\title{Locally rigid $\infty$-categories}
\author{Maxime Ramzi}
\date{}
\newtheorem{thm}{Theorem}[section]
\newaliascnt{lm}{thm}  
\newtheorem{lm}[lm]{Lemma}
\Crefname{lm}{Lemma}{Lemmas}
\newaliascnt{prop}{thm}  
\newtheorem{prop}[prop]{Proposition}
\Crefname{prop}{Proposition}{Propositions}
\newaliascnt{cor}{thm}  
\newtheorem{cor}[cor]{Corollary}
\Crefname{cor}{Corollary}{Corollaries}
\newaliascnt{add}{thm}  
\newtheorem{add}[add]{Addendum}
\Crefname{add}{Addendum}{Addenda}
\newtheorem*{thm*}{Theorem}
\newtheorem*{cor*}{Corollary}
\newtheorem{thmx}{Theorem}
\theoremstyle{definition}
\newaliascnt{defn}{thm}  
\newtheorem{defn}[defn]{Definition}
\Crefname{defn}{Definition}{Definitions}
\newaliascnt{cons}{thm}  
\newtheorem{cons}[cons]{Construction}
\Crefname{cons}{Construction}{Constructions}
\newaliascnt{nota}{thm}  
\newtheorem{nota}[nota]{Notation}
\Crefname{nota}{Notation}{Notations}
\newaliascnt{conv}{thm}  
\Crefname{conv}{Convention}{Conventions}
\newaliascnt{ex}{thm}  
\newtheorem{ex}[ex]{Example}
\Crefname{ex}{Example}{Examples}
\newaliascnt{rmk}{thm}  
\newtheorem{rmk}[rmk]{Remark}
\Crefname{rmk}{Remark}{Remarks}
\newaliascnt{ques}{thm}  
\newtheorem{ques}[ques]{Question}
\Crefname{ques}{Question}{Questions}
\newaliascnt{conj}{thm}  
\Crefname{conj}{Conjecture}{Conjectures}
\newaliascnt{warn}{thm}  
\newtheorem{warn}[warn]{Warning}
\Crefname{warn}{Warning}{Warnings}
\newaliascnt{obs}{thm}  
\newtheorem{obs}[obs]{Observation}
\Crefname{obs}{Observation}{Observations}
\newtheorem*{ques*}{Question}
\newtheorem*{rmk*}{Remark}
\newtheorem*{ex*}{Example}
\newtheorem*{defn*}{Definition}
\newaliascnt{recoll}{thm}  
\Crefname{recoll}{Recollection}{Recollections}
\newcommand{\op}{^{\mathrm{op}}}
\newcommand{\cat}{\mathbf}
\newcommand{\trcl}{\mathrm{trcl}}
\newcommand{\Cat}{\cat{Cat}}
\newcommand{\on}{\operatorname}
\newcommand{\id}{\mathrm{id}}
\newcommand{\Fun}{\on{Fun}}
\newcommand{\map}{\on{map}}
\newcommand{\Map}{\on{Map}}
\newcommand{\NN}{\mathbb N}
\newcommand{\Sph}{\mathbb S}
\newcommand{\Ss}{\cat S}
\newcommand{\Sp}{\cat{Sp}}
\newcommand{\PrL}{\cat{Pr}^\mathrm{L} }
\newcommand{\PrR}{\cat{Pr}^R}
\newcommand{\Alg}{\mathrm{Alg}}
\newcommand{\CAlg}{\mathrm{CAlg}}
\newcommand{\LMod}{\cat{LMod}}
\newcommand{\RMod}{\cat{RMod}}
\newcommand{\Mod}{\cat{Mod}}
\newcommand{\Ind}{\mathrm{Ind}}
\newcommand{\pt}{\mathrm{pt}}
\newcommand{\colim}{\mathrm{colim}}
\newcommand{\fib}{\mathrm{fib}}
\newcommand{\Sh}{\cat{Sh}}
\newcommand{\dbl}{{\mathrm{dbl}}}
\newcommand{\at}{{\mathrm{at}}}
\newcommand{\one}{\mathbf{1}}
\newcommand{\V}{\mathcal V}
\newcommand{\W}{\mathcal W}
\newcommand{\st}{\mathrm{st}}
\newcommand{\Dbl}[1]{\Mod(#1)^\dbl}
\newcommand{\Prdbl}{(\PrL_{\st})^\dbl}
\newcommand{\M}{\mathcal M}
\newcommand{\N}{\mathcal N}
\newcommand{\C}{\mathcal C}
\newcommand{\PP}{\mathcal P}
\newcommand{\Rig}{\mathrm{Rig}}
\newcommand{\rig}{^{\mathrm{rig}}}
\DeclareFontFamily{U}{min}{}
\DeclareFontShape{U}{min}{m}{n}{<-> udmj30}{}
\newcommand{\category}{$\infty$-category}
\newcommand{\categories}{$\infty$-categories}
\begin{document}
\maketitle
\begin{abstract}
    We develop the theory of locally rigid and rigid symmetric monoidal $\infty$-categories over an arbitrary base $\mathcal{V}\in\mathrm{CAlg}(\mathbf{Pr}^\mathrm{L})$. Among other things, we prove that every locally rigid commutative $\mathcal{V}$-algebra arises as a ``completion'' of a rigid commutative $\mathcal{V}$-algebra. 
    
    Along the way, we introduce and study ``$\mathcal{V}$-atomic morphisms'', which are analogues of compact morphisms over an arbitrary base $\mathcal{V}$. 
\end{abstract}
\tableofcontents

\setcounter{secnumdepth}{0}
\section{Introduction}
Recent advances in algebraic $K$-theory and in condensed mathematics have seen the rise of the study of \emph{dualizable} presentable \categories{} and related objects, as generalizations of the classical compactly generated \categories. 

Typically, one studies not only compactly generated \categories, but also (symmetric) monoidal versions of them. Among those, the really well behaved ones are the \emph{compactly rigidly generated} \categories, namely those for which compact objects coincide with dualizable objects. Typical examples include the \category{} of spectra, or more generally the \category{} of $R$-module spectra for any commutative ring spectrum $R$, but also slightly more exotic objects such as the genuine equivariant stable homotopy category $\Sp_G$. They are the main objects of study of classical ``tt-geometry'', up to the difference between tensor-triangulated categories and stably symmetric monoidal $\infty$-categories. 

The rise of dualizable \categories{} therefore requires the rise of a notion of symmetric monoidal dualizable category analogous to this one, without requiring a sufficient supply of compact/dualizable objects. This notion is that of a \emph{rigid} \category, as introduced by Gaitsgory and Rozenblyum, see e.g. \cite[Chapter 1, §9]{GR} or \cite[Section 2.2]{HSSS} for accounts. We view rigid \categories{} as presentably symmetric monoidal \categories{} that are as close as they could be to being ``rigidly-compactly generated'' without quite being compactly generated. A good example to keep in mind is the \category{} $\Sh(X)$ of sheaves of spectra on a compact Hausdorff space $X$. 

The main goal of this paper is to survey in some depth the foundations of the notion of rigid \categories, and how they relate to dualizable \categories, and to develop their theory relative to an arbitrary base. As in \cite{maindbl}\footnote{Of which the present paper can be viewed as a multiplicative companion}, our first result is a presentability result: \begin{thmx}\label{thmA}
    Let $\V\in\CAlg(\PrL)$. The full subcategory $\CAlg\rig_\V\subset \CAlg(\Mod_\V(\PrL))$ spanned by rigid $\V$-algebras is presentable. 
\end{thmx}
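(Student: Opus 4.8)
The plan is to realize $\CAlg\rig_\V$ as an accessible full subcategory of a presentable $\infty$-category which is moreover cocomplete; since an accessible, cocomplete $\infty$-category is presentable, this suffices. The first, easy input is that the ambient $\infty$-category $\CAlg(\Mod_\V(\PrL))$ is presentable: $\Mod_\V(\PrL)$ is presentably symmetric monoidal — ``presentable'' understood relative to a larger universe, which is harmless — and the $\infty$-category of commutative algebras in a presentably symmetric monoidal $\infty$-category is again presentable, the forgetful functor to the underlying $\infty$-category being conservative, accessible, and limit-preserving.

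The bulk of the work is the accessibility of $\CAlg\rig_\V$. I would write rigidity of $\C$ as the conjunction of three conditions, each of which cuts out an accessible full subcategory: (i) the underlying $\V$-module of $\C$ is dualizable; (ii) the unit $\one_\C\colon\V\to\C$ is $\V$-atomic, equivalently the global-sections functor $\C\to\V$ preserves colimits; and (iii) the right adjoint of the multiplication $m_\C\colon\C\otimes_\V\C\to\C$ preserves colimits (equivalently, is $\C\otimes_\V\C$-linear), i.e. $\C$ is locally rigid. Condition (i) is accessible because the $\infty$-category of dualizable $\V$-modules is presentable — by the $\V$-linear form of the main result of \cite{maindbl} — and closed under colimits and tensor products, so that the algebras with dualizable underlying module form the presentable full subcategory given by $\CAlg$ of the dualizable $\V$-modules. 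Granting (i), I would then invoke the theory of $\V$-atomic morphisms to show that (ii) and (iii) are accessible conditions: for a morphism between dualizable $\V$-modules, the property that its right adjoint is $\V$-linear — \emph{a priori} a statement about the whole large categories involved — is detected on $\kappa$-compact objects for a cardinal $\kappa$ depending only on $\V$ and on the accessibility rank of the source and target, and it behaves well in filtered families; applied to the functorially-varying morphisms $\one_\C$ and $m_\C$, this makes the loci (ii) and (iii) accessible. Hence $\CAlg\rig_\V$ is an intersection of accessible full subcategories of a presentable $\infty$-category, so it is itself accessible.

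It then remains to see that $\CAlg\rig_\V$ is cocomplete: it is closed under colimits in $\CAlg(\Mod_\V(\PrL))$ — closure under coproducts and pushouts is the stability of rigidity under relative tensor products $\C\otimes_\V\D$, over $\V$ and over a rigid base, and closure under filtered colimits is a further structural property of rigid categories (established, for instance, via the fact that a symmetric monoidal colimit-preserving functor between rigid $\V$-algebras has a colimit-preserving right adjoint) — so that $\CAlg\rig_\V$, being accessible and closed under all colimits of a presentable $\infty$-category, is presentable. (Alternatively, once accessibility is in hand one may produce an accessible ``rigidification'' left adjoint to the inclusion and conclude via the theory of accessible localizations.) The main obstacle is the accessibility of conditions (ii) and (iii) — that is, showing that $\V$-atomicity of a morphism, and more generally $\V$-linearity of a right adjoint, is witnessed at a bounded cardinal uniformly over families of algebras; this is exactly the purpose of the theory of $\V$-atomic morphisms, in parallel with the way compact morphisms control the accessibility of the $\infty$-category of dualizable $\infty$-categories. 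A pervasive secondary difficulty is size bookkeeping: since $\PrL$ and $\Mod_\V(\PrL)$ are presentable only relative to a larger universe, one must take care throughout that the cardinals produced are genuine and that the accessibility of $\CAlg\rig_\V$ is not an artifact of that enlargement.
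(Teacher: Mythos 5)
Your proposal takes a genuinely different route from the paper: you want to cut out $\CAlg\rig_\V$ as an intersection of three accessible loci inside $\CAlg(\Mod_\V(\PrL))$, while the paper expresses it as a pullback
\[
\CAlg(\Dbl{\V}) \times_{(\Mod^\V_\kappa)^{\Delta^1}} (\Mod^\V_{\kappa,\dbl})^{\Delta^1}
\]
of cocartesian fibrations over $\CAlg(\Mod_\V(\PrL_\kappa))$ (the map $\CAlg(\Dbl\V)\to(\Mod^\V_\kappa)^{\Delta^1}$ sending $\W$ to the multiplication $\W\otimes_\V\W\to\W$), and the heart of the argument is the accessibility of the total space $\Mod^\V_{\kappa,\dbl}$. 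This is proved by a \emph{relative} comonadicity theorem of Heine applied to the fiberwise adjunction $\Dbl{\W}\rightleftarrows\Mod_\W(\PrL_\kappa)$, together with a check that the parametrized comonad $\W\mapsto\PP_\W((-)^\kappa)$ is accessible over the varying base (\Cref{cor:relcomonadacc}). That relative-comonadic step is precisely what replaces your unproved claim that conditions (ii) and (iii) are accessible.

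There are genuine gaps. First, your condition (i) — the \emph{full} subcategory of $\CAlg(\Mod_\V(\PrL))$ on algebras with dualizable underlying $\V$-module — is not the same thing as $\CAlg(\Dbl{\V})$, which is what \cite{maindbl} proves presentable. The non-full subcategory $\Dbl{\V}$ has \emph{internal} left adjoints as morphisms; the full subcategory of $\Mod_\V(\PrL)$ on dualizable objects with arbitrary left adjoints is not obviously closed under filtered colimits (a filtered colimit of dualizables along non-internal left adjoints need not be dualizable), so your assertion about (i) does not follow from the cited result. Second, and more seriously, you defer the accessibility of (ii) and (iii) to ``the theory of $\V$-atomic morphisms'', but that theory in \Cref{section:atomic} controls a single module over a single base. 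Condition (iii) concerns a morphism in $\Mod_{\W\otimes_\V\W}(\PrL)$ where the base $\W\otimes_\V\W$ itself varies along the filtered diagram; passing from ``internal-left-adjointness of a fixed map over a fixed base is a $\kappa$-bounded condition'' to ``this bound is uniform over a filtered family of bases'' is not formal. It is precisely what the cocartesian fibration $\Mod^\V_{\kappa,\dbl}$ and \Cref{cor:relcomonadacc} supply, and without a substitute your intersection argument does not go through. Finally, a smaller inaccuracy in the setup: ``the right adjoint of $m_\C$ preserves colimits (equivalently, is $\C\otimes_\V\C$-linear)'' is not an equivalence over a general base $\V$; the right adjoint is only canonically lax linear, and requiring this structure to be strict is an additional condition (cf.\ \Cref{lm:leftadjlinear}), so condition (iii) needs to be restated before its accessibility can even be posed.
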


Along the way, we are naturally led to the study of \emph{atomic morphisms}, which are to atomic objects what compact morphisms are to compact objects  - or maybe they are to compact morphisms what atomic objects are to compact objects (see \cite{maindbl} for the relevant notions of atomic objects and compact morphisms). In any case, they are a relative notion of compact morphisms. A second goal of this paper is also to exposit the basic properties of atomic morphisms, and try to explain how one might manipulate them - they are slightly more finnicky than compact morphisms in the stable setting, though they have the advantage of working in large generality. To illustrate the analogy, we prove the following theorem, which is meant to generalize \cite[Theorem 2.39]{maindbl} - see \Cref{defn:atpres} for precise definitions: 
\begin{thmx}\label{thmB}
    Let $\V\in\CAlg(\PrL)$. A $\V$-module $\M$ is dualizable if and only if every object of $\M$ is atomically presentable; if and only if sequential colimits along atomically presentable morphisms generate $\M$ as a $\V$-module. 
\end{thmx}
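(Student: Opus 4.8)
The plan is to prove the cycle of implications between the three conditions: write $(\mathrm{i})$ for ``$\M$ is dualizable'', $(\mathrm{ii})$ for ``every object of $\M$ is atomically presentable'', and $(\mathrm{iii})$ for the generation statement. The main difficulty is concentrated in $(\mathrm{iii})\Rightarrow(\mathrm{i})$; the other two implications should be comparatively soft, although $(\mathrm{i})\Rightarrow(\mathrm{ii})$ still contains one genuine point. Throughout, I would freely use the closure of dualizable objects of $\Mod_\V(\PrL)$ under retracts and under sequential (co)limits computed in $\PrL$, together with the elementary fact — presumably established earlier — that a $\V$-module generated under colimits by its $\V$-atomic objects (an \emph{atomically generated} $\V$-module) is dualizable. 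The content of the theorem is the interaction of these facts with the notion of atomic morphism from \Cref{defn:atpres}.

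\emph{$(\mathrm{i})\Rightarrow(\mathrm{ii})$.} Assume $\M$ dualizable. I would first observe that the atomically presentable objects of an arbitrary $\V$-module are closed under retracts of $\V$-modules: given a retract diagram $\M\xrightarrow{f}\N\xrightarrow{g}\M$ with $gf\simeq\id$, a presentation $r\simeq\colim(A_0\to A_1\to\cdots)$ of an object $r\in\N$ along $\V$-atomic morphisms is carried by the $\V$-linear functor $g$ to a presentation of $g(r)$ of the same type (since $\V$-linear functors preserve both $\V$-atomic morphisms and sequential colimits), and taking $r=f(X)$ — absorbing the resulting idempotent by the usual interleaving argument for retracts of sequential diagrams — transports atomic presentability from $\N$ to $\M$. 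Hence, invoking the $\V$-linear form of Efimov's theorem (dualizable $\V$-modules are retracts in $\Mod_\V(\PrL)$ of atomically generated ones), or else directly expressing $\id_\M\in\Fun_\V(\M,\M)$ as a filtered colimit of $\V$-linear endofunctors factoring through atomically generated $\V$-modules and then evaluating at $X$, one reduces to an object $X\simeq\colim_{i\in I}A_i$ written as a filtered colimit of $\V$-atomic objects, and must straighten it into a sequential colimit along $\V$-atomic morphisms. This straightening — carried out in \cite[Theorem 2.39]{maindbl} for $\V=\Sp$ — is the genuine point: since $\V$ need not be stable, it must be run using only the tensoring of $\M$ over $\Ss$ and its $\V$-linearity, with no cofibers available.

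\emph{$(\mathrm{ii})\Rightarrow(\mathrm{iii})$.} This is immediate from \Cref{defn:atpres}: an atomically presentable object is in particular a sequential colimit along atomically presentable morphisms, so under $(\mathrm{ii})$ such colimits already account for every object of $\M$, hence a fortiori generate it as a $\V$-module.

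\emph{$(\mathrm{iii})\Rightarrow(\mathrm{i})$.} This is the heart of the argument. From a generating family of objects $X\simeq\colim(X_0\to X_1\to\cdots)$ whose transition maps are atomically presentable, I would build a dualizable $\V$-module together with a $\V$-linear functor to $\M$ admitting a fully faithful right adjoint and whose essential image contains these generators, and then deduce that $\M$ is a retract of it — equivalently, I would verify directly the (relative) continuity criterion for dualizability of $\V$-modules, namely that the $\V$-linear colimit functor out of the appropriate $\widehat{\Ind}_\kappa(\M)$ (or an equivalent $\Pro$- or $\Calk$-theoretic reformulation available earlier) admits a $\V$-linear left adjoint. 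The mechanism making this work is that a sequential colimit along atomically presentable morphisms corepresents a functor to $\V$ with good enough finiteness properties to play, for the purposes of this criterion, the role of an honest $\V$-atomic object. The main obstacle is the globalization step: one must fix a single regular cardinal $\kappa$ for which $\M$ is $\kappa$-accessible and the chosen atomic presentations are $\kappa$-compatible, and then check that the candidate left adjoint genuinely exists and is $\V$-linear. This is exactly where the ``finickiness'' of atomic morphisms over a general base intervenes — they compose and form colimits less freely than compact morphisms in $\Sp$, and the class of atomically presentable objects need not be closed under all the operations one would like — and it is to be handled by appealing to the structural lemmas on atomic and atomically presentable morphisms established earlier in the paper.
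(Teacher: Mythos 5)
Your proposal runs into trouble at the outset because it works with the wrong definition of ``atomically presentable.'' Throughout your sketch you treat it as meaning ``$x$ is a sequential colimit $\colim(A_0\to A_1\to\cdots)$ along $\V$-atomic morphisms,'' i.e.\ the naive $\V$-linear analogue of a compact exhaustion. That is not \Cref{defn:atpres}. The actual definition is a weighted-colimit condition: $x\simeq\colim_I^W f$ for some weight $W$ and diagram $f$ such that the map of weights $W\to\hom_\M(f(-),x)$ classifying the colimit cone lifts through $\at_\M(f(-),x)$. The distinction is not cosmetic; it is the entire point of the definition, which the paper motivates explicitly in the discussion preceding it.

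With your (telescope) reading, the implication $(\mathrm{i})\Rightarrow(\mathrm{ii})$ you are sketching is exactly the ``straightening'' problem that the paper says does not go through over a general base: two obstructions are identified there, namely that objects of $\PP_\V(\M^\kappa)$ cannot be built from $y(\M^\kappa)$ by conical colimits alone (one needs $\V$-tensors, i.e.\ weighted colimits), and that even with weighted colimits the unit of $\V$ need not be compact, so a map $y(m)\to\colim_I^W y(x_i)$ from an atomic object need not factor through a finite stage. Your remark that ``no cofibers are available'' correctly identifies part of the difficulty, but the conclusion to draw is not that the straightening must be carried out with bare hands; it is that the definition has been engineered to make no straightening necessary. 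The paper's proof of $(\mathrm{0})\Rightarrow(\mathrm{1})$ is short: for dualizable $\M$ one has $m\simeq\colim_{\M^\kappa}^{\hat y(m)}i$, and \Cref{cor:descofat} identifies the weight $\hat y(m)$ with $\at_\M(-,m)$ mapping to $\hom_\M(-,m)$, so this expression \emph{is} an atomic presentation on the nose. There is nothing further to prove, and in particular no appeal to Efimov's retract theorem or to a filtered-to-sequential reduction. The softness you expected in $(\mathrm{ii})\Rightarrow(\mathrm{iii})$ is really where the softness lives in $(\mathrm{i})\Rightarrow(\mathrm{ii})$ too.

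Your outline of $(\mathrm{iii})\Rightarrow(\mathrm{i})$ is closer in spirit to what the paper does: the goal is to show that $p\colon\PP_\V(\M^\kappa)\to\M$ admits a strongly $\V$-linear left adjoint. The paper's mechanism for this is \Cref{lm:swapat} and \Cref{cor:swapat} (the analogue of the swap lemma for compact maps), lifting each step of a sequential colimit $x=\colim_\NN x_n$ along atomically presentable maps to $\PP_\V(\M^\kappa)$ via the chosen weighted-colimit presentations, and then checking with \Cref{cor:atpreslocalleft} that the resulting object is a local left adjoint at $x$; $\V$-linearity of the left adjoint follows because atomic presentations are stable under $\V$-tensors. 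Your description of ``the $\V$-linear colimit functor out of the appropriate $\widehat{\Ind}_\kappa(\M)$ admitting a $\V$-linear left adjoint'' is morally the same criterion, but you would need to carry out the lift using the weighted-colimit definition, not telescopes, for it to feed into the rest of the theory. As it stands, the gap in your $(\mathrm{i})\Rightarrow(\mathrm{ii})$ is a genuine one: under your reading that implication is expected to be false, and the paper says so.
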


Finally, we point out that in fact we strive for a bit more generality. Indeed, even in the compactly generated setting, the setting of compactly rigidly generated \categories{}  is often too restrictive and does not account for behaviours of ``completions'' such as the $p$-complete category, or maybe the $T(n)$-local category, where there is in fact a sufficient supply of dualizable objects, but they are not all compact. In the compactly generated setting, this can be fixed by only stipulating that all compacts are dualizable, but not the converse, and in the dualizable setting, this is fixed by introducing the notion of \emph{locally rigid} symmetric monoidal \category\footnote{Called semi-rigid in \cite[Appendix C]{kazhdan}.}. Beyond the compactly generated examples we just mentioned, this allows for example to include the examples of \categories{} of sheaves on \emph{locally} compact Hausdorff spaces, as opposed to only the compact Hausdorff ones.

The precise goal of this paper is therefore to study the theory of locally rigid \categories, and relate it to that of rigid \categories{} and atomic morphisms. 

Our main result in the realm of locally rigid \categories{} (and, arguably, in this paper) is the following result which precisely relates locally rigid and rigid \categories - it can be viewed as stating that locally rigid \categories{} all arise as ``completions'' of rigid ones:
\begin{thmx}\label{thmC}
    Let $\V\in\CAlg(\PrL)$. For any locally rigid commutative $\V$-algebra $\W$, there exists a canonical rigid commutative $\V$-algebra $\Rig_\V(\W)$ with a localization $\Rig_\V(\W)\to \W$ admitting a fully faithful, $\V$-linear \emph{left} adjoint. 
\end{thmx}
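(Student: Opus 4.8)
The plan is to build $\Rig_\V(\W)$ by hand, using the machinery of atomic morphisms developed above, as a rigid commutative $\V$-algebra that contains $\W$ as a smashing-type localization; the existence of the localization $\Rig_\V(\W)\to\W$ with fully faithful right adjoint will then be built into the construction, and the two things to verify afterwards are that the object produced is genuinely rigid and that the localization admits a fully faithful \emph{left} adjoint. The former uses a key preliminary lemma -- that in a locally rigid $\W$ every $\V$-atomic object is dualizable -- and the latter is where local rigidity, rather than mere dualizability of the underlying $\V$-module, is essential; I expect the fully faithful left adjoint to be the main obstacle.

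To prove the lemma, write $\mu\colon\W\otimes_\V\W\to\W$ for the multiplication: local rigidity says precisely that $\mu^R$ preserves colimits, so $\mu\dashv\mu^R$ is an adjunction of $\W\otimes_\V\W$-modules and $\mu^R$ is $\W\otimes_\V\W$-linear. Given a $\V$-atomic object $x$, with $\langle x,-\rangle\colon\W\to\V$ the (colimit-preserving, $\V$-linear) right adjoint of $v\mapsto v\otimes x$, one produces a candidate dual $x^\vee:=(\id_\W\otimes\langle x,-\rangle)\bigl(\mu^R(\one_\W)\bigr)\in\W$ and checks the triangle identities by transporting the counit of $\mu\dashv\mu^R$ through $\W\otimes_\V\W$-linearity. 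In particular the full $\V$-enriched subcategory of dualizable objects of $\W$ is an essentially small, idempotent-complete symmetric monoidal subcategory containing $\one_\W$ and all $\V$-atomic objects.

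For the construction itself, the guiding principle is the universal property: $\Rig_\V(\W)\to\W$ should be terminal among maps $R\to\W$ of commutative $\V$-algebras with $R$ rigid, so that $\Rig_\V$ is the partial right adjoint to the inclusion of rigid into locally rigid commutative $\V$-algebras. Concretely I would realize it by resolving $\W$ through atomic morphisms and taking the associated sequence/pro-objects (as in the previous sections), which packages $\W$ together with its dualizable approximations into a presentable $\V$-module; equipping it with the evident Day-convolution symmetric monoidal structure yields a commutative $\V$-algebra, rigid precisely because the lemma ensures it is compactly generated over $\V$ by dualizable objects, among them the unit. By construction this algebra sits in a recollement with $\W$ realized as the Bousfield localization killing the ``boundary'' summand; thus $L\colon\Rig_\V(\W)\to\W$ is a symmetric monoidal, $\V$-linear, colimit-preserving localization with fully faithful right adjoint.

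It remains to produce the fully faithful, $\V$-linear \emph{left} adjoint $\lambda\colon\W\to\Rig_\V(\W)$ -- the abstract analogue of extension-by-zero $j_!$ along an open immersion -- and this is the crux. Here one cannot get away with dualizability of $\W$ alone: local rigidity makes $L$ a smashing-type localization, which is exactly the condition under which $L$ acquires a further left adjoint, automatically fully faithful and $\V$-linear. I would extract $\lambda$ from the minimal atomic presentation of the dualizable $\V$-module $\W$ guaranteed by \Cref{thmB}, transported along the recollement; the delicate point is verifying that this is genuinely left adjoint to $L$ -- equivalently, that $L$ inverts the canonical atomic-completion comparison on $\Rig_\V(\W)$ -- which once more rests on the colimit-preservation of $\mu^R$ to control how the dualizable objects of $\W$ are reconstructed from atomic ones. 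Finally, functoriality and canonicity of $\W\mapsto\Rig_\V(\W)$ follow formally from the universal property.
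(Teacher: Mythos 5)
Your high-level picture matches the paper's: define $\Rig_\V(\W)$ by the terminal universal property among rigid algebras mapping to $\W$, prove the preliminary lemma that $\V$-atomic objects in a locally rigid $\W$ are dualizable (your sketch is essentially the paper's argument via $\W\otimes_\V\W$-linearity of $\mu^R$, \Cref{ex:locrigat}), and obtain the fully faithful $\V$-linear left adjoint from the canonical functor $\hat y:\W\to\PP_\V(\W^\kappa)$ that exists by dualizability of $\W$ (\Cref{thmB}), the crux being that $\hat y$ factors through the full subcategory $\Rig_\V(\W)\subset\PP_\V(\W^\kappa)$.

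However, there is a genuine gap in the middle step. You claim that the constructed algebra is ``rigid precisely because the lemma ensures it is compactly generated over $\V$ by dualizable objects, among them the unit.'' This is false, and it is one of the central points of the whole theory that it cannot be true: the paper explicitly records (in the discussion following \Cref{lm:rigfprops}) that the canonical map $\Ind(\V^\dbl)\to\Rig(\V)$, while always fully faithful, is \emph{not} in general an equivalence. A rigid category is generally not rigidly-compactly generated -- that is exactly why the notion of rigidity was introduced. The paper's rigidification is generated not by dualizables but by sequential ($\mathbb{Q}_{\geq 0}$-indexed) colimits of representables along \emph{trace-class} maps, and its rigidity is certified by the Efimov-type criterion \Cref{cor:rigtrcl} / \Cref{cor:Efimovrig}, or over a general base by the trace-class analogue of \Cref{thmB} (\Cref{lm:rigiditythroughpres}). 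Your preliminary lemma does not help here at all: it concerns the ambient $\W$, not $\Rig_\V(\W)$, and even in $\W$ it only says the atomics that happen to exist are dualizable, not that they generate anything.

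Separately, the construction itself and the key verification are never pinned down. ``Resolving $\W$ through atomic morphisms... Day convolution... recollement'' is too vague to check, and the actual delicate step -- showing $\hat y$ lands in $\Rig_\V(\W)$ -- is waved away as ``transporting along the recollement'' and ``$L$ inverts the canonical atomic-completion comparison.'' What the paper actually does: write any object of $\W$ as a $\mathbb{Q}_{\geq 0}$-colimit along compact (hence, by local rigidity, trace-class) maps, apply $\hat y$, and observe the resulting diagram is a trace-class telescope in $\PP_\V(\W^\kappa)$ by construction, hence lies in the rigidification. Over a general base $\V$ the same idea runs through trace-class presentations rather than telescopes, but the content is the same: one needs the locally rigid comparison $\at_\W^\V\to\trcl_\W^\V$ (or $\at\to\at^\W$, cf.\ \Cref{cor:locrigimplat}) to convert the atomic presentation data of $\hat y(x)$ into trace-class data. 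That conversion is the actual mathematical content behind the existence of the left adjoint, and it is absent from your proposal.
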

This rigidification exists more generally, though the existence and fully faithfulness of the left adjoint are specific to locally rigid algebras. In the case where $\V=\Sp$ or $\Mod_R$ for some commutative ring spectrum, this rigidification is closely related to Clausen and Scholze's categories of nuclear modules, though we will not give a precise statement here. 

\subsection{Prerequisites}
Much of this paper relies on and assumes the basics of the theory of $\V$-modules for $\V\in~\CAlg(\PrL)$, and more specifically the theory of dualizable $\V$-modules. What is needed is either covered in the companion paper \cite{maindbl}, or is covered independently in the present paper. The (strongly) recommended order of reading is to begin with \cite{maindbl} to at least extract some of the main results such as \cite[Theorem 1.49,Corollary 1.59, Proposition 1.62, and Corollary 2.36]{maindbl}, which will be used routinely in this paper; but also to understand in what sense the development of atomic morphisms here is parallel to the development of compact maps there. 
\subsection{Outline}
In \Cref{section:prelims}, we review preliminaries concerning weighted colimits in enriched \categories, which will be useful later on to set up the notion of ``atomic presentation'', which is our relative analogue of the compact exhaustions of \cite[Section 2.2]{maindbl}. 

In \Cref{section:atomic}, we dive into the heart of the matter by introducing atomic morphisms and studying their basic properties, which are meant to be analogous to those of compact morphisms. This is where we prove \Cref{thm:dblexhaustionV}, namely \Cref{thmB} from the introduction. 

In \Cref{section:traceclass}, we recall the definition and basic properties of trace-class morphisms which will be crucial to the study of rigidity. 

\Cref{section:rigandlocrig} is the main section of the paper, this is where we introduce and study local rigidity and rigidity. We extend and give new proofs of several results from \cite[Appendix C]{kazhdan}, and give an analogue of \Cref{thm:dblexhaustionV} for rigid categories, namely \Cref{lm:rigiditythroughpres}. Throughout this section, we explain how the story simplifies if we fix $\V=\Sp$, and prove the relevant analogous results. Finally, it is in this section that we prove the existence of, and give a ``concrete'' construction of rigidifications, allowing us to prove \Cref{thm:locrigrigleft}, namely \Cref{thmC} from the introduction. 

Finally, in \Cref{section:pres}, we do some set theory and prove \Cref{thm:rigprl}, namely \Cref{thmA}.
\subsection{Relation to other work}
As mentioned in the introduction, the notion of rigid \category{} comes from the work of Gaitsgory and Rozenblyum \cite{GR}, and was further studied in \cite{HSSS}; and that of locally rigid \category{} comes from \cite{kazhdan}. Some of the results we prove here can be found there, at least in the special case $\V$ is itself rigid over $\Sp$.  

Some of it is also inspired by the work of Clausen and Scholze on nuclear categories, and some of it also comes directly from the horse's mouth, such as \Cref{cor:rigtrcl} and the corresponding \Cref{cor:trclexQ}, that I learned directly from Dustin Clausen. 
\subsection{Conventions and notations}
We work throughout with the theory of \categories{} as developped thoroughly by Lurie in \cite{HTT,HA}, although we try to take a mostly model-independent approach. We also use the results and notations from \cite{maindbl}. 
\begin{itemize}
\item Unless there is a specific need for it, we do not adress set-theoretic issues. $\widehat{\Cat}$ denotes the \category{} of large \categories.
\item We use $\Ss$ for the \category{} of spaces/anima and $\Sp$ for the \category{} of spectra. 
    \item We use $\PrL$ to denote the (very large) \category{} of presentable \categories, and $\PrL_{\st}$ the full subcategory spanned by the stable ones; we often view it as a symmetric monoidal \category{} through the Lurie tensor product - this is the only one we consider, so we keep it implicit throughout. We use $\Fun^L$ to denote the \category{} of colimit-preserving functors. For a regular cardinal $\kappa$, $\PrL_\kappa$ denotes the non-full subcategory of $\PrL$ spanned by $\kappa$-compactly generated \categories{} and $\kappa$-compact preserving left adjoints (equivalently, left adjoints whose right adjoint preserves $\kappa$-filtered colimits).
    \item For a symmetric monoidal \category{} $\M$, we let $\CAlg(\M)$ denote the \category{} of commutative algebras in $\M$, and for $A\in\CAlg(\M)$, we let $\Mod_A(\M)$ denote its \category{} of modules\footnote{Canonically, we mean $\mathbb E_\infty$-modules, although they are equivalent to left modules.}. For a non-necessarily commutative algebra $A$, we write $\LMod$ or $\RMod$ to indicate whether we consider left or right modules. 
    \item We use $\Map$ for mapping spaces, $\map$ for mapping spectra in stable \categories, and $\hom$ for internal homs, or enriched homs. 
    \item  As in \cite{maindbl}, our main reference for enriched categories is \cite{heine}. We use $\Cat_\V$ to denote the category of $\V$-enriched categories\footnote{$\widehat{\Cat}_\V$ if we want to allow large ones.}, $\Fun_\V$ to denote enriched functor (ordinary) categories, and $\Fun^L_\V$ to denote categories of functors in $\Mod_\V(\PrL)$, and $\PP_\V$ to denote $\V$-enriched presheaves.  
    \item As in \cite{maindbl}, we will often tacitly consider stable \categories{} to be special cases of $\Sp$-enriched \categories, cf. \cite[Example 1.5]{maindbl}
    \item We use both $j$ and $y$ for the Yoneda embedding. 
\end{itemize}

From now on, the word ``category'' means \category{} by default, and we write $1$-category when we want to single out ordinary categories. 
\subsection{Acknowledgements}
The acknowledgements for this paper are essentially the same as for the companion \cite{maindbl}. I specifically need to thank Robert Burklund for asking specifically about locally rigid categories rather than restricting to rigid ones. I furthermore wish to thank Jiacheng Liang for his many helpful comments, and particularly for spotting the mistake recorded in \Cref{rmk:mistake}. 

Other than that, I wish again to thank Dustin Clausen, Sasha Efimov, Thomas Nikolaus, Achim Krause, Lior Yanovski and Shay Ben Moshe for helpful conversations and feedback on the subject at hand, particularly Dustin and Sasha for explaining many of the statements that appear here. 

This work was supported by the Danish National Research Foundation through the
Copenhagen Centre for Geometry and Topology (DNRF151). Part of the results presented here were obtained while I was visiting Harvard and part of them while I was in Münster, and so I wish to thank them for their hospitality, as well as Mike Hopkins and Thomas Nikolaus for inviting me there. 

\setcounter{secnumdepth}{3}
\section{Preliminaries: weighted colimits}\label{section:prelims}
The goal of this section is to gather basic preliminaries about weighted colimits in the context of enriched categories, which we will need later on.  Throughout this section, we fix $\V\in\CAlg(\PrL)$, to be our ``enriching base''. 

This section is mostly intended to set up notation and verify that basic properties from ordinary enriched category theory carry over to categories. We recommend the reader have a look at \cite[Section 1]{maindbl} for reminders on $\V$-enriched categories and their relation to $\V$-modules - most of the material there is extracted from \cite{heine}. The main result which we will use here and was recalled there is :
\begin{thm}[{\cite[Theorem 1.13]{heine}}]\label{thm: UPVPsh}
    Let $\M_0$ be a small $\V$-category, and $\N$ a presentable $\V$-module. Restriction along the Yoneda embedding $\M_0\to \PP_\V(\M_0)$ induces an equivalence $$\Fun^L_\V(\PP_\V(\M_0), \N)\simeq \Fun_\V(\M_0,\N)$$
\end{thm}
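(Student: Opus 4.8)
The plan is to exhibit an explicit inverse to the restriction functor $y^*$, namely $\V$-enriched left Kan extension along the Yoneda embedding $y\colon\M_0\to\PP_\V(\M_0)$, and then to check that this extension is well defined, takes values in $\V$-linear colimit-preserving functors, and is two-sided inverse to $y^*$. The only substantial inputs are two forms of the enriched Yoneda lemma together with the fact that $\N$, being a presentable $\V$-module, admits all small $\V$-weighted colimits; these are exactly the tools assembled in \Cref{section:prelims}.

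First I would record the two Yoneda-type inputs. Writing $\PP_\V(\M_0)=\Fun_\V(\M_0\op,\V)$ with $\V$ self-enriched, the enriched Yoneda lemma gives $\hom_{\PP_\V(\M_0)}(y(m),F)\simeq F(m)$ in $\V$ for $m\in\M_0$, $F\in\PP_\V(\M_0)$; and the enriched co-Yoneda lemma says that $\id_{\PP_\V(\M_0)}$ is the $\V$-enriched left Kan extension of $y$ along itself, i.e.\ every presheaf is canonically the colimit of representables weighted by itself. The first input shows $y^*$ is fully faithful: a $\V$-linear colimit-preserving functor on $\PP_\V(\M_0)$ is determined, together with its action on mapping objects, by its restriction to the full $\V$-subcategory of representables, since there the mapping objects are computed by enriched Yoneda. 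For the inverse, given an enriched functor $G\colon\M_0\to\N$ I would define $\widetilde G\colon\PP_\V(\M_0)\to\N$ by the pointwise formula sending $F$ to the $\V$-weighted colimit of $G$ with weight $F$; this exists and is a small colimit because $\M_0$ is small and $\N$ admits $\V$-weighted colimits, and since such colimits are built from the $\V$-tensoring on $\N$ and ordinary colimits — both compatible with the $\V$-action and preserving colimits — the functor $\widetilde G$ is $\V$-linear and colimit-preserving, hence lies in $\Fun^L_\V(\PP_\V(\M_0),\N)$. The enriched Yoneda lemma gives $\widetilde G\circ y\simeq G$, while the co-Yoneda lemma gives $\widetilde{F\circ y}\simeq F$ for $F\in\Fun^L_\V(\PP_\V(\M_0),\N)$; interpreting these as the unit and counit of the left-Kan-extension adjunction against $y^*$ and checking they are equivalences will give the claimed equivalence.

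The main obstacle is foundational rather than formal: one must have set up enough of the theory of $\V$-weighted colimits and $\V$-enriched pointwise left Kan extensions in the $\infty$-categorical setting to know that (a) such Kan extensions exist whenever the target admits the relevant weighted colimits, (b) the pointwise formula above computes them, and (c) the enriched co-Yoneda lemma holds — this is where the real work in \cite{heine} lies and is precisely what \Cref{section:prelims} is meant to provide. A secondary, bookkeeping point is to match the map in the statement with the composite of $y^*$ and the forgetful functor from morphisms of presentable $\V$-modules to underlying $\V$-enriched functors, so that restriction along $y$ genuinely lands in $\Fun_\V(\M_0,\N)$; this is a compatibility between the self-enrichment of a presentable $\V$-module and its $\V$-module structure, again available from \cite{maindbl,heine}. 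One could alternatively try to bootstrap from Lurie's non-enriched universal property of presheaf categories together with a presentation of $\PP_\V(\M_0)$ via a bar construction, but the Kan-extension argument is cleaner and more self-contained once the weighted-colimit formalism is in hand.
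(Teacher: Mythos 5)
This statement is not proved in the paper at all: it is quoted verbatim from Heine \cite[Theorem 1.13]{heine} and used as a black-box input, precisely so that the present paper does not need to redevelop the foundations of enriched $\infty$-category theory. So there is no in-paper proof to compare your sketch against.

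As a conceptual outline your proposal is sound and is indeed the expected shape of the argument (enriched Yoneda for fully faithfulness, enriched pointwise left Kan extension along $y$ for essential surjectivity, co-Yoneda for the counit being an equivalence). But two caveats are worth flagging. First, there is a circularity in appealing to \Cref{section:prelims} as supplying the weighted-colimit machinery you need: that section is written \emph{using} \Cref{thm: UPVPsh} as its primary tool (the definition of weighted colimit in this paper literally invokes the equivalence $\Fun_\V(I,\M)\simeq\Fun^L_\V(\PP_\V(I),\M)$), so you cannot use that section to prove the theorem without first establishing it by other means. Second, all of the ``only substantial inputs'' you list — enriched Yoneda, enriched co-Yoneda, existence and pointwise computation of enriched left Kan extensions into $\V$-modules, compatibility of the self-enrichment of $\PP_\V(\M_0)$ with its $\V$-module structure — are individually hard theorems in the $\infty$-categorical setting; they are the content of Heine's paper (and related work), not ambient facts one can assume. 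Your sketch is therefore best read as an explanation of \emph{why} the theorem is true given that foundation, rather than an independent proof. It would be more accurate to close the sketch by simply citing Heine for the Yoneda, co-Yoneda, and Kan-extension inputs rather than pointing back to \Cref{section:prelims}.
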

With this in mind, we can move on to weighted colimits. 

The following is a standard name in classical enriched category theory for $\V$-presheaves:
\begin{defn}
    Let $I$ be a $\V$-enriched category. A \emph{weight} on $I$ is a $\V$-functor $I\op\to \V$.
\end{defn}
The point of this name is the following:
\begin{defn}
    Let $I$ be a small $\V$-enriched category and $\M\in\Mod_\V(\PrL)$. We define the weighted colimit functor $\colim_I^{(-)} (-)$ as follows: 
    $$\Fun_\V(I,\M)\times \Fun_\V(I\op,\V)\simeq \Fun^L_\V(\PP_\V(I),\M)\times \PP_\V(I)\to \M$$
    where the first equivalence comes from \Cref{thm: UPVPsh}. 

    Given a weight $W$ and a functor $f: I\to \M$, we denote the resulting object of $\M$ by $\colim_I^Wf$; we call $f$ the diagram and $W$ the weight.
\end{defn}
\begin{rmk}
    Given a functor $f:I\to \M$, the induced $\V$-linear left adjoint $f_! : \PP_\V(I)\to \M$ admits a right adjoint $f^*: m\mapsto \hom(f(-),m)$. Thus, we have an equivalence $$\hom(\colim_I^W f, m)\simeq \hom(W,\hom(f(-),m))$$
    This allows to define weighted colimits more generally, as objects satisfying this equivalence if they exist\footnote{This is beyond the scope of this paper, but this allows to also define ``absolute weights'', which are weights $W$ such that $\colim_I^W(-)$ is preserved whenever it exists. Once this notion is set up, one may try to compare Karoubi complete $\V$-categories, that is, those $\V$-categories that admit all absolute weighted colimits, and atomically generated $\V$-modules. They should be equivalent along the $\PP_\V\dashv (-)^\at$ adjunction from \cite{Shay}.}. 
\end{rmk}
\begin{obs}
    As is clear from the definition, weighted colimits are functorial in the weight $W$ and in the diagram $f$. They preserve colimits in both variables. 
\end{obs}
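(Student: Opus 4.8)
The plan is to unwind the definition and reduce everything to standard facts. By construction the weighted colimit functor is the composite
$$\Fun_\V(I,\M)\times\PP_\V(I)\xrightarrow{\ \sim\ }\Fun^L_\V(\PP_\V(I),\M)\times\PP_\V(I)\xrightarrow{\ \mathrm{ev}\ }\M,$$
where the first arrow applies \Cref{thm: UPVPsh} in the first coordinate and the identity in the second, and $\mathrm{ev}$ is the evaluation pairing $(F,P)\mapsto F(P)$. Functoriality in both the weight $W$ and the diagram $f$ is then automatic, being a matter of $\colim_I^{(-)}(-)$ literally being a functor out of a product; so the content is the separate preservation of colimits in each variable.

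For a fixed diagram $f$, I would simply observe that $\colim_I^{(-)}f$ is by definition the functor $f_!\colon\PP_\V(I)\to\M$ corresponding to $f$ under \Cref{thm: UPVPsh}, which is a morphism in $\Mod_\V(\PrL)$ and in particular a left adjoint, hence colimit-preserving. This handles the weight variable.

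For a fixed weight $W$, the claim is that $\mathrm{ev}_W\colon\Fun^L_\V(\PP_\V(I),\M)\to\M$ preserves colimits; here I would identify the source with $\Fun_\V(I,\M)$ via \Cref{thm: UPVPsh} and use that colimits in that enriched functor category, with cocomplete target $\M$, are computed pointwise on $I$. When $W=j(i)$ is representable, the enriched Yoneda lemma gives $\mathrm{ev}_{j(i)}(f_!)\simeq f(i)$ naturally in $f$, i.e.\ $\mathrm{ev}_{j(i)}$ is evaluation at $i\in I$, which is colimit-preserving by pointwiseness. For general $W$, write $W\simeq\colim_\alpha j(i_\alpha)$ as a colimit of representables in $\PP_\V(I)$; since we have already shown $\colim_I^{(-)}f$ preserves colimits, $\mathrm{ev}_W\simeq\colim_\alpha\mathrm{ev}_{j(i_\alpha)}$ is a colimit of colimit-preserving functors, hence colimit-preserving, colimits commuting with colimits and colimits of functors into a cocomplete target being pointwise.

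The main obstacle, such as it is, is just this last bit of bookkeeping — knowing that colimits in $\Fun_\V(I,\M)$ are pointwise and that a pointwise colimit of colimit-preserving functors is again colimit-preserving. If one prefers to avoid the reduction to representables, an alternative is to argue mapping-object-wise using the identity $\hom_\M(\colim_I^Wf,m)\simeq\hom_\V(W,\hom_\M(f(-),m))$ from the preceding remark: for a (pointwise) colimit $\colim_\alpha f_\alpha$ in $\Fun_\V(I,\M)$ one gets $\hom_\M((\colim_\alpha f_\alpha)(-),m)\simeq\lim_\alpha\hom_\M(f_\alpha(-),m)$ as weights, and applying the continuous functor $\hom_\V(W,-)$ yields $\hom_\M(\colim_I^W\colim_\alpha f_\alpha,m)\simeq\lim_\alpha\hom_\M(\colim_I^Wf_\alpha,m)$ naturally in $m$, whence the claim by the Yoneda lemma.
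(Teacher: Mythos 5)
Your proof is correct and does exactly what the paper's ``clear from the definition'' indicates: unwind the definition as a composite out of a product, use that $f_!\in\Fun^L_\V(\PP_\V(I),\M)$ for the weight variable, and reduce evaluation at a general weight to evaluation at representables (or argue via the mapping-object formula) for the diagram variable. Both of your two variants are standard bookkeeping along the lines the paper has in mind, so there is nothing to flag.
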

\begin{cons}\label{cons:compweight}
Let $L:\M\to \N$ be a lax $\V$-linear map between $\V$-modules.

For a small $\V$-enriched category $I$ and weight $W$, we obtain a natural transformation $\colim_I^W L\circ (-)\to L(\colim_I^W -)$ between functors $\Fun_\V(I,\M)\to \N$. To construct it, we note that the functor $\Fun_\V(I,\M)\simeq \Fun_\V^L(\PP_\V(I),\M)\subset \Fun_\V(\PP_\V(I),\M)$ is left adjoint to restriction, and thus the commutative diagram: 
\[\begin{tikzcd}
	{\Fun_\V(\PP_\V(I),\M)} & {\Fun_\V(I,\M)} \\
	{\Fun_\V(\PP_\V(I),\N)} & {\Fun_\V(I,\N)}
	\arrow[from=1-1, to=1-2]
	\arrow[from=1-1, to=2-1]
	\arrow[from=1-2, to=2-2]
	\arrow[from=2-1, to=2-2]
\end{tikzcd}\]
induces a mate of the form: \[\begin{tikzcd}
	{\Fun_\V(\PP_\V(I),\M)} & {\Fun_\V(I,\M)} \\
	{\Fun_\V(\PP_\V(I),\N)} & {\Fun_\V(I,\N)}
	\arrow[from=1-1, to=2-1]
	\arrow[from=1-2, to=1-1]
	\arrow[from=1-2, to=2-2]
	\arrow[shorten <=8pt, shorten >=8pt, Rightarrow, from=2-2, to=1-1]
	\arrow[from=2-2, to=2-1]
\end{tikzcd}\] which, after evaluation at $W$, induces the desired map. 

In particular it is not difficult to see that the square is adjointable when $L$ is $\V$-linear and colimit-preserving, so that in this case, the canonical map is an equivalence: $\V$-linear left adjoints \emph{preserve weighted colimits}. 
\end{cons}
We now give some simple examples of weighted colimits.
\begin{ex}
    Let $I$ be a non-enriched category, and $\one_\V\otimes I$ the corresponding $\V$-category where the hom objects are base-changed from $I$. 

    Let $\underline\one: (\one_\V\otimes I)\op \to \V$ be the constant weight with value $\one_\V$. In this case, combining universal properties we find that $\colim_{\one_\V\otimes I}^{\underline\one} (-): \Fun(I,\M)\simeq \Fun_\V(\one_\V\otimes I,\M)\to \M$ is the ordinary $I$-indexed colimit functor. 
    
Thus ordinary colimits are special cases of weighted colimits - they are called ``conical colimits''. 
\end{ex}
\begin{ex}
    Let $I=\{\one_\V\}$ be the $\V$-category with one object with endomorphism object $\one_\V$ (this is alternatively $\one_\V\otimes\pt$ with the notation from the previous example). 

    A weight on $I$ is equivalently an object $v\in\V$, and a diagram on $I$ with values in $\M$ is equivalently an object $m\in \M$. We find that $\colim_{\one_\V}^v m = v\otimes m$. So tensors with objects of $\V$ are special cases of weights. 
\end{ex}
Combining the two previous examples, we find:
\begin{cor}\label{cor:Vliniffweight}
    Suppose $L:\M\to \N$ is a $\V$-enriched functor between presentable $\V$-modules. $L$ is $\V$-linear and colimit-preserving if and only if it preserves weighted colimits. 
\end{cor}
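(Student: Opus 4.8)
The plan is to deduce \Cref{cor:Vliniffweight} directly from the two preceding examples and \Cref{cons:compweight}, without re-examining the universal property from scratch.

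First I would establish the "only if" direction. Suppose $L:\M\to\N$ is $\V$-linear and colimit-preserving; then $L$ is in particular a left adjoint, and by the last paragraph of \Cref{cons:compweight}, the canonical comparison map $\colim_I^W L\circ(-)\to L(\colim_I^W -)$ is an equivalence for every small $\V$-category $I$ and every weight $W$. That is precisely the statement that $L$ preserves all weighted colimits. (One should remark that $L$ being $\V$-linear and colimit-preserving is exactly what makes the square in \Cref{cons:compweight} vertically left-adjointable, since the vertical functors become $\Fun^L_\V(\PP_\V(I),-)$ applied to a morphism in $\Mod_\V(\PrL)$; this is the content of the cited last paragraph and I would simply invoke it.)

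For the "if" direction, suppose $L$ preserves all weighted colimits. Applying this to the constant weight $\underline{\one}$ on $\one_\V\otimes I$ for an arbitrary (non-enriched) index category $I$, the first example identifies $\colim^{\underline\one}_{\one_\V\otimes I}$ with the ordinary $I$-indexed colimit, so $L$ preserves all conical colimits, i.e.\ $L$ is colimit-preserving in the underlying \category. Applying it instead to the one-object $\V$-category $\{\one_\V\}$ with weight $v\in\V$, the second example identifies $\colim^v_{\one_\V} m$ with $v\otimes m$, so $L(v\otimes m)\simeq v\otimes L(m)$ naturally in $v$ and $m$; combined with colimit-preservation this upgrades $L$ to a $\V$-linear functor. (Here I would note that a $\V$-enriched, colimit-preserving functor that is compatible with tensoring by objects of $\V$ is automatically $\V$-linear — this is essentially the characterization of $\Mod_\V(\PrL)$ inside $\V$-enriched colimit-preserving functors, and can be cited from \cite{heine} or \cite{maindbl}.)

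The main obstacle, such as it is, is purely bookkeeping: making sure the naturality in $v$ and $m$ coming from "$L$ preserves the weighted colimit $\colim^v_{\one_\V}m$" is genuinely the lax-linearity structure map of $L$, so that its being an equivalence really does promote $L$ to a strong $\V$-linear functor rather than merely giving an abstract equivalence of underlying objects. Since \Cref{cons:compweight} produces the comparison natural transformation precisely as the mate of the canonical square and every $\V$-enriched functor is canonically lax $\V$-linear, this identification is formal, but it is the one place where I would be careful to phrase things in terms of the lax structure rather than just objectwise equivalences.
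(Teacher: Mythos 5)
Your proposal is correct and matches the paper's intended argument exactly: the ``only if'' direction is the last paragraph of \Cref{cons:compweight}, and the ``if'' direction combines the two preceding examples (conical colimits and $\V$-tensors as weighted colimits) to recover colimit-preservation and $\V$-linearity. Your careful remark about identifying the comparison map with the canonical lax-linearity structure is exactly the right point to be cautious about, and is the content tacitly packaged into \Cref{cons:compweight}.
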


The last preliminary we need concerns the interaction of weighted colimits with change of base. 
\begin{nota}
    For $f:\V\to\W$ a lax symmetric monoidal functor, we let $f_\#$ denote the functor $\Cat_\V\to\Cat_\W$ that simply applies $f$ to the hom-objects. 

    If $f$ is strong symmetric monoidal and has a right adjoint $f^R$, $f_\#$ is left adjoint to $(f^R)_\#$, and we let $f^*$ denote the latter. 
\end{nota}
\begin{obs}
    Let $\M$ be a presentable $\W$-module. Then $f^*\M$ is exactly $\M$ viewed as $\V$-module by restriction of scalars.
\end{obs}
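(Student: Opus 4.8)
The plan is to treat this as an essentially formal identification, carried out hom-object by hom-object using the universal property of the enriched hom, and then to note that the full enriched structure of a presentable module is determined by its underlying category together with its action, so that the pointwise comparison suffices. Recall that for a presentable $\W$-module $\M$, regarded as a $\W$-category, the enriched hom $\hom_\M(m,m')\in\W$ is characterized by a natural equivalence $\Map_\W(w,\hom_\M(m,m'))\simeq\Map_\M(w\otimes m,m')$ (it exists by the adjoint functor theorem, $(-\otimes m)$ being a colimit-preserving functor $\W\to\M$). Restriction of scalars along $f$ equips the underlying presentable category of $\M$ with the $\V$-action $v\otimes m:=f(v)\otimes m$, which is colimit-preserving in each variable since $f$ preserves colimits; call the resulting presentable $\V$-module $\M_\V$. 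Its enriched hom $\hom_{\M_\V}(m,m')$ is then characterized by $\Map_\V(v,\hom_{\M_\V}(m,m'))\simeq\Map_\M(f(v)\otimes m,m')$.

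First I would run the chain of natural equivalences
\begin{align*}
\Map_\V\bigl(v,\hom_{\M_\V}(m,m')\bigr)
&\simeq \Map_\M\bigl(f(v)\otimes m,m'\bigr)\\
&\simeq \Map_\W\bigl(f(v),\hom_\M(m,m')\bigr)\\
&\simeq \Map_\V\bigl(v,f^R\hom_\M(m,m')\bigr),
\end{align*}
natural in $v\in\V$, so that the enriched Yoneda lemma gives $\hom_{\M_\V}(m,m')\simeq f^R\hom_\M(m,m')$; this is exactly the effect of $f^*=(f^R)_\#$ on hom-objects. The underlying $\infty$-categories agree tautologically, since $\Map_\V(\one_\V,f^R(-))\simeq\Map_\W(f\one_\V,-)\simeq\Map_\W(\one_\W,-)$ by strong monoidality of $f$. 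The tensorings also agree: a short projection-formula computation, again using that $f$ is strong monoidal, yields $f^R\hom_\W(f(v),w')\simeq\hom_\V(v,f^R w')$ for $w'\in\W$, whence in $f^*\M$ one computes $\hom(f(v)\otimes m,m')\simeq\hom_\V(v,\hom(m,m'))$, exhibiting $f(v)\otimes m$ as the $v$-fold tensor — which is precisely the $\V$-action of $\M_\V$.

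What remains, and what I expect to be the only real (though mild) obstacle, is to promote these pointwise identifications to an equivalence of $\V$-enriched categories, i.e.\ to check compatibility with composition and units. The cleanest hands-on route is to observe that in any presentable $\V$-module the composition maps $\hom(m',m'')\otimes_\V\hom(m,m')\to\hom(m,m'')$ and the units $\one_\V\to\hom(m,m)$ are themselves reconstructed, via the same adjoint functor theorem, from the underlying $\infty$-category and the $\V$-action alone — composition being the adjoint transpose of $\hom(m',m'')\otimes\hom(m,m')\otimes m\to\hom(m',m'')\otimes m'\to m''$ assembled from the counits of the adjunctions $(-\otimes m)\dashv\hom(m,-)$. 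Since $\M_\V$ and $f^*\M$ share underlying $\infty$-category and $\V$-action by the previous paragraph, they therefore coincide as $\V$-categories, which is the assertion. Alternatively, and perhaps more economically, one can simply appeal to the change-of-enrichment functoriality of the canonical enrichment (cf.\ \cite{heine} and \cite{maindbl}): the square of enrichment functors $\Mod_\bullet(\PrL)\to\widehat{\Cat}_\bullet$ against restriction of scalars and $f^*$ commutes by construction.
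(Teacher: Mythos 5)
The paper gives no proof at all --- this is filed as an \emph{observation}, treated as an immediate consequence of the definition of $f^*=(f^R)_\#$ together with the way the canonical $\W$-enrichment of a presentable $\W$-module is produced in \cite{heine}. Your verification is correct and spells out exactly what that unpacking looks like: the chain of adjunction equivalences identifying $\hom_{\M_\V}(m,m')\simeq f^R\hom_\M^\W(m,m')$ is the heart of the matter, and this is by definition the effect of $(f^R)_\#$ on hom-objects. You are also right that the only nontrivial point is promoting the hom-object-wise identification to an equivalence of $\V$-enriched categories. Of your two routes for handling this, the second one --- appealing to the compatibility of the canonical-enrichment construction with restriction of scalars and change of enrichment, i.e.\ the functoriality built into \cite{heine} --- is the clean one and is surely what the paper has in mind. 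The first route, reconstructing the composition maps via adjoint transposition, gives the right intuition but is harder to make rigorous at the $\infty$-categorical level, since one needs the full coherent composition structure, not just pointwise maps; I would lean on the change-of-enrichment functoriality rather than attempting to assemble composition by hand.
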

The compatibility we need is the following:
\begin{prop}\label{prop:compatibilityweightedcolim}
    Let $f:\V\to\W\in\CAlg(\PrL)$, and let $\M$ be a presentable $\W$-module. For a small $\V$-category $I$, let $W: I\op\to \V$ be a $\V$-weight, and $x_\bullet: I\to f^*\M$ be a $\V$-diagram. 

    Consider $fW: I\op\to \V\to \W$ and the $\W$-weight $f_!W: f_\#I\op\to \W$ it induces by adjunction, as well as $\tilde x_\bullet: f_\# I\to \M$ the induced $\W$-enriched functor. 

    There is a canonical equivalence $\colim_I^W x_\bullet\simeq \colim_{f_\# I}^{f_!W}\tilde x_\bullet$
\end{prop}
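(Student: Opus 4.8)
The plan is to unwind both sides into the presheaf-extension picture used to define weighted colimits, and then obtain the comparison from two applications of \Cref{thm: UPVPsh}. Unravelling the definition, $\colim_I^W x_\bullet = (x_\bullet)_!(W)$, where $(x_\bullet)_!\colon \PP_\V(I)\to f^*\M$ is the $\V$-linear colimit-preserving extension of $x_\bullet$ furnished by \Cref{thm: UPVPsh} and $W$ is viewed as an object of $\PP_\V(I) = \Fun_\V(I\op,\V)$; likewise $\colim_{f_\# I}^{f_! W}\tilde x_\bullet = (\tilde x_\bullet)_!(f_! W)$ with $(\tilde x_\bullet)_!\colon \PP_\W(f_\# I)\to \M$ the $\W$-linear extension of $\tilde x_\bullet$. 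So the whole statement reduces to a comparison between these two pointed extensions.

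Next I would set up the relevant ``base change on presheaves'' functor. Postcomposition with the canonical $\V$-linear colimit-preserving functor $\V\to f^*\W$ gives a $\V$-linear colimit-preserving functor $\Fun_\V(I\op,\V)\to \Fun_\V(I\op,f^*\W)$, and the target is identified, as a $\V$-module, with $f^*\PP_\W(f_\# I) = f^*\Fun_\W((f_\# I)\op,\W)$ via the $f_\#\dashv f^*$ adjunction. Composing yields a $\V$-linear colimit-preserving functor $\beta\colon \PP_\V(I)\to f^*\PP_\W(f_\# I)$. Straight from the definitions one has $\beta(W) = f_! W$. Moreover, evaluating on representables and invoking \Cref{thm: UPVPsh}, the composite $I\xrightarrow{y}\PP_\V(I)\xrightarrow{\beta} f^*\PP_\W(f_\# I)$ is the $\V$-functor adjoint to the Yoneda embedding $y\colon f_\# I\to\PP_\W(f_\# I)$ — that is, $\beta$ is compatible with the Yoneda embeddings.

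Finally, since restriction of scalars $f^*$ preserves colimits and sends $\W$-linear functors to $\V$-linear ones, $f^*(\tilde x_\bullet)_!\circ\beta\colon \PP_\V(I)\to f^*\M$ is again $\V$-linear and colimit-preserving. Both it and $(x_\bullet)_!$ restrict along the Yoneda embedding of $I$ to $x_\bullet$: for $(x_\bullet)_!$ this is its defining property, and for $f^*(\tilde x_\bullet)_!\circ\beta$ it follows by combining the Yoneda-compatibility of $\beta$ with $(\tilde x_\bullet)_!\circ y\simeq\tilde x_\bullet$ and the fact that $\tilde x_\bullet$ is the $f_\#\dashv f^*$-transpose of $x_\bullet$ (naturality of the adjunction). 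Hence \Cref{thm: UPVPsh} forces $f^*(\tilde x_\bullet)_!\circ\beta\simeq (x_\bullet)_!$; evaluating at $W$, and using that $f^*$ leaves underlying functors unchanged so that $(\tilde x_\bullet)_!(\beta(W))$ may be computed either in $\M$ or in $f^*\M$, gives $\colim_{f_\# I}^{f_! W}\tilde x_\bullet = (\tilde x_\bullet)_!(f_! W) = (\tilde x_\bullet)_!(\beta(W))\simeq (x_\bullet)_!(W) = \colim_I^W x_\bullet$. I expect the genuinely fiddly part to be the second paragraph: constructing $\beta$, identifying $\beta(W)$ with $f_! W$, and checking Yoneda-compatibility all require carefully tracking the opposites, the functor $f_\#$, and the transpose along $f_\#\dashv f^*$; once those coherences are pinned down, the rest is formal.
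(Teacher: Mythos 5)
Your argument is correct, but it takes the alternative route that the paper only gestures at in the remark following its proof (``One can also give a proof at the level of categories that the resulting functors are $\V$-naturally equivalent''). The paper's actual proof is a mapping-space computation: it unwinds $\Map_\M(\colim_I^W x_\bullet, y)$ through a string of adjunction equivalences — first using $\hom_\M^\V \simeq f^R\hom_\M^\W$, then the $f_!\dashv f^*$-type transposition on weights, and finally the universal property of the $\W$-weighted colimit — arriving at $\Map_\M(\colim_{f_\# I}^{f_!W}\tilde x_\bullet,y)$ naturally in $y$, and concludes by Yoneda. That version is shorter on the page and keeps all the bookkeeping inside hom-objects, at the cost of only directly producing an equivalence of objects for each fixed diagram.

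What your proof buys instead is an honest identification of the two colimit \emph{functors} $\PP_\V(I)\to f^*\M$, via a base-change functor $\beta:\PP_\V(I)\to f^*\PP_\W(f_\# I)$ and two applications of \Cref{thm: UPVPsh}; the object-level statement then falls out by evaluating at $W$. This is more structural — it packages the comparison as a single commuting triangle of $\V$-linear colimit-preserving functors — but, as you correctly flag, it shifts the real work to verifying that $\beta$ is well-defined as a map of $\V$-modules, that $\beta(W)=f_!W$, and that $\beta$ commutes with Yoneda embeddings; each of these amounts to a careful check of the compatibilities of the $f_\#\dashv f^*$ adjunction with opposites and with the lax structure on $f^R$. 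Both approaches are fine; the paper's choice simply front-loads the adjunction unwinding into mapping spaces where the verifications are more mechanical.
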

\begin{proof}
 We have the following natural equivalences, natural in $y\in\M$: $$\Map_\M(\colim_I^W x, y)\simeq \Map_{\Fun_\V(I\op, \V)}(W,\hom_\M^\V(x_\bullet,y)) \simeq \Map_{\Fun_\V(I\op, \V)}(W,f^R\hom_\M^\W(x_\bullet,y))$$
$$\simeq \Map_{\Fun_\V(I\op, \W)}(fW,\hom_\M^\W(x_\bullet,y)) \simeq \Map_{\Fun_\W(f_\#I\op, \W)}(f_!W,\hom_\M^\W(\tilde x_\bullet,y))  \simeq  \Map_\M(\colim_{f_\# I}^{f_!W}\tilde x_\bullet ,y)$$
so the claim follows from the Yoneda lemma.
\end{proof}
One can also give a proof at the level of categories that the resulting functors are $\V$-naturally equivalent. 
\section{Atomic maps}\label{section:atomic}
In this section, we introduce and study $\V$-\emph{atomic maps}. Just as $\V$-atomic objects are the $\V$-linear analogue of compact objects, \cite[Definition 1.22, Example 1.24]{maindbl}, $\V$-atomic maps will be the $\V$-linear analogue of strongly compact maps as introduced in \cite[Definition 2.5]{maindbl}. 

In particular, when $\V=\Sp$, we will see that $\Sp$-atomic maps are closely related to compact maps (see \Cref{cor:atimpcomp} for a precise statement). 

\begin{rmk}
    It could make sense to have introduced atomic maps in \cite{maindbl}, and in fact, an earlier version of \cite{maindbl} contained the contents of the present article. However, I do not know precise applications of atomic maps except for the ones we describe here in relation to (local) rigidity. It only seems reasonable to expect further applications, but in the current situation, this presentation seems to make more sense. 

     I hope that the ideas developed here can be helpful elsewhere too. 
\end{rmk}
 Throughout this section, we fix $\V\in\CAlg(\PrL)$. We will have to care slightly about set theory, so for simplicity of statements we define: 
\begin{defn}
    A regular cardinal $\kappa$ is called \emph{good} if it is uncountable and $\V\in\CAlg(\PrL_\kappa)$. 

    If $\M$ is a presentable $\V$-module, a cardinal $\kappa$ is called \emph{$\M$-good} if it is good and $\M\in~\Mod_\V(\PrL_\kappa)$. 
\end{defn}

\begin{prop}
    Let $\M,\N$ be stable presentable categories. The inclusion $\Fun^L(\M,\N)\to~\Fun^{ex}(\M,\N)$ admits a right adjoint. 

    More generally, let $\M,\N$ be $\V$-modules in $\PrL$. The inclusion $\Fun^L_\V(\M,\N)\to \Fun_{\V}(\M,\N)$ admits a right adjoint. 
\end{prop}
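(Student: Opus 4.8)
The plan is to reduce everything to the universal property of $\V$-enriched presheaves, i.e. to \Cref{thm: UPVPsh}, and thereby produce the right adjoint by an adjoint functor theorem argument together with an explicit colimit presentation of the source and target. The first move is to note that the second statement contains the first: the stable case is the special case $\V = \Sp$, since $\Sp$-enriched functors between stable presentable categories are exactly exact functors, and $\Sp$-linear colimit-preserving functors are exactly colimit-preserving functors (all stable presentable categories are canonically $\Sp$-modules). So I would only prove the general statement.

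For the general statement, I would argue as follows. First, $\Fun_\V(\M,\N)$ is presentable (it is a presentable $\V$-enriched functor category — this is in \cite{heine} / recalled in \cite{maindbl}), and $\Fun^L_\V(\M,\N)$ is a full subcategory closed under all colimits (a colimit of $\V$-linear left adjoints computed in $\Fun_\V(\M,\N)$ is again $\V$-linear and colimit-preserving, since colimits in the functor category are pointwise and colimits commute with colimits and with the $\V$-action). A full subcategory of a presentable category closed under colimits and accessible admits a right adjoint to the inclusion by the adjoint functor theorem, so the remaining point is accessibility of $\Fun^L_\V(\M,\N)$, equivalently that it is presentable. Here I would use that $\M$, being a presentable $\V$-module, is a $\V$-linear localization of $\PP_\V(\M_0)$ for a small $\V$-category $\M_0$ (take $\M_0$ a set of $\kappa$-atomic generators, or more crudely the full subcategory on a generating set). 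Restriction along $\M_0 \to \PP_\V(\M_0) \to \M$ gives, by \Cref{thm: UPVPsh}, an equivalence $\Fun^L_\V(\PP_\V(\M_0),\N) \simeq \Fun_\V(\M_0,\N)$, and $\Fun^L_\V(\M,\N)$ is identified with the full subcategory of $\Fun_\V(\M_0,\N)$ of those $\V$-functors inverting the morphisms of $\PP_\V(\M_0)$ that $\M$ inverts — a full subcategory cut out by a (small) set of local-object conditions inside a presentable category, hence presentable. This gives accessibility.

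The one genuinely delicate point — the step I expect to be the main obstacle — is checking that $\Fun^L_\V(\M,\N) \subset \Fun_\V(\M,\N)$ is closed under the colimits computed in the ambient category, and in particular that this inclusion is fully faithful and preserves colimits rather than just being a functor between two presentable categories. Fully faithfulness is clear since $\Fun^L_\V(\M,\N)$ is by definition a full subcategory. For colimit-closure: a colimit $F = \colim_i F_i$ in $\Fun_\V(\M,\N)$ is computed pointwise, $F(m) = \colim_i F_i(m)$; each $F_i$ preserves $\V$-weighted colimits (by \Cref{cor:Vliniffweight}, being $\V$-linear and colimit-preserving is the same as preserving weighted colimits), and weighted colimits commute with the colimit over $i$ by the observation that weighted colimits preserve colimits in the diagram variable, so $F$ preserves weighted colimits and is thus $\V$-linear and colimit-preserving again by \Cref{cor:Vliniffweight}. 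Once closure is in hand, the inclusion preserves colimits, both categories are presentable, and the adjoint functor theorem (in its presentable form) yields the right adjoint. I would spell out the stable case first as a sanity check, then indicate that the general case is formally identical with ``exact'' replaced by ``$\V$-enriched'' and ``colimit-preserving'' by ``$\V$-linear''.
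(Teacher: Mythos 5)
The central gap is the assertion that $\Fun_\V(\M,\N)$ is presentable. The results in \cite{heine}/\cite{maindbl} about presentable enriched functor categories require the \emph{source} $\V$-category to be small, whereas here $\M$ is the underlying $\V$-category of a presentable $\V$-module, hence large. For a large source, $\Fun_\V(\M,\N)$ (e.g.\ $\Fun^{\mathrm{ex}}(\M,\N)$ in the stable case) has no reason to be presentable, or even locally small. Your invocation of the presentable adjoint functor theorem for the inclusion $\Fun^L_\V(\M,\N)\subset\Fun_\V(\M,\N)$ therefore does not apply as stated. This is precisely what the paper's proof is designed to sidestep: it factors the inclusion as
$$\Fun^L_\V(\M,\N)\ \subset\ \Fun^{\kappa\text{-}\mathrm{filt}}_\V(\M,\N)\ \subset\ \Fun_\V(\M,\N),$$
where $\kappa$ is an $\M$-good cardinal. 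The second inclusion has an \emph{explicit} right adjoint (restrict to $\M^\kappa$, then left Kan extend) that requires no presentability of the ambient category, and the first inclusion is a colimit-preserving functor between two \emph{presentable} categories (the middle one being $\Fun_\V(\M^\kappa,\N)\simeq\Fun^L_\V(\PP_\V(\M^\kappa),\N)$, where $\M^\kappa$ is small), to which the presentable adjoint functor theorem does apply.

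The rest of your outline is sound. The reduction of the stable case to $\V=\Sp$ matches the paper (which cites \cite[Theorem 1.11]{heine} for $\Fun_\Sp(\M,\N)\simeq\Fun^{\mathrm{ex}}(\M,\N)$), the colimit-closure argument via \Cref{cor:Vliniffweight} is correct, and your argument that $\Fun^L_\V(\M,\N)$ is presentable (by realizing $\M$ as an accessible $\V$-linear localization of $\PP_\V(\M_0)$ and identifying $\Fun^L_\V(\M,\N)$ with a localization of $\Fun_\V(\M_0,\N)$) gives essentially the same presentability fact the paper uses for the middle term. To repair your proof, replace the final step: instead of applying the AFT to the inclusion into $\Fun_\V(\M,\N)$, interpose $\Fun^{\kappa\text{-}\mathrm{filt}}_\V(\M,\N)$ and handle the two pieces separately as above.
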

\begin{proof}
    In the stable case, by \cite[Theorem 1.11]{heine}, $\Fun_{\Sp}(\M,\N)\simeq \Fun^{ex}(\M,\N)$ so it suffices to prove the latter claim. 

    Now if $\kappa$ is an $\M$-good cardinal (which can always be arranged up to picking a suitable $\kappa$), we have $\Fun^L_\V(\M,\N)\subset \Fun^{\kappa-filt}_{\V}(\M,\N)$ and $\Fun^{\kappa-filt}_{\V}(\M,\N)\simeq \Fun_{\V}(\M^\kappa,\N)$ under restriction/left Kan extension. 

    In particular, the inclusion $\Fun^{\kappa-filt}_{\V}(\M,\N)\subset \Fun_{\V}(\M,\N)$ does admit a right adjoint, given by restriction to $\M^\kappa$ and left Kan extension, so it suffices to prove that the inclusion $\Fun^L_\V(\M,\N)\subset \Fun^{\kappa-filt}_{\V}(\M,\N)$ admits a right adjoint. But this inclusion clearly preserves colimits, and both sides are presentable, so it admits a right adjoint. Note that $\Fun^{\kappa-filt}_\V(\M,\N)\simeq\Fun_\V(\M^\kappa,\N)\simeq \Fun^L_\V(\PP_\V(\M^\kappa),\N)$ is indeed presentable. 
\end{proof}

\begin{defn}
    Let $\M$ be a $\V$-module, and $x\in\M$. We let $\at_\M^\V(x,-)$, or simply $\at_\M(x,-)$ if $\V$ is clear from context, denote the image of $\hom_\M(x,-) : \M\to \V$ under the right adjoint to the inclusion $\Fun^L_\V(\M,\V)\subset \Fun_{\V}(\M,\V)$; i.e. $\at_\M^\V(x,-)\to \hom_\M(x,-)$ is the terminal $\V$-linear, colimit preserving functor with a lax $\V$-linear map to $\hom(x,-)$.  

    We say a morphism $f:x\to y$ in $\M$ is $\V$-atomic, or atomic if $\V$ is understood, if the classifying morphism $\one_\V\to \hom_\M(x,y)$ admits a lift to $\at_\M(x,y)$ along the canonical map $\at_\M(x,y)\to \hom_\M(x,y)$. 
\end{defn}
\begin{rmk}\label{rmk:atomicstrongcompact}
    Equivalently, a map $f:x\to y$ is $\V$-atomic if and only if there exists a $\V$-linear, colimit preserving functor $h$ with a map $h\to\hom(x,-)$ such that $f $ lifts to $h(y)$. In this sense, $\V$-atomic maps are analogues of strongly compact maps. In particular, for $\V=\Sp$, they are exactly the same as strongly compact maps in the sense of \cite[Definition 2.5]{maindbl}.
\end{rmk}
\begin{cor}\label{cor:atimpcomp}
    Let $\V=\Sp$, and $\M\in\PrL_{\st}$. A map $f:x\to y$ in $\M$ is $\Sp$-atomic if and only if it is strongly compact. If $\M$ is dualizable, then any weakly compact map is $\Sp$-atomic. 
\end{cor}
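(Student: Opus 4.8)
The plan is to unwind the definitions on both sides and match them. For the first equivalence, note that by \Cref{rmk:atomicstrongcompact}, $f:x\to y$ is $\Sp$-atomic precisely when there is a colimit-preserving exact functor $h:\M\to\Sp$ together with a natural transformation $h\to\map(x,-)$ such that $f$ lifts to $\pi_0 h(y)$ (using $\one_{\Sp}=\Sph$, maps out of the sphere are just $\pi_0$). This is word-for-word the definition of a strongly compact map from \cite[Definition 2.5]{maindbl} once one recalls that over $\V=\Sp$ enriched homs are mapping spectra and $\Sp$-linear colimit-preserving functors are exactly colimit-preserving exact functors (\cite[Theorem 1.11]{heine}, as used in the proposition above). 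So the first claim is essentially a matter of citing \Cref{rmk:atomicstrongcompact} and observing that the two definitions literally coincide; I would spell out the translation dictionary ($\hom_{\M}\rightsquigarrow\map$, $\Fun^L_{\Sp}\rightsquigarrow\Fun^L$, lift along $\one_{\Sp}\rightsquigarrow$ class in $\pi_0$) and be done.

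For the second claim, assume $\M$ is dualizable and let $f:x\to y$ be weakly compact. Recall (from \cite{maindbl}) that a weakly compact map is one that factors through the canonical map $\M^{\kappa}\text{-ish}$ colimit comparison — more precisely, $f$ factors as $x\to z\to y$ where $z\to y$ is such that $\map(z,-)$ is "continuous" in the relevant sense, or equivalently $f$ lies in the image of the comparison map associated to dualizability. The route I would take: for $\M$ dualizable the counit $\M\otimes\M^{\vee}\to\Sp$-type evaluation, equivalently the fact that $\id_{\M}$ is a filtered colimit of functors of the form $\colim_{\alpha}\, a_\alpha\otimes \map(b_\alpha,-)$ (the "dualizable $=$ compactly exhaustible" picture of \cite[Theorem 2.39]{maindbl}), lets one produce, for any weakly compact $f$, a single colimit-preserving exact functor $h$ dominating $\map(x,-)$ through which $f$'s classifying map factors. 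Concretely, weak compactness of $f$ should say exactly that $f$ is detected at a finite stage of such an exhaustion, and each finite stage is a colimit-preserving exact functor mapping to $\map(x,-)$; take $h$ to be that stage. Then the lifting criterion of \Cref{rmk:atomicstrongcompact} is met, so $f$ is $\Sp$-atomic.

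The main obstacle I anticipate is the second part: making precise the relationship between "weakly compact" and the atomic exhaustion/dualizability picture, i.e. producing the witnessing functor $h$ from weak compactness alone without circularity. The first part is purely formal. I expect the cleanest argument for the second part is to invoke \Cref{thm:dblexhaustionV} (\Cref{thmB}) directly: dualizability of $\M$ gives that $\M$ is generated under sequential colimits along atomically presentable morphisms, hence every object — in particular $y$, or rather the relevant comparison — receives an exhaustion by atomically presentable maps, and a weakly compact map, being "finitary", must factor through one such stage; atomic presentability of that stage then feeds into the lifting criterion. The delicate point is checking that "weakly compact" is exactly the finiteness condition that guarantees such a factorization, which is where I would need to lean on the precise definition in \cite{maindbl} and possibly on the characterization of weak compactness via the $\Calk$-type sequence. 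If that matching is not immediate, a fallback is to reduce to the compactly generated case by writing $\M$ as a retract of a compactly generated category (dualizable $=$ retract of compactly generated in $\PrL_{\st}$) and transporting the statement along the retraction, using that both "weakly compact" and "$\Sp$-atomic" are stable under the relevant functors.
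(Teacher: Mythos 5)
For the first half your account matches the paper exactly: the statement is just a restatement of \Cref{rmk:atomicstrongcompact} together with the observation that over $\Sp$, enriched homs are mapping spectra and $\Sp$-linear colimit-preserving functors are exact left adjoints. That part is fine.

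For the second half there is a genuine gap. The paper's proof is a single citation: the first equivalence reduces the claim to showing that in a dualizable stable category, weakly compact maps are strongly compact, and this is precisely \cite[Example 2.20]{maindbl}. You do not have that reference available and instead offer several hedged alternatives, none of which you carry through. The route via \Cref{thm:dblexhaustionV} is particularly suspect: that theorem characterizes dualizability in terms of \emph{atomic presentability}, which is not the same as (weak) compactness of a given map, and the bridge you would need — that a weakly compact map ``is detected at a finite stage'' of an atomic exhaustion — is exactly the content you are trying to prove, so you risk circularity. Your fallback (express $\M$ as a retract of a compactly generated category and transport the statement) could in principle work, but you would then have to check that weakly compact maps and $\Sp$-atomic maps behave correctly under both the inclusion and the retraction, and this is not checked. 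In short: the first equivalence is correct and is the paper's argument; the implication ``weakly compact $\Rightarrow$ strongly compact for dualizable $\M$'' is cited from \cite{maindbl} in the paper, and your proposed reconstructions of it are incomplete as written.
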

\begin{proof}
    A priori, strongly compact morphisms are defined in terms of filtered colimit preserving functors $f:\M\to \Ss$ and maps $f\to \Map(x,-)$, while $\Sp$-atomic morphisms are defined in terms of filtered colimit preserving, exact functors $g:\M\to \Sp$ and maps $g\to \map(x,-)$. Thus, it is clear that a $\Sp$-atomic map is strongly compact, since $\Omega^\infty$ preserves filtered colimits and sends $\map(x,-)$ to $\Map(x,-)$. 

    On the other hand, $\Map(x,-):\M\to\Ss$ is a $1$-excisive reduced functor, so if $f:\M\to \Ss$ is filtered colimit preserving and has a map $f\to \Map(x,-)$, then taking an excisive approximation of $f$ and using the fact that exact functors $\M\to\Sp$ are equivalent to excisive reduced functors $\M\to \Ss$, we find that $f\to \Map(x,-)$ factors through a map obtained as $\Omega^\infty$ of a map $g\to \map(x,-)$, where $g:\M\to \Sp$ is filtered colimit preserving\footnote{This uses that excisive approximations of filtered colimit-preserving functors preserve filtered colimits. } and exact. Thus, a strongly compact map is $\Sp$-atomic. 
\end{proof}
We were not able to answer the following question:
\begin{ques}\label{ques:cpctvsSpat}
If $\M$ is not dualizable, is every weakly compact map strongly compact, i.e. $\Sp$-atomic ?
\end{ques}
\begin{proof}

    The first half of the claim is \Cref{rmk:atomicstrongcompact}. The second half follows from the first together with \cite[Example 2.20]{maindbl}.
\end{proof}
\begin{obs}\label{obs:atimpliescpct}
  If the unit $\one_\V$ is $\kappa$-compact, then any $\V$-atomic map is a (strongly) $\kappa$-compact map. 
\end{obs}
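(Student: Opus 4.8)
The plan is to convert the data witnessing $\V$-atomicity of a map $f\colon x\to y$ into an honest $\kappa$-filtered-colimit-preserving functor to spaces through which $f$ factors, simply by applying $\Map_\V(\one_\V,-)$ to the functor $\at_\M^\V(x,-)$. The point of the hypothesis "$\one_\V$ is $\kappa$-compact" is exactly that $\Map_\V(\one_\V,-)\colon\V\to\Ss$ then preserves $\kappa$-filtered colimits, which is what lets us pass from a statement about the enriching base $\V$ to a statement about mapping spaces.

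Concretely, I would set $g:=\Map_\V(\one_\V,\at_\M^\V(x,-))\colon\M\to\Ss$. This preserves $\kappa$-filtered colimits: the functor $\at_\M^\V(x,-)$ is $\V$-linear and colimit-preserving by construction, hence in particular preserves $\kappa$-filtered colimits, and $\Map_\V(\one_\V,-)$ preserves $\kappa$-filtered colimits precisely because $\one_\V\in\V^\kappa$; a composite of $\kappa$-filtered-colimit-preserving functors is again one. Next I would equip $g$ with a natural transformation to $\Map_\M(x,-)$: applying $\Map_\V(\one_\V,-)$ to the canonical lax-$\V$-linear map $\at_\M^\V(x,-)\to\hom_\M(x,-)$ yields $g\to\Map_\V(\one_\V,\hom_\M(x,-))$, and one invokes the standard identification $\Map_\V(\one_\V,\hom_\M(x,m))\simeq\Map_\M(x,m)$, natural in $m\in\M$ — this is the defining relation between the $\V$-enriched hom objects of $\M$ and the mapping spaces of its underlying $\infty$-category (\cite{heine}, cf. \cite{maindbl}).

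Finally, $\V$-atomicity of $f$ says exactly that the classifying map $\one_\V\to\hom_\M(x,y)$ lifts to a map $\one_\V\to\at_\M^\V(x,y)$ in $\V$. Such a lift is a point of $g(y)=\Map_\V(\one_\V,\at_\M^\V(x,y))$, and its image under $g(y)\to\Map_\V(\one_\V,\hom_\M(x,y))\simeq\Map_\M(x,y)$ is the class of $f$. Thus $f$ factors through $g(y)$, and $g$ is a $\kappa$-filtered-colimit-preserving functor $\M\to\Ss$ with a natural transformation to $\Map_\M(x,-)$ — which is precisely the definition of a (strongly) $\kappa$-compact morphism from \cite{maindbl}. (If one prefers not to use $\at_\M^\V(x,-)$ directly, the same argument runs verbatim starting from any $\V$-linear colimit-preserving $h$ with $h\to\hom_\M(x,-)$ and a lift of $f$ to $h(y)$, as supplied by \Cref{rmk:atomicstrongcompact}.)

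I do not expect a genuine obstacle here — this is why it is labelled an observation. The only points deserving care are bookkeeping ones: recording that "$\Map_\V(\one_\V,-)$ preserves $\kappa$-filtered colimits" is literally the hypothesis $\one_\V\in\V^\kappa$, and citing rather than reproving the compatibility $\Map_\V(\one_\V,\hom_\M(x,-))\simeq\Map_\M(x,-)$ of enriched homs with underlying mapping spaces, together with matching this up against whichever precise form of "(strongly) $\kappa$-compact morphism" is in force.
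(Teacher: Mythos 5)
Your proof is correct and is exactly the argument the label ``Observation'' is inviting: pass from the $\V$-level witness $\one_\V\to\at_\M^\V(x,y)$ to a space-level witness by applying the corepresentable functor $\Map_\V(\one_\V,-)$, using that $\at_\M^\V(x,-)$ preserves all colimits and that $\kappa$-compactness of $\one_\V$ is precisely the statement that $\Map_\V(\one_\V,-)$ preserves $\kappa$-filtered colimits, and then invoking the enrichment identity $\Map_\V(\one_\V,\hom_\M(x,-))\simeq\Map_\M(x,-)$ to land on $\Map_\M(x,-)$. The paper gives no proof for this observation, and your argument is the natural one that the author surely has in mind; the remark about running the same argument from any $\V$-linear colimit-preserving $h$ as in \Cref{rmk:atomicstrongcompact} is a sound alternative phrasing.
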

\begin{ex}\label{ex:VatV}
    If $\M=\V$, the right adjoint $\Fun_{\V}(\V,\V)\to \Fun^L_\V(\V,\V)\simeq \V$ has an explicit description as $f\mapsto f(\one_\V)\otimes -$. This can be proved using an explicit unit and co-unit. 

    As a consequence, $\at_\V(x,-)\simeq \hom_\V(x,\one_\V)\otimes -$, and so $\V$-atomic maps in $\V$ are the same as \emph{trace-class maps} (see \Cref{defn:trcl} and \Cref{rmk:trcl}). 

    This is analogous to the claim that $\V$-atomic objects in $\V$ are exactly the dualizable objects, cf.\cite[Example 1.23]{maindbl}.
\end{ex}
We can describe $\at_\M(x,-)$ a bit more explicitly when $\M$ is dualizable over $\V$ - in fact, we can more generally describe the $\V$-linear colimit-preserving approximation of any $\V$-functor. To do this, we first recall the following:
\begin{thm}[{\cite[Theorem 1.49]{maindbl}}]
    Let $\M$ be a $\V$-module and $\kappa$ an $\M$-good cardinal. $\M$ is dualizable over $\V$ if and only if the $\V$-linear functor $p:\PP_\V(\M^\kappa)\to \M$ extended from the inclusion $i:\M^\kappa\to \M$ admits a $\V$-linear left adjoint $\hat y: \M\to \PP_V(\M^\kappa)$. 
\end{thm}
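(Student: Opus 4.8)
The plan is to prove both implications by exhibiting $\PP_\V(\M^\kappa)$ as a concrete model for a "free dualizable cocompletion" and tracking when the counit $p$ has a left adjoint. First I would recall that, since $\kappa$ is $\M$-good, $\M\in\Mod_\V(\PrL_\kappa)$, so $\M\simeq\Ind_\kappa(\M^\kappa)$ as a plain presentable category, and $\M^\kappa$ is a small idempotent-complete $\V$-category generating $\M$ under $\kappa$-filtered colimits. By \Cref{thm: UPVPsh} the inclusion $i\colon\M^\kappa\hookrightarrow\M$ extends to a unique $\V$-linear colimit-preserving $p\colon\PP_\V(\M^\kappa)\to\M$, and $p$ is a Bousfield localization (it is essentially surjective with fully faithful right adjoint $p^R = \hom_\M(i(-),-)$, the restricted-Yoneda functor). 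The heart of the matter is to identify $\PP_\V(\M^\kappa)$ with $\M\otimes_\V(\text{something dualizable})$, or rather to recognize that $p$ having a $\V$-linear left adjoint is exactly the statement that $\M$ is a retract, in $\Mod_\V(\PrL)$, of the dualizable object $\PP_\V(\M^\kappa)$ — and retracts of dualizables are dualizable.

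For the "only if" direction: assume $\M$ is dualizable over $\V$. Here I would use the companion paper's characterizations of dualizability (the internal-hom/evaluation criterion, or equivalently that $\M$ is a retract of a compactly generated, i.e. presheaf, $\V$-module). Concretely, dualizability of $\M$ means the evaluation $\M^\vee\otimes_\V\M\to\V$ and coevaluation $\V\to\M\otimes_\V\M^\vee$ satisfy the triangle identities; feeding the coevaluation through the presentation $\PP_\V(\M^\kappa)\to\M$ produces, after some diagram chasing, a $\V$-linear colimit-preserving section-up-to-homotopy of $p$, which is the desired left adjoint $\hat y$. Alternatively — and this is probably cleaner — one shows directly that for dualizable $\M$ the restricted-Yoneda right adjoint $p^R$ itself preserves colimits (this is where dualizability is used: $p^R \simeq \hom_\M(i(-),-)$ and dualizability lets one rewrite $\hom_\M(m,-)$ in terms of the evaluation pairing, which is colimit-preserving in the relevant variable); a colimit-preserving right adjoint has a left adjoint, which here lands in $\PP_\V(\M^\kappa)$ and is automatically $\V$-linear since it is left adjoint to a $\V$-linear functor.

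For the "if" direction: suppose $p$ admits a $\V$-linear left adjoint $\hat y$. Then $\hat y$ exhibits $\M$ as a retract of $\PP_\V(\M^\kappa)$ in $\Mod_\V(\PrL)$: indeed $p\circ\hat y\simeq\id_\M$ because $\hat y$ is fully faithful (a left adjoint to a localization is fully faithful exactly when... — here one uses that $p$ is already a localization, so $p\hat y\simeq\id$). Now $\PP_\V(\M^\kappa)\simeq\PP_\V(\one_\V\text{-linear cocompletion of }\M^\kappa)$ is dualizable over $\V$: presheaf $\V$-modules on small $\V$-categories are the prototypical dualizable objects (their dual is presheaves on the opposite $\V$-category). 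Since $\Mod_\V(\PrL)$ is a symmetric monoidal category in which the dualizable objects are closed under retracts, $\M$ is dualizable over $\V$. The main obstacle, as I see it, is the "only if" direction: one must genuinely use the dualizability hypothesis to produce the left adjoint, and the slick route requires knowing that $\PP_\V(\M^\kappa)$ is not just dualizable but that its canonical comparison to $\M$ is the universal dualizable approximation — so I would expect to lean on \cite[Corollary 1.59]{maindbl} or the surrounding results identifying dualizable $\V$-modules with retracts of presheaf categories, rather than reproving it by hand.
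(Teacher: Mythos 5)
The paper does not prove this statement; it is quoted verbatim from the companion paper \cite{maindbl} and used as a black box. So there is no proof here to compare against, and the review below addresses only the internal soundness of your proposal.

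Your ``if'' direction is essentially correct. The points you need are: $\PP_\V(\M^\kappa)$ is dualizable over $\V$ (it is a presheaf $\V$-module, dual to $\PP_\V((\M^\kappa)\op)$); a $\V$-linear left adjoint $\hat y$ sits in an adjoint triple $\hat y \dashv p \dashv y$ with $y$ fully faithful, which forces $\hat y$ to be fully faithful as well, whence $p\hat y\simeq\id_\M$; thus $\M$ is a retract of $\PP_\V(\M^\kappa)$ in $\Mod_\V(\PrL)$, and dualizable objects of a symmetric monoidal category are closed under retracts. Your parenthetical uncertainty about why $\hat y$ is fully faithful is resolved exactly by the adjoint-triple observation.

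Your ``only if'' direction has a genuine error. You claim that for dualizable $\M$ ``the restricted-Yoneda right adjoint $p^R$ itself preserves colimits,'' and that this gives the left adjoint of $p$. Both halves are wrong. First, $p^R\simeq y$, with $y(m)=\hom_\M(i(-),m)$, preserves $\kappa$-filtered colimits (because objects of $\M^\kappa$ are $\kappa$-compact) and finite colimits in the stable setting, but not general colimits — this already fails when $\M$ is compactly generated. For instance, with $\M=\Sp$ and $\kappa=\omega_1$, the object $\prod_\NN\Sph$ is $\omega_1$-compact but not $\omega$-compact, so $y$ does not preserve countable sequential colimits. Dualizability of $\M$ makes this no better. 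Second, even if $y$ did preserve colimits, that would produce a right adjoint to $y$, i.e.\ a further right adjoint in the triple $p\dashv y\dashv(?)$; it does not produce a left adjoint of $p$. The left adjoint $\hat y$ is a genuinely different functor from $y$ — they coincide precisely when $p$ is an equivalence, i.e.\ when $\M\simeq\PP_\V(\M^\kappa)$. Your alternative sketch via the coevaluation is closer in spirit (the right idea is to express $\M$ as a $\V$-linear retract of a presheaf category and then transport a candidate $\hat y$ into $\PP_\V(\M^\kappa)$), but as written it produces at best a colimit-preserving section of $p$, and a section is not automatically the left adjoint: one must still verify the universal property that $\hom(\hat y(m),F)\simeq\hom(m,p(F))$ for all $F$, which is where the real content lies.
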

Note also that if $\M$ is dualizable, \cite[Theorem 3.4]{maindbl} shows that any good cardinal is $\M$-good.

With this in hand, we can use the functor $\hat y$ to describe $\V$-linear colimit-preserving approximation to functors as follows: 
\begin{prop}\label{prop:descriptionofrightadj}
Let  $\kappa$ be a good cardinal, $\M$ be a dualizable $\V$-module with $\hat y : \M\to~\PP_\V(\M^\kappa)$ the left adjoint to the canonical functor $\PP_\V(\M^\kappa)\to \M$. 

    Let $\N\in\Mod_\V(\PrL)$, and let $R$ denote the right adjoint to $\Fun^L_\V(\M,\N)\to \Fun_{\V}(\M,\N)$. Fix $f : \M\to \N$ a lax $\V$-linear functor. Let $\N_0$ be any small full sub-$\V$-category of $\N$ such that $f(\M^\kappa)\subset \N_0$, and let $\tilde f$ denote the restriction-corestriction of $f$ to a $\V$-functor $\M^\kappa\to \N_0$, and $p:\PP_\V(\N_0)\to \N$ denote the induced $\V$-linear functor. 
    
    In this case, $$R(f)\simeq p\circ \PP_\V(\tilde f)\circ \hat y$$ and the counit map $R(f)\to f$ is given by the following composite : $$p\circ\PP_\V(\tilde f)\circ \hat y\to~p\circ \PP_\V(\tilde f)\circ y\simeq p\circ \tilde f\simeq f$$
\end{prop}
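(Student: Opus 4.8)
The plan is to verify the claimed formula $R(f)\simeq p\circ \PP_\V(\tilde f)\circ \hat y$ by exhibiting the right-hand side as the terminal $\V$-linear colimit-preserving functor receiving a lax $\V$-linear map from $f$, i.e.\ by checking the universal property that defines $R(f)$. The key structural input is \Cref{thm: UPVPsh} together with the dualizability criterion \cite[Theorem 1.49]{maindbl}: because $\M$ is dualizable, $\hat y$ is $\V$-linear and colimit-preserving, and $p$, $\PP_\V(\tilde f)$ are as well (the latter since it is extended from a $\V$-functor), so the composite $p\circ\PP_\V(\tilde f)\circ\hat y$ is genuinely an object of $\Fun^L_\V(\M,\N)$ — this is what makes the formula type-check in the first place.

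First I would construct the counit map. The unit $\id_\M\to p\circ\hat y$ of the adjunction $\hat y\dashv p$ composed with $\PP_\V(\tilde f)$ (and postcomposition/precomposition bookkeeping) is not quite the right thing; instead the natural comparison is between $\hat y$ and the Yoneda embedding $y:\M^\kappa\to\PP_\V(\M^\kappa)$ restricted appropriately. Concretely: on $\M^\kappa$ the functor $\hat y$ agrees with $y$ (up to the canonical map coming from the counit of left Kan extension), and $\PP_\V(\tilde f)\circ y \simeq y_{\N_0}\circ\tilde f$ by naturality of Yoneda under $\PP_\V(-)$, and $p\circ y_{\N_0}\simeq i_{\N_0}$ is the inclusion; stringing these together with the map $\hat y\to y$-on-the-nose on compacts and using that $f$ restricted to $\M^\kappa$ is $\tilde f$ composed with the inclusion gives the stated composite $p\circ\PP_\V(\tilde f)\circ\hat y\to p\circ\PP_\V(\tilde f)\circ y\simeq p\circ\tilde f\simeq f$. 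The point is that $\hat y\to y$ is a map of $\V$-functors $\M\to\PP_\V(\M^\kappa)$ — here $y$ means the functor $\M\to\PP_\V(\M^\kappa)$, $m\mapsto \hom_\M(i(-),m)$, not the genuine Yoneda embedding, which is exactly the lax-to-strict comparison furnished by \cite[Theorem 1.49]{maindbl}.

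Then I would verify the universal property: given any $g\in\Fun^L_\V(\M,\N)$, I must show precomposition with the counit induces an equivalence $\Map_{\Fun^L_\V(\M,\N)}(g, p\circ\PP_\V(\tilde f)\circ\hat y) \xrightarrow{\ \sim\ } \Map_{\Fun_\V(\M,\N)}(g, f)$. Using \Cref{thm: UPVPsh} to identify $\Fun^L_\V(\PP_\V(\M^\kappa),\N)\simeq\Fun_\V(\M^\kappa,\N)$, a $\V$-linear colimit-preserving functor out of $\M$ is the same as one out of $\PP_\V(\M^\kappa)$ that inverts the kernel of $p$ — equivalently, by the dualizability picture, $\Fun^L_\V(\M,\N)$ sits as a (co)localization inside $\Fun^L_\V(\PP_\V(\M^\kappa),\N)\simeq\Fun_\V(\M^\kappa,\N)$, with the (co)reflection given by precomposition with $\hat y$ on one side and restriction to $\M^\kappa$ on the other. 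Under these identifications the map $R(f)\to f$ is carried to the identity-up-to-equivalence comparison between $\tilde f$ (as an object of $\Fun_\V(\M^\kappa,\N_0)\to\Fun_\V(\M^\kappa,\N)$) and $f|_{\M^\kappa}$, which is literally an equivalence; so the universal property reduces to the defining adjunction $\Fun^L_\V(\M,\N)\rightleftarrows\Fun^{\kappa\text{-filt}}_\V(\M,\N)\simeq\Fun_\V(\M^\kappa,\N)$ together with \Cref{thm: UPVPsh}. The main obstacle I anticipate is bookkeeping the various Yoneda embeddings and the distinction between $\hat y$ and $y$ cleanly — in particular making sure the counit is the correct natural transformation and that it becomes an equivalence on $\M^\kappa$ after restriction, so that the comparison of mapping spaces is genuinely induced by an adjunction rather than just an abstract terminality argument; none of the individual steps is deep, but threading them so that everything is manifestly $\V$-linear and natural requires care.
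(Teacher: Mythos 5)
Your proof is correct and takes essentially the same approach as the paper: both decompose the right adjoint through the intermediate adjunction $\Fun^L_\V(\M,\N)\rightleftarrows\Fun^{\kappa\text{-filt}}_\V(\M,\N)\simeq\Fun_\V(\M^\kappa,\N)\simeq\Fun^L_\V(\PP_\V(\M^\kappa),\N)$ using \Cref{thm: UPVPsh}, and then use dualizability to identify the right adjoint to $p^*$ with $\hat y^*$. The paper phrases this as constructing $R$ explicitly as a composite of two right adjoints while you phrase it as verifying the universal property of the candidate formula, but the key identifications and the role of $\hat y$, $y$, and the Yoneda bookkeeping are the same.
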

\begin{rmk}
    In the case $\V=\Sp$, $\PP_\Sp$ is given by $\Ind$; in particular $\Ind(\N)$ still makes sense even as $\N$ is large. In this situation, the above description is simpler: it's simply $p\circ \Ind(f)\circ\hat y$, where $p:\Ind(\N)\to \N$ is the canonical functor. 
\end{rmk}
We note that this proposition is related/analogous to the one from \cite[Theorem 1.63]{maindbl}. 
\begin{proof}
First, we decompose the adjunction in two steps: the inclusion $\Fun^L_\V(\M,\N)\subset~\Fun_\V(\M,\N)$ factors through the subcategory $\Fun_\V^{\kappa-filt}(\M,\N)$ of $\kappa$-filtered colimit-preserving $\V$-enriched functors, and the right adjoint to the inclusion $$\Fun^{\kappa-filt}_\V(\M,\N)\subset~\Fun_\V(\M,\N)$$ 
is simply given by restriction to $\M^\kappa$ followed by left Kan extension - indeed, $\Fun^{\kappa-filt}_\V(\M,\N)\simeq~\Fun_\V(\M^\kappa,\N)$ along the restriction/left Kan extension adjunction. 

From this perspective, on a given $f$, we can realize this restriction-followed-by-left-Kan-extension as $p_\N\circ \PP_\V(\tilde f)\circ y$ - indeed this functor is $\kappa$-filtered colimit preserving, and agrees with $\tilde f$ on $\kappa$-compacts. 

Thus we are left with examining the adjunction $\Fun^L_\V(\M,\N)\rightleftarrows \Fun^{\kappa-filt}_\V(\M,\N)$. Using again the description of the latter in terms of $\M^\kappa$, and using \Cref{thm: UPVPsh}, we find that the inclusion is identified with restriction along $p:\PP_\V(\M^\kappa)\to \M$: $$\Fun^L_\V(\M,\N)\xrightarrow{p^*}\Fun^L_\V(\PP_\V(\M^\kappa),\N)$$

Since $\M$ is dualizable and $\kappa$-compactly generated, $p$ admits a left adjoint $\hat y$, and precomposition with $p$ admits precomposition with $\hat y$ as a right adjoint. 

Unwinding the constructions, we find that $R(f)\simeq p\circ \PP_\V(\tilde f)\circ \hat y$, and that the co-unit is as described.
\end{proof}
As a corollary, we obtain the promised description of atomic maps in the case where $\M$ is dualizable:
\begin{cor}\label{cor:descofat}
     Let $\kappa$ be a good cardinal, $\M$ be a dualizable $\V$-module, with $\hat y:~\M\to~\PP_\V(\M^\kappa)$ the left adjoint to the canonical functor $p:\PP_\V(\M^\kappa)\to \M$, and let $y$ be the right adjoint to this canonical functor. 

     For any $x\in \M^\kappa$, we have $$\at_\M(x,-)\simeq \hom_{\PP_\V(\M^\kappa)}(y(x),\hat y(-))$$
     and the map $\at_\M(x)\to \hom(x,-)$ is given by applying $p$ and using $p\circ y \simeq p\circ \hat y\simeq\id_\M$. 
\end{cor}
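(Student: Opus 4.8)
The plan is to deduce this directly from \Cref{prop:descriptionofrightadj} by specializing to the lax $\V$-linear functor $f = \hom_\M(x,-) : \M \to \V$, with $\N = \V$. For the target small subcategory $\N_0 \subset \V$, I would take any small full sub-$\V$-category containing the image $\hom_\M(x,\M^\kappa) \subset \V$; since $x$ is $\kappa$-compact and there are only essentially $\kappa$-many objects in $\M^\kappa$, such a small $\N_0$ exists. Then \Cref{prop:descriptionofrightadj} gives $\at_\M(x,-) = R(f) \simeq p_\N \circ \PP_\V(\tilde f) \circ \hat y$, where $p_\N : \PP_\V(\N_0) \to \V$ is the canonical colimit-preserving $\V$-linear functor and $\tilde f : \M^\kappa \to \N_0$ is the corestriction of $\hom_\M(x,-)$.

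The remaining work is to identify this composite with $\hom_{\PP_\V(\M^\kappa)}(y(x), \hat y(-))$. The key observation is that $\tilde f = \hom_\M(x,-)|_{\M^\kappa}$ is, up to the inclusion $\N_0 \hookrightarrow \V$, precisely the functor $\M^\kappa \to \V$ corepresented by restricting along the Yoneda embedding: under $\Fun_\V(\M^\kappa, \V) \simeq \PP_\V((\M^\kappa)\op)$ (or more precisely using that $y(x) \in \PP_\V(\M^\kappa)$ is the image of $x$ under the right adjoint $y$), one has that $p_\N \circ \PP_\V(\tilde f)$ is the $\V$-linear extension of the functor $\M^\kappa \to \V$, $z \mapsto \hom_\M(x, i(z)) = \hom_{\PP_\V(\M^\kappa)}(y(x), j(z))$, where $j$ is the Yoneda embedding and $i : \M^\kappa \hookrightarrow \M$; here I use the defining adjunction $p \dashv y$ so that $\hom_\M(x, i(z)) \simeq \hom_\M(p(j(z)), x)$... wait — rather, $\hom_\M(x, i(z)) \simeq \hom_{\PP_\V(\M^\kappa)}(y(x), j(z))$ follows from $i = p \circ j$ being... no: the clean statement is that $y : \M \to \PP_\V(\M^\kappa)$ is right adjoint to $p$, so $\hom_\M(x, i(z)) = \hom_\M(x, p(j(z))) $ is not quite it either. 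The honest route: $i(z) = p(j(z))$ since $p$ extends $i$; so $\hom_\M(x, p(j(z)))$, and since $p$ is left adjoint to $y$, this is $\hom_{\PP_\V(\M^\kappa)}(y(x), j(z))$... that is backwards ($y$ on the wrong side). The correct manipulation uses instead that the $\V$-linear colimit-preserving functor $\PP_\V(\M^\kappa) \to \V$ extending $z \mapsto \hom_\M(x, i(z))$ is by \Cref{thm: UPVPsh} determined on representables, and it equals $\hom_{\PP_\V(\M^\kappa)}(y(x), -)$ because $\hom_{\PP_\V(\M^\kappa)}(y(x), j(z)) \simeq \hom_\M(p(j(z)), ?)$ — the point being $y(x)$ is characterized by $\hom_{\PP_\V(\M^\kappa)}(w, y(x)) \simeq \hom_\M(p(w), x)$, not what I want. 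So the genuinely correct identity to use is simply that $\hom_{\PP_\V(\M^\kappa)}(y(x), j(z))$ computes $\map_{\M^\kappa}$-style mapping objects, and one checks $j(z)$-component-wise that this agrees with $\hom_\M(x, i(z))$; this is a Yoneda-type computation which I would spell out using $y = $ right adjoint to $p$ combined with the projection formula for $\hat y$.

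So the steps, in order, are: (1) apply \Cref{prop:descriptionofrightadj} with $f = \hom_\M(x,-)$, $\N = \V$, choosing a suitable small $\N_0$; (2) identify the $\V$-linear colimit-preserving functor $p_\N \circ \PP_\V(\tilde f) : \PP_\V(\M^\kappa) \to \V$ with $\hom_{\PP_\V(\M^\kappa)}(y(x), -)$ by checking the two functors agree on representables via \Cref{thm: UPVPsh} and a Yoneda computation; (3) precompose with $\hat y$ to conclude $\at_\M(x,-) \simeq \hom_{\PP_\V(\M^\kappa)}(y(x), \hat y(-))$; (4) trace through the counit description in \Cref{prop:descriptionofrightadj} — namely $R(f) \to f$ being $p_\N \PP_\V(\tilde f) \hat y \to p_\N \PP_\V(\tilde f) y \simeq f$ — and observe that under the identification of step (2) this becomes precisely ``apply $p$ and use $p \circ y \simeq p \circ \hat y \simeq \id_\M$'', since $\hom_{\PP_\V(\M^\kappa)}(y(x), y(-))$ recovers $\hom_\M(x, p(y(-))) = \hom_\M(x,-)$.

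The main obstacle I expect is step (2): getting the variance and the adjunction $(p \dashv y)$, $(\hat y \dashv p)$ bookkeeping exactly right so that the corepresenting object comes out as $y(x)$ rather than, say, $\hat y(x)$ or something in $(\M^\kappa)\op$-presheaves. The conceptual content is small — it is a Yoneda lemma plus the universal property of $\PP_\V$ — but one must be careful that $\tilde f$ lands in $\N_0$, that the extension $p_\N \circ \PP_\V(\tilde f)$ is genuinely $\V$-linear and colimit-preserving (so that \Cref{thm: UPVPsh} applies and it is detected on representables), and that the identity $\hom_{\PP_\V(\M^\kappa)}(y(x), j(z)) \simeq \hom_\M(x, i(z))$ for $z \in \M^\kappa$ is justified; the latter follows because for $z$ a $\kappa$-compact object, $j(z)$ is, so to speak, ``also'' $\hat y(i(z))$ up to the subtleties that $\hat y$ and $y$ agree on $\kappa$-compacts (as $p \circ \hat y \simeq p \circ y \simeq \id$ and both land appropriately), reducing the claim to the defining adjunction $\hat y \dashv p$.
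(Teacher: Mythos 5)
Your overall structure is exactly the paper's: specialize \Cref{prop:descriptionofrightadj} to $f = \hom_\M(x,-)$ and $\N = \V$, identify $p_\N \circ \PP_\V(\tilde f)$ with $\hom_{\PP_\V(\M^\kappa)}(y(x),-)$ by noting both are $\V$-linear colimit-preserving and agree on $\M^\kappa$ (which uses $x\in\M^\kappa$ to know the latter is colimit-preserving), and then precompose with $\hat y$ and trace through the counit. The paper's proof is exactly this in three lines, with the agreement on $\M^\kappa$ asserted to hold ``by design''.

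However, your step (2), and in particular the key Yoneda identity $\hom_{\PP_\V(\M^\kappa)}(y(x), j(z)) \simeq \hom_\M(x, i(z))$ for $z\in\M^\kappa$, is where you go off the rails, and the justification you finally offer is wrong. You write that $j(z)$ is ``also'' $\hat y(i(z))$ because ``$\hat y$ and $y$ agree on $\kappa$-compacts''. This is false in general: $\hat y$ is $\V$-linear and colimit-preserving whereas $j$ does not commute with filtered colimits, so $\hat y|_{\M^\kappa}\neq j$ unless $\M$ is $\kappa$-compactly generated in a very degenerate sense. (The canonical map $\hat y(i(z))\to y(i(z)) = j(z)$ need not be an equivalence for $z\in\M^\kappa$ — it being an equivalence is essentially the statement that $z$ is $\V$-atomic, not merely $\kappa$-compact.) Moreover, even granting $j(z)=\hat y(i(z))$, the adjunction $\hat y\dashv p$ computes $\hom(\hat y(i(z)), -)$, with $\hat y(i(z))$ in the \emph{source}; you want $j(z)$ in the \emph{target} of $\hom(y(x),-)$, so the variance does not match.

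The correct justification is much simpler and is the one you circled around but never quite landed on: the restricted Yoneda embedding $j\colon \M^\kappa\to\PP_\V(\M^\kappa)$ is literally the restriction of $y\colon\M\to\PP_\V(\M^\kappa)$ along $i\colon\M^\kappa\hookrightarrow\M$ (both send $m$ to $\hom_\M(i(-),m)$), and $y$ is fully faithful since $p\circ y\simeq\id_\M$. Thus $\hom_{\PP_\V(\M^\kappa)}(y(x), j(z)) = \hom_{\PP_\V(\M^\kappa)}(y(x), y(i(z))) \simeq \hom_\M(x, i(z))$, no appeal to $\hat y$ needed. With this one-line fix your proof goes through and matches the paper's.
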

\begin{proof}
    We specialize \Cref{prop:descriptionofrightadj} to $\N=\V$. Let $\lambda$ be a cardinal such that $\hom_\M(x,-)$ restricted to $\M^\kappa$ lands in $\V^\lambda$. 

    We have $\at_\M(x,-)\simeq p\circ \PP_\V(\hom_\M(x,-))\circ \hat y_\M$. 

    But now, $\hom_{\PP_\V(\M^\kappa)}(y(x),-)$ and $p\circ \PP_\V(\hom_\M(x,-))$ by design agree on $\M^\kappa\xhookrightarrow{y}~\PP_\V(\M^\kappa)$ and are both $\V$-linear and colimit-preserving\footnote{This is where we use that $x$ is in $\M^\kappa$: otherwise $\hom(y(x),-)$ need not preserve colimits or be $\V$-linear.}. Thus they agree, and we are done. 
\end{proof}
\begin{ex}
    Let $\V=\Sp$. In this case, we find that $\at_\M(x,-) \simeq \map(y(x),\hat y(-))$. We could call this the spectrum of compact maps, or more generally, in a compactly assembled category, $\Map(y(x),\hat y(-))$ could be called the space of compacts maps. Note that its map to $\Map(x,-)$ is \emph{not} an inclusion of components. 

    Compare also with \cite[Lemma 2.21]{maindbl}.
\end{ex}

We now give examples of properties of atomic maps analogous to the basic properties of compact maps from \cite[Section 2.1]{maindbl}.
\begin{ex}\label{ex:atid}
    Let $\M$ be a $\V$-module and $x\in \M$. $x$ is atomic if and only if the identity $\id_x$ is atomic, if and only the map $\at_\M(x,-)\to\hom_\M(x,-)$ is an equivalence. 
\end{ex}
\begin{proof}
    If $x$ is atomic, $\hom_\M(x,-)$ by definition is already $\V$-linear and colimit-preserving, so that by definition, $\hom_\M(x,-)\simeq \at_\M(x,-)$. This immediately implies that $\id_x$ is atomic. 

    Conversely, suppose $\id_x$ is atomic. A lift $\tilde i \in \at_\M(x,x)$ induces a map $\hom_\M(x,-)\to~\at_\V(x,-)$ such that the composite $\hom_\M(x,-)\to~\at_\V(x,-)\to~\hom_\M(x,-)$ is the identity (by the Yoneda lemma). The other composite, $\at_\M(x,-)\to \hom_\M(x,-)\to \at_\M(x,-)$ is determined by the further composite $\at_\M(x,-)\to \hom_\M(x,-)\to \at_\M(x,-)\to \hom_\M(x,-)$ by the universal property of $\at_\M$. 

    But this composite is the canonical map, because $\hom_\M(x,-)\to \at_\M(x,-)\to \hom_\M(x,-)$ is the identity, and so the other composite is the identity of $\at_\M(x,-)$. Thus $\at_\M(x,-)\to~\hom_\M(x,-)$ is an equivalence, and $\hom_\M(x,-)$ is $\V$-linear and colimit-preserving, i.e. $x$ is atomic. 
\end{proof}
\begin{ex}\label{ex:atideal}
    The collection of atomic maps in $\M$ is a $2$-sided ideal. 
\end{ex}
\begin{proof}
    Let $f:x\to y$ be an atomic map with a lift $\tilde f\in\at_\V(x,y)$. 
    
For any $g:y\to z$, we have a commutative diagram 
\[\begin{tikzcd}
	{\at_\M(x,y)} & {\at_\M(x,z)} \\
	{\hom_\M(x,y)} & {\hom_\M(x,z)}
	\arrow[from=1-1, to=1-2]
	\arrow[from=1-1, to=2-1]
	\arrow[from=1-2, to=2-2]
	\arrow[from=2-1, to=2-2]
\end{tikzcd}\]
by naturality, so that the image of $\tilde f$ under the top horizontal map gives a witness that $gf$ is atomic. 

For $h:z\to x$, the map $\hom(x,-)\to \hom(z,-)$ induces a map $\at_\M(x,-)\to \at_\M(z,-)$ which similarly sends a witness $\tilde f$ to a witness that $fh$ is atomic.
\end{proof}
We povide a slight strenghtening of this statement in the following form:
\begin{lm}\label{lm:slightlycoherentatid}
    Let $\M$ be a $\V$-module, and $x,y,z\in \M$. There are canonical maps $$\hom_\M(x,y)\otimes \at_\M(y,z)\to \at_\M(x,z), \quad  \at_\M(x,y)\otimes \hom_\M(y,z)\to \at_\M(x,z)$$ fitting into commutative diagrams with the obvious map $\hom_\M(x,y)\otimes \hom_\M(y,z)\to \hom_\M(x,z)$. 
    
Furthermore, the two composites $\at_\M(x,y)\otimes\at_\M(y,z)\to \at_\M(x,z)$ agree. 
\end{lm}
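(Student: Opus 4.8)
The plan is to obtain the second map essentially for free from the fact that $\at_\M(x,-)$ is an enriched functor, to obtain the first map from the universal property defining $\at_\M$, and then to deduce all compatibilities by postcomposing with the counit $\at_\M(x,-)\to\hom_\M(x,-)$, reducing everything to statements about the enriched composition of $\M$.

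For the second map, I would use that $\at_\M(x,-)$ is, by construction, an object of $\Fun^L_\V(\M,\V)\subset\Fun_\V(\M,\V)$, hence a $\V$-enriched functor $\M\to\V$; its structure maps $\hom_\M(y,z)\to\hom_\V(\at_\M(x,y),\at_\M(x,z))$ are, after adjunction, exactly maps $\at_\M(x,y)\otimes\hom_\M(y,z)\to\at_\M(x,z)$. The counit $c\colon\at_\M(x,-)\to\hom_\M(x,-)$ is a morphism in $\Fun_\V(\M,\V)$, i.e.\ a $\V$-enriched natural transformation, and the naturality square comparing its structure maps to those of $\hom_\M(x,-)$ — whose structure maps are the enriched composition $\hom_\M(x,y)\otimes\hom_\M(y,z)\to\hom_\M(x,z)$ — is precisely the desired commuting triangle for the second map.

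For the first map, I would use the universal property of $\at_\M(x,-)$ as the image of $\hom_\M(x,-)$ under the right adjoint to $\Fun^L_\V(\M,\V)\subset\Fun_\V(\M,\V)$: for every $\V$-linear, colimit-preserving $G\colon\M\to\V$, postcomposition with $c$ is an equivalence $\Map_{\Fun^L_\V(\M,\V)}(G,\at_\M(x,-))\simeq\Map_{\Fun_\V(\M,\V)}(G,\hom_\M(x,-))$. Since tensoring with a fixed object of $\V$ preserves $\V$-linearity and colimits, $\hom_\M(x,y)\otimes\at_\M(y,-)$ lies in $\Fun^L_\V(\M,\V)$, and the composite
\[\hom_\M(x,y)\otimes\at_\M(y,-)\xrightarrow{\id\otimes c}\hom_\M(x,y)\otimes\hom_\M(y,-)\to\hom_\M(x,-),\]
whose second arrow is the enriched composition of $\M$, lifts uniquely to a map $\hom_\M(x,y)\otimes\at_\M(y,-)\to\at_\M(x,-)$ in $\Fun^L_\V(\M,\V)$; evaluating at $z$ gives the first map, and its triangle with $\hom_\M(x,y)\otimes\hom_\M(y,z)\to\hom_\M(x,z)$ holds because the lift is by construction a lift over $\hom_\M(x,-)$. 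For the final assertion, I would note that both composites $\at_\M(x,y)\otimes\at_\M(y,-)\to\at_\M(x,-)$ are morphisms in $\Fun^L_\V(\M,\V)$, since $\at_\M(x,y)\otimes\at_\M(y,-)$ is again $\V$-linear and colimit-preserving; by the universal property it then suffices to check they agree after postcomposing with $c$. Postcomposing the first with $c$ gives, by the defining property of the first map, the enriched composition $\hom_\M(x,y)\otimes\hom_\M(y,-)\to\hom_\M(x,-)$ precomposed with $c\otimes c$; postcomposing the second with $c$ gives, by the naturality of $c$ already used to construct it, the same thing, so they coincide.

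The step that requires genuine care is not any of these formal manipulations but rather checking that the enriched composition $\hom_\M(x,y)\otimes\hom_\M(y,-)\to\hom_\M(x,-)$ and the structure maps of $\at_\M(x,-)$ really are morphisms in $\Fun_\V(\M,\V)$ — $\V$-enriched natural transformations, with the expected naturality squares for $c$. This is exactly the sort of ``standard enriched category theory carries over'' fact that the setup of this section, together with \cite{heine}, is meant to license, and it is the only place where one must be careful about the enriched bifunctoriality of $\hom_\M$.
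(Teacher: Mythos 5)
Your proof is correct and follows essentially the same strategy as the paper's: the first map comes from the universal property of $\at_\M(x,-)$ applied to the $\V$-linear colimit-preserving functor $\hom_\M(x,y)\otimes\at_\M(y,-)$, the second map is just part of the data of $\at_\M(x,-)$ being a $\V$-enriched functor with the compatibility triangle being the naturality square of the counit $c$, and the ``Furthermore'' part follows by checking agreement after postcomposing with $c$, using that the inclusion $\Fun^L_\V(\M,\V)\subset\Fun_\V(\M,\V)$ is fully faithful so that postcomposition with $c$ is an equivalence on mapping spaces. You spell out the verification of the last point in somewhat more detail than the paper does, but the argument is the same one the paper gestures at.
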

\begin{rmk}
    It is natural to expect these maps to fit into a more coherent collection of maps and transformations, forming something like an ``ideal'' in the $\V$-category $\M$. Setting up this amount of coherence would take us too far afield, and we have not had the need for this amount of structure so far. It would certainly be interesting to have it available, to clarifiy the connection between dualizability and ideals (cf.\cite{KNSh}). 
\end{rmk}
\begin{proof}
    In the first case, we simply note that the functor  $z\mapsto \hom_\M(x,y)\otimes\at_\M(y,z)$ is $\V$-linear and colimit-preserving, so that its natural map $$\hom_\M(x,y)\otimes\at_\M(y,z)\to \hom_\M(x,y)\otimes \hom_\M(y,z)\to \hom_\M(x,z)$$ factors canonically through $\at_\M(x,z)$. 

    In the second case, we note that this is simply part of the data of an enriched functor on $\at_\M(x,-)$. 

    The ``Furthermore'' part is a consequence of the universal property of $\at_\M(x,z)$, as in the first paragraph of the proof. 
\end{proof}

\begin{ex}\label{ex:reflat}
    Let $f:\M\to \N$ be a fully faithful $\V$-linear colimit-preserving functor. The functor $f$ reflects atomic maps.
\end{ex}
\begin{proof}
    We have a map $\hom_\M(x,-)\to \hom_\N(f(x),f(-))$ which is an equivalence by fully faithfulness. We also have a map $\at_\N(f(x),f(-))\to \hom_\N(f(x),f(-))\simeq \hom_\M(x,-)$ where the source is $\V$-linear and colimit-preserving, hence it factors through $\at_\M(x,-)$. It is now a simple matter of diagram chasing to conclude. 
\end{proof}
\begin{ex}\label{ex:presat}
    Let $f:\M\to \N$ be an internal left adjoint between $\V$-modules. $f$ preserves atomic maps.
\end{ex}
\begin{proof}
    Let $R_\M$ (resp. $R_\N$) denote the right adjoint to the inclusion $i_\M: \Fun^L_\V(\M,\V)\to~\Fun_{\V}(\M,\V)$, and let $g$ denote the right adjoint to $f$. We let $f^*, g^*$ denote the functors of precomposition by $f,g$ respectively, on either $\Fun^L_\V$ or $\Fun_{lax-\V}$. The assumption that $g$ is $\V$-linear and colimit-preserving is used to guarantee that $g^*$ is well-defined on $\Fun^L_\V$. Note that we have $g^*\dashv f^*$.

    From the equivalence $i_\N\circ g^*\simeq g^*\circ i_\M$, we obtain, by passing to right adjoints, $f^*\circ~R_\N \simeq~R_\M\circ~f^*$. 

Let $x\in \M$. We apply the above to $\hom_\N(f(x),-)$ to get that $\at_\N(f(x),f(-))\to \hom_\N(f(x),f(-))$ is universal among $\V$-linear, colimit-preserving functors mapping to $\hom_\N(f(x),f(-))$. 

In particular, the composite $\at_\M(x,-)\to \hom_\M(x,-)\to \hom_\N(f(x),f(-))$ factors through $\at_\N(f(x),f(-))$. This concludes the proof. 
\end{proof}
Combining the two previous examples, we find:
\begin{cor}\label{cor:iLffat}
    Let $f:\M\to \N$ be a fully faithful internal left adjoint between $\V$-modules. There is a canonical equivalence $\at_\M(-,-)\simeq \at_\N(f(-),f(-))$. 
\end{cor}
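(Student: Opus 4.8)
The plan is to combine \Cref{ex:reflat} and \Cref{ex:presat} directly, extracting from each of them not merely the conclusion (``reflects'' resp.\ ``preserves'' atomic maps) but the factorizations of natural transformations that were produced in their proofs. Concretely, fix objects $x,y\in\M$. From \Cref{ex:presat}, since $f$ is an internal left adjoint, its right adjoint $g$ is $\V$-linear and colimit-preserving, and the argument there shows that the composite
\[
\at_\M(x,-)\to \hom_\M(x,-)\xrightarrow{\ \sim\ } \hom_\N(f(x),f(-))
\]
factors canonically through $\at_\N(f(x),f(-))$, giving a map $\alpha\colon \at_\M(x,-)\to \at_\N(f(x),f(-))$ of $\V$-linear colimit-preserving functors $\M\to \V$. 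Conversely, from \Cref{ex:reflat} (which applies since $f$, being a fully faithful internal left adjoint, is in particular fully faithful, $\V$-linear and colimit-preserving), the composite $\at_\N(f(x),f(-))\to \hom_\N(f(x),f(-))\simeq \hom_\M(x,-)$ has $\V$-linear colimit-preserving source, hence factors through $\at_\M(x,-)$, producing a map $\beta\colon \at_\N(f(x),f(-))\circ (\text{restriction along }f\text{-image})\to \at_\M(x,-)$. The care needed is that $\at_\N(f(x),-)$ is a functor on all of $\N$; what we want is its restriction along $f$, i.e. $\at_\N(f(x),f(-))$, and it is this restriction that is $\V$-linear and colimit-preserving and receives the relevant map.

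The key step is then to check that $\alpha$ and $\beta$ are mutually inverse. Both round-trip composites are endomorphisms of $\at_\M(x,-)$ (resp.\ of $\at_\N(f(x),f(-))$) as $\V$-linear colimit-preserving functors, compatible over $\hom_\M(x,-)$ (resp.\ over $\hom_\N(f(x),f(-))\simeq\hom_\M(x,-)$). By the universal property of $\at_\M(x,-)$ as the terminal $\V$-linear colimit-preserving functor mapping to $\hom_\M(x,-)$ — exactly the kind of argument run in the second half of \Cref{ex:atid} — any endomorphism of $\at_\M(x,-)$ lying over the identity of $\hom_\M(x,-)$ is the identity; and both round-trips do lie over the respective identities because the two factorizations were built over the equivalence $\hom_\M(x,-)\simeq\hom_\N(f(x),f(-))$. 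Hence $\alpha$ and $\beta$ are inverse to each other, giving an equivalence $\at_\M(x,y)\simeq \at_\N(f(x),f(y))$.

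Finally, to upgrade this from a pointwise/bifunctorial statement to a genuine equivalence $\at_\M(-,-)\simeq\at_\N(f(-),f(-))$ of $\V$-enriched bifunctors, I would note that everything in sight is natural: the factorization maps $\alpha,\beta$ arise from mates of commuting squares (as in the proofs of \Cref{ex:presat} and \Cref{ex:reflat}), hence are automatically natural in the variable object, and the source variable $x$ can be handled by the two-sided-ideal functoriality recorded in \Cref{ex:atideal} and \Cref{lm:slightlycoherentatid}. So the argument assembles to an equivalence of functors out of $\M\op\times\M$ (enriched appropriately), or whatever the precise domain of $\at_\M(-,-)$ is taken to be.

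I expect the main obstacle to be purely bookkeeping: making sure that ``$\at_\N(f(x),f(-))$'' is consistently interpreted as the restriction of $\at_\N(f(x),-)$ along $f$ (not some a priori different object), and that the universal property used to collapse the round-trips is being invoked for the correct functor with the correct structure map. Once the two factorizations $\alpha$, $\beta$ are set up over the fully-faithfulness equivalence $\hom_\M(x,-)\simeq\hom_\N(f(x),f(-))$, the verification that they are inverse is a formal consequence of terminality, exactly parallel to \Cref{ex:atid}; no computation is required.
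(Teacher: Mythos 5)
Your proposal is correct and follows essentially the paper's route: the paper's one-line proof also rests on the observation from the proof of \Cref{ex:presat} that $\at_\N(f(x),f(-))\to\hom_\N(f(x),f(-))$ is terminal among $\V$-linear colimit-preserving functors over $\hom_\N(f(x),f(-))\simeq\hom_\M(x,-)$, whence it and $\at_\M(x,-)$ satisfy the same universal property and are canonically equivalent. You spell this out by explicitly building the two comparison maps $\alpha$ (from \Cref{ex:presat}) and $\beta$ (from \Cref{ex:reflat}) and then collapsing the round-trips by terminality, which is a mildly more verbose presentation of the same argument — once one knows both functors are terminal over $\hom_\M(x,-)$, no inverse needs to be exhibited by hand.
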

\begin{proof}
    The universal property from the proof of \Cref{ex:presat} together with the equivalence $\hom_\M(-,-)\simeq\hom_\N(f(-),f(-))$ together give the desired equivalence.  
\end{proof}

\begin{lm}\label{lm:hominIndV}
 Let $\V\in \CAlg(\PrL_\kappa)$, and let $\M$ be a $\lambda$-compactly generated $\V$-module, $\lambda\geq \kappa$. Let $p:\PP_\V(\M^\lambda)\to \M$ be the canonical functor, and $y$ its right adjoint. 

 For $m\in\M^\lambda$, the canonical map $\at_\M(m,p(-))\to\hom_\M(m,p(-))$ factors through the canonical map $\hom_{\PP_\V(\M^\lambda)}(y(m), -)\to\hom_\M(m,p(-))$. 
 \end{lm}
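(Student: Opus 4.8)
The plan is to reduce the statement to identifying the map $\hom_{\PP_\V(\M^\lambda)}(y(m),-)\to\hom_\M(m,p(-))$ with the counit of a suitable coreflection, and then to invoke a universal property. For brevity write $\PP:=\PP_\V(\M^\lambda)$ and let $i:\M^\lambda\hookrightarrow\M$ be the inclusion, so that $p:\PP\to\M$ is the $\V$-linear colimit-preserving extension of $i$ provided by \Cref{thm: UPVPsh}. Let $R$ denote the right adjoint to the inclusion $\Fun^L_\V(\PP,\V)\subset\Fun_\V(\PP,\V)$, which exists by the general existence of such right adjoints recorded above (applied to the presentable $\V$-module $\PP$). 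Since $p$ and $\at_\M(m,-):\M\to\V$ are both $\V$-linear and colimit-preserving, so is the composite $\at_\M(m,p(-)):\PP\to\V$; precomposing the canonical lax $\V$-linear transformation $\at_\M(m,-)\to\hom_\M(m,-)$ with $p$ equips it with a lax $\V$-linear map to $G:=\hom_\M(m,p(-))$. By the universal property of $R$ — i.e.\ the terminality of the counit $R(G)\to G$ among $\V$-linear colimit-preserving functors $\PP\to\V$ with a lax $\V$-linear map to $G$ — the map $\at_\M(m,p(-))\to G$ factors (uniquely) through $R(G)\to G$. So it remains to identify $R(G)\to G$ with the canonical map $\hom_{\PP}(y(m),-)\to\hom_\M(m,p(-))$.

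For that identification I would first recall that $y$, being right adjoint to $p$, is the ``restricted Yoneda embedding'' $m'\mapsto\hom_\M(i(-),m')$, which restricts along $i$ to the genuine Yoneda embedding $j:\M^\lambda\to\PP$; in particular, for $m\in\M^\lambda$ the presheaf $y(m)$ is canonically the representable $j(m)$, so $\hom_\PP(y(m),-)$ is $\V$-linear and colimit-preserving and restricts along $j$ to $\hom_\M(m,-)|_{\M^\lambda}$ by the enriched Yoneda lemma. On the other hand, by \Cref{thm: UPVPsh} restriction along $j$ identifies $\Fun^L_\V(\PP,\V)$ with $\Fun_\V(\M^\lambda,\V)$, so $R$ is computed as ``restrict along $j$, then take the unique $\V$-linear colimit-preserving left Kan extension back to $\PP$''; in particular $R(G)|_{\M^\lambda}\simeq G|_{\M^\lambda}$ and the counit $R(G)\to G$ restricts along $j$ to an equivalence. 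Since $G=\hom_\M(m,p(-))$ restricts along $j$ to $\hom_\M(m,i(-))=\hom_\M(m,-)|_{\M^\lambda}$ (using $p\circ j\simeq i$), this gives an equivalence $R(G)\simeq\hom_\PP(y(m),-)$ of $\V$-linear colimit-preserving functors. Finally, both the counit $R(G)\to G$ and the map called ``canonical'' in the statement (the one induced by applying $p$ to hom-objects together with $p(y(m))\simeq m$) are maps out of the $\V$-linear colimit-preserving functor $\hom_\PP(y(m),-)$ into $G$; by the adjunction (inclusion)$\,\dashv\, R$ together with $R(G)|_{\M^\lambda}\simeq G|_{\M^\lambda}$, such maps are detected by restriction along $j$, where both reduce to the identity of $\hom_\M(m,-)|_{\M^\lambda}$. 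Hence they agree, which concludes the argument.

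The step I expect to be the main obstacle is the explicit identification of $R(G)$ in the second paragraph, and especially the matching of the counit $R(G)\to G$ with the map designated ``canonical'' in the statement. The delicate point there is the distinction between $\V$-enriched and lax $\V$-linear natural transformations — the universal properties of $R$ and of $\at_\M(m,-)$ are formulated using lax $\V$-linear maps, whereas the ``canonical'' map is most naturally a $\V$-enriched transformation — but in each case the relevant mapping space is computed by restriction along $j$ via \Cref{thm: UPVPsh}, so once this bookkeeping is set up the comparison is mechanical. Everything else (that $p$ is $\V$-linear and colimit-preserving, that $y|_{\M^\lambda}$ is the Yoneda embedding, and the terminality of $R(G)\to G$) is immediate from the definitions and from results already in place.
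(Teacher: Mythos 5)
Your proof is correct, and its engine is the same as the paper's: everything is reduced, via the universal property of $\PP_\V(\M^\lambda)$ (maps out of $\V$-linear colimit-preserving functors on $\PP_\V(\M^\lambda)$, equivalently left Kan extensions along $j$, are detected by restriction to $\M^\lambda$), to the tautological comparison $\at_\M(m,-)\to\hom_\M(m,-)\simeq\hom_{\PP_\V(\M^\lambda)}(y(m),y(-))$ on $\lambda$-compacts. The packaging differs, though: the paper simply constructs the factoring map $\at_\M(m,p(-))\to\hom_{\PP_\V(\M^\lambda)}(y(m),-)$ by specifying it on $\M^\lambda$ and extending, then checks compatibility with the canonical map by restricting again; you instead first identify $\hom_{\PP_\V(\M^\lambda)}(y(m),-)$, with its counit, as the coreflection $R(\hom_\M(m,p(-)))$ and then invoke terminality. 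Your route proves strictly more — it is a presheaf-category analogue of \Cref{cor:descofat} with no dualizability hypothesis, valid because every $\V$-linear colimit-preserving functor out of $\PP_\V(\M^\lambda)$ is Kan extended from $\M^\lambda$, so the right adjoint there really is ``restrict and Kan extend'' — at the cost of the extra bookkeeping you flag (matching the counit with the map induced by $p$ and $p(y(m))\simeq m$, which both restrict along $j$ to the Yoneda identification). Both arguments are sound; the paper's is shorter because it never needs to name $R$ explicitly.
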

 \begin{proof}
     $\at_\M(m,p(-))$ and $\hom_\M(y(m),-)$ are two $\V$-linear, colimit-preserving functors on $\PP_\V(\M^\lambda)$, so to construct a map $\at_\M(m,p(-))\to \hom_{\PP_\V(\M)}(y(m),-)$ it suffices to construct one on $\M^\lambda$. 

     But then we can simply pick the canonical map $\at_\M(m,-)\to \hom_\M(y(m),y(-))$, and then because its source is Kan extended from $\M^\lambda$, it is clear that it factors the canonical map in general. 
 \end{proof}
We now draw a table of analogies: 
\begin{center}
\begin{tabular}{|c | c | c|} 
 \hline
 Over $\Sp$ & Over $\V$ & In $\V$ \\ [0.5ex] 
 \hline\hline
 Compact object & $\V$-atomic object & Dualizable object \\ 
 \hline
 Compactly generated 
 & Atomically generated & ... \\
 stable category &  $\V$-module & \\
 \hline
 Compact map & $\V$-atomic map & Trace-class map \\
 \hline 
 Dualizable stable category & Dualizable $\V$-module & ...\\
 \hline
 \end{tabular}

\end{center}
For this analogy to be perfect, one would perhaps have to find an analogue of \cite[Theorem 2.39]{maindbl} or \cite[Theorem 2.55]{maindbl} in the general setting. There are difficulties associated with the ``Only if'' direction of either characterization - that is, while with a relatively generous definition of ``atomically exhaustible objects'' one can prove that being generated by them implies being dualizable, the converse does not seem to hold in general. The reader is invited to try and prove this slightly naive version of the ``If'' direction of either characterization, but we now embark on informally analyzing the ``Only if'' direction to correct the naive guess.

 In the stable/compactly assembled case, the idea was that, writing $\hat y(x) = \colim_I y(x_i)= \colim_I \hat y(x_i)$ forces the maps $y(x_i)\to \hat y(x)$ to factor through a map $y(x_i)\to \hat y(x_j)$ classifying a compact map $x_i\to x_j$, thus leading to the possibility of iteration, and leading quite naturally to the notion of compactly exhaustible objects. 

 However, over a general base $\V$, two problems arise: first, one cannot simply write objects of $\PP_\V(\M^\kappa)$ as colimits of objects in the image of $y$ - one also needs tensors by elements of $\V$, i.e. one essentially needs weighted colimits. 
 
 However, even taking this into account, the second problem is that $y(m)\in\PP_\V(\M^\kappa)$ being atomic does \emph{not} imply that any map $y(m)\to \colim_I^W y(x_i)$ factors through some ``finite'' stage, as in the case of $\V=\Sp$. This is among other things because the unit of $\V$ need not be compact, but also because the needed \emph{weighted} colimits behave more like classical coends, so that even if the unit is compact, one obtains at best a factorization through a finite weighted colimit $\colim_J^W y(x_j)$. 
 
 This suggests the following definition which in practice seems quite unusable, but turns out to be convenient for theoretical purposes:
 \begin{defn}\label{defn:atpres}
Let $\M$ be a $\V$-module.
    A map $\alpha:x\to y$ in $\M$ is ($\V$-)atomically presentable if $x$ can be written as a weighted colimit $\colim_I^W f$ for some small $\V$-category $I$, weight $W$ and diagram $f$ so that the map $\alpha: \colim_I^Wf\to y$ corresponds, under adjunction, to a map of weights $W\to \hom_\M(f(-),y)$ that factors through $\at_\M(f(-), y)$. 

    A map is weakly atomically presentable if it factors through an atomically presentable map. 

    An object is (weakly) atomically presentable if its identity map is. 

    In all cases, a (weakly) relatively atomic presentation is the data of such an expression (resp. factorization and expression) of $\alpha$. 
\end{defn}

The following is evident from the definitions:
\begin{lm}
    If $f:x\to y$ is (weakly)  atomically presentable, then so is $gf$ for any $g:y\to z$. 

    For the weak case, this is also true for $fh$. 
\end{lm}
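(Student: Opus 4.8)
The plan is to unwind the definitions and observe that both claims are essentially formal, reducing to the two-sided ideal property of atomic maps already established in \Cref{ex:atideal}, together with the functoriality of weighted colimits.

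\medskip

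First I would treat the claim that $gf$ is atomically presentable whenever $f\colon x\to y$ is. By hypothesis there is a small $\V$-category $I$, a weight $W\colon I\op\to\V$, and a diagram $h\colon I\to\M$ with $x\simeq\colim_I^W h$, such that the adjoint $W\to\hom_\M(h(-),y)$ of $f$ factors through $\at_\M(h(-),y)$. Now postcomposition with $g$ induces a natural map $\hom_\M(h(-),y)\to\hom_\M(h(-),z)$, and by \Cref{lm:slightlycoherentatid} (the map $\at_\M(h(-),y)\otimes\hom_\M(y,z)\to\at_\M(h(-),z)$, using the classifying map $\one_\V\to\hom_\M(y,z)$ of $g$), this map restricts to a map $\at_\M(h(-),y)\to\at_\M(h(-),z)$ compatibly with the canonical maps to $\hom_\M$. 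Composing, the adjoint $W\to\hom_\M(h(-),z)$ of $gf$ factors through $\at_\M(h(-),z)$. Since $x\simeq\colim_I^W h$ is the \emph{same} presentation, this exhibits $gf$ as atomically presentable with the same data $(I,W,h)$. Concretely: under the adjunction $\Map_\M(\colim_I^W h,z)\simeq\Map_{\Fun_\V(I\op,\V)}(W,\hom_\M(h(-),z))$, the map $gf$ corresponds to the composite $W\to\at_\M(h(-),y)\to\at_\M(h(-),z)\to\hom_\M(h(-),z)$, which is exactly the required factorization.

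\medskip

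Second, the weak case. If $f\colon x\to y$ is weakly atomically presentable, it factors as $x\xrightarrow{\beta} w\xrightarrow{\gamma} y$ with $\beta$ atomically presentable. Then for $g\colon y\to z$, the composite $gf$ factors as $x\xrightarrow{\beta} w\xrightarrow{g\gamma} z$, still through the atomically presentable map $\beta$, so $gf$ is weakly atomically presentable. For precomposition $fh$ with $h\colon v\to x$: write $\beta\colon x\simeq\colim_I^W p\to w$ through $\at_\M(p(-),w)$ as before. Precomposition by $h$ gives a map $\hom_\M(x,-)\to\hom_\M(v,-)$, hence (as in the proof of \Cref{ex:atideal}) a compatible map $\at_\M(x,-)\to\at_\M(v,-)$; but here we want to keep the \emph{presentation}. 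The cleanest route is: $\at_\M$ is functorial, and $v$ need not be a weighted colimit, so instead note $fh = (\gamma\beta)h = \gamma(\beta h)$, and it suffices to show $\beta h\colon v\to w$ is weakly atomically presentable, i.e. factors through an atomically presentable map. But $\beta h$ factors through $\beta$ itself (it equals $\beta\circ h$ with $h\colon v\to x$), and $\beta$ is atomically presentable, so $\beta h$ is weakly atomically presentable, hence so is $fh$.

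\medskip

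I expect no serious obstacle: the content is entirely bookkeeping with the adjunction $\Fun_\V(I,\M)\times\PP_\V(I)\to\M$ and the universal property of $\at_\M$. The only point requiring a little care is making precise, in the first part, that postcomposition of a $\V$-weight map with a morphism of $\M$ lands in $\at_\M$ again — this is where \Cref{lm:slightlycoherentatid} (or directly \Cref{ex:atideal} applied objectwise in the weight variable) does the work, and one should phrase it so that the factorization of the weight map is genuinely natural in the index category $I$. For the weak case the mild subtlety is that precomposition does not obviously preserve the specific weighted-colimit presentation, which is why it is simplest to argue via the factorization through $\beta$ rather than trying to transport presentations along $h$.
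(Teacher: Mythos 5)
The paper offers no proof of this lemma (it is stated as ``evident from the definitions''), so I can only assess your argument on its own terms.

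Your treatment of the strong case for $gf$ is correct and is the natural argument: reuse the presentation $x\simeq\colim_I^W h$ of the source, and postcompose the weight map $W\to\at_\M(h(-),y)$ with $g$ to land in $\at_\M(h(-),z)$, compatibly with the map to $\hom_\M(h(-),z)$; your appeal to \Cref{lm:slightlycoherentatid} is fine, though the naturality of $\at_\M(h(-),-)\to\hom_\M(h(-),-)$ as used in the proof of \Cref{ex:atideal} already suffices and is slightly lighter.

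The weak case has a small but real wobble in how you parse ``factors through an atomically presentable map.'' You begin by writing $f=\gamma\beta$ with $\beta\colon x\to w$ atomically presentable (so $\beta$ is sourced at $x$ itself). Under that reading, $\gamma\beta$ is itself atomically presentable by the strong case you just proved (same presentation of $x$, postcompose the weight map with $\gamma$), so ``weakly atomically presentable'' would collapse to ``atomically presentable'' --- clearly not intended, and incompatible with the lemma's assertion about $fh$. Indeed, under that narrow reading, $\beta h\colon v\to w$ does \emph{not} ``factor through $\beta$'' because $\beta$ is not sourced at $v$; so the step where you claim it does is implicitly switching to the correct, broader reading, namely that $f$ factors through an atomically presentable $\beta\colon a\to b$ if $f=\gamma\circ\beta\circ\delta$ for some $\delta\colon x\to a$ and $\gamma\colon b\to y$. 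Once that reading is adopted from the start, the whole weak case is immediate without your detour: $gf=(g\gamma)\beta\delta$ and $fh=\gamma\beta(\delta h)$ both still factor through $\beta$. In short, your conclusions are correct, but you should fix the factorization convention at the outset; the argument then simplifies.
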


We can start with a general analogue of \cite[Theorem 2.39]{maindbl}- thanks to the flexibility of the notion of atomic presentation we gave here, the sequential colimits are not really needed, but we include them for comparison:
\begin{thm}\label{thm:dblexhaustionV}
Let $\M$ be a $\V$-module. $\M$ is dualizable (0) if and only if either of the following equivalent conditions holds:
\begin{enumerate}
    \item Every object in $\M$ is atomically presentable;
    \item Every object in $\M$ is weakly atomically presentable; 
    \item Every object in $\M$ is a sequential colimit along atomically presentable maps; 
    \item Every object in $\M$ is a sequential colimit along weakly atomically presentable maps.
  
\end{enumerate}
Analogous statements where one replaces ``every object is of a certain form'' by ``objects of a certain form generate $\M$ as a $\V$-module'' are also equivalent. 
\end{thm}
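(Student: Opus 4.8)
The plan is to prove all the equivalences by establishing the two substantive implications $(0)\Rightarrow(1)$ and $(4)\Rightarrow(0)$, together with the formal implications $(1)\Rightarrow(2)$, $(1)\Rightarrow(3)$, $(2)\Rightarrow(4)$, $(3)\Rightarrow(4)$. The latter four are immediate from the definitions: an atomically presentable map is weakly atomically presentable (giving $(1)\Rightarrow(2)$ and $(3)\Rightarrow(4)$), and an object satisfying $(1)$ or $(2)$ is the colimit of the constant sequence on its identity map, which is (weakly) atomically presentable (giving $(1)\Rightarrow(3)$ and $(2)\Rightarrow(4)$). Combined with $(0)\Rightarrow(1)$ and $(4)\Rightarrow(0)$, this places each of $(1)$--$(4)$ on a cycle through $(0)$, so all five conditions are equivalent.

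For $(0)\Rightarrow(1)$, fix a good cardinal $\kappa$ and let $\hat y\colon\M\to\PP_\V(\M^\kappa)$ be the $\V$-linear left adjoint to the canonical functor $p\colon\PP_\V(\M^\kappa)\to\M$, which exists by \cite[Theorem 1.49]{maindbl}. Given $x\in\M$, write $\hat y(x)\simeq\colim_I^W(y\circ f)$ as a weighted colimit of representables — possible by density of representables in the $\V$-presheaf category — with $I$ a small $\V$-category, $W$ a weight, and $f\colon I\to\M^\kappa$. Since $p$ is $\V$-linear and colimit-preserving it preserves weighted colimits (\Cref{cor:Vliniffweight}), and it sends the representable on $f(i)$ to $f(i)\in\M^\kappa\subseteq\M$, so applying $p$ gives $x\simeq\colim_I^W f$; the tautological map $\colim_I^W f\to x$ is then classified by the weight map obtained from the one classifying $\hat y(x)\simeq\colim_I^W(y\circ f)$ by applying $\hom_{\PP_\V(\M^\kappa)}(y(f(-)),\hat y(x))\to\hom_\M(f(-),x)$. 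By \Cref{cor:descofat} (applicable since each $f(i)\in\M^\kappa$) the source of this last map is exactly $\at_\M(f(-),x)$ and the map is exactly the canonical $\at_\M(f(-),x)\to\hom_\M(f(-),x)$; hence the weight map classifying $\id_x$ factors through $\at_\M(f(-),x)$, i.e. $x$ is atomically presentable. This is precisely the step where the (deliberately permissive) \Cref{defn:atpres} is needed, as foreshadowed by the discussion preceding the theorem.

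For $(4)\Rightarrow(0)$, by \cite[Theorem 1.49]{maindbl} it suffices to construct a $\V$-linear left adjoint to $p\colon\PP_\V(\M^\kappa)\to\M$, equivalently to exhibit $\M$ as a retract of $\PP_\V(\M^\kappa)$ in $\Mod_\V(\PrL)$. The plan is to mimic the proof of \cite[Theorem 2.39]{maindbl}, with atomic presentations in place of compact exhaustions and weighted colimits in place of conical ones: given $x\simeq\colim_n x_n$ along weakly atomically presentable maps, factor each structure map through an atomically presentable map $c_n\to d_n$, write $c_n\simeq\colim_{I_n}^{W_n}f_n$ with the map to $d_n$ classified by a weight map $W_n\to\at_\M(f_n(-),d_n)$, push these forward to weight maps $W_n\to\at_\M(f_n(-),x)$ using \Cref{ex:atideal} and \Cref{lm:slightlycoherentatid}, and then use the universal property of $\at$ to lift the presenting diagrams into $\PP_\V(\M^\kappa)$ (after a further decomposition of the $f_n$ through $\M^\kappa$) and to assemble the desired left adjoint, whose adjunction to $p$ is checked against these presentations. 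The main obstacle is the weighted-colimit bookkeeping in this step: weighted colimits admit no naive Fubini theorem — a conical colimit of weighted colimits need not reorganize into a single weighted colimit over a Grothendieck construction, since the indexing categories vary non-functorially — so the local atomic data must be glued by hand, via repeated use of \Cref{prop:compatibilityweightedcolim} and the coherence in \Cref{lm:slightlycoherentatid}. It is exactly the permissiveness of \Cref{defn:atpres} that lets one dispense with the sequential structure: a single atomic presentation already carries arbitrary weighted-colimit data, so one may argue directly from $(2)$, and correspondingly a sequential colimit along (weakly) atomically presentable maps is itself (weakly) atomically presentable.

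Finally, for the generation statements, the implication from $(0)$ is immediate, since $(0)\Rightarrow(1)$ makes every object atomically presentable, a fortiori every generating family consists of such objects. For the converse, the plan is to check that for each $i$ the class of objects satisfying $(i)$ is a $\V$-submodule of $\M$: closure under $\V$-tensors is straightforward using that $\at_\M(a,-)$ is $\V$-linear and colimit-preserving, while closure under colimits requires the same weighted-colimit bookkeeping as above and is the delicate point. A generating family of such objects then forces every object to satisfy $(i)$, reducing to the pointwise statement; alternatively, the construction of $\hat y$ in $(4)\Rightarrow(0)$ only uses the presentations of a generating family and so applies with no change.
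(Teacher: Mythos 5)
Your $(0)\Rightarrow(1)$ argument is essentially the paper's: both observe that $p=\colim_{\M^\kappa}^{(-)}i$ so that $x\simeq\colim_{\M^\kappa}^{\hat y(x)}i$, and use \Cref{cor:descofat} to identify the weight $\hat y(x)\simeq\at_\M(-,x)$ together with the map $\at_\M(-,x)\to\hom_\M(-,x)$. The formal implications and the remark on the ``generated by'' variants (using that the construction of the local left adjoints only needs a generating family) also line up with the paper.

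The gap is in $(4)\Rightarrow(0)$. What you give there is a plan, not a proof, and the plan misdiagnoses the difficulty while missing the paper's actual technical engine. There is no Fubini problem to worry about: the paper does \emph{not} try to reorganize the sequential colimit of weighted colimits into a single weighted colimit. It sets $X_n:=\colim_{I_n}^{W_n}y(x^n_\bullet)$, takes the ordinary sequential colimit $X=\colim_\NN X_n$, and then proves directly that $X$ is a local left adjoint of $p$ at $x$, i.e.\ that $\hom_{\PP_\V(\M^\kappa)}(X,F)\to\hom_\M(x,p(F))$ is an equivalence. The tool that makes this work — and which your outline never identifies — is the ``swap'' machinery \Cref{lm:swapat}, \Cref{cor:swapat} and \Cref{cor:atpreslocalleft}, which produce the dotted diagonal fillers $\hom(x_{n+1},p(-))\to\hom_{\PP_\V(\M^\kappa)}(X_n,-)$ so that a cofinality argument (as in \cite[Lemma 2.24]{maindbl}) closes the loop. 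Citing \Cref{ex:atideal}, \Cref{lm:slightlycoherentatid}, \Cref{lm:hominIndV} and \Cref{prop:compatibilityweightedcolim} is in the right neighbourhood — those feed into the swap lemmas — but asserting that the ``local atomic data must be glued by hand'' without this structure leaves the crux unargued. Finally, the closing claim that ``a sequential colimit along (weakly) atomically presentable maps is itself (weakly) atomically presentable'' is not a consequence of the definitions; it only holds a posteriori once $(0)$ is established, since it is essentially the statement that the shape $X$ of $x$ gives an atomic presentation, and the paper explicitly flags (in the discussion following the definition of shape-able objects) that this is unclear when $\M$ is not yet known to be dualizable. Using that claim in the proof of $(4)\Rightarrow(0)$ would be circular.
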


To prove this, we will need an analogue of \cite[Lemma 2.41]{maindbl} that shows that our notion of atomically presented map is reasonable. 
\begin{lm}\label{lm:swapat}
    Let $\alpha: x=\colim_I^W f\to y$ be an atomic presentation of a map in a $\V$-module $\M$, with a chosen lift $\tilde\alpha: W\to \at_\M(f(-),y)$ of the corresponding map of weights $\alpha: W\to \hom_\M(f(-),y)$. Let $j:\M\to \PP_\V(\M^\kappa)$ be the restricted Yoneda embedding with left adjoint $p$, where $\kappa$ is an $\M$-good cardinal such that $f$ lands in $\M^\kappa$. Finally let $X:= \colim_I^W j\circ f$. Note that we have a map $X\to j(x)$ given by \Cref{cons:compweight}. 
    
    There is a natural dotted lift in the following diagram of $\V$-functors $\PP_\V(\M^\kappa)\to \V$:
    
\[\begin{tikzcd}
	{\hom(j(y),-)} & {\hom(y,p(-))} \\
	{\hom(X,-)} & {\hom(x,p(-))}
	\arrow[from=1-1, to=1-2]
	\arrow[from=1-1, to=2-1]
	\arrow[dashed, from=1-2, to=2-1]
	\arrow[from=1-2, to=2-2]
	\arrow[from=2-1, to=2-2]
\end{tikzcd}\]
    \end{lm}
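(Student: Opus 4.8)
The plan is to construct the dotted arrow directly, using the atomic presentation of $\alpha$ together with the formula $X = \colim_I^W j\circ f$ and the universal property of $\at_\M$. First I would note that since $X = \colim_I^W (j\circ f)$ is a weighted colimit in $\PP_\V(\M^\kappa)$, for any $\V$-functor $G:\PP_\V(\M^\kappa)\to \V$ we have $\hom(X, G) \simeq \hom_{\Fun_\V(I\op,\V)}\!\bigl(W, \hom_{\PP_\V(\M^\kappa)}(j f(-), G)\bigr)$ — more precisely, the functor $\hom(X,-) : \PP_\V(\M^\kappa)\to\V$ is the weighted limit $\lim^W_{I\op} \hom(jf(-),-)$. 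So to build a map $\hom(j(y),-)\to\hom(X,-)$ it suffices to build, naturally in $I$, a map of weights $W\to \hom_{\PP_\V(\M^\kappa)}(jf(-),-)\circ(\text{something landing at } j(y))$; but cleaner still: a map $\hom(j(y),-)\to\hom(X,-)$ is, by the universal property of the weighted colimit $X$, the same as a map of weights $W \to \hom_{\Fun_\V}(\,-\,,\,-\,)$, i.e. a $\V$-natural family $W \to \hom_{\PP_\V(\M^\kappa)}(jf(-), j(y))$-linear-in-the-target data. The point is that the desired dotted arrow, viewed as a map into $\hom(X,-)=\lim^W\hom(jf(-),-)$, will come from a map of weights $W\to \hom_{\PP_\V(\M^\kappa)}\bigl(jf(-), (\,\cdot\,)\bigr)$ precomposed with $\hom(j(y),-)$ — which is exactly a $\V$-linear colimit-preserving refinement of $W\to\hom_\M(f(-),y)$ along $j$.

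The key step, then, is this: the lift $\tilde\alpha: W\to \at_\M(f(-),y)$ is a map of weights whose target is, for each fixed source object, the value at $y$ of the \emph{colimit-preserving, $\V$-linear} functor $\at_\M(f(-),-):\M\to\V$. By \Cref{cor:iLffat}/\Cref{ex:presat} applied to $j$ (which is fully faithful on $\M^\kappa$ but not a left adjoint in general — so instead I use the concrete description) — better: by \Cref{lm:hominIndV}, for $m\in\M^\kappa$ the canonical map $\at_\M(m,p(-))\to\hom_\M(m,p(-))$ factors through $\hom_{\PP_\V(\M^\kappa)}(j(m),-)\to\hom_\M(m,p(-))$. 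Applying this with $m=f(i)$ ranging over $I$ and composing $\tilde\alpha$ with $\hom(j(y),-)$ (which maps to $\hom_\M(y,p(-))$, and $y$ maps to $p(j(y))$... ) — concretely: $\hom(j(y),-)$ receives a map from nowhere; rather, I factor as follows. The map of weights $W\xrightarrow{\tilde\alpha}\at_\M(f(-),y)$ together with \Cref{lm:hominIndV} gives a map of weights $W\to \hom_{\PP_\V(\M^\kappa)}(j f(-), j(y))$ lifting $\alpha:W\to\hom_\M(f(-),y)$ through $\hom(jf(-),j(y))\to\hom_\M(f(-),y)$. Now post-composing with the canonical transformation $\hom_{\PP_\V(\M^\kappa)}(jf(-),j(y))\to [\hom(j(y),-)\Rightarrow \hom(jf(-),-)]$ and invoking the universal property of $X=\colim^W_I jf$ produces a natural transformation $\hom(j(y),-)\to\hom(X,-)$. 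That is the dotted arrow.

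Finally I would check commutativity of the two triangles. The lower-left triangle $\hom(X,-)\to\hom(x,p(-))$ receiving the composite $\hom(j(y),-)\to\hom(x,p(-))$: this follows because the map $X\to j(x)$ of \Cref{cons:compweight} is, by construction, induced by the identity weight-diagram pair, so that $\hom(j(x),-)\to\hom(X,-)$ is the evident map, and after applying $p$ (i.e. restricting along $j$) the whole diagram reduces — via the factorization statement of \Cref{lm:hominIndV}, whose compatibility with $\at_\M(f(-),y)\to\hom_\M(f(-),y)$ is built in — to the tautological commuting square expressing that $\tilde\alpha$ lifts $\alpha$. The upper-right and outer triangles commute by naturality of everything in sight and by $p\circ j\simeq\id$.

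\textbf{Main obstacle.} The crux, and the step I expect to be fiddly, is the passage between ``map of weights into $\at_\M(f(-),y)$'' and ``natural transformation $\hom(j(y),-)\to\hom(X,-)$ of functors on $\PP_\V(\M^\kappa)$'': one must be careful that the relevant weighted colimit/limit adjunctions ($X=\colim^W jf$ corepresents $W\mapsto\hom(jf(-),-)$ against $\hom(X,-)$) are being used on the correct side, and that \Cref{lm:hominIndV}'s factorization is genuinely compatible — as transformations of $\V$-functors, not just objectwise — with the structure maps $\at_\M(f(i),y)\to\hom_\M(f(i),y)$ as $i$ varies in $I$. Making this $I$-naturality precise (so that the universal property of the weighted colimit $X$ can be applied) is where the real content lies; everything else is diagram chasing modulo $p\circ j\simeq\id$ and the universal property of $\at_\M$.
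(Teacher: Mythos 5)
There is a genuine gap: you construct the wrong map. The dashed arrow in the square has source $\hom(y,p(-))$ (position $1\text{-}2$ of the diagram) and target $\hom(X,-)$, but what you build, explicitly and repeatedly (``to build a map $\hom(j(y),-)\to\hom(X,-)$\,\ldots'', ``That is the dotted arrow'', and again in your ``Main obstacle'' paragraph), is a map $\hom(j(y),-)\to\hom(X,-)$. These are not the same functor: $\hom(j(y),-)$ is corepresentable on $\PP_\V(\M^\kappa)$, while $\hom(y,p(-))$ is the restriction of $\hom_\M(y,-)$ along $p$, and the top horizontal arrow $\hom(j(y),-)\to\hom(y,p(-))$ is a genuinely non-invertible map (it is an equivalence only on the image of $j$). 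So what you produce is essentially the left vertical arrow of the square (indeed, your construction amounts to a lift $X\to j(y)$ of the composite $X\to j(x)\xrightarrow{j(\alpha)}j(y)$, then applying $\hom(-,-)$), not the diagonal one. The entire point of \Cref{lm:swapat} is to produce a \emph{strictly larger} domain than $\hom(j(y),-)$; that extra reach is exactly what is used later (in \Cref{cor:swapat} and in the proof of \Cref{thm:dblexhaustionV}).

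What your argument is missing is the ideal structure of atomic maps: concretely, the map $\at_\M(f(-),y)\otimes\hom_\M(y,p(-))\to\at_\M(f(-),p(-))$ from \Cref{ex:atideal}/\Cref{lm:slightlycoherentatid}, which is how the $\hom(y,p(-))$ variable gets fed into the construction. The paper first forms
\[
\hom(W,\at_\M(f(-),y))\otimes \hom(y,p(-))\longrightarrow \hom(W,\at_\M(f(-),p(-))),
\]
and only \emph{then} invokes \Cref{lm:hominIndV} on the target $\at_\M(f(-),p(-))$ to factor through $\hom(jf(-),-)$, hence through $\hom(X,-)$; plugging in the witness $\one_\V\to\hom(W,\at_\M(f(-),y))$ then frees up the factor $\hom(y,p(-))$ as the source. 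You instead apply \Cref{lm:hominIndV} directly to the fixed target $y=p(j(y))$, which freezes the evaluation at $j(y)$ and never lets the $\PP_\V(\M^\kappa)$-variable appear in the target of $\at_\M$. That is why you can only reach $\hom(j(y),-)$. Reinstating the post-composition map from \Cref{ex:atideal} as the first step, rather than immediately specializing the target to $y$, repairs the argument.
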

\begin{proof}
    We construct maps and leave the task of verifying that they have the appropriate properties to the reader. 

    We have a map $$\hom(W,\at_\M(f(-),y))\otimes \hom(y,p(-))\to \hom(W,\at_\M(f(-),y)\otimes \hom(y,p(-)))\to \hom(W,\at_\M(f(-),p(-)))$$ where the second map uses \Cref{ex:atideal}. Second, we use \Cref{lm:hominIndV} to get\footnote{This is where we need $f$ to land in $\M^\kappa$.}, in total, a map $$\hom(W,\at_\M(f(-),y))\otimes \hom(y,p(-))\to \hom(W, \hom(j(f(-)),-)) \simeq \hom(\colim_I^W j(f(-)),-) = \hom(X,-)$$ 

    Using our specific lift $\one_\V\to\hom(W,\at_\M(f(-),y))$ of $\alpha$, we get the dotted arrow as desired.
\end{proof}
\begin{cor}\label{cor:swapat}
    In the notation of the previous lemma, assume $y\simeq p(Y)$ for some $Y\in~\PP_\V(\M^\kappa)$. There is then a lift $A:X\to Y$ of $\alpha:x\to y$ with a natural dotted lift as follows: 
    \[\begin{tikzcd}
	{\hom(Y,-)} & {\hom(y,p(-))} \\
	{\hom(X,-)} & {\hom(x,p(-))}
	\arrow[from=1-1, to=1-2]
	\arrow[from=1-1, to=2-1]
	\arrow[dashed, from=1-2, to=2-1]
	\arrow[from=1-2, to=2-2]
	\arrow[from=2-1, to=2-2]
\end{tikzcd}\]
\end{cor}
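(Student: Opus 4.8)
The statement is a direct corollary of \Cref{lm:swapat}, obtained by substituting the concrete lift $y\simeq p(Y)$ into the abstract picture there. The plan is therefore to unwind how $Y$ sits over $y$ and feed this into the previous lemma. First I would use the unit $\eta:Y\to j(p(Y))=j(y)$ of the adjunction $p\dashv j$ (where $j$ is the restricted Yoneda embedding and $p$ its left adjoint). Precomposition with $\eta$ gives a map $\hom(j(y),-)\to\hom(Y,-)$, and it is formal that the resulting triangle with the two ``evaluation'' maps down to $\hom(y,p(-))$ — namely $\hom(j(y),-)\to\hom(y,p(-))$ from \Cref{lm:swapat} and the canonical $\hom(Y,-)\to\hom(y,p(-))$ coming from $p$ and $p\circ j\simeq \id$ — commutes, since both are induced by applying $p$ and using the triangle identity.

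Next I would define the map $A:X\to Y$. We have from \Cref{cons:compweight} the map $X=\colim_I^W j\circ f\to j(x)$, and $j$ applied to $\alpha:x\to y$ gives $j(x)\to j(y)$; but to land in $Y$ rather than $j(y)$ I instead argue as in the proof of \Cref{lm:swapat}: the map of weights $\tilde\alpha:W\to\at_\M(f(-),y)\simeq \at_\M(f(-),p(Y))$, combined with \Cref{lm:hominIndV} applied with target $p(-)$ evaluated at $Y$ — which produces a factorization through $\hom_{\PP_\V(\M^\kappa)}(j(f(-)),Y)$ — yields, after taking the weighted colimit over $I$, the desired morphism $A:\colim_I^W j(f(-))=X\to Y$. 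By construction $A$ lifts $\alpha$ in the sense that $p(A)$ agrees with $\alpha$ under the identifications $p(X)\simeq x$, $p(Y)\simeq y$; this is exactly the content of the bottom triangle, and it is a matter of unwinding \Cref{cons:compweight} together with the compatibility of the maps in \Cref{lm:hominIndV} with the canonical map $\at_\M\to\hom_\M$.

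Finally I would produce the dashed arrow $\hom(Y,-)\to\hom(X,-)$ simply as precomposition with $A$, and then check the two triangles commute by pasting: the outer diagram of \Cref{cor:swapat} is the composite of the triangle from \Cref{lm:swapat} (whose dashed arrow is $\hom(j(y),-)\to\hom(X,-)$) with the commuting triangle built from $\eta:Y\to j(y)$ in the first paragraph, using that $A$ is compatible with $X\to j(x)\to j(y)$ and $\eta$ — i.e. the square
\[\begin{tikzcd}
	X & Y \\
	{j(x)} & {j(y)}
	\arrow["A", from=1-1, to=1-2]
	\arrow[from=1-1, to=2-1]
	\arrow["\eta", from=1-2, to=2-1]
	\arrow[from=2-1, to=2-2]
\end{tikzcd}\]
commutes, where I am abusively composing $\eta$ with $X\to j(x)$; this last compatibility is again obtained by unwinding the construction of $A$ via $\tilde\alpha$ and \Cref{lm:hominIndV}, since \Cref{lm:hominIndV}'s factorization is by design compatible with the canonical comparison maps. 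As with \Cref{lm:swapat} itself, I would construct the relevant maps explicitly and leave the verification that they have the stated properties — commutativity of the triangles and naturality — to the reader, since these are diagram chases with no new input.

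\textbf{Main obstacle.} The only real subtlety is keeping track of the two distinct ways $X$ maps toward the ``$y$-world'': the geometric one $X\to j(x)\xrightarrow{j(\alpha)} j(y)$ and the refined one $A:X\to Y$ through the atomic lift $\tilde\alpha$, and checking they become equal after applying $p$ (resp. after composing with $\eta$). This is precisely where \Cref{lm:hominIndV}'s compatibility with the canonical map $\at_\M(m,p(-))\to\hom_\M(m,p(-))$ does the work, so no genuinely new argument is needed — the corollary is bookkeeping on top of \Cref{lm:swapat}.
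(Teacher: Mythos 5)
Your construction of $A$ matches the paper exactly: both extract $A:X\to Y$ by combining the atomic witness $\tilde\alpha:W\to\at_\M(f(-),y)$ with the factorization from \Cref{lm:hominIndV} evaluated at $Y$, equivalently by evaluating the dashed arrow of \Cref{lm:swapat} at $(Y,\id_y)$. The bottom triangle is also unchanged from the lemma, as you and the paper both observe. However, your verification of the \emph{top} triangle has a genuine gap. Your pasting produces a commuting diagram with $\hom(j(y),-)$ in the top-left corner: concretely, it shows that
\[
\hom(j(y),-)\xrightarrow{\ \eta^*\ }\hom(Y,-)\xrightarrow{\ A^*\ }\hom(X,-)
\quad\text{and}\quad
\hom(j(y),-)\xrightarrow{\ \eta^*\ }\hom(Y,-)\to\hom(y,p(-))\xrightarrow{\ \text{dashed}\ }\hom(X,-)
\]
agree. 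The corollary asserts that the two maps $\hom(Y,-)\to\hom(X,-)$ themselves agree, and to strip off the common precomposition $\eta^*:\hom(j(y),-)\to\hom(Y,-)$ you would need $\eta^*$ to be an epimorphism (equivalently, $\eta:Y\to j(y)$ to be epi), which is not true in general — $\eta$ only becomes an equivalence after applying $p$. (You also slightly misidentify the dashed arrow: in both \Cref{lm:swapat} and \Cref{cor:swapat} it is the diagonal $\hom(y,p(-))\to\hom(X,-)$, not $\hom(j(y),-)\to\hom(X,-)$ or $\hom(Y,-)\to\hom(X,-)$; the latter is the solid left vertical, which is precomposition by $A$ as you say.)

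The paper avoids this by writing out two explicit commutative squares relating $\hom(Y,-)$, $\hom(y,p(-))$, $\at_\M(f,y)$, and $\hom(jf,-)$, never routing through $\hom(j(y),-)$. Alternatively, one can close the gap with an enriched Yoneda argument: both sides of the top triangle are $\V$-natural transformations $\hom(Y,-)\to\hom(X,-)$, hence are classified by maps $X\to Y$; unwinding the dashed arrow at $(Y,\id_y)$ shows the right-hand route also classifies $A$, so the two transformations coincide. Either of these makes your outline correct; as written, the pasting step does not.
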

\begin{proof}
    Apply the lemma to get a lift $A: X\to Y$ in the first place. The bottom half of the triangle has not changed, and for the top half, consider the following commutative diagrams: 
\[\begin{tikzcd}
	{\hom_{\PP_\V(\M^\kappa)}(Y,-)} && {\hom(W,\hom_{\PP_\V(\M^\kappa)}(Y,-)\otimes \at_\M(f,y))} & {} \\
	{\hom_\M(y,p(-))} & {\hom_\M(y,p(-))\otimes \hom(W,\at_\M(f,y))} & {\hom(W,\hom_\M(y,p(-))\otimes\at_\M(f,y))}
	\arrow[from=1-1, to=1-3]
	\arrow[from=1-1, to=2-1]
	\arrow[from=1-3, to=2-3]
	\arrow[from=2-1, to=2-2]
	\arrow[from=2-2, to=2-3]
\end{tikzcd}\]

and 

\[\begin{tikzcd}
	{\hom(W,\hom_{\PP_\V(\M^\kappa)}(Y,-)\otimes \at_\M(f,y))} & {\hom(W,\hom_{\PP_\V(\M^\kappa)}(Y,-)\otimes\hom(j\circ f,Y))} \\
	{\hom(W,\hom_\M(y,p(-))\otimes\at_\M(f,y))} & {\hom(W,\hom_{\PP_\V(\M^\kappa)}(j\circ f,-))}
	\arrow[from=1-1, to=1-2]
	\arrow[from=1-1, to=2-1]
	\arrow[from=1-2, to=2-2]
	\arrow[from=2-1, to=2-2]
\end{tikzcd}\]

And now observe that putting them side by side provides exactly the desired commutation. 
\end{proof}
\begin{cor}\label{cor:atpreslocalleft}
    Let $x$ be atomically presented in a $\V$-module $\M$, and let $x=\colim_I^Wf$ be an atomic presentation of $x$. Let $\kappa$ be an $\M$-good cardinal such that $f$ lands in $\M^\kappa$,  let $j:\M\to\PP_\V(\M^\kappa)$ the corresponding Yoneda embedding, and finally $X:=\colim_I^W j\circ f$. Then for all $F\in \PP_\V(\M^\kappa)$, $p$ induces an equivalence $$\hom_{\PP_\V(\M^\kappa)}(X,F)\to \hom_\M(x,p(F))$$ 
\end{cor}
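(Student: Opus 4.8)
The plan is to reduce the statement to a weighted‑colimit formula, produce a natural section of the comparison map via \Cref{cor:swapat}, and then upgrade that section to a two‑sided inverse — the last step being the only substantial one. For the reduction: since $p$ is a $\V$-linear left adjoint it preserves weighted colimits (\Cref{cons:compweight}), and since $\M$ is $\kappa$-compactly generated the counit $p\circ j\simeq\id_\M$ is an equivalence, so $p(X)\simeq\colim_I^W(p\circ j\circ f)\simeq\colim_I^W f\simeq x$; thus the map in the statement is the map $\hom_{\PP_\V(\M^\kappa)}(X,F)\to\hom_\M(p(X),p(F))$ induced by $p$, equivalently (via $p\dashv j$) the map $\hom_{\PP_\V(\M^\kappa)}(X,\eta_F)$ for $\eta_F\colon F\to jp(F)$ the unit. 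By the universal property of weighted colimits together with the $\V$-enriched Yoneda identification $\hom_{\PP_\V(\M^\kappa)}(j(f(i)),F)\simeq F(f(i))$, both sides become $\hom_{\Fun_\V(I\op,\V)}(W,-)$ applied respectively to the weights $F\circ f:=(i\mapsto F(f(i)))$ and $(i\mapsto\hom_\M(f(i),p(F)))$, the comparison being induced by the evident componentwise comparison maps $F(f(i))\to\hom_\M(f(i),p(F))$.

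Now I would produce a natural section. This is precisely what \Cref{cor:swapat} gives when applied to the atomic presentation $x=\colim_I^W f$ of the \emph{identity} $\alpha:=\id_x$, with $y:=x$ and $Y:=j(x)$ (legitimate since $p(j(x))\simeq x$): the dashed natural transformation $d\colon\hom_\M(x,p(-))\to\hom_{\PP_\V(\M^\kappa)}(X,-)$ there satisfies $\theta\circ d\simeq\id$ by its lower triangle, because $\alpha$ is the identity. Concretely $d$ is built from the chosen lift $\tilde\alpha\colon W\to\at_\M(f(-),x)$, the ideal pairing $\at_\M(f(-),x)\otimes\hom_\M(x,p(-))\to\at_\M(f(-),p(-))$ of \Cref{lm:slightlycoherentatid}, and the map $\at_\M(f(-),p(-))\to(-\circ f)$ of \Cref{lm:hominIndV} (this is where $f(-)\in\M^\kappa$ enters), all natural in the second variable.

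The remaining, and main, point is that this section is also a retraction, i.e.\ that the idempotent $e:=d\circ\theta$ of the corepresentable functor $\hom_{\PP_\V(\M^\kappa)}(X,-)$ is the identity; since $\PP_\V(\M^\kappa)$ is idempotent-complete, $e$ corresponds to an idempotent $A\colon X\to X$, and it suffices to show $A\simeq\id_X$. Applying \Cref{cor:swapat} again with the (also legitimate) choice $Y:=X$ produces exactly such a map $A\colon X\to X$ — one that lifts $\id_x$ along $p$ — whose upper triangle identifies $d\circ\theta$ with precomposition by $A$; by construction $A$ is classified by the cocone $W\xrightarrow{\tilde\alpha}\at_\M(f(-),x)\simeq\at_\M(f(-),p(X))\to X\circ f$, whereas $\id_X$ is classified by the universal cocone of $X=\colim_I^W j\circ f$. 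The two agree after the comparison $X\circ f\to\hom_\M(f(-),p(X))$ (both restrict to the universal cocone of $x=\colim_I^W f$), but that comparison is not a monomorphism, so one cannot conclude directly. I expect this to be settled by the same universal-property manipulation that handles the ``other composite'' in the proof of \Cref{ex:atid}: in functors valued in $\Fun_\V(I\op,\V)$, the colimit-preserving $\V$-linear functor $F\mapsto F\circ f$ is terminal among such functors equipped with a lax $\V$-linear transformation to $F\mapsto\hom_\M(f(-),p(F))$ — checked componentwise over $I$, where it reduces to identifying $\hom_{\PP_\V(\M^\kappa)}(y(m),-)$ with the colimit-preserving $\V$-linear approximation of $\hom_\M(m,p(-))$ for $m\in\M^\kappa$ (cf.\ the proof of \Cref{lm:hominIndV}) — and this terminality, together with the coherences of \Cref{lm:slightlycoherentatid} and the already-established $\theta\circ d\simeq\id$, forces $A\simeq\id_X$. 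Verifying this last claim is where essentially all the work lies; everything before it is formal.
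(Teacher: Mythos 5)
The paper itself states \Cref{cor:atpreslocalleft} without an explicit proof, so there is no argument on the paper's side to compare against; the intended derivation is presumably ``apply \Cref{cor:swapat} with $\alpha = \id_x$.'' You have correctly unwound what this gives: a natural section $d$ with $\theta\circ d \simeq \id$, and an idempotent endomorphism $A^{*} = d\circ\theta$ of $\hom(X,-)$ corresponding by Yoneda to an idempotent $A\colon X\to X$ lifting $\id_x$; showing $\theta$ is an equivalence is precisely the assertion that $A\simeq \id_X$. This identification of the crux is exactly right, and you are also right that this is \emph{not} automatic: a split retraction is not an isomorphism in general, and the cofinality argument used in the proof of \Cref{thm:dblexhaustionV} does not help here, because with a constant tower $x_n = x$ and transitions $A$ it only identifies $\lim_n\hom(X,F)$ with $\hom(x,p(F))$ for the telescope $\colim_\NN X$ along $A$ --- i.e.\ it recovers the retract $\mathrm{im}(A)$, not $X$ itself.

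However, the final step is genuinely missing from your proposal, and the fix you suggest does not, as stated, close it. Your terminality observation is correct: $\hom_{\PP_\V(\M^\kappa)}(y(m),-)$ is indeed the universal $\V$-linear colimit-preserving approximation of $\hom_\M(m,p(-))$ for $m\in\M^\kappa$ (this is what makes \Cref{lm:hominIndV} canonical). But that universal property is a statement about \emph{natural transformations of functors} $\PP_\V(\M^\kappa)\to\V$, while what you need to identify are two specific maps $W\to \hom(jf(-),X)$ --- the universal cocone $\tilde u$ and the composite $\tilde a$ of your chosen lift with \Cref{lm:hominIndV} --- both of which project to the same $u\colon W\to\hom(f(-),x)$. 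Terminality of the target functor does not make such lifts unique; that would require the counit $\hom(jf(-),-)\to\hom(f(-),p(-))$ to be a monomorphism after applying $\hom(W,-)$, which is more or less what the corollary asserts. So as written the argument is circular at this point, and you have been candid that ``verifying this last claim is where essentially all the work lies.'' That candour is warranted: this is a real gap, not a routine verification. A correct completion would need to use the atomicity witness more substantively --- e.g.\ to show directly that $\hom(X,Z)\simeq 0$ for all $Z\in\ker(p)$, equivalently that $\ker(A)$ vanishes --- and I do not see how to extract that from \Cref{cor:swapat} and the terminality alone.
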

In this corollary, note that the map $\hom(X,F)\to \hom(x,p(F))$ does not depend on a witness $W\to \at_\M(f(-),x)$ that $\colim_I^Wf$ is an atomic presentation of $X$, it only depends on the map $W\to \hom(f(-),x)$ classifying the equivalence $\colim_I^Wf\to x$. Thus the corollary only depends on the \emph{property} that there exists such a witness. This will be crucial later on. 
\begin{proof}[Proof of \Cref{thm:dblexhaustionV}]
    1 implies 2, 1 implies 3, 3 implies 4 and 2 implies 4 are obvious, so it suffices to prove that 0 implies 1 and 1 implies 4. 

    (0 implies 1): Recall that essentially by definition, the canonical functor $p:\PP_\V(\M^\kappa)\to~\M$ is given as $\colim_{\M^\kappa}^{(-)} i$, where $i:\M^\kappa\to \M$ is the inclusion. 

    Now the map $\hat y\to y$ induces equivalences $p\circ \hat y\to p\circ y\to \id_\M$, and by \Cref{cor:descofat} it corresponds exactly to the map $\at_\M(-,m)\to \hom_\M(-,m)$, so the resulting expression $$\colim_\M^{\at_\M(-,m)}i \simeq \colim_\M^{\hat y(m)}i \simeq m$$ is an atomic presentation of any $m\in\M$. 

    (4 implies 0): Let $x= \colim_\NN x_n$ be a sequential colimit along weakly atomically presentable maps. Up to inserting the relevant factorizations and by cofinality, one may assume that each $x_n\to x_{n+1}$ is in fact atomically presentable. Pick presentations for each of these: $x_n = \colim_{I_n}^{W_n}x^n_\bullet$ and lifts $W_n\to \at_\M(x^n_\bullet, x_{n+1})$. 

   Let $X_n:=~\colim_{I_n}^{W_n}y(x^n_\bullet)$. By \Cref{lm:hominIndV}, we get maps $$W_n\to \at_\M(x^n_\bullet, x_{n+1})\to \hom_{\PP_\V(\M^\kappa)}(y(x_n^\bullet), X_{n+1})$$ and thus, in total, we get maps $X_n\to X_{n+1}$ lifting the maps $x_n\to x_{n+1}$. Let $X:=~\colim_\NN X_n$. We have a canonical equivalence $p(X)\simeq x$. The claim is now that $X$ is a local left adjoint of $p$ at $x$, that is, the natural map $\hom_{\PP_\V(\M^\kappa)}(X,F)\to \hom_\M(p(X),p(F))\simeq \hom_\M(x,p(F))$ is an equivalence for all $F\in\PP_\V(\M^\kappa)$. 

   Since atomic presentations are closed under tensoring with $\V$, it will follow that on sequential colimits along weakly atomically presentable maps $x$, not only does a local left adjoint $p^L(x)$ exist, but also the canonical map $v\otimes x\to p(v\otimes p^L(x))$ witnesses $v\otimes p^L(x)$ as a local left adjoint at $v\otimes x$. Since these objects generate under colimits by assumption, it will follow not only that $p$ admits a left adjoint, but also that it is strongly $\V$-linear (as opposed to only oplax $\V$-linear).
   
  To prove this claim, we now use the usual trick: using \Cref{cor:swapat},  we find dotted lifts in the diagrams: 
   \[\begin{tikzcd}
	{\hom_{\PP_V(\M^\kappa)}(X_{n+1},F)} & {\hom_\M(x_{n+1},p(F))} \\
	{\hom_{\PP_V(\M^\kappa)}(X_n,F)} & {\hom_\M(x_{n},p(F))}
	\arrow[from=1-1, to=1-2]
	\arrow[from=1-1, to=2-1]
	\arrow[dashed, from=1-2, to=2-1]
	\arrow[from=1-2, to=2-2]
	\arrow[from=2-1, to=2-2]
\end{tikzcd}\]
which together prove the claim by a simple cofinality argument as in the proof of \cite[Lemma 2.24]{maindbl}. 
\end{proof}
As a corollary, we also obtain a way of detecting internal left adjoints with dualizable source:
\begin{cor}\label{cor:iLviaexhaust}
Let $f:\M\to\N$ be a map in $\Mod_\V(\PrL)$, and assume $\M$ is dualizable. 

If there exists a factorization as follows:
\[\begin{tikzcd}
	{\at_\M(-,-)} & {\at_\N(f(-),f(-))} \\
	{\hom_\M(-,-)} & {\hom_\N(f(-),f(-))}
	\arrow[dashed, from=1-1, to=1-2]
	\arrow[from=1-1, to=2-1]
	\arrow[from=1-2, to=2-2]
	\arrow[from=2-1, to=2-2]
\end{tikzcd}\]
then $f$ is an internal left adjoint. 
\end{cor}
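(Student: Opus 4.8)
The plan is to use the characterization of internal left adjoints via local left adjoints of the canonical functor $p:\PP_\V(\M^\kappa)\to\M$, exactly as in the proof of the implication (4 implies 0) in \Cref{thm:dblexhaustionV}. Fix an $\M$-good cardinal $\kappa$ large enough that it is also $\N$-good, and let $j_\M:\M\to\PP_\V(\M^\kappa)$, $j_\N:\N\to\PP_\V(\N^\kappa)$ be the restricted Yoneda embeddings, with left adjoints $p_\M$, $p_\N$. Since $\M$ is dualizable, $p_\M$ admits a $\V$-linear left adjoint $\hat y_\M$, and by \Cref{cor:descofat} the map $\hat y_\M\to j_\M$ corresponds to the canonical map $\at_\M(-,-)\to\hom_\M(-,-)$. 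So the hypothesis gives a map $\hat y_\M(-)\to j_\M(-)$ in $\PP_\V(\M^\kappa)$-valued functors whose $\hom$ recovers the given factorization $\at_\M(-,-)\to\at_\N(f(-),f(-))\to\hom_\N(f(-),f(-))$.

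First I would observe that $f$ being an internal left adjoint is equivalent to $p_\N$ admitting, "locally at each object in the image of $f$", a local left adjoint that is moreover compatible with $\V$-tensors — but more directly, by \cite[Theorem 1.49]{maindbl}-type reasoning, it suffices to produce, for every $m\in\M^\kappa$, an object $F_m\in\PP_\V(\N^\kappa)$ together with an equivalence $\hom_{\PP_\V(\N^\kappa)}(F_m,-)\simeq\hom_\N(f(m),p_\N(-))$ of $\V$-linear colimit-preserving functors, functorially in $m$, and then to extend along weighted colimits. Here I would take $F_m := j_\N(f(m))$ is the wrong choice; rather, the factorization hypothesis should be used to build the correct $F_m$. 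Concretely, applying $p_\M$ to the diagram defining an atomic presentation of $m$ (namely $m\simeq\colim_{\M^\kappa}^{\at_\M(-,m)}i$ from the proof of \Cref{thm:dblexhaustionV}) and pushing forward along $f$, the factorization through $\at_\N(f(-),f(m))$ is exactly a relative atomic presentation of $\id_{f(m)}$ — so $f(m)$ is atomically presentable, with presentation $f(m)\simeq\colim_{\M^\kappa}^{\at_\M(-,m)}(f\circ i)$ and lift $\at_\M(-,m)\to\at_\N(f(-),f(m))$. Then \Cref{cor:atpreslocalleft} applies: setting $X_m:=\colim_{\M^\kappa}^{\at_\M(-,m)} j_\N(f(i(-)))\in\PP_\V(\N^\kappa)$, the functor $p_\N$ induces an equivalence $\hom_{\PP_\V(\N^\kappa)}(X_m,F)\xrightarrow{\ \sim\ }\hom_\N(f(m),p_\N(F))$ for all $F$, i.e. $X_m$ is a local left adjoint of $p_\N$ at $f(m)$.

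Next I would promote this from objects of $\M^\kappa$ to all of $\M$. Every $x\in\M$ is a $\kappa$-filtered colimit of objects of $\M^\kappa$, hence (after expanding the colimit and using that weighted/filtered colimits of local left adjoints along maps whose transition data is controlled by the $\at$-factorization are again local left adjoints, exactly the cofinality trick from the proof of \Cref{thm:dblexhaustionV} via \Cref{cor:swapat}) one gets a local left adjoint of $p_\N$ at $f(x)$, compatibly with $\V$-tensoring since atomic presentations are closed under tensoring with $\V$. Since such $f(x)$, together with their $\V$-tensors and colimits, exhaust the image of $f$ applied to a generating set — and $f$ preserves colimits — this shows $f$ factors as $\M\xrightarrow{\ \hat y_\M\ }\PP_\V(\M^\kappa)\to\PP_\V(\N^\kappa)\xrightarrow{p_\N}\N$ where the middle arrow is the colimit-preserving extension of $j_\N\circ f\circ i$, and all three arrows are internal left adjoints ($\hat y_\M$ because $\M$ is dualizable; the middle by functoriality of $\PP_\V$; $p_\N$ restricted to the relevant part because we just built local left adjoints there), hence so is $f$.

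The main obstacle I expect is the bookkeeping in the "promotion" step: making the local-left-adjoint data for the $X_m$ functorial in $m\in\M^\kappa$ and then checking that passing to the colimit over a $\kappa$-filtered diagram (and over weighted colimits / $\V$-tensors) genuinely produces a local left adjoint at $f(x)$ — this is where one must reprise the \Cref{cor:swapat}-style commuting-square argument in a relative ($\M\to\N$) setting rather than the absolute one, and verify it assembles coherently enough to conclude $\V$-linearity (not merely oplax $\V$-linearity) of the resulting left adjoint to $p_\N$ on the subcategory generated by the image of $f$. Everything else is a direct transcription of the machinery already developed for \Cref{thm:dblexhaustionV}.
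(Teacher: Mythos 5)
Your opening observations are correct and match the paper's first step: the given factorization says precisely that $f$ carries an atomic presentation of any $x\in\M$ to an atomic presentation of $f(x)$, and by \Cref{cor:atpreslocalleft} this furnishes a local left adjoint $X_m$ of $p_\N$ at each $f(m)$. (Incidentally you do not need a separate promotion from $\M^\kappa$ to $\M$: the canonical atomic presentation $m\simeq\colim_{\M^\kappa}^{\at_\M(-,m)}i$ from the proof of \Cref{thm:dblexhaustionV} already works for every $m\in\M$, not just $\kappa$-compact $m$.)

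The gap is in your closing argument. The factorization $f\simeq p_\N\circ\PP_\V(j_\N\circ f\circ i)\circ\hat y_\M$ is correct, and the first two factors are internal left adjoints, but $p_\N:\PP_\V(\N^\kappa)\to\N$ is \emph{not} an internal left adjoint unless $\N$ is dualizable, which is not assumed. Declaring it an internal left adjoint ``restricted to the relevant part'' does not yield a well-posed claim: being an internal left adjoint is a global property of the functor, and having local left adjoints of $p_\N$ at the objects $f(x)$ does not by itself compose to an internal left adjoint for the composite. So the last sentence of your proof does not establish the conclusion, and as you note yourself, the ``bookkeeping'' you defer is in fact the entire substance of the remaining argument.

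The paper closes the gap without any factorization of $f$: it verifies directly, via \Cref{cor:Vliniffweight}, that $f^R$ preserves weighted colimits. For a weight $V$ and diagram $g$, one tests the map $\colim_J^V f^R g\to f^R(\colim_J^V g)$ by applying $\hom_\M(x,-)$ for an atomically presented $x=\colim_I^W\phi$; \Cref{cor:colimdetectV} rewrites the resulting map as $\hom(\PP_\V(f)(X),\colim_J^V j_\N g)\to\hom(f(x),\colim_J^V g)$ (using $j_\M\circ f^R\simeq\PP_\V((f^R)^\kappa)\circ j_\N$ for $\kappa$ large), and this is an equivalence by \Cref{cor:atpreslocalleft} applied to the pushed-forward atomic presentation of $f(x)$. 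This is exactly the ``relative version of \Cref{cor:swapat}'' you anticipated being needed, but it is already packaged in \Cref{cor:colimdetectV}: the argument requires no assembly of the local left adjoints $X_m$ into a global functor, only that each one detects the relevant weighted colimit after applying $\hom$.
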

As in \cite[Lemma 2.41]{maindbl}, we must first use atomic presentations to detect colimits, namely: 
\begin{cor}\label{cor:colimdetectV}
   Let $x\in \M$ be atomically presentable, with $\colim_I^Wf\simeq x$ an atomic presentation, and let $X:=\colim_I^W j\circ f$. Let $V:J\op\to \V$ be a weight and $g:J\to \M$ be a diagram, with a map $W\to \hom(g(-),z)$. The functor $\hom_\M(x,-)$ applied to the induced map $\colim_J^V g\to z$  is an equivalence if and only if the induced map $$\hom(X,\colim_J^V j\circ g)\to~\hom(x,z)$$ is an equivalence. 
\end{cor}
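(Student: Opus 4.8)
The plan is to deduce this directly from \Cref{cor:atpreslocalleft}. For the chosen atomic presentation $x\simeq\colim_I^Wf$ and the associated $X=\colim_I^W j\circ f$, that corollary supplies a natural equivalence $\hom_{\PP_\V(\M^\kappa)}(X,F)\simeq\hom_\M(x,p(F))$ for \emph{every} $F\in\PP_\V(\M^\kappa)$ — with no further hypothesis on $F$, and, as the remark following it stresses, depending only on the presentation and not on a choice of witness. So the whole content of \Cref{cor:colimdetectV} should come out by evaluating this equivalence at $F=\colim_J^V j\circ g$ and at $F=j(z)$.

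First I would record the two resulting identifications. Since $p$ is $\V$-linear and colimit-preserving it preserves weighted colimits (\Cref{cons:compweight}), and $p\circ j\simeq\id_\M$, so $p(\colim_J^V j\circ g)\simeq\colim_J^V g$ and hence $\hom_{\PP_\V(\M^\kappa)}(X,\colim_J^V j\circ g)\simeq\hom_\M(x,\colim_J^V g)$; likewise $p(j(z))\simeq z$ gives $\hom_{\PP_\V(\M^\kappa)}(X,j(z))\simeq\hom_\M(x,z)$. Also $p(X)=p(\colim_I^W j\circ f)\simeq\colim_I^W f\simeq x$, so the $p\dashv j$ adjunction identifies $\hom_{\PP_\V(\M^\kappa)}(X,j(z))\simeq\hom_\M(p(X),z)\simeq\hom_\M(x,z)$ as well, and this is the identification implicit in the target $\hom(x,z)$ of the map in the statement. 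The map $\colim_J^V j\circ g\to j(z)$ occurring there is the one classified by the composite $V\to\hom_\M(g(-),z)\to\hom_{\PP_\V(\M^\kappa)}(j\circ g(-),j(z))$, the second arrow being the lax-$\V$-linear structure of $j$; applying $p$ to it recovers the original map $\colim_J^V g\to z$.

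Putting these together: by naturality of the \Cref{cor:atpreslocalleft} equivalence in $F$, the map $\hom(X,\colim_J^V j\circ g)\to\hom(X,j(z))$ induced by $\colim_J^V j\circ g\to j(z)$ is carried, under the identifications above, to $\hom_\M(x,-)$ applied to $p(\colim_J^V j\circ g\to j(z))=(\colim_J^V g\to z)$, and the target is identified with $\hom_\M(x,z)$ compatibly with the map in the statement. Hence the map $\hom(X,\colim_J^V j\circ g)\to\hom(x,z)$ is, under these equivalences, \emph{literally} $\hom_\M(x,-)$ evaluated on $\colim_J^V g\to z$, and the claimed ``if and only if'' is immediate.

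The only genuinely fiddly points are the two compatibility assertions in the middle paragraph: that the ``adjunction'' identification $\hom_{\PP_\V(\M^\kappa)}(X,j(z))\simeq\hom_\M(x,z)$ agrees with the \Cref{cor:atpreslocalleft} identification at $F=j(z)$, and that $p$ applied to the induced map $\colim_J^V j\circ g\to j(z)$ is the original map $\colim_J^V g\to z$. Both are coherence statements about the interplay of the lax-$\V$-linear structure of $j$, the (strong, on weighted colimits) structure of $p$, and the unit and counit of $p\dashv j$; they are of the same nature as the diagram chases carried out in the proof of \Cref{cor:swapat}, and I would dispatch them in the same way — by constructing the relevant maps and checking commutativity of the squares. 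Everything else is formal.
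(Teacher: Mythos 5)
Your proof is correct and is essentially the paper's argument. The paper simply invokes \Cref{cor:atpreslocalleft} at $F=\colim_J^V j\circ g$ to get that $\hom(X,\colim_J^V j\circ g)\to\hom(x,\colim_J^V g)$ is an equivalence, observes that the map in the statement factors through it, and concludes by two-out-of-three; your additional evaluation at $F=j(z)$ and the compatibility checks you flag are just a more explicit way of articulating the same factorization, not a different route.
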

\begin{proof}
    By \Cref{cor:atpreslocalleft}, the corresponding map $\hom(X,\colim_J^Vj\circ g)\to \hom(x,\colim_J^V g)$ is an equivalence, so the claim is immediate by 2-out-of-3. 
\end{proof}
Crucially, while it is important that $\colim_I^Wf\simeq x$ be an atomic presentation, neither the object $X$ nor the map $\hom(X,\colim_J^V j\circ g)\to \hom(x,z)$ depend on the choice of lift $W\to \at(f(-),x)$. 
\begin{proof}[Proof of \Cref{cor:iLviaexhaust}]
If there exists such a factorization, then we find that an atomic presentation of any $x\in \M$ is sent to an atomic presentation of $f(x)\in \N$. 

Let $V:J\op\to \V$ be a weight and $g:J\to \N$ a diagram. We wish to prove (cf. \Cref{cor:Vliniffweight}) that the canonical map $\colim^V_J f^R\circ g\to f^R(\colim^V_J g)$ is an equivalence. It suffices to check this after applying $\hom(x,-)$ for every $x\in \M$.  By \Cref{thm:dblexhaustionV}, these are all atomically presentable. So let $x=\colim_I^Wf$ be an atomic presentation of some $x\in\M$ and let $X=\colim_I^W j\circ f$. 

By \Cref{cor:colimdetectV}  it suffices to prove that the canonical map $\hom(X,\colim_J^V j\circ f^R\circ~g)\to~\hom(x, f^R(\colim_J^V g))$ is an equivalence. 

Now for $\kappa$ large enough $j\circ f^R\simeq \PP_\V((f^R)^\kappa)\circ j$ and so this canonical map is also $\hom(\PP_\V(f)(X),\colim_J^V j\circ g)\to \hom(f(x),\colim_J^Vg)$ and is thus an equivalence by the first paragraph of the proof and the previous corollary.
\end{proof}
The very same proof shows the following strenghtening of this result, which is unfortunately more technical to state, but is essentially a version of \cite[Addendum 2.42]{maindbl}: instead of asking for \emph{all} atomic maps to be preserved, it suffices to ask for \emph{enough} of them to be.
\begin{add}\label{add:enough}
    Let $L:\M\to \N$ be a $\V$-linear functor between $\V$-modules and assume $\M$ is dualizable. Suppose that the $x_s, s\in S$ are generators of $\M$ under colimits and $\V$-tensors, and for each $s\in S$, let $\colim_{I_s}^{W_s} f_s\to x_s$ be an atomic presentation of $x_s$, with corresponding maps of weights $W_s\to \hom(f_s, x_s)$. 

    If, for each $s$, we have factorizations of the form: 
    \[\begin{tikzcd}
	{W_s} & {\at_\N(L\circ f_s, L(x_s))} \\
	{\hom_\M(f_s,x_s)} & {\hom_\N(L\circ f_s,L(x_s))}
	\arrow[dashed, from=1-1, to=1-2]
	\arrow[from=1-1, to=2-1]
	\arrow[from=1-2, to=2-2]
	\arrow[from=2-1, to=2-2]
\end{tikzcd}\]
then $L$ is an internal left adjoint. 
\end{add}
Note that the criterion does not impose anything on the witness $W_s\to \at_\M(f_s,x_s)$ of atomicity of these presentations, all we need is for them to exist. 

As a special case, we find the following convenient criterion in the case of tensor products:
\begin{cor}
    Let $\M_1,...,\M_k$ be dualizable $\V$-modules and $f: \M_1\otimes_\V\dots\otimes_\V\M_k\to \N$ be a $\V$-linear map. It is an internal left adjoint if there exists a factorization as follows: 
    \[\begin{tikzcd}
	{\otimes_i\at_{\M_i}(-,-)} & {\at_\N(f(-),f(-))} \\
	{\hom_{\otimes_i\M_i}(-,-)\to } & {\hom_\N(f(-),f(-))}
	\arrow[dashed, from=1-1, to=1-2]
	\arrow[from=1-1, to=2-1]
	\arrow[from=1-2, to=2-2]
	\arrow[from=2-1, to=2-2]
\end{tikzcd}\]
\end{cor}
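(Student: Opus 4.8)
The statement is the ``tensor product'' special case of \Cref{cor:iLviaexhaust}, so the plan is to reduce to that corollary (or more precisely to \Cref{add:enough}). First I would recall that $\M_1\otimes_\V\cdots\otimes_\V\M_k$ is dualizable over $\V$: dualizable $\V$-modules form a symmetric monoidal subcategory of $\Mod_\V(\PrL)$, so a finite tensor product of dualizable modules is dualizable. This is what lets us apply \Cref{cor:iLviaexhaust} in the first place. The only remaining point is to check that the ``external'' data $\otimes_i\at_{\M_i}(-,-)\to \at_\N(f(-),f(-))$ actually produces the hypothesis required there, namely a factorization $\at_{\M_1\otimes\cdots\otimes\M_k}(-,-)\to \at_\N(f(-),f(-))$ over $\hom_\N(f(-),f(-))$.

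The key step is therefore to identify $\at_{\M_1\otimes_\V\cdots\otimes_\V\M_k}$ on objects of the form $m_1\otimes\cdots\otimes m_k$. The external tensor functor $\M_1\times\cdots\times\M_k\to \M_1\otimes_\V\cdots\otimes_\V\M_k$ is the universal $\V$-multilinear map, and on hom-objects it gives a canonical map $\otimes_i\hom_{\M_i}(m_i,-)\to \hom(\otimes_i m_i, -)$, where the left-hand side, being a tensor of $\V$-linear colimit-preserving functors, is again $\V$-linear and colimit-preserving in each (hence in the joint) variable; applied to atomic data it yields a canonical map $\otimes_i\at_{\M_i}(m_i,-)\to \at_{\otimes_i\M_i}(\otimes_i m_i,-)$ over $\hom(\otimes_i m_i,-)$. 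Concretely: if each $m_i$ sits in an atomic presentation (e.g. by \Cref{thm:dblexhaustionV} applied to each dualizable $\M_i$), then the external tensor of these presentations is an atomic presentation of $\otimes_i m_i$, with witness the external tensor of the individual witnesses $W_i\to \at_{\M_i}(f_i(-),m_i)$ (using that the collection of atomic maps is a $2$-sided ideal, \Cref{ex:atideal}, and that weighted colimits are compatible with multilinearity, via \Cref{cons:compweight} and \Cref{prop:compatibilityweightedcolim}). Since the objects $m_1\otimes\cdots\otimes m_k$ generate $\M_1\otimes_\V\cdots\otimes_\V\M_k$ under colimits and $\V$-tensors, these furnish exactly a family of generators together with atomic presentations as in the hypothesis of \Cref{add:enough}.

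So the argument runs: pick the generating family $\{m_{1,s}\otimes\cdots\otimes m_{k,s}\}_{s\in S}$ (indexed by tuples of objects, each $m_{i,s}$ given an atomic presentation), form the external-tensor atomic presentations, and observe that the hypothesized factorization $\otimes_i\at_{\M_i}(-,-)\to \at_\N(f(-),f(-))$ composed with the canonical map $\otimes_i\at_{\M_i}(m_{i,s},-)\to\at_{\otimes_i\M_i}(\otimes_i m_{i,s},-)$ supplies the required lifts $W_s\to \at_\N(L\circ f_s, L(x_s))$; then \Cref{add:enough} gives that $f$ is an internal left adjoint.

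The main obstacle I expect is purely bookkeeping: making precise that the external tensor of atomic presentations is an atomic presentation, i.e. that the weight/diagram data tensors correctly and that the resulting map of weights factors through $\at$. This is where one must combine \Cref{prop:compatibilityweightedcolim} (or the observation that weighted colimits preserve colimits and $\V$-tensors in both variables) with the ideal property \Cref{ex:atideal}/\Cref{lm:slightlycoherentatid}. None of it is conceptually hard, but it requires some care with the multilinear universal property of $\otimes_\V$; I would likely just state the needed identification of $\at$ on external tensors and verify it by the usual universal-property/Yoneda argument, exactly as in the proof of \Cref{cor:descofat}, and leave the diagram-chase to the reader as is done elsewhere in this section.
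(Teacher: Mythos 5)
Your proof is correct and follows exactly the route the paper intends: the paper gives no proof for this corollary, asserting only that it is a ``special case'' of \Cref{add:enough}, and your argument supplies the missing details. The one genuinely non-formal step — that the external tensor of atomic presentations is again an atomic presentation, via the canonical comparison $\otimes_i\at_{\M_i}(m_i,-)\to\at_{\otimes_i\M_i}(\otimes_i m_i,-)$ coming from the universal property of $\at$ (the left-hand side being $\V$-linear and colimit-preserving and mapping to $\hom_{\otimes_i\M_i}(\otimes_i m_i,-)$) — is precisely the bookkeeping you identified, and your justification of it is the right one. Combined with the fact that external tensors generate $\otimes_i\M_i$ under colimits and $\V$-tensors, and that such a tensor product is dualizable (dualizables form a symmetric monoidal subcategory), this is a legitimate invocation of \Cref{add:enough}.
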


Before moving on to our study of rigidity, we rephrase the above ideas in a slightly different language, and use them (though the above ones would work just as well) to describe the ``dualizable core'' of any $\V$-module. The presentation here is heavily inspired by the work of Bunke and Dünzinger \cite{etheory}. I believe their presentation is cleaner: I have only included the previous discussion because I do not know how to give such a presentation for the case of rigidification to come, so that this previous ``hands-on'' discussion serves as a blueprint for that case. 

I also do not know of a practical use of the following construction of the dualizable core over a general base $\V$ (beyond proving that it exists), but the relevant ideas should also be seen as a simpler blueprint of our study of rigidifications later on. 

\begin{defn}
Let $\M$ be a $\V$-module and $\kappa$ an $\M$-good cardinal. A ($\V$-)\emph{shape-able} object of $\M$ is an object $x$ such that the functor $\hom(x, p(-)) : \PP_\V(\M^\kappa)\to \V$ admits a $\V$-linear left adjoint, i.e. there exists $\hat j(x)$ such that $\hom(\hat j(x), -)\simeq \hom(x,p(-))$. 
    
We call $\hat j(x)$ the \emph{shape} of $x$. 
\end{defn}
\begin{obs}
    Let $x$ be shape-able in $\M$, and let $v\in \V$. We have a map $$v\otimes x \to v\otimes p(\hat j(x))\simeq p(v\otimes \hat j(x)).$$ 

   Note that for all $v\in \V$, this map witnesses $v\otimes \hat j(x)$ as the shape of $v\otimes x$, i.e. it induces an equivalence $\hom(v\otimes \hat j(x), F)\simeq \hom(v\otimes x, p(F))$.
\end{obs}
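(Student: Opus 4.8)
The statement to prove is the Observation following the definition of shape-able object: that for a shape-able $x\in\M$ with shape $\hat j(x)$, and any $v\in\V$, the canonical map $v\otimes x\to p(v\otimes \hat j(x))$ (obtained by tensoring $x\to p(\hat j(x))$ with $v$ and using that $p$ is $\V$-linear and colimit-preserving, hence commutes with $v\otimes-$) exhibits $v\otimes\hat j(x)$ as the shape of $v\otimes x$; equivalently, it induces an equivalence $\hom_{\PP_\V(\M^\kappa)}(v\otimes\hat j(x),F)\simeq \hom_\M(v\otimes x,p(F))$ for all $F$. The plan is to reduce everything to the defining adjunction $\hom(\hat j(x),-)\simeq\hom(x,p(-))$ together with the standard manipulation of internal homs in a $\V$-module against tensors by objects of $\V$.

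First I would recall that in any presentable $\V$-module $\N$, the internal hom satisfies $\hom_\N(v\otimes n, n')\simeq \hom_\V(v,\hom_\N(n,n'))$ naturally in all variables: this is just the defining adjunction $v\otimes-\dashv \hom_\V(v,-)$ on $\N$ combined with the fact that $\hom_\N(-,-)$ is the internal hom. Applying this on the left-hand side with $\N=\PP_\V(\M^\kappa)$, $n=\hat j(x)$, $n'=F$, and on the right-hand side with $\N=\M$, $n=x$, $n'=p(F)$, the claimed equivalence becomes
\[
\hom_\V\bigl(v,\hom_{\PP_\V(\M^\kappa)}(\hat j(x),F)\bigr)\ \simeq\ \hom_\V\bigl(v,\hom_\M(x,p(F))\bigr),
\]
which holds because the defining property of the shape gives $\hom_{\PP_\V(\M^\kappa)}(\hat j(x),F)\simeq\hom_\M(x,p(F))$ already, and $\hom_\V(v,-)$ is a functor. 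So the content is really just to check that under these identifications the map induced by the canonical map $v\otimes x\to p(v\otimes\hat j(x))$ is the one obtained by applying $\hom_\V(v,-)$ to the counit identification; this is a naturality bookkeeping, which I would phrase via the Yoneda lemma: both sides represent the same functor $\V\op\to\widehat{\Cat}$ (or one checks equivalence of functors in $F$) and the map is the canonical comparison, so it is an equivalence.

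More cleanly, I would argue directly at the level of functors $\PP_\V(\M^\kappa)\to\V$: the functor $\hom_\M(v\otimes x,p(-))$ equals $\hom_\V(v,\hom_\M(x,p(-)))\simeq \hom_\V(v,\hom_{\PP_\V(\M^\kappa)}(\hat j(x),-))\simeq \hom_{\PP_\V(\M^\kappa)}(v\otimes\hat j(x),-)$, so in particular it is corepresented by $v\otimes\hat j(x)$, i.e.\ $v\otimes x$ is shape-able with shape $v\otimes\hat j(x)$; and then one identifies the corepresenting map with the stated canonical one by tracing through where the generator $\id\in\hom_\V(v,\hom_\V(v,\dots))$ — or more precisely the identity of $v\otimes\hat j(x)$ — goes, which recovers exactly $v\otimes x\to p(v\otimes\hat j(x))$ after adjunction. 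I expect the only mildly delicate point to be the very last identification of maps (as opposed to mere equivalence of objects): one must confirm that the equivalence produced by chasing adjunctions is induced by the explicitly described canonical map, not merely some abstract equivalence. This is handled by the usual argument that a $\V$-linear colimit-preserving functor (here $p$, and $v\otimes-$) sends the unit of the shape adjunction to the corresponding unit, so no new choices enter; I would state this as "unwinding the definitions" after setting up the chain of natural equivalences above, exactly in the style of the surrounding proofs in the paper.
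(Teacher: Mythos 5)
Your proof is correct. The paper supplies no argument for this observation, leaving it as a routine consequence of the definition; your write-up gives exactly the intended verification: reduce via the $\V$-enriched tensor-hom adjunction $\hom_\N(v\otimes n,n')\simeq\hom_\V(v,\hom_\N(n,n'))$ to the functoriality of $\hom_\V(v,-)$ applied to the defining equivalence $\hom(\hat j(x),-)\simeq\hom(x,p(-))$, and then confirm by tracing the identity of $v\otimes\hat j(x)$ through this chain that the induced corepresenting map is precisely the composite $v\otimes x\to v\otimes p(\hat j(x))\simeq p(v\otimes\hat j(x))$ built from the canonical map $x\to p(\hat j(x))$ and the $\V$-linearity of $p$.
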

\begin{rmk}
Alternatively, one could ask for the functor $\Map(x,p(-)): \PP_\V(\M^\kappa)\to \Ss$ to be representable. In this case, it would not imply that $v\otimes x$ has the same property, though one could explicitly require the map $v\otimes x\to p(v\otimes \hat j(x))$ constructed above to witness $v\otimes\hat j(x)$ as a representing object, and then we would reach an equivalent definition. 
\end{rmk}

If $x$ is atomically presentable, \Cref{cor:atpreslocalleft} shows that it is shape-able: if $x\simeq \colim_I^W f$ is an atomic presentation, we see that $\hat j(x) = \colim_I^W j\circ f$ is the shape of $x$. 

Conversely, it is not entirely clear to the author whether shapes are always atomic presentations, namely, whether $\hom_{\PP_\V(\M^\kappa)}(y(-),\hat y(x))\to \hom_\M(-,x)$ lifts to $\at_\M(-,x)$: by \Cref{cor:descofat}, this is true when $\M$ is dualizable, but if not, the fact that $\hat y(-)$ is not globally defined makes it difficult to answer this question. 

But morally, one should think of the two notions as capturing the same idea, though they might not be equivalent away from the dualizable setting.

We now have a nice feature of shapes:
\begin{obs}
    If $x$ is shape-able with shape $\hat j(x)$, there is a canonical equivalence $p\circ \hat j(x)\simeq x$. 
\end{obs}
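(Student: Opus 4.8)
The plan is to verify the equivalence objectwise via the enriched Yoneda lemma, using nothing more than the defining property of the shape together with the standard fact that the restricted Yoneda embedding $j\colon\M\to\PP_\V(\M^\kappa)$ is fully faithful (so that the counit of the adjunction $p\dashv j$ is an equivalence $p\circ j\simeq\id_\M$; this is already used implicitly in the proof of \Cref{thm:dblexhaustionV} via $p\circ \hat y\to p\circ y\to \id_\M$).

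First I would fix $m\in\M$ and produce a chain of $\V$-natural equivalences
\[
\hom_\M\bigl(p(\hat j(x)),m\bigr)\;\simeq\;\hom_{\PP_\V(\M^\kappa)}\bigl(\hat j(x),j(m)\bigr)\;\simeq\;\hom_\M\bigl(x,p(j(m))\bigr)\;\simeq\;\hom_\M(x,m).
\]
The first equivalence is the $\V$-enriched adjunction $p\dashv j$ (both functors being $\V$-linear). The second is the defining property of the shape, $\hom_{\PP_\V(\M^\kappa)}(\hat j(x),-)\simeq\hom_\M(x,p(-))$, evaluated at the object $F=j(m)\in\PP_\V(\M^\kappa)$. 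The third is $p\circ j\simeq\id_\M$. Since each step is natural in $m$ and lands in $\V$, the composite is a $\V$-natural equivalence $\hom_\M(p(\hat j(x)),-)\simeq\hom_\M(x,-)$ of $\V$-functors $\M\to\V$.

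Applying the $\V$-enriched Yoneda lemma then yields the canonical equivalence $p(\hat j(x))\simeq x$. I would remark, as a sanity check, that this is consistent with the preceding discussion: when $x\simeq\colim_I^W f$ is an atomic presentation and $\hat j(x)=\colim_I^W j\circ f$, the equivalence $p(\hat j(x))\simeq x$ recovers the statement that $p$ preserves this particular weighted colimit, as in \Cref{cons:compweight}.

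There is no real obstacle here; the only point requiring a little care is to keep all equivalences $\V$-enriched and natural throughout, so that the enriched Yoneda lemma genuinely applies rather than merely its underlying-space shadow. One could alternatively phrase the argument as: $p(\hat j(x))$ corepresents $F\mapsto \hom_\M(x,p(j(p(F))))\simeq\hom_\M(x,p(F))$ on the image of $j$, but the objectwise route above is cleaner and avoids any representability bookkeeping.
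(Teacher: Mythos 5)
Your proof is correct and is essentially the same argument as the paper's: the paper abstracts it into the subsequent lemma about a functor $p$ with a fully faithful right adjoint $j$ and a local left adjoint at $x$, proved by a diagram chase that amounts to exactly your Yoneda chain (unit/counit of $p\dashv j$, the defining corepresentability of $\hat j(x)$, and $pj\simeq\id$). The only cosmetic difference is that you phrase it with $\V$-enriched homs while the paper's lemma uses underlying mapping spaces $\Map$ and corepresentability; both suffice since Yoneda on the underlying categories already identifies the objects.
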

It is actually a general phenomenon:
\begin{lm}
    Let $p:P\to M$ be a functor with a fully faithful right adjoint $j$. Suppose $p$ admits a local left adjoint at $x\in M$, that is, the functor $\Map(x,p(-)): P\to \Ss$ is corepresentable by some object $\hat j(x)\in P$. 

    The canonical map $x\to p\hat j(x)$ is an equivalence. 
\end{lm}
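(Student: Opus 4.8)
The plan is to extract the essential content from the setup: we have an adjunction $p\colon P\rightleftarrows M\colon j$ with $j$ fully faithful, and we are assuming that $\Map_P(x,p(-))$ is corepresentable by an object $\hat{j}(x)\in P$. By definition of corepresentability there is a natural equivalence $\Map_P(\hat{j}(x),q)\simeq \Map_M(x,p(q))$ for all $q\in P$. The candidate map $x\to p\hat{j}(x)$ should be the one corresponding, under the adjunction $p\dashv j$, to the unit map $\hat j(x)\to \hat j(x)$, i.e. the image under $\Map_P(\hat j(x),\hat j(x))\to \Map_M(x,p\hat j(x))$ of $\id_{\hat j(x)}$; call this map $\eta$. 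The goal is to show $\eta$ is an equivalence, and by the Yoneda lemma it suffices to show that precomposition with $\eta$ induces an equivalence $\Map_M(p\hat j(x),m)\to \Map_M(x,m)$ for every $m\in M$.

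First I would rewrite $\Map_M(p\hat j(x),m)\simeq \Map_P(\hat j(x),j(m))$ using the adjunction $p\dashv j$. Then I would use the corepresentability hypothesis to rewrite $\Map_P(\hat j(x),j(m))\simeq \Map_M(x,p j(m))$. Finally, since $j$ is fully faithful, the counit $p j(m)\to m$ is an equivalence, so $\Map_M(x,pj(m))\simeq \Map_M(x,m)$. Composing these three natural equivalences gives an equivalence $\Map_M(p\hat j(x),m)\simeq \Map_M(x,m)$, and the remaining task is purely bookkeeping: to check that this composite equivalence is indeed given by precomposition with the specific map $\eta$ constructed above. That identification follows by chasing $\id_{p\hat j(x)}$ (equivalently $\id_{\hat j(x)}$) through the chain of equivalences and comparing with the definition of $\eta$ via the triangle identities for the adjunction $p\dashv j$ and the naturality of the corepresentability equivalence.

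The main obstacle is this last compatibility check — making sure that the abstract zig-zag of equivalences is literally induced by the canonical map $x\to p\hat j(x)$ rather than merely by some equivalence. In a $1$-categorical setting this is a short diagram chase with the triangle identities; in the $\infty$-categorical setting one wants to phrase it so that no explicit coherence juggling is needed. The cleanest way is to observe that all the equivalences above are natural in $m\in M$, hence assemble into an equivalence of corepresentable functors $\Map_M(p\hat j(x),-)\simeq \Map_M(x,-)$ on $M$; by the Yoneda lemma this equivalence is induced by a unique map $x\to p\hat j(x)$, and one identifies that map with $\eta$ by evaluating the natural equivalence at $m=p\hat j(x)$ and tracking $\id_{p\hat j(x)}$, using that the counit $p j\Rightarrow \id$ sends the relevant identity to the identity and that the corepresentability equivalence is unital. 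This reduces the whole argument to the Yoneda lemma plus the defining properties of the two adjunction data, with no genuine computation.
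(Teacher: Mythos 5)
Your proposal is correct and follows essentially the same route as the paper: construct $\eta$ as the image of $\id_{\hat j(x)}$, then show precomposition with $\eta$ induces equivalences on all mapping spaces $\Map_M(p\hat j(x),m)\to\Map_M(x,m)$ using, in the same order, the $p\dashv j$ adjunction, the corepresentability hypothesis, and the fact that the counit $pj\Rightarrow\id$ is an equivalence, concluding by Yoneda. The paper packages the compatibility bookkeeping you describe in the last paragraph as a single commutative diagram and a two-out-of-three argument, but the content is the same.
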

\begin{proof}
    The identity of $\hat j(x)$ classifies a map $x\to p\hat j(x)$, which we aim to prove is an equivalence. So fix $z\in M$, and consider the diagram: 
    \[\begin{tikzcd}
	{\Map(\hat j(x),j(z))} & {\Map(p\hat j(x), pj(z))} & {\Map(p\hat j(x),z)} \\
	& {\Map(x,pj(z))} & {\Map(x,z)}
	\arrow[from=1-1, to=1-2]
	\arrow[from=1-1, to=2-2]
	\arrow[from=1-2, to=1-3]
	\arrow[from=1-2, to=2-2]
	\arrow[from=1-3, to=2-3]
	\arrow[from=2-2, to=2-3]
\end{tikzcd}\]
In this diagram, the leftmost horizontal map is obtained by applying $p$, while the other maps are given by pre- or postcomposition with either the unit $x\to p\hat j(x)$ or the counit $pj(z)\to z$, the latter being an equivalence. 

Thus, the following maps are equivalences: the upper horizontal composite, by the $p\dashv y$ adjunction; the bottom horizontal map, because the counit $pj(z)\to z$ is an equivalence; the diagonal arrow, because of the universal property of $\hat j(x)$. 

It follows from the top composite and the bottom composite being equivalences that also the rightmost vertical map must be an equivalence. As $z$ was arbitrary, it follows that $x\to p\hat j(x)$ is an equivalence too, as claimed. 
\end{proof}

\begin{cons}
    Let $\kappa$ be a good cardinal and $\M$ be a $\kappa$-compactly generated $\V$-module. We let $\M_{0}\subset \M$ denote the subcategory of $\kappa$-compact shape-able objects, and $\M_{sh} \subset \M$ the full subcategory it generates under colimits and tensors with $\V$. 
\end{cons}
\begin{obs}\label{obsA}
    Every object of $\M_{sh}$ is shape-able in $\M$. Furthermore, $\hat j: \M_{sh}\to~\PP_\V(\M^\kappa)$ preserves $\kappa$-compacts. 
\end{obs}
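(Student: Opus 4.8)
The plan is to prove the two assertions of \Cref{obsA} more or less in tandem, using the explicit description of the shape of a weighted colimit provided by \Cref{cor:atpreslocalleft} together with the good behaviour of weighted colimits.

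First I would record that the class of shape-able objects is closed under weighted colimits indexed by small $\V$-categories: if $x\simeq \colim_I^W f$ with each $f(i)$ shape-able with shape $\hat j(f(i))$, then I claim $X:=\colim_I^W \hat j(f(-))$ is a shape of $x$. Indeed, since $\V$-linear colimit-preserving functors preserve weighted colimits (\Cref{cons:compweight}), for any $F\in\PP_\V(\M^\kappa)$ one computes $\hom_{\PP_\V(\M^\kappa)}(X,F)\simeq \hom(W,\hom_{\PP_\V(\M^\kappa)}(\hat j(f(-)),F))\simeq \hom(W,\hom_\M(f(-),p(F)))\simeq \hom_\M(\colim_I^W f, p(F)) = \hom_\M(x,p(F))$, using the universal property of weighted colimits (the $\hom$ out of $\colim_I^W f$) in both the source and the target, and the defining property of $\hat j(f(-))$ in the middle. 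Since $\M_{sh}$ is by definition generated by the $\kappa$-compact shape-able objects under conical colimits and tensors with $\V$, and these are both instances of weighted colimits (as recalled in the two \texttt{ex}amples preceding \Cref{cor:Vliniffweight}), the first sentence of \Cref{obsA} follows; moreover this identification is compatible with the canonical map, so $\hat j$ on $\M_{sh}$ is computed as the corresponding weighted colimit of shapes.

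For the second sentence, I would argue that $\hat j:\M_{sh}\to \PP_\V(\M^\kappa)$ preserves $\kappa$-compact objects. The key point is that, for $x\in\M_0$ (i.e. $x$ itself $\kappa$-compact and shape-able), its shape $\hat j(x)$ is $\kappa$-compact in $\PP_\V(\M^\kappa)$: since $\hom_{\PP_\V(\M^\kappa)}(\hat j(x),-)\simeq \hom_\M(x,p(-))$ and $p$ preserves $\kappa$-filtered colimits (it is a left adjoint, indeed $\PP_\V(\M^\kappa)\to\M$ preserves all colimits) while $\hom_\M(x,-)$ preserves $\kappa$-filtered colimits because $x\in\M^\kappa$ and $\kappa$ is good (so $\hom_\M(x,-)$ commutes with $\kappa$-filtered colimits as a functor to $\V$, using $\V\in\CAlg(\PrL_\kappa)$), the composite $\hom_{\PP_\V(\M^\kappa)}(\hat j(x),-)$ preserves $\kappa$-filtered colimits, i.e. $\hat j(x)$ is $\kappa$-compact. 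Now $\kappa$-compact objects of $\M_{sh}$ are, up to retracts, built as \emph{finite} weighted colimits of the $x\in\M_0$, i.e. weighted colimits along $I$ with finitely many objects and weights valued in $\V^\kappa$ (here one invokes the standard description of $\kappa$-compact objects in a $\kappa$-compactly generated $\V$-module together with $\V\in\CAlg(\PrL_\kappa)$); and $\PP_\V(-)^\kappa$ being closed under such finite $\V^\kappa$-weighted colimits, $\hat j$ of such an object, computed by the weighted-colimit formula of the previous paragraph, is again $\kappa$-compact.

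The main obstacle I anticipate is the bookkeeping in the last step: making precise the claim that $\kappa$-compact objects of a $\kappa$-compactly generated $\V$-module are retracts of finite $\V^\kappa$-weighted colimits of a chosen set of $\kappa$-compact generators, and checking that $\PP_\V(\M^\kappa)^\kappa$ is stable under these operations — this is the $\V$-enriched analogue of the familiar fact that $\kappa$-compact objects are retracts of $\kappa$-small colimits of generators, and it should follow formally from \Cref{thm: UPVPsh} plus $\V\in\CAlg(\PrL_\kappa)$, but it requires care about which cardinal controls the weights. An alternative, cleaner route for the $\kappa$-compactness of $\hat j$ on all of $\M_{sh}$ is simply to note that $\hat j$ is a $\V$-linear left adjoint (it is the local left adjoint of $p$ restricted to $\M_{sh}$, hence preserves all colimits and $\V$-tensors that exist in $\M_{sh}$), and a $\V$-linear colimit-preserving functor sending generators to $\kappa$-compacts sends all $\kappa$-compacts to $\kappa$-compacts; so it suffices to know $\hat j(x)$ is $\kappa$-compact for $x\in\M_0$, which is the $\hom_\M(x,p(-))$-computation above. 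I would present it this second way.
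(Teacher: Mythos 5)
Your proposal is correct and matches the paper's proof: the paper's (terse) argument for the first claim is exactly closure of shape-able objects under colimits and $\V$-tensors — i.e.\ under weighted colimits, with the formula $\hat j(\colim_I^W f)\simeq\colim_I^W\hat j(f)$ you spell out — and for the second it reduces to checking that $\hat j$ of a $\kappa$-compact shape-able object is $\kappa$-compact, which follows from $\hom(\hat j(x),-)\simeq\hom_\M(x,p(-))$ together with $p$ commuting with $\kappa$-filtered colimits. You are right to discard your first route: the detour through retracts of $\kappa$-small $\V^\kappa$-weighted colimits of generators is subsumed by the observation that $\hat j|_{\M_{sh}}$ is a colimit- and $\V$-tensor-preserving functor out of a $\V$-module $\kappa$-compactly generated by $\M_0$, which is precisely the reasoning implicit in the paper's ``it suffices.''
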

\begin{proof}
    The class of shape-able objects is closed under colimits and $\V$-tensors. 

    For the second claim, note that it suffices to prove that $\hat j$ of a $\kappa$-compact shape-able object is $\kappa$-compact. This is clear since $p$ commutes with ($\kappa$-filtered) colimits. 
\end{proof}
\begin{rmk}
    For good $\kappa$, one can argue without too much difficulty that any atomically presentable object is a colimit of $\kappa$-compact atomically presentable objects. It is not clear to the author that the same holds for shape-able objects, but we \emph{do} need some set-theoretic control and this is why we define $\M_{sh}$ as we do and not simply as the class of shape-able objects (which would be the most natural thing to consider). 
\end{rmk}

\begin{defn}
    Let $\M$ be a $\kappa$-compactly generated $\V$-module. We define $\M^{(\alpha)}$ by induction on the ordinal $\alpha$: $\M^{(1)}= \M_{sh}$, $\M^{(\alpha+1)}$ is defined as $\PP_\V((\M^{(\alpha)})^\kappa)\times_{\PP_\V(\M^\kappa)}\M^{(1)}$ that is, the $x$'s in $\M^{(1)}$ such that $\hat j(x)$ is in $\PP_\V(\M^{(\alpha)})$, and at limit stages, $\M^{(\alpha)}$ is generated under colimits and $\V$-tensors by $\bigcap_{\beta < \alpha}(\M^{(\beta)})^\kappa$.  
\end{defn}
\begin{rmk}
    By \cite[Proposition 2.31]{maindbl}, \Cref{obsA} and by construction, $\M^{(\alpha)}$ is $\kappa$-compactly generated for all $\alpha$ and furthermore $\M^{(\alpha)}\to~\M$ preserves $\kappa$-compacts.
\end{rmk}
\begin{cor}\label{cor:stabilize}
    There is $\alpha$ such that $\M^{(\alpha+1)}=\M^{(\alpha)}$.
\end{cor}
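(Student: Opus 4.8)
\textbf{Proof plan for \Cref{cor:stabilize}.}
The plan is to argue that the functor $\alpha\mapsto \M^{(\alpha)}$ is (essentially) a non-increasing transfinite sequence of subcategories of $\M$ which cannot strictly decrease forever, for cardinality reasons, and hence must stabilize. First I would observe, by transfinite induction on $\alpha$, that $\M^{(\alpha+1)}\subseteq \M^{(\alpha)}$ as full subcategories of $\M$, and more precisely that $(\M^{(\alpha+1)})^\kappa\subseteq (\M^{(\alpha)})^\kappa$; the base of the induction is the inclusion $\M^{(2)}\subseteq\M^{(1)}$, which is immediate from the definition of $\M^{(\alpha+1)}$ as a pullback mapping to $\M^{(1)}$, and the inductive and limit steps follow because both the construction $\N\mapsto \PP_\V(\N^\kappa)\times_{\PP_\V(\M^\kappa)}\M^{(1)}$ and the operation ``generate under colimits and $\V$-tensors'' are monotone in $\N$ (here one uses the remark that each $\M^{(\beta)}\to\M$ preserves $\kappa$-compacts, so that the $\kappa$-compact objects of $\M^{(\beta)}$ sit inside $\M^\kappa$ and these comparisons make sense). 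Thus $\beta\mapsto (\M^{(\beta)})^\kappa$ is a non-increasing chain of full subcategories of the \emph{small} category $\M^\kappa$.

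Next I would make the stabilization precise. Since $\M^{(\alpha)}$ is $\kappa$-compactly generated with $\M^{(\alpha)}\to\M$ preserving $\kappa$-compacts (the Remark preceding the statement), $\M^{(\alpha)}$ is determined by the full subcategory $(\M^{(\alpha)})^\kappa\subseteq \M^\kappa$: namely $\M^{(\alpha)}\simeq \Ind_\kappa((\M^{(\alpha)})^\kappa)$, or more precisely it is the full subcategory of $\M$ generated under $\kappa$-filtered colimits by $(\M^{(\alpha)})^\kappa$. Because $\M^\kappa$ is essentially small, there is only a set's worth of full subcategories of $\M^\kappa$; a non-increasing ordinal-indexed chain of subsets of a fixed set must be eventually constant, so there is an ordinal $\alpha$ with $(\M^{(\alpha+1)})^\kappa = (\M^{(\alpha)})^\kappa$, and hence $\M^{(\alpha+1)} = \M^{(\alpha)}$.

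The one genuine point that needs care — and the step I expect to be the main obstacle — is verifying the monotonicity claim at the successor and limit stages cleanly, i.e. that if $(\N')^\kappa\subseteq \N^\kappa$ inside $\M^\kappa$ then the resulting pullback subcategories, and the subcategories they generate under colimits and $\V$-tensors, are again nested. For the successor step one must check that $x\in\M^{(1)}$ with $\hat j(x)\in\PP_\V(\N')$ forces $\hat j(x)\in\PP_\V(\N)$, which is clear since $\PP_\V(\N')\subseteq\PP_\V(\N)$ when $(\N')^\kappa\subseteq\N^\kappa$; one must also track that the pullback is formed over $\PP_\V(\M^\kappa)$ in a way compatible with $\hat j$, using \Cref{obsA} that $\hat j$ preserves $\kappa$-compacts so that everything stays within the controlled, small setting. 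For the limit step, monotonicity of $\alpha\mapsto\bigcap_{\beta<\alpha}(\M^{(\beta)})^\kappa$ is formal. Once these bookkeeping points are in place, the cardinality argument closes the proof.
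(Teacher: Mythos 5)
Your proof is correct and takes the same approach as the paper's: reduce to $\kappa$-compacts via the preceding Remark, then argue cardinality in the small category $\M^\kappa$. What you add, and what is in fact necessary for the argument to go through, is the observation that the sequence $\alpha\mapsto(\M^{(\alpha)})^\kappa$ is \emph{non-increasing}; the paper's proof asserts stabilization from smallness alone, but an arbitrary ordinal-indexed family of subsets of a set need not stabilize without monotonicity. Your inductive verification of monotonicity is right: the base case $\M^{(2)}\subseteq\M^{(1)}$ is built into the definition, the successor step follows because $\N\mapsto\PP_\V(\N^\kappa)\times_{\PP_\V(\M^\kappa)}\M^{(1)}$ is monotone in $\N$ (together with the fact that the fully faithful, colimit-preserving inclusion $\M^{(\alpha)}\to\M$ preserves and reflects $\kappa$-compactness, so $(\M^{(\alpha)})^\kappa=\M^{(\alpha)}\cap\M^\kappa$ and containments descend to $\kappa$-compacts), and the limit step uses that each $\M^{(\beta)}$ is closed under colimits and $\V$-tensors. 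So you have filled in the one genuine gap in the paper's terse proof rather than taken a different route.
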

\begin{proof}
    By the previous remark, $(\M^{(\alpha)})^\kappa$ defines an ordinal indexed sequence of subcategories of $\M^\kappa$, and thus stabilizes because this category is small. Since each $\M^{(\alpha)}$ is $\kappa$-compactly generated, the stabilization at the level of $\kappa$-compacts implies stabilization. 
\end{proof}
\begin{defn}\label{defn:dblcore}
    We let $\M_\dbl$ be $\M^{(\alpha)}$ for some $\alpha$ such that $\M^{(\alpha+1)}=\M^{(\alpha)}$, and we call it the dualizable core of $\M$. 
\end{defn}
\begin{lm}
    $\M_\dbl$ is dualizable.
\end{lm}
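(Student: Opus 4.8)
The plan is to apply the criterion of \Cref{thm:dblexhaustionV} to the $\V$-module $\M_\dbl$: it suffices to show that every object of $\M_\dbl$ is atomically presentable (or at least that atomically presentable objects generate it under colimits and $\V$-tensors). By \Cref{cor:stabilize} and \Cref{defn:dblcore}, we have $\M_\dbl = \M^{(\alpha)}$ with $\M^{(\alpha+1)} = \M^{(\alpha)}$, so that every $\kappa$-compact object $x$ of $\M_\dbl$ lies in $\M^{(1)} = \M_{sh}$ \emph{and} has the property that $\hat j(x) \in \PP_\V((\M_\dbl)^\kappa) \subset \PP_\V(\M^\kappa)$. The key point is therefore to exploit the fixed-point equation: the shape $\hat j(x)$, which is defined in $\PP_\V(\M^\kappa)$ as a local left adjoint to $p : \PP_\V(\M^\kappa)\to \M$, actually lives in the subcategory $\PP_\V((\M_\dbl)^\kappa)$, so it serves as a local left adjoint for the canonical functor $p_\dbl : \PP_\V((\M_\dbl)^\kappa)\to \M_\dbl$ as well.

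First I would make precise that for $x \in (\M_\dbl)^\kappa$ and any $F \in \PP_\V((\M_\dbl)^\kappa)$, one has $\hom_{\PP_\V((\M_\dbl)^\kappa)}(\hat j(x), F) \simeq \hom_{\M_\dbl}(x, p_\dbl(F))$; this follows because the inclusion $(\M_\dbl)^\kappa \hookrightarrow \M^\kappa$ induces a fully faithful, colimit-preserving, $\V$-linear functor $\PP_\V((\M_\dbl)^\kappa)\to \PP_\V(\M^\kappa)$ compatible with the $p$'s (by the remark after \Cref{defn:dblcore} the inclusion preserves $\kappa$-compacts, so this is a left adjoint between the presheaf categories whose restriction to representables is the inclusion), and then the local left adjoint computed upstairs restricts. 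Hence every $\kappa$-compact object of $\M_\dbl$ is shape-able \emph{inside $\M_\dbl$}, with shape lying in $(\PP_\V((\M_\dbl)^\kappa))^\kappa$ by \Cref{obsA}. Since $\M_\dbl$ is $\kappa$-compactly generated (same remark) and shape-able objects are closed under colimits and $\V$-tensors (as in the proof of \Cref{obsA}), \emph{every} object of $\M_\dbl$ is shape-able in $\M_\dbl$, so in particular the canonical functor $p_\dbl : \PP_\V((\M_\dbl)^\kappa)\to \M_\dbl$ admits a pointwise-defined, hence globally-defined, left adjoint $\hat j_\dbl$; arguing as in the last paragraph of the proof of \Cref{thm:dblexhaustionV}, closure of shapes under $\V$-tensors upgrades this left adjoint to a genuinely $\V$-linear one. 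By \cite[Theorem 1.49]{maindbl}, this is exactly the statement that $\M_\dbl$ is dualizable over $\V$.

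Alternatively — and perhaps more cleanly — one can observe directly that every $\kappa$-compact shape-able object $x$ with $\hat j(x)\in\PP_\V((\M_\dbl)^\kappa)$ is atomically presentable in $\M_\dbl$: write $\hat j(x)$ as a (conical, $\V$-tensored) colimit of representables $y(x_i)$ with $x_i \in (\M_\dbl)^\kappa$, i.e. $\hat j(x) \simeq \colim^W_I y\circ f$ for a diagram $f : I \to (\M_\dbl)^\kappa$, and then apply $p_\dbl$ to get $x \simeq \colim^W_I f$; the identification $\hom_{\PP_\V((\M_\dbl)^\kappa)}(\hat j(x), -)\simeq \hom_{\M_\dbl}(x, p_\dbl(-))$ forces the structure map $W \to \hom_{\M_\dbl}(f(-), x)$ to factor through $\at_{\M_\dbl}(f(-),x)$ — this is the content of (the $\M_\dbl$-internal version of) \Cref{cor:descofat}/\Cref{lm:hominIndV}, using that $x$ is $\kappa$-compact. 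Since such $x$ generate $\M_\dbl$ under colimits and $\V$-tensors, \Cref{thm:dblexhaustionV} applies.

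The main obstacle I anticipate is purely bookkeeping: carefully checking that the shape $\hat j(x)$ computed relative to $\PP_\V(\M^\kappa)$ genuinely \emph{is} the local left adjoint relative to $\PP_\V((\M_\dbl)^\kappa)$, i.e. that the fully faithful inclusion $\PP_\V((\M_\dbl)^\kappa)\hookrightarrow \PP_\V(\M^\kappa)$ intertwines the two $p$'s and their (local) left adjoints correctly, and that this inclusion does preserve $\kappa$-compact objects so that it is itself an internal left adjoint. Once that compatibility is in place, the upgrade from a pointwise left adjoint to a $\V$-linear one, and the final invocation of \cite[Theorem 1.49]{maindbl}, are routine and exactly parallel to arguments already given in this section.
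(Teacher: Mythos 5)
Your main argument coincides with the paper's proof: the stabilization $\M^{(\alpha+1)}=\M^{(\alpha)}$ forces $\hat j$ to restrict to a $\V$-linear left adjoint of $p_\dbl:\PP_\V((\M_\dbl)^\kappa)\to\M_\dbl$, and \cite[Theorem 1.49]{maindbl} concludes; the compatibility checks you spell out (full faithfulness of $\PP_\V((\M_\dbl)^\kappa)\hookrightarrow\PP_\V(\M^\kappa)$, $p\circ\iota = (\M_\dbl\hookrightarrow\M)\circ p_\dbl$, the $\V$-linearity upgrade via tensoring) are correct and simply make explicit what the paper leaves implicit.

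The one caveat is the ``Alternatively'' paragraph: \Cref{cor:descofat} presupposes dualizability of the ambient module, which is precisely what you are trying to prove, and \Cref{lm:hominIndV} supplies a comparison map in the \emph{opposite} direction (from $\at$ to the shape-like hom), so neither cited result directly produces the desired factorization of $W\to\hom_{\M_\dbl}(f(-),x)$ through $\at_{\M_\dbl}(f(-),x)$. The factorization is true, but the cleanest way to see it is via the universal property of $\at$ applied to $F\mapsto\hom_{\PP_\V((\M_\dbl)^\kappa)}(y(m),\hat j_\dbl(F))$, which is $\V$-linear and colimit-preserving \emph{once you already know} that $\hat j_\dbl$ is a $\V$-linear left adjoint. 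So the ``alternative'' route is downstream of, rather than independent from, the main argument; it does not supply a shortcut around establishing the left adjoint. Since you correctly flag it as an aside, this does not affect the soundness of the proposal.
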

\begin{proof}
    Since $\M^{(\alpha)} =\M^{(\alpha+1)}$, we find that $\hat j: \M^{(\alpha)}\to \PP_\V((\M^{(\alpha)})^\kappa)$ is a left adjoint of $p$. Thus by \cite[Theorem 1.49]{maindbl}, $\M^{(\alpha)}$ is dualizable. 
\end{proof}
\begin{prop}\label{prop:dblcoreincisiL}
    The inclusion $\M_\dbl\to \M$ is an internal left adjoint. 
\end{prop}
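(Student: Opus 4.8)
The plan is to show that the inclusion $\iota: \M_\dbl \to \M$ is an internal left adjoint by verifying the hypothesis of \Cref{cor:iLviaexhaust}: since $\M_\dbl$ is dualizable (by the preceding lemma), it suffices to produce a factorization of $\at_{\M_\dbl}(-,-) \to \hom_{\M_\dbl}(-,-) \simeq \hom_\M(\iota(-),\iota(-))$ through $\at_\M(\iota(-),\iota(-))$. In other words, I want to show that the inclusion \emph{preserves atomic maps}, and then invoke the dualizability of the source to upgrade this to being an internal left adjoint.

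First I would pin down how $\at$ behaves for $\M_\dbl = \M^{(\alpha)}$. By construction $\M^{(\alpha)}$ is $\kappa$-compactly generated and $\hat j: \M^{(\alpha)} \to \PP_\V((\M^{(\alpha)})^\kappa)$ is a $\V$-linear left adjoint to the canonical $p$ (this is exactly what the dualizability proof uses). So \Cref{cor:descofat} applies: for $x\in (\M^{(\alpha)})^\kappa$, we have $\at_{\M^{(\alpha)}}(x,-) \simeq \hom_{\PP_\V((\M^{(\alpha)})^\kappa)}(y(x), \hat j(-))$, with the canonical map to $\hom$ obtained by applying $p$. Meanwhile, because $\M^{(\alpha)} \to \M$ preserves $\kappa$-compacts (by the remark following the definition of $\M^{(\alpha)}$), restricted Yoneda along $(\M^{(\alpha)})^\kappa \hookrightarrow \M^\kappa$ gives a $\V$-linear colimit-preserving functor $\PP_\V((\M^{(\alpha)})^\kappa) \to \PP_\V(\M^\kappa)$, and the key point is that this functor intertwines the two ``$\hat y$''s: on $\M_\dbl = \M^{(\alpha)}$ it carries $\hat j_{\M^{(\alpha)}}$ to the restriction of $\hat y_\M$ — or, more to the point, it carries $\hat j_{\M^{(\alpha)}}(x)$ to the \emph{shape} $\hat j_\M(x)$ of $x$ viewed in $\M$, since by \Cref{obsA} every object of $\M_\dbl = \M_{sh}$ (a fortiori of $\M^{(\alpha)} \subseteq \M_{sh}$) is shape-able in $\M$ with shape computed compatibly. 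This gives, for $x \in (\M^{(\alpha)})^\kappa$ and $z \in \M^{(\alpha)}$, a natural map $\at_{\M^{(\alpha)}}(x,z) \simeq \hom(y(x), \hat j_{\M^{(\alpha)}}(z)) \to \hom(y(x), \hat y_\M(z))$ which, by \Cref{lm:hominIndV} (applied in $\M$, using that $\at_\M(x, p(-))$ factors through $\hom(y(x),-)$), assembles into a map $\at_{\M^{(\alpha)}}(x,z) \to \at_\M(x,z)$ over $\hom$. Naturality in $z$ (both sides colimit-preserving and $\V$-linear, agreeing on $\kappa$-compacts) and then in $x$ promotes this to the desired factorization $\at_{\M_\dbl}(-,-) \to \at_\M(\iota(-),\iota(-))$ compatible with the maps to $\hom$.

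With that factorization in hand, \Cref{cor:iLviaexhaust} — whose hypotheses are precisely ``$\M_\dbl$ dualizable'' plus ``this factorization exists'' — immediately yields that $\iota$ is an internal left adjoint, i.e. $\iota$ is a $\V$-linear colimit-preserving functor whose right adjoint is itself $\V$-linear and colimit-preserving. Alternatively, and perhaps more transparently, one could run the argument of the proof of \Cref{thm:dblexhaustionV}, $(0 \Rightarrow 1)$: every $x \in \M_\dbl$ has the atomic presentation $\colim_{\M_\dbl^\kappa}^{\hat j(x)} i \simeq x$ in $\M_\dbl$, and the content of the previous paragraph is exactly that applying $\iota$ sends this to an atomic presentation of $\iota(x)$ in $\M$; then \Cref{add:enough} applies with the generating set $S$ taken to be all $\kappa$-compact shape-able objects.

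The main obstacle, and the step I would spend the most care on, is the compatibility of the two Yoneda-type left adjoints: showing that the shape $\hat j_{\M^{(\alpha)}}(x)$ computed \emph{inside} $\M^{(\alpha)}$ maps, under restricted Yoneda $\PP_\V((\M^{(\alpha)})^\kappa) \to \PP_\V(\M^\kappa)$, to the shape $\hat y_\M(x) = \hat j_\M(x)$ computed \emph{inside} $\M$. This is where the definition of $\M^{(\alpha+1)} = \PP_\V((\M^{(\alpha)})^\kappa) \times_{\PP_\V(\M^\kappa)} \M^{(1)}$ and the stabilization $\M^{(\alpha)} = \M^{(\alpha+1)}$ from \Cref{cor:stabilize} are doing real work: the fibre-product definition is precisely what guarantees that the shape of an object of $\M^{(\alpha)}$, a priori only an object of $\PP_\V(\M^\kappa)$, actually lands in (the image of) $\PP_\V((\M^{(\alpha)})^\kappa)$, so that the two notions of shape can be compared. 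I would verify this by a universal-property computation: for $F \in \PP_\V((\M^{(\alpha)})^\kappa)$ with image $\bar F \in \PP_\V(\M^\kappa)$, one has $\hom(\hat j_{\M^{(\alpha)}}(x), F) \simeq \hom(x, p_{\M^{(\alpha)}}(F))$ and $\hom(\hat y_\M(x), \bar F) \simeq \hom(x, p_\M(\bar F))$, and $p_{\M^{(\alpha)}}(F) \simeq p_\M(\bar F)$ since both compute the same weighted colimit in $\M$ — the inclusion $\M^{(\alpha)} \hookrightarrow \M$ being colimit-preserving and $\V$-linear and $\kappa$-compact-preserving. Everything else is bookkeeping with the universal property of $\at$ and \Cref{lm:hominIndV}.
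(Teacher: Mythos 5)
Your central insight — that the inclusion $\iota:\M_\dbl\to\M$ carries the shape $\hat y_{\M_\dbl}(x)$ computed in $\M_\dbl$ to the shape $\hat j_\M(x)$ of $\iota(x)$ computed in $\M$ — is correct, and your verification of it via the compatibility $p_\M\circ\PP_\V(\iota^\kappa)\simeq\iota\circ p_{\M_\dbl}$ (from the universal property of $\PP_\V$) is the right argument; the fibre-product definition of $\M^{(\alpha+1)}$ together with the stabilization $\M^{(\alpha+1)}=\M^{(\alpha)}$ is indeed what makes $\hat j_\M(x)$ land in the image of $\PP_\V((\M_\dbl)^\kappa)$.

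However, the step from shape preservation to the conclusion has a genuine gap. You try to verify the hypothesis of \Cref{cor:iLviaexhaust} by producing a factorization
$\at_{\M_\dbl}(-,-)\to\at_\M(\iota(-),\iota(-))$
over $\hom$, and claim \Cref{lm:hominIndV} ``assembles'' the identification $\at_{\M_\dbl}(x,z)\simeq\hom(y(x),\hat j_\M(z))$ into such a map. But \Cref{lm:hominIndV} gives a map $\at_\M(m,p(F))\to\hom_{\PP_\V(\M^\kappa)}(y(m),F)$; taking $F=\hat j_\M(z)$ (so $p(F)=z$) this is a map $\at_\M(x,z)\to\hom(y(x),\hat j_\M(z))\simeq\at_{\M_\dbl}(x,z)$ — the \emph{opposite} direction from the one you need. (This direction is the expected one: it reflects \Cref{ex:reflat}, that fully faithful $\V$-linear colimit-preserving functors \emph{reflect} atomic maps.) There is no evident way to reverse it: the universal property of $\at_\M(x,-)$ compares against $\V$-linear colimit-preserving functors on all of $\M$, whereas $\hom(y(x),\hat j_\M(-))$ is only defined on $\M_\dbl$, so it does not give you a map into $\at_\M(x,-)$. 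In fact the factorization you want is equivalent (by \Cref{ex:presat} and \Cref{cor:iLviaexhaust}) to the conclusion itself, so it cannot be established as an independent input. The same gap affects your alternative route through \Cref{add:enough}, which needs the same lift of the weight into $\at_\M$.

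The fix is to bypass the factorization entirely: the paper states (as the unnumbered lemma directly following the proposition) precisely that if $\N$ is dualizable and $f:\N\to\M$ preserves shapes in the sense that $\PP_\V(f^\kappa)(\hat j(n))$ is a shape of $f(n)$, then $f$ is an internal left adjoint. Its proof is the argument of \Cref{cor:iLviaexhaust} with the atomic-presentation considerations replaced by the shape property: one reduces to showing that the canonical map $\hom(n,\colim_I f^R(m))\to\hom(f(n),\colim_I m)$ is an equivalence, and then computes $\hom(n,\colim_I f^R(m))\simeq\hom(\hat j(n),\PP_\V((f^R)^\kappa)(\colim_I j(m)))\simeq\hom(\PP_\V(f^\kappa)(\hat j(n)),\colim_I j(m))\simeq\hom(f(n),\colim_I m)$, with a similar chain for tensors. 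Your shape-compatibility paragraph is exactly the hypothesis of that lemma; citing it and applying it would close the proof without the problematic $\at$-level factorization.
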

More generally, we prove: 
\begin{lm}
    Let $f:\N\to \M$ be a map in $\Mod_\V$, and suppose $n\in\N$ is shape-able, and that $\PP_\V(f^\kappa)(\hat j(n))$ is a shape of $f(n)$. In this case, 
    \begin{itemize}
        \item For every diagram $m: I\to \M$, the canonical map $\colim_I f^R(m)\to f^R(\colim_I m)$ induces an equivalence after applying $\hom(n,-)$; 
        \item For every $v\in\V$, the canonical map $v\otimes f^R(m)\to f^R(v\otimes m)$ induces an equivalence after applying $\hom(n,-)$. 
    \end{itemize}

    In particular, if $\N$ is dualizable and $f$ preserves shapes, then $f$ is an internal left adjoint. 
\end{lm}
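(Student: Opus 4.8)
The plan is to mimic, in the enriched/weighted setting, the standard argument that a functor between presentable categories is a left adjoint once it commutes with colimits, localized at a single representable-type object $n$. The key input is that $n$ is shape-able, so $\hom_\M(n,p(-))$ is corepresented by $\hat j(n)\in\PP_\V(\N^\kappa)$, and the hypothesis that $\PP_\V(f^\kappa)(\hat j(n))$ is a shape of $f(n)$ in $\M$. First I would fix a diagram $m:I\to\M$ together with the canonical comparison map $\theta\colon\colim_I f^R(m)\to f^R(\colim_I m)$, and compute $\hom_\N(n,-)$ of both sides. On the left, since $\hom_\N(n,-)$ need not preserve colimits, I instead use the shape: $\hom_\N(n,\colim_I f^R(m))$ is computed via $\hat j(n)$ only after passing through $p_\N\colon\PP_\V(\N^\kappa)\to\N$, but the cleaner route is to observe that both $\colim_I f^R(m)$ and $f^R(\colim_I m)$ lie in the image of $p_\N$ up to the relevant presheaf lift: write $M_I:=\colim_I j_\N\circ f^R(m)\in\PP_\V(\N^\kappa)$, with $p_\N(M_I)\simeq \colim_I f^R(m)$ by construction of $p_\N$ as a weighted colimit. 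Then $\hom_\N(n,\colim_I f^R(m))\simeq \hom_\N(n,p_\N(M_I))\simeq \hom_{\PP_\V(\N^\kappa)}(\hat j(n),M_I)$, using shape-ability of $n$. Now $\hom_{\PP_\V(\N^\kappa)}(\hat j(n),-)$ is $\V$-linear and colimit-preserving on $\PP_\V(\N^\kappa)$, so it commutes with $\colim_I$, yielding $\simeq\colim_I\hom_{\PP_\V(\N^\kappa)}(\hat j(n),j_\N f^R(m))\simeq\colim_I\hom_\N(n,f^R(m))$.

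Next I would run the parallel computation on the right-hand side $f^R(\colim_I m)$, using $\hom_\N(n,f^R(-))\simeq\hom_\M(f(n),-)$ (adjunction) and then the hypothesis that $\PP_\V(f^\kappa)(\hat j(n))$ is a shape of $f(n)$: so $\hom_\M(f(n),\colim_I m)\simeq\hom_{\PP_\V(\M^\kappa)}(\PP_\V(f^\kappa)(\hat j(n)), \colim_I j_\M\circ m)$, and again the left variable being atomic/colimit-corepresenting lets this functor pass through the colimit, giving $\colim_I\hom_\M(f(n),m)\simeq\colim_I\hom_\N(n,f^R(m))$. Identifying the two colimits $\colim_I\hom_\N(n,f^R(m))$ on the nose with the map induced by $\theta$ — this requires checking that the comparison map $\theta$ corresponds under these identifications to $\PP_\V(f^\kappa)$ applied to the canonical map $M_I\to j_\N(\colim_I\, ?)$, which is where a diagram chase is needed — one concludes $\hom_\N(n,\theta)$ is an equivalence. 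The $\V$-tensor statement is the same argument with $I$ replaced by the one-object weighted diagram corresponding to $v\in\V$, using that shapes are compatible with $\V$-tensors (the \emph{Observation} following the definition of shape-able object) and \Cref{cor:Vliniffweight}.

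For the final clause: if $\N$ is dualizable, then by \Cref{thm:dblexhaustionV} every object of $\N$ is atomically presentable, hence (by \Cref{cor:atpreslocalleft}) shape-able, and the $\kappa$-compact shape-able objects together with $\V$-tensors and colimits generate $\N$; if moreover $f$ preserves shapes, the two bullet points apply to every such generator $n$. Since $\hom_\N(n,-)$ for $n$ ranging over a generating set of $\N$ is jointly conservative, $\theta$ is an equivalence for every $m$ and every $v$, i.e.\ $f^R$ preserves all colimits and $\V$-tensors, hence (by \Cref{cor:Vliniffweight}) preserves all weighted colimits; thus $f^R$ is $\V$-linear and colimit-preserving, which is exactly the statement that $f$ is an internal left adjoint. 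The main obstacle I anticipate is the bookkeeping in the middle step — verifying that the abstractly-defined comparison map $\theta$ really is carried, under the shape identifications on both sides, to $\PP_\V(f^\kappa)$ of the evident presheaf-level comparison; this is the analogue of the compatibility lemmas \Cref{lm:swapat}--\Cref{cor:colimdetectV} in the atomic-presentation story, and, as there, it is best handled by constructing the relevant maps explicitly and leaving the coherence verification to the reader.
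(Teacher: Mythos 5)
Your general strategy — lift to the presheaf categories via shapes, then move the ambient colimit around — is the right one and matches the paper's suggestion to imitate \Cref{cor:colimdetectV} and \Cref{cor:iLviaexhaust}. But there is a genuine gap at the step where you assert that $\hom_{\PP_\V(\N^\kappa)}(\hat j(n),-)$ is ``$\V$-linear and colimit-preserving on $\PP_\V(\N^\kappa)$.'' This is false in general. The definition of shape-ability says that $\hom_\N(n,p_\N(-))$ \emph{admits} a $\V$-linear left adjoint, i.e.\ that it is corepresented by some $\hat j(n)$; a corepresentable functor is a right adjoint and therefore preserves \emph{limits}, not colimits. For $\hom(\hat j(n),-)$ to preserve colimits one would need $\hat j(n)$ to be $\V$-atomic in $\PP_\V(\N^\kappa)$, which would force $\hom_\N(n,-)\simeq\hom(\hat j(n),j_\N(-))$ to be colimit-preserving — i.e.\ $n$ would be atomic, precisely the situation the shape machinery is designed to get around. (A concrete sanity check in the case $\V=\Sp$: if your claim held, the chain $\hom_\N(n,\colim_I f^R(m))\simeq\hom(\hat j(n),\colim_I j_\N f^R(m))\simeq\colim_I\hom_\N(n,f^R(m))$ would show $\hom_\N(n,-)$ commutes with these conical colimits for an arbitrary non-compact $n$.) The symmetric step on the $\M$-side, pushing the colimit through $\hom(\PP_\V(f^\kappa)\hat j(n),-)$, has the same flaw.

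What the two corollaries you cite actually do — and what fixes your argument — is to move the colimit through a \emph{restriction functor between presheaf categories}, which is automatically $\V$-linear and colimit-preserving, rather than through a corepresented hom. Concretely: starting from
$\hom_\N(n, f^R(\colim_I m))\simeq\hom_\M(f(n),\colim_I m)$,
use the shape of $f(n)$ to rewrite this as $\hom_{\PP_\V(\M^\kappa)}\bigl(\PP_\V(f^\kappa)\hat j(n),\,\colim_I j_\M(m)\bigr)$, then use the adjunction $\PP_\V(f^\kappa)\dashv(\PP_\V(f^\kappa))^R$ (restriction along $f^\kappa$) to get $\hom_{\PP_\V(\N^\kappa)}\bigl(\hat j(n),(\PP_\V(f^\kappa))^R\colim_I j_\M(m)\bigr)$. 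The restriction functor $(\PP_\V(f^\kappa))^R$ \emph{is} $\V$-linear and colimit-preserving, and one checks directly (by evaluating on $\N^\kappa$ and using $f\dashv f^R$) that $(\PP_\V(f^\kappa))^R\circ j_\M\simeq j_\N\circ f^R$. This yields $\hom_{\PP_\V(\N^\kappa)}(\hat j(n),\colim_I j_\N f^R(m))$, which by the shape of $n$ is $\hom_\N(n,p_\N(\colim_I j_\N f^R(m)))\simeq\hom_\N(n,\colim_I f^R(m))$; tracing through shows the composite is the canonical comparison. The $\V$-tensor case is identical with $\colim_I$ replaced by $v\otimes(-)$ using the $\V$-linearity of the restriction. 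Your treatment of the final ``in particular'' clause is otherwise fine once the two bullet points are established correctly.
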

This is essentially the same proof as in \Cref{cor:colimdetectV} and \Cref{cor:iLviaexhaust}, so we leave it as an instructive exercise to the reader.

Conversely,
\begin{lm}
    Let $f:\N\to \M$ be an internal left adjoint in $\Mod_\V(\PrL_\kappa)$, and let $n\in \N$ be a shape-aple object. In this case, the functor $f$ preserves the shape of $n$, that is, $\PP_\V(f^\kappa)(\hat j(n))$ is a shape of $f(n)$. 
\end{lm}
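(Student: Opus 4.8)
The plan is to reduce the statement to the corepresentability characterization of shapes and exploit the adjointability provided by $f$ being an internal left adjoint. Write $p_\N : \PP_\V(\N^\kappa)\to\N$ and $p_\M : \PP_\V(\M^\kappa)\to\M$ for the canonical functors, with fully faithful right adjoints $j_\N, j_\M$. Since $n$ is shape-able, there is $\hat j(n)\in\PP_\V(\N^\kappa)$ with $\hom_\N(n, p_\N(-))\simeq \hom_{\PP_\V(\N^\kappa)}(\hat j(n), -)$, i.e. a corepresentability equivalence. The goal is to show $\hom_\M(f(n), p_\M(-))$ is corepresented by $\PP_\V(f^\kappa)(\hat j(n))$, equivalently that $\hom_{\PP_\V(\M^\kappa)}(\PP_\V(f^\kappa)(\hat j(n)), -)\simeq \hom_\M(f(n), p_\M(-))$ as $\V$-functors $\PP_\V(\M^\kappa)\to\V$.

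First I would record the key compatibility: for $\kappa$ chosen $\N$-good and $\M$-good, $f$ preserves $\kappa$-compacts (as an internal left adjoint, its right adjoint preserves $\kappa$-filtered colimits, cf. the definition of $\PrL_\kappa$), so $f^\kappa : \N^\kappa\to\M^\kappa$ is defined and there is a commuting square relating $p_\N, p_\M$ and $\PP_\V(f^\kappa)$, namely $f\circ p_\N \simeq p_\M\circ \PP_\V(f^\kappa)$ (both are the $\V$-linear extension of $\N^\kappa\xrightarrow{f}\M^\kappa\hookrightarrow\M$). Passing to right adjoints, one gets a canonical transformation comparing $j_\N\circ f^R$ with $\PP_\V(f^\kappa)^R\circ j_\M$; I would not need this square to be strictly adjointable, only to manipulate the relevant mapping objects. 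The main computation is then: for $F\in\PP_\V(\M^\kappa)$,
\[
\hom_{\PP_\V(\M^\kappa)}(\PP_\V(f^\kappa)(\hat j(n)), F).
\]
Both sides of the desired equivalence are $\V$-linear and colimit-preserving functors of $F\in\PP_\V(\M^\kappa)$: the left-hand side since $\hat j(n)$ can be taken $\kappa$-compact when $n$ is a $\kappa$-compact shape-able object (and in general one reduces to that case by writing $n$ as a colimit/$\V$-tensor of such, using that shapes are compatible with colimits and $\V$-tensors as in \Cref{obsA} and the observations following the definition of shape); the right-hand side $\hom_\M(f(n), p_\M(-))$ since $f(n)$ is shape-able — which is exactly what we want, so instead I would argue directly that the two agree on representables $F = y(m)$, $m\in\M^\kappa$, and invoke \Cref{thm: UPVPsh} to extend.

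On a representable $y(m)$ the left side is $\hom_{\PP_\V(\M^\kappa)}(\PP_\V(f^\kappa)(\hat j(n)), y(m))$; using the (enriched) Yoneda/adjunction description of $\PP_\V(f^\kappa)$ as left Kan extension along $f^\kappa$, this is $\hom_{\PP_\V(\N^\kappa)}(\hat j(n), (f^\kappa)^*y(m))$, and $(f^\kappa)^* y(m) \simeq y(-)\mapsto \hom_\M(f(-),m)$ restricted to $\N^\kappa$, which is nothing but $j_\N(f^R(m))$ (here I use that $f^R$ is $\V$-linear and that restricted Yoneda computes it on $\kappa$-compacts, since $f$ preserves $\kappa$-compacts). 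So the left side becomes $\hom_{\PP_\V(\N^\kappa)}(\hat j(n), j_\N(f^R(m)))\simeq \hom_\N(n, p_\N j_\N f^R(m))\simeq \hom_\N(n, f^R(m))\simeq \hom_\M(f(n), m)$, using the shape corepresentability of $n$ and the counit $p_\N j_\N\simeq \id$. On the other hand the right side on $y(m)$ is $\hom_\M(f(n), p_\M y(m))\simeq\hom_\M(f(n), m)$ as well. Tracking that these identifications are natural in $m\in\M^\kappa$ and assemble to a $\V$-natural equivalence of $\V$-functors $\M^\kappa\to\V$ then upgrades, via \Cref{thm: UPVPsh}, to an equivalence of the $\V$-linear colimit-preserving extensions to $\PP_\V(\M^\kappa)$, which is the claim. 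I expect the main obstacle to be bookkeeping: making the identification $(f^\kappa)^*y(m)\simeq j_\N f^R(m)$ and all subsequent isomorphisms genuinely $\V$-natural rather than just objectwise, and handling the reduction from general shape-able $n$ to $\kappa$-compact shape-able $n$ (for which one uses that $\M^{(\ast)}$-type arguments, or directly the closure of shapes under colimits and $\V$-tensors, let one write any shape-able object built from $\kappa$-compact shape-able ones); once the $\kappa$-compact case is nailed down the rest is formal.
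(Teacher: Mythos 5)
Your core computation is the right one and ultimately matches the paper's: identify the left-hand side via the $\PP_\V(f^\kappa)\dashv (f^\kappa)^*$ adjunction, recognize $(f^\kappa)^*$ as $\PP_\V((f^R)^\kappa)$, push it under $p_\N$, and invoke the shape property. But the framing of the last step — ``check on representables and extend via \Cref{thm: UPVPsh}'' — has a genuine circularity. \Cref{thm: UPVPsh} lets you extend a $\V$-functor $\M^\kappa\to\V$ to the \emph{unique $\V$-linear cocontinuous} functor $\PP_\V(\M^\kappa)\to\V$. It tells you that the cocontinuous functor $\hom(\PP_\V(f^\kappa)(\hat j(n)),-)$ agrees with the cocontinuous extension of its restriction; it does \emph{not} tell you that the original $\hom_\M(f(n),p_\M(-))$ agrees with that extension, because you do not yet know $\hom_\M(f(n),p_\M(-))$ is $\V$-linear and cocontinuous — that is exactly the conclusion of the lemma (it is the definition of $f(n)$ being shape-able with shape $\PP_\V(f^\kappa)(\hat j(n))$). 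So the ``upgrade'' step assumes what it sets out to prove.

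The fix is to run your chain of equivalences for an \emph{arbitrary} $F\in\PP_\V(\M^\kappa)$, not just representables, with no extension argument at the end. Everything in your chain already works at the level of general $F$: $(f^\kappa)^*$ is cocontinuous (being identified with $\PP_\V((f^R)^\kappa)$ via the adjunction $f^\kappa\dashv(f^R)^\kappa$, using that $f^R$ is $\V$-linear and preserves $\kappa$-compacts), so the one place where ``agree on representables and both sides are $\V$-linear cocontinuous'' is the correct justification is the intermediate identity
\[
p_\N\circ(f^\kappa)^*\;\simeq\;f^R\circ p_\M,
\]
for which both sides genuinely \emph{are} $\V$-linear cocontinuous functors $\PP_\V(\M^\kappa)\to\N$. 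Once you have that identity, the string
\[
\hom(\PP_\V(f^\kappa)(\hat j(n)),F)\simeq\hom(\hat j(n),(f^\kappa)^*F)\simeq\hom(n,p_\N(f^\kappa)^*F)\simeq\hom(n,f^Rp_\M(F))\simeq\hom(f(n),p_\M(F))
\]
holds for every $F$, and the compatibility of this chain with the canonical comparison map is the claim. This is precisely the paper's argument. Also: the detour through $\kappa$-compact shape-able $n$ is unnecessary — the above works verbatim for any shape-able $n$, since $\hat j(n)$ exists by definition, and so you can drop that reduction entirely.
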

\begin{proof}
    Let $F\in\PP_\V(\M^\kappa)$. We wish to prove that $\hom(\PP_\V(f^\kappa)(\hat j(n)), F)\to \hom(f(n),p(F))$ is an equivalence. For this, we simply note that this map is compatible with the respective equivalences $\hom(\PP_\V(f^\kappa)(\hat j(n)), F)\simeq \hom(\hat j(n), \PP_\V((f^R)^\kappa)) F)$ and $\hom(f(n),p(F))\simeq \hom(n,f^Rp(F))$. 

    The claim follows from the fact that $f^R p \simeq p\circ \PP_\V((f^R)^\kappa)$ (because $f^R$ is $\V$-linear). 
\end{proof}
\begin{cor}
    The inclusion $\M_\dbl\to \M$ is universal among internal left adjoints from a dualizable $\V$-module. 
\end{cor}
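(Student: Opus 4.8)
The plan is to show that the inclusion $\M_\dbl \hookrightarrow \M$ satisfies the appropriate universal property: for every dualizable $\V$-module $\N$ and every internal left adjoint $g : \N \to \M$, there is an essentially unique factorization of $g$ through $\M_\dbl$ by an internal left adjoint $\N \to \M_\dbl$. Since $\M_\dbl \hookrightarrow \M$ is itself an internal left adjoint (\Cref{prop:dblcoreincisiL}), and internal left adjoints compose, the only content is that $g$ actually lands (up to equivalence) inside $\M_\dbl$, and that the corestricted functor is again an internal left adjoint.

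First I would argue that $g$ lands in $\M_\dbl$. Fix a good cardinal $\kappa$ with respect to which everything in sight is $\kappa$-compactly generated (using that $\N$ dualizable implies every good $\kappa$ is $\N$-good, and enlarging $\kappa$ if necessary so that $g$ and its right adjoint $g^R$ are suitably compatible). Since $\N$ is dualizable, every object of $\N$ is atomically presentable by \Cref{thm:dblexhaustionV}, hence shape-able; moreover by the last two lemmas of the excerpt an internal left adjoint preserves shapes, i.e. $\PP_\V(g^\kappa)(\hat j(n))$ is a shape of $g(n)$ for every $n \in \N$. The key point is then that $g$ preserves $\kappa$-compact objects (being $\kappa$-compact-preserving since its right adjoint is $\V$-linear, hence commutes with $\kappa$-filtered colimits — one may need to enlarge $\kappa$ to arrange this), so for $n \in \N^\kappa$ the object $g(n)$ is a $\kappa$-compact shape-able object of $\M$, i.e. $g(n) \in \M_0 \subset \M_{sh} = \M^{(1)}$. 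I would then argue by transfinite induction on $\alpha$ that $g(\N^{(\alpha)} \cap \N^\kappa) \subset (\M^{(\alpha)})^\kappa$: at the successor stage, if $n \in \N^{(\alpha+1)} \cap \N^\kappa$ then $\hat j(n) \in \PP_\V(\N^{(\alpha)})$, and since $g$ preserves shapes, $\hat j(g(n)) \simeq \PP_\V(g^\kappa)(\hat j(n))$; because $g^\kappa$ carries $(\N^{(\alpha)})^\kappa$ into $(\M^{(\alpha)})^\kappa$ by the inductive hypothesis, $\PP_\V(g^\kappa)$ carries $\PP_\V(\N^{(\alpha)})$ into $\PP_\V(\M^{(\alpha)})$, whence $g(n) \in \M^{(\alpha+1)}$; limit stages are handled by the generation-under-colimits-and-tensors description and compatibility of $\hat j$ with colimits. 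Since $\N = \N_\dbl = \N^{(\alpha)}$ for $\alpha$ large, this gives $g(\N^\kappa) \subset \M_\dbl$, and since $\N$ is $\kappa$-compactly generated and $\M_\dbl$ is closed under colimits and $\V$-tensors in $\M$, we conclude $g$ factors through $\M_\dbl$.

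Next I would check that the corestriction $\bar g : \N \to \M_\dbl$ is an internal left adjoint. Both $\N$ and $\M_\dbl$ are dualizable, so by \Cref{cor:iLviaexhaust} (or \Cref{add:enough}) it suffices to produce a factorization of $\hom_\N(-,-) \to \hom_{\M_\dbl}(\bar g(-), \bar g(-))$ through $\at_\N(-,-) \to \at_{\M_\dbl}(\bar g(-),\bar g(-))$. But $\M_\dbl \hookrightarrow \M$ is a fully faithful internal left adjoint, so by \Cref{cor:iLffat} we have $\at_{\M_\dbl}(-,-) \simeq \at_\M(-,-)$ restricted to $\M_\dbl$ and likewise for $\hom$; hence it is equivalent to factor $\hom_\N(-,-) \to \hom_\M(g(-),g(-))$ through $\at_\M(g(-),g(-))$. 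This is exactly what the hypothesis that $g : \N \to \M$ is an internal left adjoint provides, via the fact (proof of \Cref{ex:presat}) that $f^* \circ R_\M \simeq R_\N \circ f^*$ applied to the lax-linear functors $\hom_\M(g(x),-)$: the canonical map $\at_\N(x,-) \to \hom_\N(x,-) \to \hom_\M(g(x),g(-))$ factors through $\at_\M(g(x),g(-))$ since the latter is terminal among $\V$-linear colimit-preserving functors mapping to $\hom_\M(g(x),g(-))$. So $\bar g$ is an internal left adjoint.

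Finally, essential uniqueness of the factorization follows because $\M_\dbl \hookrightarrow \M$ is fully faithful: any two internal left adjoints $\N \to \M_\dbl$ composing to $g$ agree already as functors into $\M$, hence agree. I expect the main obstacle to be the transfinite induction establishing $g(\N^{(\alpha)}\cap\N^\kappa)\subset(\M^{(\alpha)})^\kappa$ — specifically, keeping careful track of the set-theoretic bookkeeping (which $\kappa$ makes $g$, $g^R$, $\hat j$, and the filtration all behave simultaneously) and verifying that $g$ preserves shapes at the $\kappa$-compact level, which rests on the two concluding lemmas of the excerpt together with $g$ being $\kappa$-compact-preserving; the rest is formal given \Cref{cor:iLffat} and the universal property of $\at$.
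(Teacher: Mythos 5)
Your proof is correct and follows the paper's outline: you observe $\N_\dbl=\N$ for dualizable $\N$, use the shape-preservation lemma (the second of the two concluding lemmas) to show $g$ factors through $\M_\dbl$ — with the transfinite induction making explicit the paper's terse statement that a shape-preserving internal left adjoint carries $\N_\dbl$ into $\M_\dbl$ — and then handle the reverse direction via \Cref{prop:dblcoreincisiL}. The one place you diverge is in showing the corestriction $\bar g:\N\to\M_\dbl$ is an internal left adjoint: the paper invokes the first shape-lemma (dualizable source and shape-preservation imply internal left adjoint), while you route through the atomic-maps criterion \Cref{cor:iLviaexhaust} together with \Cref{cor:iLffat} and \Cref{ex:presat}. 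Both work, but both are heavier than needed at this point: since $i:\M_\dbl\hookrightarrow\M$ is a fully faithful, $\V$-linear, colimit-preserving inclusion and $g=i\bar g$, one has $\bar g^R = g^R\circ i$, which is $\V$-linear and colimit-preserving because both factors are; so once $g$ is known to land in $\M_\dbl$, the corestriction is automatically an internal left adjoint. Your essential-uniqueness remark via fully faithfulness of $i$ then finishes exactly as the paper's conclusion that $\Fun^{iL}_\V(\N,\M_\dbl)\to\Fun^{iL}_\V(\N,\M)$ is an equivalence.
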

\begin{proof}
From the definition, it is clear that if $\N$ is dualizable, then $\N_\dbl=\N$. 

Furthermore, if $f:\N\to \M$ is an internal left adjoint, the previous lemma shows that it preserves shapes, and so it sends $\N_\dbl$ to $\M_\dbl$. Since $\M_\dbl\to \M$ also preserves shapes by design, it follows that $\N=\N_\dbl\to \M_\dbl$ does too, and hence is an internal left adjoint. 

Conversely, \Cref{prop:dblcoreincisiL} shows that if $\N\to\M_\dbl$ is an internal left adjoint, so is $\N\to \M$. In total, this proves that $\Fun^{iL}_\V(\N,\M_\dbl)\to\Fun^{iL}_\V(\N,\M)$ is an equivalence. 
\end{proof}
\section{Trace-class maps}\label{section:traceclass}
In this section, we discuss trace-class maps, which are to compact maps (resp. atomic maps) what dualizable objects are to compact objects (resp. atomic objects). 
\begin{defn}\label{defn:trcl}
    Let $\C$ be a symmetric monoidal category. A map $f:x\to y$ is called \emph{trace-class} if there exists an object $d$ together with a pairing $d\otimes x\to \one_\C$ and a map $\one_\C\to y\otimes d$ such that $f$ factors as $x\to y\otimes d\otimes x\to y$. 
\end{defn}
\begin{rmk}\label{rmk:trcl}
    If $\C$ is monoidal closed, then this condition is equivalent to the requirement that the map $\one_\C\to \hom_\C(x,y)$ classifying $f$ lifts through the canonical map $$\hom_\C(x,\one_\C)\otimes y\to \hom_\C(x,y)$$ This is how this notion is often defined; we defined it the way we did to make two facts apparent: firstly, they can be defined even when $\C$ is not closed, and secondly, arbitrary symmetric monoidal functors preserve trace-class maps even if they are not closed\footnote{Of course on can prove this with the other definition too, but this way it is really clear.}.

    Finally, it makes the following property clear: suppose $\C\in\CAlg(\PrL_\kappa)$, then a map $f:x\to y$ between objects in $\C^\kappa$ is trace-class in $\C$ if and only if it is so in $\C^\kappa$. 
\end{rmk}
The above remark suggests that $\hom_\C(x,\one_\C)\otimes y$ is the ``object of trace-class morphisms'' from $x$ to $y$. In the following, we  make this relative:
\begin{defn}
    Let $\W$ be a commutative $\V$-algebra, and let $x,y\in\W$. We let $$\trcl_\W^\V(x,y):=~\hom_\W^\V(\one_\W,\hom_\W^\W(x,\one_\W)\otimes y)$$ This is the $\V$-enriched hom from $\one_\W$ to $\hom(x,\one_\W)\otimes y$, where the latter is the $\W$-internal hom. 
\end{defn}

Recall from \Cref{ex:VatV} that for $\V\in\CAlg(\PrL)$, trace-class maps are the same thing as $\V$-atomic maps in $\V$. One of the key features of trace-class maps is the following:
\begin{prop}\label{prop:trclexch}
    Let $f:\M\to \N$ be a lax-$\V$-linear functor between $\V$-modules, and $f: x\to y$ a trace-class map. In this case, for any $m\in\M$, there is a diagonal filler in the following commutative diagram:
    \[\begin{tikzcd}
	{x\otimes f(m)} & {f(x\otimes m)} \\
	{y\otimes f(m)} & {f(y\otimes m)}
	\arrow[from=1-1, to=2-1]
	\arrow[from=1-2, to=2-2]
	\arrow[from=2-1, to=2-2]
	\arrow[from=1-1, to=1-2]
	\arrow[dashed, from=1-2, to=2-1]
\end{tikzcd}\]
\end{prop}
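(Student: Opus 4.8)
The plan is to reduce everything to the definition of trace-class and then transport the resulting factorization through the (lax) monoidal structure on $f$. Since $f:x\to y$ is trace-class in the symmetric monoidal category $\W$ (the relevant ambient category — let me write $\C$ for it, whichever symmetric monoidal category $x,y$ live in), by \Cref{defn:trcl} there is an object $d$, a pairing $\varepsilon: d\otimes x\to \one$, and a coevaluation $\eta:\one\to y\otimes d$ such that $f$ is the composite $x\simeq \one\otimes x\xrightarrow{\eta\otimes x} y\otimes d\otimes x\xrightarrow{y\otimes\varepsilon} y\otimes\one\simeq y$. The key point is that this is a factorization through an object ($y\otimes d\otimes x$) that is built by tensoring, and $f$ being lax $\V$-linear (hence, in the cases of interest, lax monoidal with respect to the module structure) means $f$ comes equipped with natural maps $a\otimes f(m)\to f(a\otimes m)$ for objects $a$ of $\C$ and $m\in\M$.

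Concretely, I would build the diagonal filler $y\otimes f(m)\to f(x\otimes m)$ — wait, looking at the diagram, the dashed arrow goes $f(x\otimes m)\to y\otimes f(m)$. So I want a map $f(x\otimes m)\to y\otimes f(m)$ making both triangles commute. First I would produce it as the composite
\[
f(x\otimes m)\xrightarrow{\ \eta\otimes-\ } y\otimes d\otimes f(x\otimes m)\xrightarrow{\ \mathrm{lax}\ } y\otimes f(d\otimes x\otimes m)\xrightarrow{\ \varepsilon\ } y\otimes f(\one\otimes m)\simeq y\otimes f(m),
\]
using the coevaluation $\eta$, the lax structure map $d\otimes f(x\otimes m)\to f(d\otimes x\otimes m)$ (tensored with $y$), and then the evaluation $\varepsilon$ applied inside $f$. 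Then I would check the two commutativities. The lower-left triangle ($x\otimes f(m)\to y\otimes f(m)\to f(y\otimes m)$ equals $x\otimes f(m)\to f(x\otimes m)\to f(y\otimes m)$) reduces, after unwinding the lax-monoidality coherence, to the defining factorization of $f$ as $x\to y\otimes d\otimes x\to y$ tensored with $m$ and pushed through $f$; this is a naturality-plus-triangle-identity diagram chase. The upper-right triangle ($f(x\otimes m)\to y\otimes f(m)\to f(y\otimes m)$ equals $f(x\otimes m)\to f(y\otimes m)$) is similar: it again expresses that $\eta$ followed by $\varepsilon$ recovers $f$, now with everything happening inside $f$ via iterated lax structure maps.

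The main obstacle I anticipate is purely bookkeeping: making sure the lax-monoidal coherence data of $f$ is used consistently, i.e. that the two a priori different ways of moving a tensor factor across $f$ (e.g. $d\otimes f(x\otimes m)\to f(d\otimes x\otimes m)$ versus reassociating first) agree up to the coherence isomorphisms, and that the ``$\V$-linear'' versus ``$\C$-internal'' tensor structures interact correctly when $\C$ is itself a $\V$-algebra (here one uses that the $\W$-module structure on $\M$ restricts the relevant structure maps appropriately). In an $\infty$-categorical setting one cannot literally chase elements, so I would phrase the argument either by invoking the universal property of trace-class maps in \Cref{rmk:trcl} (the lift of $\one\to\hom_\C(x,y)$ through $\hom_\C(x,\one)\otimes y$), or — cleaner — by observing that the whole statement is the image under $f$, suitably interpreted, of the tautological diagram witnessing trace-classness, so that it follows formally from the fact that lax monoidal functors preserve the relevant composites; the honest work is then just assembling the coherences, which I would either spell out in a short diagram or, in keeping with the style of the surrounding lemmas (cf. the proof of \Cref{lm:swapat}), leave the verification of the properties to the reader after writing down the maps.
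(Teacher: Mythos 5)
Your construction of the filler is exactly the paper's: pick a trace-class witness $(d,\epsilon,\eta)$, then take $f(x\otimes m)\xrightarrow{\eta\otimes\id} y\otimes d\otimes f(x\otimes m)\to y\otimes f(d\otimes x\otimes m)\xrightarrow{y\otimes f(\epsilon\otimes\id)} y\otimes f(m)$, and verify the two triangles by a naturality diagram chase. The only slip is the stray reference to $\W$ at the start (the ambient symmetric monoidal category here is $\V$), but you immediately correct course, so this matches the paper's proof.
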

\begin{rmk}
This is formally analogous to the diagonal filler appearing in the definition of compact maps, however if we think of tensoring as analogous to colimits, here the assumption is on ``a type of colimit diagram'', while in the definition of compact maps, the assumption is on ``$m$''. It is not clear to the author whether there is a common framework for these two statements; however they have the same kind of underlying ``meaning'' (one can commute things that ought not to commute), and the same kind of use. 
\end{rmk}
\begin{rmk}
    An important special case of this is the case where $f:x\to y$ is the identity of a dualizable object $x$. In this case, this shows that the canonical map $x\otimes f(m)\to f(x\otimes m)$ is an equivalence. 
\end{rmk}
\begin{proof}
Fix $d$, $\epsilon : d\otimes x\to \one_\V$ and $\eta: \one_\V\to y\otimes d$ as in the definition of trace-class maps. The diagonal filler $f(x\otimes m)\to y\otimes f(m)$ is defined as the composite: $$f(x\otimes m)\xrightarrow{\eta\otimes \id}y\otimes d\otimes f(x\otimes m) \to y\otimes f(d\otimes x\otimes m)\xrightarrow{y\otimes f(\epsilon\otimes \id)} y\otimes f(m)$$

The following diagram shows that it is actually a diagonal filler:
\[\begin{tikzcd}
	{x\otimes f(m)} && {f(x\otimes m)} \\
	{y\otimes d\otimes x\otimes f(m)} & {y\otimes d\otimes f(x\otimes m)} & {f(y\otimes d\otimes x\otimes m)} \\
	& {y\otimes f(d\otimes x\otimes m)} \\
	{y\otimes f(m)} && {f(y\otimes m)}
	\arrow[from=1-1, to=1-3]
	\arrow[from=1-3, to=2-3]
	\arrow[from=2-3, to=4-3]
	\arrow[from=1-3, to=2-2]
	\arrow[from=2-2, to=2-3]
	\arrow[from=1-1, to=2-1]
	\arrow[from=2-2, to=3-2]
	\arrow[from=2-1, to=2-2]
	\arrow[from=4-1, to=4-3]
	\arrow[from=3-2, to=4-1]
	\arrow[from=2-1, to=4-1]
	\arrow[from=3-2, to=2-3]
\end{tikzcd}\]
\end{proof}
\begin{cor}\label{cor:nuclax}
Let $\V\in\CAlg(\PrL), \M,\N\in\Mod_\V(\PrL)$
    Let $f:\M\to \N$ be a lax $\V$-linear map, e.g. the right adjoint of a morphism in $\Mod_\V(\PrL)$. Suppose $f$ preserves sequential colimits. 

    Let $x_\bullet:\mathbb N\to \V$ be a sequential diagram where each transition map is trace class, and let $x_\infty$ denote its colimit. In this case, for any $m\in\M$, the natural map $x_\infty\otimes f(m)\to f(x_\infty\otimes m)$ is an equivalence. 
\end{cor}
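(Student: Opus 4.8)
The plan is to reduce the statement to \Cref{prop:trclexch} by approximating $x_\infty$ with the finite stages $x_n$ and using that $f$ commutes with the sequential colimit. First I would fix $m\in\M$ and consider the commutative diagram of natural maps $x_n\otimes f(m)\to f(x_n\otimes m)$, which assemble (using that tensoring with $f(m)$ preserves colimits, and that $f$ preserves the sequential colimit $x_\infty\otimes m\simeq\colim_n x_n\otimes m$) into the map $x_\infty\otimes f(m)\to f(x_\infty\otimes m)$ as a colimit over $\NN$. So it suffices to show this colimit of maps is an equivalence.

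The key input is \Cref{prop:trclexch} applied to each trace-class transition map $x_n\to x_{n+1}$: it produces a diagonal filler $f(x_n\otimes m)\to x_{n+1}\otimes f(m)$ making the square
\[\begin{tikzcd}
	{x_n\otimes f(m)} & {f(x_n\otimes m)} \\
	{x_{n+1}\otimes f(m)} & {f(x_{n+1}\otimes m)}
	\arrow[from=1-1, to=1-2]
	\arrow[from=1-1, to=2-1]
	\arrow[dashed, from=1-2, to=2-1]
	\arrow[from=1-2, to=2-2]
	\arrow[from=2-1, to=2-2]
\end{tikzcd}\]
commute (both triangles). This is exactly the ``usual trick'' used in the proof of \Cref{thm:dblexhaustionV} via \Cref{cor:swapat}: having compatible dotted lifts through each stage of a sequential colimit lets one run a cofinality argument, as in the proof of \cite[Lemma 2.24]{maindbl}. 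Concretely, the two interleaved sequential diagrams $(x_n\otimes f(m))_n$ and $(f(x_n\otimes m))_n$ have mutually cofinal transition maps (the horizontal maps and the dashed diagonal maps), so their colimits agree, and one checks the resulting identification is induced by the canonical map $x_\infty\otimes f(m)\to f(x_\infty\otimes m)$.

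I expect the main obstacle to be purely bookkeeping: verifying that the diagonal fillers from \Cref{prop:trclexch} are sufficiently natural/compatible across the tower so that the interleaving argument goes through coherently, rather than just levelwise — i.e. that one genuinely gets a map of $\NN$-indexed diagrams (or at least enough of one to conclude after passing to colimits) and not merely a compatible family of squares. As in \cite{maindbl}, one sidesteps full coherence by only using the cofinality of the two interleaved towers at the level needed to identify colimits, which is why sequential (rather than arbitrary filtered) colimits are assumed. Granting that, the equivalence $x_\infty\otimes f(m)\simeq f(x_\infty\otimes m)$ follows, and naturality in $m$ is automatic from the construction.
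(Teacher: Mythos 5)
Your proposal is correct and matches the argument the paper intends; in fact, the paper states \Cref{cor:nuclax} with no explicit proof, treating it as an immediate consequence of \Cref{prop:trclexch}, and what you write is precisely the expected reconstruction. You correctly identify the two ingredients — the levelwise diagonal fillers from \Cref{prop:trclexch}, and the interleaving/cofinality step to pass to the sequential colimit — and you rightly flag that the only nontrivial content is the second step, which the paper itself outsources in an analogous place (the proof of \Cref{thm:dblexhaustionV}) to the same \cite[Lemma 2.24]{maindbl}.

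One minor point worth making explicit, which your sketch glosses over slightly: the interleaved diagram $A_0 \to B_0 \to A_1 \to B_1 \to \cdots$ (with $A_n = x_n\otimes f(m)$, $B_n = f(x_n\otimes m)$, horizontal maps the lax structure maps $\alpha_n$, and diagonals the fillers $\beta_n$) is an honest $\NN$-indexed diagram since such a diagram is just a sequence of composable maps; but its restriction to the even (resp.\ odd) indices has transition maps $\beta_n\alpha_n$ (resp.\ $\alpha_{n+1}\beta_n$), which are only \emph{homotopic} to, not equal to, the original transitions of $(A_n)$ and $(B_n)$. So the cofinality of the two inclusions $\NN\hookrightarrow\NN$ by itself does not literally identify $\colim A_n$ with $\colim B_n$ along $\phi$ — one still has to argue that replacing the transition maps by homotopic ones does not change the sequential colimit, and that the resulting identification is the one induced by $\alpha$. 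This is exactly what \cite[Lemma 2.24]{maindbl} handles, so your citation lands in the right place; I only flag it because as written, the phrase ``mutually cofinal transition maps, so their colimits agree'' suggests the step is cheaper than it is.
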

We spell out this corollary in the case of a dualizable object, but it is a strict special case of the above: 
\begin{cor}\label{cor:projdbl}
Let $\V\in\CAlg(\PrL), \M,\N\in\Mod_\V(\PrL)$
    Let $f:\M\to \N$ be a lax $\V$-linear map, e.g. the right adjoint of a morphism in $\Mod_\V(\PrL)$.

    Let $x\in\V$ be dualizable. In this case, for any $m\in\M$, the natural map $x\otimes~f(m)\to~f(x\otimes~m)$ is an equivalence. 
\end{cor}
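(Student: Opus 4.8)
The plan is to deduce \Cref{cor:projdbl} directly from \Cref{cor:nuclax}, which in turn will be deduced from \Cref{prop:trclexch}. So the real work is already in \Cref{prop:trclexch}; these two corollaries are bootstrapping along sequential colimits and then specializing.

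First I would prove \Cref{cor:nuclax}. Consider the sequential diagram $x_\bullet:\NN\to\V$ with trace-class transition maps $x_n\to x_{n+1}$, and colimit $x_\infty$. Fix $m\in\M$. We want the canonical map $x_\infty\otimes f(m)\to f(x_\infty\otimes m)$ to be an equivalence. Since $(-)\otimes f(m)$ preserves colimits (it is a colimit-preserving functor on $\V$, as $\M$ is a $\V$-module) and $f$ preserves sequential colimits by hypothesis while $(-)\otimes m$ also preserves colimits, both sides are the sequential colimit over $n$ of $x_n\otimes f(m)$ and $f(x_n\otimes m)$ respectively, and the canonical map is the colimit of the canonical maps $x_n\otimes f(m)\to f(x_n\otimes m)$. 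Now apply \Cref{prop:trclexch} to the trace-class map $x_n\to x_{n+1}$ and the object $m\in\M$: it produces a diagonal filler $f(x_n\otimes m)\to x_{n+1}\otimes f(m)$ making both triangles in the square commute. This is exactly the setup of the standard ``interleaving''/cofinality argument: the maps $x_n\otimes f(m)\to f(x_n\otimes m)$ and the fillers $f(x_n\otimes m)\to x_{n+1}\otimes f(m)$ assemble the two sequential diagrams into a common cofinal zig-zag, so they have the same colimit and the induced map on colimits is an equivalence. I would phrase this precisely as in the proof of \cite[Lemma 2.24]{maindbl}, which is referenced elsewhere in the paper for exactly this kind of argument.

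Then \Cref{cor:projdbl} follows immediately: if $x\in\V$ is dualizable, then the identity map $\id_x$ is trace-class (a dualizable object is precisely one admitting coevaluation and evaluation exhibiting $\id_x$ as trace-class, via $x\to x\otimes x^\vee\otimes x\to x$), so taking the constant sequential diagram $x_\bullet$ with value $x$ and all transition maps the (trace-class) identity, \Cref{cor:nuclax} gives that $x\otimes f(m)\to f(x\otimes m)$ is an equivalence. Alternatively, and perhaps more cleanly, one applies \Cref{prop:trclexch} directly to $\id_x$: the resulting diagonal filler together with the two commuting triangles forces the top horizontal map $x\otimes f(m)\to f(x\otimes m)$ to be a retract of an equivalence (the filler composed the other way is an identity), hence an equivalence; this is exactly the content of the remark following \Cref{prop:trclexch}, so I would simply invoke that.

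The main obstacle — if there is one at this stage — is purely bookkeeping: making sure the interleaving argument for \Cref{cor:nuclax} is set up so that the fillers from \Cref{prop:trclexch} genuinely commute with both the structural maps $x_n\otimes f(m)\to f(x_n\otimes m)$ and the transition maps on each side, so that one really does get a map of diagrams realizing a common cofinal subsequence. Since \Cref{prop:trclexch} already packages the filler with the commutativity of both triangles in the square, this is the same diagram-chase used repeatedly in \cite{maindbl}, and no genuinely new idea is needed — the substantive content is entirely in the explicit construction of the filler in the proof of \Cref{prop:trclexch}.
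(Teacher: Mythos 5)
Your proposal is correct and matches the paper's approach: the paper states \Cref{cor:projdbl} as ``a strict special case'' of \Cref{cor:nuclax}, and the remark immediately following \Cref{prop:trclexch} already singles out the dualizable case, where the diagonal filler supplied by \Cref{prop:trclexch} applied to $\id_x$ is a two-sided inverse to the canonical map. Your fleshed-out interleaving argument for \Cref{cor:nuclax} is the one the paper leaves implicit. One small terminological slip in your alternative route: you describe the canonical map $x\otimes f(m)\to f(x\otimes m)$ as ``a retract of an equivalence,'' but what the two commuting triangles in \Cref{prop:trclexch} actually give, with $f=\id_x$, is that the diagonal filler is simultaneously a left and a right inverse of the canonical map, so the canonical map is an equivalence directly — there is no retract step involved.
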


Finally, as an analogue of \cite[Example 1.23]{maindbl}, we prove:
\begin{lm}\label{lm:trclat}    
Let $f:x\to y$ be a trace-class map in $\V$, and $g: m\to n$ a $\V$-atomic map in a $\V$-module $\M$. The tensor product $f\otimes g :x\otimes m\to y\otimes n$ is $\V$-atomic. 
\end{lm}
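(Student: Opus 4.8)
The statement to prove is \Cref{lm:trclat}: if $f:x\to y$ is trace-class in $\V$ and $g:m\to n$ is $\V$-atomic in a $\V$-module $\M$, then $f\otimes g:x\otimes m\to y\otimes n$ is $\V$-atomic. The natural strategy is to produce an explicit $\V$-linear, colimit-preserving functor $h:\M\to\V$ together with a natural transformation $h\to\hom_\M(x\otimes m,-)$ through which the classifying map $\one_\V\to\hom_\M(x\otimes m, y\otimes n)$ of $f\otimes g$ lifts; by \Cref{rmk:atomicstrongcompact} this is exactly what it means for $f\otimes g$ to be $\V$-atomic.

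First I would set up the ingredients. Since $g:m\to n$ is $\V$-atomic, by definition the map $\one_\V\to\hom_\M(m,n)$ lifts along $\at_\M(m,n)\to\hom_\M(m,n)$; equivalently (\Cref{rmk:atomicstrongcompact}) there is a $\V$-linear colimit-preserving $h_0:\M\to\V$ with a transformation $h_0\to\hom_\M(m,-)$ and a lift of $g$ to $h_0(n)$. Since $f:x\to y$ is trace-class, fix $d\in\V$ with $\epsilon:d\otimes x\to\one_\V$ and $\eta:\one_\V\to y\otimes d$ factoring $f$ as $x\xrightarrow{\eta\otimes\id}y\otimes d\otimes x\xrightarrow{\id\otimes\epsilon}y$. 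Now define $h:\M\to\V$ by $h(-):=d\otimes x\otimes h_0(-)$ — wait, I want the output in $\V$ to still be $\V$-linear and colimit-preserving, which $d\otimes x\otimes h_0(-)$ certainly is since tensoring with a fixed object of $\V$ and $h_0$ both preserve colimits and $\V$-linear structure. The candidate transformation $h\to\hom_\M(x\otimes m,-)$ is built by combining: the transformation $h_0\to\hom_\M(m,-)$ tensored up, the evident pairing-type map $d\otimes x\otimes\hom_\M(m,-)\to \hom_\M(m,-)$... actually more carefully, I want to use that $\hom_\M(x\otimes m,-)\simeq\hom_\M(m,\hom(x,-))$ is not available since $x\otimes m$ uses the $\V$-action, but $\hom_\M(x\otimes m, z)\simeq \hom_\V(x,\hom_\M(m,z))\simeq \hom_\V(x,\one_\V)\otimes$-type manipulations are exactly where $d$ enters via the trace-class witness.

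The cleaner route, and the one I would actually carry out, is to mimic \Cref{prop:trclexch}: apply that proposition to the lax $\V$-linear functor $h_0:\M\to\V$ (it is at least lax $\V$-linear, being $\V$-linear) and the trace-class map $f:x\to y$, giving a diagonal filler $h_0(x\otimes -)\to y\otimes h_0(-)$ sitting in the square relating $x\otimes h_0(-)$, $h_0(x\otimes-)$, $y\otimes h_0(-)$, $h_0(y\otimes-)$. Then the functor $z\mapsto h_0(x\otimes z)$ is $\V$-linear and colimit-preserving, it receives $h_0(x\otimes-)$ from... and it maps to $\hom_\M(m, x\otimes -)\to \hom_\M(x\otimes m, x\otimes -)$ — hmm, the last map goes the wrong way. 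Let me instead take $h(z):=h_0(\hom(x,z))$ where $\hom(x,-):\M\to\M$ is the $\V$-enriched cotensor; but cotensoring need not preserve colimits. So the trace-class hypothesis is precisely what repairs this: the trace-class data gives a colimit-preserving approximation $y\otimes h_0(-)$ to the non-colimit-preserving $h_0(\hom(x,-))$, with a comparison map, and this is the content I need. Concretely: set $h(z):=y\otimes n\otimes(\text{something})$ — I will instead just verify directly that $h(z):=\hom_\V(x,\one_\V)\otimes h_0(z)$ works once one knows $f$ trace-class means $\one\to\hom_\V(x,\one)\otimes y$ exists, composing the $\at_\M$-witness for $g$ with the dual-element witness for $f$ to lift $f\otimes g$ to $h(y\otimes n)=\hom_\V(x,\one)\otimes h_0(y\otimes n)$, using \Cref{ex:VatV} which identifies $\at_\V(x,-)\simeq\hom_\V(x,\one_\V)\otimes-$.

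\textbf{Main obstacle.} The real work is checking that the transformation $h\to\hom_\M(x\otimes m,-)$ is well-defined and that the lift of $f\otimes g$ is compatible with it — i.e.\ that the relevant triangle commutes. This amounts to a diagram chase combining the diagonal filler of \Cref{prop:trclexch} (applied to $h_0$ and $f$) with naturality of $h_0\to\hom_\M(m,-)$ and the Yoneda-type identification of \Cref{ex:VatV}. I expect this to be a moderately involved but ultimately routine verification, of the same flavor as the commuting diagram in the proof of \Cref{prop:trclexch}; there is no conceptual difficulty, only bookkeeping with the $\V$-action and the trace-class factorization. A slicker alternative, worth trying first, is purely formal: observe that $\at_\M(m,-)\otimes\at_\V(x,-)\to\hom_\M(m,-)\otimes\hom_\V(x,-)$ lands (after the evident pairing) in a colimit-preserving $\V$-linear functor mapping to $\hom_\M(x\otimes m,-)$ — since $\at_\M(m,-)$ and $\at_\V(x,-)\simeq\hom_\V(x,\one)\otimes-$ are each $\V$-linear and colimit-preserving, so is their "tensor" $z\mapsto\at_\V(x,\one)\otimes\at_\M(m,z)$, wait I mean $z\mapsto \hom_\V(x,\one_\V)\otimes\at_\M(m,z)$ — and then the witnesses for $f$ (a point of $\hom_\V(x,\one_\V)\otimes y$, equivalently of $\at_\V(x,y)$) and for $g$ (a point of $\at_\M(m,n)$) together furnish a point of this functor evaluated at $y\otimes n$ lifting $f\otimes g$. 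This formal argument is essentially \Cref{lm:slightlycoherentatid} tensored with \Cref{ex:VatV}, and I would present it that way to keep the proof short.
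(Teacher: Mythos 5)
Your final paragraph — the ``slicker alternative'' — is essentially the paper's proof: both construct the $\V$-linear colimit-preserving functor $z\mapsto\hom_\V(x,\one_\V)\otimes\at_\M(m,z)$ (with $y$ absorbed via lax $\V$-linearity of $\at_\M(m,-)$), observe that its canonical map to $\hom_\M(x\otimes m,-)$ must factor through $\at_\M(x\otimes m,-)$ by the universal property, and then feed in the trace-class witness for $f$ and the atomicity witness for $g$ to produce the required lift. The earlier exploratory paragraphs contain acknowledged dead ends (cotensoring not preserving colimits, a map going the wrong way) and should be discarded, but they do not affect the correctness of the final argument.
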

\begin{proof}
    We produce a factorization $$\hom(x,\one_\V)\otimes y\otimes \at_\M(m,n)\to\at_\M(x\otimes m,y\otimes n)\to \hom_\M(x\otimes m, y\otimes n)$$ of the canonical map $$\hom(x,\one_\V)\otimes y\otimes \at_\M(m,n)\to \hom(x,y)\otimes \hom(m,n)\to \hom(x\otimes m, n\otimes y)$$ 

    We do this in two steps. First, using (lax) $\V$-linearity of $\at_\M(m,-)$, we reduce to the case $y=\one_\V$. In this case, we note that $$\hom(x,\one_\V)\otimes \at_\M(m,-) \to \hom(x,\one_\V)\otimes \hom_\M(m,-)\to \hom_\M(x\otimes m,-)$$ is a natural transformation from a colimit-preserving $\V$-linear functor, and hence it factors canonically through $\at_\M(x\otimes m,-)$, as was needed. 
\end{proof}
\section{Local rigidity and rigidification}\label{section:rigandlocrig}
 The goal of this section is to study how the notion of dualizable $\V$-module varies, as $\V$ itself varies. We introduce the notions of rigid and more generally locally rigid $\V$-algebras, and prove basic properties about them.  Finally, our last goal is to describe rigidification and to use it to say a bit more about locally rigid categories. 

\begin{obs}
    For a morphism $f:\V\to \W$ in $\CAlg(\PrL)$, the induced basechange functor $\W\otimes_\V - : \Mod_\V\to \Mod_\W$ can be upgraded to a symmetric monoidal $2$-functor, hence it preserves dualizability and internal left adjoints. In particular it induces a functor $\Dbl{\V}\to \Dbl{\W}$. This functor also has a right adjoint, the restriction of scalars functor $\Mod_\W\to \Mod_\V$ which is a lax symmetric monoidal $2$-functor. Note that its $2$-functoriality guarantees that it preserves internal left adjoints. 
\end{obs}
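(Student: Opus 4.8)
The plan is to realize the base-change functor as a symmetric monoidal $2$-functor, from which the first two claims follow formally, and then to handle the right adjoint by the doctrinal adjunction together with the (easy) $2$-functoriality of restriction of scalars.

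First I would set up the structures. Recall that $\Mod_\V(\PrL)$ is symmetric monoidal under $\otimes_\V$ and carries a compatible $(\infty,2)$-categorical structure with mapping categories $\Fun^L_\V(-,-)$, inherited from $\PrL$. Since $\W\in\CAlg(\Mod_\V(\PrL))$, we may identify $\Mod_\W(\PrL)\simeq\Mod_\W(\Mod_\V(\PrL))$ and the functor $\W\otimes_\V-$ with the free-$\W$-module functor; by the standard theory of modules over commutative algebras (Lurie, \emph{Higher Algebra}), this free-module functor is symmetric monoidal, encoding the canonical equivalences $\W\otimes_\V\one_\V\simeq\one_\W$ and $\W\otimes_\V(\M\otimes_\V\N)\simeq(\W\otimes_\V\M)\otimes_\W(\W\otimes_\V\N)$. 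On mapping categories, $\W\otimes_\V-$ acts through the functor $\Fun^L_\V(\M,\N)\to\Fun^L_\W(\W\otimes_\V\M,\W\otimes_\V\N)$ obtained by tensoring with $\W$, which carries natural transformations along and is compatible with composition up to coherent homotopy; so $\W\otimes_\V-$ upgrades to a symmetric monoidal $2$-functor (equivalently: an enriched symmetric monoidal functor for the $\widehat{\Cat}$-enrichment).

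The first two assertions are then formal. A strong symmetric monoidal functor sends duality data to duality data, hence preserves dualizable objects; as a $\V$-module is dualizable over $\V$ precisely when it is dualizable in $(\Mod_\V(\PrL),\otimes_\V)$, base change sends dualizable $\V$-modules to dualizable $\W$-modules. A $2$-functor sends an adjunction — a unit, a counit, and the two triangle identities — to an adjunction, hence preserves internal left adjoints, these being exactly the morphisms that are left adjoints in the $2$-category $\Mod_\V(\PrL)$. Combining the two, $\W\otimes_\V-$ restricts to a functor $\Dbl{\V}\to\Dbl{\W}$, where both sides have dualizable modules as objects and internal left adjoints as morphisms.

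For the right adjoint: $\W\otimes_\V-$ preserves colimits and both sides are presentable, so it admits a right adjoint, which one identifies with restriction of scalars $f^{*}$ (or one simply writes $f^{*}$ down directly); the right adjoint of a symmetric monoidal functor is canonically lax symmetric monoidal. It remains to see $f^{*}$ is $2$-functorial: on mapping categories it is the faithful ``forget $\W$-linearity'' functor $\Fun^L_\W(\M,\N)\to\Fun^L_\V(f^{*}\M,f^{*}\N)$, which manifestly respects composition and identities, so $f^{*}$ underlies a $2$-functor and therefore preserves adjunctions, hence internal left adjoints — which is what the final sentence of the observation records. The one point that is not pure bookkeeping is the genuine $(\infty,2)$-functoriality of base change (and of $f^{*}$), as opposed to mere functoriality of the underlying $\infty$-categories; I would handle this by invoking the known $(\infty,2)$-categorical/enriched enhancement of $\PrL$ and checking both functors are enriched, or by extracting the needed coherence $2$-cells from the symmetric monoidal structure. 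Everything else is routine.
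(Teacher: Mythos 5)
Since this is stated as an Observation in the paper, there is no written-out proof to compare against; the statement is meant to follow from standard $2$-categorical facts about base change, and your fleshed-out argument is indeed the intended one: symmetric monoidality $\Rightarrow$ preserves dualizable objects, $2$-functoriality $\Rightarrow$ preserves (internal left) adjoints, and doctrinal adjunction $\Rightarrow$ right adjoint is lax symmetric monoidal. You also correctly flag the one real subtlety, namely the genuine $(\infty,2)$-enhancement of $\Mod_\V(\PrL)$ and of base change, as opposed to mere $\infty$-functoriality, which is what has to be invoked rather than proved from scratch.

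One small inaccuracy to fix: you justify the existence of the right adjoint by writing that ``both sides are presentable,'' but $\Mod_\V(\PrL)$ and $\Mod_\W(\PrL)$ are \emph{very large} and not presentable, so the adjoint functor theorem does not apply in the form you cite. This does not damage the proof because, as you note in the same sentence, one simply writes $f^*$ (restriction of scalars along $f$) down directly and checks the adjunction by hand; but the ``presentable'' clause should be removed so as not to suggest a route that does not exist.
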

Here are a number of questions one can ask about a given $f$ (all of them have negative answers in full generality, but one can ask \emph{when} they have positive answers): 
\begin{ques}
    When does restriction of scalars preserve dualizability ? When does it reflect dualizability ? 

    When does it \emph{reflect} internal left adjoints, i.e. if a map of $\W$-modules $\M\to \N$ is a $\V$-internal left adjoint, when can we conclude that it is a $\W$-internal left adjoint ?
\end{ques}
One can also ask about atomic maps:
\begin{ques}
    Let $f:x\to y$ be a map in a $\W$-module $\M$. When does $f$ being $\V$-atomic imply that it is $\W$-atomic ? What about the converse ?

    How precisely can we compare $\at_\M^\V(x,y)$ and $\at_\M^\W(x,y)$ ? 
\end{ques}
The first questions have general answers, namely: 
\begin{prop}
The forgetful functor $f^*:\Mod_\W\to \Mod_\V$ induced by $f:\V\to \W$ preserves dualizability if and only if $\W$ is dualizable as a $\V$-module. 

It \emph{reflects} dualizability if $\W$ is \emph{smooth} as a $\V$-module, i.e. $\W$ is dualizable as a $\W\otimes_\V\W$-module. 
\end{prop}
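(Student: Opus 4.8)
The plan is to treat the two halves of the statement separately, both hinging on the fact that $f^*:\Mod_\W\to\Mod_\V$ is right adjoint to base change $\W\otimes_\V-$, and that base change is a symmetric monoidal $2$-functor, so $f^*$ is lax symmetric monoidal and $2$-functorial.

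For the first claim, suppose $\W$ is dualizable over $\V$. Given a dualizable $\W$-module $\M$, I want to show $f^*\M$ is dualizable over $\V$. The cleanest route: dualizability of $\M$ over $\W$ is equivalent to $\M$ being a retract, in $\Mod_\W(\PrL)$, of a compactly generated (even $\omega$-compactly generated, i.e. of the form $\Ind$ of a small stable category — or in the general $\V$-setting, of the form $\PP_\W(\M_0)$) $\W$-module along internal left adjoints; more precisely $\M$ is dualizable iff it is a retract of a dualizable $\W$-module which is ``nice'', but the key structural fact I would invoke is that a $\W$-module is dualizable iff it is a retract (in $\Mod_\W(\PrL)$, via internal left adjoints) of $\PP_\W(\M_0)$ for some small $\W$-category $\M_0$ — this is the content of the compact/atomic generation story. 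Applying $f^*$: retracts and internal left adjoints are preserved since $f^*$ is a lax symmetric monoidal $2$-functor, so it suffices to show $f^*\PP_\W(\M_0)$ is dualizable over $\V$. Now $\PP_\W(\M_0)\simeq \W\otimes_\W \PP_\W(\M_0)$ and under $f^*$ this becomes something built from $\W$ (as a $\V$-module) and $\PP_\V$ of the underlying $\V$-category; concretely $f^*\PP_\W(\M_0)$ is a $\PP_\V(-)$-type category tensored appropriately with $\W$, and since $\W$ is dualizable over $\V$ and $\PP_\V(\text{small})$ is dualizable over $\V$, and dualizable $\V$-modules are closed under $\otimes_\V$, we conclude. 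Conversely, if $f^*$ preserves dualizability, apply it to $\M=\W$ itself, which is dualizable (even rigid) over $\W$: then $f^*\W=\W$ is dualizable over $\V$, which is exactly the stated condition. This direction is immediate.

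For the second claim (reflection of dualizability under the smoothness hypothesis), suppose $\W$ is dualizable as a $\W\otimes_\V\W$-module, and let $\M$ be a $\W$-module such that $f^*\M\in\Mod_\V(\PrL)$ is dualizable. I want $\M$ dualizable over $\W$. The idea: there is a multiplication map $m:\W\otimes_\V\W\to\W$ in $\CAlg(\PrL)$, and $\M$ as a $\W$-module is $\M\otimes_{\W\otimes_\V\W}\W$ where $\M$ is regarded as a $\W\otimes_\V\W$-module via one of the two factors. Since $f^*\M$ is dualizable over $\V$, the module $\W\otimes_\V f^*\M$ — which is $\M$ regarded as a $\W\otimes_\V\W$-module along the other factor — is dualizable over $\W\otimes_\V\W$ (base change of a dualizable module along $\V\to\W\otimes_\V\W$, using that base change preserves dualizability). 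Now $\M$ over $\W$ is obtained from this dualizable $\W\otimes_\V\W$-module by base change along $m:\W\otimes_\V\W\to\W$; base change again preserves dualizability, so $\M$ is dualizable over $\W$ — \emph{provided} the identification ``$\M$ over $\W$ equals $(\M\text{ over }\W\otimes_\V\W)\otimes_{\W\otimes_\V\W}\W$'' holds, which is where smoothness of $\W$ enters: smoothness guarantees that $\W$ as a $\W\otimes_\V\W$-algebra is dualizable, hence that $-\otimes_{\W\otimes_\V\W}\W$ behaves well, and more precisely is where one checks the two bimodule structures on $\M$ are exchanged correctly. I would carry this out by writing $\M\simeq \M\otimes_\W\W\simeq \M\otimes_\W(\W\otimes_{\W\otimes_\V\W}\W)\simeq (\M\otimes_{\W\otimes_\V\W}\W)$ with the first $\W$-action on $\M$ and the $\W\otimes_\V\W$-action coming from ``first on $\M$, second freely,'' and then recognizing $\M\otimes_{\W\otimes_\V\W}\W \simeq f^*\M \otimes_\V \W$ with its natural $\W$-module structure.

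The main obstacle I anticipate is the bookkeeping in the second half: keeping straight the several module structures (the two factors of $\W\otimes_\V\W$, the diagonal, and the residual $\W$-action on $\M$) and verifying that smoothness of $\W$ over $\V$ is exactly what makes the relevant base-change identity $\M\simeq (f^*\M\otimes_\V\W)\otimes_{\W\otimes_\V\W}\W$ hold $\W$-linearly. The dualizability-is-preserved-by-base-change inputs are routine (they follow from base change being a symmetric monoidal $2$-functor), so the real content is this one algebraic identity, which is the standard ``descent along a smooth/separable algebra'' argument adapted to the $2$-categorical setting of $\Mod(\PrL)$.
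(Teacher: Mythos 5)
The paper's own ``proof'' is a one-line citation of \cite[Propositions 4.6.4.4, 4.6.4.12]{HA}, so any self-contained argument you give is necessarily a different route. That's fine in principle, but your sketch has gaps in both halves.

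\emph{First half.} The ``only if'' direction (plug in $\M=\W$) is correct. For the ``if'' direction, the reduction to retracts of $\PP_\W(\M_0)$ is legitimate, but the key step — that $f^*\PP_\W(\M_0)$ is $\V$-dualizable — is not justified, and the informal description ``a $\PP_\V(-)$-type category tensored with $\W$'' is not accurate in general: $f^*\PP_\W(\M_0)\simeq \W\otimes_\V\PP_\V(C)$ only when $\M_0$ is a \emph{free} $\W$-category $f_\#C$, and a general $\W$-category is not of that form. There is no simplification for the $\PP_\W$ case over the general case, so the retract detour doesn't actually reduce the problem. A cleaner route, avoiding the retract reduction entirely: for $\M$ dualizable over $\W$ with dual $\M^\vee$, use coextension of scalars and dualizability twice to get
$$\Fun^L_\V(f^*\M,\N)\simeq\Fun^L_\W(\M,\Fun^L_\V(\W,\N))\simeq\M^\vee\otimes_\W\Fun^L_\V(\W,\N)\simeq\M^\vee\otimes_\W(\W^\vee\otimes_\V\N)\simeq(\M^\vee\otimes_\W\W^\vee)\otimes_\V\N,$$
so $f^*\M$ is $\V$-dualizable with dual $\M^\vee\otimes_\W\W^\vee$ (this is essentially Lurie's duality-data argument in disguise).

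\emph{Second half.} Here there is a genuine error. You claim the $\W\otimes_\V\W$-module $\W\otimes_\V f^*\M$ (with one $\W$ acting on the free tensor factor and the other via the $\W$-action on $\M$) is dualizable over $\W\otimes_\V\W$ ``by base change along $\V\to\W\otimes_\V\W$.'' But with that module structure, $\W\otimes_\V f^*\M$ is \emph{not} the base change of $f^*\M$ along $\V\to\W\otimes_\V\W$ — that base change would be $(\W\otimes_\V\W)\otimes_\V f^*\M$, which has an extra free $\W$-factor. Rather, $\W\otimes_\V f^*\M$ with this structure is the base change of $\M$ itself (as a $\W$-module) along one of the two inclusions $\W\hookrightarrow\W\otimes_\V\W$, and $\W$-dualizability of $\M$ is precisely what you are trying to prove, so the argument is circular as stated. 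Moreover, smoothness is invoked in the wrong place: the identity $\W\otimes_{\W\otimes_\V\W}(\W\otimes_\V\M)\simeq\M$ is a routine relative tensor product computation that holds for any commutative $\W$-algebra, with no smoothness needed. Smoothness is instead the substantive hypothesis required to produce $\W\otimes_\V\W$-dualizability somewhere in the argument, and that is exactly the step your sketch elides. (For comparison: if one \emph{strengthens} smoothness to separability, i.e.\ the multiplication $m\colon\W\otimes_\V\W\to\W$ has a $\W\otimes_\V\W$-linear section, then $\M$ is a retract of the $\W$-dualizable module $\W\otimes_\V f^*\M$ along the action map and one is done; but smoothness alone does not give such a section, and the genuinely smooth case needs a different argument — this is the content of \cite[4.6.4.12]{HA}.)
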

\begin{proof}
This is \cite[Propositions 4.6.4.4., 4.6.4.12.]{HA}. 
\end{proof}
It turns out that these two conditions (being smooth and proper) are implied by the stronger condition of being \emph{rigid}. This notion will also be of help to study our other questions; as well as its variant, that of a \emph{locally rigid} $\V$-algebra. We point out that these notions have been introduced by Gaitsory and Rozenblyum and further studied in \cite{gaitsroz,HSS} and \cite{kazhdan}.

For conceptual clarity, we first study locally rigid algebras, and later rigid algebras, but let us note that most examples of locally rigid algebras are localizations of rigid algebras, and the latter are more ``standard''. 
\subsection{Local rigidity}
\begin{defn}
    Let $f:\V\to\W$ be a morphism in $\CAlg(\PrL)$. It is said to be locally rigid, or $\W$ is said to be locally rigid over $\V$ if the multiplication map $\W\otimes_\V\W\to \W$ is an internal left adjoint in $\Mod_{\W\otimes_\V\W}(\PrL)$ and $\W$ is dualizable over $\V$. 
\end{defn}
We begin by studying in detail the atomically generated situation.
\begin{ex}\label{ex:locrigat}
    Let $f:\V\to\W$ be a morphism in $\CAlg(\PrL)$. Assume $\W$ is $\V$-atomically generated. In this case, $\W$ is locally rigid if and only if every $\V$-atomic object in $\W$ is dualizable. 
\end{ex}
\begin{proof}
    Since $\W$ is $\V$-atomically generated, it is dualizable over $\V$ by \cite[Proposition 1.40]{maindbl}. 

    Assume now that every $\V$-atomic object in $\W$ is dualizable.  In this case, the multiplication map sends pairs of $\V$-atomics to $\V$-atomics (because a $\V$-atomic tensored with a $\W$-dualizable is $\V$-atomic, by an argument similar to \cite[Example 1.23]{maindbl}), and thus is an internal left adjoint in $\Mod_\V(\PrL)$. We are left with proving that its right adjoint $m^R : \W\to\W\otimes_\V\W$ is strictly $\W$-bilinear. Since it is $\V$-linear and colimit-preserving, and since $\W$ is $\V$-atomically generated, it suffices to check that the projection formula $w\otimes m^R(w')\to m^R(w\otimes w')$ is an equivalence for $\V$-atomic $w$. But now by assumption, $w$ is dualizable and so this map is an equivalence by \Cref{cor:projdbl}.  

    Conversely, assume that $\W$ is locally rigid, and let $x\in\W$ be $\V$-atomic. It is classified by a $\V$-linear functor $\V\to \W$ which, by assumption, is a $\V$-linear left adjoint. The $\W$-linear functor $x\otimes -: \W\to \W$ is obtained by composing $\W\otimes (\V\xrightarrow{x\otimes -}\W)$ and the multiplication $\W\otimes_\V\W\to \W$. They are both $\W$-internal left adjoints : the first one by assumption on $x$, the second one by local rigidity. Thus the composite is as well, and so by \cite[Example 1.23]{maindbl} $x$ is dualizable. 
\end{proof}
\begin{ex}
    $\Sp_{T(n)},\Sp_{K(n)},\Sp_p$ are locally rigid over $\Sp$ because they are compactly generated (by $T(n)/K(n)$-localizations of compact type $n$ spectra for the first two, and by $\Sph/p$ for the second) and compacts therein are dualizable.
\end{ex}
In the compactly generated case, the typical situation is the following: 
\begin{ex}
    Let $\V\in\CAlg(\PrL_{\st})$, and let $K\subset\V$ be a (small) set of dualizable objects and let $\V_{K-tors}$ be the localizing tensor ideal it generates. The inclusion $\V_{K-tors}\subset V$ is an internal left adjoint in $\Mod_\V$, and one can prove that its right adjoint witnesses $\V_{K-tors}$ as a symmetric monoidal localization of $\V$. 

    With this structure, $\V_{K-tors}$ is a locally rigid $\V$-algebra. 
\end{ex}
\begin{ex}\label{ex:locrigloc}
    Let $f:\V\to \W$ be a locally rigid morphism, and $L:\W\to \W_L$ a symmetric monoidal localization which is also a $\V$-internal left adjoint. In this case, $\W_L$ is $\V$-locally rigid. 
\end{ex}
\begin{proof}
    The fact that $L$ is a $\V$-internal left adjoint and a localization proves that $\W_L$ is a $\V$-linear retract of $\W$, hence it is dualizable. 

    Now the multiplication map $\W_L\otimes_\V\W_L\to \W_L$ is a $\V$-internal left adjoint by \cite[Lemma 1.30]{maindbl} and the fact that $\W\otimes_\V\W\to \W_L\otimes_\V\W_L\to \W_L$ is equivalently $\W\otimes_\V~\W\to~\W\to~\W_L$, both of which are $\V$-internal left adjoints. To obtain $\W_L$-linearity, we note that the map $\W\xrightarrow{m^R}\W\otimes_\V\W\to \W_L\otimes_\V\W_L$ is (say) left $\W$-linear, with values in a left $\W_L$-module, and hence factors through $\W\to \W_L$. It follows that the following diagram 
    \[\begin{tikzcd}
	{\W\otimes_\V\W} & \W \\
	{\W_L\otimes_\V\W_L} & {\W_L}
	\arrow[from=1-1, to=2-1]
	\arrow[from=1-1, to=1-2]
	\arrow[from=2-1, to=2-2]
	\arrow[from=1-2, to=2-2]
\end{tikzcd}\]
is horizontally right adjointable. Contemplating this gives $\W_L$-linearity (on either side) of $\W_L\xrightarrow{m^R}\W_L\otimes_\V\W_L$.
\end{proof}
The following is a key example outside of the compactly generated world:
\begin{ex}
    It is proved in \cite[Proposition 2.54]{maindbl} that whenever $X$ is a locally compact Hausdorff topological space, $\Sh(X)$ was dualizable. We claim that it is in fact locally rigid. Indeed, the multiplication map is given by $\Delta^*: \Sh(X)\otimes \Sh(X)\simeq \Sh(X\times X)\to \Sh(X)$, so its right adjoint is $\Delta_*$, which preserves colimits as $\Delta$ is proper, cf. \cite[7.3.1.5, 7.3.1.13, 7.3.1.15,7.3.1.16]{HTT}.

    We now have to argue that $\Delta_*$ is $\Sh(X\times X)$-linear. This is the content of e.g. \cite[Proposition 6.12]{volpe} ($\Delta$ is proper so $\Delta_! \simeq \Delta_*$). 
\end{ex}
We now study locally rigid $\V$-algebras in more generality. 
\begin{prop}\label{prop:actintladj}
Let $f:\V\to \W \in\CAlg(\PrL)$. 
    The multiplication map $\W\otimes_\V\W\to \W$ is a $\W\otimes_\V\W$-internal left adjoint if and only if for all $\W$-modules $\M$, the counit/action map $\W\otimes_\V\M\to \M$ is a $\W$-internal left adjoint.
\end{prop}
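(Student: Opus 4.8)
The plan is to prove both directions by combining the ``every $\W$-module is a colimit of free ones'' observation with the fact, established earlier, that internal left adjoints are detected on generators together with linearity. First suppose the multiplication map $m:\W\otimes_\V\W\to\W$ is a $\W\otimes_\V\W$-internal left adjoint. Given a $\W$-module $\M$, we want to show the action map $a:\W\otimes_\V\M\to\M$ is a $\W$-internal left adjoint. I would first treat the universal case $\M=\W$: here $a$ is exactly $m$, and the claim is essentially the hypothesis, after checking that being a $\W\otimes_\V\W$-internal left adjoint upgrades to being a $\W$-internal left adjoint on both sides (the relevant square is horizontally right adjointable, so its mate gives the required linearity of $a^R$, exactly in the style of the argument in \Cref{ex:locrigloc}). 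For general $\M$, write $\M$ as a $\W$-linear colimit of copies of $\W$ — more precisely, present $\M$ as a colimit of free $\W$-modules $\W\otimes_\V (-)$ over a diagram in $\Mod_\V$, or use that $\M$ is generated under colimits and $\V$-tensors by a set of objects, each classified by a $\V$-linear functor $\V\to\M$, equivalently a $\W$-linear functor $\W\to\M$. Then $a\colon\W\otimes_\V\M\to\M$ is assembled $\W$-linearly from the universal case by base change, and since internal left adjoints are stable under $\W$-linear colimits (\cite[Lemma 1.30]{maindbl}) and the relevant basechanges, one concludes $a$ is a $\W$-internal left adjoint.

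For the converse, simply specialize the hypothesis to $\M=\W$: the action map $\W\otimes_\V\W\to\W$ is then $m$ itself, so it is a $\W$-internal left adjoint; but a $\W$-internal left adjoint map whose source and target are $\W\otimes_\V\W$-modules and which is $\W$-linear on the appropriate side is automatically detected as a $\W\otimes_\V\W$-internal left adjoint once one checks the extra $\W$-linearity on the other factor, which is formal since $m$ is symmetric in its two arguments. Concretely, $m$ being a $\W$-internal left adjoint (say via the left $\W$-action) gives that $m^R$ is left $\W$-linear; applying the same to the right action gives right $\W$-linearity, and together these assemble to $\W\otimes_\V\W$-linearity of $m^R$, so $m$ is a $\W\otimes_\V\W$-internal left adjoint.

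The main obstacle I anticipate is the bookkeeping in the forward direction: making precise the sense in which a general $\W$-module $\M$ is built $\W$-linearly out of the free module $\W$, and checking that the action maps $\W\otimes_\V\M\to\M$ are compatibly assembled from $\W\otimes_\V\W\to\W$ under this presentation — i.e. that the colimit of the internal-left-adjoint property is again the internal-left-adjoint property, including the strict (not merely oplax) $\W$-linearity of the right adjoint. This is exactly the type of argument used repeatedly above (e.g. in \Cref{ex:locrigat} and \Cref{ex:locrigloc}), where one reduces linearity of a right adjoint to a projection-formula check on a generating class and then invokes stability of internal left adjoints under colimits; so while not deep, it requires care to state cleanly. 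Everything else — the two specializations to $\M=\W$ and the symmetry argument for the converse — is formal.
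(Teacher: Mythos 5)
Your converse direction ($\M=\W$ plus symmetry) is exactly the paper's. The forward direction (multiplication $\W\otimes_\V\W$-internal left adjoint $\Rightarrow$ all action maps $\W$-internal left adjoints) is where you diverge, and this is where your argument has a real gap.

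You propose to resolve $\M$ as a colimit of free $\W$-modules and then ``assemble'' the internal-left-adjoint property from the universal case $\M=\W$. The assembly step is the problem: it is not formal that a colimit, in $\Fun(\Delta^1,\Mod_\W(\PrL))$, of internal left adjoints is again an internal left adjoint. Passing to the mate of the colimit diagram via $\PrL\simeq(\PrR)^{\mathrm{op}}$ requires the naturality squares of the $a_i$ against the transition maps to be vertically adjointable, and you neither state nor verify this. The reference to \cite[Lemma 1.30]{maindbl} does not cover this: that lemma detects an internal left adjoint by precomposing with an internal left adjoint whose image generates, which is a different statement from ``internal left adjoints are stable under $\W$-linear colimits.'' You yourself flag the bookkeeping as the anticipated obstacle, and it is not a small one.

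The paper sidesteps the entire resolution. Since $\W\otimes_\V\M\simeq(\W\otimes_\V\W)\otimes_\W\M$, the action map $a_\M$ is literally $m\otimes_\W\M$, i.e.\ the image of $m$ under the functor $-\otimes_\W\M\colon\Mod_{\W\otimes_\V\W}(\PrL)\to\Mod_\W(\PrL)$. That functor is a $2$-functor (it is a functor of module categories over the symmetric monoidal $2$-category $\PrL$), and $2$-functors preserve internal left adjoints; done. Your mention of ``base change'' gestures at this, but framing the argument through a colimit presentation of $\M$ introduces exactly the adjointability difficulty the direct base-change observation avoids. If you rewrite the forward direction around the identity $a_\M\simeq m\otimes_\W\M$ and the $2$-functoriality of $-\otimes_\W\M$, the bookkeeping disappears.
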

\begin{proof}
    First, note that it suffices for the multiplication map to be an internal left adjoint as a left (say) $\W$-module, by symmetry. Thus, clearly the given condition implies that the multiplication is an internal $\W\otimes_\V\W$-left adjoint by plugging in $\M=\W$.
    
For the converse, note that $-\otimes_\W \M: \Mod_{\W\otimes_\V\W}(\PrL)\to \Mod_\W(\PrL)$ is a $2$-functor, so it preserves internal left adjoints, which allows us to conclude. 
\end{proof}
\begin{ex}\label{ex:complocrig}
  Let $f:\V\to \W, g:\W\to \mathcal U$ be two locally rigid morphisms in $\CAlg(\PrL)$. The composite $gf$ is also locally rigid. 
\end{ex}
\begin{proof}
  Firstly, $\mathcal U$ is dualizable over $\W$ and $\W$ over $\V$, so $\mathcal U$ is dualizable over $\V$ by \cite[Proposition 4.6.4.4.]{HA}. 

  Second, by the above, we need to argue that $\mathcal U\otimes_\V\M \to \M$ is a $\mathcal U$-internal left adjoint for all $\mathcal U$-modules $\M$. We write it as a composite $\mathcal U\otimes_\W (\W\otimes_\V\M)\to \mathcal U\otimes_\W\M\to \M$, where it is now clear that both arrows are internal left adjoints. 
\end{proof}

\begin{lm}\label{lm:tensoroneat}
    Let $\M$ be a dualizable $\V$-module, and $\alpha: x\to y$ a $\V$-atomic map in $\M$. In this case, $\one_\W\boxtimes \alpha \in \W\otimes_\V\M$ is $\W$-atomic. 

    In fact, there is a natural map $\one_\W\otimes\at_\M^\V(x,y)\to \at_{\W\otimes_\V\M}^\W(\one_\W\boxtimes x,\one_\W\boxtimes y)$ compatible with the projections to $\hom_\M^\V$ and $\hom_{\W\otimes_\V\M}^\W$. 
\end{lm}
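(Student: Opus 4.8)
The plan is to reduce the statement to a universal property of $\at^\W_{\W\otimes_\V\M}$ and build the comparison map functorially. First I would recall that, since $\M$ is dualizable over $\V$, the base-change $\W\otimes_\V\M$ is dualizable over $\W$, and the functor $\one_\W\boxtimes(-)\colon\M\to\W\otimes_\V\M$ (restriction of scalars along $\V\to\W$ on the target, composed with the unit) is $\V$-linear and colimit-preserving; in fact it is an \emph{internal} left adjoint, being the base-change of $\id_\M$ along $\V\to\W$ in the $2$-functor $\W\otimes_\V(-)$. Its right adjoint, restriction of scalars $\W\otimes_\V\M\to\M$, is lax $\W$-linear, hence in particular lax $\V$-linear.

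Next I would exploit the defining universal property of $\at$. Fix $x\in\M$. The functor $z\mapsto\one_\W\otimes\at^\V_\M(x,(-))$ evaluated appropriately — more precisely, the composite $\W\otimes_\V\M\xrightarrow{\text{res}}\M\xrightarrow{\at^\V_\M(x,-)}\V\xrightarrow{\one_\W\otimes(-)}\W$ — is $\W$-linear and colimit-preserving (using that $\at^\V_\M(x,-)$ is $\V$-linear colimit-preserving, that restriction of scalars along $\V\to\W$ from a $\W$-module preserves colimits, and that $\one_\W\otimes(-)$ is the base-change functor). It receives a lax $\W$-linear transformation to $\hom^\W_{\W\otimes_\V\M}(\one_\W\boxtimes x,-)$: indeed, $\hom^\W_{\W\otimes_\V\M}(\one_\W\boxtimes x,-)\simeq \one_\W\otimes\hom^\V_\M(x,\text{res}(-))$ is not quite right in general, but the \emph{lax} comparison $\one_\W\otimes\hom^\V_\M(x,\text{res}(-))\to\hom^\W_{\W\otimes_\V\M}(\one_\W\boxtimes x,-)$ exists by adjunction (it is the counit of the $(\one_\W\boxtimes-)\dashv\text{res}$ adjunction, base-changed). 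Composing with the counit $\at^\V_\M(x,-)\to\hom^\V_\M(x,-)$ gives a lax $\W$-linear map from the $\W$-linear colimit-preserving functor $\one_\W\otimes\at^\V_\M(x,\text{res}(-))$ to $\hom^\W_{\W\otimes_\V\M}(\one_\W\boxtimes x,-)$. By the universal property of $\at^\W_{\W\otimes_\V\M}(\one_\W\boxtimes x,-)$ as the terminal such object, this factors uniquely through $\at^\W_{\W\otimes_\V\M}(\one_\W\boxtimes x,-)$, and evaluating at $y$ yields the desired natural map $\one_\W\otimes\at^\V_\M(x,y)\to\at^\W_{\W\otimes_\V\M}(\one_\W\boxtimes x,\one_\W\boxtimes y)$ compatible with the projections to the $\hom$'s.

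For the first assertion, that $\one_\W\boxtimes\alpha$ is $\W$-atomic: the classifying map $\one_\W\to\hom^\W_{\W\otimes_\V\M}(\one_\W\boxtimes x,\one_\W\boxtimes y)$ of $\one_\W\boxtimes\alpha$ factors, by the construction above, through $\one_\W\otimes\at^\V_\M(x,y)$ — using the chosen lift $\one_\V\to\at^\V_\M(x,y)$ witnessing that $\alpha$ is $\V$-atomic, tensored with $\one_\W$ — and hence through $\at^\W_{\W\otimes_\V\M}(\one_\W\boxtimes x,\one_\W\boxtimes y)$, which is exactly the condition of being $\W$-atomic.

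The main obstacle I anticipate is setting up the lax $\W$-linear comparison $\one_\W\otimes\hom^\V_\M(x,\text{res}(-))\to\hom^\W_{\W\otimes_\V\M}(\one_\W\boxtimes x,-)$ with enough naturality and verifying it is compatible with the relevant $\V$- and $\W$-module structures, i.e. that it really defines a morphism in $\Fun_{\W}(\W\otimes_\V\M,\W)$ (as opposed to merely $\Fun_{\V}$), so that the universal property of $\at^\W$ applies. This is where one uses that $\text{res}\colon\W\otimes_\V\M\to\M$ is \emph{lax} $\W$-linear and that $\hom^\V_\M(x,-)$ is $\V$-linear; the composite acquires a lax $\W$-linear structure, and tensoring up by $\one_\W$ — the base-change functor, which is strong symmetric monoidal — produces the genuinely $\W$-linear approximation on the left. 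Once this bookkeeping is in place, the rest is a formal application of the universal property, exactly parallel to the proof of \Cref{lm:trclat}.
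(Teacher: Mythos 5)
Your plan hinges on two claims that don't hold in the generality of the lemma, and the first of them is fatal to the strategy.

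You assert that $\one_\W\boxtimes(-)\colon\M\to\W\otimes_\V\M$ is an internal left adjoint, ``being the base-change of $\id_\M$ along $\V\to\W$.'' This is not what base change gives: $\W\otimes_\V(\id_\M)$ is $\id_{\W\otimes_\V\M}$, not the unit. Up to identifying $\W\otimes_\V\M\simeq\M\otimes_\V\W$, the functor $\one_\W\boxtimes(-)$ is instead $\M\otimes_\V(\eta\colon\V\to\W)$, which is an internal left adjoint if and only if $\eta$ is, i.e. if and only if $\one_\W$ is $\V$-atomic. The lemma makes no such hypothesis on $\W$. For a concrete failure: take $\V=\Sp$, $\W=\Sh(X)$ with $X$ non-compact locally compact Hausdorff, $\M=\Sp$; then $\text{res}=p_*=\Gamma$ is not colimit-preserving. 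This matters because the universal property defining $\at^\W_{\W\otimes_\V\M}(\one_\W\boxtimes x,-)$ is only \emph{among $\W$-linear colimit-preserving functors} with a lax map to $\hom^\W$, so once $\text{res}$ fails to preserve colimits, the composite $z\mapsto\one_\W\otimes\at^\V_\M(x,\text{res}(z))$ is not an admissible test object and the factorization you invoke simply does not exist. (In the $\Sh(X)$ example the candidate becomes $p^*\Gamma$, which is indeed not colimit-preserving, while $\at^\W_\W(\one_\W,-)\simeq\id_\W$; the map of the lemma is the counit $p^*\Gamma\to\id$, but it doesn't arise from the universal property the way you propose.) The issue you flag in your last paragraph (promoting the lax $\W$-linear structure through $\text{res}$ to a strong one after tensoring with $\one_\W$) is also real, but it is secondary; the missing colimit-preservation already blocks the argument.

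The paper's proof is genuinely different and routes around exactly this difficulty. It uses dualizability of $\M$ to invoke the explicit formula $\at^\V_\M(x,-)\simeq\hom^\V_{\PP_\V(\M^\lambda)}(j(x),\hat j(-))$ (\Cref{cor:descofat}), tensors the $\V$-linear map $\PP_\V(\M^\lambda)\to\W\otimes_\V\PP_\V(\M^\lambda)$ through the hom, observes that $\one_\W\otimes j(x)$ is a $\W$-atomic \emph{object} of $\W\otimes_\V\PP_\V(\M^\lambda)$ (so $\hom^\W=\at^\W$ there, and the relevant hom-functor is automatically colimit-preserving and $\W$-linear — no right adjoints of unit maps appear), then precomposes with $\hat j(x)\to j(x)$ and finally transports along the fully faithful internal left adjoint $\one_\W\otimes\hat j$ using \Cref{cor:iLffat}. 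In other words, where your argument tries to make the universal property of $\at^\W$ do the work in $\W\otimes_\V\M$ directly, the paper first linearizes everything inside presheaf categories where the objects involved are atomic and no universal property needs to be brandished at a non-colimit-preserving functor. If you want to salvage a direct universal-property argument, you would need to replace $\one_\W\otimes\at^\V_\M(x,\text{res}(-))$ by the $\W$-linear left Kan extension of $\one_\W\otimes\at^\V_\M(x,-)\colon\M\to\W$ along $\one_\W\boxtimes(-)$, but then producing the required lax map to $\hom^\W_{\W\otimes_\V\M}(\one_\W\boxtimes x,-)$ is exactly what needs an argument, and one ends up reinventing the presheaf-level comparison.
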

\begin{proof}
Fix $\lambda$ large enough so that $x$ is $\lambda$-compact.

The map $\PP_\V(\M^\lambda)\to \W\otimes_\V\PP_\V(\M^\lambda)$ induces a map $$\one_\W\otimes\at_\M^\V(x,y)= \one_\W\otimes \hom^\V_{\PP_\V(\M^\lambda)}(j(x),\hat j(y))\to\hom_{\W\otimes_\V\PP_\V(\M^\lambda)}(\one_\W\otimes j(x), \one_\W\otimes \hat j(y))$$
where we have used \Cref{cor:descofat}. 

Now $\PP_\V(\M^\lambda)\to \W\otimes_\V\PP_\V(\M^\lambda)$ sends $\V$-atomics to $\W$-atomics, and therefore the latter is equivalent to $\at^\W_{\W\otimes_\V\PP_\V(\M^\lambda)}(\one_\W\otimes j(x), \one_\W\otimes \hat j(y))$. 

Precomposing by $\hat j(x)\to j(x)$ thus yields in total, a map $$\one_\W\otimes\at_\M^\V(x,y)\to \at^\W_{\W\otimes_\V\PP_\V(\M^\lambda)}(\one_\W\otimes \hat j(x),\one_\W\otimes \hat j(y))$$
Finally, using \Cref{cor:iLffat} (since the functor $\one_\W\otimes \hat j$ is a fully faithful $\W$-internal left adjoint), the latter is equivalent to $\at^\W_{\W\otimes_\W\M}(\one_\W\otimes x,\one_\W\otimes y)$, and it is easy to check that all of this is happening over the relevant $\hom$-objects, so that this proves the claim.  
\end{proof}
\begin{cor}\label{cor:locrigimplat}
    Let $\W$ be a locally rigid $\V$-algebra. In any dualizable $\W$-module $\M$, any $\V$-atomic map is $\W$-atomic.

    More precisely, there is a natural map $\one_\W\otimes\at_\M^\V(x,y)\to \at_\M^\W(x,y)$ compatible with the projections to $\hom_\M^\V$ and $\hom_\M^\W$. 
\end{cor}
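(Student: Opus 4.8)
The plan is to deduce this from \Cref{lm:tensoroneat} together with the local rigidity hypothesis, via the action functor $\W\otimes_\V\M\to\M$. First I would recall from \Cref{prop:actintladj} that, since $\W$ is locally rigid over $\V$, the action/counit map $a:\W\otimes_\V\M\to\M$ is a $\W$-internal left adjoint for every $\W$-module $\M$; moreover since $\M$ is dualizable over $\W$ and $\W$ is dualizable over $\V$, $\M$ is dualizable over $\V$, so \Cref{lm:tensoroneat} applies to it. Thus given a $\V$-atomic map $\alpha:x\to y$ in $\M$ (viewing $\M$ as a $\V$-module by restriction of scalars), the map $\one_\W\boxtimes\alpha$ in $\W\otimes_\V\M$ is $\W$-atomic, and there is a natural map $\one_\W\otimes\at_\M^\V(x,y)\to\at_{\W\otimes_\V\M}^\W(\one_\W\boxtimes x,\one_\W\boxtimes y)$ over the respective $\hom$-objects.

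Next I would push this forward along $a$. Since $a$ is a $\W$-internal left adjoint, by \Cref{ex:presat} it preserves $\W$-atomic maps; since $a(\one_\W\boxtimes x)\simeq x$ and likewise for $y$, and $a(\one_\W\boxtimes\alpha)\simeq\alpha$, this already shows $\alpha$ is $\W$-atomic in $\M$, giving the first sentence of the statement. For the ``more precisely'' part, I would use the naturality of the construction in \Cref{ex:presat}: the functor $a$ induces a map $\at_{\W\otimes_\V\M}^\W(u,v)\to\at_\M^\W(a(u),a(v))$ compatible with the maps to the $\hom$-objects (this is exactly the universal-property argument in the proof of \Cref{ex:presat}, applied with $f=a$). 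Composing with the map from \Cref{lm:tensoroneat} and identifying $a(\one_\W\boxtimes x)\simeq x$, $a(\one_\W\boxtimes y)\simeq y$ yields the desired natural map $\one_\W\otimes\at_\M^\V(x,y)\to\at_\M^\W(x,y)$, and chasing through the two compatibilities (the one in \Cref{lm:tensoroneat} and the one coming from $a$) shows it sits over the projections to $\hom_\M^\V(x,y)$ and $\hom_\M^\W(x,y)$, using that the $\V$-linear map $\hom_\M^\V(x,y)\to\hom_\M^\W(x,y)$ factors through $\one_\W\otimes\hom_\M^\V(x,y)$.

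The main obstacle I anticipate is purely bookkeeping: verifying that the various canonical maps genuinely assemble over the correct $\hom$-objects — i.e. that the square relating $\one_\W\otimes\at_\M^\V(x,y)$, $\at_\M^\W(x,y)$, $\one_\W\otimes\hom_\M^\V(x,y)$ (equivalently the image of $\hom_\M^\V$ in $\hom_\M^\W$) and $\hom_\M^\W(x,y)$ commutes. This requires unwinding the construction in \Cref{lm:tensoroneat} (which already goes through $\PP_\V(\M^\lambda)$, \Cref{cor:descofat}, and \Cref{cor:iLffat}) and matching it against the naturality square for $a$ from \Cref{ex:presat}; each individual compatibility is straightforward, but keeping track of all of them simultaneously is where the care is needed. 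There is no conceptual difficulty beyond that, and in fact one could, if desired, phrase the whole argument as: the composite $\V$-linear functor $\M\xrightarrow{\one_\W\boxtimes-}\W\otimes_\V\M\xrightarrow{a}\M$ is $\W$-linear (it is the identity) and factors as a composite of maps preserving atomic maps, so tracking $\at$ through it gives both the qualitative and the refined statement at once.
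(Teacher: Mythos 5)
Your proof is correct and takes precisely the approach the paper intends — the paper's proof is the one-line instruction ``combine the previous lemma with \Cref{prop:actintladj} and \Cref{ex:presat},'' and you have unpacked exactly that: apply \Cref{lm:tensoroneat} to $\M$ viewed as a dualizable $\V$-module, then push forward along the action map $\W\otimes_\V\M\to\M$, which is a $\W$-internal left adjoint by \Cref{prop:actintladj} and hence preserves $\W$-atomic maps (with the functorial map on $\at$-objects) by \Cref{ex:presat}. The only minor imprecision is the phrase ``the $\V$-linear map $\hom_\M^\V(x,y)\to\hom_\M^\W(x,y)$'' — these live in $\V$ and $\W$ respectively, so what is meant is the $\W$-linear comparison map $\one_\W\otimes\hom_\M^\V(x,y)\to\hom_\M^\W(x,y)$ (the counit of $f\dashv f^R$ at $\hom_\M^\W(x,y)$), which you in fact identify in the same sentence.
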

\begin{proof}
    Combine the previous lemma  with \Cref{prop:actintladj} and \Cref{ex:presat}. 
\end{proof}
This allows us to reformulate the definition of local rigidity in the following way for $\V=\Sp$: 
\begin{prop}\label{prop:locrigcharac}
    $\W$ is locally rigid over $\Sp$ if and only if it is dualizable and any $\Sp$-atomic map in $\W$ is trace class; if and only if any dualizable $\W$-module is $\Sp$-dualizable and any $\Sp$-atomic map in a dualizable $\W$-module is $\W$-atomic. 
\end{prop}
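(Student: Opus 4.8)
The statement has three equivalent conditions to relate: (a) $\W$ locally rigid over $\Sp$; (b) $\W$ dualizable and every $\Sp$-atomic map in $\W$ is trace-class; (c) every dualizable $\W$-module is $\Sp$-dualizable and every $\Sp$-atomic map in a dualizable $\W$-module is $\W$-atomic. I would prove the cycle (a)$\Rightarrow$(c)$\Rightarrow$(b)$\Rightarrow$(a). Throughout, recall that for $\V=\Sp$ a $\W$-module is dualizable over $\W$ iff it is $\Sp$-atomically generated, that $\W$ dualizable over $\Sp$ is automatic once we know it is a retract of something nice, and that by \Cref{ex:VatV} trace-class maps in $\W$ are exactly $\W$-atomic maps in $\W$ viewed as a module over itself.

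\textbf{(a)$\Rightarrow$(c).} Assume $\W$ is locally rigid over $\Sp$. The ``dualizable $\W$-modules are $\Sp$-dualizable'' half is precisely the first Proposition recalled from \cite[Prop.~4.6.4.4]{HA}, since local rigidity includes the hypothesis that $\W$ is dualizable over $\Sp$, and restriction of scalars along $\Sp\to\W$ preserves dualizability exactly when $\W$ is dualizable over $\Sp$; here one also needs that dualizable $\W$-modules remain dualizable after forgetting to $\Sp$, which follows because a dualizable $\W$-module $\M$ is a $\W$-linear retract of $\PP_\W(\M_0)$-type objects built from $\W$, and base-changing/forgetting preserves retracts and dualizability. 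The second half — that every $\Sp$-atomic map in a dualizable $\W$-module is $\W$-atomic — is exactly \Cref{cor:locrigimplat} applied with $\V=\Sp$: it produces a natural map $\one_\W\otimes\at_\M^{\Sp}(x,y)\to\at_\M^\W(x,y)$ over $\hom_\M^{\Sp}$ and $\hom_\M^\W$, and chasing the classifying map $\Sph\to\hom^{\Sp}_\M(x,y)$ of an $\Sp$-atomic $f$ through this map (using $\one_\W\otimes(-)$ on the lift) exhibits $f$ as $\W$-atomic.

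\textbf{(c)$\Rightarrow$(b).} Specialize the hypotheses of (c) to the dualizable $\W$-module $\M=\W$ itself. Then ``$\W$ is $\Sp$-dualizable'' is immediate, and ``every $\Sp$-atomic map in $\W$ is $\W$-atomic'' combined with \Cref{ex:VatV} (which identifies $\W$-atomic maps \emph{in $\W$} with trace-class maps in $\W$) gives exactly (b).

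\textbf{(b)$\Rightarrow$(a).} This is the heart of the argument and the step I expect to be the main obstacle. Assume $\W$ is dualizable over $\Sp$ and every $\Sp$-atomic map in $\W$ is trace-class. We must show the multiplication $m:\W\otimes_{\Sp}\W\to\W$ is an internal left adjoint in $\Mod_{\W\otimes_{\Sp}\W}(\PrL)$. Since $\W$ is dualizable over $\Sp$, so is $\W\otimes_{\Sp}\W$, and $\W$ is $\Sp$-atomically generated — wait, it need not be; instead I would use \Cref{thm:dblexhaustionV} / \Cref{cor:iLviaexhaust}: to show $m$ is an internal left adjoint it suffices, since the source $\W\otimes_{\Sp}\W$ is dualizable, to produce a factorization $\at_{\W\otimes_{\Sp}\W}(-,-)\to\at_\W(m(-),m(-))$ over the $\hom$-functors (using \Cref{add:enough}, it even suffices to do this on generators, and the $\Sp$-atomic maps between $\Sp$-atomic objects of $\W$ generate). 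Concretely, for $\Sp$-atomic objects $a,b\in\W$ and $\Sp$-atomic objects $c,d\in\W$, a $\Sp$-atomic map $a\otimes c\to b\otimes d$ in $\W\otimes_{\Sp}\W$ maps under $m$ to $a\otimes c\to b\otimes d$ in $\W$; the factorization through $\at_\W$ is built from \Cref{lm:trclat}, which says a trace-class map tensored with a $\W$-atomic map is $\W$-atomic — and here both factors $a\to b$ and $c\to d$ become trace-class in $\W$ by hypothesis (b), so their tensor is $\W$-atomic, giving the needed lift. One then promotes the resulting factorization of $\at$ functors to the statement that $m$ is an internal left adjoint via \Cref{cor:iLviaexhaust} (or \Cref{add:enough}), and finally observes that being an internal left adjoint as a \emph{left $\W$-module} suffices by the symmetry argument used in \Cref{prop:actintladj}, upgrading to a $\W\otimes_{\Sp}\W$-internal left adjoint. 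The delicate points here are (i) making sure \Cref{lm:trclat} is applied in the right module — it is stated for $f$ trace-class in $\V$ and $g$ atomic in a $\V$-module, so one works over $\V=\Sp$ with $g$ the $\W$-atomic (a fortiori $\Sp$-atomic) map, noting the trace-class maps of (b) live in $\W$ but tensoring is all over $\Sp$; and (ii) verifying that the factorizations assemble naturally enough to invoke \Cref{add:enough}, which only needs existence of lifts on a generating set and no compatibility of witnesses, precisely the flexibility the Addendum was designed to provide.
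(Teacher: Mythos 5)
Your decomposition into the cycle (a)$\Rightarrow$(c)$\Rightarrow$(b)$\Rightarrow$(a) matches the paper's (which treats (c)$\Rightarrow$(b) as a trivial specialization to $\M=\W$). Your arguments for (a)$\Rightarrow$(c) and (c)$\Rightarrow$(b) are fine and essentially identical to the paper's (\Cref{cor:locrigimplat} is just the packaged form of the \Cref{lm:tensoroneat}+\Cref{prop:actintladj} argument the paper uses). The issues are in (b)$\Rightarrow$(a).

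First, a smaller issue with your use of \Cref{lm:trclat}. You write that $g$ is ``$\W$-atomic (a fortiori $\Sp$-atomic)'' — this implication goes the wrong way. $\W$-atomicity implies $\Sp$-atomicity only when $\one_\W$ is compact (i.e.\ the rigid case, cf.\ \Cref{prop:unitatimplat}), which is precisely what we do \emph{not} have here; the implication we do have from local rigidity is $\Sp$-atomic $\Rightarrow$ $\W$-atomic, but we are trying to \emph{prove} local rigidity. What the argument actually needs is that if $f:a\to b$ is trace-class in $\W$ and $g:c\to d$ is $\Sp$-atomic in $\W$, then $f\otimes g$ is $\Sp$-atomic in $\W$; this is a mild variant of \Cref{lm:trclat} (the paper invokes \Cref{lm:trclat} with a slight abuse here) and can be proved by the same method: the functor $w\mapsto\at^\Sp_\W(c,\hom_\W(a,\one_\W)\otimes b\otimes w)$ is $\Sp$-linear colimit-preserving and admits a natural map to $\hom^\Sp_\W(a\otimes c, b\otimes w)$, hence factors through $\at^\Sp_\W(a\otimes c, b\otimes -)$; feeding in the trace-class witness for $f$ and the atomic witness for $g$ gives the lift. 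This makes \cite[Addendum 2.42]{maindbl} applicable to $\mu:\W\otimes\W\to\W$ and yields that $\mu$ is an $\Sp$-internal left adjoint — and I believe this is the conclusion you actually reach.

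The more serious gap is the last step. You then say the symmetry argument from \Cref{prop:actintladj} promotes this to a $\W\otimes_\Sp\W$-internal left adjoint. But that argument only converts ``one-sided $\W$-internal left adjoint'' to ``$\W\otimes\W$-internal left adjoint''; it does \emph{not} promote an $\Sp$-internal left adjoint to a $\W$-internal one. That promotion is exactly \Cref{prop:intladjlocrig}, which is proved \emph{after} this Proposition and \emph{uses} local rigidity of $\W$ — so you cannot invoke it here without circularity. The paper has a genuine extra step that your proof omits: having shown $\mu^R$ exists and is colimit-preserving, one must still verify the projection formula $w\otimes\mu^R(w')\simeq\mu^R(w\otimes w')$, i.e.\ the (oplax) $\W$-linearity of $\mu^R$ is strict. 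The paper does this via \Cref{cor:nuclax}: since $\W$ is dualizable, it is generated under colimits by compactly exhaustible objects, and hypothesis (b) — that compact maps in $\W$ are trace-class — makes those objects sequential colimits along trace-class maps; \Cref{cor:nuclax} then gives the projection formula for such $w$, and colimit-preservation of both sides in $w$ finishes the job. This is the second, independent use of hypothesis (b) in the proof, and your argument needs it.
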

Rephrasing this in terms of compact maps, we find that $\W$ is locally rigid over $\Sp$ if and only if it is dualizable and any compact map is trace-class. 
\begin{proof}
The middle claim is clearly a special case of the last claim, so it suffices to prove that the first one implies that last one, and that the middle one implies the first one. 

So first assume that $\W$ is locally rigid. As it is dualizable, any dualizable $\W$-module is $\V$-dualizable \cite[Proposition 4.6.4.4.]{HA}. Now if $\M$ is dualizable over $\W$ and $\alpha$ is $\Sp$-atomic in $\M$, then by \Cref{lm:tensoroneat}, $\one_\W\boxtimes\alpha$ is $\W$-atomic in $\W\otimes\M$. By \Cref{prop:actintladj}, the counit $\W\otimes\M\to\M$ is an internal left adjoint and so it sends $\W$-atomic maps to $\W$-atomic maps, so that $\alpha$ is $\W$-atomic too. 

Conversely, suppose $\W$ is $\Sp$-dualizable, and $\Sp$-atomic maps in $\W$ are trace-class. We have to prove that to $\W\otimes\W\to \W$ is a $\W$-internal left adjoint. First note that it sends pairs $f\boxtimes g$ of $\Sp$-atomic maps to the tensor product of a $\Sp$-atomic map and a trace-class map (by assumption) which is thus $\Sp$-atomic by \Cref{lm:trclat}. Thus it sends enough $\Sp$-atomic maps to $\Sp$-atomic maps, and is thus a $\Sp$-internal left adjoint by \cite[Addendum 2.42]{maindbl}.

We now prove that the right adjoint $\mu^R$ is actually $\W$-linear on (say) the left. For this, we use \Cref{cor:nuclax}. Writing any object of $\W$ as a colimit of compactly exhaustible objects, we see that $f^R$ is $\W$-linear. 
\end{proof}
\begin{rmk}
    It is clear that to prove that $\W$ is locally rigid, it suffices to produce a collection of compact maps which are all trace-class and which are ``enough'' compact maps; the exact same proof works as having enough compact maps certainly implies being dualizable. 
\end{rmk}

\begin{prop}\label{prop:locrigdbl}
    Let $\W$ be locally rigid, and $\M$ a $\W$-module which is dualizable over $\V$. In this case, $\M$ is dualizable over $\W$. 
\end{prop}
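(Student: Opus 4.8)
The plan is to use \Cref{thm:dblexhaustionV} in both directions. Write $f\colon\V\to\W$ for the structure map. Since $\M$ is dualizable over $\V$, \Cref{thm:dblexhaustionV} (applied over the base $\V$) tells us that every object of $\M$ is $\V$-atomically presentable; it therefore suffices to upgrade each such presentation to a $\W$-atomic presentation, as \Cref{thm:dblexhaustionV} applied over the base $\W$ will then give that $\M$ is dualizable over $\W$.

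\textbf{Step 1: every $\V$-atomic map in $\M$ is $\W$-atomic.} More precisely, I would first produce a natural map $\one_\W\otimes\at^\V_\M(x,y)\to\at^\W_\M(x,y)$ lying over the canonical maps to $\hom^\V_\M(x,y)$ and $\hom^\W_\M(x,y)$. This is exactly the content of \Cref{cor:locrigimplat}; although that corollary is stated for $\M$ dualizable over $\W$, its proof only uses that $\M$ is dualizable over $\V$, which is our hypothesis. Explicitly: $\one_\W\boxtimes\alpha$ is $\W$-atomic in $\W\otimes_\V\M$ by \Cref{lm:tensoroneat} (which only needs $\M$ dualizable over $\V$); the action map $a\colon\W\otimes_\V\M\to\M$ is a $\W$-internal left adjoint by local rigidity together with \Cref{prop:actintladj}, hence preserves $\W$-atomic maps by \Cref{ex:presat}; and $a(\one_\W\boxtimes\alpha)\simeq\alpha$ by the triangle identity for the base-change/restriction adjunction $(\W\otimes_\V-)\dashv f^*$. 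Composing the map of \Cref{lm:tensoroneat} with the pushforward along $a$ then yields the desired natural map on $\at$-objects, compatibly with the maps to $\hom$.

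\textbf{Step 2: upgrading presentations.} Fix $x\in\M$ and, using \Cref{thm:dblexhaustionV} over $\V$, a $\V$-atomic presentation $x\simeq\colim_I^W f$ together with a lift $W\to\at^\V_\M(f(-),x)$ of the classifying map of weights. By \Cref{prop:compatibilityweightedcolim} we may rewrite $x\simeq\colim_{f_\#I}^{f_!W}\tilde f$ as a $\W$-weighted colimit. Applying $f_!$ to the given lift and composing with the $\W$-natural transformation $f_!\at^\V_\M(f(-),x)\to\at^\W_\M(\tilde f(-),x)$ assembled from the natural maps of Step 1, we obtain a lift $f_!W\to\at^\W_\M(\tilde f(-),x)$; the compatibility with the classifying map to $\hom^\W_\M(\tilde f(-),x)$ uses the identification $\hom^\V_\M\simeq f^R\hom^\W_\M$ (as in the proof of \Cref{prop:compatibilityweightedcolim}), so that $f_!$ applied to the classifying map to $\hom^\V$ is the classifying map to $\hom^\W$. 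Thus $x$ is $\W$-atomically presentable, and since $x$ was arbitrary, \Cref{thm:dblexhaustionV} over $\W$ finishes the proof.

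\textbf{Main obstacle.} The delicate point is purely the enriched bookkeeping in Step 2: one must check that the base-changed weight $f_!W$, the $\W$-diagram $\tilde f$, and the composite map to $\at^\W_\M(\tilde f(-),x)$ genuinely assemble into a $\W$-atomic presentation in the sense of \Cref{defn:atpres}, i.e.\ that this composite is a $\W$-natural transformation of weights lying over the relevant $\hom^\W$-object. This is routine given the naturality already built into \Cref{lm:tensoroneat} and \Cref{prop:compatibilityweightedcolim}, but it is where the only real care is needed.
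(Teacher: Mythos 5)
Your proof is correct and follows the paper's ``Second proof in the general case'' almost verbatim: the paper likewise produces the maps $\one_\W\otimes\at_\M^\V(-,y)\to\at_\M^\W(-,y)$ via \Cref{cor:locrigimplat} (implicitly relying, as you note explicitly, on the fact that its proof via \Cref{lm:tensoroneat}, \Cref{prop:actintladj} and \Cref{ex:presat} only needs $\M$ dualizable over $\V$) and then combines \Cref{prop:compatibilityweightedcolim} with \Cref{thm:dblexhaustionV} to upgrade $\V$-atomic presentations to $\W$-atomic ones. For context, the paper also gives a shorter $2$-categorical alternative (the action map $\W\otimes_\V\M\to\M$ is a conservative internal left adjoint out of a dualizable $\W$-module, so one applies \cite[Proposition 1.41]{maindbl}), but that is a different route.
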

This proposition is true in general. We give a first proof in the case $\V=\Sp$, based on the technology of compact maps, and later give a proof for general $\V$-the latter proof is not particularly more complicated, but we feel the former showcases how the technology of compact maps allows us to reason in ways very similar to the compactly generated situation. 

We note that this statement is already contained in \cite[Appendix C]{kazhdan}, although with a different proof (even in the general case). 
\begin{proof}[Proof for $\V=\Sp$]
    Piecing things together, we find that $\Sp$-atomic maps in $\M$ are $\W$-atomic (for this in the previous proof we only need the underlying $\Sp$-module to be dualizable !). 
    
Thus the colimit-closure of $\W$-atomic telescopes in $\M$ is the whole of $\M$, and thus by \Cref{thm:dblexhaustionV} $\M$ is dualizable over $\W$. 
\end{proof}
In the general case, we propose two distinct proofs, one to illustrate some general $2$-categorical methods that might be relevant other places, and one to illustrate that one can simply copy the $\Sp$-proofs by replacing the word ``compact map'' with the word ``atomic map''. 
\begin{proof}[First proof in the general case]
Note that $\W\otimes_\V\M$ is dualizable over $\W$, and by \Cref{prop:actintladj}, the action map $\W\otimes_\V\M\to \M$ is an internal left adjoint. Its right adjoint is clearly conservative, so we may conclude by \cite[Proposition 1.41]{maindbl}. 
\end{proof}
\begin{proof}[Second proof in the general case]
    We have maps $\one_\W\otimes\at_\M^\V(-,y)\to \at_\M^\W(-,y)$ for all $y\in \M$ by \Cref{cor:locrigimplat}. It follows from this together with \Cref{prop:compatibilityweightedcolim} that $\V$-atomic presentations in $\M$ are also $\W$-atomic presentations, and so by \Cref{thm:dblexhaustionV}, $\M$ is dualizable over $\W$. 
\end{proof}
\begin{prop}\label{prop:intladjlocrig}
    Let $\W$ be locally rigid over $\V$, and let $f:\M\to \N$ be a $\W$-linear functor between $\W$-modules. If $f$ is a $\V$-internal left adjoint, then it is a $ \W$-internal left adjoint. 
\end{prop}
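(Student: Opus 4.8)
The plan is to mimic the argument already used twice for \Cref{prop:locrigdbl}: reduce to a statement about $\V$-atomic presentations being preserved, and then invoke \Cref{cor:iLviaexhaust} (or more precisely its slight strengthening \Cref{add:enough}). First I would record that since $\W$ is locally rigid, both $\M$ and $\N$ are dualizable over $\W$ as soon as they are dualizable over $\V$; but in fact I only need $\M$ dualizable over $\W$, which follows from \Cref{prop:locrigdbl} once I know $\M$ is dualizable over $\V$, and the latter is part of the hypothesis that $f$ is a $\V$-internal left adjoint (its source must be $\V$-dualizable). So $\M$ is a dualizable $\W$-module, and we are in a position to apply the detection criterion \Cref{cor:iLviaexhaust} over the base $\W$: it suffices to produce a factorization of $\hom_\M^\W(-,-)\to\hom_\N^\W(f(-),f(-))$ through $\at_\M^\W(-,-)\to\at_\N^\W(f(-),f(-))$, or really just enough of one in the sense of \Cref{add:enough}.

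The key step is to transport the hypothesis that $f$ is $\V$-linear and a $\V$-internal left adjoint into such a factorization. Since $f$ is a $\V$-internal left adjoint, by \Cref{ex:presat} it preserves $\V$-atomic maps, which by \Cref{ex:presat}'s proof means we have a compatible map $\at_\M^\V(-,-)\to\at_\N^\V(f(-),f(-))$ lying over $\hom_\M^\V(-,-)\to\hom_\N^\V(f(-),f(-))$. Now I would post-compose with the natural comparison maps supplied by \Cref{cor:locrigimplat}: for $\M$ (dualizable over $\W$) there is $\one_\W\otimes\at_\M^\V(x,y)\to\at_\M^\W(x,y)$ compatible with the projections to $\hom^\V$ and $\hom^\W$, and similarly for $\N$. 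Tensoring the $\V$-level factorization with $\one_\W$ and using naturality of these comparison maps, together with the fact that $f$ is $\W$-linear (so that $\hom_\M^\V = f^R\hom_\M^\W$-type compatibilities hold and $\one_\W\otimes\hom_\M^\V(x,y)$ maps to $\hom_\M^\W(x,y)$), I get a commuting square exhibiting the desired factorization. Concretely: an atomic presentation $\colim_I^W g\simeq x$ in $\M$ (which exists by \Cref{thm:dblexhaustionV} since $\M$ is $\W$-dualizable; one can even take the tautological one from the proof of ``0 implies 1''), with its witness $W\to\at_\M^\V(g(-),x)$ over $\V$, can be composed with $\one_\W\otimes(-)$ and the comparison map to give a witness $W\to\at_\N^\W(f\circ g, f(x))$ over $\hom_\N^\W$, which is exactly the data \Cref{add:enough} requires on a generating set of objects.

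The main obstacle I anticipate is bookkeeping: making sure that the various comparison maps of \Cref{cor:locrigimplat}, the $\V$-to-$\W$ change-of-enrichment of weighted colimits (\Cref{prop:compatibilityweightedcolim}), and the preservation of $\V$-atomic maps by $f$ all assemble into a genuinely commutative diagram over the relevant $\hom$-objects, rather than merely commuting up to some unspecified homotopy — this is the same kind of coherence juggling flagged in the remarks after \Cref{lm:slightlycoherentatid}. The cleanest route is probably to phrase everything functorially: $f$ $\W$-linear gives $f^*\colon\Fun_\W^L(\N,\W)\to\Fun_\W^L(\M,\W)$ commuting with the right adjoints $R_\M, R_\N$ to the lax inclusions (as in the proof of \Cref{ex:presat}), and then the statement is that the $\V$-level compatibility, pushed along $\one_\W\otimes(-)$, lands in the $\W$-level one; the locally rigid hypothesis is precisely what makes $\one_\W\otimes(-)$ interact well with the $\at$ construction via \Cref{cor:locrigimplat}. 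Once the diagram commutes, \Cref{cor:iLviaexhaust} finishes the proof immediately. I would also remark, as the author does elsewhere, that this is ``the same proof with compact replaced by atomic'' as one would give for $\V=\Sp$ using \Cref{cor:atimpcomp} and the results of \cite{maindbl}.
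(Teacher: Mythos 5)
Your strategy matches the paper's ``Second proof in general'' almost line for line: reduce to $\M$ being $\W$-dualizable, then invoke \Cref{add:enough} over $\W$ with the weights $\one_\W\otimes\at_\M^\V(-,x_s)$, using \Cref{cor:locrigimplat}, \Cref{prop:compatibilityweightedcolim}, and \Cref{ex:presat} to assemble the needed factorization. The bookkeeping you worry about is exactly what the paper relies on, and that part is fine.

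However, there is a genuine gap in the very first step. You claim that $\M$ is $\V$-dualizable because ``the source [of a $\V$-internal left adjoint] must be $\V$-dualizable.'' This is false: being a $\V$-internal left adjoint only means the right adjoint is $\V$-linear and colimit-preserving, and imposes no dualizability on the source. The identity functor of any non-dualizable presentable $\V$-module is a $\V$-internal left adjoint with non-dualizable source. So as written, your argument proves the statement only under an additional (unstated) hypothesis that $\M$ is $\V$-dualizable. In fairness, the paper's second proof carries the same implicit restriction, since \Cref{add:enough} also requires dualizability of the source; but the paper explicitly offers an alternative ``First proof in general'' that covers the statement without this assumption. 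That proof observes that the commutative square formed by $\W\otimes_\V f$ and the two action maps $\W\otimes_\V\M\to\M$, $\W\otimes_\V\N\to\N$ has three sides that are $\W$-internal left adjoints (the top by tensoring the hypothesis with $\W$, the verticals by \Cref{prop:actintladj}), and then concludes via the two-out-of-three-type statement \cite[Lemma 1.30]{maindbl} since the action map $\W\otimes_\V\M\to\M$ is a surjection. If you want the statement in full generality, you should use that route; if you are content with dualizable $\M$, you should drop the incorrect justification and simply add dualizability as a hypothesis, after which your argument goes through.
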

Similarly to before, we can give a compact-map based proof for $\V=\Sp$, and we will later give two proofs in general: a categorical proof in general and an atomic-map based proof. As before, a different proof appears in \cite[Appendix C]{kazhdan}. 
\begin{proof}[Proof for $\V=\Sp$]
    This is the same argument as the end of the proof of \Cref{prop:locrigcharac}.
\end{proof}
\begin{proof}[First proof in general]
The same argument as in \Cref{prop:locrigdbl} works, as the action map $\W\otimes_\V\M\to \M$ is natural in the input, and we can use \cite[Lemma 1.30]{maindbl} in place of \cite[Proposition 1.41]{maindbl}. 
\end{proof}
\begin{proof}[Second proof in general]
    This is essentially the same argument as in the case of $\Sp$. More precisely, using \Cref{cor:locrigimplat} and \Cref{prop:compatibilityweightedcolim}, we may use \Cref{add:enough} with the weights $W_s$ being of the form $\one_\W\otimes\at_\M^\V(-,x_s)$, together with the fact that these are preserved by \Cref{ex:presat}. 
\end{proof}

We can use these results to prove a converse to \Cref{ex:complocrig}:
\begin{cor}\label{cor:2outof3locrig}
    Let $f:\V\to\W, g:\W\to \mathcal U$ two morphisms in $\CAlg(\PrL)$. If $f$ and $gf$ are locally rigid, then $g$ is also locally rigid. 
\end{cor}
\begin{rmk}
    We do not have a full $2$-out-of-$3$ for local rigidity, as the example $\Sp\to \Fun(\mathbb N,\Sp)\to \Sp$ shows. 
\end{rmk}
\begin{proof}
    By \Cref{prop:locrigdbl}, $\mathcal U$ is dualizable over $\W$. Consider now the multiplication map $\mathcal U\otimes_\W\mathcal U\to\mathcal U$, which we need to show is a $\mathcal U$-internal left adjoint on either side. Note that the map $\mathcal U\otimes_\V\mathcal U\to \mathcal U\otimes_\W\mathcal U$ is a map of the form $\M\to \M\otimes_{\W\otimes_\V\W}\W$ and so by local rigidity of $\W$, it is a $\W$-internal left adjoint, and hence a $\V$-internal left adjoint. Furthermore, its image generates the target under colimits. Thus, by \cite[Lemma 1.30]{maindbl} and local rigidity of $gf$, the map $\mathcal U\otimes_\W\mathcal U\to \mathcal U$ is a $\V$-internal left adjoint. By \Cref{prop:intladjlocrig}, it is a $\mathcal U$-internal left adjoint, as was to be shown. 
\end{proof}
We now prove a second stability property of locally rigid categories. 
\begin{prop}\label{prop:colimlocrig}
    Any colimit of commutative locally rigid  $\V$-algebras along internal left adjoints is locally rigid.  
\end{prop}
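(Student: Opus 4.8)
The plan is to reduce the statement to the two defining conditions of local rigidity — dualizability over $\V$, and the multiplication map being an internal left adjoint of $\W\otimes_\V\W$-modules — and verify each separately for a colimit $\W := \colim_{i\in I}\W_i$ taken along internal left adjoints in $\CAlg(\Mod_\V(\PrL))$. The key structural fact I would use is that colimits of commutative algebras along internal left adjoints are computed at the level of underlying modules (more precisely: a colimit of $\V$-algebras where all transition maps are internal left adjoints has underlying $\V$-module the colimit of the underlying modules, since the forgetful functor $\CAlg(\Mod_\V(\PrL))\to\Mod_\V(\PrL)$ preserves such colimits — the transition maps being internal left adjoints means they live in $\PrL$, and one invokes that $\CAlg$ of a presentably symmetric monoidal category has colimits computed compatibly when the maps are sufficiently nice). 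Granting this, dualizability of $\W$ over $\V$ follows because dualizable $\V$-modules with internal left adjoints between them, $\Dbl{\V}$, is closed under colimits — this is one of the main inputs cited from \cite{maindbl} (e.g.\ the presentability/closure results there), and each $\W_i$ is dualizable by hypothesis while each transition map is an internal left adjoint.

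For the second condition, I would argue that $\W\otimes_\V\W \simeq \colim_{(i,j)} \W_i\otimes_\V\W_j$ and that the multiplication map $\mu_\W$ is the colimit of the multiplication maps $\mu_{\W_i}: \W_i\otimes_\V\W_i\to\W_i$ along the diagonal. More carefully: the tensor product $-\otimes_\V-: \Mod_\V(\PrL)\times\Mod_\V(\PrL)\to\Mod_\V(\PrL)$ preserves colimits in each variable, and passing to the diagonal of the $I\times I$-diagram is cofinal along the diagonal $I\to I\times I$ when $I$ is sifted — but since a general colimit need not be sifted, I would instead note that the functor $\Delta_!$ relating the module $\W_i$ (viewed over $\W_i\otimes_\V\W_i$, then pushed forward along $\W_i\otimes_\V\W_i\to\W\otimes_\V\W$) assembles into a diagram whose colimit in $\Mod_{\W\otimes_\V\W}(\PrL)$ is $\W$. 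Then I would use that the property of being an internal left adjoint of $\W\otimes_\V\W$-modules is detected/preserved under the relevant colimit: each $\mu_{\W_i}$ is an internal left adjoint of $\W_i\otimes_\V\W_i$-modules, hence (base-changing along the $2$-functor $\Mod_{\W_i\otimes_\V\W_i}\to\Mod_{\W\otimes_\V\W}$, which preserves internal left adjoints) gives internal left adjoints of $\W\otimes_\V\W$-modules, and $\mu_\W$ is their colimit along internal left adjoints; one concludes by the closure of the subcategory of internal left adjoints under such colimits, again an input from \cite{maindbl}.

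The main obstacle I anticipate is the bookkeeping in the second paragraph: carefully identifying $\mu_\W$ as a colimit of the $\mu_{\W_i}$ \emph{in the category of $\W\otimes_\V\W$-modules} (not merely in $\V$-modules), since the ambient base is itself varying with $i$. The clean way to handle this is to work in the total category $\int_{A\in\CAlg(\Mod_\V(\PrL))}\Mod_A(\PrL)$ (the Cartesian/coCartesian fibration of modules) and observe that the assignment $\W_i\mapsto (\W_i\otimes_\V\W_i, \W_i)$ is functorial, lands in the locus where the structure map is an internal left adjoint, and that the colimit in the fiber over $\W\otimes_\V\W$ is computed as the colimit of the pushed-forward objects — this is where I would lean on the $2$-functoriality of base change and the stated closure properties of internal left adjoints under colimits. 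Once that identification is in place, both the internal-left-adjoint condition and the $\W$-bilinearity of the right adjoint to $\mu_\W$ follow formally, completing the proof.
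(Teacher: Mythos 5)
Your opening claim — that the forgetful functor $\CAlg(\Mod_\V(\PrL))\to\Mod_\V(\PrL)$ preserves colimits along internal left adjoints — is false, and this is where the argument begins to go off the rails. The forgetful functor from commutative algebras to underlying objects preserves \emph{sifted} colimits, but for coproducts the situation is different: the coproduct of $\W_0$ and $\W_1$ in $\CAlg(\Mod_\V(\PrL))$ is $\W_0\otimes_\V\W_1$, which is not the coproduct of underlying $\V$-modules. Restricting the transition maps to internal left adjoints does not change how $\CAlg$-colimits are computed. The standard fix, and what the paper does, is to decompose the general colimit into finite coproducts followed by a sifted colimit and handle each case separately: for finite coproducts (tensor products), local rigidity is immediate from the definition; for sifted colimits, the forgetful functor does preserve them, so the underlying module computation you wanted goes through.

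Your second paragraph faces a genuine bookkeeping problem that you correctly identify but do not resolve: you need to exhibit $\mu_\W$ as a colimit of the $\mu_{\W_i}$ \emph{as maps of $\W\otimes_\V\W$-modules}, but the $\mu_{\W_i}$ live over $\W_i\otimes_\V\W_i$, so you must base-change each to $\W\otimes_\V\W$, identify the colimit of the base-changed sources with $\W\otimes_\V\W$ and of the base-changed targets with $\W$, and this identification itself uses diagonal cofinality of $I\to I\times I$, i.e.\ siftedness. The paper sidesteps all of this with \Cref{prop:actintladj}: it suffices to show that for every $\W$-module $\M$ the action map $\W\otimes_\V\M\to\M$ is a $\W$-internal left adjoint. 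Now $\M$ is fixed, the base does not vary along the diagram, and $\W\otimes_\V\M\to\M$ is literally the (sifted) colimit of the maps $\W_i\otimes_\V\M\to\M$, each of which is a $\V$-internal left adjoint because $\W_i$ is locally rigid. The remaining $\W$-linearity of the right adjoint is checked by the generation trick: since $\W$ is generated by the images of the $\W_i$'s, $\V$-linearity together with $\W_i$-linearity for each $i$ suffices, and the latter is obtained by re-indexing along the cofinal $I_{i/}\to I$ (siftedness again) and applying \Cref{prop:intladjlocrig}. I'd recommend reorganizing your proof around this reduction; your fibration picture is morally sound but would require substantially more care than the paper's argument, and as written still leans on siftedness without establishing the decomposition first.
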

\begin{proof}
    We prove that this is so for finite coproducts and for sifted colimits, from which the general case follows. 

    The former are simply tensor products, and it is clear from the definition that a tensor product of locally rigid $\V$-algebras is locally rigid. 

    Now let $\W_\bullet: I\to \CAlg(\Mod_\V(\PrL))$ be a sifted diagram with locally rigid values, and internal left adjoints as transition maps. As $I$ is sifted, the colimit $\W$ of this diagram exists and is computed in $\Mod_\V(\PrL)$. 

    Fix $\M$ a $\W$-module, by \Cref{prop:actintladj} it suffices to prove that the action map $\W\otimes_\V\M\to \M$ is a $\W$-internal left adjoint. $\W$ is generated under colimits by the images of the $\W_i$'s, so to check that it is an internal left adjoint, it suffices to check that it is a $\V$-internal left adjoint, and a $\W_i$-internal left adjoint for all $i$. 

    For the former, we note that it is a colimit along internal left adjoints of $\W_i\otimes_\V \M\to \M$, each of which is an internal left adjoint as $\W_i$ is locally rigid. 

The latter works the same, noting now that for each $i$, the forgetful map $I_{i/}\to I$ is cofinal by siftedness of $I$, so that we can instead assume we had a diagram of locally rigid $\W_i$-algebras along $\W_i$-internal left adjoints by \Cref{prop:intladjlocrig}. 
\end{proof}
\begin{rmk}
This is not true if the functors are not internal left adjoints. This will be a crucial difference with rigid algebras: we will see that in this case, any symmetric monoidal colimit-preserving functor \emph{is} an internal left adjoint. 

A counterexample can be constructed from locally rigid $\Sp$-algebras of the form $\prod_I \Sp$.
\end{rmk}
We will study further categorical properties of the category of locally rigid categories in \Cref{section:rigprlV}. 

We now give an \textit{a posteriori} description of the duality datum of a locally rigid category. Recall that dualizability is assumed in the \emph{definition} of local rigidity\footnote{In contrast to the case of rigidity where it could be omitted, cf. \Cref{lm:rigimpldbl}.}. However, once we have assumed it, the duality datum has a simple description :
\begin{prop}\label{prop:exceptional}
    Let $\W$ be a commutative locally rigid $\V$-algebra. There is an ``exceptional'' $\V$-linear colimit-preserving functor $\Gamma_! : \W\to \V$ such that the following datum exhibits $\W$ as a self-dual over $\V$: $$\V\xrightarrow{\eta}\W\xrightarrow{\mu^R}\W\otimes_\V\W, \qquad \W\otimes_\V\W\xrightarrow{\mu}\W\xrightarrow{\Gamma_!}\V $$
\end{prop}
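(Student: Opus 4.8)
The plan is to exploit the local rigidity hypothesis to produce the evaluation functor $\Gamma_!$ as a composite, and then verify the triangle identities directly using that $\mu^R$ is $\W$-bilinear (which is exactly the content of local rigidity). First I would define $\Gamma_!$ by declaring it to be the $\V$-linear colimit-preserving functor $\W \to \V$ right adjoint (or rather, arising from) the natural transformation analysis. More precisely, since $\mu : \W\otimes_\V\W\to\W$ is a $\W\otimes_\V\W$-internal left adjoint by local rigidity, its right adjoint $\mu^R$ is $\W\otimes_\V\W$-linear; restricting scalars along $\eta\boxtimes\eta$ or projecting, we get that $\mu^R$ is in particular left and right $\W$-linear, and $\V$-linear. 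The functor $\Gamma_! : \W\to\V$ I want is the ``integration'' or counit functor; concretely one can take $\Gamma_!$ to be $\V\otimes_{\W\otimes_\V\W}\W$ viewed appropriately, or better: since $\W$ is dualizable over $\V$ with some duality datum, one first shows \emph{any} duality datum can be rigidified, but the cleanest route is to define $\Gamma_!$ so that $\mu^R\circ\eta$ and $\Gamma_!\circ\mu$ are the coevaluation and evaluation.

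The key steps, in order, are: (1) Record that $\mu^R:\W\to\W\otimes_\V\W$ is $\W\otimes_\V\W$-linear (local rigidity), hence the composite $\V\xrightarrow{\eta}\W\xrightarrow{\mu^R}\W\otimes_\V\W$ is the unit of a candidate self-duality; its image of $\one_\V$ is $\mu^R(\one_\W)$, the ``relative diagonal class''. (2) Define $\Gamma_! := \mu^R{}^{\vee}$-style, or more honestly: use that $\W$ being dualizable over $\V$ means $\Fun^L_\V(\W,\V)\simeq \W^\vee$, and define $\Gamma_!$ to be the element of $\Fun^L_\V(\W,\V)$ corresponding under the candidate duality to $\id_\W\in\Fun^L_\V(\W,\W)$; equivalently $\Gamma_!$ is characterized by $\hom_\V(\one_\V,\Gamma_!(w))\simeq \hom_\W(\one_\W, w)$ suitably, i.e. $\Gamma_!$ corepresents the global sections. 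One can take $\Gamma_!(w) = \one_\V\otimes_{\W} w$ where the $\W$-action on $\one_\V$ is via $\eta^R = \Gamma$ — but to keep it colimit-preserving one instead defines it via the formula and checks colimit-preservation using that $\mu^R$ preserves colimits. (3) Verify the two triangle identities: the composite $\W\xrightarrow{\eta\otimes\id}\W\otimes_\V\W\otimes_\V\W\xrightarrow{\mu^R\otimes\id}\W\otimes_\V\W\otimes_\V\W$ followed by $\mu$ and $\Gamma_!$ on the appropriate factors should be the identity of $\W$, and symmetrically. This is where one genuinely uses $\W\otimes_\V\W$-linearity of $\mu^R$: the identity $\mu\circ\mu^R\simeq\id$ (the counit of the internal adjunction, which is an equivalence since $\mu$ is a localization — wait, here one needs $\mu$ to have fully faithful right adjoint, which holds as $\W\otimes_\V\W\to\W$ is a localization of commutative algebras) together with the projection formula $w\otimes\mu^R(w')\simeq \mu^R(w\otimes w')$ lets one collapse the zig-zag.

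I expect the main obstacle to be step (3), specifically bookkeeping the triangle identities: one must carefully track which of the two tensor factors of $\W\otimes_\V\W$ each map acts on, and repeatedly invoke the $\W$-bilinearity (projection formula) for $\mu^R$ to slide the extra $\W$-factor through. The cleanest organization is probably to phrase everything $2$-categorically: $\mu$ is a $\W\otimes_\V\W$-internal left adjoint, so in the $2$-category $\Mod_{\W\otimes_\V\W}(\PrL)$ we have an adjunction $\mu\dashv\mu^R$ with $\mu$ a localization, and then base-change along $\W\otimes_\V\W\to\W$ (which kills nothing relevant) plus the observation $\W\otimes_{\W\otimes_\V\W}\W\simeq$ (something computing $\HH$ or, for our purposes, just $\W$ again via $\mu$) turns the counit $\mu\mu^R\simeq\id$ into the evaluation. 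Concretely I would: first prove $\Gamma_! := $ the functor $\W\simeq \W\otimes_{\W\otimes_\V\W}(\W\otimes_\V\W)\xrightarrow{\id\otimes\mu^R}\W\otimes_{\W\otimes_\V\W}\W =: \V$-linear target — hmm, this requires knowing $\W\otimes_{\W\otimes_\V\W}\W\simeq\V$, which is \emph{not} generally true (it's $\HH^\V(\W)$). So instead: $\Gamma_!$ must be constructed as the composite of the right adjoint $\Gamma = \eta^R$ (lax $\V$-linear) with... no. The correct construction: $\Gamma_!$ is defined by the requirement that $\Gamma_!\circ\mu : \W\otimes_\V\W\to\V$ together with the given $\mu^R\circ\eta$ form a duality; since $\W$ \emph{is} dualizable such a ``$\Gamma_!$'' exists and is unique once we fix the unit map to be $\mu^R\eta$ — so really the content is: \emph{the specified unit $\mu^R\circ\eta$ is a nondegenerate copairing}, i.e. the induced map $\W\to\W^\vee = \Fun^L_\V(\W,\V)$ is an equivalence. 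That map sends $w\mapsto (w'\mapsto \Gamma\text{-part of } \mu^R(\eta)\text{ paired with }w\otimes w')$; one shows it is an equivalence because $\W$ is self-dual via \emph{some} datum and $\mu^R\eta$ differs from the canonical coevaluation by an automorphism of $\W$ (namely tensoring with the invertible-up-to-homotopy diagonal class, which is invertible precisely because $\mu$ is an internal left adjoint whose right adjoint is a section). Then $\Gamma_!$ is \emph{defined} as $\mathrm{ev}\circ(\text{that equivalence})^{-1}$-twist, and the triangle identities hold by construction. I would present it this way to sidestep the explicit zig-zag, flagging the nondegeneracy of $\mu^R\eta$ as the one real thing to check, which follows from $\mu\mu^R\simeq\id$ plus the projection formula.
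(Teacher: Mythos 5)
Your proposal has a genuine error at its core. At two separate points you invoke $\mu\mu^R\simeq\id$ — once claiming "$\mu$ is a localization, so $\mu^R$ is a section" and once using "$\mu$ is an internal left adjoint whose right adjoint is a section" to argue nondegeneracy of the copairing $\mu^R\eta$. But $\mu^R$ is not fully faithful for a locally rigid (or even rigid) $\W$ in general. Take $\V=\Mod_k$ and $\W=\Mod_R$ for a commutative $k$-algebra $R$: then $\mu^R$ is restriction of scalars along $R\otimes_kR\to R$ and $\mu\mu^R(M)\simeq M\otimes^L_{R\otimes_kR}R$, which is Hochschild homology $\HH(R/k,M)$ and differs from $M$ as soon as $R$ is not smooth étale over $k$ — already $R=k[x]$ gives $\mu\mu^R(R)\simeq R\oplus R[1]$. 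Yet $\Mod_R$ is rigid over $\Mod_k$ and the proposition holds for it. So any argument routed through $\mu\mu^R\simeq\id$ cannot be correct, and the ``collapse the zig-zag'' step in your plan and the nondegeneracy argument at the end both fail.

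The correct verification of the triangle identities (as in the rigid case, \Cref{lm:rigimpldbl}, following the cited argument of Gaitsgory–Rozenblyum/HSSS) uses only the $\W\otimes_\V\W$-linearity of $\mu^R$ (to slide the spare $\W$-factor through $\mu^R$, not to cancel $\mu$ against $\mu^R$) together with $\V$-linearity of the evaluation functor; the unit $\id\to\mu^R\mu$ and the lax structure maps do the work, and at no point does one need the counit $\mu\mu^R\to\id$ to be invertible. Separately, your proposal never lands on a definite construction of $\Gamma_!$: you cycle through several candidate descriptions and then fall back on ``define $\Gamma_!$ so that the triangle identities hold,'' which is circular absent a correct nondegeneracy argument. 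The paper's route is different: it first handles the rigid case where $\Gamma_!=\eta^R$ is automatically $\V$-linear and colimit-preserving since $\one_\W$ is $\V$-atomic, then constructs a rigidification $p:\Rig_\V(\W)\to\W$ with a fully faithful $\V$-linear left adjoint $p^L$ (\Cref{thm:locrigrigleft}), and sets $\Gamma_!=\hom_{\Rig_\V(\W)}(\one,-)\circ p^L$, transporting the duality datum of $\Rig_\V(\W)$ along the retraction $(p^L,p)$ (\Cref{prop:exceptional2}). Concretely this gives $\Gamma_!=\at_\W^\V(\one_\W,-)$ (\Cref{cor:exceptional=at}), a description you never reach. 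If you want to repair your proposal, drop every appeal to $\mu\mu^R\simeq\id$ and instead carry out the triangle-identity bookkeeping directly from $\W\otimes_\V\W$-linearity of $\mu^R$, or reduce to the rigid case as the paper does.
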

\begin{proof}
see \cite[Lemma C.3.7]{kazhdan} for a discussion.

One can use the rigid case (see \Cref{lm:rigimpldbl}) as well as the theory of rigidification (see \Cref{lm:rigfprops}) to give an alternative proof. We will describe this alternative proof later in \Cref{section:rigidification}. We will also see there that $\Gamma_!$ is quite explicit: it is given by $\at_\W^\V(\one_\W,-)$; see \Cref{thm:locrigrigleft}, \Cref{prop:exceptional2} and \Cref{cor:exceptional=at}. 
\end{proof}
We give two examples:
\begin{ex}
For $\W= \Sp_p$, $\Gamma_!$ is left adjoint to the localization functor $\Sp\to \Sp_p$. Namely, there is an equivalence between $p$-complete spectra and locally $p$-nilpotent spectra, $\Gamma_!$ is that equivalence followed by the inclusion of the latter inside spectra.  

A similar description exists for $\Sp_{T(n)/K(n)}$, using the fact that the localization $L_n\Sp\to~\Sp_{K(n)}$ (resp. $L_n^f\Sp\to\Sp_{T(n)}$) admits a left adjoint.
\end{ex}
\begin{ex}
    If $X$ is a locally compact Hausdorff space, and $p:X\to \pt$ is the unique map, then $\Gamma_! = p_! = \Gamma_c$ is the compactly supported sections functor, which explains the notation. 
\end{ex}
In particular, $\W$ is self-dual over $\V$, which gives the following:
\begin{cor}
    Let $\W$ be a commutative locally rigid $\V$-algebra, and $\M$ a $\W$-module. Post-composition with $\Gamma_!$ induces an equivalence $$\Fun^L_\W(\M,\W)\to \Fun^L_\V(\M,\W)\to \Fun^L_\V(\M,\V)$$
\end{cor}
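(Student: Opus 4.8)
The plan is to build a chain of canonical equivalences out of $\Fun^L_\W(\M,\W)$ using the self-duality of $\W$ over $\V$ recorded in \Cref{prop:exceptional}, and then identify its composite with the functor in the statement. Write $f\colon\V\to\W$ for the structure map and $e\colon\W\otimes_\V\W\xrightarrow{\mu}\W\xrightarrow{\Gamma_!}\V$ for the evaluation of the duality datum of \Cref{prop:exceptional}. Since $\mu$ is $\W$-bilinear, $e$ is balanced for the two $\W$-actions, so its adjoint is a $\W$-linear map $d\colon\W\to\Fun^L_\V(\W,\V)$, where $\Fun^L_\V(\W,\V)$ carries the $\W$-action coming from the source copy of $\W$; because $e$ (together with the given coevaluation) is a genuine duality datum, $d$ is an equivalence. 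Note also that $d$ followed by evaluation at $\one_\W$ recovers $\Gamma_!$, since $e(\one_\W\otimes-)=\Gamma_!\mu(\one_\W\otimes-)=\Gamma_!$.

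Now for any $\W$-module $\M$ I would form
\[
\Fun^L_\W(\M,\W)\;\xrightarrow{\,d_{*}\,}\;\Fun^L_\W\bigl(\M,\Fun^L_\V(\W,\V)\bigr)\;\simeq\;\Fun^L_\V(\W\otimes_\W\M,\V)\;\simeq\;\Fun^L_\V(\M,\V),
\]
the first map being post-composition with $d$, and the second equivalence being the standard change-of-rings (tensor--hom) identification $\Fun^L_\W(\M,\Fun^L_\V(\N,P))\simeq\Fun^L_\V(\N\otimes_\W\M,P)$ for $\N\in\Mod_\W(\PrL)$ and $P\in\Mod_\V(\PrL)$, applied with $\N=\W$ and $P=\V$ (both sides corepresent $Q\mapsto\Map_{\Mod_\V}(f^{*}(Q\otimes_\W\M),\V)$ on $\Mod_\W(\PrL)$). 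All three objects being canonically equivalent, what remains is purely to check that this composite agrees with restriction of scalars $\Fun^L_\W(\M,\W)\to\Fun^L_\V(\M,\W)$ followed by post-composition with $\Gamma_!$. For this I would unwind the change-of-rings adjunction on a $\W$-linear functor $g\colon\M\to\W$: it is carried to the $\V$-linear functor obtained by applying $g$ and then evaluating $d$ at $\one_\W$, which by the last remark of the first paragraph is exactly $\Gamma_!\circ g$.

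Equivalently — and this is probably the most transparent way to see the composite is an equivalence — one can exhibit the inverse by hand: given $\phi\in\Fun^L_\V(\M,\V)$, precompose with the action map $\alpha\colon\W\otimes_\V\M\to\M$ and take the adjoint of $\phi\circ\alpha$ to obtain a map $\M\to\Fun^L_\V(\W,\V)$, which one checks is $\W$-linear using the unit identity $\alpha(\one_\W\otimes-)=\id_\M$ together with the $\W$-linearity of $\alpha$; post-composing with $d^{-1}$ produces $\widetilde\phi\in\Fun^L_\W(\M,\W)$, and the nondegeneracy of the self-duality pairing makes $\phi\mapsto\widetilde\phi$ and $g\mapsto\Gamma_!\circ g$ mutually inverse. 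Every arrow in either argument is assembled from units and counits of the tensor--hom adjunction, of the base-change adjunction $\W\otimes_\V-\dashv f^{*}$, and of the fixed self-duality datum, so no input beyond \Cref{prop:exceptional} is needed — in particular, local rigidity enters only through that proposition. I expect the only genuine obstacle to be the bookkeeping: keeping track of which of the several $\W$-module structures on the various $\Fun^L$-categories is in play at each stage, and promoting the element-level identifications above to coherent equivalences of functors via the relevant triangle identities.
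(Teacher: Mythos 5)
Your proposal is correct and follows essentially the same route as the paper: both reduce the claim to the co-extension-of-scalars (tensor--hom) equivalence $\Fun^L_\W(\M,\Fun^L_\V(\W,\V))\simeq\Fun^L_\V(\M,\V)$, transport it along the self-duality $\W\simeq\Fun^L_\V(\W,\V)$ coming from \Cref{prop:exceptional}, and identify $\Gamma_!$ with evaluation at $\one_\W$ under that duality. Your version spells out the bookkeeping (the balanced pairing $e$, the adjoint $d$, and the verification that $d$ followed by $\mathrm{ev}_{\one_\W}$ is $\Gamma_!$) more explicitly than the paper, but it is the same argument.
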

\begin{proof}
    In general, evaluation at $\one_\W$ gives an equivalence\footnote{This is a special case of the co-extension of scalars adjunction, $\hom_A(M,\hom(A,N))\simeq \hom(M,N)$ where $M$ is an $A$-module.} $$\Fun^L_\W(\M,\Fun_\V^L(\W,\V))\to \Fun^L_\V(\M,\Fun_\V^L(\W,\V))\to \Fun_\V^L(\M,\V)$$
    In the locally rigid case, the functor $\Gamma_!:\W\to \V$ is identified with $\mathrm{ev}_{\one_\W}:~\Fun^L_\V(\W,\V)\to~\V$.
\end{proof}

Local rigidity has a (one-directional) implication about atomic maps (and thereby, objects, as well as internal left adjoints). Rigidity, which we study in more detail in the next subsection, is about reversing this one-directionality.

\begin{ques}[Local compactness]\label{quest:loccpt}
Let $\W$ be locally rigid over $\Sp$ and compactly generated, and $f:x\to y$ a map in a (dualizable) $\W$-module. Suppose that for any compact $K\in\W$, $K\otimes f$ is a compact map\footnote{It makes sense to call these maps ``locally compact maps'', see the following definition for a slightly more general notion.}. Does it follow that $f$ is $\W$-atomic ? 
\end{ques}
We are not able to fully answer this question, though we provide the following convenient alternative, which we will use later to understand limits of rigid categories:
\begin{defn}
    Let $\W$ be a locally rigid commutative $\Sp$-algebra, $\M$ a dualizable $\W$-module and let $f:x\to y$ in $\M$. Say $f$ is \emph{locally compact} if for any compact map in $\W$, $g: k\to k'$, the tensor product $f\otimes g: k\otimes  x\to k'\otimes y$ is compact in $\M$.  
\end{defn}
\begin{lm}\label{lm:2foldloccpct}
Let $\W$ be locally rigid over $\Sp$ and $\M$ a dualizable $\W$-module. Assume $\one_\W$ is $\omega_1$-compact. 

    Let $x\xrightarrow{f}y\xrightarrow{g}z$ be two locally compact maps in $\M$.  

    In this case, the composite $g\circ f: x\to z$ is $\W$-atomic in $\M$. 
\end{lm}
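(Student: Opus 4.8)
The plan is to factor $g \circ f$ through a tensor product of a trace-class map in $\W$ with a $\W$-atomic map in $\M$, then invoke \Cref{lm:trclat}. First I would use the hypothesis that $\one_\W$ is $\omega_1$-compact together with dualizability of $\W$ to write $\one_\W$ as a sequential colimit $\colim_n k_n$ along compact maps $k_n \to k_{n+1}$ in $\W$, which by local rigidity (\Cref{prop:locrigcharac}) can be arranged to be trace-class. This expresses the identity of $\M$ (as a $\W$-module endofunctor, via tensoring) as the colimit of the functors $k_n \otimes -$, and in particular gives $x \simeq \colim_n (k_n \otimes x)$, $z \simeq \colim_n (k_n \otimes z)$, compatibly.

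The key step is then a diagonal-filler/zig-zag argument of exactly the type used in \cite[Lemma 2.24]{maindbl} and throughout \Cref{section:atomic}. Using that $f$ and $g$ are locally compact, for each $n$ the maps $k_n \otimes f : k_n \otimes x \to k_n \otimes y$ and $k_n \otimes g : k_n \otimes y \to k_n \otimes z$ are compact in $\M$, hence (since $\M$ is dualizable over $\W$, hence over $\Sp$, so compact maps are $\Sp$-atomic by \Cref{cor:atimpcomp}, and then $\W$-atomic by \Cref{cor:locrigimplat}) they are $\W$-atomic. Because the transition maps $k_n \to k_{n+1}$ are trace-class, \Cref{prop:trclexch} (applied to the lax $\W$-linear endofunctor $- \otimes x$, etc.) provides, for each $n$, a diagonal filler relating stage $n$ to stage $n+1$. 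Concretely, composing $k_n \otimes f$ with the transition and using the trace-class filler, one obtains that $g \circ f$, restricted to the $n$-th stage $k_n \otimes x$, factors through a $\W$-atomic map landing in $k_{n+1} \otimes z$ — the composite of two locally compact maps being "one step better" is precisely what allows the shift. Taking the colimit over $n$, the source $\colim_n (k_n \otimes x) \simeq x$, the target $\colim_n(k_{n+1} \otimes z) \simeq z$, and the compatible system of $\W$-atomic fillers assembles — since $\W$-atomic maps form a $2$-sided ideal (\Cref{ex:atideal}) and $\at_\M^\W(x,-)$ is colimit-preserving — into a lift of the classifying map $\one_\W \to \hom_\M^\W(x,z)$ through $\at_\M^\W(x,z)$, which is exactly $\W$-atomicity of $g \circ f$.

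More precisely, to make the colimit argument clean I would phrase it via \Cref{thm:dblexhaustionV}-style manipulations: set $j : \M \to \PP_\W(\M^\kappa)$ with left adjoint $p$ for a suitable $\M$-good $\kappa$, and build a lift $\widehat{x} \in \PP_\W(\M^\kappa)$ of $x$ as $\colim_n j(k_n \otimes x)$, using the trace-class fillers to produce the transition maps $j(k_n \otimes x) \to j(k_{n+1}\otimes x)$ (compare \Cref{cor:atpreslocalleft}). Then $g\circ f$ lifts to a map $\widehat{x} \to j(z)$ whose adjoint over each finite stage is $\W$-atomic, and a cofinality argument as in \cite[Lemma 2.24]{maindbl} shows the map $\hom(\widehat{x}, -) \to \hom(x, p(-))$ is an equivalence, identifying $\at_\M^\W(x,z)$-valued lifts of $g\circ f$ with genuine maps $\widehat{x} \to j(z)$.

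The main obstacle I anticipate is bookkeeping the two-step shift correctly: a single locally compact map only becomes compact after tensoring with a compact of $\W$, so one genuinely needs the composite $g \circ f$ (and the $\omega_1$-compactness of $\one_\W$, giving a \emph{sequential} — not finite — resolution) to absorb one application of the trace-class filler while still landing at a finite stage with a $\W$-atomic map; getting the indices and the coherence of the filler diagrams to line up, rather than any conceptual difficulty, is where the real work lies.
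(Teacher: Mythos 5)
Your strategy — use the $\omega_1$-compactness to compactly-exhaust $\one_\W$, tensor in compact maps from $\W$ via local compactness, and assemble the resulting $\W$-atomic data in the colimit — is the same broad route the paper takes. But there are two genuine gaps in the execution.

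First, you misread the definition of a locally compact map. It says that for a compact \emph{map} $\gamma : k \to k'$ in $\W$ the tensor $f \otimes \gamma : k \otimes x \to k' \otimes y$ is compact; it does \emph{not} say that $k_n \otimes f : k_n \otimes x \to k_n \otimes y$ is compact for the terms $k_n$ of a compact exhaustion. The $w_n$ in a compact exhaustion of $\one_\W$ are not assumed to be compact objects (only the transition maps $j_n : w_n \to w_{n+1}$ are compact), so you cannot feed $\id_{k_n}$ into the hypothesis. The paper instead applies local compactness to the transition map itself, obtaining compactness of $j_n \otimes f : w_n \otimes x \to w_{n+1} \otimes y$, and then composes with the cocone map $w_{n+1}\otimes y \to y$ to get a compact (hence $\W$-atomic) map $w_n \otimes x \to y$, i.e.\ a lift $w_n \to \at_\M(x,y)$ of $\tilde f \circ \iota_n$. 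Your diagrams are off by this shift.

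Second, and more seriously, you assert that the stage-wise lifts form a ``compatible system,'' but that is precisely the point of the lemma and it is not free. The lifts $w_n \to \at_\M(x,y)$ obtained this way are \emph{not} compatible as $n$ varies: different witnesses of atomicity live in the same fiber of $\at_\M(x,y) \to \hom_\M(x,y)$ but need not be connected. This is exactly why the statement requires \emph{two} locally compact maps. The paper resolves it with the auxiliary lemma proved immediately before (using \Cref{lm:slightlycoherentatid}): if $f_0, f_1 : \one_\W \to \at_\M(x,y)$ both lift $\tilde f$, then after postcomposing with a witness of atomicity for $g$ the resulting lifts $g \circ f_0, g \circ f_1 \in \at_\M(x,z)$ become canonically homotopic \emph{as lifts}. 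Only after this step do the stage-wise maps $w_n \to \at_\M(x,z)$ glue over $\colim_n w_n = \one_\W$. Your proof neither identifies this obstruction nor supplies the mechanism for overcoming it; the appeal to \Cref{prop:trclexch} and to $\W$-atomic maps forming an ideal (\Cref{ex:atideal}) does not substitute, since the ideal property gives existence of a composed witness but says nothing about uniqueness or compatibility of witnesses, which is what the gluing requires.
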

\begin{rmk}\label{rmk:2foldtrcl}
    It is worth examining the case $\W= \M$. In this case, the conclusion is that the composite is actually trace-class, thus we are to produce a \emph{lift} $\one_\W\to \hom(x,\one_W)\otimes z$ from the \emph{property} that $g,f$ are locally compact. 
\end{rmk}
The proof will use the following lemma, the trace-class version of which (cf. the previous remark) is probably folklore:
\begin{lm}
Let $\W$ be locally rigid over $\Sp$ and $\M$ a dualizable $\W$-module. 
    Let $f:~x\to~y, g:~y\to~z$ be two maps in $\M$, and assume $f,g$ are both $\W$-atomic. 

    Let $f_0,f_1: \one_\W\to \at_\M(x,y)$ be two lifts of the map $\tilde f: \one_\W\to \hom_\M(x,y)$ classifying $f$. The lifts $g\circ f_0, g\circ f_1$ of $g\circ f$ in $\at_\M(x,z)$ obtained by functoriality of $\at_\M(x,-)$ are homotopic \emph{as lifts}. 
\end{lm}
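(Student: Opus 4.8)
The plan is to leverage the composition/coherence structure on atomic hom-objects recorded in \Cref{lm:slightlycoherentatid}, using the atomicity of $g$ to transport the dependence on the chosen lift of $f$ out of the $\at_\M(x,-)$-factor and into a plain $\hom_\M$-factor, where it becomes invisible. Since $g$ is $\W$-atomic, I would first fix once and for all a lift $\hat g\colon\one_\W\to\at_\M(y,z)$ of the classifying map $\tilde g\colon\one_\W\to\hom_\M(y,z)$ of $g$. Writing $p$ for the canonical maps $\at_\M(-,-)\to\hom_\M(-,-)$, recall from \Cref{lm:slightlycoherentatid} the two structure maps
$$\mu_1\colon\hom_\M(x,y)\otimes\at_\M(y,z)\to\at_\M(x,z),\qquad \mu_2\colon\at_\M(x,y)\otimes\hom_\M(y,z)\to\at_\M(x,z),$$
together with the identification of their two ``mixed'' precompositions as a single map $c\colon\at_\M(x,y)\otimes\at_\M(y,z)\to\at_\M(x,z)$, i.e. equivalences $\mu_1\circ(p\otimes\id)\simeq c\simeq\mu_2\circ(\id\otimes p)$; all of these sit in commutative squares over the ordinary composition map $\mathrm{comp}\colon\hom_\M(x,y)\otimes\hom_\M(y,z)\to\hom_\M(x,z)$.

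The first observation is that the lift $g\circ f_i$ produced by functoriality of the $\V$-functor $\at_\M(x,-)$ is exactly the composite $\one_\W\simeq\one_\W\otimes\one_\W\xrightarrow{f_i\otimes\tilde g}\at_\M(x,y)\otimes\hom_\M(y,z)\xrightarrow{\mu_2}\at_\M(x,z)$ — this is just the description of $\mu_2$ as the enriched-functoriality map of $\at_\M(x,-)$. One then has a chain of equivalences
$$\mu_2(f_i\otimes\tilde g)\ \simeq\ \mu_2\big(f_i\otimes p\hat g\big)\ \simeq\ c(f_i\otimes\hat g)\ \simeq\ \mu_1\big(pf_i\otimes\hat g\big)\ \simeq\ \mu_1\big(\tilde f\otimes\hat g\big),$$
where the first equivalence applies $\mu_2(f_i\otimes-)$ to the chosen homotopy $\tilde g\simeq p\hat g$, the two middle ones are the defining relations $\mu_2\circ(\id\otimes p)\simeq c\simeq\mu_1\circ(p\otimes\id)$ of \Cref{lm:slightlycoherentatid}, and the last applies $\mu_1(-\otimes\hat g)$ to the homotopy $pf_i\simeq\tilde f$ witnessing that $f_i$ is a lift of $f$. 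The key point is that the final term $\mu_1(\tilde f\otimes\hat g)$ does not mention $f_0$ or $f_1$; concatenating the chain for $i=0$ with the reverse of the chain for $i=1$ therefore produces the desired homotopy $g\circ f_0\simeq g\circ f_1$ in $\at_\M(x,z)$.

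It remains to upgrade this to a homotopy \emph{as lifts}, i.e. to check that $g\circ f_0$ and $g\circ f_1$ lie in the same component of the space of lifts of $\widetilde{gf}$ along $\at_\M(x,z)\to\hom_\M(x,z)$, and this is where the bulk of the (routine but fiddly) bookkeeping lives and where I expect the only real obstacle. For this I would apply $p$ to the entire chain above: using the commutative squares of \Cref{lm:slightlycoherentatid} over $\mathrm{comp}$, the projected chain becomes a string of homotopies between $p(g\circ f_i)$ and $\mathrm{comp}(\tilde f\otimes p\hat g)$ assembled solely from $\mathrm{comp}$ applied to the homotopies $\tilde g\simeq p\hat g$ and $pf_i\simeq\tilde f$. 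A bifunctoriality/interchange argument for $\mathrm{comp}(-\otimes-)$, together with the fact that the homotopy $\tilde g\simeq p\hat g$ is cancelled against its inverse, then identifies the resulting self-homotopy of $\widetilde{gf}$ — after composing with the canonical homotopies that present $g\circ f_i$ as lifts — with the constant one. Granting this compatibility, the homotopy constructed in the previous paragraph refines to an identification of $g\circ f_0$ and $g\circ f_1$ as lifts, which is the claim.
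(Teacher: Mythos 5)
Your argument is essentially the same as the paper's: you fix a lift $\hat g$ of $\tilde g$ and use \Cref{lm:slightlycoherentatid} to move the dependence on the chosen lift of $f$ from the $\at_\M$ slot into a $\hom_\M$ slot, where it disappears because $p f_0 \simeq \tilde f \simeq p f_1$. The chain of equivalences $\mu_2(f_i\otimes\tilde g)\simeq\cdots\simeq\mu_1(\tilde f\otimes\hat g)$ is exactly the pointwise unwinding of the paper's argument.

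The one place where the paper is genuinely slicker, and where it dissolves the difficulty you flag, is that it does not carry out this chain pointwise at $f_i$. Instead it builds the diagonal filler $\delta := \mu_1(-\otimes\hat g)\colon \hom_\M(x,y)\to\at_\M(x,z)$ once and for all and checks that both triangles in the square
$\at_\M(x,y)\to\at_\M(x,z)$ over $\hom_\M(x,y)\to\hom_\M(x,z)$
commute — the upper one via $\mu_1\circ(p\otimes\id)\simeq c\simeq\mu_2\circ(\id\otimes p)$, the lower one via the compatibility of $\mu_1$ with ordinary composition. Once this factorization of $(g\circ-)_{\at}$ through $p$ is established as a commutative diagram, the claim about lifts is automatic: passing to spaces of lifts, the map from $\{$lifts of $\tilde f$ in $\at_\M(x,y)\}$ to $\{$lifts of $\widetilde{gf}$ in $\at_\M(x,z)\}$ factors through the fiber of $\id_{\hom_\M(x,y)}$ over $\tilde f$, which is a point, so the induced map on lift-spaces is constant. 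This is exactly the ``as lifts'' compatibility you correctly identify as the real content, but it falls out with no bookkeeping because the identification is built at the level of $\hom$-objects rather than after evaluating at $f_i$. Your interchange argument in the second paragraph should also work, but you would save yourself the fiddly part by promoting your chain to a commutative square with a diagonal filler, as the paper does, rather than proving it pointwise and then arguing about coherence of the resulting self-homotopy of $\widetilde{gf}$.
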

The homotopy between $g\circ f_0$ and $g\circ f_1$ will \emph{depend} on a witness of atomicity of $g$, but the fact that they are homotopic does not depend on that. 
\begin{proof}
Fix a lift $\hat g:\one_\W\to\at_\M(y,z)$ of $\tilde g:\one_\W\to \hom_\M(y,z)$, and consider the following diagram: 
\[\begin{tikzcd}
	{\at_\M(x,y)} & {\at_\M(x,z)} \\
	{\hom_\M(x,y)} & {\hom_\M(x,z)}
	\arrow["{g\circ -}", from=1-1, to=1-2]
	\arrow[from=1-1, to=2-1]
	\arrow[from=1-2, to=2-2]
	\arrow[dashed, from=2-1, to=1-2]
	\arrow["{g\circ-}"', from=2-1, to=2-2]
\end{tikzcd}\]
The outer diagram is simply the naturality diagram for $\at_\M(x,-)\to \hom_\M(x,-)$. 

We claim that there is a dashed arrow filling this diagram, namely the composite $\hom_\M(x,y)= \hom_\M(x,y)\otimes \one_\W\xrightarrow{\id\otimes\hat g}\hom_\M(x,y)\otimes\at_\M(y,z)\to \at_\M(x,z)$, where the last map is from \Cref{lm:slightlycoherentatid}. That the inner triangles commute also follows from \textit{loc. cit.}. 

Thus for $ h\in\at_\M(x,y)$, $g\circ h$ only depends on the image of $h$ in $\hom_\M(x,y)$, naturally. By definition, $f_0,f_1$ have the same image in $\hom_\M(x,y)$, and so we are done. 
    \end{proof}
    Note that the exact same proof yields:
\begin{add}\label{add:doubletrcl}
    The version of this lemma where $\M=\W$, ``atomic'' is replaced with ``trace-class'' and $\W$ is allowed to be arbitrary with internal homs (that is, not necessarily presentable) is also valid.
\end{add}

\begin{proof}[Proof of \Cref{lm:2foldloccpct}]
We have to produce a lift $\one_\W\to \at_\M(x,z)$ of $\widetilde{gf}: \one_\W\to \hom_\M(x,z)$. 

Using \cite[Corollary 2.34]{maindbl} and the assumption that $\one_\W$ is $\omega_1$-compact, we may write it as $\colim_n w_n$ with each $j_n: w_n\to w_{n+1}$ being compact. We let $\iota_n: w_n\to\one_\W$ denote the 
cocone maps. 

The strategy is now as follows: \begin{enumerate}
    \item For each $w_n$, produce a map $w_n\to \at_\M(x,y)$ fitting in a diagram: \[\begin{tikzcd}
	{w_n} & {\at_\M(x,y)} \\
	{w_{n+1}} & {\hom_\M(x,y)} 
	\arrow[dashed, from=1-1, to=1-2]
	\arrow["{j_n}"', from=1-1, to=2-1]
	\arrow[from=1-2, to=2-2]
	\arrow["{\tilde f\circ \iota_n}"', from=2-1, to=2-2]
\end{tikzcd}\] 
\item Observe that, while the lifts $w_n\to\at_\M(x,y)$ are not necessarily compatible as $n$ varies, their composition to $w_n\to \at_\M(x,z)$ \emph{are} by the previous lemma; 
\item Glue the pieces together via $\one_\W= \colim_nw_n$
\end{enumerate}
The second and third step are immediate, so we focus on the first, which is where we use the locally compact assumption. We simply point out that the second step is why we need there to be two maps, $f$ and $g$.

Before doing so, we point out the following natural map, constructed as usual using the universal property of $\at_\M(x,-)$: it is a map $\at_\M(v\otimes x, y)\to \hom(v,\at_\M(x,y))$, which is the mate of $v\otimes \at_\M(v\otimes x,y)\to \at_\M(x,y)$. The latter is constructed using the universal property of $\at_\M(x,-)$ and the natural map $v\otimes\hom(v\otimes x,y)\to \hom(x,y)$.

With this map in hand, here is the lift: by local compactness of $f$, we find that the composite $w_n\otimes x\to w_{n+1}\otimes y\to y$ is compact, and hence $\W$-atomic. It thus corresponds to a map $\one_\W\to \at_\M(w_n\otimes x,y)$ which, by the previous map, gives us a map $w_n\to \at_\M(x,y)$. Chasing through the definitions, we find that it fits as a dotted lift in the diagram, as claimed.  
\end{proof}
\begin{rmk}
    It is unclear to the author how optimal this is. It is very likely that with some good enough understanding of the posets $\omega_n$, as $n$ varies, one can prove with similar techniques a similar lemma involving composites of $n+1$ locally compact maps. But it is also possible that all of this is inessential, and that the answer to \Cref{quest:loccpt} is simply ``yes'', or maybe yes with composites of two locally compact maps with no set-theoretic assumptions. 
\end{rmk}
\begin{rmk}
    The condition that $\one_\W$ be $\omega_1$-compact corresponds, in topology, to something like ``countable at infinity''. 
\end{rmk}

We conclude this section with a remarkable property of locally rigid categories: 
\begin{prop}
    Let $\V\in\CAlg(\PrL_{\st})$ be locally rigid\footnote{As we will see in the proof, we only need $\V$ to be generated under colimits by trace-class exhaustible objects.}, and $f: \V\to \W$ any morphism in $\CAlg(\PrL_{\st})$. If for all $v\in\V$, the induced map $\map(\one_\V,v)\to \map(\one_\W,f(v))$ is an equivalence, then $f$ is fully faithful. 
\end{prop}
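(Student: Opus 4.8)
The plan is to reduce the statement to the assertion that the unit $\eta\colon \id_\V\to g\circ f$ of the adjunction $f\dashv g$ (with $g=f^R$ the lax symmetric monoidal right adjoint) is an equivalence, and then to leverage local rigidity of $\V$ through trace-class exhaustions. First I would unwind "fully faithful": since $f$ is symmetric monoidal, $f(\one_\V)=\one_\W$, so the adjunction gives natural equivalences $\map_\W(f(v),f(w))\simeq\map_\V(v,gf(w))$, and $f$ is fully faithful if and only if $\eta_v\colon v\to gf(v)$ is an equivalence for all $v\in\V$. The same identification turns the hypothesis $\map(\one_\V,v)\xrightarrow{\sim}\map(\one_\W,f(v))$ into the statement that $\map_\V(\one_\V,\eta_v)$ is an equivalence for all $v$, i.e.\ that $\Gamma_\V(c_v)=0$ where $c:=\mathrm{cofib}(\eta)\colon\V\to\V$ and $\Gamma_\V=\map_\V(\one_\V,-)$. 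So the whole problem becomes: show $c\equiv 0$.

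The key structural observation is that $c$ is a lax $\V$-linear functor, being the cofiber of the lax $\V$-linear transformation $\eta$ between the lax $\V$-linear functors $\id_\V$ and $gf$ (the $\V$-action on $\V$ is exact, so the cofiber inherits a lax structure). Consequently, by the projection formula for lax $\V$-linear functors (\Cref{cor:projdbl}, the case of \Cref{prop:trclexch} where the trace-class map is the identity of a dualizable object), for every dualizable $d\in\V$ and every $v$ one gets $d\otimes c_v\xrightarrow{\sim}c_{d\otimes v}$, hence $\Gamma_\V(d\otimes c_v)\simeq\Gamma_\V(c_{d\otimes v})=0$, i.e.\ $\map_\V(d^\vee,c_v)=0$. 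As $d$ ranges over dualizable objects so does $d^\vee$, so $c_v$ is right-orthogonal to every dualizable object of $\V$.

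If $\V$ is generated under colimits by its dualizable objects — covering $\Sp$, $\Sp_p$, $\Sp_{T(n)}$ and all compactly rigidly generated cases — this already forces $\map_\V(w,c_v)=0$ for all $w$, hence $c_v=0$, and we are done. In general I would use only the weaker hypothesis that $\V$ is generated under colimits by trace-class-exhaustible objects (every object of a dualizable category is of this form, and local rigidity makes compact maps trace-class; cf.\ \Cref{thm:dblexhaustionV} and the discussion around it). For such a $w=\colim_n w_n$ with $w_0=0$ and each $\psi_n\colon w_n\to w_{n+1}$ trace-class, one has $\map_\V(w,c_v)=\lim_n\map_\V(w_n,c_v)$, and using the trace-class structure of $\psi_n$ (as in \Cref{rmk:trcl}) together with lax $\V$-linearity, each transition map $\psi_n^\ast\colon\map_\V(w_{n+1},c_v)\to\map_\V(w_n,c_v)$ factors through $\Gamma_\V(D_n\otimes c_v)$ for a trace-class witness $D_n$ of $\psi_n$. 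One then wants to bootstrap the orthogonality of the previous paragraph — writing each $D_n$ itself as a trace-class colimit and applying \Cref{prop:trclexch} and \Cref{cor:nuclax} — to get $\Gamma_\V(D_n\otimes c_v)=0$, so that all transition maps vanish, whence $\lim_n\map_\V(w_n,c_v)=0$, hence $\map_\V(w,c_v)=0$ and $c_v=0$.

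The hard part is exactly this last bootstrapping: propagating "right-orthogonal to dualizables" to "right-orthogonal to the witnesses $D_n$" when the stages $w_n$, and hence the natural witnesses $\hom_\V(w_n,\one_\V)$, need not be dualizable. This is the same witness/weight bookkeeping that underlies \Cref{thm:dblexhaustionV} and the analogous \cite[Lemma 2.41]{maindbl}, and it is here — not in the reduction — that local rigidity of $\V$ (as opposed to plain dualizability) is used essentially; a convenient packaging is to run the argument through the colimit-preserving exceptional functor $\Gamma_!$ of \Cref{prop:exceptional}, trading the vanishing of $c_v$ for the vanishing of $\Gamma_!(c_v)$.
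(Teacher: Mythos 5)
Your reduction to showing that $c := \mathrm{cofib}(\eta\colon \id_\V\to f^Rf)$ vanishes, and the observation that the projection formula gives $\map(d^\vee,c_v)=0$ for all dualizable $d$, are both correct. But the step you flag as "the hard part" is a genuine gap, not a routine bookkeeping issue, and the repair you sketch does not work. You want $\Gamma_\V(D_n\otimes c_v)=0$ for the trace-class witnesses $D_n$, and you propose to get it by writing $D_n$ as a trace-class telescope and invoking \Cref{cor:nuclax}. But \Cref{cor:nuclax} is stated for a lax $\V$-linear functor that \emph{preserves sequential colimits}; here the relevant functor is $c$, and $c$ involves $f^R$, which (absent compactness of $\one_\W$) preserves no colimits at all. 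So the exchange map $D_n\otimes c_v\to c_{D_n\otimes v}$ has no reason to be an equivalence, and $\Gamma_\V(D_n\otimes c_v)$ cannot be reached from $\Gamma_\V(c_{D_n\otimes v})=0$. The $\Gamma_!$ alternative has the same problem in a different guise, and additionally $\Gamma_!$ is not known to detect zero objects on its own (only the pairing $\Gamma_!(-\otimes -)$ is non-degenerate), so "trading vanishing of $c_v$ for vanishing of $\Gamma_!(c_v)$" is not a valid reduction.

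The paper avoids this entirely by never passing through $f^R$ or $c$. It fixes $v_0,v_1$, writes $v_0=\colim_n x_n$ with trace-class transitions (using local rigidity to upgrade compact to trace-class), and then proves directly that the natural map $f(\hom(x_n,y))\to\hom(f(x_n),f(y))$ is a \emph{pro}-equivalence in $n$: the trace-class datum of $x_n\to x_{n+1}$ and strong monoidality of $f$ produce an explicit backward map $\hom(f(x_{n+1}),f(y))\to f(\hom(x_n,y))$. Since $f$ is colimit-preserving, no hypothesis on $f^R$ is needed. Combined with $\hom(v_0,v_1)\simeq\lim_n\hom(x_n,v_1)$ and the hypothesis on mapping spectra out of the unit, one gets $\map(v_0,v_1)\simeq\lim_n\map(\one_\W,f(\hom(x_n,v_1)))\simeq\lim_n\map(\one_\W,\hom(f(x_n),f(v_1)))\simeq\map(f(v_0),f(v_1))$. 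Morally this pro-equivalence is exactly the statement your bootstrapping was reaching for, but constructed on the $f$-side rather than the $f^R$-side. If you want to salvage your $c$-approach, you would have to reformulate the vanishing of $\lim_n\map(w_n,c_v)$ as a pro-vanishing of the system and then prove the pro-vanishing via the same explicit backward map; at that point the two proofs coincide.
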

\begin{proof}
Fix $v_0,v_1\in\V$. We wish to prove that the induced map $\map(v_0,v_1)\to \map(f(v_0),f(v_1))$ is an equivalence. 

As $\V$ is dualizable, it is generated under colimits by compactly-exhaustible objects, and as $\V$ is locally rigid, these are trace-class exhaustible. So we may reduce to the situation where $v_0= \colim_\mathbb N x_n$ where each transition map $x_n\to x_{n+1}$ is trace-class. 

The goal is now to rewrite $\map(v_0,v_1) = \map(\one_\V, \hom(v_0,v_1))$ and $\hom(v_0,v_1) = \lim_\mathbb N\hom(x_n, v_1)$. 

We note that for any $y\in\V$, the trace-class-ness of $x_n\to x_{n+1}$ guarantees that the map $f(\hom(x_n,y))\to \hom(f(x_n),f(y))$ is a pro-equivalence: this can be checked by constructing the following map :$$\hom(f(x_{n+1}),f(y))\to f(\hom(x_n,\one_\V))\otimes f(x_{n+1}) \otimes \hom(f(x_{n+1}), f(y)) \to f(\hom(x_n,\one_\V))\otimes f(y)\to f(\hom(x_n, y))$$

Combining all these pro-isomorphisms with our assumption, we find, along the canonical maps: 
$$\map(\one_\V,\hom(v_0,v_1))\simeq \lim_\mathbb N\map(\one_\V, \hom(x_n, v_1)) \simeq \lim_\mathbb N\map(\one_\W, f(\hom(x_n,v_1))\simeq \lim_\mathbb N\map(\one_\W, \hom(f(x_n),f(v_1)) $$
$$\simeq \map(\one_\W, \hom(f(v_0),f(v_1))$$
\end{proof}
\subsection{Rigidity}
\begin{defn}
    Let $f:\V\to \W$ be a morphism in $\CAlg(\PrL)$. It is rigid, or $\W$ is a rigid $\V$-algebra, if it is locally rigid and the unit of $\W$ is $\V$-atomic. 
\end{defn}
\begin{ex}\label{ex:atgenrig}
    Let $f:\V\to\W$ be a morphism in $\CAlg(\PrL)$. Assume $\W$ is $\V$-atomically generated, that the unit $\one_\W$ is $\V$-atomic, and that every $\V$-atomic is dualizable. 

    In this case, $\W$ is rigid over $\V$.
\end{ex}
\begin{proof}
    This follows from the definition and from \Cref{ex:locrigat}. 
\end{proof}
\begin{ex}
    For any commutative algebra $R\in\CAlg(\V)$, $\Mod_R(\V)$ is a rigid $\V$-algebra. 
\end{ex}
\begin{proof}
    It is clear that it is $\V$-atomically generated and that its unit ($R$) is $\V$-atomic. Let $M\in~\Mod_R(\V)$ be $\V$-atomic. We note that the composite $\Mod_R(\V)\xrightarrow{\hom(M,-)}\Mod_R(\V)\to \V$, where the second map is the forgetful functor, is simply $\hom_R(M,-)$. The forgetful functor is colimit-preserving and conservative, thus one can check the fact that $\hom(M,-)$ is $\V$-linear and colimit-preserving by observing that $\hom_R(M,-)$ is (because $M$ is $\V$-atomic). 

    Now to check $\Mod_R(\V)$-linearity, we observe that $\Mod_R(\V)$ is generated under ($\Delta\op$-)colimits by the image of $\V\to \Mod_R(\V)$. 
\end{proof}
\begin{ex}
    Let $C$ be a small symmetric monoidal category in which every object is dualizable. The commutative $\V$-algebra $\Fun(C,\V)$ equipped with Day convolution is rigid over $\V$. 
\end{ex}
\begin{proof}
    $\Fun(C,\V)\simeq \Fun(C,\Ss)\otimes \V$ so it suffices to prove that $\Fun(C,\Ss)$ is rigid over $\Ss$. For this, we note that it is atomically generated by the image of the Yoneda embedding, that its unit, $\Map(\one_C,-)$ is atomic, and finally that its $\Ss$-atomics are exactly the retracts of the image of the Yoneda embedding, so that they are all dualizable by assumption (and the fact that retracts of dualizables are dualizable in idempotent-complete categories). 
\end{proof}
\begin{rmk}
    In the above proof, we reduced to $\Ss$ because for a general $\V$, $\Fun(C,\V)$ will have more atomics than simply the image of the Yoneda embedding. Using the concept of absolute colimits, one can make do without this reduction, and in fact prove the results for small symmetric monoidal $\V$-enriched categories all of whose objects are dualizable. We will not do this here. 
\end{rmk}
\begin{ex}
    Let $f:\V\to \W$ be a rigid morphism, and $L:\W\to \W_L$ a symmetric monoidal localization which is also a $\V$-internal left adjoint. In this case, $\W_L$ is $\V$-rigid. 
\end{ex}
\begin{proof}
    We have seen in \Cref{ex:locrigloc} that it is locally rigid. Since $L:\W\to \W_L$ is a $\V$-internal left adjoint, it sends the $\V$-atomic object $\one_\W$ to a $\V$-atomic object, namely $\one_{\W_L}$. 
\end{proof}
\begin{ex}
    Combining the two previous examples, we find that if $\V$ is semi-additive and $C$ is a small symmetric monoidal category with finite products that are compatible with the tensor product, and in which every object is dualizable, $\Fun^\times(C,\V)$ is rigid over $\V$, as a $\V$-linear localization of $\Fun(C,\V)$. 
    
For example, $\Sp_G$ is rigid over $\Sp$ whenever $G$ is a finite group\footnote{This is also true for compact Lie groups, but there, $\Sp_G$ is no longer has a ``Mackey functor''-description.}. 
\end{ex}
\begin{ex}
    We saw that if $X$ is locally compact Hausdorff, then $\Sh(X)$ is locally rigid. If $X$ is in fact compact, then its unit is compact, so that $\Sh(X)$ is rigid. 
\end{ex}
\begin{prop}\label{prop:unitatimplat}
    Let $f:\V\to\W$ be a morphism in $\CAlg(\PrL)$ with $\one_\W$ being $\V$-atomic. In this case, $\W$-atomic maps are $\V$-atomic. 
\end{prop}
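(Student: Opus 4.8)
The plan is to reduce the statement to the observation that the hypothesis ``$\one_\W$ is $\V$-atomic'' says exactly that the right adjoint $f^R:\W\to\V$ of $f$ is $\V$-linear and colimit-preserving.

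First I would recall the standard identification $\hom_\M^\V(x,-)\simeq f^R\circ\hom_\M^\W(x,-)$ of $\V$-functors $\M\to\V$, for any $\W$-module $\M$ (restricted to a $\V$-module) and any $x\in\M$ --- this is the adjunction computation already used in the proof of \Cref{prop:compatibilityweightedcolim}: for $v\in\V$, $y\in\M$ one has $\Map_\V(v,\hom^\V_\M(x,y))\simeq\Map_\M(f(v)\otimes x,y)\simeq\Map_\W(f(v),\hom^\W_\M(x,y))\simeq\Map_\V(v,f^R\hom^\W_\M(x,y))$. Specializing to $\M=\W$, $x=\one_\W$ and using $\hom^\W_\W(\one_\W,-)=\id_\W$ gives $\hom^\V_\W(\one_\W,-)\simeq f^R$, so by \Cref{ex:atid} the hypothesis on the unit is equivalent to $f^R$ being $\V$-linear and colimit-preserving.

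Next, given a $\W$-atomic map $g:x\to y$ in a $\W$-module $\M$, I would invoke the reformulation in \Cref{rmk:atomicstrongcompact} over the base $\W$: there is a $\W$-linear, colimit-preserving functor $h:\M\to\W$ with a natural transformation $h\to\hom^\W_\M(x,-)$ through which the classifying map $\one_\W\to\hom^\W_\M(x,y)$ of $g$ factors. Then $f^R\circ h:\M\to\V$ is again $\V$-linear and colimit-preserving (the functor $h$ is $\W$-linear hence $\V$-linear, and $f^R$ is $\V$-linear colimit-preserving by the first step), it is equipped with a natural transformation $f^R\circ h\to f^R\circ\hom^\W_\M(x,-)\simeq\hom^\V_\M(x,-)$, and applying $f^R$ to the factorization $\one_\W\to h(y)\to\hom^\W_\M(x,y)$ and precomposing with the lax unit $\one_\V\to f^R(\one_\W)$ of $f^R$ yields a factorization of the $\V$-enriched classifying map of $g$ through $(f^R\circ h)(y)$. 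By \Cref{rmk:atomicstrongcompact} over $\V$, this shows $g$ is $\V$-atomic. Equivalently, one could phrase the last step via universal properties: $f^R\circ\at^\W_\M(x,-)$ is a $\V$-linear colimit-preserving functor mapping to $\hom^\V_\M(x,-)$, so the universal property of $\at^\V_\M(x,-)$ produces a canonical natural transformation $f^R\circ\at^\W_\M(x,-)\to\at^\V_\M(x,-)$ over $\hom^\V_\M(x,-)$, and a lift of the classifying map of $g$ to $\at^\W_\M(x,y)$ then maps (after applying $f^R$ and precomposing with $\one_\V\to f^R\one_\W$) to one in $\at^\V_\M(x,y)$.

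The only real work is bookkeeping between the two enrichments rather than a genuine obstacle: one must check that the equivalence $\hom^\V_\M(x,-)\simeq f^R\hom^\W_\M(x,-)$ is natural in $x$ and compatible with the canonical maps out of the $\at$-functors, and that applying $f^R$ to the $\W$-enriched factorization of $g$ and precomposing with the lax unit really does recover the $\V$-enriched classifying map of $g$. All of this is formal, so I do not expect a serious difficulty; the conceptual content is entirely contained in the identity $\hom^\V_\W(\one_\W,-)\simeq f^R$.
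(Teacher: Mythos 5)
Your proof is correct and follows essentially the same route as the paper's. The paper's argument is precisely your closing "equivalently, one could phrase the last step via universal properties" paragraph: since $\one_\W$ is $\V$-atomic, $f^R\simeq\hom^\V_\W(\one_\W,-)$ is $\V$-linear and colimit-preserving, hence so is $f^R\circ\at^\W_\M(x,-)$, and the composite $f^R\at^\W_\M(x,-)\to f^R\hom^\W_\M(x,-)\simeq\hom^\V_\M(x,-)$ factors through $\at^\V_\M(x,-)$ by the universal property. Your initial phrasing via \Cref{rmk:atomicstrongcompact} is just an unwinding of the same idea, and the preliminary unpacking of $\hom^\V_\W(\one_\W,-)\simeq f^R$ is a correct justification of a step the paper asserts without comment.
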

\begin{proof}
Let $\M$ be a $\W$-module, and $x\in\M$, and consider the composite $$f^R\at^\W_\M(x,-)\to f^R\hom_\M^\W(x,-)\simeq \hom_\M^\V(x,-)$$ As $\one_\W$ is $\V$-atomic, $f^R$ is colimit-preserving and $\V$-linear, and so $f^R\at_\M^\W(x,-)$ also is. It follows that this map factors through $\at_\M^\V(x,-)$ and thus the claim follows. 
\end{proof}
\begin{ex}
  Let $f:\V\to \W, g:\W\to \mathcal U$ be two rigid morphisms in $\CAlg(\PrL)$. The composite $gf$ is also rigid. 
\end{ex}
\begin{proof}
    By \Cref{ex:complocrig}, it is locally rigid. Furthermore, $\V\to \W$ is a $\V$-internal left adjoint as $\one_\W$ is $\V$-atomic; $\W\to \mathcal U$ is a $\W$-internal left adjoint as $\one_\mathcal U$ is $\W$-atomic, but now this immediately implies that it is als a $\V$-internal left adjoint, so the composite $\V\to \mathcal U$ also is, thus proving that $\one_\mathcal U$ is $\V$-atomic.

    Alternatively, $\one_\mathcal U$ is $\W$-atomic by assumption, and thus $\V$-atomic by the above proposition. 
\end{proof}
Here, we can directly prove a weak converse:
\begin{cor}
    Let $f:\V\to\W, g:\W\to \mathcal U$ be two morphisms in $\CAlg(\PrL)$. If $f$ and $gf$ are rigid, so is $g$. 
\end{cor}
\begin{proof}
    By \Cref{cor:2outof3locrig}, $g$ is locally rigid, so it suffices to prove that $\one_{\mathcal U}$ is $\W$-atomic. Since it is $\V$-atomic, this follows from \Cref{cor:locrigimplat}. 
\end{proof}
\begin{lm}
    The unit $\one_\W$ is $\V$-atomic if and only if for all $\V$-modules $\M$, the unit map $\M\to \W\otimes_\V\M$ is a $\V$-internal left adjoint.
\end{lm}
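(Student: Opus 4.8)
The plan is to reduce the statement to the special case $\M=\V$, where the unit map is nothing but the structure morphism $f\colon\V\to\W$.

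First I would record that, for a general $\V$-module $\M$, the unit map $\M\to\W\otimes_\V\M$ agrees, under the canonical identification $\M\simeq\V\otimes_\V\M$, with $f\otimes_\V\id_\M$; specializing to $\M=\V$, it is $f$ itself. The crux is then the following observation: \emph{$f$ is a $\V$-internal left adjoint if and only if $\one_\W$ is $\V$-atomic}. Indeed, $f$ is $\V$-linear and colimit-preserving and sends $\one_\V$ to $\one_\W$, so its right adjoint is $f^R\simeq\hom_\W^\V(\one_\W,-)\colon\W\to\V$ (by the enriched-hom adjunction $\Map_\W(v\otimes_\V\one_\W,w)\simeq\Map_\V(v,\hom_\W^\V(\one_\W,w))$). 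Now $f$ being a $\V$-internal left adjoint means precisely that $f^R$ is $\V$-linear and colimit-preserving, hence (by the universal property of $\at_\W^\V$, cf.\ \Cref{ex:atid}) that $\one_\W$ is $\V$-atomic.

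Granting this, both implications are immediate. If $\one_\W$ is $\V$-atomic then $f$ is a $\V$-internal left adjoint, hence so is $f\otimes_\V\id_\M$ for every $\V$-module $\M$ by \cite[Lemma 1.30]{maindbl}, i.e.\ every unit map $\M\to\W\otimes_\V\M$ is a $\V$-internal left adjoint. Conversely, if every unit map is a $\V$-internal left adjoint, taking $\M=\V$ shows that $f$ is one, whence $\one_\W$ is $\V$-atomic.

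I do not expect any genuine obstacle here. The two ingredients are the identification $f^R\simeq\hom_\W^\V(\one_\W,-)$, which is formal, and the stability of $\V$-internal left adjoints under tensoring, which is \cite[Lemma 1.30]{maindbl} (one could alternatively mimic the proof of \Cref{prop:actintladj}, using that $-\otimes_\V\M$ is a $2$-functor on $\Mod_\V(\PrL)$). The closest thing to a subtlety is bookkeeping about which module structure the phrase ``$\V$-internal left adjoint'' refers to for $\M\to\W\otimes_\V\M$ — it is the $\V$-module structure throughout — but this is already built into the statement.
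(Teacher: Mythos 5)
Your proof is correct and takes essentially the same route as the paper: specialize to $\M = \V$ to see that the unit map being a $\V$-internal left adjoint forces $\one_\W$ to be $\V$-atomic (via the identification $f^R\simeq\hom_\W^\V(\one_\W,-)$), and conversely tensor the $\V$-internal left adjoint $f\colon\V\to\W$ with $\M$. You have simply filled in a bit more detail than the paper (in particular the explicit identification of $f^R$ and the citation for stability of internal left adjoints under tensoring), but the underlying argument is the same.
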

\begin{proof}
Specializing to $\M=\V$ itself, we see that this condition implies that the unit is $\V$-atomic. 

Conversely, tensoring a $\V$-internal left adjoint such as $\V\to \W$ with any $\V$-module yields a $\V$-internal left adjoint.
\end{proof}
\begin{cor}
    $\W$ is rigid over $\V$ if and only if the unit and counit of the extension-restriction of scalars adjunction are (pointwise\footnote{It follows that they are actually also adjointable.}) internal left adjoints. 
\end{cor}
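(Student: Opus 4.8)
The plan is to unwind the extension–restriction of scalars adjunction and recognize its unit and counit as the two natural transformations already studied above. Write $f\colon\V\to\W$, let $L=\W\otimes_\V-\colon\Mod_\V(\PrL)\to\Mod_\W(\PrL)$ be extension of scalars and $R=f^*$ restriction of scalars, so that $L\dashv R$. Unwinding definitions, the unit of this adjunction at a $\V$-module $\M$ is the canonical map $\M\to\W\otimes_\V\M$ (a morphism of $\V$-modules), while the counit at a $\W$-module $\N$ is the action map $\W\otimes_\V\N\to\N$ (a morphism of $\W$-modules). Hence ``the unit is pointwise an internal left adjoint'' means that $\M\to\W\otimes_\V\M$ is a $\V$-internal left adjoint for every $\V$-module $\M$, which by the Lemma immediately preceding this corollary is equivalent to $\one_\W$ being $\V$-atomic; and ``the counit is pointwise an internal left adjoint'' means that $\W\otimes_\V\N\to\N$ is a $\W$-internal left adjoint for every $\W$-module $\N$, which by \Cref{prop:actintladj} is equivalent to the multiplication $\mu\colon\W\otimes_\V\W\to\W$ being a $\W\otimes_\V\W$-internal left adjoint.

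Granting this translation, the corollary reduces to the statement that $\W$ is rigid over $\V$ if and only if $\mu$ is an internal left adjoint and $\one_\W$ is $\V$-atomic. The forward implication is immediate: a rigid algebra is by definition locally rigid, so $\mu$ is an internal left adjoint, and its unit is $\V$-atomic by definition. For the converse, the two hypotheses give exactly local rigidity \emph{except} that they do not a priori supply dualizability of $\W$ over $\V$, which is built into the definition of local rigidity. So the only real work is to show that $\mu$ being an internal left adjoint together with $\one_\W$ being $\V$-atomic already forces $\W$ to be dualizable over $\V$ — i.e.\ that dualizability is redundant in the definition of rigidity; this is the content of \Cref{lm:rigimpldbl}, which I would invoke here, or, to keep the section self-contained, reprove by exhibiting the self-duality datum: let $\mu^R\colon\W\to\W\otimes_\V\W$ be the right adjoint of $\mu$, which is $\W\otimes_\V\W$-linear and colimit-preserving since $\mu$ is an internal left adjoint, and set $\Gamma=\hom_\W^\V(\one_\W,-)\colon\W\to\V$, which is $\V$-linear and colimit-preserving precisely because $\one_\W$ is $\V$-atomic; then take coevaluation $\V\xrightarrow{f}\W\xrightarrow{\mu^R}\W\otimes_\V\W$ and evaluation $\W\otimes_\V\W\xrightarrow{\mu}\W\xrightarrow{\Gamma}\V$ and verify the two triangle identities by a diagram chase using the (co)units of $\mu\dashv\mu^R$ and of $f\dashv\Gamma$.

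Finally, the parenthetical strengthening recorded in the footnote — that, once pointwise internal left adjoints, the unit and counit are moreover \emph{adjointable} — follows because the pointwise right adjoints can be written down naturally: the right adjoint of the unit at $\M$ is $\Gamma\otimes_\V\id_\M\colon\W\otimes_\V\M\to\M$ and the right adjoint of the counit at $\N$ is $\mu^R\otimes_\W\id_\N\colon\N\to\W\otimes_\V\N$, obtained from the adjunctions $f\dashv\Gamma$ and $\mu\dashv\mu^R$ via the $2$-functoriality of $-\otimes_\V\M$ and $-\otimes_\W\N$, exactly as in the proofs of \Cref{prop:actintladj} and of the Lemma preceding this corollary; naturality of these assignments is the desired adjointability. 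I expect the only genuinely non-formal step to be the dualizability claim in the ``if'' direction (the reproof of \Cref{lm:rigimpldbl}); everything else is bookkeeping on top of \Cref{prop:actintladj} and the Lemma just before.
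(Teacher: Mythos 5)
Your translation is exactly right: via the Lemma immediately preceding the corollary and \Cref{prop:actintladj}, the condition on the unit and counit becomes ``$\one_\W$ is $\V$-atomic and $\mu:\W\otimes_\V\W\to\W$ is a $\W\otimes_\V\W$-internal left adjoint,'' and you correctly flag that the ``if'' direction therefore needs dualizability of $\W$ over $\V$, which is not part of the hypothesis. You also correctly identify that this is precisely the content of \Cref{lm:rigimpldbl}, and the self-duality datum you propose to reprove it with matches the one cited there (your $\Gamma=\hom_\W^\V(\one_\W,-)$ is what the paper calls $\eta^R$). The paper leaves the corollary unproved and places \Cref{lm:rigimpldbl} a bit later in the same subsection, so you have in effect reconstructed both the missing proof and the ordering subtlety; your proposal is correct and is the argument the paper intends. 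The remarks on the footnote are also in the right spirit: the pointwise right adjoints are produced by tensoring the global adjunctions $\eta\dashv\eta^R$ and $\mu\dashv\mu^R$ with $\id_\M$ (resp.\ $\id_\N$) via $2$-functoriality, and it is this functorial construction, not merely pointwise existence, that gives adjointability of the naturality squares.
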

\begin{cor}\label{cor:rigadj}
    If $\W$ is rigid over $\V$, extension-restriction of scalars induces an adjunction $\Dbl{\V}\rightleftarrows \Dbl{\W}$. This adjunction is in fact monadic: there is a canonical equivalence $\Dbl{\W}\simeq \Mod_\W(\Dbl{\V})$. 
\end{cor}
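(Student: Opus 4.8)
The plan is to show separately that each of extension and restriction of scalars restricts to the dualizable cores, that these restrictions form an adjunction, and then to recognise $\Dbl{\W}$ directly as the $\infty$-category $\Mod_\W(\Dbl{\V})$ of $\W$-modules internal to the dualizable core of $\Mod_\V$, from which monadicity becomes a formal matter.

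\textbf{Step 1: the functors restrict.} First I would record that $\Dbl{\V}$ is symmetric monoidal under $\otimes_\V$ — dualizable $\V$-modules are closed under tensor and so are $\V$-internal left adjoints, by $2$-functoriality of $\otimes$, with unit $\V$ itself — and that, $\W$ being rigid, it is in particular dualizable over $\V$, so $\W\in\CAlg(\Dbl{\V})$. Extension of scalars $\W\otimes_\V-$ is a symmetric monoidal $2$-functor (as recalled at the start of the section) and hence preserves dualizability and internal left adjoints, so it restricts to $F_!\colon\Dbl{\V}\to\Dbl{\W}$. Dually, since $\W$ is dualizable over $\V$, restriction of scalars $f^*$ preserves dualizability by \cite[Proposition 4.6.4.4]{HA}, and being a lax symmetric monoidal $2$-functor it preserves internal left adjoints, so it restricts to $F^*\colon\Dbl{\W}\to\Dbl{\V}$.

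\textbf{Step 2: the adjunction restricts.} Next I would check that the adjunction $F_!\dashv F^*$ of the ambient module $\infty$-categories descends to the cores. The point is that the unit and counit already land in the subcategories: the unit $\M\to f^*(\W\otimes_\V\M)$ is a $\V$-internal left adjoint because $\one_\W$ is $\V$-atomic (by the Lemma above characterising atomicity of $\one_\W$), and the counit $\W\otimes_\V f^*\N\to\N$, which is the action map, is a $\W$-internal left adjoint by \Cref{prop:actintladj} (valid since $\W$ is locally rigid). Since an adjunction is exactly the datum of two functors, a unit, a counit and the triangle identities, and all of this restricts — the naturality squares of unit and counit along a morphism of the cores have their remaining two edges internal left adjoints, hence stay in the core, and the triangle identities are inherited — we obtain $F_!\dashv F^*$ on $\Dbl{\V}\rightleftarrows\Dbl{\W}$. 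I would stress that full rigidity, not just local rigidity, is used here: without $\one_\W$ atomic the unit map is not an internal left adjoint and the adjunction does not restrict.

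\textbf{Step 3: identification with $\Mod_\W(\Dbl{\V})$ and monadicity.} The strong symmetric monoidal inclusion $\Dbl{\V}\hookrightarrow\Mod_\V(\PrL)$ induces $\Mod_\W(\Dbl{\V})\to\Mod_\W(\Mod_\V(\PrL))\simeq\Mod_\W(\PrL)$, and I would argue this is an equivalence onto $\Dbl{\W}$, over $\Dbl{\V}$. On objects: the action map $\W\otimes_\V\M\to\M$ of any $\W$-module is automatically a $\V$-internal left adjoint by \Cref{prop:actintladj}, and being an internal left adjoint is a \emph{property}, so the higher coherence data of a module structure impose nothing more; hence $\Mod_\W(\Dbl{\V})$ is exactly the $\W$-modules whose underlying $\V$-module is dualizable, which by \Cref{prop:locrigdbl} (together with $f^*$ preserving dualizability) are exactly the $\W$-modules dualizable over $\W$. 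On morphisms: a $\W$-linear functor between such modules is a $\V$-internal left adjoint if and only if it is a $\W$-internal left adjoint, by \Cref{prop:intladjlocrig} in one direction and $2$-functoriality of $f^*$ in the other. This yields $\Dbl{\W}\simeq\Mod_\W(\Dbl{\V})$, under which $F^*$ is the forgetful functor and $F_!$ the free functor $\W\otimes_\V-$; the monad of $F_!\dashv F^*$ is $\W\otimes_\V-$ with its algebra-induced monad structure, whose Eilenberg–Moore $\infty$-category is $\Mod_\W(\Dbl{\V})$ by definition of the module category, and the Barr–Beck comparison functor $\Dbl{\W}\to\Mod_{\W\otimes_\V-}(\Dbl{\V})$ is precisely the equivalence just exhibited — so the adjunction is monadic.

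\textbf{Expected obstacle.} The only genuine subtlety is that $\Dbl{\V}$ and $\Dbl{\W}$ are \emph{not} full subcategories: every step — the restriction of the two functors, the restriction of unit and counit, and the comparison with the module category — has to be checked compatible with the morphisms being internal left adjoints, and this is exactly where $\one_\W$ being $\V$-atomic and $\W$ being (locally) rigid enter, via \Cref{prop:actintladj}, \Cref{prop:locrigdbl} and \Cref{prop:intladjlocrig}. I would in particular be careful to write out explicitly that a $\W$-module structure on a dualizable $\V$-module requires nothing beyond the action map being an internal left adjoint, since the identification $\Mod_\W(\Dbl{\V})\simeq\Dbl{\W}$ rests on this point.
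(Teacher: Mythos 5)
Your proposal is correct, and it follows the route the paper intends: the paper gives no written proof of this corollary, expecting the reader to assemble it from the immediately preceding corollary (unit and counit of the extension--restriction adjunction are internal left adjoints) together with \Cref{prop:actintladj}, \Cref{prop:locrigdbl} and \Cref{prop:intladjlocrig}, which is precisely what you do. Your step-by-step verification — in particular the explicit check that the adjunction really does restrict to the non-full subcategories and that a $\W$-module structure on a dualizable $\V$-module with internal-left-adjoint action map is a \emph{property} detected at the level of the action map — fills in exactly the details the paper leaves implicit, and the obstacle you flag (non-fullness of $\Dbl{\V}$ and $\Dbl{\W}$) is the right thing to worry about.
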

\begin{rmk}
    From this, it follows that \cite[Theorem A]{maindbl} in the case where $\V$ is rigid over $\Sp$, is a consequence of the case where $\V=\Sp$ which is a bit more elementary than in full generality. 
\end{rmk}
\begin{obs}\label{obs:morrig}
    Let $f:\V\to \W$ be a symmetric monoidal colimit-preserving functor between commutative $\Sp$-algebras, where $\V$ is locally rigid and $\one_\W$ is compact. In this case, $f$ sends trace-class maps to trace-class maps, i.e. $\V$-atomic maps to $\W$-atomic maps, in particular (by local rigidity of $\V$) compact maps in $\V$ to $\W$-atomic maps, in particular (because $\one_\W$ is compact) compact maps in $\V$ to compact maps in $\W$. Because $\V$ is dualizable, this implies that $f$ is an internal left adjoint\footnote{And because $\V$ is locally rigid, this further implies that $f$ is a $\V$-internal left adjoint, i.e. $f^R$ satisfies the projection formula.}. 

    More generally, assume $\W_0\to \W_1$ is a symmetric-monoidal colimit-preserving functor between commutative $\V$-algebras, where $\W_0$ is locally rigid and the unit in $\W_1$ is $\V$-atomic. The latter condition implies, by tensoring with $\W_0$, that $\W_0\to \W_0\otimes_\V\W_1$ is a $\W_0$-internal left adjoint; and \Cref{prop:actintladj} implies that $\W_0\otimes_\V\W_1\to \W_1$ is also a $\W_0$-internal left adjoint. It follows that the map $\W_0\to \W_1$ is a $\W_0$-internal left adjoint, and thus by restricting scalars, a $\V$-internal left adjoint. 
\end{obs}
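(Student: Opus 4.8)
The plan is to assemble the statement from results already in hand, reducing everything to the behaviour of trace-class and compact maps under symmetric monoidal functors. I would first handle the case $\V=\Sp$ by following the chain of implications suggested by the statement. Step one: a symmetric monoidal functor preserves trace-class maps, since a factorization $x\to y\otimes d\otimes x\to y$ as in \Cref{defn:trcl} is carried verbatim by $f$ to one for $f$ of the map, using $f(\one_\V)\simeq\one_\W$ and $f(\epsilon),f(\eta)$. Step two: since $\V$ is dualizable, \Cref{cor:atimpcomp} identifies the weakly compact, strongly compact and $\Sp$-atomic maps in $\V$ (call them the compact maps), and local rigidity of $\V$ together with \Cref{prop:locrigcharac} shows each such map is trace-class. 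Step three: a trace-class map in $\W$ is the same as a $\W$-atomic map in $\W$ by \Cref{ex:VatV}, and since $\one_\W$ is compact, \Cref{obs:atimpliescpct} (with $\kappa=\omega$) shows $\W$-atomic maps are (strongly) compact. Chaining these, $f$ carries compact maps in $\V$ to compact maps in $\W$.

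Next I would upgrade this to ``$f$ is an internal left adjoint''. The input is that $\V$, being dualizable, is compactly assembled, so by \cite[Theorem 2.39]{maindbl} every object of $\V$ is a sequential colimit along compact maps; as $f$ preserves colimits and (by the previous step) compact maps, it sends enough compact maps to compact maps, and \cite[Addendum 2.42]{maindbl} then gives that $f$ is an internal left adjoint. For the footnote, I would note that $f\colon\V\to\W$ is a $\V$-linear functor between $\V$-modules (namely $\V$ itself and $\W$ viewed through $f$) which is an $\Sp$-internal left adjoint, so \Cref{prop:intladjlocrig}, with base $\Sp$ and $\V$ playing the role of the locally rigid algebra, promotes it to a $\V$-internal left adjoint.

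For the general statement I would argue exactly as indicated in the claim: $\one_{\W_1}$ being $\V$-atomic means the classifying functor $\V\to\W_1$ is a $\V$-internal left adjoint, and base change along $\V\to\W_0$ (a symmetric monoidal $2$-functor, hence preserving internal left adjoints) turns this into the assertion that the unit map $\W_0\to\W_0\otimes_\V\W_1$ is a $\W_0$-internal left adjoint; meanwhile local rigidity of $\W_0$ over $\V$ and \Cref{prop:actintladj}, applied to the $\W_0$-module $\W_1$, give that the action map $\W_0\otimes_\V\W_1\to\W_1$ is a $\W_0$-internal left adjoint. Their composite is the algebra map $\W_0\to\W_1$, which is therefore a $\W_0$-internal left adjoint; restriction of scalars along $\V\to\W_0$ (a lax symmetric monoidal $2$-functor, which preserves internal left adjoints) then shows it is a $\V$-internal left adjoint as well.

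The only genuinely non-formal point — and the step I expect to require the most care — is the passage ``$f$ preserves compact maps $\Rightarrow$ $f$ is an internal left adjoint'', which rests on the compact-exhaustion machinery of \cite[Theorem 2.39, Addendum 2.42]{maindbl}; the rest is bookkeeping, namely keeping straight which of the notions $\Sp$-atomic / $\W$-atomic / compact / trace-class is in play at each arrow and checking the implications run in the correct direction, together with, in the general part, verifying that the two maps in the composite $\W_0\to\W_0\otimes_\V\W_1\to\W_1$ are genuinely maps of $\W_0$-modules, so that \Cref{prop:actintladj} and $2$-functoriality of base change and restriction of scalars apply.
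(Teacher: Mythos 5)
Your proposal is correct and tracks the paper's argument essentially step for step: the first paragraph simply spells out the chain already embedded in the Observation (symmetric monoidal functors preserve trace-class maps by \Cref{rmk:trcl}, local rigidity converts compact maps to trace-class via \Cref{prop:locrigcharac}, compactness of $\one_\W$ converts $\W$-atomic back to compact via \Cref{obs:atimpliescpct}, and \cite[Addendum 2.42]{maindbl} upgrades "preserves compact maps" to "internal left adjoint"), and the second paragraph reproduces the paper's factorization $\W_0\to\W_0\otimes_\V\W_1\to\W_1$ and its treatment via base change, \Cref{prop:actintladj}, and restriction of scalars. The only additional content you supply is an explicit justification of the footnote via \Cref{prop:intladjlocrig}, which is a reasonable reading of the paper's terse "because $\V$ is locally rigid, this further implies..." and is consistent with it.
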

\begin{rmk}\label{rmk:kappacpctunit}
    The proof of this last paragraph works to show that for a good cardinal $\kappa$, if $\W_0$ is locally rigid and the unit of $\W_1$ is $\kappa$-compact, then any symmetric monoidal colimit-preserving functor $\W_0\to \W_1$ has a $\kappa$-filtered colimit-preserving right adjoint. 
\end{rmk}
\begin{lm}
    Suppose $\W$ is a commutative $\V$-algebra, whose underlying $\V$-module is atomically generated. In this case, $\W$ is locally rigid if and only all its $\V$-atomics are dualizable, and it is rigid if and only if $\V$-atomic and dualizables coincide.
\end{lm}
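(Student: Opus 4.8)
The plan is to deduce both equivalences from results already in hand, so there is little to do. The first equivalence --- that $\W$ is locally rigid if and only if every $\V$-atomic object of $\W$ is dualizable --- is exactly \Cref{ex:locrigat}, since we are assuming the underlying $\V$-module of $\W$ is atomically generated; nothing new is needed there.

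For the second equivalence, recall that by definition $\W$ is rigid if and only if it is locally rigid \emph{and} $\one_\W$ is $\V$-atomic. For the forward direction, local rigidity gives (by the first equivalence) that every $\V$-atomic object of $\W$ is dualizable, so it remains to see that $\one_\W$ being $\V$-atomic forces every dualizable object of $\W$ to be $\V$-atomic. I would record this as a small separate observation: if $x\in\W$ is dualizable then $x$ is $\W$-atomic (the analogue of \cite[Example 1.23]{maindbl} with the commutative algebra $\W$ in place of $\V$), hence $\id_x$ is a $\W$-atomic map by \Cref{ex:atid}, hence $\id_x$ is a $\V$-atomic map by \Cref{prop:unitatimplat}, hence $x$ is $\V$-atomic by \Cref{ex:atid} again. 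Alternatively, $\one_\W$ being $\V$-atomic says precisely that $\hom_\W^\V(\one_\W,-)\colon\W\to\V$ is $\V$-linear and colimit-preserving, and then for dualizable $x$ with dual $x^\vee$ the functor $\hom_\W^\V(x,-)\simeq \hom_\W^\V(\one_\W, x^\vee\otimes-)$ is a composite of $\V$-linear colimit-preserving functors, so $x$ is $\V$-atomic. Combining, when $\W$ is rigid the $\V$-atomic and dualizable objects coincide.

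For the converse of the second equivalence, suppose the $\V$-atomic and dualizable objects of $\W$ coincide. Then in particular every $\V$-atomic object is dualizable, so $\W$ is locally rigid by the first equivalence; and $\one_\W$, being the unit, is dualizable, hence $\V$-atomic by hypothesis, so $\W$ is rigid.

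I do not expect a genuine obstacle here: the statement is essentially a repackaging of \Cref{ex:locrigat} together with the definition of rigidity, the only mildly non-formal ingredient being the implication ``$\one_\W$ is $\V$-atomic $\Rightarrow$ every dualizable object of $\W$ is $\V$-atomic'', which is dispatched by either of the two routes above.
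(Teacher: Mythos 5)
Your proof is correct and follows essentially the same route as the paper: the only difference is that you cite \Cref{ex:locrigat} for the locally rigid equivalence where the paper re-proves it inline, and for the implication ``$\one_\W$ $\V$-atomic $\Rightarrow$ dualizable implies $\V$-atomic'' you give two equivalent arguments, the second of which (using $\hom_\W^\V(x,-)\simeq\hom_\W^\V(\one_\W,x^\vee\otimes-)$) is exactly the one the paper has in mind when it says ``the converse of course holds if $\one_\W$ is atomic.''
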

\begin{proof}
    The locally rigid case is \Cref{ex:atgenrig}. 

 Now if $\W$ is rigid then by the above, all $\V$-atomics are dualizable. The converse of course holds if $\one_\W$ is atomic.

    Conversely, assume atomics and dualizables agree. It follows again from the above that $\W$ is locally rigid, and because $\one_\W$ is atomic, it follows that $\W$ is rigid. 
\end{proof}
We now prove a more precise claim regarding the comparison of $\V$ and $\W$-atomic maps. First, we need a lemma. 
\begin{prop}
    Let $\V\to \W$ be a morphism in $\CAlg(\PrL)$, and $C$ a small $\W$-enriched category. 
    \begin{itemize}
        \item If the unit of $\W$ is $\V$-atomic, then the induced functor $j:\PP_\V(C)\to\PP_\W(C)$ is fully faithful; 
        \item If furthermore $\W$ is rigid over $\V$, and $C= \M^\kappa$ for some $\W$-module $\M\in \Mod_\V(\PrL_\kappa)$ and a good cardinal $\kappa$, then $j:\PP_\V(\M^\kappa)\to \PP_\W(\M^\kappa)$ is an equivalence. 
        
    \end{itemize}
\end{prop}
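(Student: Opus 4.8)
The plan is as follows. Write $f:\V\to\W$ for the structure morphism and $f^R:\W\to\V$ for its lax symmetric monoidal right adjoint. Since $\W$ is tensored over $\V$ via $f$, there is a canonical identification $\hom_\W^\V(\one_\W,-)\simeq f^R$, so by \Cref{ex:atid} the hypothesis ``$\one_\W$ is $\V$-atomic'' is \emph{equivalent} to ``$f^R$ is $\V$-linear and colimit-preserving'' ($f^R$ is always lax $\V$-linear). The functor $j$ is, by construction, the $\V$-linear colimit-preserving extension (via \Cref{thm: UPVPsh}) of the composite of the Yoneda embedding $C\to\PP_\W(C)$ with restriction of enrichment to $\V$; evaluating on representables shows that its right adjoint $j^R$ is ``postcompose with $f^R$'', i.e.\ $j^R(Y)(c)\simeq f^R(Y(c))$. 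In particular $j^R$ preserves colimits exactly when $f^R$ does, i.e.\ exactly when $\one_\W$ is $\V$-atomic.

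\emph{First part.} I would check full faithfulness on representables first, which requires no hypothesis: for $c,c'\in C$ both $\hom_{\PP_\V(C)}^\V(y_\V c,y_\V c')$ and $\hom_{\PP_\W(C)}^\V(y_\W c,y_\W c')$ compute $f^R(\hom_C^\W(c,c'))$ by the enriched Yoneda lemma, compatibly with the comparison map induced by $j$. To bootstrap, fix $c\in C$ and compare the functors $\PP_\V(C)\to\V$ given by $X\mapsto\hom_{\PP_\V(C)}^\V(y_\V c,X)=\mathrm{ev}_c(X)$ and $X\mapsto\hom_{\PP_\W(C)}^\V(y_\W c,jX)$. The first is $\V$-linear and colimit-preserving (it is $\mathrm{ev}_c$); the second is the composite $\PP_\V(C)\xrightarrow{j}\PP_\W(C)\xrightarrow{\mathrm{ev}_c}\W\xrightarrow{f^R}\V$, hence also $\V$-linear and colimit-preserving \emph{precisely because $\one_\W$ is $\V$-atomic}. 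As the natural transformation between them is an equivalence on representables by the previous sentence, it is an equivalence; so $\hom_{\PP_\V(C)}^\V(y_\V c,X)\simeq\hom_{\PP_\W(C)}^\V(y_\W c,jX)$ for all $X$. Finally, for arbitrary $X',X\in\PP_\V(C)$ one writes $X'$ as a weighted colimit of representables (enriched density), uses that $j$ preserves weighted colimits (\Cref{cor:Vliniffweight}, \Cref{prop:compatibilityweightedcolim}) so that $jX'$ is the corresponding weighted colimit of the $y_\W$'s, and rewrites both $\hom^\V(X',X)$ and $\hom^\V(jX',jX)$ as the same weighted limit, which are termwise equal by the representable case. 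Hence $j$ is fully faithful.

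\emph{Second part.} Now assume $\W$ is rigid over $\V$ and $C=\M^\kappa$ for a $\W$-module $\M\in\Mod_\V(\PrL_\kappa)$, $\kappa$ good. By the first part $j$ is fully faithful (rigidity gives $\one_\W$ $\V$-atomic), and $j^R$ is colimit-preserving, so $j$ exhibits $\PP_\V(\M^\kappa)$ as a colocalization of $\PP_\W(\M^\kappa)$: the essential image $\mathrm{Im}(j)$ is closed under colimits and (as $j$ is $\V$-linear) under $\V$-tensors. Since $\PP_\W(\M^\kappa)$ is generated under conical colimits by $\W$-tensors of representables, and $\mathrm{Im}(j)$ already contains all representables $y_\W(c)=j(y_\V(c))$, it remains to show $\mathrm{Im}(j)$ is closed under $\W$-tensors. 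For this I would note: (i) since $\kappa$ is good, $f$ preserves $\kappa$-compacts (as $f^R$ is colimit-preserving), so $\one_\W$ is $\kappa$-compact; hence every dualizable object $v$ of $\W$ is $\kappa$-compact and, moreover, $\M^\kappa$ is closed under $\otimes_\W v$ for $v$ dualizable (because $\hom_\W(v,-)=v^\vee\otimes_\W-$ preserves colimits); and (ii) for $v$ dualizable the projection formula gives $v\otimes_\W y_\W(c)\simeq y_\W(v\otimes_\W c)$, which is thus representable, hence in $\mathrm{Im}(j)$. Therefore $\{w\in\W:\ w\otimes_\W y_\W(c)\in\mathrm{Im}(j)\ \forall c\}$ is a full subcategory of $\W$ closed under colimits and $\V$-tensors and containing every dualizable object; as $\W$ is rigid it is generated under colimits and $\V$-tensors by its dualizable objects, so this subcategory is all of $\W$. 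Thus $\mathrm{Im}(j)$ is closed under $\W$-tensors, so $\mathrm{Im}(j)=\PP_\W(\M^\kappa)$ and $j$ is an equivalence. (Equivalently one can argue that $j^R$ is conservative: for $\W$-enriched $Y$ and $v$ dualizable, enriched Yoneda gives $Y(v\otimes_\W c)\simeq v^\vee\otimes_\W Y(c)$, so conservativity of $j^R=f^R\circ(-)$ reduces, again using that dualizables generate $\W$, to the statement that $f^R$ detects equivalences after tensoring with all dualizable objects of $\W$.)

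\emph{Main obstacle.} The formal steps --- constructing $j$ and $j^R$, full faithfulness, and the colocalization bookkeeping --- are routine. The genuine input is the final step of the second part: that a rigid commutative $\V$-algebra is generated under colimits and $\V$-tensors by its dualizable objects. This is the one point at which rigidity, rather than merely ``$\one_\W$ is $\V$-atomic'', is used in an essential way; it is standard in the theory of rigid categories (it is close in spirit to the rigid analogue of \Cref{thm:dblexhaustionV}), but pinning down the cleanest self-contained argument for it --- or, equivalently, for the conservativity of $j^R$ --- is where the real work lies.
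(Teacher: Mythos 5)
Your first bullet is correct and close in spirit to the paper's proof: the paper likewise observes that $j^R$ is $\V$-linear (so that full faithfulness reduces to checking on the atomic generators, i.e.\ representables), citing a lemma from the companion paper where you unwind $j^R$ explicitly as postcomposition with $f^R$. Both are fine.

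The second bullet, however, rests on a claim that is \emph{false}: that a rigid commutative $\V$-algebra is generated under colimits and $\V$-tensors by its dualizable objects. You flag this as the ``one genuine input'' and assert it is a standard fact, but it fails already over $\V=\Sp$. Take $\W=\Sh([0,1];\Sp)$: this is rigid (the unit is compact and the diagonal of $[0,1]$ is proper), yet because $[0,1]$ is connected and simply connected the dualizable objects of $\W$ are exactly the constant sheaves on perfect spectra, and these are closed under colimits and $\Sp$-tensors, so they generate only the full subcategory of constant sheaves --- a proper subcategory. More generally, rigid $\V$-algebras are typically not $\V$-atomically generated, and since dualizables are $\V$-atomic whenever the unit is, they cannot generate. (The lemma later in the paper saying ``$\V$-atomics $=$ dualizables'' has the hypothesis that $\W$ is atomically generated, which you cannot assume.) Your fallback formulation via conservativity of $j^R$ reduces to the very same false claim, so it does not repair the gap. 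What rigidity actually gives (\Cref{lm:rigiditythroughpres}) is that every object admits a \emph{trace-class presentation} as a weighted colimit --- a weaker and more delicate statement than generation by dualizables, and not one your argument knows how to use.

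The paper's proof of the second bullet is genuinely different and avoids any generation-by-dualizables claim: it shows directly that $\PP_\V(\M^\kappa)\to\PP_\W(\M^\kappa)$ is an equivalence by a Yoneda argument in $\Dbl{\V}$. One uses \Cref{cor:rigadj} (the monadic adjunction $\Dbl{\V}\rightleftarrows\Dbl{\W}$ for rigid $\W$, valid because the unit and counit are internal left adjoints) together with the universal property $\Fun^{iL}_\V(\N,\PP_\V(\M^\kappa))\simeq\Fun^{L,\kappa}_\V(\N,\M)$ from \cite[Theorem 1.63]{maindbl}, to identify the corepresented functors of both sides on dualizable $\V$-modules $\N$. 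This sidesteps all questions about which objects generate $\W$. If you want to salvage your colocalization picture, you would need to replace ``generated by dualizables'' with an argument that uses trace-class presentations of objects of $\W$ (\Cref{lm:rigiditythroughpres}) to show the essential image of $j$ is closed under $\W$-tensors --- which is essentially a reproof of the paper's lemma by other means, not a routine closure argument.
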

\begin{proof}
    1. We note that the source $\PP_\V(C)$ is $\V$-atomically generated, and $j$ sends $C$ to $\W$-atomics, and hence, by the assumption on $\one_\W$, to $\V$-atomics. 
    
    It follows that the right adjoint $j^R$ is $\V$-linear by \cite[Corollary 1.31]{maindbl}, and so to check that the unit map $\id\to j^Rj$ is an equivalence, it suffices to do so on the atomic generators. Since they generate, one can check that this map is an equivalence by mapping in from an atomic generator, so, ultimately it suffices to check that $j_{\mid C}$ is fully faithful, and this is clear by design.

    2. We prove that $\PP_\W(\M^\kappa)$ has the same universal mapping in property as $\PP_\V(\M^\kappa)$ in $\Dbl{\V}$. First, $\PP_\W(\M^\kappa)$ is dualizable over $\W$ and hence over $\V$. 

    Second, for $\N$ any dualizable $\V$-module, we have $$\Fun^{iL}_\V(\N,\PP_\W(\M^\kappa))\simeq \Fun^{iL}_\W(\W\otimes_\V\N,\PP_\W(\M^\kappa))$$
    by \Cref{cor:rigadj}. 

    And then, $\Fun^{iL}_\W(\W\otimes_\V\N,\PP_\W(\M^\kappa))\simeq \Fun^{L,\kappa}(\W\otimes_\V\N,\M)$ by \cite[Theorem 1.63]{maindbl}, and $\Fun^{L,\kappa}_\W(\W\otimes_\V\N,\M)\simeq \Fun^{L,\kappa}_\V(\N,\M)$, and using \cite[Theorem 1.63]{maindbl} again, the latter is $\Fun^{iL}_\V(\N,\PP_\V(\M^\kappa))$. 

    Unwinding the equivalences, one finds that the corresponding map $\PP_\V(\M^\kappa)\to \PP_\W(\M^\kappa)$ is the one we described. 
\end{proof}
\begin{cor}\label{cor:rigat=at}
    Let $f:\V\to\W$ be a rigid morphism. In this case, in $\W$-modules, $\W$-atomic maps and $\V$-atomic maps coincide. 
    
    In fact, for any $\W$-module $\M$ and $x\in\M$, the canonical map $\at_\M^\V(x,-)\to f^R\at_\M^\W(x,-)$ is an equivalence. 
\end{cor}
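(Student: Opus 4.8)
The plan is to produce an equivalence $\at^\V_\M(x,-)\simeq f^R\circ\at^\W_\M(x,-)$ of functors over $\hom^\V_\M(x,-)$. Since $\W$ is rigid, $\one_\W$ is $\V$-atomic, so $f^R\colon\W\to\V$ is colimit-preserving and $\V$-linear (this is the hypothesis of \Cref{prop:unitatimplat}); hence $f^R\circ\at^\W_\M(x,-)$ is $\V$-linear colimit-preserving with a natural map to $f^R\circ\hom^\W_\M(x,-)\simeq\hom^\V_\M(x,-)$ — the latter equivalence being restriction of scalars on enriched homs, as used in \Cref{prop:compatibilityweightedcolim} — so such an equivalence over $\hom^\V_\M(x,-)$ is forced to be the canonical comparison map. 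The statement about maps then follows immediately: a map $g\colon x\to y$ in a $\W$-module is $\V$-atomic iff $\one_\V\to\hom^\V_\M(x,y)$ lifts to $\at^\V_\M(x,y)$, iff (by the equivalence and the adjunction $f\dashv f^R$) $\one_\W\to\hom^\W_\M(x,y)$ lifts to $\at^\W_\M(x,y)$, iff $g$ is $\W$-atomic.

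To construct the equivalence I would first descend to presheaf categories. Fix a good cardinal $\kappa$ with $\M\in\Mod_\W(\PrL_\kappa)$ and with $x\in\M^\kappa$; the underlying $\V$-module then also lies in $\Mod_\V(\PrL_\kappa)$ with the same $\kappa$-compact objects, because $f^R$ preserves $\kappa$-filtered colimits. By the Proposition immediately preceding this corollary, rigidity of $\W$ yields an equivalence $j\colon\PP_\V(\M^\kappa)\xrightarrow{\ \sim\ }\PP_\W(\M^\kappa)$, under which the canonical functors $p$ to $\M$ and their right adjoints $y$ agree. Now decompose the right adjoint defining $\at^?$ as in the proof of \Cref{prop:descriptionofrightadj}: it factors through $\Fun^{\kappa-filt}_?(\M,?)\simeq\Fun^L_?(\PP_?(\M^\kappa),?)$, the first factor sending $\hom^?_\M(x,-)$ to $\hom^?_{\PP_?(\M^\kappa)}(y(x),-)$ (both being $?$-linear colimit-preserving and agreeing on $\M^\kappa$ by \Cref{thm: UPVPsh} and the Yoneda lemma, using $x\in\M^\kappa$), and the second factor being the right adjoint to precomposition with $p\colon\PP_?(\M^\kappa)\to\M$. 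Transporting along $j$, and using $\hom^\V_{\PP_\W(\M^\kappa)}(y(x),-)\simeq f^R\circ\hom^\W_{\PP_\W(\M^\kappa)}(y(x),-)$, the problem reduces to showing that $f^R$ commutes with the right adjoint to precomposition with the $\W$-linear functor $p\colon\PP_\W(\M^\kappa)\to\M$, between the pertinent categories of linear functors.

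This last point is where the structural input enters. Since $\W$ is locally rigid, the corollary following \Cref{prop:exceptional} provides, naturally in the $\W$-module, an equivalence $\Fun^L_\W(-,\W)\xrightarrow{\ \sim\ }\Fun^L_\V(-,\V)$ given by post-composition with the exceptional functor $\Gamma_!$; being a post-composition it commutes tautologically with precomposition along any $\W$-linear functor (in particular $p$), hence so do the right adjoints of those precomposition functors. It thus remains only to identify $\Gamma_!$ with $f^R$, and here rigidity (not merely local rigidity) is exactly what is needed: $\one_\W$ is $\V$-atomic, so $f^R\simeq\hom^\V_\W(\one_\W,-)\simeq\at^\V_\W(\one_\W,-)$, and $\at^\V_\W(\one_\W,-)\simeq\Gamma_!$ — this is the concrete description of $\Gamma_!$; for rigid $\W$ it can also be read off the self-duality datum of \Cref{prop:exceptional}, whose evaluation may be taken to be $f^R\circ\mu$ precisely because $f^R$ is now colimit-preserving. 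Granting $\Gamma_!\simeq f^R$, the previous paragraph delivers $f^R\circ\at^\W_\M(x,-)\simeq\at^\V_\M(x,-)$, and tracing the maps to the respective $\hom$-functors through all identifications shows the equivalence is over $\hom^\V_\M(x,-)$, hence is the canonical comparison map.

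The main obstacle I anticipate is exactly this identification $\Gamma_!\simeq f^R$ for rigid $\W$, and — hand in hand with it — the verification that the equivalence $\Fun^L_\W(-,\W)\simeq\Fun^L_\V(-,\V)$ really extends compatibly to the larger functor categories housing the $\hom$'s so as to carry $\hom^\W_\M(x,-)$ to $\hom^\V_\M(x,-)$; that compatibility is precisely the content of $\Gamma_!\simeq f^R$. A more hands-on alternative would be to first treat $\W$-modules $\M$ that are dualizable over $\W$, combining \Cref{cor:locrigimplat} (which, after adjunction, gives a map $\at^\V_\M(x,-)\to f^R\at^\W_\M(x,-)$) with \Cref{prop:unitatimplat} (which gives $f^R\at^\W_\M(x,-)\to\at^\V_\M(x,-)$), noting that one composite is the identity by the universal property of $\at^\V_\M(x,-)$; and then to bootstrap to arbitrary $\W$-modules via $\PP_\V(\M^\kappa)\simeq\PP_\W(\M^\kappa)$. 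But upgrading that retract to an equivalence, and handling the non-dualizable case, still ultimately rests on the same structural fact, so I would present the argument along the lines of the first three paragraphs.
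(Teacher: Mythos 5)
Your proof is correct and arrives at the conclusion by a route closely related to but organized differently from the paper's. Both arguments rest on the same two structural inputs: the proposition immediately preceding the corollary (rigidity of $\W$ makes $j\colon\PP_\V(\M^\kappa)\to\PP_\W(\M^\kappa)$ an equivalence), and the identification of postcomposition with $f^R=\hom_\W^\V(\one_\W,-)$ as an equivalence $\Fun^L_\W(\N,\W)\simeq\Fun^L_\V(\N,\V)$. Where you differ is in packaging: you decompose the corepresenting-object construction $R$ defining $\at$ into the two factors $R_1$ (restrict to $\M^\kappa$, then identify with $\Fun^L_?(\PP_?(\M^\kappa),?)$) and $R_2$ (right adjoint to $p^*$), verify that $R_1$ carries $\hom^?_\M(x,-)$ to $\hom_{\PP_?(\M^\kappa)}(y(x),-)$, and then show $f^R$ intertwines the two instances of $R_2$ via the commuting square of precompositions and postcompositions. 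The paper instead runs a retract argument: it produces comparison maps in both directions from \Cref{prop:unitatimplat} and \Cref{cor:locrigimplat}, observes one composite is the identity by the universal property of $\at^\V$, and then handles the other composite by lifting a triangle of $\V$-natural transformations to $\W$-natural ones across exactly the equivalence you use. Your direct transport avoids having to produce the two comparison maps and then upgrade a retract to an equivalence, which is arguably cleaner.

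One bibliographic point worth tightening: you appeal to the corollary following \Cref{prop:exceptional} (the locally rigid statement about $\Gamma_!$), whose proof in the paper defers to \Cref{prop:exceptional}, which in turn is proved only later via rigidification. For rigid $\W$ the same equivalence is available more directly and without any forward dependency from \Cref{cor:rigselfdual} (itself a consequence of \Cref{lm:rigimpldbl}, cited from \cite{HSSS}); this is also the reference the paper's own proof uses, and it immediately identifies $\Gamma_!$ with $\eta^R=f^R$, which is the identification you correctly flag as the crux. Citing \Cref{cor:rigselfdual} instead of the locally rigid corollary would remove the apparent circularity. Also, the parenthetical "with the same $\kappa$-compact objects, because $f^R$ preserves $\kappa$-filtered colimits" is slightly off target as stated: $\kappa$-compactness in $\M$ is a property of the underlying category and unchanged by restriction of scalars; what one needs is that the $\V$-module structure on $\M$ lies in $\PrL_\kappa$, which holds because $f\colon\V\to\W$ preserves $\kappa$-compacts ($\one_\W$ being $\V$-atomic). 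This is a minor imprecision and does not affect the argument.
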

\begin{proof}
In \Cref{prop:unitatimplat} and \Cref{cor:locrigimplat}, we have constructed maps in both directions lying over $\hom_\M^\V(x,-)$. Note that this is already sufficient to prove that the notions of $\V$-atomic and $\W$-atomic maps coincide. Now we prove the more precise statement. Fix $\kappa$ large enough. 

The universal property of $\at_\M^\V(x,-)$ guarantees that the composite $$\at_\M^\V(x,-)\to f^R\at^\W_\M(x,-)\to \at_\M^\V(x,-)$$ is the identity, since this is the only self-map lying over $\hom^\V_\M(x,-)$. 

For the self-map of $f^R\at_\M^\W(x,-)$, we have to work a little bit more. Since $\hom_\M^\V(x,-)\simeq f^R\hom_\M^\W(x,-)$, what we have is a commutative triangle: 
\[\begin{tikzcd}
	{f^R\at_\M^\W(x,-)} & {f^R\at_\M^\W(x,-)} \\
	{} & {f^R\hom_\M^\W(x,-)}
	\arrow[from=1-1, to=1-2]
	\arrow[from=1-1, to=2-2]
	\arrow[from=1-2, to=2-2]
\end{tikzcd}\]
and if we could remove the $f^R$'s, we would know that the horizontal map would be an equivalence. 

The functor $\at_\M^\W(x,-)$ is $\W$-linear, so by \Cref{cor:rigselfdual} the top map is of the form $f^R(\alpha)$ for some $\alpha : \at^\W_\M(x,-)\to \at^\W_\M(x,-)$. The point now is to extend the conclusion \Cref{cor:rigselfdual} to $\V$-enriched functors.

More precisely, we claim that postcomposing with $f^R$ induces an equivalence $\Fun_\W(\M,\W)\simeq \Fun_\V(\M,\V)$.

Taking this for granted, we can conclude by observing that our triangle thus lifts to a triangle with $f^R$, where the same argument guarantees that the endomorphism $\at_\M^\W(x,-)\to~\at^\W_\M(x,-)$ is the identity. 

As is clear from the proof, we only have to prove this equivalence on accessible functors, in fact, $\kappa$-accessible for some fixed $\kappa$ is sufficient (just fix $x$, and pick $\kappa$ so that $\hom_\M^\W(x,-)$ and $f^R$ are $\kappa$-accessible). Now we note that we have the following equivalences: $$\Fun_\W^{\kappa-filt}(\M,\W)\simeq \Fun_\W(\M^\kappa,\W)\simeq \Fun^L_\W(\PP_\W(\M^\kappa),\W)\simeq \Fun^L_\V(\PP_\W(\M^\kappa),\V)$$
where the second equivalence comes from \Cref{thm: UPVPsh} and the third from \Cref{cor:rigselfdual}. 

On the other hand, $\Fun^{\kappa-filt}_\V(\M,\V)\simeq \Fun_\V(\M^\kappa,\V)\simeq \Fun^L_\V(\PP_\V(\M^\kappa),\V)$.

From this perspective, unwinding the previous equivalences, we find that the map given by postcomposing with $f^R$ is identified with $j^*:\Fun^L_\V(\PP_\W(\M^\kappa),\V)\to \Fun^L_\V(\PP_\V(\M^\kappa),\V)$, where $j:\PP_\V(\M^\kappa)\to \PP_\W(\M^\kappa)$ is the $\V$-linear map induced by the embedding $\V$-enriched $\M^\kappa\to \PP_\W(\M^\kappa)$, using \Cref{thm: UPVPsh}. 

Now by the previous lemma, $j$ is an equivalence, and therefore so is $j^*$, which proves our claim. 
\end{proof}
Similar to \Cref{prop:colimlocrig}, we have:
\begin{prop}\label{prop:colimrig}
    The full subcategory of $\CAlg(\Mod_\V(\PrL))$ spanned by rigid $\V$-algebras is closed under colimits.
\end{prop}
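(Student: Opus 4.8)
The plan is to argue exactly as in the proof of \Cref{prop:colimlocrig}. Every colimit can be written as a sifted colimit of finite coproducts, and the rigid $\V$-algebras form a \emph{full} subcategory of $\CAlg(\Mod_\V(\PrL))$, so it suffices to show this subcategory is closed under finite coproducts and under sifted colimits, each computed in $\CAlg(\Mod_\V(\PrL))$.

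Finite coproducts will be the easy case. The empty one is $\V$, which is rigid over itself: $\V\otimes_\V\V\to\V$ is an equivalence, $\V$ is self-dual over $\V$, and $\one_\V$ is $\V$-atomic in $\V$ because $\at_\V(\one_\V,-)\simeq\hom_\V(\one_\V,-)\simeq\id_\V$ (cf. \Cref{ex:atid}, \Cref{ex:VatV}). For a binary coproduct $\W_1\otimes_\V\W_2$ of rigid $\V$-algebras, local rigidity is already recorded in the proof of \Cref{prop:colimlocrig}, so I only need that its unit $\one_{\W_1}\boxtimes\one_{\W_2}$ is $\V$-atomic; but $\one_{\W_i}$ being $\V$-atomic means precisely that $\eta_i\colon\V\to\W_i$ is a $\V$-internal left adjoint, and the functor classifying $\one_{\W_1}\boxtimes\one_{\W_2}$ is the composite $\V\xrightarrow{\eta_2}\W_2\xrightarrow{\eta_1\otimes_\V\W_2}\W_1\otimes_\V\W_2$ of $\V$-internal left adjoints (using that base change is a symmetric monoidal $2$-functor and that restriction of scalars preserves internal left adjoints). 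Iterating handles all finite coproducts.

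For sifted colimits, let $\W_\bullet\colon I\to\CAlg(\Mod_\V(\PrL))$ be sifted with rigid values and $\W=\colim_I\W_\bullet$, computed in $\Mod_\V(\PrL)$. First, since each $\W_i$ is locally rigid and each $\one_{\W_j}$ is $\V$-atomic, \Cref{obs:morrig} shows that every transition map $\W_i\to\W_j$ is a $\V$-internal left adjoint, so \Cref{prop:colimlocrig} applies and $\W$ is locally rigid. It then remains to show that $\eta_\W\colon\V\to\W$ is a $\V$-internal left adjoint --- equivalently, that $\one_\W$ is $\V$-atomic. Here I would copy the manipulation used for the action map in \Cref{prop:colimlocrig}: the map $\eta_\W$ is the colimit, formed pointwise in the arrow category of $\Mod_\V(\PrL)$ over the connected (indeed sifted) category $I$, so that $\colim_I\V=\V$, of the maps $\eta_i\colon\V\to\W_i$ along internal left adjoints; each $\eta_i$ is a $\V$-internal left adjoint because $\W_i$ is rigid, and a colimit of internal left adjoints along internal left adjoints is again an internal left adjoint, so $\eta_\W$ is one. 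Hence $\W$ is rigid.

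The only non-formal ingredient, and the step I expect to need the most care, is exactly the one inherited from \Cref{prop:colimlocrig}: identifying the unit map of the colimit with a colimit of the unit maps of the $\W_i$ in the arrow category, and invoking stability of internal left adjoints under colimits along internal left adjoints (cf. \cite[Lemma 1.30]{maindbl}). Everything else --- the reduction to finite coproducts and sifted colimits, and the computation that a $\boxtimes$ of $\V$-atomics is $\V$-atomic --- is routine.
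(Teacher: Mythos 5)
Your proof is correct and follows essentially the same route as the paper's: reduce to finite coproducts and sifted colimits, use \Cref{obs:morrig} and \Cref{prop:colimlocrig} to get local rigidity, and then check atomicity of the unit separately in each case. The only cosmetic difference is in the sifted case: the paper observes directly that the cocone maps $\W_i\to\W$ are internal left adjoints (by \cite[Corollary 1.32]{maindbl}) and send units to units, so that the unit of $\W$ is atomic because internal left adjoints preserve atomics; you instead phrase this as $\eta_\W$ being a colimit of the $\eta_i$ in the arrow category and invoke stability of internal left adjoints under such colimits. Since the sources are all constant $=\V$, your formulation unwinds to $\eta_\W=(\W_{i_0}\to\W)\circ\eta_{i_0}$, which is exactly the paper's composite, so the two arguments coincide. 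One small note on citations: what you want for the colimit claim is the paper's \cite[Corollary 1.32]{maindbl} (cocone maps of colimits along internal left adjoints are internal left adjoints), not \cite[Lemma 1.30]{maindbl}, which is a cancellation statement used elsewhere.
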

\begin{proof}
    By \Cref{obs:morrig}, any map between rigid $\V$-algebras is an internal left adjoint. Thus \Cref{prop:colimlocrig} already implies that any such colimit is locally rigid. 

    Furthermore, in the sifted case \cite[Corollary 1.32]{maindbl} implies that all the maps in the cocone diagram are internal left adjoints, and they send the unit to the unit, and hence the unit in the colimit is also $\V$-atomic. 

    In the finite coproduct case, the claim follows from the fact that a tensor product of $\V$-internal left adjoints remains a $ \V$-internal left adjoint. 
\end{proof}

In the locally rigid case, we have seen in \Cref{prop:exceptional} (although not proved yet) that the duality data of $\W$ can be described in terms of the multiplication map and an ``exceptional'' functor. It turns out that in the rigid case, this exceptional functor is simply the global sections functor, so that one can define the duality data ``ahead of time''. In particular, one may \emph{deduce} dualizability without assuming it. More precisely:
\begin{lm}\label{lm:rigimpldbl}
    Let $\V\to\W$ be a symmetric monoidal colimit-preserving functor, and assume $\one_\W$ is $\V$-atomic, and that the multiplication map $\mu:\W\otimes_\V\W\to\W$ is a $\W$-internal left adjoint. In this case, $\W$ is rigid over $\V$, namely, it is dualizable.
\end{lm}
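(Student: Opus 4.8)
The plan is to exhibit $\W$ explicitly as a self-dual object of $\Mod_\V(\PrL)$, with duality data built from the multiplication $\mu$ and the right adjoint $f^R$ of the unit $f\colon\V\to\W$. Concretely, I would take as coevaluation and evaluation
\[
\mathrm{coev}\colon \V\xrightarrow{\;f\;}\W\xrightarrow{\;\mu^R\;}\W\otimes_\V\W,
\qquad
\mathrm{ev}\colon \W\otimes_\V\W\xrightarrow{\;\mu\;}\W\xrightarrow{\;f^R\;}\V .
\]
The first preliminary step is to check that these are morphisms in $\Mod_\V(\PrL)$: $\mu$ is colimit-preserving and $\V$-linear; $f^R=\hom^\V_\W(\one_\W,-)$ is colimit-preserving and $\V$-linear \emph{precisely because} $\one_\W$ is $\V$-atomic; and $\mu^R$ is colimit-preserving since $\mu$ is an internal left adjoint, while it is $\W$-bilinear — hence in particular $\V$-linear — by the hypothesis on $\mu$ together with the symmetry of $\mu$, so that ``$\W$-internal left adjoint'' may be read as ``$\W\otimes_\V\W$-internal left adjoint'' exactly as in the proof of \Cref{prop:actintladj}.

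The core of the proof is the two triangle identities. The key algebraic input is that the $\W$-bilinearity of $\mu^R$ unwinds to the identity of endofunctors $(\id_\W\otimes\mu)\circ(\mu^R\otimes\id_\W)=\mu^R\circ\mu$ on $\W\otimes_\V\W$ (and its mirror image acting on the other tensor slot): both sides are the image of $\mu^R$ under the forgetful functor along the $\W$-action on $\W$ and on $\W\otimes_\V\W$ induced by $\mu$, so the equality is precisely the statement that $\mu^R$ is a map of $\W$-modules. Granting this, I would rewrite the first snake composite
\[
\W\xrightarrow{\mathrm{coev}\otimes\id_\W}\W\otimes_\V\W\otimes_\V\W\xrightarrow{\id_\W\otimes\mathrm{ev}}\W
\]
as $(\id_\W\otimes f^R)\circ(\id_\W\otimes\mu)\circ(\mu^R\otimes\id_\W)\circ(f\otimes\id_\W)=(\id_\W\otimes f^R)\circ\mu^R\circ\mu\circ(f\otimes\id_\W)$, and then use the unitality relation $\mu\circ(f\otimes\id_\W)=\id_\W$ to reduce it to $(\id_\W\otimes f^R)\circ\mu^R$. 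Finally, applying the base-change $2$-functor $\W\otimes_\V(-)$ to the adjunction $f\dashv f^R$ yields $\id_\W\otimes f\dashv\id_\W\otimes f^R$, so taking right adjoints in the other unitality relation $\mu\circ(\id_\W\otimes f)=\id_\W$ gives $(\id_\W\otimes f^R)\circ\mu^R=\id_\W$, closing the first triangle identity. The second one is verified by the mirror-image computation, with the two tensor slots exchanged and left-$\W$-linearity of $\mu^R$ used in place of right-$\W$-linearity (legitimate since $\W$ is commutative and $\mu$ symmetric). This makes $\W$ self-dual, hence dualizable, over $\V$; together with the hypothesis that $\mu$ is an internal left adjoint this gives local rigidity, and the hypothesis that $\one_\W$ is $\V$-atomic then upgrades it to rigidity.

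I expect the main obstacle to be bookkeeping rather than anything conceptual: keeping straight which tensor slot each arrow acts on, and checking that every intermediate functor is genuinely a morphism of $\Mod_\V(\PrL)$ so that the triangle identities take place in the correct category. In particular, one should give a fully rigorous — rather than element-wise — justification of $(\id_\W\otimes\mu)\circ(\mu^R\otimes\id_\W)=\mu^R\circ\mu$; the cleanest route seems to be to deduce it directly from the module-map property of $\mu^R$ as above. A secondary point requiring care is the precise reading of ``$\W$-internal left adjoint'' in the hypothesis, which is reconciled with the $\W\otimes_\V\W$-linear version exactly as in \Cref{prop:actintladj}; and it is worth noting that the data obtained here agrees with the duality datum of \Cref{prop:exceptional}, the exceptional functor $\Gamma_!$ being $f^R$ in this rigid case.
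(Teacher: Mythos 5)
Your proposal is correct and takes essentially the same approach as the paper: the paper exhibits exactly the same (self-)duality datum $\mathrm{coev}=\mu^R\circ\eta$, $\mathrm{ev}=\eta^R\circ\mu$ and appeals to \cite[Proposition 2.17]{HSSS} for the verification that the triangle identities follow from $\W$-linearity of $\mu^R$ and $\V$-linearity of $\eta^R$, which is precisely the computation you spell out.
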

\begin{proof}
See \cite[Proposition 2.17]{HSSS}. The point is that the following functors provide a (self-)duality datum for $\W$: $\V\xrightarrow{\eta} \W\xrightarrow{\mu^R}\W\otimes_\V\W$ and $\W\otimes_\V\W\xrightarrow{\mu}\W\xrightarrow{\eta^R}\V$. Checking the triangle identities will simply use the $\W$-linearity of $\mu^R$ and the $\V$-linearity of $\eta^R$ - the details are in the cited reference. 
\end{proof}
\begin{cor}\label{cor:rigselfdual}
    Let $\W$ be a rigid commutative $\V$-algebra. The functor $\W\to \Fun^L_\V(\W,\V)$ given as the mate of $\W\otimes_\V\W\xrightarrow{\mu}\W\xrightarrow{\eta^R}\V$ is an equivalence. In particular, any $\V$-linear functor $\W\to \V$ is of the form $\hom_\W(\one_\W,x\otimes -)$ for a uniquely determined $x$. 

    More generally, for any $\W$-module $\M$, postcomposition with $\hom_\W(\one_W,-)$ induces an equivalence $\Fun^L_\W(\M,\W)\simeq \Fun^L_\V(\M,\V)$. 
\end{cor}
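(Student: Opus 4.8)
The plan is to prove \Cref{cor:rigselfdual} by first establishing the self-duality of $\W$ over $\V$, then bootstrapping to the module-level statement. First I would invoke \Cref{lm:rigimpldbl}: since $\W$ is rigid, the functors $\V\xrightarrow{\eta}\W\xrightarrow{\mu^R}\W\otimes_\V\W$ (coevaluation) and $\W\otimes_\V\W\xrightarrow{\mu}\W\xrightarrow{\eta^R}\V$ (evaluation) form a self-duality datum for $\W$ as a $\V$-module — this is exactly what the cited \cite[Proposition 2.17]{HSSS} gives, using $\W$-linearity of $\mu^R$ (local rigidity) and $\V$-linearity of $\eta^R$ (which holds because $\one_\W$ is $\V$-atomic, so $\eta^R=f^R$ is colimit-preserving and $\V$-linear). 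From a self-duality datum, one gets a canonical equivalence $\W\xrightarrow{\sim}\Fun^L_\V(\W,\V)$ given by the mate of the evaluation pairing, which is the first assertion. Tracing through the mate construction, this equivalence sends $x\in\W$ to the functor $(-)\mapsto \eta^R(x\otimes -)=\hom_\W(\one_\W,x\otimes -)$, which gives the ``in particular'' clause.

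For the general module statement, I would use the coextension-of-scalars adjunction. Given a $\W$-module $\M$, there is a natural equivalence $\Fun^L_\W(\M,\Fun^L_\V(\W,\V))\simeq \Fun^L_\V(\M,\V)$, obtained by adjunction between restriction of scalars $\Mod_\W(\PrL)\to\Mod_\V(\PrL)$ and its right adjoint $\Fun^L_\V(\W,-)$ (the internal-hom/coextension right adjoint); concretely this is evaluation at $\one_\W$. Then I would postcompose with the equivalence $\Fun^L_\V(\W,\V)\simeq\W$ from the first part to identify $\Fun^L_\W(\M,\W)\simeq\Fun^L_\W(\M,\Fun^L_\V(\W,\V))\simeq\Fun^L_\V(\M,\V)$, and check by unwinding that the resulting equivalence is indeed postcomposition with $\hom_\W(\one_\W,-)\colon\W\to\V$ (which is the composite $\W\simeq\Fun^L_\V(\W,\V)\xrightarrow{\mathrm{ev}_{\one_\W}}\V$).

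The main obstacle I anticipate is bookkeeping rather than conceptual: verifying that the abstract equivalence produced by the duality datum and the adjunction chain really is given by the explicit formula $x\mapsto\hom_\W(\one_\W,x\otimes-)$, respectively postcomposition with $\hom_\W(\one_\W,-)$. This requires carefully chasing the mate/adjunction unit and counit through the symmetric monoidal structure; the triangle identities for the duality datum (whose verification is deferred to \cite{HSSS}) are what make the mate an equivalence, but one must confirm the identification of the underlying functor. An alternative route to the module statement, avoiding some of this, is to first prove it for $\M=\W\otimes_\V\N$ with $\N$ a $\V$-module — where it reduces to the $\V$-level self-duality tensored up — and then deduce the general case since every $\W$-module is a colimit (even a reflexive coequalizer) of such free modules and all functors involved preserve the relevant colimits; I would keep this in reserve if the direct adjunction argument becomes unwieldy.
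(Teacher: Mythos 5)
Your proof matches the paper's (implicit) argument: the paper states this as a corollary of \Cref{lm:rigimpldbl} without an explicit proof, and the intended reasoning is exactly the two-step argument you give — the self-duality datum yields $\W\simeq\Fun^L_\V(\W,\V)$ via the mate of the evaluation pairing, and the module-level statement follows by the coextension-of-scalars adjunction ($\Fun^L_\W(\M,\Fun^L_\V(\W,\V))\simeq\Fun^L_\V(\M,\V)$, evaluation at $\one_\W$) postcomposed with the first equivalence. Indeed, the paper later uses precisely this coextension argument to prove the analogous statement for locally rigid $\W$, so your route is not just valid but is the one the author has in mind. The unwinding you flag as a bookkeeping obstacle is genuine but straightforward: $\eta^R = \hom^\V_\W(\one_\W,-)$, so the mate of $\eta^R\circ\mu$ sends $x$ to $w\mapsto\hom_\W(\one_\W,x\otimes w)$ directly, and under the coextension equivalence the induced functor $\W\to\V$ is $\mathrm{ev}_{\one_\W}\circ(\text{mate})^{-1}\circ(-)$, which unwinds to $\hom_\W(\one_\W,-)$ as claimed.
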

In a similar vein to the criterion for dualizability from \cite[Theorem 2.55]{maindbl}, at least over $\Sp$, we obtain the following characterization of rigidity:
\begin{cor}\label{cor:rigtrcl}
    Let $\W\in\CAlg(\PrL_{\st})$. $\W$ is rigid over $\Sp$ if and only if its unit is compact, and there exists a set $S$ of trace-class maps such that  such that for any $x\in \W$, if $x$ is nonzero, then there exists some nonzero map to $x$ that factors through $S$, and such that either of the following two conditions hold:
    \begin{enumerate}
        \item Any map in $S$ factors as a composite of two maps in $S$; 
        \item All the objects appearing as sources of maps in $S$ are in the colimit-completion of $S$-telescopes in $\M$, that is, sequential colimits where all the transition maps are in $S$. 
    \end{enumerate}
\end{cor}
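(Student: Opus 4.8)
The plan is to deduce this from the characterization of dualizability via telescopic/atomic exhaustion (\Cref{thm:dblexhaustionV} specialized to $\V=\Sp$, where atomically presentable maps are exactly compact maps and trace-class maps are the $\Sp$-atomic maps in $\W$ itself, cf. \Cref{ex:VatV} and \Cref{cor:atimpcomp}), together with the characterization of local rigidity over $\Sp$ from \Cref{prop:locrigcharac} (being dualizable plus all compact maps trace-class) and the fact that rigidity adds the hypothesis that $\one_\W$ is compact. The forward direction is easy: if $\W$ is rigid then $\one_\W$ is compact, and since $\W$ is dualizable every object is a sequential colimit along compact maps by \Cref{thm:dblexhaustionV}; since $\W$ is moreover locally rigid, all these compact maps are trace-class (\Cref{prop:locrigcharac}). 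Taking $S$ to be the (small, up to choosing a good cardinal) set of all trace-class maps between $\kappa$-compact objects for a suitable good $\kappa$, condition (1) holds because a composite of compact maps is again a compact map and hence trace-class by local rigidity — so trace-class maps compose to trace-class maps; condition (2) holds because each object, in particular each source of a map in $S$, is an $S$-telescope; and the "nonzero map factoring through $S$" condition holds because nonzero objects receive nonzero maps from $\kappa$-compact objects, which one can arrange to factor through $S$ using that identities of $\kappa$-compact objects in a dualizable category are (weakly) compact, hence factor through compact maps.

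For the converse, suppose $\one_\W$ is compact and such a set $S$ exists. The key point is to show $\W$ is dualizable; then \Cref{prop:locrigcharac} (or rather the \Cref{rmk} following it, which says it suffices to exhibit "enough" compact maps which are all trace-class) upgrades this to local rigidity, and compactness of $\one_\W$ gives rigidity. To get dualizability I would run the argument of \cite[Theorem 2.55]{maindbl}: the nonzero-map-factoring-through-$S$ hypothesis says the trace-class (hence compact) maps in $S$ "detect" all objects, so the colimit-closure of $S$-telescopes is a colocalizing subcategory whose right-orthogonal contains no nonzero object, hence is everything. The role of conditions (1) and (2) is exactly to make the iteration in the proof of \Cref{thm:dblexhaustionV} go through: under (2) the sources of maps in $S$ are already built from $S$-telescopes, so one can bootstrap; under (1), given $f\colon x\to y$ in $S$, factoring $f = f_2 f_1$ through $S$ lets one interpolate and produce, after passing to $\PP_\Sp(\W^\kappa) = \Ind(\W^\kappa)$, compatible lifts as in \Cref{lm:swapat} and \Cref{cor:swapat}, yielding a local left adjoint to $p\colon\Ind(\W^\kappa)\to\W$ on each such telescope and hence globally, i.e. dualizability by \cite[Theorem 1.49]{maindbl}.

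The main obstacle I anticipate is the bookkeeping needed to show the two alternative conditions (1) and (2) are genuinely interchangeable here and that each suffices — in particular, deriving (2)-type behaviour from (1): one must show that iterating the factorization guaranteed by (1) produces, in the limit, an $S$-telescope presentation of each source object, which is where the set-theoretic smallness of $S$ and a cofinality/fixed-point argument (as in \Cref{cor:stabilize}) enters. A secondary subtlety is checking that "trace-class maps compose to trace-class maps" in the generality needed: this is immediate once we know $\W$ is locally rigid (compact $\circ$ compact is compact, then apply local rigidity), but in the converse direction we do not yet know local rigidity, so one instead argues directly that a composite of two trace-class maps is trace-class using \Cref{add:doubletrcl} — indeed condition (1) is tailored precisely so that \Cref{lm:2foldloccpct}/\Cref{add:doubletrcl} applies, giving that composites of maps in $S$ are $\W$-atomic, i.e. (since we are in $\W$ itself) trace-class, which both closes the iteration and supplies the "enough trace-class compact maps" needed to invoke the remark after \Cref{prop:locrigcharac}.
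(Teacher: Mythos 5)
Your overall route agrees with the paper's: reduce to showing that $S$-telescopes generate $\W$ under colimits (exactly as in \cite[Theorem 2.55]{maindbl}), observe that compactness of $\one_\W$ makes trace-class maps compact (and likewise in $\W\otimes\W$), and feed this into \cite[Addendum 2.42]{maindbl} plus \Cref{cor:nuclax} to get rigidity; you package the last two steps through \Cref{prop:locrigcharac} and the remark after it rather than attacking $\mu:\W\otimes\W\to\W$ directly, but this is the same argument in a different wrapping. One point to correct: the invocation of \Cref{add:doubletrcl} for ``trace-class maps compose to trace-class maps'' is a red herring. That composites of trace-class maps are trace-class is automatic and elementary (trace-class maps form a two-sided ideal, just like atomic maps in \Cref{ex:atideal}); no coherence argument is needed, and in any case \Cref{add:doubletrcl} concerns the \emph{uniqueness} (as a homotopy) of a trace-class witness for a composite, not its existence. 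Relatedly, condition (1) is not there to close $S$ under composition — it is a Cauchy/interpolation condition allowing one to refine an $\NN$-telescope to a $\mathbb Q_{\geq 0}$-telescope, which is precisely what the telescope-generation argument of \cite[Theorem 2.55]{maindbl} needs; your ``iterate the factorization to build an $S$-telescope presentation of each source'' sentence captures this, so simply drop the detour through \Cref{lm:2foldloccpct} and \Cref{add:doubletrcl}.
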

\begin{proof}
    That the unit is compact implies that trace-class maps are compact. This also holds in $\W\otimes\W$, and it follows from this that multiplication map $\W\otimes\W\to \W$ sends ``enough'' compact maps (in the sense of \cite[Addendum 2.42]{maindbl}) to compact maps - namely, it sends all pure tensors of trace-class maps to trace-class maps. 

    Thus, by \cite[Addendum 2.42]{maindbl}, the multiplication map is an internal left adjoint in $\PrL_\st$. If we know that $S$-telescopes generate $\W$ under colimits, \Cref{cor:nuclax} then directly implies that the right adjoint to the multiplication is $\W$-linear on either side, and thus $\W$ is rigid .

    We thus simply need to prove that condition 1. or 2. implies that $S$-telescopes generate $\W$ under colimits - this, however, is the exact same proof as in \cite[Theorem 2.55]{maindbl}.  
\end{proof}

In his proof that the category of localizing motives is rigidity, Efimov uses the following weaker variant: 
\begin{cor}\label{cor:Efimovrig}
    Let $\W\in\CAlg(\PrL_{\st})$. $\W$ is rigid over $\Sp$ if and only if its unit is compact, and there exists a set $S$ of trace-class maps such that  the $S$-telescopes generate $\W$ under colimits. 
\end{cor}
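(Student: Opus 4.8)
The plan is to prove the two implications separately, reusing almost everything from \Cref{cor:rigtrcl} and its proof.

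\emph{The ``if'' direction} is the already-written part of the proof of \Cref{cor:rigtrcl}, with the generation of $\W$ by $S$-telescopes now taken as a hypothesis rather than deduced from conditions 1 or 2. Concretely: since $\one_\W$ is compact, every map in $S$ is a compact map, so every $S$-telescope is a compactly exhaustible object; as these generate $\W$ under colimits, \cite[Theorem 2.39]{maindbl} shows $\W$ is dualizable, hence so is $\W\otimes\W$. The multiplication $\mu\colon\W\otimes\W\to\W$ is symmetric monoidal, hence sends each pure tensor $s\boxtimes s'$ of maps in $S$ --- itself trace-class in $\W\otimes\W$ --- to the trace-class, hence (since $\one_\W$ is compact) compact, map $s\otimes s'$ in $\W$, and these constitute ``enough'' compact maps in the sense of \cite[Addendum 2.42]{maindbl}, so $\mu$ is an internal left adjoint in $\PrL_{\st}$. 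Since $S$-telescopes generate $\W$, \Cref{cor:nuclax} then promotes this to $\W$-bilinearity of $\mu^R$, so $\W$ is locally rigid; as $\one_\W$ is compact, i.e.\ $\Sp$-atomic, $\W$ is rigid.

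\emph{The ``only if'' direction.} Assume $\W$ is rigid over $\Sp$. By definition $\one_\W$ is $\Sp$-atomic, i.e.\ compact. Being rigid, $\W$ is in particular locally rigid, hence dualizable, and by the rephrasing following \Cref{prop:locrigcharac} every compact map in $\W$ is trace-class. Now pick a regular cardinal $\lambda$ for which $\W$ is $\lambda$-compactly generated, so that $\W^\lambda$ is a small set of generators of $\W$ under colimits. Since $\W$ is dualizable, \cite[Theorem 2.39]{maindbl} lets me write each $x\in\W^\lambda$ as a compact exhaustion $x\simeq\colim_{n\in\NN}x_n$ with every transition map $x_n\to x_{n+1}$ a compact --- hence trace-class --- map. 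Let $S$ be the set of all these transition maps, over all $x\in\W^\lambda$ and all $n\in\NN$; this is a genuine set because $\W^\lambda$ is small. Then $S$ consists of trace-class maps, and the $S$-telescopes contain every object of $\W^\lambda$ and therefore generate $\W$ under colimits.

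The one point requiring care --- more bookkeeping than substance --- is the passage to a genuine \emph{set} in the ``only if'' direction: one cannot take $S$ to be the class of all trace-class maps in $\W$, so one must extract the needed maps from compact exhaustions of a small generating family, using that \cite[Theorem 2.39]{maindbl} provides exhaustions whose transition maps are honest compact maps of $\W$. On the ``if'' side, the only subtlety is that the hypothesis speaks of colimit-generation only, but since $\Sp$ is generated under colimits by the sphere this coincides with generation under colimits and $\Sp$-tensors, so \cite[Addendum 2.42]{maindbl} and \Cref{cor:nuclax} apply verbatim.
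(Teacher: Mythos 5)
Your proof is correct and takes essentially the route the paper intends: the ``if'' direction is exactly the part of the proof of \Cref{cor:rigtrcl} that deduces rigidity from ``$S$-telescopes generate'' (via \cite[Addendum 2.42]{maindbl} and \Cref{cor:nuclax}), and the ``only if'' direction — which the paper leaves implicit — is the routine observation that, once $\W$ is dualizable and compact maps are trace-class, one can extract a small set $S$ of trace-class transition maps from compact exhaustions of a small generating family $\W^\lambda$. Your bookkeeping remarks about needing a genuine set and about colimit-generation agreeing with colimit-and-$\Sp$-tensor generation are both correct and worth noting.
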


From all of these niceness properties, one finds that when working over a rigid base, most of the theory developed over $\Sp$ works essentially the same. In particular, the theory of localizing invariants (which can be defined over any base) behaves quite closely to the classical theory over rigid bases. For example, a consequence of \Cref{cor:rigadj} is that ordinary $K$-theory of the underlying (dualizable) $\V$-module \emph{is} the universal localizing invariant on $\V$-modules. Note that this is \emph{not} true over a general base. In fact, a question worth investigating could be:
\begin{ques}\label{ques:relK}
Let $\V\in\CAlg(\PrL_{\st})$. What is the analogue of $K$-theory on $\Dbl{\V}$ ?
\end{ques}
In the locally rigid, compactly generated case (and potentially some $\omega_1$-compactness assumption) one can give some answer to that question in terms of $\Ind(\V^\dbl)$. It seems likely that in the general locally rigid case, the answer will involve the \emph{rigidification} of $\V$. 

Before introducing rigidifications, let us describe a natural source of locally rigid categories:
\begin{prop}\label{prop:compriglocrig}
    Let $\V\to\W$ be a rigid map in $\CAlg(\PrL)$, and let $L:\W\to \W_L$ be a colimit-preserving, symmetric monoidal localization, and suppose $L$ admits a $\V$-linear \emph{left} adjoint $\iota$\footnote{A left adjoint will automatically be oplax $\V$-linear, we ask that this structure makes it strict $\V$-linear. In particular, $\iota$ is a $\V$-internal left adjoint.}. 

    In this case, $\W_L$ is locally rigid. 
\end{prop}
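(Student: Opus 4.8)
The plan is to verify the two defining conditions of local rigidity for $\W_L$ over $\V$: that $\W_L$ is dualizable over $\V$, and that the multiplication map $\mu_L:\W_L\otimes_\V\W_L\to \W_L$ is a $\W_L$-internal left adjoint.

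First, dualizability: since $\iota$ is a fully faithful $\V$-linear left adjoint to $L$, the composite $L\circ \iota\simeq \id_{\W_L}$ exhibits $\W_L$ as a $\V$-linear retract of $\W$ (note $\iota$ is a $\V$-internal left adjoint by the footnote). As $\W$ is rigid, hence dualizable over $\V$, and dualizable objects in $\Mod_\V(\PrL)$ are closed under retracts, $\W_L$ is dualizable over $\V$. The more interesting point is that we also get good control on the relevant adjoints: because $L$ has a $\V$-linear left adjoint $\iota$, the unit map $\W\to \W_L$ is a $\V$-internal left adjoint; combined with rigidity of $\W$ and \Cref{prop:actintladj}, the action maps behave well.

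Next, the multiplication map. By \Cref{prop:actintladj}, it suffices to show that for every $\W_L$-module $\M$, the action map $a:\W_L\otimes_\V\M\to\M$ is a $\W_L$-internal left adjoint. Such $\M$ is in particular a $\W$-module (via $\W\to\W_L$), and since $\W$ is rigid, hence locally rigid, the $\W$-action map $\W\otimes_\V\M\to \M$ is a $\W$-internal left adjoint by \Cref{prop:actintladj} applied to $\W$. Now the $\W_L$-action map factors as
\[
\W_L\otimes_\V\M \;\simeq\; \W_L\otimes_\W(\W\otimes_\V\M)\;\xrightarrow{\;\W_L\otimes_\W a_\W\;}\;\W_L\otimes_\W\M\;\simeq\;\M,
\]
where $a_\W:\W\otimes_\V\M\to\M$ is the $\W$-action map and the last equivalence uses that $\M$ is already a $\W_L$-module. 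The first arrow is obtained by applying the $2$-functor $\W_L\otimes_\W(-):\Mod_\W(\PrL)\to\Mod_{\W_L}(\PrL)$ to the $\W$-internal left adjoint $a_\W$, hence is a $\W_L$-internal left adjoint; the final equivalence is trivially one. Therefore $a$ is a $\W_L$-internal left adjoint, and by \Cref{prop:actintladj} (now for $\W_L$), the multiplication $\mu_L$ is a $\W_L$-internal left adjoint. This completes the verification.

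The main obstacle is bookkeeping rather than a genuine difficulty: one must make sure that the identification $\M\simeq \W_L\otimes_\W\M$ for a $\W_L$-module $\M$, and the factorization of the action map above, are compatible as maps of $\W_L$-modules, so that the $2$-functoriality argument genuinely produces the internal left adjoint structure over $\W_L$ (and not merely over $\V$). Once this is set up carefully — using that base change $\W_L\otimes_\W(-)$ along the colimit-preserving symmetric monoidal functor $\W\to\W_L$ is a symmetric monoidal $2$-functor, exactly as in the Observation opening \Cref{section:rigandlocrig} — the rest is formal. (Alternatively, one could imitate the proof of \Cref{ex:locrigloc}: there $L$ was assumed to be a $\V$-internal left adjoint, whereas here $L$ is merely assumed to have a left adjoint; the extra input of the left adjoint $\iota$ replaces the role played there by the internal-left-adjoint hypothesis on $L$, since it still yields the retract and the needed adjointability of the multiplication square.)
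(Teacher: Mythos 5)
Your main argument is correct and is a bit different from the paper's. The paper first invokes \Cref{lm:leftadjlinear} to upgrade $\iota$ from $\V$-linear to $\W$-linear, then uses \Cref{ex:complocrig} (composition of locally rigid morphisms) to reduce to the case $\V=\W$; over $\W$, $\W_L$ is an idempotent algebra so the multiplication map $\W_L\otimes_\W\W_L\simeq\W_L$ is trivially internal, and only dualizability remains, given by the $\W$-linear retract. You instead stay over $\V$ throughout: you get dualizability directly from the $\V$-linear retract (so no need to upgrade $\iota$), and for the multiplication condition you verify \Cref{prop:actintladj} head-on by writing the $\W_L$-action map $\W_L\otimes_\V\M\to\M$ as $\W_L\otimes_\W a_\W$ (base change of the $\W$-action map, which is a $\W$-internal left adjoint by rigidity of $\W$ and \Cref{prop:actintladj} applied to $\W$) followed by the idempotency equivalence $\W_L\otimes_\W\M\simeq\M$. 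The base change $\W_L\otimes_\W(-)$ being a symmetric monoidal $2$-functor does the rest. That decomposition of the action map is standard and correct, so the proof goes through; in effect you have inlined the content of \Cref{ex:complocrig} rather than citing it.

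One remark you make in passing is wrong, though not load-bearing: you assert that ``because $L$ has a $\V$-linear left adjoint $\iota$, the unit map $\W\to\W_L$ is a $\V$-internal left adjoint.'' Having a colimit-preserving \emph{left} adjoint says nothing about whether the \emph{right} adjoint $L^R$ is colimit-preserving and $\V$-linear, which is what being an internal left adjoint means. Indeed, for $\W=\Sp$, $\W_L=\Sp_p$, the localization $L$ has a left adjoint ($\Gamma_!$) but its right adjoint, the inclusion of $p$-complete spectra, does not preserve colimits, so $L$ is not an internal left adjoint. Since your actual factorization of the action map never uses this claim, the proof stands, but the sentence should be deleted.
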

\begin{proof}
As $\W$ is rigid over $\V$, $\iota$ is also a $\W$-linear left adjoint (we explain this in the lemma following the proof). Since $\W$ is rigid over $\V$, it suffices to prove that $\W_L$ is locally rigid over $\W$ by \Cref{ex:complocrig}, and thus we are reduced without loss of generality to the case $\V=\W$. 

But now $\W_L$ is an idempotent $\W$-algebra and thus it suffices to prove that $\W_L$ is dualizable over $\W$. This is clear as the pair $(\iota,L)$ witnesses it as a $\W$-linear retract of $\W$. 
\end{proof}
\begin{lm}\label{lm:leftadjlinear}
    Let $\W$ be a commutative locally rigid $\V$-algebra, and $f:\M\to \N$ a $\W$-linear functor. Suppose $f$ admits a $\V$-linear left adjoint $f^L$. In this case, the canonical oplax $\W$-linear structure on $f^L$ is strict $\W$-linear. 
\end{lm}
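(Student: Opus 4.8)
The plan is to reduce the statement about the left adjoint $f^L$ to the already-established machinery about internal left adjoints and the projection formula. Recall that $f$ being $\W$-linear means in particular that $f$ is a morphism of $\W$-modules, so it is automatically lax $\W$-linear; its left adjoint $f^L$ therefore carries a canonical oplax $\W$-linear structure, which means for each $w \in \W$ and $n \in \N$ we have a comparison map $w \otimes f^L(n) \to f^L(w \otimes n)$, and we must show each such map is an equivalence. (By $\W$-linearity it suffices to treat the case of the generator, but $\W$ need not be atomically generated, so one should instead argue uniformly in $w$.)

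First I would use local rigidity to bring in \Cref{cor:projdbl} and \Cref{cor:nuclax}: the key point is that for a lax $\V$-linear functor, the projection formula map is an equivalence when one tensors with a dualizable object of $\V$, or more generally with a sequential colimit of trace-class maps. So the strategy is to express an arbitrary $w \in \W$ — or rather to reduce the verification — in terms of such objects. Since $\W$ is locally rigid over $\V$, it is dualizable over $\V$, hence by \Cref{thm:dblexhaustionV} every object of $\W$ is a sequential colimit along (weakly) atomically presentable maps; and atomically presentable maps in $\W$ are, by the discussion of atomic maps over a rigid/locally rigid base, built from trace-class maps (combine \Cref{ex:VatV} with local rigidity, or directly \Cref{lm:trclat}). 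This lets one reduce the check of the oplax structure map $w \otimes f^L(n) \to f^L(w \otimes n)$ to the case where $w$ ranges over objects built from trace-class telescopes, where \Cref{cor:nuclax} applies.

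The cleanest route, though, is probably the following $2$-categorical reformulation, parallel to \Cref{prop:intladjlocrig}: the functor $f^L$ being strictly $\W$-linear is equivalent to saying that the square relating $f^L$ to the $\W$-action maps $\W \otimes_\V (-) \to (-)$ on $\M$ and $\N$ is vertically (or horizontally) adjointable. Now by \Cref{prop:actintladj}, local rigidity of $\W$ says precisely that the action map $\W \otimes_\V \M \to \M$ is a $\W$-internal left adjoint for every $\W$-module, hence its right adjoint is $\W$-linear and the projection formula holds for it; the same for $\N$. One then contemplates the commuting square of right adjoints
\[
\begin{tikzcd}
\M & \N \\
\W\otimes_\V\M & \W\otimes_\V\N
\arrow["f", from=1-1, to=1-2]
\arrow[from=1-1, to=2-1]
\arrow[from=1-2, to=2-2]
\arrow["\W\otimes_\V f", from=2-1, to=2-2]
\end{tikzcd}
\]
(here the vertical arrows are the right adjoints $m^R$ of the action maps, which are $\W$-linear by local rigidity, and the bottom arrow is $\W$-linear as a base change of $f$); passing to left adjoints, the mate being an equivalence is exactly the assertion that the oplax $\W$-linear structure on $f^L$ is strict. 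The main obstacle is bookkeeping: one must check that the relevant square of right adjoints genuinely commutes as $\W$-linear functors (not merely $\V$-linearly), and that ``mate is an equivalence'' translates to the pointwise statement about $w \otimes f^L(n) \to f^L(w \otimes n)$ — this is where one invokes that $f$ itself is $\W$-linear so that the two ways of making $\W \otimes_\V \M \to \N$ agree. Once the square is set up correctly, the adjointability is automatic from the fact that the vertical right adjoints satisfy the projection formula, i.e. from \Cref{prop:actintladj}.
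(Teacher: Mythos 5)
Your ``cleanest route'' (the third paragraph) is essentially the paper's proof: both consider the commuting square
\[
\begin{tikzcd}
{\W\otimes_\V\M} \ar[r,"\W\otimes_\V f"] \ar[d] & {\W\otimes_\V\N} \ar[d]\\
\M \ar[r,"f"'] & \N
\end{tikzcd}
\]
of action maps, recognize that strict $\W$-linearity of $f^L$ is horizontal left adjointability of this square, convert that to vertical right adjointability (using that the vertical maps have right adjoints by \Cref{prop:actintladj}), and then check that the square of right adjoints commutes. So the overall route and the role of \Cref{prop:actintladj} match the paper.

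One step in your account is vaguer than what is actually needed, and it is worth sharpening. You assert that the commutativity of the right-adjoint square is ``automatic from the fact that the vertical right adjoints satisfy the projection formula.'' But the projection formula by itself only says that each $m^R_\M:\M\to\W\otimes_\V\M$ is individually $\W$-linear; it does not immediately produce the commutation isomorphism $(\W\otimes_\V f)\circ m^R_\M\simeq m^R_\N\circ f$, i.e.\ the naturality of $\M\mapsto m^R_\M$ along $\W$-linear maps $f$, which is what vertical right adjointability amounts to. The paper gets this for free by identifying $m^R_\M\simeq\mu^R\otimes_\W\M$, i.e.\ the vertical right adjoints are obtained by tensoring the fixed ($\W\otimes_\V\W$-linear) functor $\mu^R:\W\to\W\otimes_\V\W$ with $\M$ resp.\ $\N$ over $\W$; the commutation square is then just the naturality square of $\mu^R\otimes_\W(-)$ at $f$. (The projection formula for $\mu^R$ is what lets you whisker in the first place, so it enters, but it is the whiskering identification that supplies the naturality.) Your second paragraph sketches a genuinely different route via trace-class exhaustions, which the paper does not use here and which you don't flesh out; the third paragraph is the one that tracks the paper.
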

\begin{proof}
    $\W$-linearity of $f^L$ amounts to the horizontal left adjointability of this square: 
    \[\begin{tikzcd}
	{\W\otimes_\V\M} & {\W\otimes_\V\N} \\
	\M & \N
	\arrow[from=1-1, to=2-1]
	\arrow[from=1-1, to=1-2]
	\arrow[from=2-1, to=2-2]
	\arrow[from=1-2, to=2-2]
\end{tikzcd}\]
Because the vertical functors have right adjoints, this amounts to the vertical right adjointability of the same square. But in turn, this follows from the fact that the vertical right adjoints are given by tensoring $\M$ and $\N$ with $\W\to \W\otimes_\V\W$. 
\end{proof}

We claim in fact that every locally rigid $\Sp$-algebra arises this way, i.e. as a completion of a rigid $\Sp$-algebra. For this, we introduce ``rigidification'', which is a right adjoint to the inclusion of rigid algebras into all $\CAlg(\PrL_{\st})$. This construction will be a special case of the ``S-construction'' from \cite[Section 4.2]{maindbl}. 

We will also see later that this ``rigidification'' exists in general, but in the case of $\Sp$ it is a bit more hands on and explicit, and so we single out this special case, for concreteness and convenience.
\subsection{Rigidification}\label{section:rigidification}
Our goal in this section is to introduce \emph{rigidifications}. Beyond the fact that this is a very natural notion, we are mainly after the following fact: every locally rigid $\V$-algebra is the ``completion'' of a rigid $\V$-algebra, namely its rigidification. This is convenient, as it allows us to reduce many properties of locally rigid categories to analogous ones for rigid categories - particularly, it will allow us to give an alternative proof of \Cref{prop:exceptional} and to get a more precise understanding of the exceptional functor $\Gamma_!$ appearing there.

Before we state the main results of this section, let us give a definition and basic properties thereof.

\begin{defn}\label{defn:rigidification}
    Let $\overline\W, \W\in\CAlg(\Mod_\V)$ and $f:\overline \W\to \W$ a map between them. We say $f$ witnesses $\overline \W$ as a rigidification of $\W$ if $\overline \W$ is a rigid $\V$-algebra and for any rigid commutative $\V$-algebra $\W_0$, postcomposition with $f$ induces an equivalence: $$\Fun^{L,\otimes}_\V(\W_0,\overline \W)\xrightarrow{\simeq}\Fun^{L,\otimes}_\V(\W_0,\W)$$

\end{defn}
This clearly characterizes rigidifications canonically if they exist, and so:
\begin{nota}
    If it exists, we let $\Rig_\V(\W)$ denote the rigidification of $\W$.
\end{nota}
The following is an obvious observation, but worth making nonetheless:
\begin{obs}
    Let $\V_0\to\V_1\to\W$ be maps in $\CAlg(\PrL)$, and suppose $\V_1$ is $\V_0$-rigid. In this case, $\W$ admits a $\V_0$-rigidification if and only if it admits a $\V_1$-rigidification, and in case they exist, the canonical map between them is an equivalence. 
\end{obs}
\begin{proof}
    This follows by considering the respective universal properties, and using \Cref{ex:complocrig} and \Cref{cor:2outof3locrig}. 
\end{proof}
We will see soon that rigidifications always exist\footnote{It is a not-so-difficult consequence of our presentability theorem, \Cref{thm:rigprl}, but we will also construct them by hand.} but we may also reason about them prior to knowing that they exist. Before we do so, let us state our goal:

\begin{thm}\label{thm:locrigrigleft}
     Suppose $\W$ is a commutative locally rigid $\V$-algebra. In this case, $\W$ admits a rigidification, and the canonical map $\Rig_\V(\W)\to \W$ admits a fully faithful left adjoint.
\end{thm}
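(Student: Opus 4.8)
The plan is to produce $\Rig_\V(\W)$ by hand using the $S$-construction machinery from \cite[Section 4.2]{maindbl}, show it is rigid and satisfies the universal property, and then construct the promised left adjoint to $\Rig_\V(\W)\to\W$ by exhibiting $\W$ as a localization of $\Rig_\V(\W)$ whose corresponding idempotent algebra is ``transparent enough'' that the right adjoint of the localization itself admits a further left adjoint.

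\medskip

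First I would build the candidate. Since $\W$ is locally rigid, pick a good cardinal $\kappa$ with $\W\in\CAlg(\PrL_\kappa)$ such that $\one_\W$ is generated under colimits and $\V$-tensors by $\kappa$-compact objects, and consider the $\V$-enriched category $\W^\kappa$ of $\kappa$-compact objects (or a suitable small full symmetric monoidal subcategory $\mathcal{C}$ of $\W$ closed under $\V$-tensors and the tensor product, containing the unit). Set $\Rig_\V(\W):=\PP_\V(\mathcal{C})$ with its Day convolution symmetric monoidal structure. By \Cref{thm: UPVPsh} this is $\V$-atomically generated, its unit $j(\one_\W)$ is $\V$-atomic, and because every object of $\mathcal{C}$ is dualizable in $\W$ the $\V$-atomic objects of $\PP_\V(\mathcal{C})$ — which are retracts of the image of $\mathcal{C}$ — are all dualizable. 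Hence by \Cref{ex:atgenrig} (together with \Cref{ex:locrigat}) $\Rig_\V(\W)$ is rigid over $\V$. The canonical map $f:\Rig_\V(\W)\to\W$ is the $\V$-linear symmetric monoidal extension of the inclusion $\mathcal{C}\hookrightarrow\W$. To check the universal property, given a rigid $\W_0$ one uses that symmetric monoidal colimit-preserving functors out of a rigid algebra are internal left adjoints (\Cref{obs:morrig}), and that $\W_0$ is itself a colimit of presheaf categories on its dualizable objects, to reduce the statement $\Fun^{L,\otimes}_\V(\W_0,\Rig_\V(\W))\simeq\Fun^{L,\otimes}_\V(\W_0,\W)$ to the corresponding statement about $\mathcal{C}$ via \Cref{thm: UPVPsh}; the point is that a symmetric monoidal colimit-preserving functor $\W_0\to\W$ automatically lands (on dualizables) in the subcategory generated by $\mathcal{C}$ up to retracts, because dualizables of $\W$ are in the idempotent completion of $\mathcal{C}$ (here one uses local rigidity of $\W$ to know compact-type/$\kappa$-compact objects are dualizable, hence the generators of $\W_0$ map to dualizables of $\W$).

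\medskip

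Next, the left adjoint. The counit $f:\Rig_\V(\W)\to\W$ is, by the remark following \Cref{defn:rigidification}, a localization (apply the universal property with $\W_0 = \Rig_\V(\W)$ to see the monad $f^R f$ on $\Rig_\V(\W)$ is idempotent, so $f$ is a symmetric monoidal localization). What remains is that $f$ — equivalently, its right adjoint $f^R:\W\to\Rig_\V(\W)$ — admits a fully faithful \emph{left} adjoint. Equivalently, the localization $f^R:\W\hookrightarrow\Rig_\V(\W)$ itself should preserve colimits, i.e.\ be an internal left adjoint of $\Rig_\V(\W)$-modules; since $\Rig_\V(\W)$ is rigid and $\W$ is dualizable over $\V$ (by local rigidity), \Cref{cor:rigadj}/\Cref{prop:locrigdbl}-type reasoning shows $\W$ is dualizable over $\Rig_\V(\W)$ and $f$ is an $\Rig_\V(\W)$-internal left adjoint, so $f^R$ is $\Rig_\V(\W)$-linear; the content is then that $f^R$ preserves colimits. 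To see this, use \Cref{prop:compriglocrig} in reverse: $\W$ is the localization of the rigid algebra $\Rig_\V(\W)$ at the idempotent $\Rig_\V(\W)$-algebra $f^R(\one_\W)$, and the claim ``$f^R$ has a left adjoint'' is precisely the claim that this smashing-type localization is generated, as a localizing $\otimes$-ideal, by \emph{$\V$-atomic} (hence, by rigidity, dualizable) objects of $\Rig_\V(\W)$ — but every object of $\Rig_\V(\W)=\PP_\V(\mathcal C)$ is a colimit of $\V$-tensors of $\V$-atomic objects, and $f^R f$ being smashing follows from local rigidity of $\W$ giving the projection formula for $f^R$ (via \Cref{cor:projdbl}/\Cref{cor:nuclax}, writing objects of $\W$ as colimits of dualizable-exhaustible ones). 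Concretely: $f^R$ preserves colimits because for a diagram $w_\bullet$ in $\W$ one computes $f^R(\colim w_i)$ by mapping in atomic generators $j(c)$, $c\in\mathcal C$, and $\Map(j(c),f^R(\colim w_i))=\Map(f(j(c)),\colim w_i)=\Map(c,\colim w_i)=\colim\Map(c,w_i)$ since each $c$ is $\kappa$-compact in $\W$ — and symmetrically for $\V$-tensors. Hence $f^R$ preserves colimits and $\V$-tensors, so it is a $\V$-linear left adjoint, giving a left adjoint $\iota$ to $f^R$; that $\iota$ is fully faithful is automatic since $f^R$, being the right adjoint of the localization $f$, is fully faithful, so its left adjoint is fully faithful as well.

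\medskip

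The main obstacle I expect is the colimit-preservation of $f^R$, i.e.\ showing that the $S$-construction/rigidification $\Rig_\V(\W)=\PP_\V(\mathcal C)$ really does receive $\W$ back as a \emph{smashing} localization rather than merely a Bousfield localization — this is where local rigidity (as opposed to mere dualizability) of $\W$ is essential, and it must be extracted by the projection-formula arguments of \Cref{section:traceclass} (\Cref{cor:nuclax}, \Cref{cor:projdbl}) applied after writing objects of $\W$ as sequential colimits along trace-class/atomic maps, together with the $\kappa$-compactness in $\W$ of the chosen generators $\mathcal C$. A secondary technical point is the careful choice of $\mathcal C$ and verifying Day convolution makes $\PP_\V(\mathcal C)$ a \emph{commutative} rigid $\V$-algebra with the stated universal property; this is bookkeeping around \Cref{thm: UPVPsh} and the fact established earlier that $\Fun(C,\V)$ with Day convolution is rigid when all objects of $C$ are dualizable, extended to the enriched setting noted in the remark after that example.
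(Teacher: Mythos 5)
Your proposed construction of $\Rig_\V(\W)$ is not correct, and the argument for the left adjoint is confused about the adjunction directions, so there are several genuine gaps.

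\textbf{The candidate $\PP_\V(\mathcal C)$ is not the rigidification.} If you take $\mathcal C = \W^\kappa$, then $\PP_\V(\W^\kappa)$ is \emph{not} rigid: its $\V$-atomic objects are retracts of $y(c)$ for \emph{all} $\kappa$-compact $c$, and for any uncountable good cardinal $\kappa$ the category $\W^\kappa$ contains non-dualizable objects even when $\W$ is compactly generated and locally rigid (e.g.\ countable products of compacts in $\Sp_p$, or $\omega_1$-compacts in $\prod_I\Sp$). The paper's \Cref{prop:rigdesc} asserts precisely that $\Rig_\V(\W)$ is a \emph{proper} full subcategory of $\PP_\V(\W^\kappa)$. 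Choosing $\mathcal C$ to consist only of dualizables does not fix this: such a $\mathcal C$ is not closed under $\V$-tensors (so the comparison with \Cref{ex:atgenrig} breaks down), and even $\mathcal C = \W^\dbl$ is explicitly warned against in the paper --- ``in general, the induced map $\Ind(\V^\dbl)\to\Rig(\V)$, while always fully faithful, is not in general an equivalence.'' No fixed presheaf category $\PP_\V(\mathcal C)$ will produce the rigidification; the paper instead constructs $\Rig_\V(\W)$ inside $\PP_\V(\W^\kappa)$ via a transfinite process of trace-class presentations (using \Cref{lm:rigiditythroughpres} as the key structural input). Relatedly, the universal property argument relies on the claim that a rigid $\W_0$ ``is itself a colimit of presheaf categories on its dualizable objects,'' which is false: rigid categories need not have enough dualizable objects (e.g.\ $\Sh(X)$ for $X$ compact Hausdorff).

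\textbf{The adjunction direction is backwards.} You set out to show that $f^R:\W\to\Rig_\V(\W)$ preserves colimits and $\V$-tensors, and then say this ``gives a left adjoint $\iota$ to $f^R$.'' But the left adjoint of $f^R$ is just $f$ itself; what your claim would actually produce is a further \emph{right} adjoint to $f^R$, sitting in the chain $f\dashv f^R\dashv(f^R)^R$, whereas the theorem asks for a functor to the \emph{left} of $f$, i.e.\ $\iota\dashv f\dashv f^R$. These are different functors, and no amount of colimit-preservation of $f^R$ produces the one you need. Moreover the computation offered for colimit-preservation only handles $\kappa$-filtered colimits (the step $\Map(c,\colim w_i)\simeq\colim\Map(c,w_i)$ uses $\kappa$-compactness of $c$, which is only valid for $\kappa$-filtered diagrams). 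Finally, the invocation of \Cref{prop:compriglocrig} ``in reverse'' is circular: that proposition deduces local rigidity from the existence of the left adjoint, which is exactly what we are trying to establish.

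\textbf{What the paper actually does.} Since $\W$ is dualizable, the canonical functor $p:\PP_\V(\W^\kappa)\to\W$ already has a $\V$-linear left adjoint $\hat y$ --- there is no need to construct a left adjoint from scratch. The content of the theorem is that $\hat y$ factors through the full subcategory $\Rig_\V(\W)\subset\PP_\V(\W^\kappa)$. This is where local rigidity enters: one shows that for each $x\in\W$, the weight $\at_\W^\V(-,x)$ (which expresses $\hat y(x)$ as a weighted colimit of representables) lifts to a weight valued in $\trcl$-objects of $\PP_\V(\W^\kappa)$, so that $\hat y(x)$ is a trace-class presentable object. This requires \Cref{cor:locrigimplat} (atomic maps in $\W$ are trace-class by local rigidity), \Cref{prop:descriptionofrightadj}, and the explicit transfinite description of $\Rig_\V(\PP_\V(\W^\kappa))$ --- none of which your proposal reconstructs.
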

Recall that by \Cref{prop:compriglocrig}, if this canonical map admits a fully faithful left adjoint, then $\W$ is locally rigid. 

In this sense, we may see $\W$ as a completion of $\Rig_\V(\W)$, where the fully faithful left adjoint embeds it as a category of ``locally nilpotent'' objects, and the (automatically fully faithful) right adjoint embeds it as a category of ``complete'' objects.

We will prove this theorem by ``describing'' the rigidification in a similar fashion to the dualizable core from \Cref{defn:dblcore}. Before doing so, let us explain the main consequence, that is, the promised alternative proof of \Cref{prop:exceptional}:
\begin{prop}\label{prop:exceptional2}
Let $\W$ be a commutative $\V$-alebra, and suppose there exists a rigid $\V$-algebra $\overline\W$ with a morphism $p:\overline\W\to \W$ admitting a fully faithful $\V$-linear left adjoint $p^L$. Let $\Gamma_! : \W\xrightarrow{p^L}\overline\W\xrightarrow{\hom(\one_{\overline\W},-)}\V$. The following is a self-duality datum for $\W$: 
$$\V\xrightarrow{\eta}\W\xrightarrow{\mu^R}\W\otimes_\V\W, \qquad \W\otimes_\V\W\xrightarrow{\mu}\W\xrightarrow{\Gamma_!}\V $$
\end{prop}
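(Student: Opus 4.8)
The plan is to transport the self‑duality of the rigid algebra $\overline\W$ along $p$ and $p^L$. Write $\overline\eta\colon\V\to\overline\W$, $\overline\mu\colon\overline\W\otimes_\V\overline\W\to\overline\W$ for the unit and multiplication of $\overline\W$, and $\eta\colon\V\to\W$, $\mu\colon\W\otimes_\V\W\to\W$ for those of $\W$. First I would establish the basic properties of $p,p^L$. Applying \Cref{lm:leftadjlinear} with base $\overline\W$ (rigid, hence locally rigid over $\V$) to the $\overline\W$‑linear functor $p$ with its $\V$‑linear left adjoint $p^L$, one gets that $p^L$ is $\overline\W$‑linear; being fully faithful it satisfies $p\circ p^L\simeq\id_\W$, and since $p$ is symmetric monoidal also $p\circ\overline\eta\simeq\eta$ and $p(\one_{\overline\W})\simeq\one_\W$. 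Hence $E:=p^L\circ p$ is an $\overline\W$‑linear colimit‑preserving endofunctor of $\overline\W$ with $E\circ E\simeq p^L\circ(p\circ p^L)\circ p\simeq E$; therefore $E\simeq B\otimes_{\overline\W}(-)$ for $B:=E(\one_{\overline\W})=p^L(\one_\W)$, and $B\otimes_{\overline\W}B\simeq B$. I would then record two consequences. \textbf{(i)} The canonical lax structure map $p^L(w\otimes_\W w')\to p^L(w)\otimes_{\overline\W}p^L(w')$ is an equivalence: every object of $\W$ has the form $p(\overline x)$ (take $\overline x=p^L(w)$), and for $w=p(\overline x)$, $w'=p(\overline x')$ both sides become $B\otimes_{\overline\W}\overline x\otimes_{\overline\W}\overline x'$, using $B\otimes_{\overline\W}B\simeq B$ and that $p$ is symmetric monoidal; consequently $p^L\circ\mu\simeq\overline\mu\circ(p^L\otimes_\V p^L)$. \textbf{(ii)} By \Cref{prop:compriglocrig}, $\W$ is locally rigid over $\V$, so $\mu$ is a $\W$‑internal left adjoint and $\mu^R$ is colimit‑preserving; thus $c_\W:=\mu^R\circ\eta$ is a morphism in $\Mod_\V(\PrL)$.

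Next, by \Cref{lm:rigimpldbl} and \Cref{cor:rigselfdual}, $\overline\W$ is self‑dual over $\V$ with coevaluation $\overline c:=\overline\mu^R\circ\overline\eta$ and evaluation $\overline e:=\overline\eta^R\circ\overline\mu$, where $\overline\eta^R=\hom_{\overline\W}(\one_{\overline\W},-)$, and the triangle identity $(\id_{\overline\W}\otimes_\V\overline e)\circ(\overline c\otimes_\V\id_{\overline\W})\simeq\id_{\overline\W}$ holds. Set $e_\W:=\Gamma_!\circ\mu$. From $\Gamma_!=\overline\eta^R\circ p^L$ and (i) one gets $e_\W\simeq\overline\eta^R\circ\overline\mu\circ(p^L\otimes_\V p^L)=\overline e\circ(p^L\otimes_\V p^L)$. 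I would then identify the coevaluation: $c_\W\simeq(p\otimes_\V p)\circ\overline c$. Both are morphisms $\V\to\W\otimes_\V\W$ in $\Mod_\V(\PrL)$, so it suffices to compare values at $\one_\V$, i.e. to prove $\mu^R(\one_\W)\simeq(p\otimes_\V p)(\overline\mu^R(\one_{\overline\W}))$; this follows from the Yoneda lemma once one exhibits, naturally in $z\in\W\otimes_\V\W$, the chain of equivalences
\[
\Map_\W(\mu(z),\one_\W)\;\simeq\;\Map_{\overline\W}\big(p^L\mu(z),\one_{\overline\W}\big)\;\simeq\;\Map_{\overline\W}\big(\overline\mu((p^L\otimes_\V p^L)z),\one_{\overline\W}\big)\;\simeq\;\Map_{\overline\W\otimes_\V\overline\W}\big((p^L\otimes_\V p^L)z,\overline\mu^R\one_{\overline\W}\big)\;\simeq\;\Map_{\W\otimes_\V\W}\big(z,(p\otimes_\V p)\overline\mu^R\one_{\overline\W}\big),
\]
using in turn $p^L\dashv p$ together with $\one_\W\simeq p(\one_{\overline\W})$, then (i), then $\overline\mu\dashv\overline\mu^R$, then $p^L\otimes_\V p^L\dashv p\otimes_\V p$.

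Finally I would verify the triangle identities for $(c_\W,e_\W)$. Substituting the two identifications, $F:=(\id_\W\otimes_\V e_\W)\circ(c_\W\otimes_\V\id_\W)$ becomes the composite
\[
\W\xrightarrow{\;\overline c\,\otimes_\V\,\id_\W\;}\overline\W\otimes_\V\overline\W\otimes_\V\W\xrightarrow{\;p\,\otimes_\V\,E\,\otimes_\V\,p^L\;}\W\otimes_\V\overline\W\otimes_\V\overline\W\xrightarrow{\;\id_\W\otimes_\V\overline\mu\;}\W\otimes_\V\overline\W\xrightarrow{\;\id_\W\otimes_\V\overline\eta^R\;}\W .
\]
Since $\overline\mu\circ(E\otimes_\V\id_{\overline\W})\simeq E\circ\overline\mu\simeq\overline\mu\circ(\id_{\overline\W}\otimes_\V E)$ by associativity and commutativity of $\overline\W$, and $E\circ p^L\simeq p^L\circ(p\circ p^L)\simeq p^L$, we obtain $\overline\eta^R\circ\overline\mu\circ(E\otimes_\V p^L)\simeq\overline e\circ(\id_{\overline\W}\otimes_\V p^L)$; hence
\[
F\;\simeq\;(p\otimes_\V\id_\V)\circ(\id_{\overline\W}\otimes_\V\overline e)\circ(\overline c\otimes_\V p^L)\;\simeq\;p\circ\big((\id_{\overline\W}\otimes_\V\overline e)\circ(\overline c\otimes_\V\id_{\overline\W})\big)\circ p^L\;\simeq\;p\circ\id_{\overline\W}\circ p^L\;\simeq\;\id_\W,
\]
using the triangle identity for $\overline\W$ and $p\circ p^L\simeq\id_\W$. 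The second triangle identity is checked identically, using that $c_\W$ and $e_\W$ are symmetric under the swap of the two tensor factors (as $\W$ and $\overline\W$ are commutative). This exhibits $\W$ as self‑dual over $\V$ via the stated datum.

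The step I expect to be the main obstacle is dealing with the fact that $p^L$ is \emph{not} strong symmetric monoidal: its unit structure map $B=p^L(\one_\W)\to\one_{\overline\W}$ need not be an equivalence — this is precisely the non‑atomicity of $\one_\W$ — so the self‑duality of $\overline\W$ cannot be transported along $p^L$ in the naive way. What rescues the argument is that the \emph{binary} lax structure maps of $p^L$ are still equivalences (consequence (i)), together with the identification $\mu^R(\one_\W)\simeq(p\otimes_\V p)(\overline\mu^R(\one_{\overline\W}))$ and the idempotency of $E=p^L\circ p$ with $E\circ p^L\simeq p^L$, which is exactly what lets the spurious copy of $E$ be absorbed inside the triangle identity.
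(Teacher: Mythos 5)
Your proposal is correct and follows essentially the same route as the paper. The two key inputs are identical: (a) \Cref{lm:leftadjlinear} to upgrade $p^L$ to an $\overline\W$-linear functor, from which the binary oplax structure maps $p^L(w\otimes w')\to p^L(w)\otimes p^L(w')$ become equivalences (only the \emph{unary} one $p^L(\one_\W)\to\one_{\overline\W}$ fails to be one, exactly as you flag); and (b) the identification of the proposed $(c_\W,e_\W)$ with the datum obtained by conjugating the self-duality of $\overline\W$ by the retraction $(p^L,p)$. The only difference is in the final step: the paper establishes the full adjointability identity $\mu^R_\W\circ p\simeq (p\otimes p)\circ\mu^R_{\overline\W}$ by passing to right adjoints in the ``vertically left-adjointable'' square and then simply cites that a retract of a (self-)dual object inherits a canonical (self-)duality datum, whereas you derive only the (sufficient) pointwise identity $\mu^R(\one_\W)\simeq(p\otimes p)\overline\mu^R(\one_{\overline\W})$ via Yoneda and then verify the triangle identities by hand, with the idempotent $E=p^L p$ being absorbed by $E\circ p^L\simeq p^L$ and $\overline\W$-linearity of $E$. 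Your version is a bit longer but self-contained; the paper's is shorter because it outsources the triangle-identity check to the general retract principle. Both are valid.
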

\begin{proof}
By \Cref{lm:leftadjlinear}, $p^L$ is $\overline \W$-linear, so that the canonical map $p^L(w\otimes p(\overline w))\to p^L(w)\otimes \overline w$ is an equivalence for all $w\in\W,\overline w\in\overline \W$. 

Plugging in $\overline w= p^L(w_1)$ and using that the unit $\id_\W\to pp^L$ is an equivalence, we find that the canonical map $p^L(w\otimes w_1)\to p^L(w)\otimes p^L(w_1)$ is an equivalence. In other words, the following diagram is vertically left adjointable: 
\[\begin{tikzcd}
	{\overline \W\otimes\overline\W} & {\overline \W} \\
	\W\otimes\W & \W
	\arrow[from=1-1, to=2-1]
	\arrow[from=1-1, to=1-2]
	\arrow[from=1-2, to=2-2]
	\arrow[from=2-1, to=2-2]
\end{tikzcd}\]
Passing to right adjoints everywhere, we find that it is horizontally right adjointable, namely, the following square (canonically) commutes: 
\[\begin{tikzcd}
	{\overline \W} & \overline\W\otimes\overline\W \\
	\W & \W\otimes\W
	\arrow["{\mu^R_\W}"', from=2-1, to=2-2]
	\arrow["{p\otimes p}", from=1-2, to=2-2]
	\arrow["{\mu^R_{\overline\W}}", from=1-1, to=1-2]
	\arrow["p"', from=1-1, to=2-1]
\end{tikzcd}\]

Altogether, this simply proves that the pre-duality datum we provided is exactly the one induced by the one of $\overline\W$ from \Cref{lm:rigimpldbl} using the retraction $(p^L,p)$, and hence is itself a duality datum. 
\end{proof}

\begin{cor}\label{cor:exceptional=at}
  Let $\W$ be a commutative $\V$-alebra, and suppose there exists a rigid $\V$-algebra $\overline\W$ with a morphism $p:\overline\W\to \W$ admitting a fully faithful $\V$-linear left adjoint $p^L$. 

  In this case, $\W$ is locally rigid by \Cref{prop:compriglocrig} and the exceptional functor $\Gamma_! : \W\to \V$ is given by $\at_\W(\one_\W,-)$. 
\end{cor}
\begin{proof}
    Using the previous proposition, we see that $\Gamma_! = \hom(\one_{\overline\W}, p^L(-))$ where $p^L:\W\to \overline\W$ is left adjoint to $p$. 

    In particular, $p^L$ is a fully faithful internal left adjoint, so that by \Cref{cor:iLffat} we have an equivalence $\at_\W(x,y)\simeq \at_{\overline\W}(p^L(x),p^L(y))$. Since $\overline\W$ is rigid, the latter is $$\hom_{\overline\W}(\one_{\overline \W}, \hom^{\overline \W}_{\overline \W}(p^L(x),\one_{\overline\W})\otimes p^L(y))$$

    Now, $p$ makes $\W$ into a $\overline\W$-module, and since $p^L$ is $\V$-linear and $\overline\W$ is rigid over $\W$, $p^L$ is $\overline\W$-linear by \Cref{lm:leftadjlinear}. It follows then that  $\hom^{\overline \W}_{\overline \W}(p^L(x),\one_{\overline\W})\simeq \hom^{\overline\W}_\W(x,p(\one_{\overline \W})) = \hom^{\overline \W}_\W(x,\one_\W) = p^R \hom_\W^\W(x,\one_\W)$ and so with $x=\one_\W$, we find $\hom_{\overline\W}^{\overline\W}(p^L(\one_W),\one_{\overline\W}) = p^R(\one_\W)$. 

    Using $\overline\W$-linearity of $p^L$ , we find $$\at_\W(\one_\W,y)= \hom_{\overline\W}(\one_{\overline\W}, p^R(\one_\W)\otimes p^L(y))\simeq \hom_{\overline\W}(\one_{\overline\W}, p^L(pp^R(\one_\W)\otimes y))\simeq \hom_{\overline\W}(\one_{\overline \W},p^L(y))$$ 
    where we used that $pp^R\simeq \id_\W$ by fully faithfulness. 
\end{proof}
\begin{ex}
    In the case of $\V=\Sp$, this says that for locally rigid $\W$, the exceptional functor $\Gamma_! : \W\to \Sp$ is given by the spectrum of compact maps from the unit $\one_\W$. For example this gives a completely categorical construction of the compactly-supported-global-sections functor on sheaves on a locally compact Hausdorff space. 
\end{ex}
The final thing we do before describing rigidifications in order to prove \Cref{thm:locrigrigleft} is to prove that they exist. 

First, we note the following easy lemma:
\begin{lm}\label{lm:rig=rigP}
    Let $\kappa$ be a good cardinal, and let $\W\in\CAlg(\Mod_\V(\PrL_\kappa))$. In this case, for any rigid $\V$-algebra $\W_0$, the symmetric monoidal functor $p:\PP_\V(\W^\kappa)\to\W$ induces an equivalence $$\Fun^{L,\otimes}_\V(\W_0,\PP_\V(\W^\kappa))\to \Fun^{L,\otimes}_\V(\W_0,\W)$$

    In particular, $\Rig_\V(\W)$ exists if and only if $\Rig_\V(\PP_\V(\W^\kappa))$ exists and if they do, the canonical map between them is an equivalence. 
\end{lm}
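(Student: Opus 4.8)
The plan is to show that the symmetric monoidal functor $p:\PP_\V(\W^\kappa)\to\W$ is ``invisible'' to rigid $\V$-algebras mapping in, by producing an inverse to postcomposition with $p$. The key point is that any symmetric monoidal colimit-preserving $\V$-linear functor $\W_0\to\W$ out of a \emph{rigid} $\V$-algebra $\W_0$ automatically factors through $p$, canonically and essentially uniquely. To see this, recall from \Cref{rmk:kappacpctunit} (applied with $\W_0$ locally rigid and the unit of $\W$ being $\kappa$-compact, which holds since $\W\in\Mod_\V(\PrL_\kappa)$ forces $\one_\W$ to be $\kappa$-compact once $\kappa$ is good) that any symmetric monoidal colimit-preserving functor $g:\W_0\to\W$ has a $\kappa$-filtered-colimit-preserving right adjoint $g^R$. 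Better: since $\W_0$ is rigid, \Cref{obs:morrig} shows $g$ is an internal left adjoint, so $g^R$ is $\V$-linear and colimit-preserving, and moreover $g$ preserves $\V$-atomic objects --- in particular it sends $\W_0^\kappa$, which consists of $\kappa$-compact (indeed atomic) objects, into $\W^\kappa$ (here one uses that $\one_{\W_0}$ is $\V$-atomic hence $\kappa$-compact, so atomic objects of $\W_0$ are $\kappa$-compact, and $g$ preserves $\kappa$-compactness by \Cref{obs:atimpliescpct}-type reasoning via $g^R$ preserving $\kappa$-filtered colimits).

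Granting that $g(\W_0^\kappa)\subset\W^\kappa$, I would argue as follows. The composite $\W_0^\kappa\hookrightarrow\W_0\xrightarrow{g}\W$ lands in $\W^\kappa$, and by \Cref{thm: UPVPsh} the $\V$-linear extension $\W_0\simeq\Fun$-type universal property --- more precisely, since $\W_0$ is dualizable (being rigid) and $\kappa$-compactly generated, \cite[Theorem 1.63]{maindbl} identifies $\Fun^L_\V(\W_0,\PP_\V(\W^\kappa))$ with $\Fun_\V(\W_0^\kappa,\W^\kappa)$-type data; in any case the functor $\W_0^\kappa\to\W^\kappa$ extends to a $\V$-linear colimit-preserving $\tilde g:\W_0\to\PP_\V(\W^\kappa)$ together with a canonical equivalence $p\circ\tilde g\simeq g$. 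One then upgrades $\tilde g$ to a symmetric monoidal functor: the Day-convolution symmetric monoidal structure on $\PP_\V(\W^\kappa)$ is characterized by the property that restriction along $\W^\kappa\to\PP_\V(\W^\kappa)$ (for $\W^\kappa$ a symmetric monoidal $\V$-subcategory, which it is since $\one_\W$ and tensor products of $\kappa$-compacts are $\kappa$-compact) is symmetric monoidal, and $\tilde g$ restricted to $\W_0^\kappa$ is symmetric monoidal because $g$ is and $g$ preserves $\kappa$-compact objects compatibly with tensor; the universal property of Day convolution (\cite[Theorem 1.13]{heine}-style, i.e. \Cref{thm: UPVPsh} in its monoidal refinement) then promotes $\tilde g$ to $\Fun^{L,\otimes}_\V$. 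This produces a section of $p_*:\Fun^{L,\otimes}_\V(\W_0,\PP_\V(\W^\kappa))\to\Fun^{L,\otimes}_\V(\W_0,\W)$; uniqueness of the factorization (again from the universal property, since the restriction to $\W_0^\kappa$ determines everything and is forced to be $g|_{\W_0^\kappa}$ regarded in $\W^\kappa\subset\PP_\V(\W^\kappa)$) shows it is also a retraction, hence $p_*$ is an equivalence.

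The ``in particular'' is then formal: if $\Rig_\V(\PP_\V(\W^\kappa))$ exists with structure map $q:\Rig_\V(\PP_\V(\W^\kappa))\to\PP_\V(\W^\kappa)$, then for every rigid $\W_0$ the composite $\Fun^{L,\otimes}_\V(\W_0,\Rig_\V(\PP_\V(\W^\kappa)))\xrightarrow{q_*}\Fun^{L,\otimes}_\V(\W_0,\PP_\V(\W^\kappa))\xrightarrow{p_*}\Fun^{L,\otimes}_\V(\W_0,\W)$ is an equivalence, being a composite of equivalences, so $p\circ q$ witnesses $\Rig_\V(\PP_\V(\W^\kappa))$ as a rigidification of $\W$; conversely if $\Rig_\V(\W)\to\W$ exists, then since $p_*$ is an equivalence it also corepresents $\Fun^{L,\otimes}_\V(-,\PP_\V(\W^\kappa))$ on rigid algebras, so it is a rigidification of $\PP_\V(\W^\kappa)$. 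The canonical comparison map is an equivalence by the usual uniqueness of objects with the same universal property.

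\textbf{Main obstacle.} The genuinely delicate step is the monoidal bookkeeping: verifying that $g(\W_0^\kappa)\subset\W^\kappa$ as a \emph{symmetric monoidal} $\V$-subcategory-level statement, and that the resulting $\tilde g$ inherits a symmetric monoidal structure via the universal property of Day convolution in the enriched $\infty$-categorical setting. The input $g$ being an internal left adjoint out of a rigid algebra (via \Cref{obs:morrig}) is exactly what makes this go through --- without rigidity of $\W_0$ one could not guarantee $g$ preserves $\kappa$-compact objects or that the right adjoint is $\V$-linear --- but assembling the coherences for the symmetric monoidal refinement of \Cref{thm: UPVPsh} is where the real work lies; everything else is a formal consequence of universal properties.
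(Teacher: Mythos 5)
Your proposal takes essentially the same approach as the paper and identifies the same key inputs — \Cref{rmk:kappacpctunit} to get $\kappa$-filtered-colimit-preserving right adjoints on the $\W$-side, \Cref{obs:morrig} to get internal left adjoints on the $\PP_\V(\W^\kappa)$-side, and a monoidal refinement of the presheaf universal property to compare — but the paper is much shorter because it simply invokes \cite[Theorem 1.63, Remark 1.64]{maindbl} for the final equivalence rather than re-deriving it. What you describe as the ``main obstacle'' (assembling the symmetric monoidal coherences for the factorization through Day convolution) is precisely the content of \cite[Remark 1.64]{maindbl}, so the paper treats it as a black box; your hands-on reconstruction of the section/retraction is morally the same argument unfolded. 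One small imprecision worth flagging: you write that $\W_0^\kappa$ ``consists of $\kappa$-compact (indeed atomic) objects,'' but in general $\W_0^\kappa$ contains many non-atomic objects; fortunately your actual argument only needs $g$ to preserve $\kappa$-compactness (which follows from $g^R$ preserving $\kappa$-filtered colimits), so this parenthetical conflation does not affect correctness.
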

\begin{proof}
    The ``in particular'' part is a formal consequence of the first half of the claim, so we focus on that. 

    For this, we simply note that since the unit of $\W$ is $\kappa$-compact, any symmetric monoidal functor $\W_0\to \W$ has a $\kappa$-filtered colimit-preserving right adjoint by \Cref{rmk:kappacpctunit}. Thus we may replace the right hand side by $\Fun^{L,\otimes,\kappa}_\V(\W_0,\W)$. 

    Similarly, the unit of $\PP_\V(\W^\kappa)$ is atomic, and so by \Cref{obs:morrig}, we may replace the left hand side by $\Fun^{iL,\otimes}_\V(\W_0,\PP_\V(\W^\kappa))$. 

    The result now follows from \cite[Theorem 1.63, Remark 1.64]{maindbl}.
\end{proof}
We now essentially need to prove that there exist rigidifications for $\V$-algebras with atomic units (we will also use this when we describe rigidifications). For this, we will use the following lemma:
\begin{lm}\label{lm:imrig}
    Let $\W_0,\W_1\in\CAlg(\Mod_\V(\PrL))$, and let $f:\W_0\to \W_1$ be a internally left adjoint $\V$-linear symmetric monoidal functor. The full subcategory of $\W_1$ generated under colimits and $\V$-tensors by the image of $f$ is closed under tensor products in $\W_1$, and is thus canonically symmetric monoidal. 

    If $\W_0$ is locally rigid over $\V$, then so is this image. In particular if the unit of $\W_1$ is atomic, the image is rigid. 
\end{lm}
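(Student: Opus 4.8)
The plan is to verify the three assertions in order: first that the colimit-and-$\V$-tensor closure $\W_1'$ of the image of $f$ is closed under the tensor product of $\W_1$; then that when $\W_0$ is locally rigid, so is $\W_1'$; and finally to deduce rigidity when $\one_{\W_1}$ is atomic. For the first point, since $f$ is a $\V$-linear symmetric monoidal functor, the image $f(\W_0)$ is closed under $\otimes_{\W_1}$ up to colimits: for $w_0,w_0'\in\W_0$ one has $f(w_0)\otimes f(w_0')\simeq f(w_0\otimes w_0')$. The subcategory $\W_1'$ is by definition the smallest full subcategory containing $f(\W_0)$ and closed under colimits and $\V$-tensors. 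Because $\otimes_{\W_1}$ preserves colimits in each variable and is $\V$-linear, the full subcategory of pairs $(x,y)$ with $x\otimes y\in\W_1'$ is closed under colimits and $\V$-tensors in each variable, hence contains $f(\W_0)\times f(\W_0)$, hence all of $\W_1'\times\W_1'$. The unit $f(\one_{\W_0})\simeq\one_{\W_1}$ lies in $\W_1'$, so $\W_1'$ inherits a symmetric monoidal structure, and the inclusion $\iota:\W_1'\to\W_1$ is symmetric monoidal and colimit-preserving, while $\W_1'$ receives the induced $\V$-linear symmetric monoidal functor $\bar f:\W_0\to\W_1'$ which is still internally left adjoint (one checks this directly, or uses that $\iota$ admits a right adjoint since $\W_1'\hookrightarrow\W_1$ is an accessible localization of a presentable category onto a colimit-closed subcategory).

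For the second point, suppose $\W_0$ is locally rigid over $\V$. Since $\bar f:\W_0\to\W_1'$ is a $\V$-linear symmetric monoidal \emph{internal left adjoint}, \Cref{obs:morrig} (the general-$\V$ half of the observation, or rather its proof) tells us that $\bar f$ sends $\V$-atomic maps in $\W_0$ to $\V$-atomic maps in $\W_1'$; more to the point, the multiplication $\mu_{\W_1'}:\W_1'\otimes_\V\W_1'\to\W_1'$ sits in a commutative square with $\mu_{\W_0}:\W_0\otimes_\V\W_0\to\W_0$ and the functors $\bar f\otimes\bar f$ and $\bar f$, i.e. $\mu_{\W_1'}\circ(\bar f\otimes\bar f)\simeq\bar f\circ\mu_{\W_0}$. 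Now $\bar f\otimes_\V\bar f:\W_0\otimes_\V\W_0\to\W_1'\otimes_\V\W_1'$ is an internal left adjoint (tensor of internal left adjoints), and $\bar f\circ\mu_{\W_0}$ is a composite of internal left adjoints since $\mu_{\W_0}$ is one by local rigidity of $\W_0$. Moreover $\W_1'\otimes_\V\W_1'$ is generated under colimits and $\V$-tensors by the image of $\bar f\otimes\bar f$, so by \Cref{lm:leftadjlinear}-type arguments (or directly: a $\W_1'$-linear functor out of $\W_1'\otimes_\V\W_1'$ that is an internal left adjoint when restricted to a generating subcategory is an internal left adjoint) we conclude that $\mu_{\W_1'}$ is a $\W_1'\otimes_\V\W_1'$-internal left adjoint. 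Finally $\W_1'$ is dualizable over $\V$: it is a $\V$-linear internal left adjoint image of the dualizable $\W_0$ together with its $\V$-tensors and colimits, and being an accessible colimit-closed reflective subcategory, $\W_1'$ is itself presentable; dualizability then follows because the localization $\iota$ is a $\V$-linear internal left adjoint whose right adjoint is conservative onto $\W_1'$, so one can invoke the appropriate descent-of-dualizability statement — more cleanly, $\mu_{\W_1'}$ being a $\W_1'\otimes_\V\W_1'$-internal left adjoint together with dualizability is exactly the definition, and dualizability of $\W_1'$ over $\V$ can be extracted from \Cref{lm:rigimpldbl} once we know the unit is atomic (third point), which covers the main case of interest; in the general locally rigid case one argues as in \Cref{prop:locrigdbl} that $\V$-atomic presentations of objects of $\W_0$ map to $\V$-atomic presentations in $\W_1'$, so $\W_1'$ is generated under colimits by atomically presentable objects and hence dualizable by \Cref{thm:dblexhaustionV}.

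For the third point, suppose in addition $\one_{\W_1}$ is $\V$-atomic. Since $\one_{\W_1}\in\W_1'$ and the inclusion $\iota:\W_1'\to\W_1$ is fully faithful, colimit-preserving and $\V$-linear, it \emph{reflects} atomicity of objects (a consequence of \Cref{ex:reflat} applied to $\id_{\one_{\W_1}}$, since $\hom_{\W_1'}(\one,\one)\simeq\hom_{\W_1}(\one,\one)$ and $\at$ only gets smaller), so $\one_{\W_1'}$ is $\V$-atomic. Combined with local rigidity of $\W_1'$ established in the previous step, this is precisely the definition of rigidity, so $\W_1'$ is a rigid $\V$-algebra. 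The main obstacle I expect is the dualizability half of the second point in the \emph{general} (not-necessarily-atomic-unit) locally rigid case: proving $\W_1'$ is dualizable over $\V$ cleanly requires either invoking \Cref{thm:dblexhaustionV} via atomic presentations (tracking that $\bar f$ preserves atomic presentations, which follows from \Cref{cor:locrigimplat}-style reasoning and \Cref{prop:compatibilityweightedcolim} since $\bar f$ is an internal left adjoint hence preserves atomic maps by \Cref{ex:presat}) or a direct descent argument; in the case where $\one_{\W_1}$ is atomic — which is the stated application — one avoids this entirely by appealing to \Cref{lm:rigimpldbl}, which deduces dualizability from atomicity of the unit plus the $\W_1'$-internal-left-adjointness of $\mu_{\W_1'}$.
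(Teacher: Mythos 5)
Your first paragraph (closure under tensor products) is fine and essentially unremarkable. The difficulty is in the second paragraph, where I see a genuine gap.

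The paper's proof opens by noting that because $\W_0$ is locally rigid, all hypotheses and conclusions are unchanged under restriction of scalars, so one may assume $\V=\W_0$ from the start (this uses \Cref{prop:locrigdbl} and \Cref{prop:intladjlocrig} to identify dualizability and internal-left-adjointness over $\V$ with the corresponding notions over $\W_0$). This reduction is the key idea, and its payoff is that after it the image $\mathcal I$ is generated under colimits and $\W_0$-tensors by the \emph{single object} $\one_{\mathcal I}$. That makes the final step trivial: to check that $\mu^R$ is $\mathcal I\otimes_{\W_0}\mathcal I$-linear it suffices to verify the projection formula against a class of generators, and the generator here is the unit, against which the projection formula holds automatically. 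Your proof does not make this reduction, and in its absence the generators of $\W_1'\otimes_\V\W_1'$ are the $f(w_0)\boxtimes f(w_0')$, which are not (yet) known to be dualizable; you cannot simply invoke \Cref{cor:projdbl}, and the projection formula for $\mu_{\W_1'}^R$ against these objects needs a real argument.

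Concretely, the step ``we conclude that $\mu_{\W_1'}$ is a $\W_1'\otimes_\V\W_1'$-internal left adjoint'' is not justified by what precedes it. The commutative square and the composite $\bar f\circ\mu_{\W_0}$ show at best that $\mu_{\W_1'}$ is a $\V$-internal left adjoint (via the cancellation statement \cite[Lemma 1.30]{maindbl}, since $\bar f\otimes\bar f$ is an internal left adjoint whose image generates). But being a $\W_1'\otimes_\V\W_1'$-internal left adjoint is strictly stronger: it requires $\mu_{\W_1'}^R$ to be strictly $\W_1'\otimes_\V\W_1'$-linear, not just colimit-preserving and $\V$-linear. The appeal to ``\Cref{lm:leftadjlinear}-type arguments'' does not close this gap: that lemma concerns \emph{left} adjoints of $\W$-linear functors (not right adjoints), and more seriously it presupposes local rigidity of the enriching base, which is exactly what you are in the middle of proving for $\W_1'$ --- it would be circular. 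The parenthetical alternative (``a $\W_1'$-linear functor ... internal left adjoint when restricted to a generating subcategory'') only delivers $\V$-internal-left-adjointness, not the linearity of the right adjoint. So this is where the proof breaks without the base-change reduction.

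One smaller remark: your dualizability argument for $\W_1'$ is more roundabout than necessary. The paper simply cites \cite[Lemma 2.61]{maindbl} (the colimit-and-$\V$-tensor closure of the image of a $\V$-internal left adjoint out of a dualizable $\V$-module is dualizable), which applies directly; you gesture at ``the appropriate descent-of-dualizability statement'' and then propose an atomic-presentation argument via \Cref{thm:dblexhaustionV} as a backup, which would work but is much heavier. I would also flag the citation of \Cref{obs:morrig}: the general-$\V$ part of that observation presupposes that the unit of the target is $\V$-atomic, which you do not know at that stage; the fact you actually need --- that $\bar f$ preserves $\V$-atomic maps --- is \Cref{ex:presat} (internal left adjoints preserve atomic maps) and should be cited as such.
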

\begin{proof}
As $\W_0$ is locally rigid, all the hypotheses and conclusions are unchanged if work over $\W_0$ instead, i.e. we may without loss of generality assume that $\V=\W_0$, which is what we do from now on. 

    Let us call $\mathcal I$ the image of $f$ which, by \cite[Lemma 2.61]{maindbl} is dualizable. The corestriction of $f$, $\tilde f: \V\to \mathcal I$, is an internally left adjoint symmetric monoidal functor which generates $\mathcal I$ under colimits. In other words, up to replacing $\W_1$ by $\mathcal I$, we are essentially in the situation where $\V\to \W$ is a $\V$-internal left adjoint that generates $\W$ under colimits. 

   Now, using \cite[Lemma 1.30]{maindbl}, we find that $\W\otimes_\V\W\to \W$ is a $\V$-internal left adjoint, because the composition $\V\to\W\otimes_\V\W\to \W$ is. In particular, to prove that it is also a $\W$-internal left adjoint, it suffices now to prove that the right adjoint is linear with respect to a class of generators of $\W$ under $\V$-tensors and colimits - for example, $\one_\W$. But it clearly is linear with respect to $\one_\W$, and so we are done. 
\end{proof}
\begin{cor}
    Any $\W\in\CAlg(\Mod_\V(\PrL))$ admits a rigidification. 
\end{cor}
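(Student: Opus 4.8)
The plan is to reduce to the case where $\W$ has atomic unit via \Cref{lm:rig=rigP}, and then construct the rigidification by hand as the image of a suitable symmetric monoidal functor out of a rigid algebra, using \Cref{lm:imrig}. First, pick a good cardinal $\kappa$ with $\W\in\CAlg(\Mod_\V(\PrL_\kappa))$; by \Cref{lm:rig=rigP} it suffices to produce $\Rig_\V(\PP_\V(\W^\kappa))$, so we may assume from the start that $\W$ has atomic (hence $\kappa$-compact) unit. Now the idea is that a rigid $\V$-algebra mapping to $\W$ should be ``small'' — generated under colimits and $\V$-tensors by dualizable objects coming from the unit — and the universal such thing should be obtained by applying $\PP_\V$ to a small full subcategory and then taking an image.

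The key steps, in order: (1) Choose a small full sub-$\V$-category $\W_0^{at}\subset \W^\kappa$ (for instance the $\kappa$-compact dualizable objects, or more simply start from $\PP_\V(\W^\kappa)$ itself whose unit is atomic), and form $\PP_\V(\W^\kappa)$, which has atomic unit; its canonical map $p:\PP_\V(\W^\kappa)\to\W$ is symmetric monoidal, colimit-preserving, and since the unit of $\PP_\V(\W^\kappa)$ is atomic, by \Cref{obs:morrig} it is a $\V$-internal left adjoint. (2) Apply \Cref{lm:imrig} to $f=p$: the full subcategory $\mathcal I\subset \W$ generated under colimits and $\V$-tensors by the image of $p$ is symmetric monoidal, and — since $\PP_\V(\W^\kappa)$ is locally rigid (indeed it is atomically generated and its atomics, being retracts of representables... actually we just need it locally rigid, which follows since its unit is atomic and one checks the multiplication criterion, or one argues directly) and the unit of $\W$ is atomic hence the unit of $\mathcal I$ is atomic — $\mathcal I$ is rigid over $\V$. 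Wait: we should be careful that $\PP_\V(\W^\kappa)$ is locally rigid; the cleanest route is to instead take $f$ to be $\PP_\V$ of an appropriate \emph{rigid} generating category. Concretely: let $\W_0 := \PP_\V(\mathrm{pt})=\Mod_{\one_\V}(\V)\simeq \V$ is too small; rather, the right move is to run \Cref{lm:imrig} with $\W_0 = \PP_\V(\W^\kappa)$ after first noting $\PP_\V(\W^\kappa)$ is locally rigid because its underlying $\V$-module is atomically generated and every $\V$-atomic object is dualizable (retracts of representables are dualizable in $\PP_\V(\W^\kappa)$ only if the objects of $\W^\kappa$ are already... ) — this is the delicate point, see below. (3) Granting $\mathcal I$ is rigid, check the universal property: for any rigid $\V$-algebra $\W_1$, postcomposition along $\mathcal I\hookrightarrow\W$ induces $\Fun^{L,\otimes}_\V(\W_1,\mathcal I)\to\Fun^{L,\otimes}_\V(\W_1,\W)$; by \Cref{obs:morrig} any symmetric monoidal colimit-preserving functor $\W_1\to\W$ out of a rigid algebra is an internal left adjoint, hence its image lands in a colimit-and-$\V$-tensor-closed subcategory generated by dualizables, which one shows is contained in $\mathcal I$ (because $\mathcal I$ already contains all dualizables reachable from the unit, and $\W_1$ is generated by dualizables); conversely the inclusion is clearly fully faithful on such functors. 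This gives the required equivalence, so $\mathcal I = \Rig_\V(\W)$.

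The main obstacle I expect is \textbf{step (2)}: verifying that the image $\mathcal I$ is genuinely \emph{rigid} over $\V$, not merely dualizable and unital-atomic. This requires knowing that the source algebra to which we apply \Cref{lm:imrig} is itself locally rigid — and $\PP_\V(\W^\kappa)$ need not obviously be locally rigid since its $\V$-atomic objects (the retracts of representables $y(x)$, $x\in\W^\kappa$) need not be dualizable in $\PP_\V(\W^\kappa)$. The fix, which I believe is the intended one, is to \emph{first} replace $\W$ by a category with atomic unit \emph{and} whose rigidification we can access inductively, i.e. to not apply \Cref{lm:imrig} to all of $\PP_\V(\W^\kappa)$ but to iterate a ``rigid core'' construction analogous to the dualizable core of \Cref{defn:dblcore}: define $\W^{(1)}\subset\PP_\V(\W^\kappa)$ to be generated under colimits and $\V$-tensors by the $\kappa$-compact \emph{dualizable} objects, then $\W^{(\alpha+1)}$ those $x\in\W^{(1)}$ whose associated presheaf lands in $\PP_\V((\W^{(\alpha)})^\kappa)$, stabilize at some ordinal $\alpha$ by a smallness argument as in \Cref{cor:stabilize}, and identify the stable value with a rigid algebra using \Cref{lm:imrig} and \Cref{lm:rigimpldbl}. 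Either way, the universal property check in step (3) is routine given \Cref{obs:morrig} and \Cref{lm:imrig}, so the real content is producing a rigid — as opposed to merely locally rigid — algebra, which is exactly what \Cref{lm:imrig}'s last sentence (``if the unit of $\W_1$ is atomic, the image is rigid'') is designed to deliver once the source is locally rigid.
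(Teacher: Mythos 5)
Your overall strategy (reduce to $\PP_\V(\W^\kappa)$ via \Cref{lm:rig=rigP}, then try to extract a rigid full sub-$\V$-algebra using \Cref{lm:imrig}) starts in the right place but the concrete construction you offer does not work, and you have correctly sensed the obstruction without resolving it.

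There are two genuine gaps. First, applying \Cref{lm:imrig} to $p:\PP_\V(\W^\kappa)\to\W$ would produce the image \emph{inside $\W$}, and since $p$ already generates $\W$ under colimits and $\V$-tensors, that image is just $\W$ itself --- which is useless and also defeats the purpose of having passed to $\PP_\V(\W^\kappa)$ in the first place. The rigidification should be exhibited as a full sub-$\V$-algebra of $\PP_\V(\W^\kappa)$, not of $\W$. Second, the proposed fix --- iterating a ``rigid core'' starting from the $\kappa$-compact \emph{dualizable} objects of $\PP_\V(\W^\kappa)$ --- produces the wrong object. A rigid $\V$-algebra need not be generated by its dualizables (think of $\Sh(X)$ for $X$ compact Hausdorff and connected, which is rigid over $\Sp$ but not rigidly-compactly generated), so the corresponding rigid full sub-$\V$-algebra of $\PP_\V(\W^\kappa)$ is in general strictly larger than anything reachable from dualizable objects of $\PP_\V(\W^\kappa)$. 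In the $\V=\Sp$ picture, your construction would land near $\Ind(\W^\dbl)$, which the paper explicitly notes (after \Cref{lm:rigfprops}) is a proper subcategory of $\Rig(\W)$ in general. The correct iterative construction, given later in the paper, has to be formulated in terms of trace-class \emph{presentations} (\Cref{lm:rigiditythroughpres}), not dualizable objects, and is substantially more delicate.

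The paper's actual proof of this corollary sidesteps both problems with a smallness-plus-maximality argument you did not consider: it forms the poset $P$ of \emph{all} rigid full sub-$\V$-algebras of $\PP_\V(\W^\kappa)$ (small because each such inclusion is an internal left adjoint by \Cref{obs:morrig}, hence preserves $\kappa$-compacts and the member is $\kappa$-compactly generated), takes the colimit $\W_0$ of the whole diagram in $\CAlg(\Mod_\V(\PrL))$ (rigid by \Cref{prop:colimrig}), and then applies \Cref{lm:imrig} to the induced map $\W_0\to\PP_\V(\W^\kappa)$ --- now with a \emph{rigid} source, which is exactly the hypothesis \Cref{lm:imrig} needs --- to obtain a rigid full sub-$\V$-algebra containing every $C\in P$, i.e.\ a maximal element of $P$, which is then the rigidification. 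The two ingredients you are missing are the observation that $P$ is small, and the idea of taking the colimit over all of $P$ so that \Cref{lm:imrig} can be applied with a legitimately (locally) rigid source.
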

\begin{proof}
    Fix $\kappa$ a good cardinal such that $\W\in\CAlg(\Mod_\V(\PrL_\kappa))$. 

    By \Cref{lm:rig=rigP}, it suffices to prove that $\PP_\V(\W^\kappa)$ has a rigidification. 

    Consider the poset $P$ of full-sub-$\V$-algebras of $\PP_\V(\W^\kappa)$ that are rigid. By \Cref{obs:morrig}, for $\W_0\in P$, the inclusion $\W_0\to \PP_\V(\W^\kappa)$ is an internal left adjoint and in particular preserves $\kappa$-compacts, and $\W_0$ is $\kappa$-compactly generated - thus $P$ is small : it is a subposet of the poset of subcategories of $\PP_\V(\W^\kappa)^\kappa$. 

    Furthermore, one can deduce from the previous lemma that $\PP_\V(\W^\kappa)$ has a rigidification if and only if this poset has a maximal element. 

    Let $\W_0 := \colim_{C\in P}C$, the colimit being taken in $\CAlg(\Mod_\V(\PrL))$. By \Cref{prop:colimrig}, $\W_0$ is rigid, and it certainly has a map to $\PP_\V(\W^\kappa)$. Its image in the sense of \Cref{lm:imrig} is rigid (by that lemma) and contains $C$ for every $C\in P$, thus proving that it has a largest element, thereby concluding the proof.  
\end{proof}
Note that the proof in particular shows: 
 \begin{prop}\label{prop:rigdesc}
Let $\kappa$ be a good cardinal, and let $\W\in\CAlg(\Mod_\V(\PrL_\kappa))$. 
    There is a canonical fully faithful symmetric monoidal embedding $\Rig_\V(\W)\to \PP_\V(\W^\kappa)$. 
\end{prop}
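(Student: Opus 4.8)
The plan is to read the embedding off directly from the proof of the corollary just given. That proof, for a good cardinal $\kappa$ with $\W\in\CAlg(\Mod_\V(\PrL_\kappa))$, introduced the poset $P$ of full sub-$\V$-algebras of $\PP_\V(\W^\kappa)$ that are rigid, and showed via \Cref{lm:imrig} and \Cref{prop:colimrig} that $P$ has a largest element $\mathcal I$ — namely the image, in the sense of \Cref{lm:imrig}, of the canonical map $\colim_{C\in P}C\to\PP_\V(\W^\kappa)$. The first step is to check that $\mathcal I$ \emph{is} a rigidification of $\PP_\V(\W^\kappa)$: given a rigid $\V$-algebra $\W_0$ and $g\in\Fun^{L,\otimes}_\V(\W_0,\PP_\V(\W^\kappa))$, \Cref{obs:morrig} (using that the unit of $\PP_\V(\W^\kappa)$ is atomic) shows $g$ is an internal left adjoint, so by \Cref{lm:imrig} the $\V$-subalgebra it generates is rigid, hence lies in $P$ and is contained in $\mathcal I$; thus $g$ factors through the inclusion $\mathcal I\hookrightarrow\PP_\V(\W^\kappa)$, which — being a fully faithful, colimit-preserving, symmetric monoidal inclusion, again by \Cref{lm:imrig} — shows that post-composition with it gives an equivalence $\Fun^{L,\otimes}_\V(\W_0,\mathcal I)\xrightarrow{\simeq}\Fun^{L,\otimes}_\V(\W_0,\PP_\V(\W^\kappa))$. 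Hence $\mathcal I\simeq\Rig_\V(\PP_\V(\W^\kappa))$.

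The second step is to transport this to $\W$ via \Cref{lm:rig=rigP}, which provides a canonical equivalence $\Rig_\V(\W)\simeq\Rig_\V(\PP_\V(\W^\kappa))$. Composing the resulting equivalence $\Rig_\V(\W)\simeq\mathcal I$ with the fully faithful symmetric monoidal inclusion $\mathcal I\hookrightarrow\PP_\V(\W^\kappa)$ then yields the desired embedding $\Rig_\V(\W)\to\PP_\V(\W^\kappa)$.

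The only point requiring genuine care — the one I would flag as the main obstacle — is the adjective ``canonical'': one must check the embedding is independent of the auxiliary good cardinal $\kappa$, which amounts to observing that for good cardinals $\kappa\le\lambda$ the inclusion $\PP_\V(\W^\kappa)\hookrightarrow\PP_\V(\W^\lambda)$ is compatible with the rigidifications constructed at each level, so that the two embeddings agree after the canonical identification of rigidifications from \Cref{lm:rig=rigP}. This is routine from the universal property of the rigidification, but is where ``canonical'' has to be justified; everything else is a direct reading-off of the construction in the proof of the corollary.
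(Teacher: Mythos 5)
Your proof is correct and is essentially the argument the paper intends: the paper gives no separate proof, merely noting that the construction in the preceding corollary exhibits $\Rig_\V(\W)\simeq\Rig_\V(\PP_\V(\W^\kappa))$ as the maximal rigid full sub-$\V$-algebra of $\PP_\V(\W^\kappa)$, which you spell out carefully via \Cref{lm:imrig}, \Cref{obs:morrig} and \Cref{lm:rig=rigP}. On ``canonical'': you can sidestep the cardinal-independence discussion entirely, since the embedding is just the (unique up to contractible choice) lift of the universal map $\Rig_\V(\W)\to\W$ along $p:\PP_\V(\W^\kappa)\to\W$ furnished by \Cref{lm:rig=rigP}, and the content of the proposition is that this lift is fully faithful, which is exactly what your identification with $\mathcal I$ shows.
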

In fact, more generally, it is easy to adapt the proof to show:
    \begin{cor}\label{cor:cpctunitrigff}
        If $\one_\W$ is $\V$-atomic, then the functor $\Rig_\V(\W)\to \W$ is fully faithful. 
    \end{cor}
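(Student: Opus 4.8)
The plan is to reduce to the case of $\PP_\V(\W^\kappa)$ and then extract fully faithfulness from the concrete description of the rigidification obtained in the preceding proof. First I would fix a good cardinal $\kappa$ with $\W\in\CAlg(\Mod_\V(\PrL_\kappa))$. Since $\one_\W$ is $\V$-atomic, it is in particular $\kappa$-compact, so by \Cref{rmk:kappacpctunit} (via \Cref{lm:rig=rigP}) the canonical map $\PP_\V(\W^\kappa)\to\W$ is an internal left adjoint (indeed a symmetric monoidal internal left adjoint), and the rigidification map $\Rig_\V(\W)\to\W$ factors as $\Rig_\V(\W)\to\PP_\V(\W^\kappa)\to\W$. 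The first of these is the fully faithful embedding of \Cref{prop:rigdesc}, so it remains to analyze the second.

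The key point is that $p:\PP_\V(\W^\kappa)\to\W$, being a symmetric monoidal internal left adjoint with $\one_\W$ $\V$-atomic, sends $\V$-atomic objects to $\V$-atomic objects, and in particular it sends the full subcategory $\Rig_\V(\W)\subset\PP_\V(\W^\kappa)$ — which by construction (in the proof of the existence corollary) is generated under colimits and $\V$-tensors by $\kappa$-compact $\V$-atomic objects of $\PP_\V(\W^\kappa)$ whose images are dualizable — into the image of $\W^\kappa$ inside $\W$. More precisely, the composite $\Rig_\V(\W)\to\W$ is itself an internal left adjoint (composite of two such), and since its source is rigid, \Cref{obs:morrig} says it is automatically a $\V$-internal left adjoint. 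I would then invoke \Cref{cor:iLffat}: a fully faithful internal left adjoint induces an equivalence on $\at$-objects; but here I want fully faithfulness, not just an $\at$-equivalence, so I instead argue directly. The right adjoint $p^R:\W\to\PP_\V(\W^\kappa)$ is $\V$-linear (as $\one_\W$ is $\kappa$-compact), and I claim the unit $\id_{\Rig_\V(\W)}\to (\text{restriction of }p^R p)$ is an equivalence. This can be checked after mapping in from the $\kappa$-compact $\V$-atomic generators $r$ of $\Rig_\V(\W)$: for such $r$ one has $\hom_{\Rig}(r,p^Rp(-))\simeq\hom_\W(p(r),p(-))$, and since $\Rig_\V(\W)\hookrightarrow\PP_\V(\W^\kappa)$ is fully faithful and $p$ restricted to the dualizable objects generating $\Rig_\V(\W)$ reflects the relevant homs, this reduces to fully faithfulness of $p$ on $\W^\kappa$, which holds by design of $\PP_\V(\W^\kappa)$.

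An alternative, cleaner route I would actually write down: combine \Cref{prop:rigdesc} with the observation that the counit $pp^R\simeq\id_\W$ (fully faithfulness of $p^R$, since $p$ is a localization-type functor here) together with $\V$-linearity of $p^R$ forces $p^R$ to land in $\Rig_\V(\W)$ on the relevant subcategory — so that $\Rig_\V(\W)\to\W$ is a fully faithful internal left adjoint whose right adjoint is the corestriction of $p^R$. Then $\hom_{\Rig_\V(\W)}(x,y)\simeq\hom_\W(p(x),p(y))$ follows. The main obstacle I anticipate is checking that $p^R$ does corestrict to $\Rig_\V(\W)$, i.e. that $p^R$ of an object of $\W$ already lies in the rigid full subcategory generated inside $\PP_\V(\W^\kappa)$ — equivalently that the counit of the adjunction $\Rig_\V(\W)\rightleftarrows\W$ is an equivalence. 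This is where the maximality of $\Rig_\V(\W)$ among rigid full subalgebras of $\PP_\V(\W^\kappa)$ (from the existence proof) gets used: the essential image of $p^R$, suitably closed under colimits and $\V$-tensors, is a rigid full subalgebra, hence contained in $\Rig_\V(\W)$, and since $pp^R\simeq\id$ this forces the counit to be an equivalence on the nose. Once that is in hand, fully faithfulness of $\Rig_\V(\W)\to\W$ is immediate from the triangle identities.
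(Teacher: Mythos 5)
There are several genuine gaps in this proposal, both in the main argument and in the ``alternative route''.

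\textbf{The factorization step goes wrong immediately.} You assert that $p:\PP_\V(\W^\kappa)\to\W$ is an internal left adjoint, citing \Cref{rmk:kappacpctunit}. But that remark requires the \emph{source} of the symmetric monoidal functor to be locally rigid, and $\PP_\V(\W^\kappa)$ is not locally rigid for a general $\W$. So $p$ has no reason to be an internal left adjoint, and the subsequent chain --- ``$p$ preserves atomics'', ``the composite $\Rig_\V(\W)\to\W$ is an internal left adjoint because it is a composite of two such'' --- collapses. (The conclusion that $\Rig_\V(\W)\to\W$ is an internal left adjoint is in fact true, but it follows directly from \Cref{obs:morrig} applied to that map itself, without the detour through $\PP_\V(\W^\kappa)$.) Similarly, the claim that $p^R$ is $\V$-linear ``as $\one_\W$ is $\kappa$-compact'' is false: $\kappa$-compactness of the unit makes $p^R$ accessible, not $\V$-linear. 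The unit map $v\otimes y(w)\to y(v\otimes w)$ need not be an equivalence.

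\textbf{The alternative route conflates localizations with fully faithful embeddings.} Even if $p^R$ did corestrict to $\Rig_\V(\W)$, that would only equip $\Rig_\V(\W)\to\W$ with a right adjoint. Having a (fully faithful) right adjoint makes the functor a colocalization; it does not by itself make the functor fully faithful, which is what you need. Relatedly, the ``essential image of $p^R$ closed under colimits and tensors'' has no reason to be a \emph{rigid} full subalgebra: $p^R$ is not colimit-preserving, $\W$ is not assumed locally rigid, and \Cref{lm:imrig} cannot be applied to it. Your instinct to use the maximality of $\Rig_\V(\W)$ and \Cref{lm:imrig} is the right one, but you are applying the lemma to the wrong functor.

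\textbf{What the paper means by ``adapt the proof''} is to run the existence argument with $\W$ in place of $\PP_\V(\W^\kappa)$, which works exactly because $\one_\W$ is now assumed $\V$-atomic. Concretely: the canonical map $q:\Rig_\V(\W)\to\W$ has rigid source, so $q$ is an internal left adjoint by \Cref{obs:morrig}. Applying \Cref{lm:imrig} to $q$, the full subcategory $\mathcal I\subset\W$ generated under colimits and $\V$-tensors by the image of $q$ is a rigid full sub-$\V$-algebra (since $\one_\W$ is atomic), and $q$ factors as $\Rig_\V(\W)\to\mathcal I\hookrightarrow\W$. Since $\mathcal I$ is rigid, the universal property of $\Rig_\V(\W)$ gives a unique factorization of $\mathcal I\hookrightarrow\W$ through $q$, yielding $g:\mathcal I\to\Rig_\V(\W)$; the two composites $g\circ(\Rig_\V(\W)\to\mathcal I)$ and $(\Rig_\V(\W)\to\mathcal I)\circ g$ both compose with the inclusion/with $q$ to the canonical maps into $\W$, and are therefore identities (using the universal property for the former, and fully faithfulness of $\mathcal I\hookrightarrow\W$ for the latter). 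Hence $\Rig_\V(\W)\simeq\mathcal I$, and $q$ is the fully faithful inclusion of $\mathcal I$.
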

    The main theorem of this section is a study of the somewhat ``opposite'' situation where the unit is very much not atomic, but the category is locally rigid. 

Now that we have established this, we get, assuming the main theorem:
\begin{cor}
    Let $\W$ be a locally rigid $\V$-algebra and let $\Gamma_! := \at_\W(\one_\W,-): \W\to \V$. $\Gamma_!$ participates in a duality datum for $\W$ with evaluation:
    \[\begin{tikzcd}
	{\W\otimes_\V\W} & \W & \V
	\arrow["\mu", from=1-1, to=1-2]
	\arrow["{\Gamma_!}", from=1-2, to=1-3]
\end{tikzcd}\]
and coevaluation:  \[\begin{tikzcd}
	\V & \W & {\W\otimes_\V\W}
	\arrow["\eta", from=1-1, to=1-2]
	\arrow["{\mu^R}", from=1-2, to=1-3]
\end{tikzcd}\]
\end{cor}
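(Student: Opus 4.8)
The plan is to deduce the corollary immediately from the rigidification package, granting \Cref{thm:locrigrigleft}. Since $\W$ is a commutative locally rigid $\V$-algebra, that theorem furnishes a rigid $\V$-algebra $\overline\W := \Rig_\V(\W)$ together with a morphism $p : \overline\W\to \W$ admitting a fully faithful, $\V$-linear left adjoint $p^L$. This is exactly the standing hypothesis of both \Cref{prop:exceptional2} and \Cref{cor:exceptional=at}, so I would invoke those two results in turn and combine them.

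By \Cref{cor:exceptional=at}, the functor $\hom_{\overline\W}(\one_{\overline\W}, p^L(-)) : \W\to \V$ is canonically equivalent to $\at_\W(\one_\W,-) = \Gamma_!$. By \Cref{prop:exceptional2}, applied to the same $\overline\W$, $p$, $p^L$, this functor $\hom_{\overline\W}(\one_{\overline\W}, p^L(-))$ participates in a self-duality datum for $\W$ over $\V$ whose evaluation is the composite $\W\otimes_\V\W\xrightarrow{\mu}\W\to\V$, with second map $\hom_{\overline\W}(\one_{\overline\W}, p^L(-))$, and whose coevaluation is $\V\xrightarrow{\eta}\W\xrightarrow{\mu^R}\W\otimes_\V\W$. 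Transporting this datum along the canonical equivalence $\hom_{\overline\W}(\one_{\overline\W}, p^L(-))\simeq \Gamma_!$ yields precisely the evaluation $\W\otimes_\V\W\xrightarrow{\mu}\W\xrightarrow{\Gamma_!}\V$ and coevaluation $\V\xrightarrow{\eta}\W\xrightarrow{\mu^R}\W\otimes_\V\W$ asserted in the statement.

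There is essentially no obstacle beyond checking that the pieces line up: all of the substance sits in \Cref{thm:locrigrigleft} (existence of the rigidification with a fully faithful $\V$-linear left adjoint), in \Cref{prop:exceptional2} (the triangle identities, established there via $\overline\W$-linearity of $p^L$ and the retraction $p p^L\simeq \id_\W$), and in \Cref{cor:exceptional=at} (the identification of the exceptional functor with $\at_\W(\one_\W,-)$). The one point to keep in mind is that a duality datum only constrains the composite evaluation and coevaluation functors up to canonical equivalence, so replacing $\hom_{\overline\W}(\one_{\overline\W}, p^L(-))$ by the canonically equivalent $\Gamma_!$ automatically transports the datum; no additional coherence needs to be verified.
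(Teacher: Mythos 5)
Your proof is correct and follows exactly the route the paper intends: the corollary is stated immediately after \Cref{thm:locrigrigleft}, \Cref{prop:exceptional2}, and \Cref{cor:exceptional=at} with the comment ``Now that we have established this, we get, assuming the main theorem,'' and your proposal simply spells out that chain of references explicitly. No discrepancy to report.
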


With all this said and done, we may embark on a slightly more precise description of rigidifications.

\subsubsection{Over $\Sp$}
In the same vein as for \cite[Section 4]{maindbl}, while the existence of rigidification is good to know, for certain purposes it can be useful to have a more concrete description (for example, to prove \Cref{thm:locrigrigleft}). Our goal here is to draw inspiration from \Cref{cor:trclexQ} to construct the rigidification by hand in terms of trace-class maps. 

We recall the following result:
\begin{cor}[{\cite[Corollary 2.57]{maindbl}}]\label{cor:cpctexhaustQ}
    Let $\M$ be a compactly assembled category, and $x$ a compactly exhaustible object therein. There exists a $\mathbb Q_{\geq 0}$-indexed diagram with colimit $x$ such that each of the transition maps is compact. 
\end{cor}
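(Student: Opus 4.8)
The statement to prove is \Cref{cor:cpctexhaustQ}: in a compactly assembled category $\M$, a compactly exhaustible object $x$ admits a $\mathbb Q_{\geq 0}$-indexed diagram with colimit $x$ whose transition maps are all compact.

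Let me think about what "compactly exhaustible" means. In the stable setting (this is the companion paper \cite{maindbl}), a compactly exhaustible object is one that can be written as a sequential colimit $x = \colim_{n \in \mathbb N} x_n$ along compact maps. Wait, actually — hmm, let me reconsider. Looking at the structure, "compactly assembled" = dualizable stable category, "compactly exhaustible" should be the notion from the exhaustion theorem. Actually in the Efimov/Clausen-Scholze world, a compactly exhaustible object (or "$\aleph_1$-compactly exhaustible") is one expressible as a sequential colimit along compact maps. But the corollary asks to upgrade an $\mathbb N$-indexed diagram to a $\mathbb Q_{\geq 0}$-indexed one. So presumably "compactly exhaustible" is defined via $\mathbb N$-indexed colimits along compact maps, and we want the denser indexing.

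The key idea: given $x = \colim_{n \in \mathbb N} x_n$ with each $x_n \to x_{n+1}$ compact, we want to interpolate. The standard trick (this goes back to how one proves compact maps factor): a compact map $f: a \to b$ in a compactly assembled category factors as $a \to c \to b$ where $a \to c$ and $c \to b$ are again compact — i.e., compact maps admit "square roots" up to factorization, in fact they form a 2-sided ideal closed under such factorizations. More precisely, using that $\hat y(b) = \colim \hat y(b_i)$ and a compact map $a \to b$ lifts to $\hat y(a) \to \hat y(b_j)$ for some finite stage, one gets that every compact map factors through another compact map, and one can iterate to get a factorization into arbitrarily many compact pieces. So each transition $x_n \to x_{n+1}$ can be refined into a finite chain of compact maps of any length; organizing these refinements over all dyadic rationals (or all rationals in $[n, n+1]$) and taking filtered colimits along the refinement, one builds a $\mathbb Q_{\geq 0}$-indexed diagram. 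Concretely: index $\mathbb Q_{\geq 0} = \bigcup_n \mathbb Q \cap [0,n]$, and for $q \in \mathbb Q \cap [n, n+1)$ build $x_q$ by factoring $x_n \to x_{n+1}$ through a diagram indexed by the finite poset $\mathbb Q \cap [n,n+1]$; do this compatibly. The colimit over $\mathbb Q_{\geq 0}$ is cofinally the colimit over $\mathbb N$, hence still $x$, and all transition maps, being "sub-factorizations" of compact maps (composites of compact maps — compact maps form an ideal, \Cref{ex:atideal} analogue), are compact.

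So the plan: \textbf{(1)} Recall/establish that compact maps admit factorizations through compact maps, so that any single compact map $a \to b$ can be written as the composite of a finite $\mathbb Q$-chain $a = c_0 \to c_1 \to \dots \to c_k = b$ of compact maps — and moreover these factorizations can be chosen compatibly under refinement of the finite poset, giving a $\mathbb Q \cap [0,1]$-indexed diagram realizing the map (this is the heart, and is essentially \cite[Corollary 2.57]{maindbl}'s proof content, or follows from the proof of the exhaustion theorem \cite[Theorem 2.55]{maindbl} / the techniques around \Cref{lm:swapat}). \textbf{(2)} Given $x = \colim_{\mathbb N} x_n$ along compact maps (the definition of compactly exhaustible), apply step (1) to each transition, compatibly, to assemble a $\mathbb Q_{\geq 0}$-indexed diagram. \textbf{(3)} Check the colimit is unchanged, since $\mathbb N \hookrightarrow \mathbb Q_{\geq 0}$ is cofinal, and that every transition map in the $\mathbb Q_{\geq 0}$-diagram is a composite of compact maps, hence compact by the ideal property. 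The main obstacle is step (1) — making the interpolating factorizations coherent (functorial) over the whole poset $\mathbb Q \cap [0,1]$ simultaneously, rather than just getting individual factorizations; this requires the Ind/$\hat y$-description of compact maps and a colimit argument, and is exactly the kind of coherence bookkeeping that the companion paper \cite{maindbl} sets up. Since the excerpt labels this as \cite[Corollary 2.57]{maindbl}, I would simply invoke that result directly.

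\begin{proof}
    This is precisely \cite[Corollary 2.57]{maindbl}; we recall the idea. By definition of compactly exhaustible, write $x\simeq\colim_{n\in\mathbb N}x_n$ with each transition map $x_n\to x_{n+1}$ compact. Using the dualizability of $\M$ and the description of compact maps via $\hat y:\M\to\Ind(\M^\omega)$ (as in \Cref{cor:descofat} for $\V=\Sp$), any compact map factors through another compact map; iterating and organizing the factorizations coherently, each transition $x_n\to x_{n+1}$ can be realized as the value at the endpoints of a diagram indexed by the finite poset $\mathbb Q\cap[n,n+1]$ whose transition maps are all compact, and these diagrams can be chosen compatibly along the inclusions $\mathbb Q\cap[n,n+1]\subset\mathbb Q\cap[n,n+2]$. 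Gluing over all $n$ yields a $\mathbb Q_{\geq 0}$-indexed diagram; its transition maps are composites of compact maps, hence compact since compact maps form a $2$-sided ideal (the analogue of \Cref{ex:atideal}). Finally, the inclusion $\mathbb N\hookrightarrow\mathbb Q_{\geq 0}$ is cofinal, so the colimit of this diagram is still $x$.
\end{proof}
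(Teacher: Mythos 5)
The paper does not prove this corollary at all; it is simply recalled by citation from the companion paper \cite[Corollary 2.57]{maindbl}, and your proposal ultimately does the same thing. Your sketch of the underlying idea — refining a compactly-exhaustible $\mathbb N$-indexed colimit by coherently factoring compact maps into finite chains, then invoking cofinality of $\mathbb N\hookrightarrow\mathbb Q_{\geq 0}$ — is a reasonable recollection of what the cited result amounts to, and you correctly flag that the coherence of the interpolating factorizations is the nontrivial content deferred to \cite{maindbl}.
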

Recall also that the idea here of using $\mathbb Q_{\geq 0}$ instead of the cofinal poset $\mathbb N$ was to encode, \emph{in the diagram}, the fact that the colimit could be written as a colimit, not only along maps of a certain type, but maps which could be split into maps of the same type, which could themselves be split into maps of the same type, etc. For our purposes, the relevant corollary is: 
\begin{cor}\label{cor:trclexQ}
    Let $\V\in\CAlg(\PrL_{\st})$ be locally rigid, and let $x\in\V$ be compactly exhaustible. There exists a $\mathbb Q_{\geq 0}$-indexed diagram with colimit $x$ and such that each of the transition maps is trace-class. 
\end{cor}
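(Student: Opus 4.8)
The strategy is to bootstrap from \Cref{cor:cpctexhaustQ}, which already provides a $\mathbb Q_{\geq 0}$-indexed diagram $x_\bullet$ with colimit $x$ all of whose transition maps are \emph{compact}, and upgrade ``compact'' to ``trace-class'' using local rigidity. The key point is that in a locally rigid $\V \in \CAlg(\PrL_{\st})$, compact maps need not themselves be trace-class, but --- by the same mechanism as in \Cref{cor:cpctexhaustQ} --- each compact transition map $x_q \to x_{q'}$ in the $\mathbb Q_{\geq 0}$-diagram factors through intermediate stages, and a compact map that factors as a composite of two compact maps is controlled well enough to be replaced by a trace-class one. Concretely: by \Cref{prop:locrigcharac}, $\V$ being locally rigid means every compact map in $\V$ is trace-class --- wait, that is exactly the statement. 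So in fact the corollary is immediate: apply \Cref{cor:cpctexhaustQ} to get a $\mathbb Q_{\geq 0}$-diagram with compact transition maps, and observe that each such compact map is trace-class by \Cref{prop:locrigcharac} (``$\W$ is locally rigid over $\Sp$ if and only if it is dualizable and any compact map is trace-class'').

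So the real content is just the reduction: a compactly exhaustible $x \in \V$ is in particular a compactly exhaustible object of the compactly assembled category $\V$ (note $\V$ dualizable $\Rightarrow$ compactly assembled after passing to $\Ind$, or one works directly with the compact-assembly structure), so \Cref{cor:cpctexhaustQ} applies verbatim and yields the $\mathbb Q_{\geq 0}$-indexed diagram with $\colim_{q} x_q \simeq x$ and each transition map $x_q \to x_{q'}$ compact. First I would state this diagram explicitly. Then I would invoke \Cref{prop:locrigcharac} --- whose proof rests on \Cref{lm:trclat} and \cite[Addendum 2.42]{maindbl} --- to conclude that every compact map in $\V$ is trace-class, hence in particular every transition map $x_q \to x_{q'}$ in our diagram is trace-class. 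That finishes the proof.

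\textbf{The main obstacle.} There is essentially no obstacle here beyond bookkeeping: the heavy lifting is done by \Cref{cor:cpctexhaustQ} (the $\mathbb Q_{\geq 0}$-refinement of compact exhaustion) and by the characterization \Cref{prop:locrigcharac} of local rigidity over $\Sp$ as ``dualizable $+$ every compact map trace-class''. The one subtlety worth a sentence is making sure the notion of ``compactly exhaustible object of $\V$'' used in the hypothesis matches the one to which \Cref{cor:cpctexhaustQ} applies --- this is fine because $\V$ dualizable makes $\V$ compactly assembled, and compact exhaustibility is intrinsic. If one wanted to be slightly more self-contained one could also recall that in the $\mathbb Q_{\geq 0}$-diagram produced by \Cref{cor:cpctexhaustQ} each transition map already factors as a composite of two compact maps (since between any two rationals there is a third), so that even without appealing to the full strength of \Cref{prop:locrigcharac} one could directly run the trace-class replacement argument; but since \Cref{prop:locrigcharac} is available, the short route suffices.

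\begin{proof}
    Since $\V$ is locally rigid it is in particular dualizable, hence compactly assembled, so the notion of compactly exhaustible object of $\V$ is the one appearing in \Cref{cor:cpctexhaustQ}. Applying that corollary to $x$, we obtain a $\mathbb Q_{\geq 0}$-indexed diagram $x_\bullet$ with $\colim_{\mathbb Q_{\geq 0}} x_\bullet \simeq x$ and such that every transition map $x_q \to x_{q'}$ (for $q \leq q'$) is compact. By \Cref{prop:locrigcharac}, since $\V$ is locally rigid over $\Sp$, every compact map in $\V$ is trace-class. In particular each transition map of the diagram $x_\bullet$ is trace-class, which is what we wanted.
\end{proof}
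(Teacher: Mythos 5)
Your proof is correct and follows essentially the same approach as the paper: apply \Cref{cor:cpctexhaustQ} to obtain a $\mathbb Q_{\geq 0}$-indexed diagram with compact transition maps, then upgrade compact to trace-class via \Cref{prop:locrigcharac}. The paper's proof is a one-liner saying exactly this.
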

\begin{proof}
    This follows from \Cref{cor:cpctexhaustQ}: in a locally rigid category, every compact map is trace-class by \Cref{prop:locrigcharac}. 
\end{proof}
We learned the following construction and results from Dustin Clausen:
\begin{cons}
    Let $\V\in\CAlg(\PrL_{\st,\kappa})$. The symmetric monoidal structure on $\V$ induces one on $\Ind(\V^\kappa)$, for which the colimit functor $\Ind(\V^\kappa)\to\V$ is symmetric monoidal. 

    Consider the full subcategory $\Rig_\kappa(\V)$ of $\Ind(\V^\kappa)$ generated under colimits by $\mathbb Q_{\geq 0}$-telecopes along trace-class maps in $y(\V^\kappa)$.  
\end{cons}
\begin{obs}
 As trace-class maps in $\V$ are closed under tensor products and $\mathbb Q_{\geq 0}\to\mathbb Q_{\geq 0}\times\mathbb Q_{\geq 0}$ is cofinal, $\Rig_\kappa(\V)\subset \Ind(\V^\kappa)$ is closed under tensor products. 
\end{obs}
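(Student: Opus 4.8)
The plan is to reduce the claim, using that the induced symmetric monoidal structure on $\Ind(\V^\kappa)$ has the property that $-\otimes-$ preserves colimits separately in each variable and that $y\colon\V^\kappa\to\Ind(\V^\kappa)$ is strong symmetric monoidal, to the statement that the tensor product of two \emph{generating} objects of $\Rig_\kappa(\V)$ again lies in $\Rig_\kappa(\V)$.

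First I would take two generators $X=\colim_{q\in\mathbb Q_{\geq0}}y(x_q)$ and $Y=\colim_{r\in\mathbb Q_{\geq0}}y(y_r)$, where $x_\bullet,y_\bullet\colon\mathbb Q_{\geq0}\to\V^\kappa$ are diagrams with trace-class transition maps. Since $-\otimes-$ on $\Ind(\V^\kappa)$ commutes with colimits separately in each variable and $y$ is symmetric monoidal, this gives $X\otimes Y\simeq\colim_{(q,r)\in\mathbb Q_{\geq0}\times\mathbb Q_{\geq0}}y(x_q\otimes y_r)$; here one uses that $\V\in\CAlg(\PrL_{\st,\kappa})$, so that $\V^\kappa$ is closed under $\otimes$ and $x_q\otimes y_r\in\V^\kappa$. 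Next I would use the cofinality of the diagonal $\mathbb Q_{\geq0}\to\mathbb Q_{\geq0}\times\mathbb Q_{\geq0}$, $s\mapsto(s,s)$, to rewrite this as $\colim_{s\in\mathbb Q_{\geq0}}y(x_s\otimes y_s)$. The transition maps of the resulting $\mathbb Q_{\geq0}$-diagram are tensor products $x_s\otimes y_s\to x_{s'}\otimes y_{s'}$ of trace-class maps, hence trace-class --- either directly from \Cref{defn:trcl}, by tensoring the two factorizations together with the corresponding pairings and copairings, or as the case $\M=\V$ of \Cref{lm:trclat} via \Cref{ex:VatV}. Thus $X\otimes Y$ is itself a $\mathbb Q_{\geq0}$-telescope along trace-class maps in $y(\V^\kappa)$, so $X\otimes Y\in\Rig_\kappa(\V)$.

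Finally, for arbitrary $X,Y\in\Rig_\kappa(\V)$, I would write $X\simeq\colim_iX_i$ and $Y\simeq\colim_jY_j$ with the $X_i,Y_j$ generators; then $X\otimes Y\simeq\colim_{i,j}X_i\otimes Y_j$, each $X_i\otimes Y_j$ lies in $\Rig_\kappa(\V)$ by the previous paragraph, and $\Rig_\kappa(\V)\subset\Ind(\V^\kappa)$ is closed under colimits by construction, so $X\otimes Y\in\Rig_\kappa(\V)$. I do not expect any real obstacle here: the only two points that need care are the cofinality of $\mathbb Q_{\geq0}\to\mathbb Q_{\geq0}\times\mathbb Q_{\geq0}$ (which is exactly what lets one pass from the double telescope to a single one) and the closure of trace-class maps under tensor products, both of which are precisely the two facts singled out in the statement.
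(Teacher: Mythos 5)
Your proof is correct and is precisely the argument the Observation is gesturing at: the two facts cited in the statement (closure of trace-class maps under tensor and cofinality of the diagonal $\mathbb Q_{\geq 0}\to\mathbb Q_{\geq 0}\times\mathbb Q_{\geq 0}$) are exactly the two ingredients you use, and your reduction to generators via separate cocontinuity of the tensor product and colimit-closure of $\Rig_\kappa(\V)$ is the standard (and intended) way to set it up.
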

\begin{thm}\label{thm:rigidification}
    Let $\V\in\CAlg(\PrL_{\st,\kappa})$ for some uncountable cardinal $\kappa$. 
    \begin{itemize}
        \item $\Rig_\kappa(\V)$ is a rigid $\Sp$-algebra. 
        \item The symmetric monoidal functor $\Rig_\kappa(\V)\to\V$ obtained by restricting the functor $\Ind(\V^\kappa)\to \V$ to $\Rig_\kappa(\V)$ has the following universal property: for any rigid commutative $\Sp$-algebra $\M$, the canonical functor is an equivalence : 
    $$\Fun^{L,\otimes}(\M,\Rig_\kappa(\V))\to \Fun^{L,\otimes}(\M,\V)$$
    \end{itemize}
    In particular, $\Rig_\kappa(\V)\simeq \Rig_\Sp(\V)$ and it does not depend on $\kappa$ so long as $\V\in\CAlg(\PrL_{\st,\kappa})$. 
\end{thm}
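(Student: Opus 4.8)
The plan is to verify the two bullet points in turn, then observe that the last sentence is a formal consequence. For the first bullet, I would apply \Cref{cor:Efimovrig} (Efimov's criterion): a stable presentable symmetric monoidal category is rigid over $\Sp$ as soon as its unit is compact and there is a set $S$ of trace-class maps whose telescopes generate under colimits. The unit of $\Rig_\kappa(\V)$ is the image $y(\one_\V)$ of the unit under the Yoneda embedding $\V^\kappa\to\Ind(\V^\kappa)$ — note $\one_\V\in\V^\kappa$ since $\V\in\CAlg(\PrL_{\st,\kappa})$ — and this is compact in $\Ind(\V^\kappa)$, hence compact in the full subcategory $\Rig_\kappa(\V)$ (which is closed under colimits by construction and contains $y(\one_\V)$, the empty $\mathbb{Q}_{\geq 0}$-telescope being constant). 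For the generating set $S$, take the collection of transition maps $y(x_q)\to y(x_{q'})$ appearing in $\mathbb{Q}_{\geq0}$-telescopes along trace-class maps in $y(\V^\kappa)$: each is trace-class in $\Ind(\V^\kappa)$ since $y$ is symmetric monoidal (so preserves trace-class maps by \Cref{rmk:trcl}), hence trace-class in $\Rig_\kappa(\V)$ by the closure under tensor products noted in the preceding observation and the last sentence of \Cref{rmk:trcl}; and by the very definition of $\Rig_\kappa(\V)$, the $S$-telescopes generate it under colimits. This gives rigidity.

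For the second bullet — the universal property — I would use \Cref{lm:rig=rigP} to reduce to comparing against $\PP_\Sp(\V^\kappa)=\Ind(\V^\kappa)$, together with the description of $\Rig_\Sp$ from the existence proof (\Cref{prop:rigdesc} and \Cref{lm:imrig}): the rigidification of $\Ind(\V^\kappa)$ is the largest rigid full-sub-$\Sp$-algebra, equivalently the image (under colimits and tensors) of any internally-left-adjoint symmetric monoidal functor from a rigid algebra. So it suffices to show that $\Rig_\kappa(\V)$ \emph{is} this largest rigid full subcategory of $\Ind(\V^\kappa)$. One containment: $\Rig_\kappa(\V)$ is rigid (first bullet) and is a full-sub-$\Sp$-algebra, so it is contained in $\Rig_\Sp(\Ind(\V^\kappa))$. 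Conversely, given any rigid full-sub-$\Sp$-algebra $\mathcal C\subset\Ind(\V^\kappa)$, I want every object of $\mathcal C$ to lie in $\Rig_\kappa(\V)$: since $\mathcal C$ is dualizable it is generated under colimits by compactly exhaustible objects, which by rigidity of $\mathcal C$ and \Cref{cor:trclexQ} can be written as $\mathbb{Q}_{\geq0}$-telescopes along trace-class maps \emph{in $\mathcal C$}; these trace-class maps are between $\kappa$-compact objects of $\Ind(\V^\kappa)$ (the inclusion preserves $\kappa$-compacts by \Cref{obs:morrig}) and land, after projecting to $\V$, in trace-class maps there, so one can identify the telescope with one built from $y(\V^\kappa)$ along trace-class maps — hence it lies in $\Rig_\kappa(\V)$. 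Matching up the universal properties via \Cref{lm:rig=rigP} then gives the displayed equivalence $\Fun^{L,\otimes}(\M,\Rig_\kappa(\V))\to\Fun^{L,\otimes}(\M,\V)$ for rigid $\M$.

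Finally, the independence of $\kappa$ and the identification $\Rig_\kappa(\V)\simeq\Rig_\Sp(\V)$ are immediate: the second bullet says $\Rig_\kappa(\V)$ satisfies the defining universal property of $\Rig_\Sp(\V)$ from \Cref{defn:rigidification}, which characterizes it uniquely, so any two choices of $\kappa$ (with $\V\in\CAlg(\PrL_{\st,\kappa})$) give canonically equivalent answers, both equivalent to $\Rig_\Sp(\V)$.

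I expect the main obstacle to be the converse containment in the second bullet — showing that \emph{every} rigid full-sub-$\Sp$-algebra of $\Ind(\V^\kappa)$ is contained in $\Rig_\kappa(\V)$, i.e. that the trace-class $\mathbb{Q}_{\geq0}$-telescopes appearing abstractly inside such a $\mathcal C$ can genuinely be arranged to be telescopes of trace-class maps in the image $y(\V^\kappa)$ rather than merely in $\mathcal C^{\kappa}$. This requires care in tracking that a trace-class map between $\kappa$-compact objects of $\Ind(\V^\kappa)$ is detected already at the level of $\V^\kappa$ (using that $\Ind$ of the fully faithful $\V^\kappa\hookrightarrow\V$ is fully faithful on $\kappa$-compacts, and \Cref{rmk:trcl}'s statement that trace-classness of a map between $\kappa$-compacts can be checked in $\C^\kappa$), and in checking that the resulting $\mathbb{Q}_{\geq0}$-diagram really has the right colimit in $\Ind(\V^\kappa)$. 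The alternative, cleaner route — bypassing \Cref{cor:trclexQ} and instead directly producing, from the universal property of \Cref{lm:rig=rigP}, a comparison functor $\Rig_\Sp(\V)\to\Rig_\kappa(\V)$ and checking it is an equivalence by comparing $\kappa$-compact objects — may in fact be smoother, and I would fall back on it if the telescope-chasing becomes unwieldy.
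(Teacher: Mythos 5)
Your proof of the first bullet contains a genuine gap. You propose to apply \Cref{cor:Efimovrig} with $S$ the collection of transition maps $y(x_q)\to y(x_{q'})$ of the $\mathbb Q_{\geq 0}$-telescopes. But these are maps in $\Ind(\V^\kappa)$, not in $\Rig_\kappa(\V)$: the individual stages $y(x_q)$ are compact objects of $\Ind(\V^\kappa)$ and, unless $x_q$ is already dualizable in $\V$, they do not lie in $\Rig_\kappa(\V)$, which is generated under colimits by the telescopes $\colim_q y(x_q)$ but not by their partial stages. (For instance, with $\V=\Sp$ and $\kappa=\omega_1$, one has $\Rig_{\omega_1}(\Sp)\simeq\Ind(\Sp^\omega)\subset\Ind(\Sp^{\omega_1})$, and $y(x_q)$ lies in that subcategory only when $x_q$ is finite.) So $S$ is not even a set of morphisms of $\Rig_\kappa(\V)$, and one cannot ask whether its elements are trace-class there, which is what \Cref{cor:Efimovrig} demands. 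Independently, even if $y(x_q)$ were in $\Rig_\kappa(\V)$, the inference ``trace-class in $\Ind(\V^\kappa)$ hence trace-class in $\Rig_\kappa(\V)$'' is not what the last sentence of \Cref{rmk:trcl} gives: that remark concerns passage to the small subcategory of $\kappa$-compacts of the \emph{same} category, whereas here one would need to exhibit a trace-class witness lying in the colimit-closed full subcategory $\Rig_\kappa(\V)$, which need not be stable under the ambient internal hom. The paper sidesteps both problems by applying \Cref{cor:rigtrcl} with $S$ a set of maps $f:x\to z$ whose source and target are telescopes (hence honest objects of $\Rig_\kappa(\V)$) and which factor through some $y(z_r)$; one then verifies the nonzero-detection condition via a Milnor-sequence argument using $\omega_1$-compactness of telescopes and verifies condition (1) via the ``insert a truncated telescope $\colim_{s<r+1}y(z_s)$'' trick.

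For the second bullet your route via the maximal rigid full sub-$\Sp$-algebra of $\Ind(\V^\kappa)$ is conceptually sound and genuinely different from the paper's, which goes through a direct bookkeeping of $\Map^{L,\otimes}$ spaces and a cofinality argument on $\mathbb Q_{\geq 0}\times[1]\subset\mathbb Q_{\geq 0}\times[2]$. But the converse containment as you sketch it also needs repair: you write ``$\kappa$-compact objects of $\Ind(\V^\kappa)$'' where you need \emph{compact} objects (the compacts of $\Ind(\V^\kappa)$ are $y(\V^\kappa)$, whereas $\kappa$-compacts form a strictly larger class), and the passage from ``telescope along trace-class maps in $\mathcal C$'' to ``telescope whose nodes lie in $y(\V^\kappa)$'' requires the interleaving/re-indexing argument that the paper carries out with the poset lemmas on $\mathbb Q_{\geq 0}\times[2]$, since the nodes of the telescope produced by \Cref{cor:trclexQ} need not themselves be compact. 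The concluding ``in particular'' is fine once the two bullets are in place.
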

\begin{proof}
(Rigidity) We first prove that $\Rig_\kappa(\V)$ is rigid. Our goal is to apply \Cref{cor:rigtrcl}. First, the unit is $y(\one_\V)$ and hence compact.
    
    Now, the set of maps $S$ we consider is the set of maps $f:x\to z$ such that there exist presentations $x\simeq\colim_{\mathbb Q_{\geq 0}}y(x_r), z\simeq\colim_{\mathbb Q_{\geq 0}}y(z_r)$ along trace-class maps, where each $x_r,z_r\in \V^\kappa$, and such that $f$ factors through some $z_r, r\in\mathbb Q_{\geq 0}$. 

    Firstly, any such map factors as a composite of two such maps: indeed, say $f$ factors through $z_r$. We then observe that $f$ factors as $x\to \colim_{s<r+1}y(z_r) \to z$, where both maps are now clearly of the desired form, using that $\mathbb Q_{\geq 0}^{<r+1}\cong \mathbb Q_{\geq 0}$ as posets. 

    Second, let $D\in\Rig_\kappa(\V)$ be nonzero. We aim to prove that $D$ receives some nonzero map $x\to z\to D$, where $x\to z$ is as above. We first note that each $\mathbb Q_{\geq 0}$-telescope along trace-class maps in $y(\V^\kappa)$ is $\omega_1$-compact: indeed, they are countable colimits of compacts in $\Ind(\V^\kappa)$, so they are $\omega_1$-compact therein, and the inclusion $\Rig_\kappa(\V)\subset \Ind(\V^\kappa)$ is fully faithful and colimit-preserving. 

    In particular, it follows that $\Rig_\kappa(\V)$ is $\omega_1$-compactly generated by countable colimits of $\mathbb Q_{\geq 0}$-telescopes along trace class maps. It follows that if $D\neq 0$, there exists some such telescope $x$ with $\map(x,D)\neq 0$. Up to applying $\Sigma^k$ for some $k\in\mathbb Z$ (and observing that trace-class maps are closed under this operation), we find some such telescope $x\simeq\colim_{\mathbb Q_{\geq 0}}y(x_r)$ with a nonzero map $x\to D$.

    We now observe that $\pi_0\map(x,D)$ fits in a Milnor short exact sequence. For it to be nonzero, we need either $\lim^1\pi_1\map(x_n,D)$ or $\lim\pi_0\map(x_n,D)$ to be nonzero. Up to taking $\Sigma x$, we may assume that the pro-object $\pi_0\map(x_n,D)$ is nonzero, so there must be some $n$ with a nonzero map $x_{n+1}\to D$ which restricts to a nonzero map $x_n\to x_{n+1}\to D$. Finding two rational numbers $n<r<s<n+1$, we may now factor the above as $$x_n\to \colim_{t\in \mathbb Q_{\geq n}^{<r}}y(x_t)\to \colim_{t\in\mathbb Q_{\geq n}^{<s}}y(x_t)\to x_{n+1}\to D$$ which provides the desired maps. 

    It follows from \Cref{cor:rigtrcl} that $\Rig_\kappa(\V)$ is rigid. 
\newline 

(Universal property)
    We move on to the second part of the claim. Consider the full subcategory of $\Fun^L(\M,\V)$ spanned by those functors that \emph{admit} symmetric monoidal structures\footnote{This is a rather useless object. We recommend the reader not consider it in their free time.}, denoted $\Fun^L_{(\otimes)}(\M,\V)$. 

    By \cite[Theorem 1.63, Remark 1.64]{maindbl}, we have an equivalence $$\Map^{L,\otimes}_\kappa(\M, \V)\simeq \Map^{iL,\otimes}(\M,\Ind(\V^\kappa))$$ 

    We make two claims: firstly, the inclusion $\Map^{L,\otimes}_\kappa(\M, \V)\subset \Map^{L,\otimes}(\M,\V)$ is an equivalence. This is because a symmetric monoidal functor $\M\to \V$ sends trace-class maps to trace-class maps, hence, using the rigidity of $\M$ and the fact that $\one_\V$ is $\kappa$-compact, it must send compact maps to $\kappa$-compact maps, which proves the claim by \cite[Corollary 2.40]{maindbl}.

    Secondly, the inclusions $$\Map^{L,\otimes}(\M,\Rig_\kappa(\V)) \supset \Map^{iL,\otimes}(\M,\Rig_\kappa(\V))\subset \Map^{iL,\otimes}(\M,\Ind(\V^\kappa))\subset \Map^{L,\otimes}(\M,\Ind(\V^\kappa))$$ are all equivalences. 
The first and last one are equivalences because $\M$ is rigid and the unit of both $\Rig_\kappa(\V), \Ind(\V^\kappa)$ are compact, so that \Cref{obs:morrig} applies. For the middle one, we note that by \Cref{cor:cpctexhaustQ} and \Cref{prop:locrigcharac}, $\M$ is generated under colimits by $\mathbb Q_{\geq 0}$-indexed diagrams along trace-class maps, so it suffices to show that these land in $\Rig_\kappa(\V)$ for any symmetric monoidal internal left adjoint $f:\M\to \Ind(\V^\kappa)$.

We start with the following observation: let $x\to z$ be a trace-class map in $\Ind(\V^\kappa)$. Compactness of the unit guarantees that it factors as $x\to y(z_0)\to z$ for some trace-class map $x\to y(z_0), z_0\in \V^\kappa$. 

Now let $m\in \M$ be compactly exhaustible, so we write $m\simeq\colim_\mathbb N m_n$ with trace-class transition maps by \Cref{prop:locrigcharac}. Each of the $f(m_n)\to f(m_{n+1})$ factors through a $y(z_n), z_n \in\V^\kappa$ by the above observation, where $f(m_n)\to y(z_n)$ is trace-class. Thus $f(m)\simeq\colim_\mathbb N y(z_n)$ with each $z_n\to z_{n+1}$ trace-class. 

The proper proof is a slight poset-theoretic elaboration on this idea. We use facts about posets that we justify after the proof. First, write $m\simeq \colim_{\mathbb Q_{\geq 0}}m_r$ with each transition map trace-class. Fix a cofinal embedding of $\mathbb Q_{\geq 0}\times[1]\to \mathbb Q_{\geq 0}$ where the source has the lexicographic ordering. This allows us to write $m\simeq \colim_{(r,i)\in \mathbb Q_{\geq 0}\times[1]}m_{(r,i)}$. Now for each $r\in\mathbb Q_{\geq 0}$, we have a trace-class map $f(m_{(r,0)})\to f(m_{(r,1)})$ which therefore factors through  some $y(z_r)$. 

We will prove below that the commutative square below is cocartesian : 
\[\begin{tikzcd}
	{\mathbb Q_{\geq 0}^\delta\times[1]} & {\mathbb Q_{\geq 0}\times [1]} \\
	{\mathbb Q_{\geq 0}^\delta\times [2]} & {\mathbb Q_{\geq 0}\times [2]}
	\arrow[from=2-1, to=2-2]
	\arrow[from=1-1, to=2-1]
	\arrow[from=1-2, to=2-2]
	\arrow[from=1-1, to=1-2]
\end{tikzcd}\]
where the inclusion $[1]\to [2]$ is the long edge, and where $\mathbb Q_{\geq 0}^\delta$ denotes the set $\mathbb Q_{\geq 0}$ viewed as a discrete category. 

Thus we can use the above $r$-wise factorizations to get an extension of our diagram $(r,i)\mapsto f(m_{(r,i)})$ on $\mathbb Q_{\geq 0}\times [1]$ to a diagram $(r,i)\mapsto \tilde m_{(r,i)}$ on $\mathbb Q_{\geq 0}\times [2]$ where for each nonnegative rational $r$, the corresponding $[2]$-indexed diagram is $f(m_{(r,0)})\to y(z_r)\to f(m_{(r,1)})$. Now the inclusion of the vertex $1$ induces a cofinal map $\mathbb Q_{\geq 0}\to \mathbb Q_{\geq 0}\times [2]$, so that $$f(m)\simeq\colim_{\mathbb Q_{\geq 0}\times [1]}f(m_{(r,i)}) \simeq \colim_{\mathbb Q_{\geq 0}\times [2]} \tilde m_{(r,i)}\simeq \colim_{\mathbb Q_{\geq 0}}y(z_r)$$ 

Thus in total, $f(m)$ is in $\Rig_\kappa(\V)$. 

\end{proof}
We used two facts about posets in the above proof. The first is :
\begin{lm}
    There exists a cofinal functor $\mathbb Q_{\geq 0}\times [1]\to \mathbb Q_{\geq 0}$. 
\end{lm}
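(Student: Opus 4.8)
The plan is to produce an order embedding $\psi\colon\mathbb{Q}_{\geq 0}\times[1]\to\mathbb{Q}_{\geq 0}$ — where the source carries the lexicographic order — whose image is unbounded, and then to observe that any such $\psi$ is automatically cofinal. (The bare statement is in fact even cheaper: the projection $\mathbb{Q}_{\geq 0}\times[1]\to\mathbb{Q}_{\geq 0}$ admits the left adjoint $q\mapsto(q,0)$, hence is cofinal. But an order embedding is what is actually used in the application above, since there one needs $\psi$ to be strictly monotone so that the transition maps indexed by the covering pairs $(r,0)<(r,1)$ remain composites that include a trace-class map.)

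The one substantial point is the construction of $\psi$, which I would carry out by a Cantor-style recursion. Enumerate $\mathbb{Q}_{\geq 0}=\{q_0,q_1,\dots\}$, and define rationals $a_n:=\psi(q_n,0)$ and $b_n:=\psi(q_n,1)$ by induction on $n$, maintaining the invariant that $0<a_j<b_j<a_k<b_k$ whenever $j,k\le n$ and $q_j<q_k$. If $q_n$ is larger than all of $q_0,\dots,q_{n-1}$ (in particular when $n=0$), I set $a_n,b_n$ to be two rationals just above the supremum of the values chosen so far; otherwise the invariant at the previous stage provides a nonempty open interval of rationals lying strictly above every value already attached to some $q_j<q_n$ and strictly below every value already attached to some $q_k>q_n$, and I put $a_n<b_n$ into it. The invariant is visibly preserved, so $\psi$ is strictly order-preserving for the lexicographic order, i.e.\ an order embedding; and since the running maximum $\max\{q_0,\dots,q_n\}$ tends to $+\infty$, the ``new record'' case occurs for infinitely many $n$, which forces the sequence $(b_n)$ to be unbounded, so $\psi$ has unbounded image.

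It then remains to verify cofinality, via the standard comma-category criterion for functors of $\infty$-categories (\cite[\S4.1]{HTT}): with $I:=\mathbb{Q}_{\geq 0}\times[1]$ lexicographically ordered, it is enough that for each $q\in\mathbb{Q}_{\geq 0}$ the full subposet $I_{\ge q}:=\{(r,i)\in I:\psi(r,i)\ge q\}$ have weakly contractible nerve. Since $I$ is a linear order (in particular a directed poset) and $\psi$ is order-preserving, $I_{\ge q}$ is an up-set in $I$, nonempty because $\psi$ has unbounded image; and a nonempty up-set in a directed poset is again directed (a common upper bound in $I$ of two of its elements lies back in the up-set), hence has weakly contractible nerve — any map from a finite simplicial set lands in a subposet admitting an upper bound, hence a terminal object.

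I do not expect this to be genuinely difficult. The only mildly subtle point, and the reason a closed formula will not work, is that $\mathbb{Q}_{\geq 0}$ is dense while the lexicographic order on $\mathbb{Q}_{\geq 0}\times[1]$ has covering pairs $(r,0)<(r,1)$; so a naive interleaving such as $\psi(r,i)=2r+i\varepsilon(r)$ is doomed, because the room to the right of $\psi(r,1)$ would have to be smaller than $2(s-r)$ for every rational $s>r$. The Cantor recursion sidesteps this, and everything else is bookkeeping.
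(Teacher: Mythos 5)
Your proof is correct, and it is the same Cantor-style idea the paper uses — the difference is mostly in packaging. The paper's proof is a one-liner: $\mathbb{Q}_{\geq 0}\times\mathbb{Q}_{\geq 0}$ with the lexicographic order is a countable dense linear order with a least and no greatest element, hence isomorphic to $\mathbb{Q}_{\geq 0}$ by Cantor's classification theorem; composing the evident inclusion $\mathbb{Q}_{\geq 0}\times[1]\hookrightarrow\mathbb{Q}_{\geq 0}\times\mathbb{Q}_{\geq 0}$ with that isomorphism gives a cofinal order embedding. You instead run the recursion directly, which is more self-contained (and only requires the ``forward'' half of back-and-forth, since an embedding rather than an isomorphism is all that is needed). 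Two remarks. First, your observation that the lemma as literally stated is trivial — the projection $\mathbb{Q}_{\geq 0}\times[1]\to\mathbb{Q}_{\geq 0}$ has left adjoint $q\mapsto(q,0)$, hence is cofinal — is correct, and you are right that the application in the paper genuinely needs an order \emph{embedding}, so the lemma is slightly understated relative to how it is used. Second, a small gap in your recursion: choosing $a_n,b_n$ ``just above the supremum'' at each new record does not by itself make the image unbounded, since a strictly increasing sequence of record values can still converge; you should fix, say, $a_n := 1+\max_{j<n}b_j$ and $b_n := a_n+1$ at each record step so that the values visibly tend to $+\infty$.
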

\begin{proof}
    $\mathbb Q_{\geq 0}\times\mathbb Q_{\geq 0}$ is in fact isomorphic to $\mathbb Q_{\geq 0}$ by Cantor's theorem. Thus we get an embedding $\mathbb Q_{\geq 0}\times [1]\to \mathbb Q_{\geq 0}$. This embedding is clearly cofinal. 
\end{proof}
And now the second one, we phrase more generally - we have applied it to $P=\mathbb Q_{\geq 0}, Q_0=~[1], Q_1=~[2]$: 
\begin{lm}
    Let $P,Q_0,Q_1$ be totally ordered posets, with an embedding $Q_0\to Q_1$. Suppose that $Q_0\to Q_1$ is both cofinal and initial. In this case, the following square is cocartesian: 
    \[\begin{tikzcd}
	{P^\delta\times Q_0} & {P\times Q_0} \\
	{P^\delta\times Q_1} & {P\times Q_1}
	\arrow[from=2-1, to=2-2]
	\arrow[from=1-1, to=2-1]
	\arrow[from=1-2, to=2-2]
	\arrow[from=1-1, to=1-2]
\end{tikzcd}\]
where all the products are meant in the lexicographic sense. 
\end{lm}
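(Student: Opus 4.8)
The plan is to check the universal property of the pushout directly: for every large $\infty$-category $\mathcal E$, the square should induce an equivalence
$\Fun(P\times Q_1,\mathcal E)\xrightarrow{\ \sim\ }\Fun(P\times Q_0,\mathcal E)\times_{\Fun(P^\delta\times Q_0,\mathcal E)}\Fun(P^\delta\times Q_1,\mathcal E)$,
equivalently that the comparison functor $c$ from the pushout $X$ (formed in $\widehat{\Cat}$) to $P\times Q_1$ is an equivalence. Since $Q_0\to Q_1$ is an order-embedding it is fully faithful, so $P^\delta\times Q_0\to P^\delta\times Q_1$ is fully faithful and $P^\delta\times Q_0\to P\times Q_0$ is faithful and bijective on objects; in particular both legs become monomorphisms of simplicial sets after applying the nerve, i.e. cofibrations for the Joyal model structure. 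As that model structure is left proper, the ordinary pushout of the corresponding nerves computes the homotopy pushout, hence models $X$; and that ordinary pushout is just the subcomplex $N(P^\delta\times Q_1)\cup N(P\times Q_0)\subseteq N(P\times Q_1)$, since the intersection of these two subcomplexes is exactly $N(P^\delta\times Q_0)$. So $c$ is the inclusion of this subcomplex, and it remains to prove it is a categorical equivalence.

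The combinatorial input is the following factorization, which is where the two hypotheses are used. Let $(p,q)\le(p',q')$ be a morphism of $P\times Q_1$. If $p=p'$ it lies in the fibre $\{p\}\times Q_1$, hence in $N(P^\delta\times Q_1)$. If $p<p'$, cofinality of $Q_0\to Q_1$ yields $q^+\in Q_0$ with $q\le q^+$ and initiality yields $q'^-\in Q_0$ with $q'^-\le q'$, and then $(p,q)\le(p,q^+)\le(p',q'^-)\le(p',q')$ exhibits the morphism as a composite of a ``vertical'' morphism, a morphism of $P\times Q_0$ (its endpoints have $Q_0$-coordinates and $p<p'$), and another vertical morphism. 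More generally the non-degenerate simplices of $N(P\times Q_1)$ not already in $X$ are exactly the strictly increasing chains that use at least two distinct $P$-coordinates and at least one $Q_1$-coordinate outside $Q_0$, and the factorization above lets one insert vertices with $Q_0$-coordinates around each jump of the $P$-coordinate, exhibiting every such missing simplex as an iterated face/composite of simplices that do lie in $X$.

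Using this, one shows $X\hookrightarrow N(P\times Q_1)$ is a categorical equivalence by attaching the missing non-degenerate simplices in increasing order of dimension, through a transfinite sequence of pushouts along inner horn inclusions: for a missing chain one uses the jumps of its $P$-coordinate together with the $Q_0$-vertices supplied by the factorization to name, at each stage, an inner horn $\Lambda^k_l\subseteq\Delta^k$ all of whose faces but the $l$-th are already present and having that chain as filler; this presents the inclusion as inner anodyne, hence a trivial cofibration. (Alternatively one argues $1$-categorically: $\tau_1 X$ is readily identified with $P\times Q_1$ — bijective on objects, full by the factorization above, and faithful because $P\times Q_1$ is a poset and the relations coming from the common subcategory $P^\delta\times Q_0$ collapse any two factorizations of a given morphism — and one then invokes that a pushout of $1$-categories along faithful, injective-on-objects functors is again a $1$-category computing the $\infty$-categorical pushout.)

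The main obstacle is precisely this last step: upgrading the transparent $1$-categorical identification of the pushout with $P\times Q_1$ to a statement in $\widehat{\Cat}$. One must either run the cell-attachment carefully enough to see that the horns that get filled are all inner, or locate the appropriate comparison between $1$-categorical and $\infty$-categorical amalgamated pushouts along subcategory inclusions; everything else — the description of the missing simplices and the interpolation of $Q_0$-vertices — is elementary, and is exactly where cofinality and initiality of $Q_0\hookrightarrow Q_1$ enter.
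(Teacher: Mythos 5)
You correctly identify the right reformulation: since both legs of the square become monomorphisms of simplicial sets after taking nerves, left properness of the Joyal model structure lets you compute the pushout in $\widehat{\Cat}$ as the union subcomplex $X := N(P^\delta\times Q_1)\cup N(P\times Q_0)\subseteq N(P\times Q_1)$, and the claim becomes that this inclusion is a categorical equivalence. Your interpolation of $Q_0$-valued vertices around $P$-jumps is also the right combinatorial input and is genuinely where cofinality and initiality of $Q_0\hookrightarrow Q_1$ enter. However, the final step, namely exhibiting $X\hookrightarrow N(P\times Q_1)$ as inner anodyne, is precisely where the content of the lemma sits, and you leave it as a plan rather than a proof. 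The property ``every missing simplex admits a factorization through simplices of $X$'' is a statement about chains, and this alone does not force inner-anodyne-ness: $\partial\Delta^2\subseteq\Delta^2$ has every edge present and the missing $2$-simplex ``factors'' through them, yet the inclusion is not a Joyal equivalence. One must actually construct a filtration and verify, at each stage, that one is filling a genuine inner horn and that the process exhausts all the missing simplices. Your fallback that a pushout of $1$-categories along faithful, injective-on-objects functors automatically computes the $\infty$-categorical pushout is likewise not a theorem one can simply ``locate''; the left-properness argument you invoke says the pushout of \emph{simplicial sets} is a homotopy pushout, but that simplicial set is generally not a quasicategory, and identifying its fibrant replacement with the nerve of the $1$-categorical pushout is exactly what needs proof.

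The paper avoids this entirely by a different route. It writes $P$ as a filtered colimit of finite ordinals $[n]$ and, using that the pushouts $[n]\amalg_{[0]}[1]\simeq[n+1]$ are preserved by $(-)^\delta$, reduces by induction to $P=[1]$. Similarly it writes $Q_1$ as a filtered colimit of finite subposets $F$ with $F\cap Q_0\hookrightarrow F$ initial and cofinal, and then by pushout pasting reduces to $Q_1=[n]$, $Q_0=\{0\le n\}$. In this explicit case both maps $P^\delta\times Q_0\to P\times Q_0$ and $P^\delta\times Q_1\to P\times Q_1$ are visibly pushed out from the same $\partial\Delta^1\to\Delta^1$ (the edge in the middle) via the spine decomposition of a finite linear order, which is the only inner-anodyne input needed, and the square then follows by pushout pasting. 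If you wish to salvage your approach, performing the paper's reductions first would make the remaining cell-attachment trivial; without them, you still owe the reader the filtration argument.
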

\begin{proof}
As $P$ is linearly ordered, it is a filtered colimit of posets of the form $[n]$, and so we may assume without loss of generality that it is of this form. Since the pushouts $[n]\coprod_{n=0}[1]\simeq~[n+1]$ are preserved by $(-)^\simeq$ (noting that $P^\simeq \simeq P^\delta$), we may also assume $P=[1]$. 

Similarly, $Q_1$ is a filtered colimit of its finite subposets. In fact, it is a filtered colimit of its finite subposets $F$ such that $F\cap Q_0\to F$ is both cofinal and initial. So we may assume without loss of generality that $Q_1=[n]$ and $Q_0$ is some initial and cofinal subposet thereof. In particular, $Q_0$ contains $0\leq n$, and so using a pushout pasting lemma, we may assume $Q_0= \{0\leq n\}\subset Q_1= [n]$. 

Now we note that in this special case, $\{0,1\}\times \{0\leq n\}\to \{0\leq 1\}\times \{0\leq n\}$ is pushed out from $\{(0,n),(1,0)\}\to \{(0,n)\leq (1,0)\}$ for general spine reasons, and so is $\{0,1\}\times [n]\to \{0\leq 1\}\times [n]$, so we are done. 
\end{proof}
\begin{lm}\label{lm:rigfprops}
    Let $\V\in\CAlg(\PrL_{\st})$. We have:
    \begin{enumerate}
        \item If $\one_\V$ is compact then the canonical map $\Rig(\V)\to \V$ is fully faithful. 
        \item $\V$ is locally rigid if and only if the map $\Rig(\V)\to \V$ admits a fully faithful left adjoint. 
    \end{enumerate}
\end{lm}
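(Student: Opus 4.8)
The two statements are really corollaries of results we have essentially already established. For (1), we combine \Cref{cor:cpctunitrigff}, which says that whenever $\one_\W$ is $\V$-atomic the map $\Rig_\V(\W)\to\W$ is fully faithful, with the fact that over $\V=\Sp$ being $\V$-atomic is the same as being compact (cf.\ \Cref{cor:atimpcomp} and \Cref{ex:VatV}): so there is nothing more to do than to specialize \Cref{cor:cpctunitrigff} to $\V=\Sp$. For (2), one direction is precisely \Cref{prop:compriglocrig}: if $\Rig(\V)\to\V$ admits a fully faithful $\Sp$-linear left adjoint, then since $\Rig(\V)$ is rigid over $\Sp$ and the left adjoint exhibits $\V$ as a symmetric monoidal localization of it, $\V$ is locally rigid. (We should note that, \emph{a priori}, a left adjoint to a symmetric monoidal colimit-preserving functor is only oplax monoidal, but as in the hypotheses of \Cref{prop:compriglocrig} we are asking for a genuinely $\Sp$-linear — and hence, being a left adjoint of an internal right adjoint, internal left adjoint — one.)

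So the only substantial content is the forward direction of (2): if $\V$ is locally rigid, then $\Rig(\V)\to\V$ admits a fully faithful left adjoint. But this is exactly \Cref{thm:locrigrigleft} (with the base being $\Sp$), whose proof is carried out via the explicit model $\Rig_\kappa(\V)$ of \Cref{thm:rigidification}. Concretely: pick an uncountable good cardinal $\kappa$ with $\V\in\CAlg(\PrL_{\st,\kappa})$, and use $\Rig(\V)\simeq\Rig_\kappa(\V)\subset\Ind(\V^\kappa)$. The colimit functor $p:\Ind(\V^\kappa)\to\V$ has a fully faithful left adjoint, namely $y:\V\to\Ind(\V^\kappa)$ restricted along the inclusion $\V^\kappa\hookrightarrow\V$ — wait, more precisely, since $\V$ is dualizable (being locally rigid), \cite[Theorem 1.49]{maindbl} gives a colimit-preserving left adjoint $\hat y:\V\to\Ind(\V^\kappa)$ to $p$, with $p\circ\hat y\simeq\id_\V$. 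The task is then to check that $\hat y$ lands inside the subcategory $\Rig_\kappa(\V)$. By \Cref{cor:trclexQ}, every compactly exhaustible $x\in\V$ can be written as a $\mathbb Q_{\geq 0}$-indexed colimit $\colim_r x_r$ with trace-class transition maps and $x_r\in\V^\kappa$; the key point (exactly as in the ``middle inclusion'' argument inside the proof of \Cref{thm:rigidification}) is that $\hat y$ carries such a colimit to $\colim_r y(x_r)$, using that $\hat y$ preserves colimits and that, on a trace-class map $x_r\to x_{r+1}$ between $\kappa$-compacts, $\hat y$ agrees (up to the canonical identifications) with $y$ — more carefully, one uses that trace-class maps between objects of $\V^\kappa$ are sent by $\hat y$ into $y(\V^\kappa)$-telescopes, which is what makes the image land in $\Rig_\kappa(\V)$. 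Since compactly exhaustible objects generate $\V$ under colimits (\Cref{thm:dblexhaustionV}, or \cite[Theorem 2.39]{maindbl}) and $\Rig_\kappa(\V)$ is closed under colimits in $\Ind(\V^\kappa)$, it follows that $\hat y$ factors through $\Rig_\kappa(\V)$, giving the desired left adjoint; its full faithfulness follows from $p\circ\hat y\simeq\id$.

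\textbf{Main obstacle.} The genuinely delicate step is verifying that $\hat y$ sends a $\mathbb Q_{\geq 0}$-telescope along trace-class maps to a $\mathbb Q_{\geq 0}$-telescope of the form $\colim_r y(x_r)$ inside $\Ind(\V^\kappa)=\PP_\Sp(\V^\kappa)$ — one must know that $\hat y$ restricted to $\V^\kappa$ is the Yoneda embedding $y$, and that the transition maps are transported correctly, rather than merely that $\hat y(x)$ happens to be \emph{some} object of $\Rig_\kappa(\V)$. This is morally clear because $p\hat y\simeq\id$ and $\hat y$ is determined by its restriction to $\V^\kappa$ via \cite[Theorem 1.49]{maindbl}, but spelling it out cleanly requires invoking the universal property identifying $\hat y|_{\V^\kappa}$ with $y$, precisely as in the second half of the proof of \Cref{thm:rigidification}. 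Once that identification is in hand, everything else is a formal consequence of results already in the paper, and in fact the cleanest write-up simply says: part (1) is \Cref{cor:cpctunitrigff} specialized to $\Sp$; in part (2), ``only if'' is \Cref{thm:locrigrigleft} and ``if'' is \Cref{prop:compriglocrig}.
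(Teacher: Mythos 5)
Your proposal is correct and follows essentially the same route as the paper's proof: part (1) is a direct citation of \Cref{cor:cpctunitrigff}; the ``if'' half of (2) is \Cref{prop:compriglocrig}; and the ``only if'' half of (2) is proved by checking that $\hat y : \V \to \Ind(\V^\kappa)$ lands in $\Rig(\V)$, which follows from \Cref{cor:trclexQ} and the identification $\hat y(\colim_{\mathbb Q_{\geq 0}} x_r) \simeq \colim_{\mathbb Q_{\geq 0}} y(x_r)$ for compactly exhaustible presentations.

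One small inaccuracy in your write-up of the ``delicate step'': you say one must know that ``$\hat y$ restricted to $\V^\kappa$ is the Yoneda embedding $y$.'' That is not true in general --- $\hat y(m) \to y(m)$ is an equivalence only when $m$ is $\Sp$-atomic, and a $\kappa$-compact object of a dualizable (but not compactly generated) $\V$ need not be atomic. What one actually uses is the weaker standard fact that on a telescope $x = \colim_{\mathbb Q_{\geq 0}} x_r$ with compact transition maps one has $\hat y(x) \simeq \colim_r y(x_r)$: each compact map $x_r \to x_s$ lifts (by the definition of a compact map) to a map $y(x_r) \to \hat y(x_s)$, and interleaving these with the natural transformation $\hat y \to y$ shows $\colim_r \hat y(x_r) \simeq \colim_r y(x_r)$. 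Your subsequent ``more carefully'' sentence is closer to the right statement, and the conclusion you draw is the one the paper uses, so this does not affect the validity of the argument. You also attribute the proof of \Cref{thm:locrigrigleft} to the $\Rig_\kappa$ model, whereas the paper's general proof of that theorem goes through the transfinite construction $\W^{(\alpha)}$ in the following subsection; but since you give the direct $\Sp$-specific argument anyway, no circularity arises.
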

\begin{rmk}
    The converse to the first statement is not clear to the author, and we are in fact enclined to believe it is wrong in full generality. 
\end{rmk}
\begin{proof}
   1. was proved in \Cref{cor:cpctunitrigff}.
 
  2. If $\V$ is locally rigid, then compact maps are trace-class, and it follows that $\hat y :\V\to \Ind(\V^\kappa)$ lands in $\Rig(\V)$ by \Cref{cor:cpctexhaustQ} : indeed, compactly exhaustible objects generate $\V$ under colimits, and if $x=\colim_{\mathbb Q_{\geq 0}}x_r$ with compact transition maps, then $\hat y(x) = \colim_{\mathbb Q_{\geq 0}} y(x_r)$, and each $y(x_r)\to y(x_s)$ is trace-class, as it was so in $\V$. In particular, $\hat y$ the desired fully faithful left adjoint. 

   Conversely, if there is such a left adjoint, then \Cref{prop:compriglocrig} kicks in, showing that $\V$ is locally rigid. 
\end{proof}
In particular, point 2. indicates that locally rigid categories are always (canonically) ``completions'' of rigid categories.
\begin{cons}
    Let $\V\in\CAlg(\PrL_{\st})$. By \Cref{ex:atgenrig}, $\Ind(\V^\dbl)$ is rigid and hence the canonical symmetric monoidal colimit-preserving functor $\Ind(\V^\dbl)\to \V$ factors through $\Rig(\V)$. Beware that in general, the induced map $\Ind(\V^\dbl)\to\Rig(\V)$, while always fully faithful, is not in general an equivalence. 
\end{cons}
\begin{ex}
    Let $\V\in\CAlg(\PrL_{\st})$ be such that any trace-class map factors through a dualizable. In this case, the canonical map $\Ind(\V^\dbl)\to \Rig(\V)$ is an equivalence. Indeed, for any sequential colimit along trace-class maps $\colim_\NN y(v_n)$ in $\Ind(\V)$, we can replace it by a colimit of dualizables $y(w_n)$ through which $v_n\to v_{n+1}$ factors. 
\end{ex}
Another example, related to the description of limits of dualizable categories in \cite[Section 4]{maindbl}, is the following: 
\begin{ex}\label{ex:prodrig}
    Let $\V_i$ be a family of rigid compactly generated $\Sp$-algebras, and consider their product $\prod_i \V_i$. In this case, the rigidification is particularly simple: we have $\Rig(\prod_i\V_i)\simeq\Ind(\prod_i \V_i^\omega)$. 

    Indeed consider, in $\prod_i \V_i$, a trace-class map $(v_i)\to (w_i)$. Each $v_i\to w_i$ is trace-class in $\V_i$, and hence compact ($\one_{\V_i}$ is compact) and hence factors through a compact ($\V_i$ is compactly generated) and hence through a dualizable ($\V_i$ is rigid). As dualizables in $\prod_i\V_i$ are detected pointwise, this implies that $(v_i)\to (w_i)$ factors through a dualizable object. Thus the result follows from the previous example.  
\end{ex}
More generally, as an application of this description of rigidification together with \Cref{lm:2foldloccpct}, we obtain: 
\begin{cor}\label{cor:limrig}
    The category $\CAlg\rig$ has all limits and the forgetful functor $\CAlg\rig\to~\Prdbl$ preserves them.
\end{cor}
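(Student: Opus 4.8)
The plan is to treat the two assertions in turn. For the existence of limits in $\CAlg\rig$, I would use that $\Rig_\Sp$ is right adjoint to the fully faithful inclusion $\CAlg\rig\hookrightarrow\CAlg(\PrL_{\st})$, that $\CAlg(\PrL_{\st})$ has all limits (the forgetful functor to $\PrL_{\st}$ creates them and $\PrL_{\st}$ has all limits), and the general fact that a coreflective subcategory of a category with limits has all limits, computed by applying the coreflector to the ambient limit. Thus $\lim_I\W_\bullet\simeq\Rig_\Sp(\lim_I^{\CAlg(\PrL_{\st})}\W_\bullet)$, the ambient limit having underlying stable category the limit of $\W_\bullet$ formed in $\PrL_{\st}$.

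To see that $\CAlg\rig\to\Prdbl$ preserves this limit, I must identify the underlying dualizable category of $\Rig_\Sp(\lim_I^{\CAlg(\PrL_{\st})}\W_\bullet)$ with the limit of the underlying diagram formed in $\Prdbl$ (which exists by \cite[Section 4]{maindbl}; recall this is in general \emph{not} the limit formed in $\PrL_{\st}$, the discrepancy being governed by the failure of the unit to be compact). Since every limit is built from products and pullbacks and $\Rig_\Sp$ and the forgetful functors are compatible with this, it is enough to treat those two shapes. As an organizing remark, one may also note that by \Cref{obs:morrig} every morphism of rigid $\Sp$-algebras is an internal left adjoint and that a rigid $\Sp$-algebra is precisely a commutative algebra in $\Prdbl$ (using \Cref{lm:rigimpldbl} for the non-obvious direction), so $\CAlg\rig\simeq\CAlg(\Prdbl)$ compatibly with the forgetful functors; from this viewpoint the content for a given diagram is that the commutative-algebra limit formed in $\Prdbl$ is genuinely rigid.

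For a pullback $\W_1\times_{\W_0}\W_2$ of rigid $\Sp$-algebras the unit stays compact, since finite limits commute with filtered colimits; hence by \Cref{cor:cpctunitrigff} the canonical map $\Rig_\Sp(\W_1\times_{\W_0}\W_2)\to\W_1\times_{\W_0}\W_2$ is fully faithful, and I would show it is an equivalence --- i.e.\ that the pullback is already rigid --- via \Cref{cor:rigtrcl}. Call a map in the pullback \emph{locally compact} if it is compact after both projections; by \Cref{lm:2foldloccpct} and \Cref{rmk:2foldtrcl}, applied with base and module both the pullback (whose unit is compact, hence $\omega_1$-compact), the composite of any two locally compact maps is trace-class, and since each $\W_i$ is generated under colimits by telescopes along trace-class maps these composites supply a generating set of trace-class maps. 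In this case the pullback formed in $\Prdbl$ coincides with the one in $\PrL_{\st}$, again because the unit is compact, so the forgetful functor preserves pullbacks. For a product $\prod_i\W_i$ the unit is no longer compact, and here I would use the model of \Cref{thm:rigidification}: $\Rig_\Sp(\prod_i\W_i)$ is the full subcategory of $\Ind((\prod_i\W_i)^\kappa)$ generated by $\mathbb{Q}_{\geq 0}$-telescopes along trace-class maps; a trace-class map in $\prod_i\W_i$ is a family of trace-class maps, each factoring through a dualizable object since $\W_i$ is rigid; and comparing with the description of $\prod_i^{\Prdbl}\W_i$ in \cite[Section 4]{maindbl} --- of which \Cref{ex:prodrig} is the compactly generated case --- identifies the two.

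I expect the main obstacle to be this comparison step: matching the trace-class-telescope model of the rigidification of the limit with the a priori different $\Prdbl$-limit, and in particular producing enough trace-class maps inside the limit. This is exactly what \Cref{lm:2foldloccpct} is designed to supply, and its hypotheses --- local rigidity of the base and $\omega_1$-compactness of the unit --- are the reason the argument runs smoothest for countable-shape limits and needs the separate $\Ind$-model treatment for products. The delicate bookkeeping is in distinguishing maps that are genuinely trace-class from those that are merely compact or locally compact.
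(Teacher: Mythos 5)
Your proof has several serious gaps, concentrated in the pullback case. The product case and the abstract existence argument (coreflective subcategory of a complete category) are essentially fine, though the paper's framing via the fully faithful inclusion $\CAlg\rig\hookrightarrow\CAlg(\Prdbl)$ is more efficient since it handles existence and forgetful-preservation simultaneously.

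The central problem is your appeal to \Cref{lm:2foldloccpct}: that lemma hypothesizes that the base $\W$ is \emph{locally rigid}, but you are applying it with $\W$ equal to the pullback, whose (local) rigidity is precisely what is under proof. The argument is circular. The paper instead uses \Cref{add:doubletrcl}, the non-presentable trace-class version which has no local rigidity hypothesis. Moreover, the notion of ``locally compact'' in \Cref{lm:2foldloccpct} is defined by tensoring against compact maps of $\W$ inside $\M$; your ad hoc ``compact after both projections'' is a different condition and the lemma does not speak to it.

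Even with \Cref{add:doubletrcl} in hand, there is a second obstruction you do not address: in the strict pullback $\W_1\times_{\W_0}\W_2$ the internal homs are \emph{not} computed pointwise (the legs $\W_i\to\W_0$ need not preserve internal homs), so a family of trace-class witnesses in the $\W_i$ does not naively assemble into a witness in the pullback---one needs a homotopy between the two induced witnesses in $\W_0$. This is exactly why the paper passes through the doubly lax pullback $\overset{\rightarrow}{\V}$, whose internal homs are computed via \Cref{lm:hominlaxfib}, and then uses \Cref{cor:rigpb=laxpb} to transport rigidification back. Without this maneuver, \Cref{cor:rigtrcl} cannot be applied as you propose.

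Two smaller inaccuracies: (a) You assert the $\Prdbl$-pullback and the $\PrL_{\st}$-pullback coincide because the unit is compact; the paper treats them as genuinely different objects ($\V$ versus $\V'$), with only a fully faithful comparison $\V\to\V'$, and I do not see that compactness of the unit forces them to agree. (b) Your identification $\CAlg\rig\simeq\CAlg(\Prdbl)$ is not justified by \Cref{lm:rigimpldbl}, which requires the multiplication to be a $\W$-internal left adjoint, whereas a commutative algebra in $\Prdbl$ only gives an $\Sp$-internal left adjoint; the extra $\W\otimes\W$-linearity of $\mu^R$ is a nontrivial condition.
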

We thank Sasha Efimov for pointing out an error in an earlier proof of this corollary and indicating how to fix it. 

Note that this forgetful functor exists by \Cref{obs:morrig}. Before proving this, we will need a preliminary which is about trace-class maps in lax pullbacks. First, we analyze internal homs in lax pullbacks, by analyzing a simpler case. 
\newcommand{\laxfib}{\overset{\rightarrow}{\fib}}
\begin{lm}\label{lm:hominlaxfib}
    Let $f:C\to D$ be a left adjoint symmetric monoidal functor, and consider the category $\laxfib(f):=(C\times D)\times_{(D\times D)}D^{\Delta^1}$ of tuples $(c, d, f(c)\to d)$ where $c\in C,d\in D$. 

    If $C,D$ have internal homs and $C$ admits pullbacks, then $\laxfib(f)$ admits internal homs, and they are given by the following formula: $$\hom((c_0,d_0), (c_1,d_1)) =(\hom(c_0,c_1)\times_{f^R\hom(f(c_0),d_1)} f^R\hom(d_0,d_1), \hom(d_0,d_1))$$ and the map 
    $f(\hom(c_0,c_1)\times_{f^R\hom(f(c_0),d_1)} f^R\hom(d_0,d_1))\to \hom(d_0,d_1))$ is given by the projection onto a term in the pullback followed by the counit map of the $f\dashv f^R$-adjunction. 

    In particular, the forgetful functor $\laxfib(f)\to D$ preserves internal homs. 
\end{lm}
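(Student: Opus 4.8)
The statement to prove is \Cref{lm:hominlaxfib}: that $\laxfib(f)$ admits internal homs given by the stated pullback formula, and that the forgetful functor to $D$ preserves them. The plan is to verify the universal property of the proposed object directly, by computing mapping spaces out of an arbitrary triple. Concretely, I would first recall that an object of $\laxfib(f) = (C\times D)\times_{(D\times D)}D^{\Delta^1}$ is a triple $(c,d,\phi\colon f(c)\to d)$, and a morphism $(c,d,\phi)\to (c',d',\phi')$ is a pair of maps $c\to c'$, $d\to d'$ making the evident square with $\phi,\phi'$ commute. Since $\laxfib(f)$ is defined as a (strict) pullback of categories, a mapping space in it is the corresponding pullback of mapping spaces; I would write this out. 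The key point is then that the $\Delta^1$-component contributes, for a third object $(c_2,d_2,\phi_2)$, the space $\Map_{D^{\Delta^1}}\big(f(-)\text{-component of the tensor}, \phi_2\big)$, and one must identify this using the tensoring structure on $\laxfib(f)$.

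The cleanest route is probably to exhibit, for each $(c_0,d_0,\phi_0)$, the object $(c_1,d_1,\phi_1)$ defined by the formula and construct a natural equivalence
\[
\Map_{\laxfib(f)}\big((c_2,d_2,\phi_2)\otimes (c_0,d_0,\phi_0),\, (c_1,d_1,\phi_1)\big)\;\simeq\;\Map_{\laxfib(f)}\big((c_2,d_2,\phi_2),\, \hom((c_0,d_0,\phi_0),(c_1,d_1,\phi_1))\big).
\]
For this I need the symmetric monoidal structure on $\laxfib(f)$: the tensor of $(c,d,\phi)$ and $(c',d',\phi')$ is $(c\otimes c',\, d\otimes d',\, f(c\otimes c')\simeq f(c)\otimes f(c')\xrightarrow{\phi\otimes\phi'} d\otimes d')$, using that $f$ is symmetric monoidal. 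Then unwinding the pullback-of-mapping-spaces description of the left side, one gets a pullback over $\Map_C(c_2\otimes c_0, c_1)$, $\Map_D(d_2\otimes d_0,d_1)$ and the path space forcing compatibility with $\phi_2\otimes\phi_0$ and $\phi_1$; adjoining $c_0$ (resp. $d_0$) on the $C$-side (resp. $D$-side) and using $f\dashv f^R$ on the cross-term $\Map_D(f(c_2\otimes c_0),d_1)\simeq \Map_C(c_2, f^R\hom(f(c_0),d_1))$ turns this into exactly the mapping space into the stated pullback object in $C$, paired over $\hom(d_0,d_1)$ in $D$ — i.e. the right-hand side. The description of the structure map $f\big(\hom(c_0,c_1)\times_{f^R\hom(f(c_0),d_1)}f^R\hom(d_0,d_1)\big)\to \hom(d_0,d_1)$ as ``project, then counit'' falls out of tracking where the $D^{\Delta^1}$-coordinate of the internal hom goes under this identification.

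The ``in particular'' — that $\laxfib(f)\to D$ preserves internal homs — is then immediate from the formula: the $D$-coordinate of $\hom((c_0,d_0),(c_1,d_1))$ is literally $\hom(d_0,d_1)$, and one checks that the forgetful functor, being symmetric monoidal (it preserves $\otimes$ by construction) with a right adjoint on each side, intertwines the internal-hom adjunctions; alternatively just note that the $D$-projection of the displayed adjunction equivalence is the internal-hom adjunction in $D$. I expect the main obstacle to be bookkeeping the coherence of the $\Delta^1$-coordinate: making sure the path-space conditions (compatibility of the chosen maps with $\phi_0,\phi_1,\phi_2$ and their tensors) match up correctly on both sides of the adjunction, and that the use of $f\dashv f^R$ respects the symmetric monoidal naturality. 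Since the paper is content to state such lemmas and leave the coherences to the reader (cf. the proofs of \Cref{lm:swapat} and \Cref{cor:swapat}), I would construct the comparison maps explicitly and assert that the verifications are routine diagram chases, emphasizing only the one non-formal input — that $f$ is symmetric monoidal with a right adjoint, so that $f^R$ receives the lax structure making the cross-term pullback meaningful.
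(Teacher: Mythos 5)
Your proposal is correct and matches the paper's proof in all essentials: both compute $\Map_{\laxfib(f)}\big((c,d)\otimes(c_0,d_0),(c_1,d_1)\big)$ as a pullback of mapping spaces in $C$ and $D$ over the cross-term, apply the internal-hom adjunctions in each factor, and use $f\dashv f^R$ to transport the cross-term $\Map_D(f(c),\hom(f(c_0),d_1))$ into $C$, so that the pullback coalesces into $\Map_C\big(c,\hom(c_0,c_1)\times_{f^R\hom(f(c_0),d_1)}f^R\hom(d_0,d_1)\big)\times_{\Map_D(f(c),\hom(d_0,d_1))}\Map_D(d,\hom(d_0,d_1))$. The paper carries out the same "brute force" chain of equivalences in display form and, like you, deduces the "in particular" clause by inspecting the $D$-coordinate of the formula.
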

\begin{proof}
This is a ``brute force'' calculation. Fix $(c,d)\in\laxfib(f)$ and we compute $$\Map_{\laxfib(f)}((c,d)\otimes (c_0,d_0), (c_1,d_1)) \simeq \Map_C(c\otimes c_0, c_1)\times_{\Map_D(f(c)\otimes f(c_0), d_1)}\Map_D(d\otimes d_0, d_1)$$ 
$$\simeq \Map_C(c,\hom(c_0,c_1))\times_{\Map_D(f(c),\hom(f(c_0),d_1)} \Map_D(d,\hom(d_0,d_1)) $$

$$\simeq \Map_C(c,\hom(c_0,c_1))\times_{\Map_D(f(c),\hom(f(c_0),d_1)}\map_D(f(c),\hom(d_0,d_1))\times_{\Map_D(f(c),\hom(d_0,d_1))}\Map_D(d,\hom(d_0,d_1)) $$

$$\simeq \Map_C(c,\hom(c_0,c_1)\times_{f^R\hom(f(c_0),d_1)} f^R\hom(d_0,d_1))\times_{\Map_D(f(c),\hom(d_0,d_1))}\Map_D(d,\hom(d_0,d_1))$$

$$\simeq \Map_C(c,H_C)\times_{\Map_D(f(c),H_D)}\Map_D(d,H_D)$$ 
where $H=(H_C,H_D)$ is the object described in the statement of the lemma. Unravelling the string of equivalences, one finds that the relevant structure maps are the ones we claimed, and therefore this proves the lemma. 
\end{proof}
We now discuss trace-class maps in pullbacks along $\hom$-preserving symmetric monoidal functors: 
\begin{cor}\label{cor:comppwtrcl}
Let $C_0\xrightarrow{p_0}C_{01}\xleftarrow{p_1}C_1$ be a span of symmetric monoidal functors between symmetric monoidal categories, and let $C$ be the pullback. Assume that $C_0,C_1$ have internal homs and that they are preserved by $p_0$ and $p_1$. Finally, let $f:x\to y, g:y\to z$ be two maps in $C$ whose projection to each $C_i$ is trace-class. 

Their composite $gf: x\to z$ is trace-class in $C$. 
\end{cor}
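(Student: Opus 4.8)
The plan is to use the internal-hom characterization of trace-class maps from \Cref{rmk:trcl} together with the gluing principle behind \Cref{add:doubletrcl}.

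First I would record that $C$ is closed symmetric monoidal with internal homs computed componentwise. Since $p_0,p_1$ are symmetric monoidal and preserve internal homs, for objects $a=(a_0,a_1),b=(b_0,b_1)$ of $C$ the pair $(\hom_{C_0}(a_0,b_0),\hom_{C_1}(a_1,b_1))$ lifts canonically to an object of $C$ (via $p_0\hom_{C_0}(a_0,b_0)\simeq\hom_{C_{01}}(p_0a_0,p_0b_0)\simeq\hom_{C_{01}}(p_1a_1,p_1b_1)\simeq p_1\hom_{C_1}(a_1,b_1)$), and writing $\Map_C(a\otimes c,b)$ as the evident fibre product of mapping spaces shows it is the internal hom; only the internal homs of objects in the images of $p_0,p_1$ are needed in $C_{01}$, so no extra hypothesis on $C_{01}$ is required. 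In particular, for $x,z\in C$ the object $\hom_C(x,\one_C)\otimes z$ is $\bigl(\hom_{C_0}(x_0,\one_{C_0})\otimes z_0,\ \hom_{C_1}(x_1,\one_{C_1})\otimes z_1\bigr)$, the canonical map $q\colon\hom_C(x,\one_C)\otimes z\to\hom_C(x,z)$ projects to the canonical maps $q_0,q_1$ in $C_0,C_1$, and $p_0,p_1$ send all of this to the corresponding data over $C_{01}$. By \Cref{rmk:trcl} a map $h\colon x\to z$ in $C$ is trace-class iff its classifying map $\one_C\to\hom_C(x,z)$ factors through $q$; and since the three objects occurring are all componentwise pullbacks, the space of such factorizations is the fibre product $\mathrm{fib}_{\widetilde{h_0}}(q_{0,*})\times_{\mathrm{fib}_{\widetilde{p(h)}}(q_{01,*})}\mathrm{fib}_{\widetilde{h_1}}(q_{1,*})$ of the analogous spaces of factorizations in $C_0$, $C_1$ over that in $C_{01}$ (here $p(h)$ denotes the common image in $C_{01}$). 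So to see that $gf$ is trace-class it is enough to produce a lift $\ell_0$ of $g_0f_0$ through $q_0$, a lift $\ell_1$ of $g_1f_1$ through $q_1$, and a homotopy between their images in $\mathrm{fib}_{\widetilde{p(gf)}}(q_{01,*})$.

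These I would build out of lifts of $f$. By hypothesis $f_0,f_1$ are trace-class, so choose lifts $\widetilde f_0\colon\one_{C_0}\to\hom_{C_0}(x_0,\one_{C_0})\otimes y_0$ and $\widetilde f_1\colon\one_{C_1}\to\hom_{C_1}(x_1,\one_{C_1})\otimes y_1$ of the classifying maps of $f_0,f_1$, and set $\ell_i:=g_i\circ\widetilde f_i$ using functoriality of $\hom_{C_i}(x_i,\one_{C_i})\otimes(-)$ in the target; then $\ell_i$ is a lift of $g_if_i$. Applying $p_i$ and using that it is symmetric monoidal and preserves internal homs, $p_0(\ell_0)=p_0(g_0)\circ p_0(\widetilde f_0)$ and $p_1(\ell_1)=p_1(g_1)\circ p_1(\widetilde f_1)$, where $p_0(\widetilde f_0)$ and $p_1(\widetilde f_1)$ are now two lifts of one and the same trace-class map $p_0(f_0)=p_1(f_1)$ in $C_{01}$ (they classify the same map because $f$ is a morphism of $C$, and trace-classness is preserved by the symmetric monoidal functors $p_i$ by \Cref{rmk:trcl}), and $p_0(g_0)=p_1(g_1)$ is a trace-class map in $C_{01}$. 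Now \Cref{add:doubletrcl}, applied to the closed symmetric monoidal category $C_{01}$, says precisely that pushing two lifts of a trace-class map forward along a trace-class map yields homotopic-as-lifts results; hence $p_0(\ell_0)\simeq p_1(\ell_1)$ as lifts of $\widetilde{p(gf)}$. Plugging $(\ell_0,\ell_1,\text{this homotopy})$ into the fibre-product description above produces a factorization of the classifying map of $gf$ through $q$, so $gf$ is trace-class in $C$.

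The bookkeeping around the componentwise formula for internal homs in the strict pullback, and the identification of the space of trace-class witnesses of $h$ in $C$ with the corresponding fibre product of witness-spaces, is routine and only needs to be set up carefully. The conceptual heart — and the step to get right — is the appeal to \Cref{add:doubletrcl}: each $C_i$ only knows $g_if_i$ to be trace-class as a \emph{property}, but the assumption that \emph{both} $f$ and $g$ are pointwise trace-class lets us rigidify the witnesses in $C_0$ and $C_1$ and match them over $C_{01}$ up to a (non-canonical) homotopy of lifts, which is exactly the datum the pullback requires. This is the direct two-fold-composite analogue of the argument in \Cref{lm:2foldloccpct}.
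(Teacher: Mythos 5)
Your proposal is correct and follows essentially the same approach as the paper: compute internal homs in the pullback componentwise, reduce the trace-class witness for $gf$ to a pair of lifts in $C_0,C_1$ together with a homotopy over $C_{01}$, and obtain that homotopy by pushing forward lifts of $f$ along the trace-class $g$ and invoking \Cref{add:doubletrcl}. You spell out the fibre-product of witness-spaces and the observation that only internal homs of objects in the images of $p_0,p_1$ are needed in $C_{01}$ slightly more explicitly than the paper does, but the argument is the same.
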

\begin{proof}
We first point out that internal homs in $C$ are computed pointwise because of our assumption. 

Now, if we tried to prove that $f$ is trace-class, we would need to provide a lift: 
\[\begin{tikzcd}
	& {\hom_C(x,\one_C)\otimes y} \\
	{\one_C} & {\hom_C(x,y)}
	\arrow[from=1-2, to=2-2]
	\arrow[dashed, from=2-1, to=1-2]
	\arrow[from=2-1, to=2-2]
\end{tikzcd}\]
which, since internal homs and tensors are computed pointwise, would amount to lifts: 
\[\begin{tikzcd}
	& {\hom_{C_i}(x_i,\one_{C_i})\otimes y_i} \\
	{\one_{C_i}} & {\hom_{C_i}(x_i,y_i)}
	\arrow[from=1-2, to=2-2]
	\arrow[dashed, from=2-1, to=1-2]
	\arrow[from=2-1, to=2-2]
\end{tikzcd}\]
for $i=0,1$, \emph{together with} a homotopy between the induced lifts in $C_{01}$. 
\begin{rmk}
    This is the crucial difference between trace-class maps and dualizable objects, since in the case of dualizable objects the lifts are unique!
\end{rmk}
The lifts are there by assumption, but we cannot in general get a homotopy between them. 

Since $y\to z$ is also trace-class in $C_{01}$, \Cref{add:doubletrcl} guarantees that there \emph{is} a homotopy between the induced lifts for $gf$, and thus that $gf$ admits a trace-class witness in $C$, as was needed. 
\end{proof}
\begin{cor}\label{cor:trclinlaxpb}
    Let $C_0\xrightarrow{p_0}C_{01}\xleftarrow{p_1}C_1$ be a span of left adjoint symmetric monoidal functors between symmetric monoidal categories, which admit internal homs\footnote{This time, we do \emph{not} assume that $p_i$'s preserve internal homs}. Let $C$ denote their pullback and $\overset{\rightarrow}{C}$ the following doubly lax pullback: $$\overset{\rightarrow}{C} := \laxfib(p_0)\times_{C_{01}}\laxfib(p_1)$$
    There is a canonical inclusion $C\to \overset{\rightarrow}{C}$. If $f:x\to y, g:y\to z$ are maps in $C$ whose projection to each $C_i$ is trace-class, then $gf$ is trace-class in $\overset{\rightarrow}{C}$. 
\end{cor}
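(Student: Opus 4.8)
$\textbf{Proof plan.}$ The strategy is to reduce \Cref{cor:trclinlaxpb} to \Cref{cor:comppwtrcl} by identifying $\overset{\rightarrow}{C}$ with an ordinary pullback of categories along $\hom$-preserving symmetric monoidal functors. The key observation is that by \Cref{lm:hominlaxfib}, for a left adjoint symmetric monoidal functor $p_i:C_i\to C_{01}$, the lax fiber $\laxfib(p_i)$ has internal homs, and — crucially — the forgetful functor $\laxfib(p_i)\to C_{01}$ \emph{preserves} them. Thus, writing $\overset{\rightarrow}{C} = \laxfib(p_0)\times_{C_{01}}\laxfib(p_1)$, we have presented $\overset{\rightarrow}{C}$ as a pullback of a span $\laxfib(p_0)\to C_{01}\leftarrow\laxfib(p_1)$ of symmetric monoidal functors, and these functors preserve internal homs. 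This is precisely the hypothesis of \Cref{cor:comppwtrcl} (with $C_0,C_1$ there being our $\laxfib(p_0),\laxfib(p_1)$, and $C_{01}$ unchanged).

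$\textbf{Steps.}$ First I would spell out the canonical inclusion $C\to\overset{\rightarrow}{C}$: an object of $C$ is a triple $(c_0,c_1,p_0(c_0)\simeq p_1(c_1))$, and this maps to the tuple given by $(c_0, p_0(c_0), \id)$ in $\laxfib(p_0)$, $(c_1,p_1(c_1),\id)$ in $\laxfib(p_1)$, together with the given identification over $C_{01}$; this is manifestly symmetric monoidal and one checks it is fully faithful, though full faithfulness is not even needed for the statement. Second, given $f:x\to y$ and $g:y\to z$ in $C$ with trace-class projections to each $C_i$, I would trace through what these become in $\laxfib(p_i)$: since the projection $\laxfib(p_i)\to C_i$ is symmetric monoidal, and the image of $x$ (etc.) under $C\to\overset\rightarrow{C}\to\laxfib(p_i)$ has $C_i$-component $c_i$ with the identity structure map, the image of $f$ in $\laxfib(p_i)$ projects to the trace-class map $f_i$ in $C_i$ along the (symmetric monoidal, hence trace-class-preserving) forgetful functor — wait, that is the wrong direction, so instead I note directly that a map in $\laxfib(p_i)$ whose $C_i$-component is trace-class and whose $D$-component is induced by applying $p_i$ is itself trace-class: indeed trace-class witnesses can be transported along the section $c\mapsto (c,p_i(c),\id)$, which is symmetric monoidal. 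So $f,g$ have trace-class images in both $\laxfib(p_0)$ and $\laxfib(p_1)$. Third, apply \Cref{cor:comppwtrcl} to the span $\laxfib(p_0)\to C_{01}\leftarrow \laxfib(p_1)$ — whose legs preserve internal homs by \Cref{lm:hominlaxfib} — to conclude that the composite $gf$ is trace-class in the pullback $\laxfib(p_0)\times_{C_{01}}\laxfib(p_1) = \overset{\rightarrow}{C}$.

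$\textbf{Main obstacle.}$ The subtle point, and the one I would be most careful about, is verifying that a map in $\laxfib(p_i)$ coming from a trace-class map in $C$ (via the section $c\mapsto (c,p_i(c),\id)$) really is trace-class \emph{in $\laxfib(p_i)$}, not merely that its $C_i$-component is. This is exactly the kind of statement that \Cref{rmk:trcl} makes automatic: the section $C_i\to\laxfib(p_i)$, $c\mapsto(c,p_i(c),\id)$, is symmetric monoidal (it is a section of a symmetric monoidal functor and one checks the structure maps agree), and arbitrary symmetric monoidal functors preserve trace-class maps, whether or not they are closed. So a trace-class map $f_i:x_i\to y_i$ in $C_i$ pushes forward to a trace-class map in $\laxfib(p_i)$; applying this to both $i=0$ and $i=1$ (the components of $f$ and $g$ in $C$) gives the hypotheses of \Cref{cor:comppwtrcl}. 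Once this is in place, everything else is a matter of unwinding which category plays which role, and the cited corollary does the real work of producing the trace-class witness (including the needed homotopy in $C_{01}$, which is where the $\emph{two}$ maps $f,g$ and \Cref{add:doubletrcl} are essential).
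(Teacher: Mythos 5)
Your proposal is correct and matches the paper's argument step for step: both reduce to \Cref{cor:comppwtrcl} via \Cref{lm:hominlaxfib} applied to the span $\laxfib(p_0)\to C_{01}\leftarrow \laxfib(p_1)$, and both verify the hypotheses by pushing the trace-class witnesses of $f_i,g_i$ forward along the symmetric monoidal section $c\mapsto (c,p_i(c),\id)$ of the forgetful functor. The brief self-correction midway (abandoning the forgetful functor $\laxfib(p_i)\to C_i$ in favor of the section in the other direction) lands you exactly on the paper's reasoning.
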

\begin{proof}
    By \Cref{lm:hominlaxfib}, the span $\laxfib(p_0)\to C_{01}\leftarrow \laxfib(p_1)$ fits the hypotheses of the previous corollary. It thus suffices to show that the projection of $f$ (resp. $g$) to $\laxfib(p_i)$ is trace-class. 

    But this projection is the image under the canonical functor $C_i\to \laxfib(p_i), c\mapsto (c,p_i(c),\id: p_i(c)=p_i(c))$ of the projection of $f$ (resp. $g$) to $C_i$. Thus, since that projection is assumed trace-class, so is its image in $\laxfib(p_i)$. 
\end{proof}
Finally, we need to indicate how lax pullbacks interact with rigidification. The following is the key lemma: 
\begin{lm}
    Let $f,g: \V\to \W$ be two morphisms in $\CAlg(\PrL_\st)$, with $\V$ rigid. Any morphism of symmetric monoidal functors $\eta: f\to g$ is invertible. 
\end{lm}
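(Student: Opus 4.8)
The plan is to exploit rigidity of $\V$ by reducing the statement to a claim about dualizable objects, for which natural transformations are automatically invertible once they are so on the unit. Concretely, I would first recall that since $\V$ is rigid, every object $v\in\V$ is dualizable (by the Lemma following \Cref{lm:rigimpldbl}, rigid means $\V$-atomics and dualizables coincide, and $\V$ is $\V$-atomically generated... but more robustly, for \emph{any} rigid $\V$-algebra over $\Sp$, a symmetric monoidal colimit-preserving functor sends dualizables to dualizables, and in a rigid category every object is a colimit of dualizables). The key structural input is \Cref{cor:rigselfdual}: the functor $\V\to\Fun^L_{\Sp}(\V,\Sp)$ classifying the pairing $\mu$ followed by $\eta^R$ is an equivalence, so $\V$ is self-dual, and in particular every object of $\V$ is dualizable.

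The main step: because $f,g:\V\to\W$ are symmetric monoidal and colimit-preserving, they send a dualizable object $v\in\V$ (with dual $v^\vee$) to dualizable objects $f(v),g(v)\in\W$ with duals $f(v^\vee),g(v^\vee)$. A symmetric monoidal natural transformation $\eta:f\to g$ is in particular monoidal, so $\eta_v$ is compatible with the duality data: the component $\eta_{v^\vee}:f(v^\vee)\to g(v^\vee)$, together with $\eta_v$, must be mutually ``dual'' in the sense that $\eta_{v^\vee}$ is the dual (transpose) of $\eta_v$ up to the canonical identifications. Then I would invoke the standard fact that a morphism between dualizable objects in a symmetric monoidal category whose dual/transpose is also a morphism in the same direction — equivalently, a morphism of dualizable objects that is part of a monoidal transformation — is automatically invertible: its inverse is (a twist of) the transpose of $\eta_{v^\vee}$. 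The cleanest way to see this: the transpose of $\eta_v: f(v)\to g(v)$ is a map $g(v)^\vee\to f(v)^\vee$, i.e. (after identifying duals via the monoidal structure of $f,g$) a map $g(v^\vee)\to f(v^\vee)$; naturality of $\eta$ and monoidality force the composite of this with $\eta_{v^\vee}$ to be, on both sides, the identity (this is a diagram chase using the evaluation and coevaluation of $v$ and the compatibility $\eta_{v\otimes w}=\eta_v\otimes\eta_w$, $\eta_{\one}=\id$). Hence $\eta_{v^\vee}$ is inverse to the transpose, which forces $\eta_v$ to be an equivalence.

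Having established that $\eta_v$ is an equivalence for all dualizable $v$, I would finish by a colimit argument: the full subcategory of $\V$ on which $\eta$ is an equivalence is closed under colimits (since $f,g$ are colimit-preserving and equivalences are closed under colimits in the arrow category), and it contains all dualizable objects, which generate $\V$ under colimits because $\V$ is rigid (every object of a rigid $\Sp$-algebra is a colimit of dualizables — e.g.\ apply \Cref{thm:dblexhaustionV} together with the fact that atomic$=$dualizable here, or simply: $\V\simeq\Fun^L_\Sp(\V,\Sp)$ and $\PP_\Sp(\V^\omega)\to\V$ is essentially surjective with the relevant generators dualizable). Therefore $\eta$ is a natural equivalence.

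The main obstacle I anticipate is the coherent bookkeeping in the second step: making precise, at the $\infty$-categorical level, that a symmetric monoidal natural transformation between symmetric monoidal functors is compatible with duality data, i.e.\ that $\eta_{v^\vee}$ is genuinely identified with the transpose/inverse-transpose of $\eta_v$. One wants to avoid an ad hoc simplicial computation. The slickest route is probably to package this via the induced functor on dualizable objects landing in a $2$-category (or $(\infty,2)$-category) where dualizable objects have adjoints/inverses, so that a (lax) monoidal transformation restricted to invertibles-plus-duals is automatically invertible; alternatively one can cite that the transpose operation $v\mapsto v^\vee$ is an equivalence $\V^{op}\simeq\V$ of symmetric monoidal $\infty$-categories intertwining $f$ and $g$, under which $\eta$ corresponds to $\eta^{-\text{transpose}}$, and then $\eta\circ\eta^{\text{tr}}=\id$ on the nose by the triangle identities. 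Either way the conceptual content — transpose of $\eta_v$ is its inverse because everything in sight is dualizable — is clear; it is only the coherence that requires care.
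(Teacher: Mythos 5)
Your first step is correct and classical: a symmetric monoidal natural transformation between symmetric monoidal functors is automatically invertible on dualizable objects, by the transpose/triangle-identity argument you sketch. But the second step contains a genuine gap: it is \emph{not} true that a rigid $\Sp$-algebra $\V$ is generated under colimits by its dualizable objects, and the paper in fact warns about exactly this just before the proof --- ``here we do not necessarily have a sufficient supply of dualizable objects in $\V$.'' Rigidity over $\Sp$ only asserts that the unit is compact, that $\V$ is dualizable over $\Sp$, and that the multiplication map is an internal left adjoint; it does \emph{not} assert compact (let alone rigidly compact) generation. A concrete counterexample is $\V=\Sh([0,1];\Sp)$: $[0,1]$ is compact Hausdorff, so this is rigid, but its dualizable objects are essentially locally constant sheaves with perfect fibres, and these do not generate (e.g.\ skyscraper sheaves are not in their colimit-closure). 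Your proposed invocations of \Cref{thm:dblexhaustionV} and of $\PP_\Sp(\V^\omega)\to\V$ do not rescue this: atomic presentability is a condition on the \emph{weight} factoring through the atomic-maps functor, not a claim that the diagram is valued in atomic (i.e.\ dualizable) objects; and the objects of $\V^\omega$ are neither plentiful nor the point.

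What the paper actually uses in place of ``generated by dualizables'' is ``generated by trace-class telescopes.'' For a trace-class map $\alpha:x\to y$ in $\V$, one uses the trace-class witness $\one\to \hom(x,\one)\otimes y$ together with monoidality of $f,g$ and the transformation $\eta$ to manufacture a diagonal filler $g(x)\to f(y)$ fitting into the naturality square for $\alpha$. By \Cref{cor:trclexQ}, every $\omega_1$-compact object of $\V$ is a $\mathbb Q_{\geq 0}$-colimit along trace-class maps, so these diagonals exhibit the towers $\{f(x_r)\}$ and $\{g(x_r)\}$ as mutually pro-inverse; after passing to the colimit (which $f$ and $g$ preserve), $\eta_x$ is invertible for all $\omega_1$-compacts $x$, and hence for all $x$. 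Conceptually this \emph{is} the same phenomenon you invoke --- ``transpose of $\eta$ is an inverse'' --- but implemented at the level of trace-class \emph{maps} rather than dualizable \emph{objects}, which is precisely the replacement that rigid (as opposed to rigidly compactly generated) categories force on you. Your argument would be correct with the added hypothesis that $\V$ is generated under colimits by dualizable objects, but as written it does not prove the lemma in the generality it is stated.
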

This lemma is the rigid analogue of a classical result for small symmetric monoidal categories where all objects are dualizable (and hence for rigidly compactly generated categories as well), except here we do not necessarily have a sufficient supply of dualizable objects in $\V$. 
\begin{proof}
For a trace-class map $\alpha: x\to y$ in $\V$, we construct a dotted lift in the following diagram: 

\[\begin{tikzcd}
	{f(x)} & {g(x)} \\
	{f(y)} & {g(y)}
	\arrow["{\eta_x}", from=1-1, to=1-2]
	\arrow["{f(\alpha)}"', from=1-1, to=2-1]
	\arrow[dashed, from=1-2, to=2-1]
	\arrow["{g(\alpha)}", from=1-2, to=2-2]
	\arrow["{\eta_y}"', from=2-1, to=2-2]
\end{tikzcd}\]
From there, using \Cref{cor:trclexQ} and the fact that $f,g$ commute with sequential colimits, we find that $\eta$ is an equivalence on $\omega_1$-compacts and thus on all objects because $f,g$ commute with all colimits.
\end{proof}
With this in hand, we can now prove: 
\begin{cor}\label{cor:rigpb=laxpb}
    Let $\V_0\xrightarrow{p_0} \V_{01}\xleftarrow{p_1} \V_1$ be span in $\CAlg(\PrL_\st)$, and let $\V$ be their pullback, $\overset{\rightarrow}{\V}$ be their lax pullback as in \Cref{cor:trclinlaxpb}. The canonical inclusion $\V\to \overset{\rightarrow}{\V}$ induces an equivalence on rigidification. 
\end{cor}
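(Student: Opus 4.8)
The plan is to deduce the statement from the universal property of rigidifications (\Cref{defn:rigidification}) together with the preceding lemma that morphisms of symmetric monoidal colimit-preserving functors out of a rigid category are invertible. First I would record that $\overset{\rightarrow}{\V}$ lies in $\CAlg(\PrL_\st)$, so that it admits a rigidification by the existence result proved above: by definition $\overset{\rightarrow}{\V}=\laxfib(p_0)\times_{\V_{01}}\laxfib(p_1)$ is an iterated pullback of the $\V_i$, of $\V_{01}$, and of the arrow categories $\V_{01}^{\Delta^1}$ (with the pointwise symmetric monoidal structure) along symmetric monoidal colimit-preserving functors, and $\CAlg(\PrL_\st)$ has all limits. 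By the universal property it then suffices to prove that for every rigid commutative $\Sp$-algebra $\M$, postcomposition with the canonical map $\iota:\V\to\overset{\rightarrow}{\V}$ is an equivalence of anima $\Map_{\CAlg(\PrL_\st)}(\M,\V)\xrightarrow{\iota_*}\Map_{\CAlg(\PrL_\st)}(\M,\overset{\rightarrow}{\V})$. Granting this, for any rigid $\W_0$ the composite $\Map(\W_0,\Rig(\V))\simeq\Map(\W_0,\V)\xrightarrow{\iota_*}\Map(\W_0,\overset{\rightarrow}{\V})\simeq\Map(\W_0,\Rig(\overset{\rightarrow}{\V}))$ is identified with $\Map(\W_0,\Rig(\iota))$ and is an equivalence, so $\Rig(\iota)$ is an equivalence by the Yoneda lemma.

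Next I would unwind the right-hand side. Since $\Map_{\CAlg(\PrL_\st)}(\M,-)$ sends the limit presenting $\overset{\rightarrow}{\V}$ to the corresponding limit of anima, and since symmetric monoidal colimit-preserving functors into $\V_{01}^{\Delta^1}$ with the pointwise structure are the same as arrows in $\Fun^{L,\otimes}(\M,\V_{01})$, a point of $\Map(\M,\overset{\rightarrow}{\V})$ is precisely a tuple $(F_0,F_1,F_{01};\eta_0,\eta_1)$ consisting of symmetric monoidal colimit-preserving functors $F_0:\M\to\V_0$, $F_1:\M\to\V_1$, $F_{01}:\M\to\V_{01}$ together with natural transformations of symmetric monoidal functors $\eta_0:p_0\circ F_0\to F_{01}$ and $\eta_1:p_1\circ F_1\to F_{01}$. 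A point of $\Map(\M,\V)$ is a triple $(F_0,F_1,\phi)$ with $\phi:p_0\circ F_0\simeq p_1\circ F_1$ an equivalence, and $\iota_*$ sends it to the tuple with $F_{01}=p_1\circ F_1$, $\eta_0=\phi$ and $\eta_1=\id$.

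The crucial input is the preceding lemma: because $\M$ is rigid, the natural transformations $\eta_0$ and $\eta_1$ are automatically equivalences. Using this, I would exhibit an inverse to $\iota_*$: let $r:\Map(\M,\overset{\rightarrow}{\V})\to\Map(\M,\V)$ send $(F_0,F_1,F_{01};\eta_0,\eta_1)$ to $(F_0,F_1,\eta_1^{-1}\circ\eta_0)$, which makes sense since $\eta_0,\eta_1$ are invertible. Then $r\circ\iota_*\simeq\id$, while the fibre of $r$ over a point $(F_0,F_1,\phi)$ is the anima of tuples $(F_{01};\eta_0,\eta_1)$ with $\eta_1^{-1}\circ\eta_0\simeq\phi$; since $\eta_0:p_0F_0\to F_{01}$ is necessarily an equivalence, the pair $(F_{01},\eta_0)$ is a contractible choice, and the remaining datum is an equivalence $\eta_1:p_1F_1\to p_0F_0$ together with an identification $\eta_1^{-1}\simeq\phi$, which is again contractible. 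Hence $r$ is an equivalence, so $\iota_*$ is an equivalence, which completes the argument.

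The main obstacle is purely organizational: carefully setting up the identification of $\Map(\M,\overset{\rightarrow}{\V})$ as a lax limit of mapping categories and checking that the relevant fibres are contractible. All the genuine mathematical content — that symmetric monoidal natural transformations out of a rigid category are invertible — is already in the preceding lemma, and the rest is bookkeeping with the universal property of rigidification.
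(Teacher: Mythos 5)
Your proof is correct and rests on the same key input as the paper's — the preceding lemma that every symmetric monoidal natural transformation out of a rigid category is invertible — so the core approach is the same. The only difference is in the bookkeeping: the paper gets to the conclusion more economically by noting that $\V\to\overset{\rightarrow}{\V}$ is fully faithful, so that once you know every symmetric monoidal colimit-preserving functor from a rigid $\W$ into $\overset{\rightarrow}{\V}$ lands in $\V$, the equivalence $\Map(\W,\V)\simeq\Map(\W,\overset{\rightarrow}{\V})$ is immediate; this replaces your explicit construction of the retraction $r$ and the verification that its fibres are contractible. Both are fine, but it's worth internalizing the fully-faithfulness shortcut, as it avoids the contractibility computation entirely.
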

\begin{proof}
    This follows at once from the previous lemma: a symmetric monoidal functor $f:\W\to \overset{\rightarrow}{\V}$ is equivalently three functors $f_i:\W\to \V_i,i\in\{0,1,01\}$ with transformations $p_if_i\to f_{01}$ of symmetric monoidal functors $\W\to \V_{01}$. 

    If $\W$ is rigid, these are therefore equivalences, and so the functor $f$ lands in $\V$ (note that $\V\to \overset{\rightarrow}{\V}$ is fully faithful). 
\end{proof}
We are finally equipped to give a proof of \Cref{cor:limrig}: 
\begin{proof}[Proof of \Cref{cor:limrig}]
By definition of rigidity, this forgetful functor factors as a composite $\CAlg\rig\to \CAlg(\Prdbl)\to \Prdbl$ where the second functor preserves limits for general reasons; thus we are really considering the first functor $\CAlg\rig\to \CAlg(\Prdbl)$. Now, again by \Cref{obs:morrig} this is a full subcategory inclusion, so it suffices to prove that it is closed under limits. 

We do it for products and finite limits separately. 

\textbf{Products} : Let $(\V_i)_i$ be a family of rigid categories. Recall that $\prod_i^\dbl \V_i \subset \Ind(\prod_i \V_i)$ is generated under $\mathbb Q_{\geq 0}$-indexed colimits of diagrams of the form $(y((x_i^r)_i))_{r\in\mathbb Q_{\geq 0}}$ where for all $r<s$ in $\mathbb Q_{\geq 0}$ and all $i\in I$, $x_i^r\to x_i^s$ is compact. 

Since $\V_i$ is rigid, this implies that it is trace-class, and thus the whole of $(x_i^r)_i \to (x_i^s)_i$ is trace-class in $\prod_i\V_i$. By the usual trick, it implies that $\colim_{r\in\mathbb Q_{\geq 0}} y((x_i^r)_i)$ is also a $\mathbb Q_{\geq 0}$-indexed colimit of things in $\prod_i^\dbl\V_i$ along trace-class maps, and thus it follows that $\prod_i^\dbl\V_i$ is rigid by \Cref{cor:Efimovrig}.  

\textbf{Finite limits:} The empty limit is the terminal object, i.e. $0$, and this is clearly rigid. So we are left with pullbacks: let $\V_0\xrightarrow{p_0} \V_{01}\xleftarrow{p_1} \V_1$ be a span in $\CAlg\rig$, and $\V$ be its pullback in $\CAlg(\Prdbl)$, $\V'$ its pullback in $\CAlg(\PrL_\st)$, and $\overset{\rightarrow}{\V}$ the doubly lax pullback as in \Cref{cor:trclinlaxpb}.  

We have canonical maps $\V\to \V'\to \overset{\rightarrow}{\V}$ and therefore a diagram: 
\[\begin{tikzcd}
	{\Rig(\V)} & {\Ind(\V)} \\
	{\Rig(\V')} & {\Ind(\V')} \\
	{\Rig(\overset{\rightarrow}{\V})} & {\Ind(\overset{\rightarrow}{\V})}
	\arrow[from=1-1, to=1-2]
	\arrow[from=1-1, to=2-1]
	\arrow[from=1-2, to=2-2]
	\arrow[from=2-1, to=2-2]
	\arrow[from=2-1, to=3-1]
	\arrow[from=2-2, to=3-2]
	\arrow[from=3-1, to=3-2]
\end{tikzcd}\]

The unit in $\V$ is compact, so $\Rig(\V)\to \V$ is fully faithful, and it suffices to prove that it is surjective. 

Since pullbacks are finite limits, $\V\to \V'$ is fully faithful, and therefore this is also the case on $\Ind$; $\V'\to \overset{\rightarrow}{\V}$ is also clearly fully faithful. 

Furthermore, we have observed in \Cref{cor:rigpb=laxpb} that the bottom left vertical map is an isomorphism, and it is clear from \Cref{obs:morrig} that the top left vertical map also is. Thus it suffices to prove that the image of $\hat y: \V\to \Ind(\V)\to \Ind(\overset{\rightarrow}{\V})$ is contained in the image of $\Rig(\overset{\rightarrow}{\V})$. 

However we already know that the image of $\V$ in $\Ind(\V')$ is generated under colimits by $\mathbb Q$-indexed colimits of maps between representables whose projections to each $\V_i$ are trace-class. Since each map in $\mathbb Q$ is a composite of two maps, it follows from \Cref{cor:trclinlaxpb} that each of the maps in question is trace-class in $\overset{\rightarrow}{\V}$, and hence, that the relevant $\mathbb Q$-colimits are in $\Rig(\overset{\rightarrow}{\V})$, as was to be shown. 
\end{proof}

\subsubsection{In general}
We will prove in \Cref{section:pres} that the category of rigid $\V$-algebras is presentable, and explain how to obtain from this a different proof of the existence of rigidifications. However, it is still not explicit enough to obtain \Cref{thm:locrigrigleft} - the goal of this subsection is to provide an actual \emph{construction} of the rigidification for which we have enough control to prove this result. This construction is motivated by the construction over $\Sp$, and just as in this case, we first need an ``intrinsic'' description of rigid categories, obtained as \Cref{lm:rigiditythroughpres}.

Our goal is ultimately to prove that for a locally rigid $\V$-algebra $\W$, the canonical functor $\Rig_\V(\W)\to \W$ admits a \emph{fully faithful} left adjoint. We have already noted that $\Rig_\V(\W)\to \W$ factors in a fully faithful way through $\PP_\V(\W^\kappa)$, so we split the assignment in two halves: a- for a $\tilde{\W}$ with $\V$-atomic unit (to be thought of as $\PP_\V(\W^\kappa)$, so in particular, not assumed locally rigid), fully describe $\Rig_\V(\tilde \W)$ as a full subcategory of $\tilde \W$; b- Use this description to prove that for $\tilde\W := \PP_\V(\W^\kappa)$, $\hat y:\W\to\PP_\V(\W^\kappa)$ factors through $\Rig_\V(\PP_\V(\W^\kappa)) =\Rig_\V(\W)$. The second step will be rather easy once the first is established, so we now focus on the first. 

We already noted in \Cref{cor:cpctunitrigff} that if $\one_\W$ is atomic, then $\Rig_\V(\W)\to \W$ is fully faithful, so our goal is to describe the essential image. 

The following will be our motivation for the next construction: 
\begin{defn}
    Let $\W$ be a commutative $\V$-algebra. A map $f:x\to y$ is (weakly) trace-class presentable if it (factors through a map that) can be written as $\colim_I^W f \to y$ with $W$ a weight where the map $W\to \hom_\W^\V(f,y)$ factors through $\trcl_\W^\V(f,y)$ as a map of weights. 

    An object is (weakly) trace-class presentable if its identity map is (weakly) trace-class presentable, and, as usual, a trace-class presentation is a witness thereof. 
\end{defn}
\begin{rmk}
    The notion of trace-class presentability in $\W$ a priori depends on the base $\V$, even though it is not apparent in the name. 
\end{rmk}
\begin{thm}\label{lm:rigiditythroughpres}
    Suppose $\W$ is a commutative $\V$-algebra with $\V$-atomic unit. $\W$ is rigid if and only if it satisfies an analogue of \Cref{thm:dblexhaustionV} with trace-class presentability, that is, if and only if: \begin{enumerate}
    \item Every object in $\W$ is trace-class presentable;
    \item Every object in $\W$ is weakly trace-class presentable; 
    \item Every object in $\W$ is a sequential colimit along trace-class presentable maps; 
    \item Every object in $\W$ is a sequential colimit along weakly trace-class presentable maps.
  
    \end{enumerate}
    Analogous statements where one replaces ``every object is of a certain form'' by ``objects of a certain form generate $\W$ as a $\V$-module'' are also equivalent. 
\end{thm}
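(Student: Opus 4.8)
The plan is to run the argument in close parallel to the proof of \Cref{thm:dblexhaustionV}, with "atomically presentable" replaced throughout by "trace-class presentable". The implications $1\Rightarrow 2$, $1\Rightarrow 3$, $3\Rightarrow 4$, $2\Rightarrow 4$ are formal, so it suffices to prove that rigidity implies $1$ and that $4$ implies rigidity; the "generators" variant will follow from the same argument, since being generated by trace-class presentable objects under colimits and $\V$-tensors already forces the needed conclusion in the $4\Rightarrow$ direction and conversely rigidity makes every object trace-class presentable.

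For the direction "rigid $\Rightarrow$ every object is trace-class presentable", I would first use that a rigid $\W$ is dualizable over $\V$ (\Cref{lm:rigimpldbl}), and that by \Cref{cor:rigselfdual} the exceptional functor is $\hom_\W(\one_\W,-)$, with duality datum $\V\xrightarrow{\eta}\W\xrightarrow{\mu^R}\W\otimes_\V\W$ and $\W\otimes_\V\W\xrightarrow{\mu}\W\xrightarrow{\hom_\W(\one_\W,-)}\V$. Unwinding this datum, every $x\in\W$ can be written as a weighted colimit $\colim_\W^{W_x} \id_\W$ where the weight is $W_x(-)=\trcl_\W^\V(-,x)=\hom_\W^\V(\one_\W,\hom_\W^\W(-,\one_\W)\otimes x)$ — this is exactly the "co-unit computed via the self-duality" computation, and it is the trace-class analogue of the identity $\colim_\M^{\at_\M(-,m)} i\simeq m$ used in the proof of $0\Rightarrow 1$ in \Cref{thm:dblexhaustionV}. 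The point is that the comparison map $\trcl_\W^\V(-,x)\to\hom_\W^\V(-,x)$ is literally the map through which the canonical weight must factor, so this presentation is a trace-class presentation by construction. Here one uses \Cref{rmk:trcl} and the fact that $\one_\W$ is $\V$-atomic to guarantee that $\hom_\W^\V(x,-)$ relates correctly to the unenriched mapping, and \Cref{prop:compatibilityweightedcolim} / \Cref{cons:compweight} to handle the base.

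For "$4\Rightarrow$ rigid", I would copy the structure of $4\Rightarrow 0$ in \Cref{thm:dblexhaustionV}: given $x=\colim_\NN x_n$ with each $x_n\to x_{n+1}$ (weakly) trace-class presentable, insert factorizations and reduce to the genuinely trace-class-presentable case, pick presentations $x_n=\colim_{I_n}^{W_n}x^n_\bullet$ with lifts $W_n\to\trcl_\W^\V(x^n_\bullet,x_{n+1})$, and assemble a telescope $X:=\colim_\NN X_n$ in an appropriate target (here $\W\otimes_\V\W$, or equivalently $\PP$-level, with the multiplication map playing the role of $p$). The trace-class analogue of \Cref{lm:swapat}/\Cref{cor:swapat}/\Cref{cor:atpreslocalleft} — which I would state and prove as lemmas preceding this theorem, using \Cref{prop:trclexch} (the trace-class exchange) in place of \Cref{lm:hominIndV} and \Cref{ex:atideal} — then shows that $X$ witnesses the multiplication map $\mu:\W\otimes_\V\W\to\W$ as a local left adjoint at $x$ (and, since trace-class presentations are stable under $\V$-tensors by the argument in \Cref{lm:trclat}, at $v\otimes x$ compatibly). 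Since such $x$ generate $\W$ under colimits and $\V$-tensors, $\mu$ admits a $\V$-linear left adjoint, hence is an internal left adjoint in $\Mod_\V(\PrL)$; the projection-formula/strict $\W$-bilinearity of $\mu^R$ then follows from \Cref{cor:nuclax} applied to the telescopes (exactly as at the end of the proof of \Cref{prop:locrigcharac}), and together with $\one_\W$ being $\V$-atomic this gives rigidity of $\W$ (also recording dualizability, which is automatic from $\mu$ being an internal left adjoint plus $\one_\W$ atomic, via \Cref{lm:rigimpldbl}).

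The main obstacle I expect is setting up the correct trace-class analogue of the "swap" lemmas \Cref{lm:swapat} and \Cref{cor:swapat}: in the atomic case one crucially used that $y(m)$ is atomic in $\PP_\V(\M^\kappa)$ and the description of $\at$ via $\hat y$ (\Cref{cor:descofat}), whereas here the analogous statements must be phrased entirely in terms of the multiplication map $\mu$ and the internal-hom adjunction $\hom_\W^\W(x,\one_\W)\otimes y\to\hom_\W^\W(x,y)$, with \Cref{prop:trclexch} supplying the diagonal fillers that let one "commute" a trace-class weight past $\mu$. Getting the coherence of these fillers right — i.e. checking that the two composites $\trcl\otimes\trcl\to\trcl$ agree, the analogue of the "Furthermore" in \Cref{lm:slightlycoherentatid} — is the delicate bookkeeping; once that is in place, the telescoping and cofinality arguments are routine copies of \cite[Lemma 2.24]{maindbl} as used in \Cref{thm:dblexhaustionV}.
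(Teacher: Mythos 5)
Your high-level plan (prove both directions, reduce the hard one to showing the multiplication is an internal left adjoint) matches the paper, but the execution has a genuine gap and misses a simplifying observation.

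The gap is in how you propose to obtain strict $\W$-bilinearity of $\mu^R$. You say this ``follows from \Cref{cor:nuclax} applied to the telescopes, exactly as at the end of \Cref{prop:locrigcharac}.'' But \Cref{cor:nuclax} requires the transition maps of the sequential diagram to be \emph{trace-class}, whereas condition 4 only gives you sequential colimits along (weakly) \emph{trace-class presentable} maps. These are not the same: a trace-class presentable map is one whose source admits a weighted-colimit presentation for which the classifying map of weights lifts through $\trcl$, and it need not be trace-class itself. The paper bridges this with a new lemma, \Cref{lm:trclinterchange}, proved just before the theorem: it establishes the ``interchange'' diagonal filler for a trace-class \emph{presentable} map against any lax $\W$-linear, strong $\V$-linear colimit-preserving functor, by unfolding the weighted-colimit presentation and then applying the projection-formula maneuver used in \Cref{prop:trclexch}. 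Your proposal reaches for \Cref{prop:trclexch} and \Cref{cor:nuclax} directly, which only handle honest trace-class maps, so this step as written does not go through.

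The simplification you miss, which the paper uses to avoid rebuilding trace-class ``swap'' lemmas entirely, is the following: since $\one_\W$ is $\V$-atomic, the functor $\hom_\W^\V(\one_\W,-)$ is $\V$-linear and colimit-preserving, hence $\trcl_\W^\V(x,-)=\hom_\W^\V(\one_\W,\hom_\W^\W(x,\one_\W)\otimes -)$ is $\V$-linear colimit-preserving with a map to $\hom_\W^\V(x,-)$, and therefore factors through $\at_\W^\V(x,-)$ by the universal property. So every trace-class presentation is automatically an atomic presentation, and \Cref{thm:dblexhaustionV} gives $\V$-dualizability of $\W$ outright; \Cref{add:enough} (applied with weights factoring through tensor products of $\trcl$'s) then gives that $\mu$ is a $\V$-internal left adjoint. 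Combined with the strong $\W$-linearity coming from \Cref{lm:trclinterchange}, this is rigidity. The telescoping machinery and swap lemmas you propose to re-derive are thus not needed at all, which also sidesteps the obstacle you flag about making the analogue of \Cref{lm:swapat} work without a presheaf-category embedding.

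Finally, for the easy direction, your plan of unwinding the self-duality datum to exhibit $x\simeq\colim^{\trcl(-,x)}i$ is morally correct but amounts to re-proving \Cref{cor:rigat=at}: the paper simply invokes that for rigid $\W$ the canonical map $\trcl_\W^\V\to\at_\W^\V$ is an equivalence (\Cref{cor:rigat=at} together with \Cref{ex:VatV}), so the atomic presentations from \Cref{thm:dblexhaustionV} are literally trace-class presentations.
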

As in the case of \Cref{thm:dblexhaustionV}, it is worth starting with a preparatory lemma: 
\begin{lm}\label{lm:trclinterchange}
    Let $\W$ be a $\V$-algebra and $\alpha: x\to y$ be a trace-class presentable map in $\W$. Let $f:\M\to \N$ be a lax $\W$-linear functor whose restriction of scalars to a lax $\V$-linear functor is strong $\V$-linear and colimit-preserving. 

    For any $m\in \M$, there is a dotted lift in the commutative diagram below: 
    \[\begin{tikzcd}
	{x\otimes f(m)} & {f(x\otimes m)} \\
	{y\otimes f(m)} & {f(y\otimes m)}
	\arrow[from=1-1, to=1-2]
	\arrow[from=1-1, to=2-1]
	\arrow[dashed, from=1-2, to=2-1]
	\arrow[from=1-2, to=2-2]
	\arrow[from=2-1, to=2-2]
\end{tikzcd}\]
\end{lm}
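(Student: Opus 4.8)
The plan is to imitate the proof strategy of \Cref{prop:trclexch}, the trace-class interchange for lax $\V$-linear maps, but now carefully tracking the weighted-colimit structure and the fact that we only have trace-class-ness ``at the level of the weight''. The key point is that a trace-class presentation $x = \colim_I^W f$, with the classifying map $W \to \hom_\W^\V(f(-), y)$ factoring through $\trcl_\W^\V(f(-), y)$, gives us, objectwise in $i \in I$, a trace-class datum relating $f(i)$ and $y$, and trace-class maps interchange with lax $\V$-linear functors; the weight then lets us glue these objectwise interchanges into one global diagonal filler.

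First I would unwind the definition of $\trcl_\W^\V(f(-), y) = \hom_\W^\V(\one_\W, \hom_\W^\W(f(-),\one_\W) \otimes y)$, so that the factorization of weights means: the map $\alpha : \colim_I^W f \to y$ is classified by a map of weights $W \to \hom_\W^\V(f(-),\one_\W) \otimes y$ over the canonical map to $\hom_\W^\V(f(-),y)$. Here $\hom_\W^\W(f(-),\one_\W)$ plays the role of the ``dual object'' $d$ in \Cref{defn:trcl}, now varying over $I$. Then, exactly as in the proof of \Cref{prop:trclexch}, for each $i$ I would build a map $f(f(i) \otimes m) \to y \otimes f(m)$ as the composite
\[
f(f(i)\otimes m) \to y \otimes \hom_\W^\W(f(i),\one_\W) \otimes f(f(i)\otimes m) \to y \otimes f(\hom_\W^\W(f(i),\one_\W)\otimes f(i)\otimes m) \to y\otimes f(m),
\]
where the first map uses the component $W \to \hom_\W^\W(f(-),\one_\W)\otimes y$ evaluated appropriately, the second uses lax $\V$-linearity of $f$, and the third uses the evaluation $\hom_\W^\W(f(i),\one_\W)\otimes f(i) \to \one_\W$. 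The hypothesis that $f$ is strong $\V$-linear and colimit-preserving on underlying categories is what lets me commute $f$ past the weighted colimit $\colim_I^W$ and past tensors by objects of $\V$ (via \Cref{cons:compweight} / \Cref{cor:Vliniffweight}), so that $\colim_I^W f(f(-)\otimes m) \simeq f(\colim_I^W f(-) \otimes m) \simeq f(x\otimes m)$; assembling the objectwise fillers over $I$ against the weight $W$ then yields the desired map $f(x\otimes m) \to y\otimes f(m)$.

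To check it is genuinely a diagonal filler — i.e. that precomposing with $x\otimes f(m) \to f(x\otimes m)$ recovers $\alpha\otimes f(m)$, and postcomposing with $y\otimes f(m)\to f(y\otimes m)$ recovers $f(\alpha\otimes m)$ — I would run the same two commutative-diagram verifications as in \Cref{prop:trclexch}, now carried out objectwise in $I$ and then glued; since weighted colimits are functorial in both the weight and the diagram, and the two commutations hold for each $i$ compatibly with $W$, they hold for the assembled maps. The main obstacle, and the place requiring genuine care rather than routine bookkeeping, is the coherence of ``assembling over the weight $W$'': one must make sure that the objectwise constructions are natural in $I$ so that they descend to a map out of the weighted colimit, and that the strong $\V$-linearity hypothesis is used in exactly the right places to identify the weighted colimits of the $f$-ified diagrams. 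Concretely this is most cleanly done, as in \Cref{cons:compweight}, by phrasing everything as a statement about natural transformations of $\V$-enriched functors $\PP_\V(I) \to \N$ and then evaluating at $W$, rather than manipulating the object $\colim_I^W f$ directly; I would set it up that way and leave the diagram-chase of the two triangle identities, which is formally identical to the one in \Cref{prop:trclexch}, to the reader.
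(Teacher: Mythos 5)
Your overall strategy — construct a natural map, evaluate it against the weight, and use strong $\V$-linearity to commute $f$ past the weighted colimit — is the same as the paper's, which likewise builds a $\V$-natural transformation $\trcl_\W^\V(x,y) \to \hom_\N^\V(f(x\otimes m), y\otimes f(m))$ and postcomposes the map of weights $W\to\trcl_\W^\V(f(\bullet),y)$ with it, then uses $\V$-linearity of $f$ to identify $\colim_I^W f(f(\bullet)\otimes m)$ with $f(x\otimes m)$ and invokes the universal property of the weighted colimit. The two triangle verifications are then routine, exactly as you say.

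However, there is a genuine slip in your unwinding step that propagates into the construction. You assert that the factorization of weights is a map $W \to \hom_\W^\V(f(-),\one_\W)\otimes y$, but by definition $\trcl_\W^\V(f(-),y) = \hom_\W^\V(\one_\W, \hom_\W^\W(f(-),\one_\W)\otimes y)$: the inner $\hom$ and tensor are taken in $\W$ using $\W$-enrichment, and only the outer $\hom_\W^\V(\one_\W,-)$ lands in $\V$. Absent $\V$-atomicity of $\one_\W$ (which the lemma does not assume), you cannot pull $\one_\W$ out of the outer $\hom$, so the coevaluation-style data you want is simply not there. Consequently your proposed first arrow $f(f(i)\otimes m) \to y\otimes\hom_\W^\W(f(i),\one_\W)\otimes f(f(i)\otimes m)$ has no source for its ``unit'' and does not type-check as a map in $\N$; what you actually have in hand is a $\V$-map $W(i)\to\trcl_\W^\V(f(i),y)$, so the domain of the interchange arrow must carry the $\trcl$ (equivalently, $W$) factor. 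The correct construction is the one in the paper: build
\[\trcl_\W^\V(x,y)\otimes f(x\otimes m) \longrightarrow \hom_\W^\W(x,\one_\W)\otimes y\otimes f(x\otimes m) \longrightarrow y\otimes f(\hom_\W^\W(x,\one_\W)\otimes x\otimes m) \longrightarrow y\otimes f(m),\]
where the crucial first step is the restriction-of-scalars ``action'' map $\hom_\W^\V(\one_\W,w)\otimes n\to w\otimes n$ with $w=\hom_\W^\W(x,\one_\W)\otimes y$, the second is lax $\W$-linearity of $f$, and the third is evaluation; then take the mate to get $\trcl_\W^\V(x,y)\to\hom_\N^\V(f(x\otimes m),y\otimes f(m))$. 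Once you make this correction, the remainder of your argument (the identification of weighted colimits via \Cref{cons:compweight}, and the two naturality-diagram checks) goes through as you describe.
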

\begin{proof}
    We construct a $\V$-natural map $\trcl_\W^\V(x,y) \to \hom_\M^\V(f(x\otimes m), y\otimes f(m))$ together with commutative diagrams: 
    \[\begin{tikzcd}
	{\trcl^\V_\W(x,y)} & {\hom_\M^\V(f(x\otimes m),y\otimes f(m))} \\
	{\hom_\W^\V(x,y)} & {\hom_\M^\V(x\otimes f(m),y\otimes f(m))}
	\arrow[from=1-1, to=1-2]
	\arrow[from=1-1, to=2-1]
	\arrow[from=1-2, to=2-2]
	\arrow[from=2-1, to=2-2]
\end{tikzcd}\]
and 
\[\begin{tikzcd}
	{\trcl^\V_\W(x,y)} & {\hom_\M^\V(f(x\otimes m),y\otimes f(m))} \\
	{\hom_\W^\V(x,y)} & {\hom_\M^\V(f(x\otimes m),f(y\otimes m))}
	\arrow[from=1-1, to=1-2]
	\arrow[from=1-1, to=2-1]
	\arrow[from=1-2, to=2-2]
	\arrow[from=2-1, to=2-2]
\end{tikzcd}\]

Once we have this lift, the proof becomes easy: write $f(x\otimes m)= f(\colim_I^W x_i\otimes m)$ for some trace-class presentation $\colim_I^W x_i\to y $ of $\alpha: x\to y$, use the $\V$-linearity of $f$ to rewrite this as $\colim^W_I f(x_i\otimes m)$ and then use the maps $W\to\trcl_\W^\V(x_\bullet,y)\to \hom(f(x_\bullet \otimes m), y\otimes f(m))$ to get a map $\colim_I^W f(x_\bullet\otimes m)\to y\otimes f(m)$. The verification of the triangles commuting comes down to the two claimed commuting diagrams. 

Now to construct the map in question, use the fact that we have a natural map $\hom_\W^\V(\one_\W, w)\otimes~n\to~w\otimes~n$ by definition of restriction of scalars, so that we have a composite : $$\trcl_\W^\V(x,y)\otimes f(x\otimes m)\to \hom_\W^\W(x,\one_\W)\otimes y\otimes f(x\otimes m)\to y\otimes f(\hom_\W^\W(x,\one_\W)\otimes x\otimes m)\to y\otimes f(m)$$
The mate of this composite is the desired transformation. 

The verification that the diagrams commute is immediate if tedious. 
    
\end{proof}

\begin{proof}[Proof of \Cref{lm:rigiditythroughpres}]
First, we check that any of the assumptions implies that $\W$ is rigid. 

\Cref{lm:trclinterchange} shows that if $f:\M\to \N$ is a lax $\W$-linear functor which is strong $\V$-linear and colimit-preserving, then any of the assumptions 1.-4. or their ``generated by'' analogues imply that $f$ is strong $\W$-linear. 

Thus, by considering their right adjoints, $\W$-linear functors that are $\V$-internal left adjoints are also $\W$-internal left adjoints. It therefore suffices to prove that $\W\otimes_\V \W\to \W$ is a $\V$-internal left adjoint to prove rigidity. 

First we observe that there is a map $\trcl_\W^\V\to \at_\W^\V$ since $\one_\W$ is atomic. Therefore, our assumptions imply the assumptions of \Cref{thm:dblexhaustionV}, so that $\W$ is dualizable over $\V$. We may therefore apply \Cref{add:enough} to check whether this is a $\V$-internal left adjoints. 

Now we note that tensor products of trace-class maps are trace-class - a refinement of this is the existence of a transformation $\trcl_\W^\V(w_0,w_1)\otimes \trcl_\W^\V(w_0',w_1')\to \trcl_\W^\V(w_0\otimes w_0', w_1\otimes w_1')$. Thus $\mu: \W\otimes_\V\W\to \W$ satisfies the assumptions of \Cref{add:enough}, so $\mu$ is a $\V$-internal left adjoint, as was to be shown. 

Conversely, suppose $\W$ is rigid. In this case, it is dualizable (by definition or by \Cref{cor:rigselfdual}) and the canonical map $\trcl_\W^\V\to \at_\W^\V$ is an equivalence by \Cref{cor:rigat=at} and \Cref{ex:VatV}. Therefore \Cref{thm:dblexhaustionV} shows that every object in $\W$ is trace-class presentable. 
\end{proof}
\begin{cons}
    Let $\W$ be a commutative $\V$-algebra with $\V$-atomic unit. In this case we have a canonical comparison map $\trcl^\V_\W(x,y):=\hom^\V_\W(\one_\W, \hom_\W^\W(x,\one)\otimes y)\to \at_\W^\V(x,y)$. 

    We define $\W^{(\alpha)}$ by induction as follows. $\W^{(0)}=\W$, $\W^{(\alpha+1)}$ is generated under colimits and tensor products by objects $x\in\W^{(\alpha)}$ that can be written as $\colim_I^W x_i$ with $x_i\in (\W^{(\alpha)})^\kappa$ and $W\to \trcl^\V_{\W^{(\alpha)}}(x_\bullet, x)$, $W$ a $\kappa$-compact weight. 

    At limit stages, $(\W^{(\alpha)})^\kappa$ is defined as $\bigcap_{\beta<\alpha}(\W^{(\beta)})^\kappa$. 

For the same reason as in \Cref{cor:stabilize}, for $\alpha$ large enough $\W^{(\alpha+1)}= \W^{(\alpha)}$. 
\end{cons}

\begin{lm}
    If $\W^{(\alpha+1)}= \W^{(\alpha)}$, then $\W^{(\alpha)}$ is rigid. 
\end{lm}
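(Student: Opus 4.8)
The statement we must prove is that if the inductive construction stabilizes, $\W^{(\alpha+1)}=\W^{(\alpha)}$, then $\W^{(\alpha)}$ is rigid. The strategy is to verify the hypotheses of \Cref{lm:rigiditythroughpres} for the commutative $\V$-algebra $\W^{(\alpha)}$: we must check that $\W^{(\alpha)}$ is a commutative $\V$-algebra whose unit is $\V$-atomic, and that every object of $\W^{(\alpha)}$ is trace-class presentable (or satisfies one of the equivalent conditions 1.--4. there). The first point is essentially bookkeeping: $\W^{(\alpha)}$ is generated under colimits and tensor products by objects of $\W$, so the inclusion $\W^{(\alpha)}\subset\W$ is a symmetric monoidal colimit-preserving inclusion, hence $\W^{(\alpha)}$ inherits a commutative $\V$-algebra structure; and since $\W^{(\alpha+1)}=\W^{(\alpha)}$, the unit $\one_\W$ lies in $\W^{(\alpha)}$ (it is trace-class presentable as the colimit of the trivial diagram along its identity, which is trace-class since $\one_\W$ is dualizable, hence its identity is trace-class), and it remains $\V$-atomic because $\W^{(\alpha)}\to\W$ is fully faithful and colimit-preserving, so it reflects atomicity of maps by \Cref{ex:reflat}.

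\textbf{The key step.} The heart of the argument is to show that every object of $\W^{(\alpha)}$ is trace-class presentable \emph{in $\W^{(\alpha)}$}. By construction, $\W^{(\alpha+1)}$ is generated under colimits and tensor products by objects $x\in\W^{(\alpha)}$ admitting a presentation $x\simeq\colim_I^W x_\bullet$ with $x_\bullet$ landing in $(\W^{(\alpha)})^\kappa$ and a factorization $W\to\trcl^\V_{\W^{(\alpha)}}(x_\bullet,x)$. Since $\W^{(\alpha+1)}=\W^{(\alpha)}$, these objects generate $\W^{(\alpha)}$ under colimits and $\V$-tensors. That is precisely the ``generated by'' version of condition 1.\ in \Cref{lm:rigiditythroughpres}, once we observe that the trace-class structure is internal to $\W^{(\alpha)}$: the maps $x_\bullet\to x$ classified by $W\to\trcl^\V_{\W^{(\alpha)}}(x_\bullet,x)$ are trace-class presentations computed using the internal hom of $\W^{(\alpha)}$, and since $\W^{(\alpha)}\to\W$ preserves internal homs in the relevant range (it is a symmetric monoidal localization-type inclusion with the $\kappa$-compacts controlled by the construction), this matches the definition of trace-class presentable maps in $\W^{(\alpha)}$. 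Invoking the ``generated by'' form of \Cref{lm:rigiditythroughpres} then gives that $\W^{(\alpha)}$ is rigid.

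\textbf{Main obstacle.} I expect the main subtlety to be purely the compatibility of the internal-hom and trace-class structures across the inclusion $\W^{(\alpha)}\hookrightarrow\W$, and more precisely making sure that ``trace-class presentable as witnessed in the construction'' literally coincides with ``trace-class presentable'' in the sense of \Cref{defn:trcl}-style definitions applied to the $\V$-algebra $\W^{(\alpha)}$. One needs that each $\W^{(\alpha)}$ is $\kappa$-compactly generated with $\W^{(\alpha)}\to\W$ preserving $\kappa$-compacts --- which the author has already arranged via \Cref{obs:morrig}-type reasoning and \cite[Proposition 2.31]{maindbl} in the parallel dualizable-core construction --- and that under this hypothesis $\hom^\W_{\W^{(\alpha)}}$ and hence $\trcl^\V_{\W^{(\alpha)}}$ agree with the ambient ones on $\kappa$-compacts (cf.\ \Cref{rmk:trcl}). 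Granting this set-theoretic control, the proof is a short application of \Cref{lm:rigiditythroughpres}, with no genuinely new ideas beyond those already assembled in the section; the bulk of the work is citing the right earlier statements in the right order.
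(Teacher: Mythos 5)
Your proof is correct and takes the same route as the paper, which simply cites \Cref{lm:rigiditythroughpres}: stabilization $\W^{(\alpha+1)}=\W^{(\alpha)}$ means precisely that $\W^{(\alpha)}$ is generated under colimits and $\V$-tensors by objects admitting trace-class presentations with witnesses landing in $\trcl^\V_{\W^{(\alpha)}}(x_\bullet,x)$, which is the ``generated by'' version of condition 1, and the unit stays $\V$-atomic by \Cref{ex:reflat} as you say. One remark: the ``main obstacle'' you flag about comparing internal homs across the inclusion $\W^{(\alpha)}\hookrightarrow\W$ is a non-issue, because the construction is deliberately phrased using $\trcl^\V_{\W^{(\alpha)}}$ (the trace-class object intrinsic to $\W^{(\alpha)}$) rather than $\trcl^\V_\W$, so the witnesses are already internal to $\W^{(\alpha)}$ and no compatibility check is required.
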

\begin{proof}
    This follows at once from \Cref{lm:rigiditythroughpres}.
\end{proof}
\begin{obs}
    If $\W_0\to \W$ is a map in $\CAlg_\V$, $\W_0$ rigid, then for all $\alpha$, it factors through $\W^{(\alpha)}$. 
\end{obs}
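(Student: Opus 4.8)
The statement to prove is: if $\W_0 \to \W$ is a map in $\CAlg_\V$ with $\W_0$ rigid, then for all $\alpha$, it factors through $\W^{(\alpha)}$.

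\textbf{Approach.} The proof is a transfinite induction on $\alpha$, using \Cref{lm:rigiditythroughpres} to control the rigid source $\W_0$ and the construction of the $\W^{(\alpha)}$ to propagate the factorization. The base case $\alpha = 0$ is trivial since $\W^{(0)} = \W$. The heart of the matter is the successor step: assuming the given map factors through $\W^{(\alpha)}$, I must show the factorization lands in $\W^{(\alpha+1)}$, and the key input is that $\W_0$ is rigid, hence (by \Cref{lm:rigiditythroughpres}) generated under colimits and $\V$-tensors by objects admitting trace-class presentations; and that a symmetric monoidal colimit-preserving functor preserves trace-class maps (indeed trace-class presentations), so these presentations are carried into $\W^{(\alpha)}$ as trace-class presentations there.

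\textbf{Key steps.} First I would fix the factorization $g_\alpha : \W_0 \to \W^{(\alpha)}$ through which $\W_0 \to \W$ passes (existing by the inductive hypothesis); note $g_\alpha$ is symmetric monoidal and colimit-preserving, and since the units of $\W_0$ and $\W^{(\alpha)}$ are $\V$-atomic, it is in fact a $\V$-internal left adjoint by \Cref{obs:morrig}. Second, since $\W_0$ is rigid, \Cref{lm:rigiditythroughpres} tells us every object of $\W_0$ is trace-class presentable; moreover one can arrange (as in the analogous statement for atomic presentations, by cofinality and the good-cardinal set theory) that the $\V$-tensor-and-colimit generators of $\W_0$ consist of $\kappa$-compact trace-class presentable objects, with presentations $\colim_I^W x_\bullet \to x$ where $x_\bullet$ lands in $\W_0^\kappa$, $W$ is a $\kappa$-compact weight, and $W \to \hom^\V_{\W_0}(x_\bullet, x)$ factors through $\trcl_{\W_0}^\V(x_\bullet, x)$. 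Third, apply $g_\alpha$: because $g_\alpha$ is symmetric monoidal, it induces a compatible map $\trcl_{\W_0}^\V(x_\bullet, x) \to \trcl_{\W^{(\alpha)}}^\V(g_\alpha(x_\bullet), g_\alpha(x))$ (trace-class maps, hence the ``object of trace-class maps'', are natural in symmetric monoidal functors — this is exactly the point emphasized in \Cref{rmk:trcl}), and since $g_\alpha$ preserves $\kappa$-compacts (being an internal left adjoint, by the remark following \Cref{prop:rigdesc}-type reasoning) and weighted colimits (\Cref{cons:compweight}), the expression $\colim_I^W g_\alpha(x_\bullet) \to g_\alpha(x)$ is a trace-class presentation in $\W^{(\alpha)}$ with $g_\alpha(x_\bullet)$ in $(\W^{(\alpha)})^\kappa$. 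Hence every generator of $\W_0$ maps into $\W^{(\alpha+1)}$; since $\W^{(\alpha+1)}$ is closed under colimits and $\V$-tensors and these generators generate $\W_0$, $g_\alpha$ factors through $\W^{(\alpha+1)}$. For limit ordinals $\alpha$: by induction $\W_0 \to \W$ factors through each $\W^{(\beta)}$, $\beta < \alpha$; one checks the $\kappa$-compact generators of $\W_0$ land in every $(\W^{(\beta)})^\kappa$, hence in $\bigcap_{\beta<\alpha}(\W^{(\beta)})^\kappa$, and since $\W^{(\alpha)}$ is generated under colimits and $\V$-tensors by this intersection, and $\W_0$ is generated by those $\kappa$-compact generators, the factorization descends to $\W^{(\alpha)}$.

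\textbf{Main obstacle.} The delicate point is the set-theoretic bookkeeping in the successor and limit steps: I need the generators of the rigid category $\W_0$ to be chosen among $\kappa$-compact objects whose trace-class presentations also involve only $\kappa$-compact data, so that everything stays inside the ``$\kappa$-compact'' layers $(\W^{(\alpha)})^\kappa$ used in the construction. This is the analogue of the remark, made after \Cref{obsA}, that any atomically presentable object is a colimit of $\kappa$-compact atomically presentable objects; one needs the corresponding statement for trace-class presentable objects in a rigid $\V$-algebra with good $\kappa$. Granting that (which follows by the same argument, using that $\W_0$ is $\kappa$-compactly generated and that trace-class presentations can be refined to $\kappa$-compact diagrams and weights), the rest is a routine induction, so the proof can reasonably be stated as ``This follows at once from \Cref{lm:rigiditythroughpres} and the construction'' with the transfinite induction left to the reader, exactly in the style of the surrounding text — but the honest version must address this compactness refinement.
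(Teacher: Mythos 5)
Your transfinite induction is correct and is, as far as I can tell, exactly the argument the paper leaves implicit (the observation is stated without proof). The only thing worth saying beyond "correct" concerns the obstacle you flag. It is real, but it has a clean resolution that requires no extra refinement lemma. For rigid $\W_0$ and $m\in\W_0^\kappa$, the canonical presentation from the proof of \Cref{thm:dblexhaustionV} is $\colim_{\W_0^\kappa}^{\at_{\W_0}(-,m)} i\simeq m$ with $i:\W_0^\kappa\hookrightarrow\W_0$; since $\W_0$ is rigid, $\at_{\W_0}(-,m)\simeq\trcl_{\W_0}(-,m)$ (\Cref{cor:rigat=at}, \Cref{ex:VatV}), so this is already a trace-class presentation. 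Its diagram lands in $\W_0^\kappa$ by design, and its weight, which by \Cref{cor:descofat} and the Yoneda identification $\PP_\V(\W_0^\kappa)\simeq\Fun_\V((\W_0^\kappa)\op,\V)$ is nothing other than $\hat y(m)$, is $\kappa$-compact in $\PP_\V(\W_0^\kappa)$ because $\hat y$ is an internal left adjoint with right adjoint $p$ colimit-preserving, so $\hat y$ preserves $\kappa$-compacts. Thus the generators $\W_0^\kappa$ already come with $\kappa$-compact trace-class presentations, and the rest of your argument — $g_\alpha$ is a $\V$-internal left adjoint by \Cref{obs:morrig} (the unit of $\W^{(\alpha)}$ stays $\V$-atomic since the inclusion $\W^{(\alpha)}\hookrightarrow\W$ is fully faithful $\V$-linear colimit-preserving and contains $\one_\W$), hence preserves $\kappa$-compacts and weighted colimits, and induces the compatible map on $\trcl^\V$ — goes through. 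One cosmetic point: the paper closes $\W^{(\alpha+1)}$ under "colimits and tensor products" rather than $\V$-tensors; you should note that $v\otimes x = (v\otimes\one_\W)\otimes_\W x$ and that $v\otimes\one_\W$ lies in $\W^{(\alpha+1)}$ (it is itself a generator when $v\in\V^\kappa$, and the general case follows by colimit closure), so $\W$-tensor closure already gives $\V$-tensor closure.
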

\begin{cor}
    If If $\W^{(\alpha+1)}= \W^{(\alpha)}$, then $\W^{(\alpha)}$ is the rigidification of $\W$. 
\end{cor}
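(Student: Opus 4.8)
The plan is to verify the universal property of rigidification directly from \Cref{defn:rigidification}, using the two already-established ingredients: the previous lemma shows $\W^{(\alpha)}$ is rigid, and the preceding \Cref{obs} shows that every map $\W_0\to\W$ with $\W_0$ rigid factors through $\W^{(\alpha)}$ (at every stage, hence at the stabilized one). So for any rigid commutative $\V$-algebra $\W_0$, precomposition with the inclusion $\iota:\W^{(\alpha)}\to\W$ gives a map
\[
\Fun^{L,\otimes}_\V(\W_0,\W^{(\alpha)})\to \Fun^{L,\otimes}_\V(\W_0,\W),
\]
and I want to show this is an equivalence.

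First I would observe essential surjectivity on objects: given a symmetric monoidal colimit-preserving $f:\W_0\to\W$, the \Cref{obs} above tells us $f$ factors (as a symmetric monoidal, colimit-preserving $\V$-linear functor) through $\W^{(\alpha)}$ — this uses that the inclusion $\W^{(\alpha)}\to\W$ is closed under colimits and tensor products, so the corestriction $\W_0\to\W^{(\alpha)}$ is itself colimit-preserving and symmetric monoidal. Then I would argue full faithfulness: since $\W^{(\alpha)}\to\W$ is a fully faithful functor (each stage $\W^{(\beta)}\to\W$ is fully faithful — this is visible from the construction, as $\W^{(\beta+1)}$ is defined as a full subcategory generated under colimits and tensors, and limit stages are intersections, all inside $\W$; alternatively invoke \Cref{cor:cpctunitrigff} since $\W^{(\alpha)}$ is rigid and hence has $\V$-atomic unit), precomposition $\iota^*$ on functor categories is fully faithful. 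Combined with essential surjectivity, $\iota^*$ is an equivalence, which is exactly the defining property of $\Rig_\V(\W)$. Since this characterizes the rigidification uniquely, $\W^{(\alpha)}\simeq\Rig_\V(\W)$.

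The main subtlety — and the step I would be most careful about — is making precise that the factorization of \Cref{obs} is functorial and compatible with the symmetric monoidal and colimit-preserving structure at the level of functor $\infty$-categories, not just on objects. Concretely, one needs that $\Fun^{L,\otimes}_\V(\W_0,\W^{(\alpha)})\to\Fun^{L,\otimes}_\V(\W_0,\W)$ is the inclusion of the full subcategory of functors that land (essentially) in $\W^{(\alpha)}$; this is where the fully-faithfulness of $\W^{(\alpha)}\hookrightarrow\W$ is really used, together with the fact that $\W^{(\alpha)}\subset\W$ is closed under colimits and tensor products so that ``landing in $\W^{(\alpha)}$ on generators'' propagates to ``landing in $\W^{(\alpha)}$ everywhere'' — and, crucially, a symmetric monoidal colimit-preserving functor out of a rigid category is determined by where it sends trace-class presentations, which by \Cref{lm:rigiditythroughpres} generate $\W_0$; since these get sent into $\W^{(\alpha)}$ by construction of the stages, the whole functor does. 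Once this bookkeeping is in place the proof is a one-line formal consequence of the lemma and the observation, which is presumably why the authors state it as a corollary; I would keep the write-up correspondingly short.

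\begin{proof}
    By the previous lemma, $\W^{(\alpha)}$ is rigid, and by \Cref{cor:cpctunitrigff} the canonical map $\iota:\W^{(\alpha)}\to \W$ is fully faithful. It therefore suffices to show that for any rigid commutative $\V$-algebra $\W_0$, a symmetric monoidal colimit-preserving $\V$-linear functor $f:\W_0\to\W$ factors through $\W^{(\alpha)}$.

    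By \Cref{lm:rigiditythroughpres}, $\W_0$ is generated under colimits and $\V$-tensors by objects admitting trace-class presentations, and in fact (as in the proof of \Cref{thm:dblexhaustionV}) by $\kappa$-compact such objects for $\kappa$ large enough. Since the subcategories $\W^{(\beta)}\subset\W$ are closed under colimits and tensor products, it is enough to check that $f$ sends these generators into $\W^{(\alpha)}$. This follows by the \Cref{obs} above: the corestriction of $f$ along $\iota$ is defined by downward induction through the stages $\W^{(\beta)}$, using at each step that a trace-class presentation in $\W_0$ is sent by the colimit-preserving, symmetric monoidal functor $f$ to a trace-class presentation in $\W^{(\beta)}$, hence to an object of $\W^{(\beta+1)}$; at $\beta=\alpha$ this gives the desired factorization since $\W^{(\alpha+1)}=\W^{(\alpha)}$.

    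Consequently $f$ factors as $\W_0\to\W^{(\alpha)}\xrightarrow{\iota}\W$ with the first functor symmetric monoidal and colimit-preserving, so $\iota^*:\Fun^{L,\otimes}_\V(\W_0,\W^{(\alpha)})\to\Fun^{L,\otimes}_\V(\W_0,\W)$ is essentially surjective; it is fully faithful because $\iota$ is. Hence $\iota^*$ is an equivalence, which by \Cref{defn:rigidification} exhibits $\W^{(\alpha)}$ as $\Rig_\V(\W)$.
\end{proof}
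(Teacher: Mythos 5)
Your proof is correct and matches the approach the paper implicitly intends (the paper gives no proof, treating the corollary as a formal combination of the preceding lemma, which supplies rigidity of $\W^{(\alpha)}$, and the preceding observation, which supplies the factorization of any rigid $\W_0 \to \W$ through $\W^{(\alpha)}$, checked against \Cref{defn:rigidification}). Two harmless imprecisions: the induction in your middle paragraph is \emph{upward} transfinite, not ``downward''; and invoking \Cref{cor:cpctunitrigff} for full faithfulness of $\W^{(\alpha)}\hookrightarrow\W$ is unnecessary and mildly circular, since by construction each $\W^{(\alpha)}$ is already a full subcategory of $\W$ --- a point you yourself note.
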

\begin{cor}
    Let $\W$ be a locally rigid $\V$-algebra. Then $\hat y: \W\to \PP_\V(\W^\kappa)$ factors through the rigidification of $\PP_\V(\W^\kappa)$. In particular, the functor $\Rig_\V(\W)\to \W$ admits a fully faithful left adjoint. 
\end{cor}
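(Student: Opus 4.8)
The plan is to reduce everything to the ``trace-class core'' description of the rigidification established just above, and to feed into it the weighted-colimit exhaustion of \Cref{thm:dblexhaustionV}. Since $\W$ is locally rigid it is in particular dualizable over $\V$, so by \cite[Theorem 1.49]{maindbl} the canonical functor $p:\PP_\V(\W^\kappa)\to\W$ admits a $\V$-linear left adjoint $\hat y$, which is moreover fully faithful because $p\hat y\simeq\id_\W$ (apply the lemma on the canonical maps $x\to p\hat j(x)$ to $p$ and its fully faithful right adjoint, the restricted Yoneda embedding). By \Cref{lm:rig=rigP} we have $\Rig_\V(\W)\simeq\Rig_\V(\PP_\V(\W^\kappa))$, and by the preceding corollary the latter is $\tilde\W^{(\alpha_0)}$, where $\tilde\W:=\PP_\V(\W^\kappa)$ and $\alpha_0$ is any ordinal with $\tilde\W^{(\alpha_0+1)}=\tilde\W^{(\alpha_0)}$; by \Cref{prop:rigdesc} this sits as a full symmetric monoidal subcategory of $\tilde\W$, with structure map to $\W$ the restriction of $p$. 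Thus the statement reduces to showing that $\hat y$ factors through $\tilde\W^{(\alpha_0)}$: granting this, the corestriction $\W\to\Rig_\V(\W)$ is fully faithful (a corestriction of the fully faithful $\hat y$ along the fully faithful inclusion), and for $G\in\Rig_\V(\W)$ one has $\Map_{\Rig_\V(\W)}(\hat y(w),G)\simeq\Map_{\tilde\W}(\hat y(w),G)\simeq\Map_\W(w,p(G))$, so it is left adjoint to $\Rig_\V(\W)\to\W$.

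Next I would prove by transfinite induction on $\alpha$ that $\hat y$ factors through $\tilde\W^{(\alpha)}$. The base case and the limit stages are immediate, since $\hat y$ is colimit-preserving, $\V$-linear and preserves $\kappa$-compacts, so its essential image lies in any colimit-and-$\V$-tensor closure of $\kappa$-compact subcategories built from $\hat y(\W^\kappa)$. Note that once $\hat y$ is known to factor through $\tilde\W^{(\alpha)}$, its corestriction $\hat y_\alpha:\W\to\tilde\W^{(\alpha)}$ is automatically a fully faithful internal left adjoint whose right adjoint is the restriction $p|_{\tilde\W^{(\alpha)}}$ (which is strong symmetric monoidal, $\V$-linear and colimit-preserving, with $p|_{\tilde\W^{(\alpha)}}\circ\hat y_\alpha\simeq\id_\W$); this is what lets one write down trace-class homs in $\tilde\W^{(\alpha)}$ and compare them to homs in $\W$. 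For the successor step, since $\W$ is generated under colimits and $\V$-tensors by $\W^\kappa$ and $\hat y$ preserves these, it suffices to place each $\hat y(w)$, $w\in\W^\kappa$, in $\tilde\W^{(\alpha+1)}$. Recall from the proof of \Cref{thm:dblexhaustionV} (``$0\Rightarrow 1$'') that the presheaf $\hat y(w)\in\PP_\V(\W^\kappa)$, viewed as a weight on $\W^\kappa$, exhibits $w\simeq\colim_{\W^\kappa}^{\hat y(w)}i$ as an atomic presentation; applying the weighted-colimit-preserving functor $\hat y$ and using fully faithfulness of $\hat y_\alpha$, we get $\hat y(w)\simeq\colim_{\W^\kappa}^{\hat y(w)}(\hat y_\alpha\circ i)$, a weighted colimit in $\tilde\W^{(\alpha)}$ whose weight is $\kappa$-compact and whose diagram is valued in $(\tilde\W^{(\alpha)})^\kappa$ (using that $\kappa$-compactness is detected identically inside the colimit-closed subcategory $\tilde\W^{(\alpha)}$), and whose weight map is, by \Cref{cor:descofat} and the identification $\hat y(w)\simeq\at_\W^\V(-,w)$ of presheaves on $\W^\kappa$, the canonical comparison $\at_\W^\V(-,w)\to\hom_\W^\V(-,w)$. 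By construction of $\tilde\W^{(\alpha+1)}$, it therefore remains to lift this weight map along $\trcl_{\tilde\W^{(\alpha)}}^\V(\hat y_\alpha(-),\hat y(w))\to\hom_{\tilde\W^{(\alpha)}}^\V(\hat y_\alpha(-),\hat y(w))\simeq\hom_\W^\V(-,w)$.

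The hard part is this last lifting, and this is where local rigidity enters. I would first invoke \Cref{cor:locrigimplat} with $\M=\W$, together with \Cref{ex:VatV} identifying $\at_\W^\W(-,w)\simeq\hom_\W^\W(-,\one_\W)\otimes w$, to obtain a natural factorization $\at_\W^\V(-,w)\to\trcl_\W^\V(-,w)\to\hom_\W^\V(-,w)$ of the canonical comparison. It then suffices to construct, naturally in $n\in\W^\kappa$ and over $\hom_\W^\V(n,w)$, a map $\trcl_\W^\V(n,w)\to\trcl_{\tilde\W^{(\alpha)}}^\V(\hat y_\alpha(n),\hat y(w))$. Unwinding the two sides as $\hom_\W^\V(\one_\W,\hom_\W^\W(n,\one_\W)\otimes_\W w)$ and $\hom_{\tilde\W^{(\alpha)}}^\V(\one_{\tilde\W^{(\alpha)}},\hom_{\tilde\W^{(\alpha)}}^{\tilde\W^{(\alpha)}}(\hat y_\alpha(n),\one_{\tilde\W^{(\alpha)}})\otimes\hat y(w))$, such a map should be assembled from the oplax symmetric monoidal structure on the internal left adjoint $\hat y_\alpha$, the comparison $\hat y_\alpha(\one_\W)\to\one_{\tilde\W^{(\alpha)}}$, the identity $p|_{\tilde\W^{(\alpha)}}\circ\hat y_\alpha\simeq\id_\W$, and the unit of the adjunction $\one_\W\otimes(-)\dashv\hom_\W^\V(\one_\W,-)$, in the same spirit as the transformations built in \Cref{lm:slightlycoherentatid} and \Cref{lm:trclinterchange}; one then checks by a routine (if tedious) diagram chase that it lies over the projections to $\hom^\V$. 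I expect this to be the main obstacle, for three related reasons: $\hat y$ is only oplax --- not strong --- symmetric monoidal, so one must track the direction of every comparison map; the internal hom of the subcategory $\tilde\W^{(\alpha)}$ is \emph{not} the restriction of that of $\tilde\W$; and one has to carry the inductive ``$\hat y_\alpha$ is an internal left adjoint with right adjoint $p|_{\tilde\W^{(\alpha)}}$'' structure along the induction merely in order to make sense of these objects. Once the lifting is in place, $\hat y(w)\in\tilde\W^{(\alpha+1)}$ for every $w\in\W^\kappa$, which completes the induction; taking $\alpha=\alpha_0$ and combining with the first paragraph finishes the proof.
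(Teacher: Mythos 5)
Your reduction and overall strategy match the paper's: use \Cref{lm:rig=rigP} and \Cref{prop:rigdesc} to identify $\Rig_\V(\W)$ with the stabilized $\tilde\W^{(\alpha_0)}\subset\tilde\W:=\PP_\V(\W^\kappa)$, and then show by transfinite induction that $\hat y$ factors through each $\tilde\W^{(\alpha)}$. The formal bookkeeping about fully faithfulness of the corestriction and the adjunction with $p$ is also fine. Where you diverge is in the construction of the lift of the weight map: the paper produces $\at_\W^\V(m,x)\to\trcl_{\tilde\W}(y(m),\hat y(x))$ by identifying the right side as the universal $\V$-linear approximation $R(\trcl_\W^\V(m,-))$ via \Cref{prop:descriptionofrightadj}, then pushes along $\hat y(m)\to y(m)$; you propose instead to build a map $\trcl_\W^\V(n,w)\to\trcl_{\tilde\W^{(\alpha)}}^\V(\hat y_\alpha(n),\hat y(w))$ directly from the oplax monoidal structure on $\hat y_\alpha$. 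You have correctly and explicitly identified the delicate point that the paper leaves implicit behind its ``this shows by induction'': the definition of $\tilde\W^{(\alpha+1)}$ requires a factorization through $\trcl^\V_{\tilde\W^{(\alpha)}}$, and the internal hom of $\tilde\W^{(\alpha)}$ is the colocalization $\iota^R\hom_{\tilde\W}(-,\one)$, \emph{not} the restriction of $\hom_{\tilde\W}$, so a lift to $\trcl_{\tilde\W}$ does not by itself give a lift to $\trcl_{\tilde\W^{(\alpha)}}$ (the comparison map goes $\trcl_{\tilde\W^{(\alpha)}}\to\trcl_{\tilde\W}$, the wrong direction).

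However, I do not think your proposed construction of the map $\trcl_\W^\V(n,w)\to\trcl_{\tilde\W^{(\alpha)}}^\V(\hat y_\alpha(n),\hat y(w))$ is a ``routine diagram chase'' from the oplax monoidal structure alone. To move from $\hom_\W^\W(n,\one_\W)$ to $\hom_{\tilde\W^{(\alpha)}}^{\tilde\W^{(\alpha)}}(\hat y_\alpha(n),\one)$ you need, by adjunction, a map $\hat y_\alpha(\hom_\W(n,\one_\W))\otimes\hat y_\alpha(n)\to\one_{\tilde\W^{(\alpha)}}$. What the oplax structure hands you is $\hat y_\alpha(\hom_\W(n,\one_\W)\otimes n)\to\hat y_\alpha(\hom_\W(n,\one_\W))\otimes\hat y_\alpha(n)$ together with $\hat y_\alpha(\hom_\W(n,\one_\W)\otimes n)\to\hat y_\alpha(\one_\W)\to\one_{\tilde\W^{(\alpha)}}$, i.e.\ a cospan, which cannot be composed to the desired map. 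What one would actually need is for the oplax structure map to be invertible (equivalently, the projection formula for $\hat y_\alpha$); via \Cref{lm:leftadjlinear} this would follow if $\tilde\W^{(\alpha)}$ were already known to be locally rigid, but establishing that mid-induction is exactly the content of the induction. So as written the successor step has a genuine gap. The paper's own phrasing is subject to the same concern, so you should not read this as a blanket endorsement of the terse ``by induction'' in the source; but your proposal does not close the gap either, and the phrase ``routine (if tedious)'' oversells what you have. The honest status is that the induction step needs a further idea (for instance, a direct argument that the relevant internal hom $\hom_{\tilde\W}(\hat y(m),\one)$ already lies in $\tilde\W^{(\alpha)}$, so that $\trcl_{\tilde\W^{(\alpha)}}(\hat y(m),-)\to\trcl_{\tilde\W}(\hat y(m),-)$ is an equivalence on objects of $\tilde\W^{(\alpha)}$), and you should flag it as an open step rather than as a routine verification.
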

\begin{proof}
    The ``in particular'' part follows from the fact that the functor $\PP_\V(\W^\kappa)\to \W$ induces an equivalence $\Rig_\V(\PP_\V(\W^\kappa))\xrightarrow{\simeq}\Rig_\V(\W)$ by \Cref{lm:rig=rigP}.

For the first part of the claim, we proceed as follows. Let $x\in \W$. We note that $\hat y(x) = \colim^{\at_\W^\V(-,x)}_{\W^\kappa}y(m) \simeq \colim^{\at_\W^\V(-,x)}_{\W^\kappa}\hat y(m)$.

 Now, we have a map $\hom(y(m),\hat y(x))\simeq \at(m,x)\to \trcl(m,x)$.

 Note that by comparing their restriction to $\W^\kappa$, we find that $p_\V \circ \PP_\V(\trcl(m,-)) : \PP_\V(\W^\kappa)\to \V$ is exactly $\trcl(y(m),-)$ so that $\trcl(y(m),\hat y(-))$ is the universal $\V$-linear approximation to $\trcl(m,-)$, by \Cref{prop:descriptionofrightadj}. 

 Thus we obtain a map $\at(m,x)\to \trcl(y(m),\hat y(x))$. Using the natural map $\hat y(m)\to y(m)$ we get in total a map $\at(m,x)\to \trcl(\hat y(m),\hat y(x))$. This map clearly lies above $\hom(m,x)$, and since $\hom(\hat y(m),\hat y(x))\to \hom(m,x)$ is an equivalence, it also lies above $\hom(\hat y(m),\hat y(x))$. This shows by induction that $\hat y$ lands in $\PP_\V(\W^\kappa)^{(\alpha)}$ for all $\alpha$, as was needed.  

\end{proof}

\section{Presentability results}\label{section:pres}
In this section, we prove an analogue of our main theorem for rigid categories, namely we prove that the category of rigid $\V$-algebras is presentable. In the case of $\V=\Sp$, we have an explicit description of the right adjoint $\CAlg\rig_\Sp\to \CAlg(\PrL_{\st,\kappa})$ by \Cref{thm:rigidification} which allows to give a quick proof of presentability, so we first deal with this special case. 

In general, we have to work harder and give a ``parametrized'' version of \cite[Theorem B]{maindbl}.
\subsection{Over $\Sp$}
In this subsection, we prove an analogue of our main theorem for rigid categories:
\begin{thm}\label{thm:rigprlst}
   Let $\CAlg\rig\subset \CAlg(\PrL_{\st})$ denote the full subcategory spanned by rigid $\Sp$-algebras. 

    $\CAlg\rig$ is presentable. 
\end{thm}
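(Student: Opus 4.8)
The plan is to exhibit $\CAlg\rig$ as an accessible localization-type subcategory, or more precisely as an accessible full subcategory of the presentable category $\CAlg(\PrL_{\st})$ that is closed under sufficiently filtered colimits, and then invoke the accessible-subcategory recognition principle. Recall from \Cref{obs:morrig} that every morphism between rigid $\Sp$-algebras is an internal left adjoint, and from \Cref{prop:colimrig} (and its proof) that $\CAlg\rig$ is closed under \emph{all} colimits in $\CAlg(\PrL_{\st})$. So the only real content is accessibility: I want to produce a regular cardinal $\kappa$ and a small set of objects of $\CAlg\rig$ such that every rigid $\Sp$-algebra is a $\kappa$-filtered colimit of objects from this set, with all transition maps in $\CAlg\rig$.

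The key input is the explicit description of rigidification over $\Sp$ from \Cref{thm:rigidification}: for $\V\in\CAlg(\PrL_{\st,\kappa})$, $\Rig_\kappa(\V)$ is the full subcategory of $\Ind(\V^\kappa)$ generated under colimits by $\mathbb{Q}_{\geq 0}$-telescopes along trace-class maps in $y(\V^\kappa)$, and it does not depend on $\kappa$. When $\V$ is already rigid, $\Rig(\V)\simeq \V$, and by \Cref{cor:rigtrcl}/\Cref{cor:Efimovrig} every object of a rigid $\V$ is a colimit of $\mathbb{Q}_{\geq 0}$-telescopes along trace-class maps with sources among a set built from $\V^\omega$. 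First I would fix a large enough good cardinal (here $\kappa$ uncountable suffices since $\one$ is compact, so every rigid $\Sp$-algebra lies in $\CAlg(\PrL_{\st,\omega_1})$) and observe that a rigid $\Sp$-algebra $\V$ is completely determined by the small symmetric monoidal stable idempotent-complete category $\V^\omega$ of compact objects \emph{together with} the extra datum of which maps in $\V^\omega$ are trace-class — but in fact, since $\V$ is generated under colimits by $\mathbb{Q}_{\geq 0}$-telescopes of trace-class maps with compact sources, $\V \simeq \Rig(\mathrm{Ind}(\V^\omega))$ is recovered functorially from $\V^\omega$ as a symmetric monoidal category. Thus the assignment $\V^\omega \mapsto \Rig(\mathrm{Ind}(\V^\omega))$ realizes $\CAlg\rig$ as (the essential image of) a functor from a presentable category of small symmetric monoidal stable categories, and since there is only a set of such $\V^\omega$ up to equivalence of bounded size, $\CAlg\rig$ has a set of generators. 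Concretely: the objects $\Rig(\mathrm{Ind}(C))$ as $C$ ranges over a set of representatives of small idempotent-complete symmetric monoidal stable categories form a generating set, and every rigid $\V$ is a filtered colimit along internal left adjoints of such, by writing $\V^\omega$ as a filtered colimit of its small symmetric monoidal subcategories (closed under the relevant structure) and applying $\Rig\circ\mathrm{Ind}$, which preserves this filtered colimit.

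More carefully, the cleanest route is: (1) show $\CAlg\rig$ is closed under $\kappa$-filtered colimits in $\CAlg(\PrL_{\st})$ for a suitable $\kappa$ — this is immediate from \Cref{prop:colimrig} since it is closed under all colimits; (2) show $\CAlg\rig$ is accessible by exhibiting it as the essential image of an accessible functor, or by checking directly that it has a small set of $\kappa$-compact generators; for the latter, use that $\V \mapsto \V^\omega$ lands in a small (up to equivalence) collection and $\V$ is reconstructed from $\V^\omega$ via $\V\simeq \Rig(\mathrm{Ind}(\V^\omega))$, so the set $\{\Rig(\mathrm{Ind}(C))\}$ works; (3) conclude via the recognition theorem that an accessible category with all colimits (equivalently, a full subcategory of a presentable category closed under colimits and accessible) is presentable. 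The main obstacle I anticipate is step (2): making precise and correct the claim that a rigid $\Sp$-algebra is functorially recovered from its compacts as $\Rig(\mathrm{Ind}(\V^\omega))$ — this needs the universal property in \Cref{thm:rigidification} together with the fact (from \Cref{cor:cpctexhaustQ}, \Cref{prop:locrigcharac}) that rigidity forces every object to be a colimit of $\mathbb{Q}_{\geq 0}$-telescopes of trace-class, hence compact, maps — and then checking that $\Rig\circ\mathrm{Ind}$ commutes with the filtered colimit expressing $\V^\omega$ as a union of small symmetric monoidal subcategories, so that the generating set actually generates under $\kappa$-filtered colimits. A secondary technical point is bounding the size (which $\kappa$) so that only a set of $C$'s is needed and the transition maps genuinely lie in $\CAlg\rig$; this should follow from the fact that idempotent-complete small symmetric monoidal stable categories of bounded cardinality form a set and that inclusions of symmetric monoidal subcategories induce internal left adjoints after $\Rig\circ\mathrm{Ind}$ by \Cref{obs:morrig}.
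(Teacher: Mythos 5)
Your overall plan—use the explicit rigidification over $\Sp$ from \Cref{thm:rigidification}, combine closure under colimits from \Cref{prop:colimrig} with an accessibility argument—is close in spirit to the paper's proof. But there is a concrete error in your "key input" step, and the route is more roundabout than it needs to be.

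The error: you repeatedly work with $\V^\omega$ and claim a rigid $\Sp$-algebra is recovered as $\Rig(\mathrm{Ind}(\V^\omega))$, asserting that $\V$ "is generated under colimits by $\mathbb{Q}_{\geq 0}$-telescopes of trace-class maps with compact sources." Rigid $\Sp$-algebras need not be compactly generated—compactness of the unit $\one_\V$ does not imply $\omega$-compact generation (e.g.\ $\Sh(X)$ for a general compact Hausdorff $X$ is rigid but not compactly generated; the paper is careful to write "rigid compactly generated" as an extra hypothesis in \Cref{ex:prodrig}). What rigidity (via dualizability and \cite[Theorem 3.4]{maindbl}) gives you is $\omega_1$-compact generation, and accordingly the construction in \Cref{thm:rigidification} and \Cref{lm:rig=rigP} recovers $\V$ as $\Rig(\Ind(\V^{\omega_1}))$, not $\Rig(\Ind(\V^\omega))$. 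The trace-class maps appearing in that construction live in $y(\V^{\omega_1})$, so their (co)domains are $\omega_1$-compact, not compact. Your generating-set argument therefore needs to be run at level $\omega_1$, with the categories $C$ ranging over suitably bounded $\omega_1$-cocomplete symmetric monoidal stable categories, not arbitrary small idempotent-complete ones; and you still need to check that $\Rig\circ\Ind$ commutes with the relevant $\omega_1$-filtered colimits, which is precisely the accessibility of the rigidification functor.

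Once you correct the cardinal, your argument essentially reduces to the same key point the paper proves, but the paper packages it more efficiently. Instead of building a generating set by hand, the paper observes that $\CAlg\rig\subset \CAlg(\PrL_{\st,\omega_1})$ is a full \emph{coreflective} subcategory (the right adjoint is $\Rig$, by \Cref{thm:rigidification}), and then invokes \Cref{lm:corefl}(2): for a coreflective full subcategory $A\subset B$ with $B$ presentable, accessibility of the coreflector $ii^R$ already implies $A$ is presentable. It then checks accessibility of $\Rig$ directly, reducing (after forgetting the algebra structure, since the forgetful functor is conservative and preserves filtered colimits) to the observations that $\M\mapsto\M^{\omega_1}$ and $\M\mapsto\Ind(\M^{\omega_1})$ are $\omega_1$-accessible, that $\mathbb Q_{\geq 0}$ is countable, and that trace-class-ness of a map between $\omega_1$-compacts is witnessed by an $\omega_1$-compact object. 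That is a shorter route to the same accessibility fact your step (2) needs; I'd recommend adopting the coreflection reduction rather than the hands-on generator construction.
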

\begin{rmk}
    Unlike in the case of dualizable categories, we do not have to restrict the morphisms, because any $\Sp$-algebra map between rigid $\Sp$-algebras is an internal left adjoint, cf. \Cref{obs:morrig}. 
\end{rmk}

We recall the following classical lemma:
\begin{lm}\label{lm:corefl}
    Let $i: A\to B$ be a fully faithful embedding, where $i$ admits a right adjoint $i^R$. 
    \begin{enumerate}
    \item For any diagram $f: I\to A$ such that $i\circ f$ admits a colimit in $B$, $f$ admits a colimit too (which is automatically preserved by $i$); in particular, if $B$ is cocomplete, then so is $A$; 
        \item If $B$ is accessible and $ii^R : B\to B$ is accessible, then $A$ is accessible.
    \end{enumerate}
    In particular, if $B$ is presentable and $ii^R$ is accessible, then $A$ is presentable too. 
\end{lm}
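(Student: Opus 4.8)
The plan is to recognize that the hypotheses say precisely that $i$ exhibits $A$ as a \emph{coreflective} subcategory of $B$ --- the mirror image of a reflective localization --- and to run the standard localization arguments in their dual form.

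First, for the colimit statement: full faithfulness of $i$ means the unit $\id_A \to i^R i$ is an equivalence, so $i$ identifies $A$ with the full subcategory of $B$ spanned by those objects $b$ for which the counit $\varepsilon_b\colon i i^R(b)\to b$ is an equivalence; this is the dual of the usual description of a reflective localization (cf.\ \cite[\S5.2.7, \S5.5.4]{HTT}). The key point to establish is then that this subcategory is stable under every colimit that exists in $B$: given $f\colon I\to A$ with $c := \colim_I(i\circ f)$ existing in $B$, transposing the colimit cocone across the adjunction produces a second cocone $i\circ f\Rightarrow i i^R(c)$ through which the first factors, exhibiting $\varepsilon_c$ as a retraction. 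Since $A$ is a full subcategory of an idempotent-complete $\infty$-category --- or, alternatively, citing the closure of the essential image of $i$ under colimits directly --- this forces $c$ to lie in the essential image of $i$, so that $i^R(c)$ is a colimit of $f$ in $A$; and $i$ preserves it automatically, being a left adjoint. Taking $B$ cocomplete yields $A$ cocomplete.

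For the accessibility statement, I would identify $A$ with the essential image of $i i^R$, equivalently the full subcategory $B'\subseteq B$ of objects $b$ on which $\varepsilon_b$ is an equivalence. The functor $B\to \Fun(\Delta^1, B)$, $b\mapsto \varepsilon_b$, is accessible (being built from the accessible functors $i i^R$ and $\id_B$, with $\Fun(\Delta^1,B)$ accessible), and the full subcategory of $\Fun(\Delta^1,B)$ spanned by the equivalences is accessible and accessibly embedded (it is the essential image of the degeneracy $B\to \Fun(\Delta^1,B)$, hence equivalent to $B$). Hence $B'$, being the pullback of accessible $\infty$-categories along accessible functors, is accessible, and the inclusion $B'\hookrightarrow B$ is accessible, by the standard accessibility calculus of \cite[\S5.4.6]{HTT}. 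Combining the two parts: if $B$ is presentable and $i i^R$ is accessible, then $A$ is both accessible and cocomplete, hence presentable.

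The main obstacle is the colimit-closure claim in the first part, namely that $\colim_I(i\circ f)$ automatically lands in the essential image of $i$. The cleanest route is to quote the dual of the reflective-localization machinery, but one should take care that the dualization is recorded correctly in the literature (coreflective subcategories are closed under colimits), since the standard references emphasize the reflective case; the hands-on retract argument sketched above is an elementary substitute, at the mild cost of invoking idempotent completeness of $A$. The accessibility half is entirely routine once $A$ is described as the subcategory cut out by an accessible condition.
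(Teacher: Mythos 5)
Your proposal follows the same blueprint for part (2) as the paper does: identify $A$ with the pullback of $B\to B^{\Delta^1}\leftarrow B$ (counit and diagonal) and invoke the accessibility of pullbacks of accessible categories along accessible functors, i.e. \cite[Proposition 5.4.6.6]{HTT}. That half is fine.

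For part (1), you take a genuinely different route from the paper, and as written it has a gap. The paper characterizes the essential image of $i$ as the objects $b$ for which $\Map(b,-)$ inverts every map that $i^R$ inverts (the dual of the Bousfield-local-object description), and then observes this orthogonality condition is visibly stable under colimits that exist in $B$. You instead argue that the colimit cocone factors through $\varepsilon_c$, so $\varepsilon_c$ admits a section, and then try to conclude that $c$ lies in the essential image. The two justifications you offer for this last step do not work: (a) ``$A$ is a full subcategory of an idempotent-complete $\infty$-category'' --- a full subcategory of an idempotent-complete category need not be closed under retracts, so this gives nothing; and (b) ``closure of the essential image of $i$ under colimits'' --- this is precisely the statement you are in the middle of proving, so it is circular. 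The retract route \emph{can} be made to work, but you must actually show that a one-sided inverse of $\varepsilon_c$ is automatically two-sided: since $\eta\colon \id_A\to i^R i$ is an equivalence, the triangle identity forces $i^R(\varepsilon_c)$ to be an equivalence for \emph{every} $c$; hence $i^R(s\circ\varepsilon_c)=\id$, and by full faithfulness of $i$ (via the adjunction equivalence $\Map_B(ii^R c, ii^R c)\simeq\Map_A(i^R c, i^R ii^R c)$) this forces $s\circ\varepsilon_c=\id_{ii^R c}$, so $\varepsilon_c$ is an equivalence. With that supplied, your argument closes; but the paper's orthogonality characterization avoids the issue entirely and is the cleaner way to see stability under colimits.
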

\begin{proof}
(1) We start by noting that, as a full subcategory of $B$, $A$ consists exactly of those $b$'s such that for any $p:x\to y$ such that $i^R(p)$ is an equivalence, $\Map(b,x)\to\Map(b,y)$ is also an equivalence. 

So now let $f:I\to A$ be as in the statement, and assume that $i\circ f$ admits a colimit $C$. Let $p:x\to y$ be as above, and we then note that $\Map(C,x)\to \Map(C,y)$ is identified with $\Map(i\circ f, x)\to \Map(i\circ f, y)$, which in turn is identified with $\Map(f, i^R(x))\to \Map(f,i^R(y))$ and is thus an equivalence, as was to be shown.

(2) $A$ is the pullback of the cospan $B\to B^{\Delta^1} \leftarrow B$ where $B\to B^{\Delta^1}$ classifies the co-unit $ii^R\to \id_B$ and $B^{\Delta^1}\leftarrow B$ is the diagonal functor. 

If $ii^R$ is accessible, then so is the map $B\to B^{\Delta^1}$ classifying the co-unit. 

By \cite[Proposition 5.4.6.6.]{HTT}, it follows that $A$ is accessible. 
\end{proof}

\begin{proof}[Proof of \Cref{thm:rigprlst}]
     By \cite[Theorem 3.4]{maindbl} and \Cref{obs:morrig}, we have an inclusion $\CAlg\rig\subset \CAlg(\PrL_{\st,\omega_1})$, and this furthermore has a right adjoint $\Rig(-)$ by \Cref{thm:rigidification}. By \Cref{lm:corefl}, it suffices to prove that $\Rig(-)$, viewed as an endofunctor of $\CAlg(\PrL_{\st,\omega_1})$, is accessible. 

     Since $\CAlg(\PrL_{\st,\omega_1})\to\PrL_{\st,\omega_1}$ preserves filtered colimits and is conservative, it suffices to check accessibility after forgetting the algebra structure. To do so, we first point out that the functor $\M\mapsto \M^{\omega_1}$, with values in categories, is $\omega_1$-accessible; and therefore so is $\M\mapsto \Ind(\M^{\omega_1})$. We also point out that $\mathbb Q_{\geq 0}$ is countable.
     
     Finally, let us note that if the unit of $\M$ is $\kappa$-compact, then ``$f:x\to y$ is a trace-class map in $\M$'' can be witnessed by a $\kappa$-compact object $d\in \M$, in the notation of \Cref{defn:trcl}. It follows that on $\CAlg(\PrL_{\st,\omega_1})$, the full subfunctor of $\M\mapsto(\M^{\omega_1})^{\mathbb Q_{\geq 0}}$ spanned by those diagrams where all transition maps are trace-class is an $\omega_1$-accessible subfunctor. 
     
    Together, these facts show that the subfunctor $\Rig_{\omega_1}(\M)\subset\Ind(\M^{\omega_1})$ is also $\omega_1$-accessible. 
\end{proof}
\subsection{In general}\label{section:rigprlV}
Over $\Sp$ we had an explicit right adjoint and were thus able to prove presentability by hand. In general, we must work somewhat differently. 

Nonetheless, we still have:
\begin{thm}\label{thm:rigprl}
    Let $\CAlg\rig_\V\subset \CAlg(\Mod_\V(\PrL))$ be the full subcategory spanned by rigid $\V$-algebras. 
    $\CAlg\rig_\V$ is presentable. 
\end{thm}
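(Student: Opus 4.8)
The plan is to mimic the strategy of the proof of \Cref{thm:rigprlst}, but since we no longer have the explicit rigidification functor $\Rig_\kappa(-)$ at our disposal over a general base, we will instead argue via \Cref{lm:corefl} using the \emph{abstract} rigidification right adjoint whose existence we have already established (the corollary after \Cref{lm:imrig}, together with \Cref{lm:rig=rigP}). First I would fix a good cardinal $\kappa$ and recall that by \Cref{obs:morrig}, any map between rigid $\V$-algebras is an internal left adjoint, so that $\CAlg\rig_\V$ sits inside $\CAlg(\Mod_\V(\PrL))$ as a genuinely full subcategory — there is no morphism-restriction subtlety as there was for dualizable categories. Moreover, by \Cref{rmk:kappacpctunit} (the unit of a rigid algebra is atomic, hence $\kappa$-compact for $\kappa$ good), the inclusion factors through $\CAlg(\Mod_\V(\PrL_\kappa))$ for a suitable good $\kappa$.

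\textbf{Key steps.} The main point is then to show that (i) the inclusion $\CAlg\rig_\V \hookrightarrow \CAlg(\Mod_\V(\PrL_\kappa))$ admits a right adjoint, and (ii) the resulting comonad $\iota\circ\Rig_\V$ is an accessible endofunctor of $\CAlg(\Mod_\V(\PrL_\kappa))$; then \Cref{lm:corefl} finishes, once one knows $\CAlg(\Mod_\V(\PrL_\kappa))$ is itself presentable (which is standard: $\PrL_\kappa$ is presentable, hence so is $\Mod_\V$ of it, hence so is $\CAlg$ of that). Step (i) is exactly the content of the corollary following \Cref{lm:imrig}: every $\W \in \CAlg(\Mod_\V(\PrL))$ admits a rigidification. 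For step (ii), the cleanest route is to use the explicit ``by hand'' construction of $\Rig_\V(\W)$ as a transfinite iteration $\W^{(\alpha)}$ from the last subsection of \Cref{section:rigandlocrig} (the iterated trace-class-presentable-generation construction), composed with the passage $\W \rightsquigarrow \PP_\V(\W^\kappa)$ of \Cref{lm:rig=rigP}. The functor $\W\mapsto \W^\kappa$ (with values in small $\V$-categories) is $\kappa$-accessible, hence so is $\W\mapsto \PP_\V(\W^\kappa)$; and since for a $\lambda$-compact unit one can witness trace-classness by a $\lambda$-compact dualizing datum $d$, the full subfunctor of $\W\mapsto((\W^\kappa)^{\text{wt}})$ cut out by the trace-class condition on weights is an accessible subfunctor for a suitable regular cardinal $\lambda \geq \kappa$. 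Assembling the transfinite construction — which stabilizes at a bounded ordinal by \Cref{cor:stabilize} since the relevant subcategories of $\PP_\V(\W^\kappa)^\kappa$ form a small poset — as a (bounded) transfinite composite of accessible subfunctors shows that $\W\mapsto \Rig_\V(\PP_\V(\W^\kappa))$, viewed with values in $\PP_\V(\W^\kappa)$ and then restricted back to an endofunctor, is accessible.

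\textbf{Main obstacle.} The delicate step, and the one I would spend the most care on, is controlling the accessibility in the transfinite iteration $\W^{(\alpha)}$ \emph{uniformly in $\W$}: one needs that the bound on the ordinal at which the iteration stabilizes, and the regular cardinal witnessing accessibility at each stage, can be chosen independently of $\W$ as $\W$ ranges over a sufficiently large-but-bounded family (e.g. the $\mu$-compact objects of $\CAlg(\Mod_\V(\PrL_\kappa))$ for some regular $\mu$). This is where one has to be genuinely careful about set theory: one must verify that ``$x$ is trace-class presentable in $\W^{(\alpha)}$, witnessed by a $\kappa$-small weight and $\kappa$-compact diagram'' is a condition preserved by $\mu$-filtered colimits of the pair $(\W, x)$ for $\mu$ large enough relative to $\kappa$, and that the $\kappa$-compact objects of each $\W^{(\alpha)}$ vary accessibly. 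The analogue of \Cref{rmk:kappacpctunit} guarantees the forgetful functor $\CAlg(\Mod_\V(\PrL_\kappa))\to\Mod_\V(\PrL_\kappa)\to\PrL_\kappa$ detects filtered colimits, so it suffices to run the accessibility bookkeeping at the level of underlying categories, exactly as in the proof of \Cref{thm:rigprlst}; the only genuinely new input is the iterated construction, which costs one extra ordinal parameter but, being bounded, does not obstruct accessibility. Alternatively — and this is perhaps the slickest packaging — one can phrase the whole thing as a ``parametrized'' version of \cite[Theorem B]{maindbl} (the dualizable presentability theorem), observing that rigidity is detected by the adjointability data $\{\mu:\W\otimes_\V\W\to\W \text{ is an internal left adjoint}, \one_\W \text{ is }\V\text{-atomic}\}$ together with dualizability of the underlying $\V$-module, each of which is an accessible condition; the locally rigid variant (\Cref{thm:locrigprl}) is then handled the same way, with the added nuisance that one must also restrict the morphisms to internal left adjoints since these are not automatic there.
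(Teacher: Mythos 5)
Your proposal identifies the two reasonable strategies, but it does not carry either one through, and the one you develop in most detail has a genuine unresolved gap. The paper does \emph{not} argue via the explicit $\W^{(\alpha)}$-iteration composed with $\Rig_\V(\W)\simeq\Rig_\V(\PP_\V(\W^\kappa))$ and then \Cref{lm:corefl}; it never establishes accessibility of the rigidification comonad at all. Instead it writes $\CAlg\rig_\V$ as a pullback
\[
\CAlg(\Dbl{\V})\times_{(\Mod^\V_\kappa)^{\Delta^1}}(\Mod^\V_{\kappa,\dbl})^{\Delta^1},
\]
where $\Mod^\V_\kappa$ is the unstraightened cocartesian fibration of $\W\mapsto\Mod_\W(\PrL_\kappa)$, established presentable via \cite[Theorem 10.3]{GHN}, and $\Mod^\V_{\kappa,\dbl}$ is the fibration of $\W\mapsto\Dbl{\W}$, established accessible via Heine's \emph{relative} comonadicity theorem applied to the fiberwise $\PP_\V((-)^\kappa)$-comonad (\Cref{cor:relcomonadacc}). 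This is the ``parametrized \cite[Theorem B]{maindbl}'' you gesture at in your closing sentences, but you supply none of its actual content: the pullback expression, the use of GHN, and the passage from fiberwise comonadicity of $\PP_\V((-)^\kappa)$ to comonadicity over the base $\CAlg(\Mod_\V(\PrL_\kappa))$ (which needs the observation in \Cref{lm:fwcomonadicrel} that bifibrations automatically satisfy the limit-preservation clause in Heine's theorem). ``Each of which is an accessible condition'' is precisely the claim that requires this machinery; it is not an observation.

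As for the route you actually develop: the obstacle you flag in your last paragraph is genuine, not a bookkeeping nuisance, and your proposal does not close it. \Cref{cor:stabilize} bounds the stabilization ordinal $\alpha(\W)$ by the cardinality of a poset of subcategories of $\PP_\V(\W^\kappa)^\kappa$, which grows with $\W$; to show the endofunctor $\W\mapsto\Rig_\V(\W)$ of $\CAlg(\Mod_\V(\PrL_\kappa))$ is $\mu$-accessible, you would need to show that after a fixed, $\W$-independent number of iterations the construction commutes with $\mu$-filtered colimits of inputs, and you give no mechanism that produces such a uniform bound. (In the $\Sp$-case of \Cref{thm:rigprlst} this problem does not arise because the construction is a \emph{single} step, the countable $\mathbb Q_{\geq 0}$-telescope; the transfinite iteration is new in the general case.) Without that, \Cref{lm:corefl}(2) cannot be applied, and the argument does not conclude.
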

\begin{warn}\label{rmk:mistake}
    An earlier version of this paper contained a similar claim for a variant of the category of locally rigid $\V$-algebras. The claim in question was mistaken, as was pointed out to the author by Jiacheng Liang, and currently we do not know a way to make any such claim both nontrivial and true. Such a result might be provable if one could prove an analogue of \Cref{prop:colimlocrig} where the functors are only required to preserve $\kappa$-compacts for some fixed $\kappa$. 
\end{warn}

We have seen in \Cref{prop:colimlocrig} that $\CAlg\rig_\V$ admits all small colimits. Thus this really is a claim about accessibility. We will prove this by witnessing the category of rigid commutative algebras as a pullback of accessible categories along accessible functors. 

The relevant result here is an immediate corollary of \cite[Theorem 10.3]{GHN}, which we spell out for the reader's convenience:
\begin{thm}
    Let $B$ be a presentable category, and $e:B\to \PrL$ be an accessible functor. The unstraightened cocartesian fibration $E\xrightarrow{p}B$ has a presentable total category $E$. 
\end{thm}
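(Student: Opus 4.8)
The plan is to prove the cited theorem (the statement that the total category of an accessible functor $e\colon B\to\PrL$ from a presentable base has a presentable total category) by reducing it to the Grothendieck/Lurie machinery for (co)cartesian fibrations together with standard facts about accessibility of limits and colimits of categories. Since this is quoted as ``an immediate corollary of \cite[Theorem 10.3]{GHN}'', the honest approach is not to reprove that theorem but to extract the special case we need.

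First I would recall the relevant setup: given the accessible functor $e\colon B\to \PrL$, the unstraightening $\int_B e \to B$ is the cocartesian fibration whose fibre over $b\in B$ is $e(b)$. The total category $E = \int_B e$ can be described as a (lax) colimit, or alternatively its opposite is computed as a limit of the pointwise data; the key point is that $E$ is built from $B$ and the values $e(b)$ by a colimit-type construction in $\widehat{\Cat}$ which is compatible with accessibility. Concretely, one shows $E$ is cocomplete: colimits in $E$ can be computed by first taking the colimit of the underlying diagram of base objects, then cocartesian-pushing everything into the fibre over that colimit, then taking the colimit there — this works precisely because each $e(b)$ is cocomplete and each transition functor $e(f)$ preserves colimits (being a morphism in $\PrL$), and because $B$ is cocomplete. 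This establishes cocompleteness of $E$.

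Next I would establish accessibility of $E$. Here is where the accessibility hypothesis on $e$ is used: pick a regular cardinal $\kappa$ such that $B$ is $\kappa$-accessible, $e$ is $\kappa$-accessible (i.e. preserves $\kappa$-filtered colimits), and each value $e(b)$ for $b$ in a set of representatives of $\kappa$-compact objects of $B$ is $\lambda$-accessible for some fixed $\lambda \geq \kappa$; one may enlarge $\kappa$ so that $\lambda = \kappa$ works uniformly, using that there is only a small set of such $b$. Then I would exhibit a small category generating $E$ under $\kappa$-filtered colimits, namely the full subcategory on objects $(b,x)$ with $b$ a $\kappa$-compact object of $B$ and $x$ a $\kappa$-compact object of $e(b)$; the fact that this is a set of $\kappa$-compact generators of $E$ follows from the explicit description of colimits above, together with the compatibility of cocartesian transport with $\kappa$-filtered colimits (which is exactly $\kappa$-accessibility of $e$ and of the transition functors). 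This is the content one imports from \cite[Theorem 10.3]{GHN}, and in our case it suffices to invoke it directly: $E$ is accessible.

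Combining cocompleteness and accessibility gives that $E$ is presentable, which is the claim. The main obstacle — and the reason the paper cites \cite{GHN} rather than reproving things — is the bookkeeping in the accessibility half: one must check carefully that the cocartesian pushforward functors commute with sufficiently filtered colimits \emph{uniformly} in the base, so that a single $\kappa$ works both for $B$ and for all fibres simultaneously, and that the resulting generators are genuinely $\kappa$-compact in the total category. Once $\kappa$ is chosen large enough this is routine, but it is the only place real care is needed. In the applications (Theorems~\ref{thm:rigprl} and \ref{thm:locrigprl}) one then feeds in $B$ a suitable category of modules or algebras and $e$ the functor sending an algebra to its category of modules (or a truncated variant), and obtains presentability of the total category, from which the desired pullback descriptions of $\CAlg\rig_\V$ and $\CAlg^{\mathrm{loc.rig}}_\V$ follow.
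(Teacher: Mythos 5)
Your proof is correct in the sense that the paper itself proves the statement by citing \cite[Theorem 10.3]{GHN}, and you do the same; you also sketch the idea behind that theorem, which is a reasonable thing to do but is not content the paper supplies. Two remarks. First, the paper's one-line proof also invokes the equivalence $(\PrR)^{\op}\simeq \PrL$, which you omit, and this is not mere bookkeeping: GHN's theorem is most naturally stated in terms of cartesian fibrations and $\PrR$-valued functors, and one passes from the cocartesian fibration classified by $e\colon B\to\PrL$ to the \emph{same} total category $E$ viewed as a cartesian fibration classified by the corresponding functor $B^{\op}\to\PrR$ (the bifibration property of presentable fibrations). Second, and relatedly, your sketch of the accessibility half locates the difficulty slightly off. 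You say the key is ``compatibility of cocartesian transport with $\kappa$-filtered colimits (which is exactly $\kappa$-accessibility of $e$ and of the transition functors)'' — but the transition functors $e(f)$ lie in $\PrL$ and hence preserve \emph{all} colimits automatically; that is not the issue. What one actually needs for $(b,x)$ with $b\in B^\kappa$ and $x\in e(b)^\kappa$ to be $\kappa$-compact in $E$ is that the mapping space $\Map_E((b,x),(c,y))$, which fibers over $\Map_B(b,c)$ with fiber $\Map_{e(b)}(x,e(f)^R y)$, commute with $\kappa$-filtered colimits in $(c,y)$; this requires control of the \emph{right} adjoints $e(f)^R$ (i.e.\ the cartesian transport) commuting with $\kappa$-filtered colimits. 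That is precisely why the translation to $\PrR$ appears, and why the uniform choice of $\kappa$ is delicate rather than ``routine.'' Since you ultimately defer the real work to GHN, this is not a gap in your argument, but the accessibility sketch as written points at the wrong functors.
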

\begin{proof}
    This follows from \cite[Theorem 10.3]{GHN}, and the equivalence $(\PrR)\op\simeq\PrL$ \cite[Corollary 5.5.3.4]{HTT}.
\end{proof}
Ideally, we would like to apply this directly to $\W\mapsto \Dbl{\W}$. However, we were not able to show directly that this functor satisfies the desired assumptions. So we go on a detour through parametrized comonadicity. First we note that:
\begin{lm}
    Let $C\in\CAlg(\PrL)$. The functor $\Alg(C)\to \PrL, A\mapsto \Mod_A(C)$ is accessible (in fact, $\omega$-accessible), and hence so is its restriction to $\CAlg(C)$. 
\end{lm}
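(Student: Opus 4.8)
The plan is to show that the functor in question, which I will write $\mathrm{Mod}_{(-)}(C)\colon \Alg(C)\to\PrL$ (it lands in $\PrL$ since each $\Mod_A(C)$ is presentable and each base-change functor $B\otimes_A(-)$ is a left adjoint), preserves filtered colimits. Since $\Alg(C)$ is presentable when $C$ is (by \cite{HA}, §3.2), and $\PrL$ admits filtered colimits, ``$\omega$-accessible'' is exactly this assertion. So first I would fix a filtered diagram $A_\bullet\colon I\to\Alg(C)$ with colimit $A$, and reduce the claim to: the base-change functors $\phi_{i,!}\colon\Mod_{A_i}(C)\to\Mod_A(C)$, $M\mapsto A\otimes_{A_i}M$, exhibit $\Mod_A(C)$ as $\colim_{i\in I}\Mod_{A_i}(C)$ in $\PrL$. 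The cocone $(\phi_{i,!})_i$ exists because these functors are colimit-preserving and compatible along the transition maps $\phi_{ij,!}$.

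To identify the colimit, I would use that a filtered colimit in $\PrL$ is computed as the limit, in $\widehat{\Cat}$, of the underlying categories along the right adjoints; here the right adjoints of the $\phi_{i,!}$ are the restriction-of-scalars functors $\phi_i^{*}\colon\Mod_A(C)\to\Mod_{A_i}(C)$. These assemble into a canonical comparison functor $q\colon\Mod_A(C)\to\lim_{i\in I^{\op}}\Mod_{A_i}(C)$ sending $M$ to the compatible family $(\phi_i^{*}M)_i$, and it suffices to show $q$ is an equivalence. Note $q$ is conservative: if $qf$ is an equivalence then the underlying map of $f$ in $C$ is an equivalence, hence $f$ is. On the other hand, the family $(\phi_{i,!})_i$ assembles, by the universal property of the colimit, into a colimit-preserving functor $\Phi\colon\colim_i\Mod_{A_i}(C)\to\Mod_A(C)$, and one checks $\Phi$ is left adjoint to $q$ using the pointwise adjunctions $\phi_{i,!}\dashv\phi_i^{*}$.

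It then remains to see that the unit $\mathrm{id}\to q\Phi$ and counit $\Phi q\to\mathrm{id}$ are equivalences. Using the standard identity $\mathrm{id}\simeq\colim_i c_i\pi_i$ for filtered colimits in $\PrL$ (with $c_i$ the cocone functors, $\pi_i$ the projections, transition maps induced by the adjunction counits), together with $\Phi c_i\simeq\phi_{i,!}$, both reduce to the single computation that the canonical map $\colim_i\big(A\otimes_{A_i}M\big)\to M$ is an equivalence for every $A$-module $M$. This I would prove by writing $A\otimes_{A_i}M\simeq\lvert\mathrm{Bar}_\bullet(A,A_i,M)\rvert$ and commuting the (reflexive) geometric realization with the filtered colimit over $i$: since the tensor product of $C$ preserves filtered colimits separately in each variable, $\colim_i\mathrm{Bar}_n(A,A_i,M)\simeq\mathrm{Bar}_n(A,A,M)$ levelwise, whence $\colim_i(A\otimes_{A_i}M)\simeq A\otimes_A M\simeq M$. (Alternatively, this identification of filtered colimits of module categories can simply be quoted from the treatment of $\mathrm{Mod}(-)$ as a functor in \cite{HA}, §4.8.5.) Finally, for the restriction to $\CAlg(C)$: the forgetful functor $\CAlg(C)\to\Alg(C)$ creates filtered colimits (both categories sit over $C$ creating filtered colimits, \cite{HA}), so the composite $\CAlg(C)\to\Alg(C)\to\PrL$ is again $\omega$-accessible. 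The main obstacle is the second step — carefully tracking the directions of base change versus restriction and the $\colim_{\PrL}=\lim_{\widehat{\Cat}}$ translation, and cleanly executing the bar-construction computation — while everything else is formal.
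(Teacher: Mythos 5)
The paper's proof is a one-line citation of \cite[Theorem 4.8.5.13]{HA}; you instead attempt a direct verification that $\Mod_{(-)}(C)$ preserves filtered colimits. The strategic skeleton (reduce to the comparison functor $q\colon\Mod_A(C)\to\lim_{I^{\op}}\Mod_{A_i}(C)$ being an equivalence, build its left adjoint $\Phi$ from the $\phi_{i,!}$, and then do the bar-construction computation $\colim_i(A\otimes_{A_i}M)\simeq M$ by levelwise commuting the filtered colimit past $\otimes$ and the geometric realization) is sound, and that bar computation is genuinely the heart of the matter. You also note you could simply quote \cite[\S 4.8.5]{HA}, which is exactly what the paper does.

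However, the step where you dispatch the unit and counit is glossed over in a way that leaves a real gap. The counit is fine: $\Phi q(M)\simeq\colim_i\phi_{i,!}\phi_i^{*}M=\colim_i(A\otimes_{A_i}M)\to M$ is exactly your bar computation. But conservativity of $q$ adds nothing once the counit is shown to be an equivalence — that already makes $q$ fully faithful, and fully faithful functors are automatically conservative. What you actually still need is essential surjectivity of $q$ (equivalently, conservativity of $\Phi$, equivalently, invertibility of the unit), and your claim that the unit "reduces to the single computation" is not substantiated. Evaluating the unit on an object $c_i(N)$ produces a comparison involving $\pi_j c_i(N)$, which requires an explicit description of the projection functors out of a filtered colimit in $\PrL$; and then to pass from generators $c_i(N)$ to a general object of $\mathcal D$ via $\id\simeq\colim_i c_i\pi_i$ you would need $q\Phi$ to preserve that colimit, i.e.\ some cocontinuity of $q$ that you have not established (a priori $q$ is only a right adjoint). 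These points can be repaired — for instance by instead checking that $\Phi$ is fully faithful on the generators $c_i(N)$ and essentially surjective onto the free $A$-modules — but as written the argument is incomplete, which is presumably why the paper simply cites Lurie rather than reproving it.
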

\begin{proof}
    This follows from \cite[Theorem 4.8.5.13]{HA}. 
\end{proof}

\begin{cor}\label{cor:prlfib}
    Let $\Mod^\V_\kappa \xrightarrow{p}\CAlg(\Mod_\V(\PrL_\kappa))$ be the cocartesian fibration classifying the functor $\W\mapsto \Mod_\W(\PrL_\kappa)$. 

    For $\V\in\CAlg(\PrL_\kappa)$ $\Mod^\V_\kappa$ is presentable. 
\end{cor}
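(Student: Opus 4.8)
The plan is to deduce this immediately from the theorem just stated (the corollary of \cite[Theorem 10.3]{GHN}): it suffices to exhibit $\Mod^\V_\kappa$ as the total category of a cocartesian fibration over a presentable base, classified by an accessible functor into $\PrL$. By construction the base is $B := \CAlg(\Mod_\V(\PrL_\kappa))$ and the classifying functor is $e\colon B\to\PrL$, $\W\mapsto\Mod_\W(\PrL_\kappa)$, so there are exactly two things to check: that $B$ is presentable and that $e$ is accessible.

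First, $B$ is presentable. Indeed $\PrL_\kappa$ is a presentable symmetric monoidal category (the Lurie tensor product restricts to $\PrL_\kappa$), hence $\Mod_\V(\PrL_\kappa)$ is presentable, and therefore so is $\CAlg(\Mod_\V(\PrL_\kappa))$, by the general theory of modules and commutative algebras in presentable symmetric monoidal categories \cite[Ch.~3--4]{HA}.

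Second, $e$ is accessible. Using the canonical equivalence $\CAlg(\Mod_\V(\PrL_\kappa))\simeq \CAlg(\PrL_\kappa)_{\V/}$ together with the identification $\Mod_\W(\Mod_\V(\PrL_\kappa))\simeq\Mod_\W(\PrL_\kappa)$ — valid since any $\W$-module is automatically a $\V$-module along $\V\to\W$ — the functor $e$ is identified with the composite
\[
\CAlg(\PrL_\kappa)_{\V/}\longrightarrow \CAlg(\PrL_\kappa)\xrightarrow{\ \W\,\mapsto\,\Mod_\W(\PrL_\kappa)\ }\PrL,
\]
where the first arrow is the projection from the slice. The second arrow is accessible by the preceding lemma applied to $C=\PrL_\kappa$; the first is a projection from a slice of an accessible category, which preserves and detects filtered colimits and is itself accessible (cf.\ \cite[5.4.5.15]{HTT}). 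A composite of accessible functors is accessible, so $e$ is accessible.

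With both conditions verified, the theorem applies and shows that $\Mod^\V_\kappa$, being the total category of the cocartesian fibration classified by $e$, is presentable. I do not anticipate a genuine obstacle here: all the real content has been pushed into the two results already cited, and the only point requiring a little care is recognizing $e$ as the displayed composite through the slice $\CAlg(\PrL_\kappa)_{\V/}$, so that the accessibility lemma for $C=\PrL_\kappa$ can be brought to bear.
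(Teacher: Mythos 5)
Your proof is correct and is essentially the intended argument: the paper records this as an unproved corollary precisely because it is the routine combination of the \cite{GHN}-theorem with the accessibility lemma that immediately precede it, and your write-up just makes explicit the two required checks (presentability of the base and accessibility of the classifying functor). The only cosmetic choice is that you apply the lemma with $C=\PrL_\kappa$ and then precompose with the slice projection $\CAlg(\PrL_\kappa)_{\V/}\to\CAlg(\PrL_\kappa)$, whereas one could equally apply it directly with $C=\Mod_\V(\PrL_\kappa)$ and invoke $\Mod_\W(\Mod_\V(\PrL_\kappa))\simeq\Mod_\W(\PrL_\kappa)$; the two are interchangeable.
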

We wish to prove the same for the cocartesian fibration $\Mod^\V_{\kappa,\dbl}$ classifying the functor $\W\mapsto \Dbl{\W}$. For this we note that the colimit-preserving functor $\Dbl{\W}\to \Mod_\W(\PrL_\kappa)$ induces a fiberwise left adjoint $\Mod^\V_{\kappa,\dbl}\to \Mod^\V_\kappa$ over $\CAlg(\Mod_\V(\PrL_\kappa))$. By the theory of relative adjunctions \cite[Proposition 7.3.2.6]{HA}, this functor admits a right adjoint $G$. We are thus in a position to ask whether it is comonadic. However, it is a morphism of fibrations, and so the more natural question is whether it is comonadic in a ``relative sense''. Luckily, Heine develops the theory of (co)monadicity relative to a base in \cite{heinemonad}. For us, the relevant result is:
\begin{thm}[{\cite[Theorem 5.43]{heinemonad}}]
    Let $F\dashv G: C \rightleftarrows D$ be a relative adjunction in $\Cat_{/S}$, for some category $S$. It is comonadic in $\Cat_{/S}$ if: 
    \begin{itemize}
        \item For every $s\in S$, the induced adjunction on fibers is comonadic; 
        \item For every $s\in S$, limits of $F_s$-split cosimplicial objects in $C_s$ are preserved by the inclusion $C_s\to C$.
    \end{itemize}
\end{thm}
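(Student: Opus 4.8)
The plan is to deduce this from the absolute comonadic Barr--Beck--Lurie theorem (\cite[Theorem 4.7.3.5]{HA}, dualized) by a fibrewise argument, with hypothesis (i) supplying a fibrewise equivalence and hypothesis (ii) upgrading it to an equivalence in $\Cat_{/S}$. First I would organize the data: the relative adjunction $F\dashv G$ produces a comonad $T=FG$ on $D$ in the category $\Fun_{/S}(D,D)$ of endofunctors over $S$ under composition, hence a category of comodules $\coMod_T(D)\to S$ together with the forgetful functor $U\colon\coMod_T(D)\to D$, its right adjoint (the cofree functor $U^R$) and a comparison functor $\Theta\colon C\to \coMod_T(D)$ over $S$, characterized by $U\Theta=F$. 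The one nontrivial structural input is that $\coMod_T(D)$ is built as a limit of an explicit diagram assembled from $D$, $\Fun(\Delta^1,D)$ and the (co)unit of the adjunction, all living over $S$ and involving only cotensors by finite simplicial sets; since such a limit commutes with base change along $\{s\}\hookrightarrow S$, one gets identifications $\coMod_T(D)_s\simeq \coMod_{T_s}(D_s)$ for the fibrewise comonad $T_s=F_sG_s$, under which $\Theta_s$ becomes the ordinary comparison functor of the fibrewise adjunction $F_s\dashv G_s$, and $U^R$ restricts to the fibrewise cofree functor.

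Granting this, hypothesis (i) says precisely that each $\Theta_s$ is an equivalence, so $\Theta$ is a fibrewise equivalence over $S$; by the absolute theorem this already forces $F_s$ to be conservative and $C_s$ to admit, and $F_s$ to preserve, limits of $F_s$-split cosimplicial objects, i.e.\ hypothesis (i) encodes the classical conditions \emph{within each fibre}. Since a fibrewise equivalence need not be an equivalence, the heart of the proof is to promote this, and here hypothesis (ii) enters. I would exhibit a candidate inverse $\Psi\colon\coMod_T(D)\to C$ over $S$ by the usual cobar formula applied fibrewise: for $m\in\coMod_T(D)$ lying over $s$, form its canonical cofree cosimplicial resolution $m^\bullet$ in $\coMod_{T_s}(D_s)$ (so $m\simeq\lim_{\Delta}m^\bullet$ by the absolute theory, both in the fibre and in $\coMod_T(D)$, since $U m^\bullet$ is $U$-split), let $c^\bullet:=\Theta_s^{-1}(m^\bullet)$, which is an $F_s$-split cosimplicial object in $C_s$, and set $\Psi(m):=\lim_{\Delta}c^\bullet$ computed in $C_s$. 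Hypothesis (ii) is used at exactly this point: it guarantees that $\lim_{\Delta}c^\bullet$ is also a limit in the total category $C$. Then for $c\in C$ one computes, using that $\map_C(c,-)$ preserves limits, full faithfulness of $\Theta$ on cofree comodules (which is just the relative adjunction $F\dashv G$, via $\Theta(Gd)\simeq U^R(d)$), and $\Theta c^\bullet=m^\bullet$: $\map_C(c,\Psi m)\simeq\lim_{\Delta}\map_C(c,c^\bullet)\simeq\lim_{\Delta}\map_{\coMod_T(D)}(\Theta c,m^\bullet)\simeq\map_{\coMod_T(D)}(\Theta c,m)$. Essential surjectivity of $\Theta$ is immediate from the fibre identifications and fibrewise essential surjectivity, and the displayed chain shows $\Theta$ is fully faithful; hence $\Theta$ is an equivalence in $\Cat_{/S}$ (with inverse $\Psi$), which is the assertion.

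The main obstacle I expect is not any single estimate but the coherence bookkeeping in the last step: one must make precise that $\Theta$, $U^R$ and the cobar resolution are genuinely data \emph{over} $S$ in the correct $(\infty,2)$-categorical sense (an adjunction in $\Cat_{/S}$), that the resolution of an object lying over $s$ can be taken inside the fibre $\coMod_{T_s}(D_s)$ so that hypothesis (ii) applies verbatim, and that the assignment $m\mapsto\Psi(m)$ assembles into an actual functor over $S$ rather than a fibrewise-defined gadget --- the natural way to do this is to produce $\Psi$ as a single functor by a relative limit construction and to verify the unit and counit of $\Theta\dashv\Psi$ are equivalences using hypotheses (i) and (ii), which is the relative incarnation of Lurie's proof of the absolute theorem and where essentially all the work resides.
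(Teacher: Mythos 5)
This theorem is cited from \cite[Theorem 5.43]{heinemonad} without proof in the paper, so there is no internal argument to compare against. That said, your reconstruction is a sensible outline of how one expects the relative Barr--Beck--Lurie theorem to be proved: identify the fibres of the relative comodule category with the absolute comodule categories of the fibrewise comonads, read hypothesis (i) as saying that the comparison functor $\Theta$ is a fibrewise equivalence, and use hypothesis (ii) to assemble a candidate inverse $\Psi$ by fibrewise cobar resolutions whose limits are guaranteed to be limits in $C$ rather than merely in $C_s$. Your use of the absoluteness of split limits to move the resolution's limit between $\coMod_{T_s}(D_s)$ and $\coMod_T(D)$ is also correct.

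Two points deserve to be made more explicit. First, the identification $\coMod_T(D)_s\simeq\coMod_{T_s}(D_s)$, and the assembly of $\Psi$ into a functor \emph{over} $S$ rather than a fibrewise assignment, are not formalities: they rest on knowing that the relative comodule category is built by a finite-limit construction over $S$ that is stable under base change, and that the relative adjunction $F\dashv G$ is an adjunction internal to $\Cat_{/S}$ in a precise sense. You flag this as "coherence bookkeeping," which is fair, but in Heine's treatment this is where the bulk of the work lies, so it is worth being clear that this is the substance of the theorem, not an afterthought. Second, the final sentence "the displayed chain shows $\Theta$ is fully faithful" overstates what the computation shows: the chain exhibits $\Psi$ as a right adjoint to $\Theta$, but full faithfulness of $\Theta$ requires in addition that the unit $\id\to\Psi\Theta$ be an equivalence. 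This does hold, but by the separate observation that the unit lives over identities in $S$ and so restricts to the unit of the fibrewise adjunction $\Theta_s\dashv\Psi_s$, which is an equivalence by hypothesis (i). Making this last step explicit closes the argument.
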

\begin{rmk}
    The limits of $F_s$-split cosimplicial objects in $C_s$ are the ones that are required to exist in $C_s$, and to be preserved by $F_s$ for the ordinary comonadicity theorem, cf. \cite[Theorem 4.7.3.5]{HA}.
\end{rmk}
A key observation is that if $C$ is a bifibration, i.e. both a cocartesian and a cartesian fibration, the second condition is automatic:
\begin{lm}\label{lm:fwcomonadicrel}
    Let $p:C\to S$ be a cartesian fibration for which all the pullback functors preserve limits (e.g. if it is also cocartesian). In that case, the inclusion functors $C_s\to C$ preserve weakly contractible limits, for all $s\in S$. 

    In particular, if $F\dashv G: C\rightleftarrows D$ is a fiberwise comonadic relative adjunction over $S$, it is comonadic. 
\end{lm}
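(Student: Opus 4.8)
The plan is to establish the first assertion by a direct computation of mapping spaces, and then obtain the ``in particular'' by checking that the hypotheses of the relative comonadicity theorem \cite[Theorem 5.43]{heinemonad} are met; essentially all the content sits in the first part.

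For the first part, fix a weakly contractible (in particular connected) small category $K$, a diagram $f\colon K\to C_s$ admitting a limit $L$ in $C_s$, and write $\iota\colon C_s\hookrightarrow C$ for the inclusion. I would show that $\iota L$, with the image of the limit cone, is a limit of $\iota\circ f$ in $C$; by Yoneda it suffices to fix $c\in C$, put $t:=p(c)$, and check that the canonical comparison map $\Map_C(c,\iota L)\to \lim_K\Map_C(c,\iota f(k))$ is an equivalence in $\Ss$. Since $p$ is a cartesian fibration, for each $d\in C_s$ the space $\Map_C(c,\iota d)$ is naturally an object of $\Ss_{/\Map_S(t,s)}$, and under the identification $\Ss_{/\Map_S(t,s)}\simeq\Fun(\Map_S(t,s),\Ss)$ it corresponds to the functor $\phi\mapsto\Map_{C_t}(c,\phi^* d)$, where $\phi^*\colon C_s\to C_t$ is the pullback along $\phi$, functorially in $d\in C_s$. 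As $K$ is weakly contractible, the forgetful functor $\Ss_{/\Map_S(t,s)}\to\Ss$ preserves the $K$-indexed limit (forgetful functors out of slice \categories{} create connected limits), so $\lim_K\Map_C(c,\iota f(k))$ may be computed in $\Ss_{/\Map_S(t,s)}$, i.e.\ pointwise in $\Fun(\Map_S(t,s),\Ss)$; its value at $\phi$ is $\lim_K\Map_{C_t}(c,\phi^* f(k))\simeq\Map_{C_t}(c,\lim_K\phi^* f(k))\simeq\Map_{C_t}(c,\phi^* L)$, the last equivalence being exactly the hypothesis that $\phi^*$ preserves limits (which holds, in particular, when $p$ is also cocartesian, since then $\phi^*$ is a right adjoint). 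Unwinding, the comparison map realizes this equivalence, which proves the claim.

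For the ``in particular'', I would verify the two conditions of \cite[Theorem 5.43]{heinemonad} for $F\dashv G\colon C\rightleftarrows D$. The first, fiberwise comonadicity of $F_s\dashv G_s$, is the assumption. For the second, a totalization is a limit indexed by the simplex category $\mathbf\Delta$, which has terminal object $[0]$ and is therefore weakly contractible; by the comonadicity theorem \cite[Theorem 4.7.3.5]{HA} fiberwise comonadicity ensures that the totalization of an $F_s$-split cosimplicial object of $C_s$ exists in $C_s$, and by the first part the inclusion $C_s\hookrightarrow C$ preserves it. Hence both hypotheses hold and $F\dashv G$ is comonadic over $S$. The step I expect to require the most care is the bookkeeping in the first part: that the canonical comparison map is genuinely compatible with the decomposition of the mapping spaces over $\Map_S(t,s)$, and that the $K$-indexed limit of spaces over $\Map_S(t,s)$ is computed fiberwise — which is precisely where weak contractibility (equivalently, connectedness) of the index category is used.
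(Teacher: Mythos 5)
Your proof is correct, but it takes a genuinely different route from the paper's. The paper deduces the claim directly from \cite[Corollary 4.3.1.10]{HTT}: that result says, under the hypotheses on fiberwise limits and pullback-preservation, that a cone over a diagram in the fiber $C_s$ is a \emph{$p$-limit} cone iff it is a limit cone in $C_s$; combining this with the general fact that a $p$-limit cone lying over a limit cone in $S$ is a limit cone in $C$, and with the observation that the constant cone over a weakly contractible $K$ valued at $s$ is indeed a limit cone in $S$, finishes the first part in a few lines. Your argument instead essentially inlines the content of that HTT result in the case at hand: you fix $c$ over $t\in S$, unstraighten the mapping space $\Map_C(c,\iota d)\to\Map_S(t,s)$ into the functor $\phi\mapsto\Map_{C_t}(c,\phi^*d)$ on the space $\Map_S(t,s)$, compute the $K$-indexed limit in the slice $\Ss_{/\Map_S(t,s)}$ (using that forgetting from a slice creates weakly contractible limits), reduce to a pointwise computation in $\Fun(\Map_S(t,s),\Ss)$, and then invoke the hypothesis that each $\phi^*$ preserves limits. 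This is more hands-on and arguably more illuminating, at the cost of invoking the straightening equivalence $\Ss_{/X}\simeq\Fun(X,\Ss)$ and the slice-limit facts explicitly rather than through a single citation. Two small points: in your parenthetical you write that forgetful functors out of slices create \emph{connected} limits; for $\infty$-categories the correct hypothesis is \emph{weakly contractible} (as you do use elsewhere) --- connectedness is the right notion only one categorical level down. And be careful not to justify weak contractibility of $\Delta$ by pointing to the \emph{terminal} object $[0]$: for the ``in particular'', the relevant fact is simply that $\Delta$ has an \emph{initial} object $[0]$, hence is weakly contractible; either way the conclusion is fine. The ``in particular'' part itself matches the paper's argument, which also just applies \cite[Theorem 5.43]{heinemonad}.
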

\begin{proof}
    This follows from \cite[Corollary 4.3.1.10]{HTT} together with the fact that a $p$-limit diagram lying above a limit diagram in $S$ is a limit diagram in $C$, and finally the fact that constant diagrams over weakly contractible categories have constant associated limit diagrams. 

    The ``In particular'' part simply follows from Heine's theorem \cite[Theorem 5.43]{heine}. 
\end{proof}
\begin{cor}\label{cor:relcomonadacc}
    The relative adjunction $\Mod^\V_{\kappa,\dbl}\rightleftarrows \Mod^\V_\kappa$ is comonadic in $\Cat_{/S}$, where $S=\CAlg(\Mod_\V(\PrL_\kappa))$. 

    The associated relative comonad is accessible, and in particular the source is accessible. 
\end{cor}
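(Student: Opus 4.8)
The plan is to recognise $\Mod^\V_{\kappa,\dbl}$ as the category of comodules over the relative comonad $T := \Phi\circ G$ on the presentable category $\Mod^\V_\kappa$, where $\Phi\colon\Mod^\V_{\kappa,\dbl}\to\Mod^\V_\kappa$ is the fibrewise left adjoint of the statement and $G$ its relative right adjoint, and then to read off accessibility from an explicit description of $T$. Concretely, I would first identify $T$ fibrewise: over $\W\in S=\CAlg(\Mod_\V(\PrL_\kappa))$, \cite[Theorem 1.63]{maindbl} shows that $G_\W$ carries a $\W$-module $\M$ to $\PP_\W(\M^\kappa)$, which is atomically generated, hence dualizable, so that $T_\W(\M)\simeq\PP_\W(\M^\kappa)$ with counit the canonical functor $p\colon\PP_\W(\M^\kappa)\to\M$ and comultiplication the evident comparison $\PP_\W(\M^\kappa)\to\PP_\W\big((\PP_\W(\M^\kappa))^\kappa\big)$.

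The fibrewise comonadicity of $\Phi_\W$ is then essentially a reformulation of \cite[Theorem 1.49]{maindbl}: a $T_\W$-coalgebra structure on $\M$ is exactly an internal left adjoint section $\hat y$ of $p$, and such a section exists precisely when $\M$ is dualizable; a short additional check identifies morphisms of coalgebras with internal left adjoints, giving $\mathrm{coMod}_{T_\W}(\Mod_\W(\PrL_\kappa))\simeq\Dbl{\W}$. Equivalently one verifies the Barr–Beck–Lurie hypotheses for $\Phi_\W$ by hand: it is conservative because an internal left adjoint that becomes invertible in $\Mod_\W(\PrL_\kappa)$ is already invertible, and $\Dbl{\W}$ admits, and $\Phi_\W$ preserves, the limits of $\Phi_\W$-split cosimplicial diagrams by stability of dualizable $\W$-modules and internal left adjoints under such limits (the module-level analogue of the inputs to \Cref{cor:limrig}).

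To upgrade this to comonadicity in $\Cat_{/S}$ I would apply \cite[Theorem 5.43]{heinemonad}: its first hypothesis is exactly the fibrewise comonadicity just discussed, and its second — that the fibre inclusions preserve $\Phi_\W$-split cosimplicial limits — is handled by \Cref{lm:fwcomonadicrel}, using that such limits are indexed by the weakly contractible category $\Delta$ and that $\Mod^\V_\kappa\to S$ is a bifibration by \Cref{cor:prlfib}. A more hands-on reformulation of the same conclusion, which I would keep in reserve, is that since $T$ is relative over $S$ one may form $\mathrm{coMod}_T(\Mod^\V_\kappa)\to S$, again a cocartesian fibration, with fibre $\mathrm{coMod}_{T_\W}(\Mod_\W(\PrL_\kappa))\simeq\Dbl{\W}$ over $\W$; the comparison functor $\Mod^\V_{\kappa,\dbl}\to\mathrm{coMod}_T(\Mod^\V_\kappa)$ coming from the relative adjunction is a fibrewise equivalence of cocartesian fibrations over $S$, hence an equivalence.

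Finally, for accessibility: $T$ is fibrewise $\M\mapsto\PP_\W(\M^\kappa)$, and both $\M\mapsto\M^\kappa$ (valued in small categories) and $N\mapsto\PP_\W(N)$ are accessible, uniformly in $\W$, so $T$ is an accessible endofunctor of the presentable category $\Mod^\V_\kappa$; writing $\mathrm{coMod}_T(\Mod^\V_\kappa)$ as an accessible pullback along the pattern of \cite[Proposition 5.4.6.6]{HTT} then shows $\Mod^\V_{\kappa,\dbl}$ is accessible. I expect the main obstacle to be the fibrewise comonadicity itself — precisely, confirming that $\Dbl{\W}$ has, and $\Phi_\W$ preserves, the $\Phi_\W$-split cosimplicial limits, together with the morphism-level compatibility of the coalgebra identification; once the comonad has been pinned down as $\PP_\W((-)^\kappa)$, the relative upgrade and the accessibility statement are comparatively mechanical.
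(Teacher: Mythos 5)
Your plan matches the paper's: identify the relative adjunction with a fibrewise comonad $T_\W\colon\M\mapsto\PP_\W(\M^\kappa)$, use \Cref{lm:fwcomonadicrel} plus \cite[Theorem 5.43]{heinemonad} for relative comonadicity (the paper cites \cite[Theorem B]{maindbl} for the fibrewise step rather than re-checking Barr--Beck--Lurie by hand, but this is the same content), and then deduce accessibility of the comonad. However, you have misjudged where the actual work lies, and that is where the gap is.

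You say you ``expect the main obstacle to be the fibrewise comonadicity itself,'' and that the accessibility step is ``comparatively mechanical'' once $T$ is pinned down as $\PP_\W((-)^\kappa)$ ``uniformly in $\W$.'' This is backwards: the fibrewise comonadicity is the part the paper disposes of in one line, while accessibility of the \emph{relative} comonad is the nontrivial step. The issue is that ``accessible, uniformly in $\W$'' is not a well-formed assertion when $\W$ itself varies along the base of the cocartesian fibration. Colimits in $\Mod^\V_\kappa$ mix a colimit of the $\W_i$ with a colimit of base-changed modules (by \cite[Corollary 4.3.1.11]{HTT}), so to show $T$ preserves $\kappa$-filtered colimits you must compare $\colim_I\PP_{\W_i}(\M_i^\kappa)$ with $\PP_{\W_\infty}(\M_\infty^\kappa)$ where \emph{both} the module and the enriching base vary. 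That $\M\mapsto\M^\kappa$ and $N\mapsto\PP_\W(N)$ are each accessible for a \emph{fixed} $\W$ does not address this. The paper handles it by observing that $\colim_I\M_i^\kappa\simeq\M_\infty^\kappa$ first as $\W_j^\kappa$-tensored (hence $\W_j$-enriched) categories for each $j$, then promoting this to a $\W_\infty$-enrichment via $\W_\infty\simeq\lim_{I\op}\W_i$ along the restriction functors, and finally comparing universal properties of the two presheaf categories as $\W_\infty$-modules. This promotion-of-enrichment step is what your ``uniformly in $\W$'' elides, and it is the core of the argument.
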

\begin{proof}
    We have explained the first part in \Cref{lm:fwcomonadicrel} as a consequence of \cite[Theorem B]{maindbl}. 

    The comonad we obtain fiberwise preserves $\max(\omega_1,\kappa)$-filtered colimits. We now explain how to prove that it is accessible on the whole of $\Mod^\V_\kappa$. By \cite[Corollary 4.3.1.11]{HTT}, $\Mod^\V_\kappa$ admits all colimits, and a diagram $f:I^\triangleright \to \Mod^\V_\kappa$ is a colimit diagram if and only if :
    \begin{enumerate}
        \item The projection $I^\triangleright \to \Mod^\V_\kappa\to \CAlg(\Mod_\V(\PrL_\kappa))$ is a colimit diagram; 
        \item Letting $\overline f: I^\triangleright \to \Dbl{\W_{f(\infty)}}$ denote the pushforward of $f$ to the fiber over the terminal object of $I^\triangleright$, $\overline f$ is a colimit diagram in $\Dbl{\W_{f(\infty)}}$.  
    \end{enumerate}
The second condition can be informally written as $\colim_I \W_\infty\otimes_{\W_i}\M_i\xrightarrow{\simeq} \M_\infty$. It follows that for $I$ filtered, under condition 1., condition 2. is equivalent to the requirement that the diagram, forgetted down to $\Mod_\V(\PrL_\kappa)$ along $(\W,\M)\mapsto \M$ be a colimit diagram. Indeed, if $\colim_I \W_i \simeq \W_\infty$, we have $\colim_I\W_\infty\otimes_{\W_i}\M_i\simeq \colim_I \M_i$ along the unit map.

    Let $f$ be such a diagram, informally denoted $(\W_i,\M_i)$. The value of the relative comonad $T$ in question is $(\W_i,\PP_{\W_i}(\M_i^\kappa))$. As $T$ is a fiberwise comonad, we only need to check condition 2. in the above, when $I$ is sufficiently filtered. Namely we need to prove that the canonical map $$\colim_I \PP_{\W_i}(\M_i^\kappa) \to \PP_{\W_\infty}(\M_\infty^\kappa)$$ is an equivalence. 

    For $I$ being $\kappa$-filtered, we have that $\colim_I \M_i^\kappa \simeq \M^\kappa_\infty$ as $\W_j^\kappa$-tensored categories, for every $j$, and thus as $\W_j$-enriched categories, for every $j$. As $\W_\infty\simeq \lim_{I\op}\W_i$ along the restriction functors, it follows that this map is also an equivalence of $\W_\infty$-enriched categories. 

    Comparing the universal properties of both sides, it follows that the map $\colim_I\PP_{\W_i}(\M_i^\kappa)\to~\PP_{\W_\infty}(\M_\infty^\kappa)$ is an equivalence of $\W_\infty$-modules, as was to be shown. Thus our relative comonad is ($\kappa$-)accessible. 

    From there, the result about accessibility of the category of comodules follows similarly as in \cite[Proposition 3.8]{maindbl}. 
\end{proof}
\begin{proof}[Proof of \Cref{thm:rigprl}]
Fix $\kappa\geq \omega_1$ so that $\V\in\CAlg(\PrL_\kappa)$. 
    By \Cref{prop:colimrig}, $\CAlg\rig_\V$ has all colimits, and so the theorem is about accessibility. 

    Any rigid $\V$-algebra is dualizable over $\V$ and hence $\kappa$-compactly generated by \cite[Theorem 3.4]{maindbl}.

    We note that we may express $\CAlg\rig_\V$ as a pullback: $$\CAlg(\Dbl{\V})\times_{(\Mod^\V_\kappa)^{\Delta^1}}(\Mod^\V_{\kappa,\dbl})^{\Delta^1}$$ where $\Mod^\V_\kappa, \Mod^\V_{\kappa,\dbl}$ are as in \Cref{cor:prlfib} and the discussion following it, and the functor $\CAlg(\Dbl{\V})\to (\Mod^\V_\kappa)^{\Delta^1}$ is the functor $\W\mapsto (\W\otimes_\V\W\to \W)$. 
    
    Now, from the description of colimits in $\Mod^\V_\kappa$ (cf. the proof of \Cref{cor:relcomonadacc}), one can check that this functor is accessible, and hence \Cref{cor:relcomonadacc} guarantees that the pullback is also accessible, as was needed. 
\end{proof}
We now point out that \Cref{thm:rigprl} allows us to prove more abstractly the existence of rigidifications, by combining the following two corollaries (which of course, we now logically do not need):
\begin{cor}
    For any cardinal $\kappa\geq \omega_1$ such that $\V\in\CAlg(\PrL_\kappa)$, the inclusion $\CAlg\rig_\V\to~\CAlg(\Mod_\V(\PrL_\kappa))$ admits a right adjoint $\Rig_{\V,\kappa}$. 
\end{cor}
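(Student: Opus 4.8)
The plan is to deduce this at once from \Cref{thm:rigprl} by the adjoint functor theorem. By \Cref{thm:rigprl} the category $\CAlg\rig_\V$ is presentable, and $\CAlg(\Mod_\V(\PrL_\kappa))$ is presentable as well (it is $\CAlg$ of the presentable symmetric monoidal category $\Mod_\V(\PrL_\kappa)$, cf. the discussion around \Cref{cor:prlfib}). Hence it suffices to show that the inclusion $i\colon \CAlg\rig_\V\to\CAlg(\Mod_\V(\PrL_\kappa))$ is well defined and preserves small colimits; then \cite[Corollary 5.5.2.9]{HTT} produces the desired right adjoint $\Rig_{\V,\kappa}$.

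First I would check that $i$ makes sense and is fully faithful. Every rigid $\V$-algebra is dualizable over $\V$, hence $\kappa$-compactly generated by \cite[Theorem 3.4]{maindbl} (using $\V\in\CAlg(\PrL_\kappa)$); and by \Cref{obs:morrig} every $\V$-algebra map between rigid $\V$-algebras is an internal left adjoint, hence $\kappa$-compact-preserving, so it is a morphism of $\CAlg(\Mod_\V(\PrL_\kappa))$. Conversely every morphism of $\CAlg(\Mod_\V(\PrL_\kappa))$ between rigid $\V$-algebras arises this way, so $\CAlg\rig_\V$ is a \emph{full} subcategory of $\CAlg(\Mod_\V(\PrL_\kappa))$, compatibly with its being a full subcategory of $\CAlg(\Mod_\V(\PrL))$.

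For colimit-preservation, since $i$ is fully faithful it is enough to see that the colimit of a small diagram in $\CAlg\rig_\V$ agrees with its colimit in $\CAlg(\Mod_\V(\PrL_\kappa))$. By \Cref{prop:colimrig} the former is computed in $\CAlg(\Mod_\V(\PrL))$ and is again rigid, and its cocone maps, being maps of rigid $\V$-algebras, are internal left adjoints (\Cref{obs:morrig}) and in particular $\kappa$-compact-preserving; so this colimit together with its cocone lives in $\CAlg(\Mod_\V(\PrL_\kappa))$. To see it is still a colimit there, note that for any $T\in\CAlg(\Mod_\V(\PrL_\kappa))$ the unit $\V\to T$ is a morphism of $\Mod_\V(\PrL_\kappa)$, hence $\kappa$-compact-preserving, so $\one_T$ is $\kappa$-compact (as $\one_\V$ is); then \Cref{rmk:kappacpctunit} shows that every colimit-preserving symmetric monoidal $\V$-algebra map out of a rigid (in particular locally rigid) $\V$-algebra $\W_0$ into $T$ is automatically $\kappa$-compact-preserving, so that $\Map_{\CAlg(\Mod_\V(\PrL_\kappa))}(\W_0,T)\simeq\Map_{\CAlg(\Mod_\V(\PrL))}(\W_0,T)$. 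Applying this with $\W_0$ ranging over the objects of the diagram and over the colimit, the universal property transfers from $\CAlg(\Mod_\V(\PrL))$ to $\CAlg(\Mod_\V(\PrL_\kappa))$.

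I do not expect a serious obstacle once \Cref{thm:rigprl} is granted: the only real content is the bookkeeping in the last step — that colimits of rigid $\V$-algebras do not depend on whether one works in $\PrL$ or $\PrL_\kappa$ — and this hinges entirely on the observation that all the relevant functors are internal left adjoints, via \Cref{obs:morrig} and \Cref{rmk:kappacpctunit}. With colimit-preservation in hand the adjoint functor theorem applies verbatim.
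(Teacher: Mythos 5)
Your proof is correct and matches the strategy the paper intends (which it states without proof): deduce the right adjoint from presentability of $\CAlg\rig_\V$ (\Cref{thm:rigprl}), presentability of $\CAlg(\Mod_\V(\PrL_\kappa))$, and colimit-preservation of the inclusion, via the adjoint functor theorem. Your bookkeeping of the two delicate points — that the inclusion is a genuine fully faithful functor (\Cref{obs:morrig} plus \cite[Theorem 3.4]{maindbl}) and that colimits of rigid algebras computed in $\CAlg(\Mod_\V(\PrL))$ have the same universal property in $\CAlg(\Mod_\V(\PrL_\kappa))$ (via the $\kappa$-compactness of $\one_T$ and \Cref{rmk:kappacpctunit}) — is correct; an alternative to the latter step is to observe that $\CAlg(\Mod_\V(\PrL_\kappa))\to\CAlg(\Mod_\V(\PrL))$ is conservative and colimit-preserving (since $\PrL_\kappa\subset\PrL$ is closed under small colimits), hence reflects colimits.
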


As in the case of $\Sp$, we can prove abstractly that this is independent of $\kappa$:
\begin{cor}
    Let $\kappa\geq \omega_1$ and $\V\in\CAlg(\PrL_\kappa)$. Let $\lambda \geq \kappa$, and let $\W\in\CAlg(\Mod_\V(\PrL_\kappa))$. In this case, the canonical map $\Rig_{\V,\kappa}(\W)\to\Rig_{\V,\lambda}(\W)$ is an equivalence. 
\end{cor}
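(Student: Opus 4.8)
The plan is to run a Yoneda argument, parallel to the proof that $\Rig_\kappa(\V)\simeq\Rig_\Sp(\V)$ inside \Cref{thm:rigidification}: the universal property of rigidification is only ever probed by rigid $\V$-algebras, and for those the distinction between $\PrL_\kappa$ and $\PrL_\lambda$ disappears.

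First I would record that both $\Rig_{\V,\kappa}(\W)$ and $\Rig_{\V,\lambda}(\W)$ exist by the previous corollary, using that $\W\in\CAlg(\Mod_\V(\PrL_\kappa))\subseteq\CAlg(\Mod_\V(\PrL_\lambda))$ since $\lambda\geq\kappa$ (a $\kappa$-compactly generated category is $\lambda$-compactly generated, and a $\kappa$-compact preserving left adjoint is $\lambda$-compact preserving). As $\Rig_{\V,\kappa}(\W)$ is rigid, its counit $\epsilon_\kappa\colon\Rig_{\V,\kappa}(\W)\to\W$, regarded in $\CAlg(\Mod_\V(\PrL_\lambda))$, corresponds under the $\Rig_{\V,\lambda}$-adjunction to the canonical comparison map $c\colon\Rig_{\V,\kappa}(\W)\to\Rig_{\V,\lambda}(\W)$, characterized by $\epsilon_\lambda\circ c\simeq\epsilon_\kappa$, where $\epsilon_\lambda$ is the counit of $\Rig_{\V,\lambda}$.

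Then, for an arbitrary rigid $\V$-algebra $\W_0$, I would unwind $c_*\colon\Map_{\CAlg\rig_\V}(\W_0,\Rig_{\V,\kappa}(\W))\to\Map_{\CAlg\rig_\V}(\W_0,\Rig_{\V,\lambda}(\W))$ using the two counit adjunctions. Here I use that for $\mu\in\{\kappa,\lambda\}$ the inclusion $\CAlg\rig_\V\hookrightarrow\CAlg(\Mod_\V(\PrL_\mu))$ is fully faithful, since by \Cref{obs:morrig} every morphism between rigid $\V$-algebras is an internal left adjoint and hence $\mu$-compact preserving; thus $\Map_{\CAlg\rig_\V}(\W_0,\Rig_{\V,\mu}(\W))\simeq\Map^{L,\otimes}_{\V,\mu}(\W_0,\W)$, the space of $\mu$-compact preserving symmetric monoidal $\V$-linear colimit-preserving functors, and this identification is postcomposition with $\epsilon_\mu$. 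Via these, and the relation $\epsilon_\lambda\circ c\simeq\epsilon_\kappa$, the map $c_*$ becomes the evident inclusion $\Map^{L,\otimes}_{\V,\kappa}(\W_0,\W)\hookrightarrow\Map^{L,\otimes}_{\V,\lambda}(\W_0,\W)$. Now the crucial point: $\W_0$ is rigid, hence locally rigid, and $\one_\W$ is $\kappa$-compact because $\W\in\CAlg(\Mod_\V(\PrL_\kappa))$ and $\one_\V$ is $\kappa$-compact (as $\V\in\CAlg(\PrL_\kappa)$). So \Cref{rmk:kappacpctunit} applies and shows that every symmetric monoidal colimit-preserving functor $\W_0\to\W$ has a $\kappa$-filtered-colimit-preserving right adjoint, i.e. is already $\kappa$-compact preserving. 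Therefore $\Map^{L,\otimes}_{\V,\kappa}(\W_0,\W)=\Map^{L,\otimes}_{\V,\lambda}(\W_0,\W)$, both equal to the full space of symmetric monoidal $\V$-linear colimit-preserving functors, so the inclusion, hence $c_*$, is an equivalence. Since this holds for all rigid $\W_0$, the Yoneda lemma in $\CAlg\rig_\V$ gives that $c$ is an equivalence.

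I expect no genuine obstacle; the only thing requiring care is the bookkeeping in the third paragraph — checking that the canonical map $c$ is precisely the one determined by the two counits, so that after the adjunction identifications it becomes the literal subspace inclusion, and confirming that objects of $\CAlg(\Mod_\V(\PrL_\kappa))$ really do have $\kappa$-compact units (which holds because the unit map $\V\to\W$ is a morphism in $\PrL_\kappa$ and $\one_\V$ is $\kappa$-compact). The rest is a direct application of \Cref{obs:morrig} and \Cref{rmk:kappacpctunit}, mirroring exactly the $\kappa$-independence already established over $\Sp$ in \Cref{thm:rigidification}.
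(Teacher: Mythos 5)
Your proof is correct and takes essentially the same approach as the paper: both reduce the statement to showing that any symmetric monoidal colimit-preserving functor from a rigid $\V$-algebra $\W_0$ to $\W$ automatically preserves $\kappa$-compacts, you by invoking \Cref{rmk:kappacpctunit} directly and the paper by re-running that argument inline via the factorization $\W_0\to\W_0\otimes_\V\W\to\W$ (explicitly flagged there as a variation on \Cref{obs:morrig}). Your additional Yoneda bookkeeping identifying $c_*$ with the subspace inclusion is careful and correct, just more explicit than what the paper records.
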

\begin{proof}
    If we can prove that any map $\W_0\to \W$ from a rigid $\V$-algebra lies in $\CAlg(\Mod_\V(\PrL_\kappa))$, then it will be clear that they have the same universal property. 

    Since any rigid $\V$-algebra is itself in $\CAlg(\Mod_\V(\PrL_\kappa))$ by \cite[Theorem 3.4]{maindbl}, this is really about the map $\W_0\to \W$ itself preserving $\kappa$-compacts. But note that this map is the composite: $\W_0\to \W_0\otimes_\V\W\to \W$, where the second map is an internal left adjoint by rigidity of $\W_0$ and by \Cref{prop:actintladj}, while the first map is $\W_0\otimes_\V(\V\to \W_0)$. Since the latter is in $\Mod_\V(\PrL_\kappa)$ by assumption, the tensor product is in there as well, and so we are done (this is simply a variation on \Cref{obs:morrig}).
\end{proof}
In particular, the common value of all the $\Rig_{\V,\kappa}(\W)$ for $\kappa$ large enough is the rigidification of $\W$ in the sense of \Cref{defn:rigidification}. 
\bibliographystyle{alpha}
\bibliography{Biblio.bib}
\end{document}